\documentclass{amsart}

\usepackage{tikz-cd}
\usepackage[all]{xy}

\usepackage[top=1in, bottom=1in, left=1in, right=1in]{geometry}
\usepackage{graphicx,hyperref,multicol,amsthm,amssymb,tikz,mathtools,xcolor,tikz,tikz-3dplot,ytableau,enumerate,appendix,comment}
\usetikzlibrary{decorations.pathreplacing,
                matrix,
                positioning,
                }

\newcommand{\nlcalg}{\mathcal{L}^{\mathcal{N}=2}_{\infty}}

\newcommand{\Winf}{\mathcal{W}_{\infty}}

\newcommand{\WNtwo}{\mathcal{W}^{\mathcal{N}=2}_{\infty}}
\newcommand{\Mod}[1]{\ (\mathrm{mod}\ #1)}

\numberwithin{equation}{section}

\newtheorem{theorem}{Theorem}
\numberwithin{theorem}{section}
\newtheorem{definition}[theorem]{Definition}
\newtheorem{lemma}[theorem]{Lemma}
\newtheorem{remark}[theorem]{Remark}
\newtheorem{proposition}[theorem]{Proposition}

\newtheorem{example}[theorem]{Example}
\newtheorem{corollary}[theorem]{Corollary}

\newtheorem{conjecture}[theorem]{Conjecture}

\def\ra{\rightarrow}

\newcommand{\CC}{\mathbb{C}}
\newcommand{\B}[1]{\textbf{#1}}

\newcommand{\T}[1]{\text{#1}}

\newcommand{\fr}[1]{\mathfrak{#1}}

\def\cO{{\cal O}}

\def\cA{{\cal A}}

\def\ra{\rightarrow}

\def\cA{{\mathcal A}}
\def\cB{{\mathcal B}}
\def\cC{{\mathcal C}}
\def\cD{{\mathcal D}}
\def\cE{{\mathcal E}}
\def\cF{{\mathcal F}}

\def\cH{{\mathcal H}}
\def\cI{{\mathcal I}}

\def\cK{{\mathcal K}}

\def\cN{{\mathcal N}}
\def\cO{{\mathcal O}}

\def\cS{{\mathcal S}}

\def\cV{{\mathcal V}}
\def\cW{{\mathcal W}}

\def\ga{{\mathfrak a}}

\def\gd{{\mathfrak d}}

\def\gg{{\mathfrak g}}
\def\gh{{\mathfrak h}}

\def\gl{{\mathfrak l}}

\def\gn{{\mathfrak n}}
\def\go{{\mathfrak o}}
\def\gp{{\mathfrak p}}

\def\gs{{\mathfrak s}}

\title{Universal $2$-parameter $\mathcal{N}=2$ supersymmetric $\cW_{\infty}$-algebra}
\author{Thomas Creutzig}
\address{FAU Erlangen}
\email{creutzigt@math.fau.de}
\thanks{T. Creutzig is supported by DFG project Projektnummer 551865932.}

\author{Volodymyr Kovalchuk}
\address{Department Mathematik, FAU Erlangen}
\email{v.dexter97@gmail.com}
\thanks{V. Kovalchuk is supported by the Alexander von Humboldt Foundation.}

\author{Andrew R. Linshaw}
\address{University of Denver}
\email{andrew.linshaw@du.edu}
\thanks{A. Linshaw is supported by NSF Grant DMS-2401382 and Simons Foundation Grant MPS-TSM-00007694.}

\author{Arim Song}
\address{Department Mathematik, FAU Erlangen}
\email{arimsong11@gmail.com}
\thanks{A. Song is supported by Basic Science Research Program through the National Research Foundation of Korea (NRF) funded by the Ministry of Education(RS-2024-00409689).}

\author{Uhi Rinn Suh}
\address{University of Denver and Seoul National University}
\email{uhrisu1@snu.ac.kr}
\thanks{U.R. S. is supported by NRF Grant \#2022R1C1C1008698 and  Creative-Pioneering Researchers Program by Seoul National University}

\thanks{We thank S. Nakatsuka and J. Fasquel for sharing with us that they had independently calculated the central charges of the commuting Virasoro fields inside $\cW^k(\gs\gl_{n+1|n})$.}
\begin{document}
	\maketitle

	\pagestyle{plain}

\noindent {ABSTRACT. The universal $2$-parameter vertex algebra $\cW_{\infty}$ of type $\cW(2,3,\dots)$ is a classifying object for vertex algebras of type $\cW(2,3,\dots,N)$ for some $N$; under mild hypotheses, all such vertex algebras arise as quotients of $\cW_{\infty}$. In 2017, Gaiotto and Rap\v{c}\'ak introduced a family of such vertex algebras called $Y$-algebras, and conjectured that they fall into groups of three that are mutually isomorphic. This is a common generalization of both Feigin-Frenkel duality and the coset realization of principal $\cW$-algebras in type $A$, and was proven in 2021 for the simple $Y$-algebras (i.e., one label is zero) by the first and third authors. In this paper, we extend this entire story to the $\mathcal{N}=2$ superconformal setting. First, we prove the 2013 conjecture of Gaberdiel and Candu that there exists a universal $2$-parameter vertex algebra $\cW^{\cN=2}_{\infty}$ which is an extension of the $\mathcal{N}=2$ superconformal algebra, and has four additional generators in weights $i, i + \frac{1}{2}, i + \frac{1}{2}, i+1$, for each integer $i > 1$. This admits many $1$-parameter quotients which we call $\mathcal{N}=2$ supersymmetric $Y$-algebras, and we prove the dualities among these algebras which were conjectured in 2018 by Prochazka and Rap\v{c}\'ak. A special case is the coset realization of the principal $\cW$-algebra $\cW^k(\gs\gl_{n+1|n})$ which was conjectured in 1992 by Ito. As a corollary, we obtain the strong rationality of $\cW_k(\gs\gl_{n+1|n})$ for $k = -1 + \frac{1}{n+a+1}$ for all positive integers $n,a$, and we describe its module category. This generalizes Adamovi\'c's 1999 result on $\mathcal{N}=2$ minimal models, which is the case $n=1$.}

\section{Introduction}

$\cW$-algebras are a class of vertex algebras that are a common generalization of affine vertex algebras and the Virasoro algebra. Given a Lie (super)algebra $\mathfrak{g}$ and a nilpotent element $F$ in the even part of $\mathfrak{g}$, the $\cW$-algebra $\cW^k(\mathfrak{g}, F)$ at level $k\in \mathbb{C}$ was defined by Kac, Roan and Wakimoto in \cite{KRW04}, generalizing earlier constructions of Feigin and Frenkel \cite{FF1}. The best studied case is when $F$ is a principal nilpotent; $\cW^k(\mathfrak{g},F)$ is then called a principal $\cW$-algebra and is denoted by $\cW^k(\mathfrak{g})$. Principal $\cW$-algebras satisfy {\it Feigin-Frenkel duality}, which is a vertex algebra isomorphism $\cW^k(\gg) \cong \cW^{k'}(^L \gg)$. Here $^L\gg$ is the Langlands dual Lie algebra, $h^{\vee}$,  $^L h^{\vee}$ are the dual Coxeter numbers of $\gg$, $^L\gg$, and $(k+h^{\vee})(k' + ^Lh^{\vee}) = r$ where $r$ is the lacing number \cite{FF}. For $\gg$ simply-laced, there is another duality called the {\it coset realization} which was proven \cite{ACL}. For generic values of $\ell$, we have a vertex algebra isomorphism 
	\begin{equation} \label{eq:cosetrealization} \cW^{\ell}(\gg)\cong  \text{Com}(V^{k+1}(\gg),V^k(\gg)\otimes L_1(\gg)),\qquad \ell +h^{\vee}=\frac{k+h^{\vee}}{(k+1)+ h^{\vee}},\end{equation} which descends to an isomorphism of simple vertex algebras $\cW_{\ell}(\gg)\cong \text{Com}(L_{k+1}(\gg), L_k(\gg)\otimes L_1(\gg))$ for all admissible levels $k$. This was a longstanding conjecture \cite{BBSS,FaLu,FKW92,KW89}, generalizing the Goddard-Kent-Olive (GKO) construction of the Virasoro algebra \cite{GKO}. A shorter proof had been found later \cite{CN}. When $k$ is an admissible level for $\widehat{\gg}$, $\ell$ is a nondegenerate admissible level for $\widehat{\gg}$, so that $\cW_{\ell}(\gg)$ is strongly rational (i.e., lisse and rational) by \cite{Ar1, Ar2}. There is also a coset realization for type $B$ appearing in \cite{CL3}: 
\begin{equation} \label{cosetBC} \cW^{\ell}(\gs\go_{2n+1})  \cong \text{Com}(V^k(\gs\gp_{2n}), V^k(\go\gs\gp_{1|2n})), \qquad \ell + h^{\vee}_{\gs\go_{2n+1}} =  \frac{k + h^{\vee}_{\go\gs\gp_{1|2n}}}{k + h^{\vee}_{\gs\gp_{2n}}}. 
\end{equation} 
	
	Non-principal $\cW$-algebras as well as $\cW$-superalgebras have become important in physics in recent years; see for example \cite{ArMT,CG,CH,CHR, GR,GRZ,ProI,ProII,PR}. In \cite{GR}, Gaiotto and Rap\v{c}\'ak introduced a family of $1$-parameter vertex algebras $Y_{N_1, N_2, N_3}[\psi]$ called $Y$-algebras, which are indexed by three integers $N_1, N_2, N_3\geq 0$ and $\psi \in \mathbb{C}$. They are associated to interfaces of twisted $\cN=4$ supersymmetric gauge theories with gauge groups $U(N_1)$,  $U(N_2)$, and $U(N_3)$. The interfaces satisfy a permutation symmetry which led Gaiotto and Rap\v{c}\'ak to conjecture a triality of isomorphisms of $Y$-algebras. The $Y$-algebras with one label zero are (up to a Heisenberg algebra) the affine cosets of a family of non-principal $\cW$-(super)algebras of type $A$ which are known as {\it hook-type}. For $n\geq 1$ and $m\geq 0$, define 
$$\cW^{\psi}(n,m) := \cW^k(\gs\gl_{n+m}, F_{n,1^m}),$$ where $F_{n,1^m}$ corresponds to the hook-type partition $(n,1,1,\dots, 1)$ of $n+m$, and $\psi = k + n+m$. It has affine subalgebra $V^{\psi-m-1}(\gg\gl_m)$, and we define
\begin{equation} \cC^{\psi}(n,m) := \text{Com}(V^{\psi-m-1}(\gg\gl_m), \cW^{\psi}(n,m)).\end{equation} When $n=0$, we instead define
\begin{equation}\cC^{\psi}(0,m) := \text{Com}(V^{k-1}(\gg\gl_m), V^k(\gs\gl_m) \otimes \cS(m)).\end{equation} Here $\cS(m)$ is the rank $m$ $\beta\gamma$-system, which has an action of $L_{-1}(\gg\gl_m)$.

Similarly, for $n\geq 1$, $m\geq 0$, and $m\neq n$, we define 
$$\cV^{\psi}(n,m) := \cW^k(\gs\gl_{n|m}, F_{n|1^m}),$$ where $F_{n|1^m}$ is principal in the subalgebra $\gs\gl_n$ and trivial in $\gs\gl_m$, and $\psi = k+n-m$. It has affine subalgebra $V^{-\psi-m+1}(\gg\gl_m)$, and we define
\begin{equation}\cD^{\psi}(n,m) := \text{Com}(V^{-\psi-m+1}(\gg\gl_m), \cV^{\psi}(n,m)).\end{equation}
For $m = n \geq 2$,  we define $\cD^{\psi}(n,n) := \text{Com}(V^{-\psi-n+1}(\gs\gl_n), \cV^{\psi}(n,n))^{\text{GL}_1}$, where $\cV^{\psi}(n,n) := \cW^k(\gp\gs\gl_{n|n}, F_{n|1^n})$. When $n=0$, we set
\begin{equation} \label{intro:D0m} \cD^{\psi}(0,m) := \text{Com}(V^{-k+1}(\gg\gl_m), V^{-k}(\gs\gl_m) \otimes \cE(m)).\end{equation} Here $\cE(m)$ is the rank $m$ $bc$-system, which has an action of $L_{1}(\gg\gl_m)$. Note that \eqref{intro:D0m} is isomorphic to \eqref{eq:cosetrealization} for $\gg = \gs\gl_m$, so we also regard \eqref{intro:D0m} as a GKO coset. Finally, we define $\cD^{\psi}(1,1) :=\cA(1)^{\text{GL}_1}$ where $\cA(1)$ is the rank $1$ symplectic fermion algebra. 
\begin{theorem} \label{intro:trialitytheorem} \cite{CL2} For integers $n\geq m \geq 0$, we have isomorphisms of $1$-parameter vertex algebras
\begin{equation} \label{typeAtriality} \cD^\psi(n, m)  \cong \cC^{\psi^{-1}}(n-m, m) \cong \cD^{\psi'}(m, n),\qquad  \frac{1}{\psi} +\frac{1}{\psi'} =1.\end{equation}
\end{theorem}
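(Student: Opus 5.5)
This is the triality theorem of \cite{CL2}. I would prove it with the universal $2$-parameter algebra $\Winf$ of type $\cW(2,3,\dots)$ and its classifying property. The strategy is to show that each of the three algebras in \eqref{typeAtriality} is a $1$-parameter quotient of $\Winf$, and that the corresponding truncation curves in the parameter space of $\Winf$ coincide. The first step is to recall that, after a suitable extension, $\Winf$ is defined over a localization of $\CC[c,\lambda]$. Its simple graded quotients with weight-$3$ field $W^3$ nondegenerately normalized are parametrized by points of an ideal $I\subset \CC[c,\lambda]$, i.e.\ by truncation curves $V(I)$. Any vertex algebra of type $\cW(2,3,\dots,N)$ that is generated by its weights $2$ and $3$ fields, and satisfies mild nondegeneracy, is then a quotient $\Winf^I$.

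The second step handles the algebras $\cC^{\psi}(n,m)$ and $\cD^{\psi}(n,m)$ one family at a time. For each, I would verify strong generation of type $\cW(2,3,\dots,N)$ for generic $\psi$. The plan is to pass to the large-level limit, where the coset becomes the $GL_m$-orbifold of a free field algebra, and to use classical invariant theory (first and second fundamental theorems for $GL_m$) to find a minimal strong generating set and the first decoupling relation. One also checks that the weight-$3$ field generates everything, which follows from the same limit analysis. Hence each is isomorphic to a simple quotient $\Winf_I$ along some curve.

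The third step computes the truncation curves explicitly. Since the generic simple quotient on a curve is determined by $c$ and one more invariant, it suffices to compute the central charge and one structure constant, for instance the coefficient of $W^4$ in $W^3_{(1)}W^3$ or of $:LL:$ in $W^3_{(3)}W^3$. I would compute these from free field realizations (Miura-type embeddings of $\cW^{\psi}(n,m)$, $\cV^{\psi}(n,m)$ into Heisenberg times $\beta\gamma$ or $bc$ systems), restricted to the lowest weight fields commuting with $\gg\gl_m$. This yields a rational parametrization $(c(\psi),\lambda(\psi))$ for each family. One then checks that the three parametrizations for $\cD^\psi(n,m)$, $\cC^{\psi^{-1}}(n-m,m)$ and $\cD^{\psi'}(m,n)$ define the same curve after the stated reparametrization. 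Agreement of $c$ and $\lambda$ at generic $\psi$ then gives isomorphism of simple $1$-parameter algebras, and genericity of simplicity gives the isomorphisms of $1$-parameter vertex algebras. The degenerate cases $n=0$, $m=0$, $m=n$ (with the $\gp\gs\gl$ and $GL_1$-orbifold modifications) and $\cD^\psi(1,1)$ need separate but analogous checks.

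The main obstacle is the second step: proving generically that the cosets are generated by weights $2$ and $3$ with the correct truncation, so that they are genuinely quotients of $\Winf$. The computation of $\lambda(\psi)$ in the third step is also delicate. For the latter, I would reduce to the coefficient of $W^3$ terms in $W^3_{(1)}W^3$, evaluated on explicit free field formulas for the lowest few generators.
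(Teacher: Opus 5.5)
Your overall route is the key idea the paper attributes to \cite{CL2}: realize $\cD^\psi(n,m)$, $\cC^{\psi^{-1}}(n-m,m)$ and $\cD^{\psi'}(m,n)$ as simple $1$-parameter quotients of $\Winf$ and match their truncation curves. The paper itself only cites the result. The genuine gap is in your second step, where you say that generation by the weight $3$ field ``follows from the same limit analysis.'' It does not.

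In the large level limit the coset becomes $\bigotimes_{i=2}^{n}\cO_{\text{ev}}(1,2i)$, coming from the principal block, tensored with a $\text{GL}_m$-orbifold of free fields of weight $\frac{n+1}{2}$. In this tensor product of generalized free fields, each principal generator has only a central pole with itself, and it has trivial OPE with the other factors. So $W^3_{(1)}W^3$ has no $W^4$ component, and as soon as $n\geq 3$ neither $W^4,\dots,W^n$ nor the orbifold generators (lowest weight $n+1$) are reached from weights $2$ and $3$. Because weak generation by weights $2,3$ fails in the limit, semicontinuity tells you nothing about generic $\psi$. All the limit certifies is weak generation by the principal-block generators together with the lowest orbifold generators, i.e.\ up to weight about $n+2$. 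This is exactly what Lemma \ref{lem:genofcpsi} gets in the $\mathcal{N}=2$ setting.

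Without weak generation by $L$ and $W^3$, the classifying property of $\Winf$ applies only to the subalgebra $\tilde{\cC}\subseteq\cC$ generated in weights $2$ and $3$. Matching truncation curves would then identify these subalgebras, not the algebras in \eqref{typeAtriality}. For $m=0$, the needed fact that $\cW^k(\gs\gl_n)$ is generated by $W^3$ for generic $k$ is likewise invisible in the free field limit and needs its own argument.

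The missing ingredient is a bootstrapping argument of the kind the paper carries out for the $\mathcal{N}=2$ case in Lemmas \ref{lem:tildecpsi}, \ref{thm:tildednm} and Theorem \ref{trunc:Cnmthird}.
\begin{enumerate}
\item Write the fields generated recursively from $W^3$ as $W^r=\lambda_r\omega^r+\cdots$ in terms of the limit generators.
\item If $\lambda_{M+1}=0$ for a first $M$, then $\tilde{\cC}$ closes on the generators up to weight $M$ and is still a $1$-parameter quotient of $\Winf$.
\item Compute the truncation curve of $\tilde{\cC}$.
\item Use that curve, and the known algebras along it, to show the leading coefficients cannot vanish below the weight where the limit argument takes over. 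This forces $\tilde{\cC}=\cC$.
\end{enumerate}

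Separately, computing the truncation curves for $m\geq 1$ from explicit free-field formulas for coset fields, uniformly in $n$ and $m$, is impractical. The uniform method is the one mirrored in Theorem \ref{recon:first}: impose Jacobi identities between $W^3$ and the extension fields of weight $\frac{n+1}{2}$ in $\cW^\psi(n,m)$ or $\cV^\psi(n,m)$. Their conformal weight and $\gg\gl_m$-transformation are known, and this pins down the curve directly as a function of $\psi$.
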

 The cases $\cD^\psi(n, 0)  \cong \cC^{\psi^{-1}}(n, 0)$ and $\cD^\psi(n, 0)  \cong \cD^{\psi'}(0, n)$ are just Feigin-Frenkel duality and the coset realization, respectively. The key idea in the proof of \eqref{typeAtriality} that $\cC^\psi(n, m)$ and $\cD^\psi(n, m)$ can be realized explicitly as simple $1$-parameter quotients of the universal $2$-parameter vertex algebra $\cW_{\infty}$ of type $\cW(2,3,\dots)$. This is a classifying object for vertex algebras of type $\cW(2,3,\dots, N)$ for some $N$ satisfying some mild hypotheses. It was known to physicists since the early 1990s, and was constructed rigorously by the third author in \cite{Lin}.

\subsection{$\cW$-algebras with $\mathcal{N}=2$ supersymmetry}
The $\mathcal{N}=2$ superconformal algebra $\text{Vir}^c_{\mathcal{N}=2}$ has even generators $H, L$ of weights $1$ and $2$, and odd generators $G^{\pm}$ of weight $\frac{3}{2}$, which satisfy
 \begin{equation} \label{eq:N=2}
\begin{split}
H(z)H(w) &\sim \frac{c/3}{(z-w)^2}, \qquad H(z)G^\pm(w) \sim \frac{\pm G^\pm(w)}{(z-w)}, \\ 
G^\pm(z) G^\mp(w) &\sim \frac{c/3}{(z-w)^3} \pm \frac{H(w)}{(z-w)^2} + \frac{L(w) \pm 1/2 \partial H(w)}{(z-w)}.
\end{split}
\end{equation} It is isomorphic to the principal $\cW$-algebra $\cW^k(\gs\gl_{2|1})$, where $c = -3(2k+1)$. Like the Virasoro algebra, there is a sequence of levels, namely $k= -1 + \frac{1}{3+a}$ for $a \in \mathbb{Z}_{\geq 1}$, where the central charge is $c = \frac{3(a-1)}{a+1}$, and the simple quotient $\cW_k(\gs\gl_{2|1})$ is strongly rational. The classification of irreducible modules and fusion rules for these vertex algebras were given by Adamovi\'c in \cite{Ad99}.

A natural generalization of $\text{Vir}^c_{\mathcal{N}=2}$ is the principal $\cW$-algebra $\cW^k(\gs\gl_{n+1|n})$, which is a conformal extension of $\text{Vir}^c_{\mathcal{N}=2}$ for $c = -3n (kn + k +n)$ for $k\neq -1$. It is freely generated of type
\begin{equation} \label{stronggenW:intro}
\cW\bigg( 1,2^2, 3^2,\dots, n^2, n+1; \bigg(\frac{3}{2}\bigg)^2, \bigg(\frac{5}{2}\bigg)^2,\dots, \bigg(\frac{2n+1}{2}\bigg)^2\bigg).\end{equation} This means that it has a minimal strong generating set consisting of an even field in weight $1$, two even fields in weights $2,3,\dots, n$, an even field in weight $n+1$, two odd fields in weights $\frac{3}{2}, \frac{5}{2}, \dots, \frac{2n+1}{2}$, and there are no normally ordered relations among these generators. A longstanding conjecture of Ito \cite{I1,I2} says that $\cW^k(\gs\gl_{n+1|n})$ should have the following coset realization. The rank $n$ $bc$-system $\cE(n)$ has an action of the simple affine vertex algebra $L_1(\gg\gl_n) = \cH(1) \otimes L_1(\gs\gl_n)$, where $\cH(1)$ is the rank $1$ Heisenberg algebra. We therefore have the diagonal action $V^{\ell+1}(\gg\gl_n) \rightarrow V^{\ell}(\gs\gl_{n+1}) \otimes \cE(n)$. Set 
\begin{equation} \label{intro:itocoset} \cC^{\ell}(n) = \text{Com}(V^{\ell+1}(\gg\gl_n), V^{\ell}(\gs\gl_{n+1}) \otimes \cE(n)),\end{equation} which is analogous to the GKO coset \eqref{intro:D0m}. Observe that when $\ell$ is admissible for $\hat{\gs\gl}_n$, the simple quotient $\cC_{\ell}(n)$ of $\cC^{\ell}(n)$ coincides with $\text{Com}(\tilde{V}^{\ell+1}(\gg\gl_n), L_{\ell}(\gs\gl_{n+1}) \otimes \cE(n))$, where $\tilde{V}^{\ell+1}(\gg\gl_n)$ is the image of $V^{\ell+1}(\gg\gl_n)$ in the simple quotient $L_{\ell}(\gs\gl_{n+1}) \otimes \cE(n))$. As explained in \cite{GL}, Ito's conjecture is equivalent to the following isomorphism of $1$-parameter vertex algebras
\begin{equation} \cW^k(\gs\gl_{n+1|n}) \cong \cC^{\ell}(n),\qquad (k+1)(\ell+ n+1) = 1.\end{equation} This is well known for $n=1$ since both sides are just $\text{Vir}^c_{\cN=2}$ \cite{KRW04,CL1}, and it was proven in \cite{GL} for $n=2$. By \cite[Theorem 8.1]{CL1}, Ito's conjecture would imply the isomorphism of simple quotients $\cW_k(\gs\gl_{n+1|n}) \cong \cC_{\ell}(n)$ when $\ell$ is admissible. It is apparent that $\cC_{\ell}(n)$ is unitary when $\ell$ is a positive integer, and it was shown to be strongly rational in \cite{ACL}. Therefore the isomorphism would imply the strong rationality of $\cW_k(\gs\gl_{n+1|n})$ at these levels. This is a natural generalization of the strong rationality of the $\mathcal{N}=2$ minimal models \cite{Ad99}.

\subsection{SUSY $\cW$-algebras} An important insight into the structure of $\cW^k(\gs\gl_{n+1|n})$ was obtained from the concept of a supersymmetric (SUSY) $\cW$-algebra, first introduced by Madsen and Ragoucy \cite{MRa} and developed mathematically by Molev, Ragoucy and the fifth author in \cite{MRS21}. Given a basic Lie superalgebra $\mathfrak{g}$ and an odd nilpotent element contained in an $\mathfrak{osp}_{1|2}$-subalgebra of $\mathfrak{g}$, the SUSY $\cW$-algebra is constructed via the supersymmetric analogue of the BRST complex. By its construction, any SUSY $\cW$-algebra naturally has an $\mathcal{N}=1$ supersymmetry and, moreover, it is a conformal extension of the $\mathcal{N}=1$ superconformal algebra $\text{Vir}^c_{\mathcal{N}=1}$. Recently, the fourth and fifth authors and Genra in \cite{GSS25} showed that the SUSY $\cW$-algebra $\cW^k_{\mathcal{N}=1}(\mathfrak{g})$ associated with the odd principal  nilpotent $f$ is isomorphic to the principal $\cW$-algebra $\cW^k(\mathfrak{g})$. This  implies the intrinsic SUSY structure of $\cW^k(\mathfrak{g}).$ In particular, when $\mathfrak{g}=\gs\gl_{n+1|n}$ and $f_{n+1|n}$ is its odd principal nilpotent,  $F_{n+1|n}=\frac{1}{2}[f_{n+1|n},f_{n+1|n}]$ is an even principal nilpotent and  $\cW^k(\gs\gl_{n+1|n})\cong \cW^k_{\mathcal{N}=1}(\gs\gl_{n+1|n},f_{n+1|n})$. The $\mathcal{N}=2$ supersymmetry of $\cW^k(\gs\gl_{n+1|n})$, understood as a conformal extension of $\text{Vir}^c_{\mathcal{N}=2}$ with the remaining generators arranged into $\mathcal{N}=2$ diamonds, essentially follows from the existence of two independent odd nilpotents $f_{n+1|n}$ and $\tilde{f}_{n+1|n}$ satisfying $[f_{n+1|n},f_{n+1|n}]=[\tilde{f}_{n+1|n},\tilde{f}_{n+1|n}]$ and hence 
\begin{equation} 
    \cW^k_{\mathcal{N}=1}(\gs\gl_{n+1|n},f_{n+1|n})\cong\cW^k(\gs\gl_{n+1|n})\cong \cW^k_{\cN=1}(\gs\gl_{n+1|n},\tilde{f}_{n+1|n}).
\end{equation}
The details will be explained in Section \ref{sec:W-algebra type A(n,n-1)}. In addition, all the strong generators of $\cW^k_{\mathcal{N}=1}(\gs\gl_{n+1|n})$ can be obtained by a column determinant formula of a matrix \cite{MRS21}. Using the SUSY Miura map on  $\cW_{\mathcal{N}=1}^k(\gs\gl_{n+1|n})$ which induces the free field realization \cite{{Song24free}}, one can find OPE relations between generators.

\subsection{$\mathcal{N}=2$ universal vertex algebra $\cW^{\mathcal{N}=2}_{\infty}$ and its quotients}
Based on computer calculations, Candu and Gaberdiel conjectured in \cite{CanGab} that there exists a universal $2$-parameter vertex algebra $\cW^{\mathcal{N}=2}_{\infty}$ which is freely generated of type
 \begin{equation} \label{stronggenU:intro}
\cW\bigg(1,2^2, 3^2,\dots; \bigg(\frac{3}{2}\bigg)^2, \bigg(\frac{5}{2}\bigg)^2,\dots \bigg),\end{equation}
and serves as a classifying object for vertex algebras with this generating type, in the sense that they can be obtained as quotients of $\cW^{\mathcal{N}=2}_{\infty}$. In particular, both $\cW^k(\gs\gl_{n+1|n})$ and $\cC^{\ell}(n)$ are known to have this property, so constructing $\cW^{\mathcal{N}=2}_{\infty}$ is an approach to proving Ito's conjecture. In fact, there are many other $1$-parameter vertex algebras of this kind. Ito's conjecture is one of an infinite family of dualities that have been conjectured by Prochazka and Rap{\'c}ak \cite{PR}, which we will prove in a uniform way using $\cW^{\mathcal{N}=2}_{\infty}$.

Our first main result, which is a paraphrasing of Theorems \ref{thm:induction}, \ref{Wn=2 freely generated}, and \ref{one-parameter quotients theorem}, is the following
\begin{theorem} There exists a unique $2$-parameter vertex algebra $\cW^{\mathcal{N}=2}_{\infty}$ with the following features:
\begin{enumerate}
\item It is defined over the localization of the polynomial ring $\mathbb{C}[c,\lambda]$ obtained by inverting $c$, $c-1$, and $c+3$.
\item It is freely generated of type \eqref{stronggenU:intro}, and weakly generated by the fields in weights $1,\frac{3}{2}, 2$.
\item Fields $H$, $G^{\pm}$, $L$ of weights $1, \frac{3}{2}$, $2$ generate a copy of $\text{Vir}^c_{\mathcal{N}=2}$.
\item For each $i \geq 2$, we have fields $W^{i, \bot}, W^{i,\pm}, W^{i,\top}$ of conformal weights $i, i+ \frac{1}{2}, i+ \frac{1}{2}, i+ 1$, which generate an $\mathcal{N}=2$ diamond.
\end{enumerate}
Moreover, $\cW^{\mathcal{N}=2}_{\infty}$ serves as a classifying object for vertex algebras with these properties; any vertex algebra with a strong generating set of type \eqref{stronggenU:intro} (not necessarily minimal) satisfying the above conditions, is a quotient of $\cW^{\mathcal{N}=2}_{\infty}$.
\end{theorem}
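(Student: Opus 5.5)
I will follow the strategy of \cite{Lin} for $\cW_{\infty}$, adapted to $\mathcal{N}=2$ diamonds. Start from the most general OPE ansatz among the generators $H,G^{\pm},L$ and $W^{i,\bot},W^{i,\pm},W^{i,\top}$ for $i\geq 2$. The ansatz is constrained by three requirements. First, the $\text{Vir}^c_{\mathcal{N}=2}$ relations \eqref{eq:N=2} hold. Second, each diamond is primary for $\text{Vir}^c_{\mathcal{N}=2}$ with $H$-charges $0,\pm1,0$. Third, $W^{i,\pm}$, $W^{i,\top}$ are obtained from $W^{i,\bot}$ by the action of $G^{\pm}_{(0)}$. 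The lowest-weight field $W^{i,\bot}$ of each diamond determines the rest, so the free data are the OPEs $W^{i,\bot}(z)W^{j,\bot}(w)$ together with the mixed ones. The normalization is fixed by $W^{2,\bot}_{(3)}W^{2,\bot}=\frac{c}{3}\cdot(\text{const})$, and the recursive definition is $W^{i+1,\bot}:=W^{2,\bot}_{(1)}W^{i,\bot}$ (corrected by composites). This makes the algebra weakly generated by weights $1,\frac32,2$, and removes the field-redefinition ambiguity except for a single parameter $\lambda$, which records $W^{2,\bot}_{(1)}W^{3,\bot}$ or a similar coefficient. The denominators $c$, $c-1$, $c+3$ appear exactly when normalizing diamond primaries, since these are the zeros of the Kac-type determinants at weight $\le 3$. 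So all structure constants should lie in $\mathbb{C}[c,\lambda]$ localized at these.

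The core is an induction, which would be Theorem \ref{thm:induction}. Assume all OPEs $W^{i,*}(z)W^{j,*}(w)$ with $i+j\le N$ are uniquely determined as polynomials in $\lambda$ over $\mathbb{C}[c]_{c(c-1)(c+3)}$ by imposing the Jacobi identities $J(a,b,d)$ of total weight at most $N+1$. Then show that the OPEs with $i+j=N+1$ are also determined. The method is to write $W^{i,\bot}=W^{2,\bot}_{(1)}W^{i-1,\bot}+\dots$ and use the Borcherds identity to express $W^{i,\bot}_{(r)}W^{j,\bot}$ through OPEs of lower total weight. The odd and top components are then recovered by $G^\pm_{(0)}$. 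Consistency, namely that the $(i,j)$ and $(i-1,j+1)$ routes agree, is a finite list of Jacobi identities involving $W^{2,\bot},W^{2,\bot},W^{N-1,\bot}$ and their $\mathcal{N}=2$ partners. The base cases, up to roughly total weight $8$, are handled by explicit computer-assisted computation, and this is where $\lambda$ emerges as the only free parameter.

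This induction step is the main obstacle. In the bosonic case one only needs $W^3$ and a handful of identities. Here the diamond structure multiplies the number of fields and identities, and the odd fields produce signs. I would try first to work in $\mathcal{N}=2$ superfield language, treating each diamond as a single superfield $\mathbf{W}^{i}$ and the superconformal current as $\mathbf{J}$. This way only the super-OPEs $\mathbf{W}^2\mathbf{W}^{i}$ need checking, which reduces the bookkeeping to the analogue of the $\cW_\infty$ argument.

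With the Jacobi identities verified on generators, the standard criterion (De Sole--Kac, as in \cite{Lin}) gives a nonlinear Lie conformal algebra. Its universal enveloping vertex algebra is $\cW^{\mathcal{N}=2}_{\infty}$, which is freely generated of type \eqref{stronggenU:intro}, giving Theorem \ref{Wn=2 freely generated}. For free generation one can alternatively specialize to a family of quotients such as $\cW^k(\gs\gl_{n+1|n})$: their generators in weights $\le n+1$ are free, so any normally ordered relation would have to vanish for all $n$. Uniqueness follows because every structure constant was forced. For universality, take any vertex algebra $\cA$ with the stated properties and a strong generating set of this type. The same normalization produces fields in $\cA$ satisfying OPEs of the same shape. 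The Jacobi identities then force these OPEs to be the specialization at some $(c,\lambda)$, possibly after passing to a quotient of the ideal generated by $c-c_0$ and $\lambda-\lambda_0$. Hence $\cA$ is a quotient of $\cW^{\mathcal{N}=2}_{\infty}$, which is the classification in Theorem \ref{one-parameter quotients theorem}.
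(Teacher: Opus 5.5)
The overall architecture matches the paper: a general ansatz, a computer base case, induction on total weight via Jacobi identities, and free generation via $\cW^k(\gs\gl_{n+1|n})$. But the step that drives your induction is wrong.

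\textbf{The raising step.} $W^{2,\bot}_{(1)}W^{i,\bot}$ has conformal weight $2+i-2=i$, so it cannot define $W^{i+1,\bot}$. The weight-$(i+1)$, charge-$0$ candidate $W^{2,\bot}_{(0)}W^{i,\bot}$ fails too. Under the order-two automorphism $\sigma$ it has sign $(-1)^i$. This $\sigma$ exists in every intended quotient, e.g.\ $\cW^k(\gs\gl_{n+1|n})$ by Proposition~\ref{prop:automorphism}. Both $W^{i+1,\bot}$ and $W^{i,\top}$ have sign $(-1)^{i+1}$, so neither can occur in that product, and ``correcting by composites'' leaves zero. This is why the paper assumes (U3) and raises with the $\sigma$-odd top component, $W^{2,\top}_{(1)}W^{N,\bot}=W^{N+1,\bot}$. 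You never use $\sigma$, yet it is also what trims the ansatz: no new generator appears in bottom--bottom first-order poles, and only $\partial W^{i+j-1,\bot}$ appears in $W^{i,\top}_{(0)}W^{j,\bot}$.

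\textbf{The inductive reduction.} Even with the correct raising operator, the Borcherds identity does not push $W^i\times W^j$ to lower total weight. Expanding $(W^{2,\top}_{(1)}W^{i-1,\bot})_{(r)}W^{j,\bot}$ produces $W^{i-1}\times W^{j+1}$ and $W^{2}\times W^{i+j-2}$, both at the same total weight. What you actually get is a chain reducing $S^{N+1}$ to $W^2\times W^{N-1}$ and $W^3\times W^{N-2}$ (Lemma~\ref{lem:Induction1}). These must then be solved for from specific Jacobi identities such as $J_{1,0}(W^{2,\top},W^{2,\top},W^{n-2,\bot})$, $J_{1,0}(W^{2,\top},W^{3,\top},W^{n-3,\bot})$ and $J_{1,0}(W^{3,\top},W^{2,\top},W^{n-3,\bot})$ (Lemma~\ref{lem:Induction2}). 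The real content is that these linear systems, and recursion coefficients like $1-w^{W^{2,\top},W^{i,\top}}_{\partial W^{i+1,\top}}$, are nondegenerate for every $N$. The paper gets this from the closed forms in Proposition~\ref{prop:structure constants}, e.g.\ $W^{i,\top}_{(1)}W^{j,\bot}=\frac{i!j!}{2(i+j-2)!}W^{i+j-1,\bot}+\cdots$. These are extracted from $J_{2,0}(W^{k,\top},W^{i,\top},W^{j,\bot})$ and the normalization $w^{W^{2,\top},W^{i,\bot}}_{W^{i+1,\bot}}=1$. You rightly call this the main obstacle, but superfield notation only reorganizes the bookkeeping; it does not supply the nondegeneracy.

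\textbf{Primary diamonds and the localization.} Requiring every diamond to be $\text{Vir}^c_{\mathcal{N}=2}$-primary is not what the paper does, and it undercuts claim (1). The remark after the primary field $W^{(3),\bot}$ in the base case shows that primary-correcting the weight-$3$ diamond already requires inverting $5c-12$, $2c-3$ and $c+6$. So $c$, $c-1$, $c+3$ are not ``exactly'' the Kac zeros you would meet. The paper assumes only $W^2$ primary, and primary diamonds exist only generically (Appendix~\ref{sect:primary structures}).

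\textbf{Free generation.} The induction imposes only a subset of the Jacobi identities, so the De Sole--Kac route is not available as you state it. The $\cW^k(\gs\gl_{n+1|n})$ argument is therefore necessary, not an alternative. It requires showing these algebras satisfy the hypotheses, in particular weak generation by the fields of weight $\le 2$ at all noncritical levels (Proposition~\ref{lem:weakgen}). It also requires infinitely many distinct truncation curves, so that each Jacobiator, vanishing on all of them, vanishes identically.
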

The construction is similar to the construction of $\cW_{\infty}$ given in \cite{L};  $\cW^{\mathcal{N}=2}_{\infty}$ is defined as the universal enveloping vertex algebra of a nonlinear Lie conformal superalgebra \cite{DSKI}. In \cite{CanGab}, it was also conjectured that for $i\geq 2$, the fields $\{W^{i, \bot}, W^{i,\pm}, W^{i,\top}\}$ could be assumed primary for $\text{Vir}^c_{\mathcal{N}=2}$, and we will prove this in Appendix \ref{sect:primary structures}; see Corollary \ref{univeral:primarydiamond}. 

We will also show that $\cW^{\mathcal{N}=2}_{\infty}$ is a conformal extension of $\cH \otimes \cW^+_{\infty} \otimes \cW^-_{\infty}$, where $\cW^{\pm}_{\infty}$ are copies of the universal $2$-parameter vertex algebra $\cW_{\infty}$ constructed in \cite{Lin}; see Proposition \ref{prop:base case prop}. The parameters of both $\cW^{\pm}_{\infty}$ are determined from the parameters of $\cW^{\mathcal{N}=2}_{\infty}$, and vice versa. This implies that in the terminology of \cite{SY25}, $\cW^{\mathcal{N}=2}_{\infty}$ is a {\it minimal $\mathcal{N}=2$ supersymmetric extension} of $\cW_{\infty}$; see Corollary \ref{minextension}.

\begin{remark} In a recent paper \cite{CKoL26}, three of us have conjectured the existence of universal $2$-parameter vertex algebras for {\it all} nilpotents in classical Lie types. In type $A$, let $P = (n_0^{m_0}, n_1^{m_1},\dots, n_{t}^{m_t})$ be a partition of $N = \sum_{i=0}^t n_i m_i$ consisting of $m_i$ parts of size $n_i$, where $n_0> n_1 > \cdots > n_t \geq 2$. Let $M = \{m_0,\dots, m_t\}$ be the set of {\it multiplicities}, and if $t \geq 1$, $S = \{d_1,\dots, d_t\}$ be the set of {\it height differences} $d_{i+1} = n_i - n_{i+1}$. If $t = 0$, we set $S = \emptyset$. Let $f_P \in \gs\gl_N$ be the corresponding nilpotent. The conjectural universal object $\cW^{A, S, M}_{\infty}$ depends only on $S$ and $M$, and is expected to admit all the $\cW$-algebras $\cW^k(\gs\gl_n, f_P)$ as $1$-parameter quotients. It is expected to be a conformal extension of the Heisenberg algebra $\cH(|M| - 1)$ tensored with $|M|$ commuting copies of $\cW_{\infty}$, where $|M| = \sum_{i=0}^t m_i$. The case  $\cW^{A, \emptyset, \{1\}}_{\infty}$ is just $\cW_{\infty}$. One can similarly conjecture the existence of universal $2$-parameter vertex superalgebras that govern $\cW$-superalgebras of classical Lie types. For example, a nilpotent in $\gs\gl_{N|N'}$ corresponds to a {\it pair} of partitions $P$ of $N$ and $P'$ of $N'$. The corresponding universal object is uniquely specified by two pairs of sets $S,M$ and $S',M'$, together with an overall height difference $e \geq 0$, and is expected to be a conformal extension of $\cH(|M| + |M'| - 1)$ tensored with $|M| + |M'|$ commuting copies of $\cW_{\infty}$. The case $\cW^{\mathcal{N}=2}_{\infty}$ corresponds to $S = \emptyset = S'$, $M = 1 = M'$, and $e = 1$. \end{remark}

Like $\cW_{\infty}$, $\cW^{\mathcal{N}=2}_{\infty}$ is simple for generic parameter values, but acquires a singular vector along certain curves in the parameter space which we call {\it truncation curves}. This is because the generating set for $\cW^{\mathcal{N}=2}_{\infty}$ typically truncates to a strong finite generating set for the simple quotient of $\cW^{\mathcal{N}=2}_{\infty}$ along these curves. There is an infinite family of such vertex algebras which were conjectured by Prochazka and Rap{\'c}ak \cite{PR} to be quotients of $\cW^{\mathcal{N}=2}_{\infty}$. For $n\geq 1$ and $r,s \geq 0$, we set $\psi = k + r - s + 1$, and define
\begin{equation}\label{eq:intro:Yalg}
    \cC^{\psi}_{\cN=2}(n,r|s):=
    \begin{cases}
        \textup{Com}(V^{k+1}_{\mathcal{N}=1}(\mathfrak{gl}_{r|s}),\mathcal{W}^k_{\mathcal{N}=1}(\gs\gl_{n+r+1|n+s},f_{n+1|n})) &r\neq s-1,\\
        \textup{Com}(V^{k+1}_{\mathcal{N}=1}(\mathfrak{sl}_{r|r+1}),\mathcal{W}^k_{\mathcal{N}=1}(\gp\gs\gl_{n+r+1|n+r+1},f_{n+1|n}))^{U(1)} &r= s-1,
    \end{cases}
\end{equation}
For $n=0$, we interpret $f_{1|0}$ as the zero (odd) nilpotent. Then $\cW^k_{\cN=1}(\gs\gl_{r+1|s},f_{1|0}) = V^k_{\cN=1}(\fr{sl}_{r+1|s})$ for $r\neq s-1$, and $\cW^k_{\cN=1}(\gp\gs\gl_{r+1|r+1},f_{1|0}) = V^k_{\cN=1}(\fr{psl}_{r+1|r+1})$, so that \eqref{eq:intro:Yalg} becomes 
\begin{equation}\label{eq:introparafermions}
    \cC_{\cN=2}^{\psi}(0,r|s):=\begin{cases}
        \T{Com}(V^{k+1}_{\cN=1}(\fr{gl}_{r|s}),V^{k}_{\cN=1}(\fr{sl}_{r+1|s})) & r\neq s-1,\\
        \T{Com}(V^{k+1}_{\cN=1}(\fr{sl}_{r|r+1}),V^{k}_{\cN=1}(\fr{psl}_{r+1|r+1}))^{U(1)} & r= s-1.
        \end{cases}
\end{equation}
The right-hand sides of \eqref{eq:intro:Yalg} and \eqref{eq:introparafermions} can also be expressed in terms of non-SUSY affine and $\mathcal{W}$-algebras, using the isomorphism $\mathcal{W}^k_{\mathcal{N}=1}(\gg,f)\cong \mathcal{W}^k(\gg,F)\otimes \mathcal{F}(\gg^f_0)$, where $[f,f]=2F$ and $\mathcal{F}(\gg^f_0)$ is the $bc\beta\gamma$-system on the parity reversed space of $\gg^f_0$ \cite{LSS}. For $n\geq 1$ we have
$$\cC^{\psi}_{\cN=2}(n,r|s)\cong
\begin{cases}
    \textup{Com}(V^{{k+1}}(\mathfrak{gl}_{r|s}),\mathcal{W}^k(\gs\gl_{n+r+1|n+s},F_{n+1|n}))& r \neq s-1,\\
    \textup{Com}(V^{{k+1}}(\mathfrak{sl}_{r|r+1}),\mathcal{W}^k(\fr{psl}_{n+r+1|n+r+1},F_{n+1|n}))^{U(1)}& r = s-1,
\end{cases}$$
and for $n=0$ we have 
$$
 \cC_{\cN=2}^{\psi}(0,r|s)\cong
 \begin{cases}
     \T{Com}(V^{k+1}(\fr{gl}_{r|s}),V^{k}(\fr{sl}_{r+1|s})\otimes \cE(r) \otimes \cS(s))& r\neq s-1\\
     \T{Com}(V^{k+1}(\fr{sl}_{r|r+1}),V^{k}(\fr{psl}_{r+1|r+1})\otimes \cE(r) \otimes \cS(r+1))^{U(1)} & r=s-1.
 \end{cases}$$

We will prove that all the vertex algebras $\cC^{\psi}_{\cN=2}(n,r|s)$ arise as $1$-parameter quotients of $\cW^{\mathcal{N}=2}_{\infty}$, and we will compute their explicit truncation curves; see Theorem \ref{trunc:main}. In addition, they are conformal extensions of $\cH \otimes \cA^+ \otimes \cA^-$, where $\cA^{\pm}$ are $1$-parameter quotients of $\cW^{\pm}_{\infty}$, respectively. In particular, $\cC^{\psi}_{\cN=2}(n,r|s)$ is a minimal $\mathcal{N}=2$ supersymmetric extensions of either $\cA^+$ or $\cA^-$. 

The most important cases are when either $r = 0$ or $s = 0$, since in these cases $\cC^{\psi}_{\cN=2}(n,r|s)$ is simple as a $1$-parameter vertex algebra. We use the notation
\begin{equation}\label{intro:cnmdnm}
\cE^{\psi}_{\mathcal{N}=2}(n,r) :=\cC^{\psi}_{\cN=2}(n,r|0),\quad \cD^{\psi}_{\mathcal{N}=2}(n,s):=\cC^{\psi}_{\mathcal{N}=2}(n,0|s+1).
\end{equation}
Using the fact that a simple, $1$-parameter quotient of $\cW^{\mathcal{N}=2}_{\infty}$ is uniquely determined by its truncation curve, which in turn is determined by the truncation curves of $\cA^{\pm}$ and vice versa, the triality theorem of \cite{CL2} implies the following result which was conjectured Prochazka and Rap{\'c}ak \cite{PR}.

\begin{theorem} \label{intro:duality} (Theorem \ref{N=2:duality}) For all integers $n,m \geq 0$, we have isomorphisms of $1$-parameter vertex superalgebras
\begin{equation} \cD^{\psi}_{\mathcal{N}=2}(n,m) \cong \cD^{\psi^{-1}}_{\mathcal{N}=2}(m,n),\qquad \cE^{\psi}_{\mathcal{N}=2}(n,m) \cong \cE^{\psi^{-1}}_{\mathcal{N}=2}(m,n). \end{equation}
\end{theorem}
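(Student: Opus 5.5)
We will deduce the isomorphisms from the classification of simple $1$-parameter quotients of $\WNtwo$, so that no direct comparison of operator products is needed. First, by Theorem \ref{trunc:main}, each of $\cD^{\psi}_{\cN=2}(n,m)$ and $\cE^{\psi}_{\cN=2}(n,m)$ is a $1$-parameter quotient of $\WNtwo$ (after a suitable localization) with an explicitly computed truncation curve $I\subset \CC[c,\lambda]$. For $r=0$ or $s=0$ these algebras are simple as $1$-parameter vertex algebras, by the argument sketched before \eqref{intro:cnmdnm}: the affine coset is taken inside a $\cW$-superalgebra that is generically simple, and the commutant is the full coset. Since $\WNtwo$ is weakly generated by the fields of weights $1,\frac32,2$, a simple $1$-parameter quotient is determined up to isomorphism by its truncation ideal. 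It therefore suffices to show that the truncation curves of $\cD^{\psi}_{\cN=2}(n,m)$ and $\cD^{\psi^{-1}}_{\cN=2}(m,n)$ agree after the reparametrization $\psi\mapsto\psi^{-1}$, and likewise for $\cE$.

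To avoid working directly with the $\cN=2$ truncation curves, we will use Proposition \ref{prop:base case prop}. The parameters $(c,\lambda)$ of $\WNtwo$ determine the parameters of the two subalgebras $\cW^{\pm}_\infty$, and conversely. Each algebra in question is a conformal extension of $\cH\otimes\cA^+\otimes\cA^-$ with $\cA^{\pm}$ quotients of $\cW^{\pm}_\infty$, so its $\cN=2$ truncation curve is determined by the truncation curve of $\cA^+$ (or of $\cA^-$). The key step is to identify $\cA^{\pm}$ for $\cD^{\psi}_{\cN=2}(n,m)$ and $\cE^{\psi}_{\cN=2}(n,m)$ with the type $A$ $Y$-algebras $\cC^{\psi}(\cdot,\cdot)$ and $\cD^{\psi}(\cdot,\cdot)$ from Theorem \ref{intro:trialitytheorem}, with explicit labels and explicit parameters that are M\"obius functions of $\psi$. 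We expect this to come from a coset-of-coset argument: $\cA^{\pm}$ is the commutant of $\cH\otimes\cA^{\mp}$ inside $\cW^k(\gs\gl_{n+r+1|n+s},F_{n+1|n})$, and by the structure of hook-type $\cW$-superalgebras this reduces to one of the cosets $\cC^{\psi'}$ or $\cD^{\psi'}$. Alternatively, we can read off the identification from the truncation curves in Theorem \ref{trunc:main}, by passing them through the parameter map of Proposition \ref{prop:base case prop} and comparing with the known truncation curves of \cite{CL2}.

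With these identifications in hand, we apply Theorem \ref{intro:triality} from \cite{CL2}, including Feigin--Frenkel duality and the coset realization as special cases. The triality shows that the truncation curve of $\cA^+$ for $\cD^{\psi}_{\cN=2}(n,m)$ coincides with that of the corresponding subalgebra for $\cD^{\psi^{-1}}_{\cN=2}(m,n)$, possibly after exchanging the roles of $\cA^+$ and $\cA^-$. Transporting this back through the invertible parameter map gives equal $\cN=2$ truncation curves, and simplicity then yields the isomorphism. The $\cE$ case is handled the same way.

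We expect the main obstacle to be the bookkeeping in the middle step. We must match the labels and parameters of $\cA^{\pm}$ exactly, handle the swap $\cA^+\leftrightarrow\cA^-$ under the duality, and treat the degenerate boundary cases separately. These include $n=0$ (GKO-type cosets), $m=0$, the $\gp\gs\gl$ case with the extra $U(1)$-orbifold, and $\cD^\psi(1,1)$. We also need to ensure that the truncation curve of a single $\cA^{\pm}$ really pins down the $\cN=2$ curve, that is, that the parameter correspondence is birational on the relevant curves rather than merely finite-to-one. If it is only finite-to-one, we would fix the correct branch by also matching the central charge of the full $\cN=2$ algebra, which is easy to compute on both sides.
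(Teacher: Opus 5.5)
Your proposal is correct and follows the paper's proof. Simple $1$-parameter quotients of $\WNtwo$ are determined by their truncation curves, and these curves are fixed by the $\psi$-parametrized data of the two $\cW_\infty$-quotients $\cA^{\pm}$ identified in Corollary \ref{trunc:DandE}; the triality of \cite{CL2} then matches these data under $\psi\mapsto\psi^{-1}$, with $\cA^+\leftrightarrow\cA^-$ swapped in the $\cD$ case but not in the $\cE$ case. One small correction: simplicity for $r=0$ or $s=0$ does not follow merely from the ambient $\cW$-superalgebra being generically simple, since that reasoning would make every $\cC^{\psi}_{\cN=2}(n,r|s)$ simple, which fails for $r,s>0$; the paper instead invokes \cite[Theorem 4.1]{ACK}, which relies on $\mathfrak{gl}_{r|s}$ being purely even in these cases.
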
 
The case $\cE^{\psi}_{\mathcal{N}=2}(n,0) \cong \cE^{\psi^{-1}}_{\mathcal{N}=2}(0,n)$ is just Ito's conjecture. The case $\cD^{\psi}_{\mathcal{N}=2}(n,0) \cong \cD^{\psi^{-1}}_{\mathcal{N}=2}(0,n)$ is a similar coset realization of $\cW^{k}(\gp\gs\gl_{n+1|n+1}, F_{n+1|n})^{U(1)}$. It can be enhanced to a coset realization of $\cW^{k}(\gp\gs\gl_{n+1|n+1}, F_{n+1|n})$; see Corollary \ref{GKO:pslnn}. In the case $n=1$, this is a coset construction of the small $\mathcal{N}=4$ superconformal vertex algebra.

By \cite[Corollary 14.1]{ACL}, $\text{Com}(L_{\ell+1}(\mathfrak{gl}_{n}), L_{\ell}(\mathfrak{sl}_{n+1}) \otimes \cE(n))$ is strongly rational for $\ell \in \mathbb{N}$. Combining this with the isomorphism of simple quotients $\cE_{\mathcal{N}=2,\psi}(n,0) \cong \cE_{\mathcal{N}=2,\psi^{-1}}(0,n)$, we obtain
\begin{theorem} \label{intro:rationality} (Theorem \ref{thm:rational}) For $\ell \in \mathbb{N}$ and $k = -1 + \frac{1}{n+\ell+1}$, the simple quotient $\cW_k(\gs\gl_{n+1|n})$ is strongly rational. Moreover, Theorem \ref{thm:fusionrulesW} gives a complete list of inequivalent simple modules, including Ramond twisted modules, as well as their fusion rules.
\end{theorem}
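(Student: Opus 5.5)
The plan is to pass the strong rationality of the unitary coset across the Ito isomorphism of Theorem \ref{intro:duality}, and then read off the module category from the coset side. Take the case $m=0$ of Theorem \ref{intro:duality}, $\cE^{\psi}_{\mathcal{N}=2}(n,0)\cong \cE^{\psi^{-1}}_{\mathcal{N}=2}(0,n)$. By \eqref{eq:intro:Yalg} with $r=s=0$, the left side is $\cW^k(\gs\gl_{n+1|n})$ up to the free $bc\beta\gamma$ factor $\cF(\gg^f_0)$ from \cite{LSS}. Since $\gg^f_0$ is trivial for the odd principal nilpotent, this factor should in fact be absent, and I would check that first. By the $n=0$ description, the right side is $\cC^{\ell}(n)=\text{Com}(V^{\ell+1}(\gg\gl_n), V^{\ell}(\gs\gl_{n+1})\otimes \cE(n))$ with $(k+1)(\ell+n+1)=1$. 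For $\ell\in\mathbb{N}$ this gives $k=-1+\frac{1}{n+\ell+1}$, which is the stated level.

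The next step is to descend the isomorphism of $1$-parameter vertex algebras to simple quotients at this specialization. Both sides are quotients of $\cW^{\mathcal{N}=2}_{\infty}$ along the same truncation curve, so the specialization at the corresponding point gives two quotients of the same specialized universal algebra $\cW^{\mathcal{N}=2}_{\infty}$. Their simple quotients therefore agree, provided the specialized point avoids the excluded loci $c=0,1,-3$. One checks this from $c=-3n(kn+k+n)$: at $k=-1+\frac{1}{n+\ell+1}$ we get $c=\frac{3n\ell}{n+\ell+1}$, which is nonzero and not $-3$, and one must also rule out $c=1$. Hence $\cW_k(\gs\gl_{n+1|n})\cong \cC_\ell(n)$. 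Using admissibility of the positive integer level $\ell$, I would identify $\cC_\ell(n)$ with $\text{Com}(\tilde V^{\ell+1}(\gg\gl_n), L_\ell(\gs\gl_{n+1})\otimes\cE(n))$. This is exactly the coset of integrable $L_{\ell+1}(\gg\gl_n)$ inside the unitary algebra, as in the discussion after \eqref{intro:itocoset} and \cite[Theorem 8.1]{CL1}. Strong rationality then follows from \cite[Corollary 14.1]{ACL}.

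For the module category I would use the coset/mirror extension theory. $L_\ell(\gs\gl_{n+1})\otimes\cE(n)$ is a rational (super) extension of $L_{\ell+1}(\gg\gl_n)\otimes \cC_\ell(n)$. Rigid braided tensor category results (Creutzig–Kanade–McRae type) then express the simple $\cC_\ell(n)$-modules as multiplicity spaces of the branching decomposition, modulo the identifications coming from simple currents. Fusion rules come from the inverse braid-reversed equivalence with a subcategory of $L_{\ell+1}(\gg\gl_n)$-modules. Ramond twisted modules arise from the $\mathbb{Z}_2$-twisted (Ramond sector) modules of $\cE(n)$, or equivalently through spectral flow by the Heisenberg field $H$.

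The main obstacle is the descent to simple quotients: showing that both $1$-parameter algebras are quotients of the same specialization of $\cW^{\mathcal{N}=2}_{\infty}$ at this point, and that no degeneration of the truncation ideal occurs there. The explicit fusion-rule bookkeeping is laborious but routine.
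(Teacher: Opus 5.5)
Your argument for strong rationality is essentially the paper's. You specialize the $1$-parameter isomorphism $\cE^{\psi}_{\mathcal{N}=2}(n,0)\cong\cE^{\psi^{-1}}_{\mathcal{N}=2}(0,n)$, identify the coset side with $\text{Com}(L_{\ell+1}(\gg\gl_n),L_\ell(\gs\gl_{n+1})\otimes\cE(n))$, and apply \cite[Corollary 14.1]{ACL}.

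You have, however, misplaced the difficulty. The excluded loci $c\in\{0,1,-3\}$ only matter at $n=\ell=1$, where $c=\frac{3n\ell}{n+\ell+1}=1$, so $c=1$ cannot be ruled out. That case is harmless, since it is the classical $\mathcal{N}=2$ minimal model.

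The real point is that $\ell_0\in\mathbb{N}$ is a rational level, which is exactly where Corollary \ref{cor:grchar} gives no control. So the specialization of the $1$-parameter coset may be a proper subalgebra of the honest coset (Remark \ref{rem:honestcoset}), and its simple quotient need not a priori be the simple honest coset. The paper settles this with \cite[Theorem 4.1]{ACK} together with weak generation in weights $\le 2$. Proposition \ref{lem:weakgen} gives weak generation at every noncritical level, and it transfers through the duality. Your appeal to \cite[Theorem 8.1]{CL1} points the same way, but this is the step that needs the argument.

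The genuine gap is in the second assertion. Apply Theorem \ref{thm:duality} to $\cC_\ell(n)$ and its commutant inside $A=L_\ell(\gs\gl_{n+1})\otimes\cE(n)$; note the commutant is an extension of $L_{\ell+1}(\gs\gl_n)\otimes V_{L_{\ell,n}}$, not $L_{\ell+1}(\gg\gl_n)$ itself. This only gives a braid-reversed equivalence between the subcategories $\mathcal U_A,\mathcal V_A$ of modules that occur in $A$. Since $A$ is not holomorphic, $\mathcal V_A$ is in general a proper subcategory. Writing $\cC_\ell(n)=C(\ell,n)$ as in Theorem \ref{thm:fusionrulesW}, it essentially contains only the modules whose label for the level-rank dual factor $L_{n+1}(\gs\gl_\ell)$ lies in the simple-current orbit of the vacuum.

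So your approach yields neither a complete list nor the fusion rules. Those rules involve the fusion coefficients of both $L_{\ell+1}(\gs\gl_n)$ and $L_{n+1}(\gs\gl_\ell)$, not just data of $L_{\ell+1}(\gg\gl_n)$. Branching the remaining $L_\ell(\mu)\otimes\cE(n)$ would produce more modules, but you would still need an exhaustion argument and a way to compute their fusion.

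The paper's key idea is different. Using level-rank duality and the branching rules of \cite{ACL,ABI}, it realizes $C(n,m)$ and its full commutant $U$ as a dual pair in $\cE(nm+n+m)$. Here $U$ is a simple current extension of $L_{m+1}(\gs\gl_n)\otimes L_{n+1}(\gs\gl_m)\otimes V_N$ with $N$ of rank two. It then adjoins $\cE(r)$ with $nm+n+m+r\equiv 0 \pmod 8$, so that the relevant canonical algebra is the holomorphic lattice VOA $V_{D_s^+}$. Holomorphy and the Frobenius--Perron identity force $\mathcal U_A=\mathcal U$ and $\mathcal V_A=\mathcal V$, which gives completeness, and the fusion rules are read off from $U$.

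Finally, $\cC_\ell(n)$ is a superalgebra, so Theorem \ref{thm:duality} does not apply verbatim. The paper passes to even subalgebras and lifts back through the induction equivalence of Section \ref{sec:mirror}. That lifting is what produces the Ramond-twisted modules with their parities, e.g.\ the factor $\pi^{mn+m+n}$ in $M^R\boxtimes M^R$, which your spectral-flow remark leaves unaddressed.
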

This generalizes the strong rationality of the $\mathcal{N}=2$ minimal models $\text{Vir}_{\mathcal{N}=2,c}$ for $c = \frac{3(a-1)}{a+1}$ and $a\in \mathbb{N}$, which was first proved by Adamovi\'c in \cite{Ad99}. 
The classification of simple modules as well as the computation of fusion rules is done via a version of level-rank duality and
this requires us to extend the {\it mirror equivalence theorem} for a pair of rational vertex algebras  $U,V$ \cite{Lin:2016hsa,McRae:2021urf}, to the case when $U,V$ can be vertex superalgebras; see Section \ref{sec:mirror} for this generalization.

In Appendices \ref{appendix:stronggenerators} and \ref{appendix:largerstructures} we extend some results of the first and third authors on cosets of affine vertex algebras $V^k(\gg)$ inside larger vertex algebras \cite{CL1} for reductive Lie algebras $\gg$, to the case when $\gg$ is a basic Lie superalgebra or $\gg\gl_{n|n}$. These results are needed in our description of $\cC^{\psi}_{\cN=2}(n,r|s)$, and are of independent interest.

\subsection{Applications}

Very much like the $\cW_\infty$-algebra and its even spin cousin have many consequences for the structure and representation theory of $\cW$-algebras, we expect to also find a wide variety of applications of our construction of $\cW^{\cN=2}_{\infty}$.
For example, using a convolution operation the dualities of Feigin-Frenkel type between $Y$-algebras were upgraded to a functor between the representation categories of the $\cW$-algebras whose cosets are isomorphic \cite{CLNS}. A similar result should work as well in the $\cN=2$ supersymmetric situation. In addition, by analogy with \cite[Conjecture 1.1]{CKoL26}, we expect that up to a suitable completion and localization, the mode algebra of $\cW^{\cN=2}_{\infty}$ is isomorphic to the affine super Yangian of $\widehat{sl}_{n+1|n}$ \cite{U1,U2}, and the Zhu algebra of $\cW^{\cN=2}_{\infty}$ is isomorphic to the finite super Yangian of $\gs\gl_{n+1|n}$. This would imply that there are surjective maps from this super affine Yangian to the mode algebras of all the vertex algebras $\cC^{\psi}_{\cN=2}(n,r|s)$. This is analogous to the realization of finite $\cW$-superalgebras as truncations of super Yangians \cite{BR}.

Three major topics of current interest are conformal embeddings \cite{AACLMMP, AMP, AKMPP}, the reduction in stages conjecture \cite{GJ, GJ2}, and the iterated reduction conjecture \cite{CDM}. It is work in progress to prove the reduction in stages and iterated reduction conjecture for $\cW^k_{\mathcal{N}=1}(\gs\gl_{n+1+r|n+s}, f_{n+1|n})$, as well as to find conformal embeddings of $\cW$-algebras into $\cW$-superalgebras.

\subsection{Organization} In Sections \ref{sec:SUSYVOA}, \ref{sect:W-alg}, and \ref{sect: N=2}, we recall some basic facts about vertex algebras, SUSY vertex algebras, and the $\cN=2$ superconformal algebra $\text{Vir}^c_{\cN=2}$ and its representation theory. In Section \ref{sec:W-algebra type A(n,n-1)}, we discuss basic properties of $\cW^k(\gs\gl_{n+1|n})$, and in Section  \ref{sect:supersymmetricY} we introduce the $\cN=2$ $Y$-algebras $\cC^{\psi}_{\cN=2}(n,r|s)$, which generalize $\cW^k(\gs\gl_{n+1|n})$. In Section \ref{sect:main}, we construct the universal $2$-parameter vertex algebra $\cW^{\cN=2}_{\infty}$ of type \eqref{stronggenU:intro}. In Section \ref{sec:one param q}, we prove that the $\cN=2$ $Y$-algebras all arise as $1$-parameter quotients of $\cW^{\cN=2}_{\infty}$ and we give their explicit truncation curves. In Section \ref{sect:FFduality}, we prove Theorem \ref{intro:duality}, and in Section \ref{level-rankrationality} we study the module category of the rational vertex superalgebras appearing in Theorem \ref{intro:rationality}. Finally, we include three appendices. In Appendix \ref{sect:primary structures}, we prove that the generators of $\cW^{\cN=2}_{\infty}$ can be organized into {\it primary diamonds}, proving the original conjecture of Candu and Gaberdiel \cite{CanGab}. Finally, in Appendices \ref{appendix:stronggenerators} and \ref{appendix:largerstructures} we extend the results of \cite{CL1,CL2} on cosets of affine vertex algebras inside larger structures, to cosets of affine vertex superalgebras.

\section{Preliminaries} \label{sec:SUSYVOA}

\subsection{Important identities} \label{sec:VOA} We will assume that the reader is familiar with vertex algebras, and we use the same notation as the paper \cite{CL3} of the first and third authors. In particular, we denote the coeffecients of the fields by $a(z)=\sum_{j\in \mathbb{Z}}a_{(j)}z^{-j-1}$ and let $\partial$ be the translation operator. We will make use of the following well-known identities which hold in any vertex algebra $\cA$.
\begin{equation}\label{conformal identity}
			(\partial a)_{(r)}b=-ra_{(r-1)}b,\quad r\in\mathbb{Z}.
		\end{equation}
		\begin{equation}\label{skew-symmetry}
			a_{(r)}b=(-1)^{p(a)p(b)+r+1}b_{(r)}a + \sum_{i = 1}^{\infty}\frac{(-1)^{p(a)p(b)+r+i+1}}{i!}\partial^i \left(b_{(r+i)}a\right),\quad r\in\mathbb{Z}.
		\end{equation}
		\begin{equation}\label{quasi-associativity}
			:a (:bc:):\ =\ :(:ab:)c: +\sum_{i=0}^{\infty}\frac{1}{(i+1)!}\left(:\!\partial^{i+1}(a) (b_{(i)}c)\!:+(-1)^{p(a)p(b)}:\!\partial^{i+1}(b) (a_{(i)}c)\!:\right).
		\end{equation}
		\begin{equation}\label{quasi-derivation}
			a_{(r)}:bc:\ =\ :(a_{(r)}b)c:+(-1)^{p(a)p(b)}: b(a_{(r)}c):+\sum_{i=1}^r\binom{r}{i}(a_{(r-i)}b)_{(i-1)}c, \quad r\geq 0.
		\end{equation}
		\begin{equation}\label{Jacobi}
			a_{(r)}(b_{(s)}c) = (-1)^{p(a)p(b)}b_{(s)}(a_{(r)}c) + \sum_{i=0}^r \binom{r}{i}(a_{(i)}b) _{(r+s-i)}c, \quad r,s\geq 0.
		\end{equation}
		Identities \eqref{Jacobi} are known as {\it Jacobi identities}, and we often denote them using the shorthand $J_{r,s}(a,b,c)$. We use $J(a,b,c)$ to denote the set of all Jacobi identities $\{J_{r,s}(a,b,c)|r,s\geq 0\}$.
	
	A vertex algebra $\cA$ is of type $\cW(a_1^{d_1}, a_2^{d_2},\dots; b_1^{e_1}, b_2^{e_2},\dots)$ if it has a minimal strong generating set consisting of $d_i$ even fields in weight $a_i$ and $e_i$ odd fields in weight $b_i$, for $i = 1,2,\dots$. We refer to \cite{CL3} for notation and basic properties of affine vertex algebras $V^k(\gg)$ and $\cW$-algebras $\cW^k(\gg,F)$, where $\gg$ is a simple, finite-dimensional Lie (super)algebra, and $F$ is a nilpotent element in the even part of $\gg$.

\subsection{SUSY vertex algebras}

Recall that an odd endomorphism $D$ on a vertex algebra $V$ is called an \textit{odd derivation} if it satisfies
\begin{equation} \label{eq: def of supersymmetry}
    [D,a_{(j)}]=(Da)_{(j)}
\end{equation}
for any $a\in V$ and $j\in \mathbb{Z}$. When $V$ has $n$ supercommuting odd derivations $D_1, \dots, D_n$ satisfying $D_i^2=\partial$, we say that \textit{$V$ has $n$ supersymmetries.} We call a vertex algebra equipped with $n$ supersymmetries an \textit{$\mathcal{N}=n$ supersymmetric (SUSY) vertex algebra}. In particular, when $\mathcal{N}=1$, we often omit $\mathcal{N}=1$ and refer to it simply as a SUSY vertex algebra.

\begin{example} \label{ex:SUSY affine}
Let $\gg$ be a finite basic simple Lie superalgebra with a supersymmetric invariant bilinear form $( \, | \, )$ normalized by $(\theta|\theta)=2$ for the even highest root $\theta$ of $\gg.$
Let $\bar{\gg}$ be the parity reversed superspace of $\gg$. The SUSY affine vertex algebra $V^k_{\mathcal{N}=1}(\gg)$ is freely generated by a basis $\{a_i\}_{i\in I}$ of $\gg$ and $\{\bar{a}_i\}_{i\in I}$, where the OPE relations between the generators are 
\begin{equation}\label{eq:SUSY affine, lambda}
\begin{aligned}
       &  \bar{a}_i(z)\bar{a}_j(w)\sim (k+h^\vee) (a_i |a_j)(z-w)^{-1} , \ \ \bar{a}_i(z) a_j(w)\sim(-1)^{p(a_i)p(a_j)}\overline{[a_i,a_j]}(w)(z-w)^{-1}\, , \\
       &  a_i(z)  a_j(w) \sim (-1)^{p(a_i)p(a_j)} \Big(\, [a_i,a_j](w)(z-w)^{-1}+ (k+h^\vee)\, (a_i|a_j)(z-w)^{-2}\, \Big).
\end{aligned}
\end{equation}
This is a SUSY vertex algebra with a supersymmetry $D$ defined by $D(\bar{a})=a$ for any $a\in \gg$. Note that 
$V^k_{\mathcal{N}=1}(\gg)\cong V^k(\gg) \otimes \mathcal{F}(\gg)$, where $\mathcal{F}(\gg)$ is the free field vertex subalgebra of $V^k_{\mathcal{N}=1}(\gg)$ generated by $\bar{\gg}.$
\end{example}

\subsection{$\cN=1$ superconformal algebras}
In this section, we recall the notion of $\mathcal{N}=1$ and $\mathcal{N}=2$ superconformality introduced in \cite{HK07}.
Recall that a vertex algebra $V$ is called \textit{conformal} if it has an even element $L$ such that
\begin{equation*}
    L(z)L(w)\sim \partial L(w)(z-w)^{-1}+2L(w)(z-w)^{-2}+\frac{c}{2}(z-w)^{-4}, \quad L_{(0)}=\partial,
\end{equation*}
and $V$ is diagonalizable with the $L_{(1)}$-action, where the eigenvalues are bounded below. In this case, $L$ is called a conformal vector of central charge $c\in\CC$. Generalizing the notion, one can define superconformal vectors inside $\mathcal{N}=1$ or $2$ SUSY vertex algebra.

Consider a SUSY vertex algebra $V$ with a supersymmetry $D$. We say $V$ is ($\mathcal{N}=1$) \textit{superconformal} if it has an odd element $G$, such that (i) $L:=\frac{1}{2}DG$ is a conformal vector of central charge $c$,  (ii) $G_{(0)}=D$, and (iii) $L$ and $G$ generate the Neveu-Schwarz vertex algebra $\textup{Vir}_{\mathcal{N}=1}^c$ of central charge $c$, that is,
        \begin{equation} \label{eq: N=1 superconformal conditions}
            L(z)G(w)\sim 
            \partial G(w)(z-w)^{-1}+\frac{3}{2}G(w)(z-w)^{-2}, \quad G(z)G(w)\sim 2L(w) (z-w)^{-1}+\frac{2}{3}c\, (z-w)^{-3}.
        \end{equation}
We call such $G$ an ($\mathcal{N}=1$) superconformal vector of central charge $c$. Conversely, in a conformal vertex algebra, one can find a supersymmetry using an odd $G$ satisfying \eqref{eq: N=1 superconformal conditions}.
\begin{lemma} \label{lem: N=1 superconformal conditions}
    Let $V$ be a conformal vertex algebra with a conformal vector $L$ and an odd element $G$ satisfying \eqref{eq: N=1 superconformal conditions}. Then $V$ is $\mathcal{N}=1$ superconformal with a superconformal vector $G$ and $D=G_{(0)}$.
\end{lemma}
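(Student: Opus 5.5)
We have to show that $D:=G_{(0)}$ is an odd derivation with $D^2=\partial$, and then check conditions (i)--(iii) in the definition of $\mathcal{N}=1$ superconformality. Condition (iii) is the hypothesis \eqref{eq: N=1 superconformal conditions}, and condition (ii) is the definition of $D$. So the work is in the derivation property, the identity $D^2=\partial$, and the identification $L=\frac12 DG$.

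\textbf{Step 1: $D$ is an odd derivation.} Since $G$ is odd, $D=G_{(0)}$ is odd. Take the Jacobi identity \eqref{Jacobi} with $r=0$ and $a=G$, and use the commutator formula for arbitrary $s\in\mathbb{Z}$ (the Borcherds identity with $r=0$ holds for all $s$). This gives
\[ G_{(0)}(b_{(s)}c)=(G_{(0)}b)_{(s)}c+(-1)^{p(b)}b_{(s)}(G_{(0)}c). \]
This is exactly $[D,b_{(s)}]=(Db)_{(s)}$ as a supercommutator, i.e.\ \eqref{eq: def of supersymmetry}.

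\textbf{Step 2: $D^2=\partial$.} Because $D$ is odd, $D^2=\frac12[D,D]$. By the commutator formula,
\[ [G_{(0)},G_{(0)}]=(G_{(0)}G)_{(0)}. \]
The $G(z)G(w)$ OPE in \eqref{eq: N=1 superconformal conditions} gives $G_{(0)}G=2L$. Hence $[D,D]=2L_{(0)}=2\partial$, since $L$ is a conformal vector. Therefore $D^2=\partial$, and $V$ is an $\mathcal{N}=1$ SUSY vertex algebra.

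\textbf{Step 3: $L=\frac12 DG$.} We have $DG=G_{(0)}G=2L$, again read off from the $G(z)G(w)$ OPE. So $L=\frac12 DG$ is the given conformal vector, of central charge $c$. This is condition (i).

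The only mildly delicate points are sign conventions and confirming that the $(z-w)^{-1}$ coefficient of $G(z)G(w)$ is exactly $G_{(0)}G$. Both are routine. The $L(z)G(w)$ relation is not needed for the SUSY structure itself; it only enters through (iii), which holds by hypothesis.
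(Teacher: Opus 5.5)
Your proof is correct and follows the paper's argument. The key computation $[G_{(0)},G_{(0)}]=(G_{(0)}G)_{(0)}=2L_{(0)}=2\partial$ is exactly the paper's. You additionally spell out two things the paper leaves implicit: that $G_{(0)}$ is an odd derivation, via the commutator formula, and that $DG=2L$.
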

\begin{proof}
    It is enough to show that $D:=G_{(0)}$ is an odd endomorphism satisfying $D^2=\partial$. By the second equality in \eqref{eq: N=1 superconformal conditions}, we have $
        [D,D]=[G_{(0)},G_{(0)}]=(G_{(0)}G)_{(0)}=(2L)_{(0)}=2\partial$,
    which proves the desired statement.
\end{proof}

	\subsection{Free field algebras}
	A {\it free field algebra} is a vertex superalgebra $\cV$ with weight grading 
	\[\cV = \bigoplus_{d\in \frac{1}{2}\mathbb{Z}_{\geq 0}}\cV[d],\quad \cV[0] = \mathbb{C}\B{1},\] 
	with strong generators $\{X^i | i\in I\}$ satisfying OPE relations
	\[X^i(z)X^j(w)\sim a_{i,j} (z-w)^{-\Delta(X^i)-\Delta(X^j)}, \quad a_{i,j}\in\mathbb{C}, \quad a_{i,j} =0 \T{ if }\Delta(X^i)+\Delta(X^j)\not\in\mathbb{Z}.\]
	Note $\cV$ is not assumed to have a conformal structure. In \cite{CL3}, the first and third authors introduced the following families of free field algebras.
	\begin{enumerate} 
	\item Even algebras of orthogonal type $\cO_{\text{ev}}(n,k)$ for $n\geq 1 $ and even $k \geq 2$. When $k =2$, $\cO_{\text{ev}}(n,2)$ is just the rank $n$ Heisenberg algebra $\cH(n)$.
	\item Odd algebras of orthogonal type $\cO_{\text{odd}}(n,k)$ for $n\geq 1 $ and odd $k \geq 1$. When $k = 1$, $\cO_{\text{odd}}(n,1)$ is just the rank $n$ free fermion algebra $\cF(n)$.
	\item Even algebras of symplectic type $\cS_{\text{ev}}(n,k)$ for $n\geq 1$ and odd $k \geq 1$. When  $k = 1$, $\cS_{\text{odd}}(n,1)$ is just the rank $n$ $\beta\gamma$-system $\cS(n)$.
	\item Odd algebras of symplectic type $\cS_{\text{odd}}(n,k)$ for $n\geq 1$ and even $k \geq 2$. When  $k = 2$, $\cS_{\text{ev}}(n,2)$ is just the rank $n$ symplectic fermion algebra $\cA(n)$.
	\end{enumerate}
	
	We refer the reader to \cite{CL3} for the construction and key properties of these algebras. The main application of them in this paper is that if $\gg$ is a simple Lie superalgebra with a nondegenerate, supersymmetric bilinear form, $\cW^k(\gg,F)$ admits a large level limit $\cW^{\text{free}}(\gg,F)$ which is a tensor product of free field algebras of these kinds. This is useful for deducing the generating type of orbifolds and affine cosets of $\cW^k(\gg,F)$.

\subsection{Vertex algebras over rings}
We will use the same notation for VOAs over a finitely generated commutative $\mathbb{C}$-algebra $R$ as in \cite[Section 3]{Lin}. Let $\cV$ be a VOA over $R$ with weight grading 
$\cV = \bigoplus_{d \in \frac{1}{2} \mathbb{Z}_{\geq 0}} \cV[d]$ where $\cV[0] \cong R$.
All ideals $\cI \subseteq \cV$ will be graded by weight; this means that $\cI = \bigoplus_{d \in \frac{1}{2} \mathbb{Z}_{\geq 0}} \cI[d]$ where $\cI[d] = \cI \cap \cV[d]$. 
We will call $\cV$ {\it simple} if there are no proper graded ideals $\cI$ such that $\cI[0] = \{0\}$. If $R$ is a field, this is the same as the usual notion of simplicity. 
If $I\subseteq R$ is an ideal, $I$ is a subset of $\cV[0] \cong R$, and we denote by $I \cdot \cV$ the VOA generated by $I$. Then $\cV^I = \cV / (I \cdot \cV)$ is a VOA over $R/I$. Even if $\cV$ is simple as a vertex algebra over $R$, $\cV^I$ need not be simple as a vertex algebra over $R/I$. 

Suppose that $R$ is the coordinate ring $\cO(X)$ of a variety $X$, and that $\cV$ is simple. Let $I\subseteq R$ be an ideal such that $\cV^I$ is {\it not} simple, i.e., $\cV^I$ has maximal proper graded ideal $\mathcal{I}$ with $\mathcal{I}[0] = \{0\}$. Then the quotient 
$$\cV_{I} = \cV^I / \cI$$ is a simple VOA over $R/I$. Letting $Y\subseteq X$ be the closed subvariety corresponding to $I$, we can regard $\cV_I$ as a simple VOA over $Y$. 
For ideals $I_1, I_2$ with this property, we have the corresponding simple VOAs  $\cV_{I_1} = \mathcal{V}^{I_1} / \mathcal{I}_1$ and $\mathcal{V}_{I_2} = \mathcal{V}^{I_2} / \mathcal{I}_2$ over $R/I_1$ and $R / I_2$, respectively. Let $Y_1, Y_2 \subseteq X$ be the closed subvarieties corresponding to $I_1, I_2$, and let $p \in Y_1 \cap Y_2$. Let $\mathcal{V}^p_{I_1}$ and $ \mathcal{V}^p_{I_2}$ be the VOAs over $\mathbb{C}$ obtained by evaluating at $p$. As above, $\mathcal{V}^p_{I_1}$ and $ \mathcal{V}^p_{I_2}$ need not be simple, and we denote their simple quotients by $\mathcal{V}_{I_1,p}$ and $\mathcal{V}_{I_2,p}$. Then $p$ corresponds to a maximal ideal $M_p \subseteq R$ containing both $I_1$ and $I_2$, so we have isomorphisms
\begin{equation}\label{tautological}  \mathcal{V}_{I_1,p} \cong \mathcal{V}_M \cong \mathcal{V}_{I_2,p}.\end{equation} In our main example $\cV = \cW^{\mathcal{N}=2}_{\infty}$, $X$ is the complement in $\mathbb{C}^2$ with coordinates $c,\lambda$, of the curves $c = 0$, $c-1=0$, and $c+3=0$.

\section{$\cW$-algebras and SUSY $\cW$-algebras} \label{sect:W-alg}

Throughout this section, we 
let $\gg$ be a finite dimensional basic simple Lie superalgebra with an $\mathfrak{sl}_2$ triple $(E,2x,F)$ and a nondegenerate supersymmetric invariant bilinear form $( \, |\, )$. We assume that the bilinear form $(\, |\, )$ is normalized by $(\theta|\theta)=2$ for an even highest root $\theta$ of $\gg$. The  $\frac{1}{2}{\mathbb{Z}}$-grading $\gg=\bigoplus_{i\in \frac{1}{2}{\mathbb{Z}}}\gg_i$ given by the eigenvalue with respect to $\text{ad}\, x$ and denote
$ \mathfrak{n}_+:=\bigoplus_{i>0}\gg_i$, $\mathfrak{n}_-:=\bigoplus_{i<0}\gg_i$ and  $\mathfrak{p}:=\bigoplus_{i\leq 0}\gg_i.$ We recall the associated $\cW$-algebras and SUSY $\cW$-algebras introduced in \cite{KRW04} and \cite{MRS21}, respectively.

\subsection{$\mathcal{W}$-algebras} \label{subsec:W-alg} 
Let $V^k(\gg)$ be the affine vertex algebra of level $k$ and $\mathcal{F}(\mathfrak{A})$ be the charged free fermion vertex algebra freely generated by a basis of  $\mathfrak{A}:=\phi_{\bar{\mathfrak{n}}_+}\oplus \phi^{\bar{\mathfrak{n}}_-}$. Here the parity of $\phi_{\bar{a}}$ (resp. $\phi^{\bar{b}}$) is opposite to the parity of $a$ (resp. $b$) and the OPE relation is given by 
 $ \phi_{\bar{a}}(z)\phi^{\bar{b}}(w)\sim  (a|b)(z-w)^{-1}$ and $\phi^{\bar{b}}(z)\phi_{\bar{a}}(w)\sim  (-1)^{p(a)}(a|b)(z-w)^{-1}$
    for $a\in \mathfrak{n}_+$ and $b\in \mathfrak{n}_-$. Let $\Phi_{1/2}$ be the neutral free fermion vertex algebra freely generated by a basis of $\{\Phi_{m}|m\in \gg_{1/2}\}$, where the parity of $\Phi_a$ is the same as the parity of $a$ and the OPE relation is $ \Phi_{m_1}(z)\Phi_{m_2}(w)\sim  (F|[m_1,m_2])(z-w)^{-1}.$

The BRST complex for the Lie superalgebra $\gg$ is $\mathcal{C}^k(\gg,F):= V^k(\gg)\otimes \mathcal{F}(\mathfrak{A}) \otimes \Phi_{1/2}$. Let
$\{u_\alpha|\alpha\in S\}$ and $\{u^\alpha|\alpha\in S\}$ be bases of $\mathfrak{n}_+$ and  $\mathfrak{n}_-$, respectively, satisfying $(u^\alpha|u_\beta)=\delta_{\alpha,\beta}$ and let 
$\{u_\alpha| \alpha \in S_{1/2}\subset S\}$ be the basis of $\gg_{1/2}.$
Consider the odd element 
\begin{equation}\label{eq:diff_W}
    d=\sum_{\alpha\in S}(-1)^{p(\alpha)}:\!\phi^{\bar{\alpha}}(u_\alpha+(F|u_{\alpha}))\!:+\sum_{\alpha\in S_{1/2}} :\!\phi^{\bar{\alpha}} \Phi_\alpha\!:+\frac{1}{2}\sum_{\alpha,\beta\in S} (-1)^{p(\alpha)}:\!\phi^{\bar{\alpha}} \phi^{\bar{\beta}} \phi_{\overline{[u_\alpha,u_\beta]}}\!:\in \mathcal{C}^k(\gg,F),
\end{equation}
where $p(\alpha):=p(u_\alpha),$ $\phi^{\bar{\alpha}}:=\phi^{\bar{u}^\alpha}$, $\phi_{\bar{\alpha}}:=\phi_{\bar{u}_\alpha}$ and $\Phi_{\alpha}=\Phi_{u_\alpha}.$ Then the operator $d_{(0)}$ on $\mathcal{C}^k(\gg,F)$ is an odd differential on $\mathcal{C}^k(\gg,f)$ and the associated $\cW$-algebra of level $k\in \CC$ is defined by $\cW^k(\gg,F):= H(\mathcal{C}^k(\gg,F),d_{(0)})$. 

Let $V^{\psi_k}(\mathfrak{p})$ be the vertex algebra freely generated by a basis of $\mathfrak{p}$, where 
\begin{equation}
    a(z)b(w) \sim \psi_k(a|b) (z-w)^{-2} +  [a,b](w)(z-w)^{-1}
\end{equation}
for $a,b\in \mathfrak{p}$ and $\psi_k(a|b)= k(a|b)+\text{str}_{\mathfrak{n}}(\text{ad}\, a \,\text{ad}\, b).$ Then, the $\mathcal{W}$-algebra $\cW^k(\gg,F)$ can be regarded as a vertex subalgebra of $ V^{\psi_k}(\mathfrak{p})\otimes \Phi_{1/2}$ by the map induced from the injective homomorphism:
\begin{equation}
    V^{\psi_k}(\mathfrak{\mathfrak{p}})\otimes \Phi_{1/2} \to \mathcal{C}^k(\gg,f), \quad a\mapsto J_{a}, \ \Phi_\alpha \mapsto \Phi_{\alpha}.
\end{equation}
Here the element $J_a$ in $\mathcal{C}^k(\gg,f)$ is defined by
\begin{equation}
    J_a = a+ \sum_{\alpha\in S} :\phi^{\bar{\alpha}} \phi_{\overline{\pi_+([u_\alpha,a])}}: \in \mathcal{C}^k(\gg,f),
\end{equation}
for the canonical projection map $\pi_+:\gg \to \mathfrak{n}_+$. When $k\neq -h^\vee$, the $\mathcal{W}$-algebra has a conformal vector $L$ with the central charge 
\begin{equation} \label{eq: W-alg central charge}
    c_k= \frac{k\,\text{sdim}\gg}{k+h^\vee}-12k(x|x) - \sum_{\alpha\in S} (-1)^{p(\alpha)} (12 j_\alpha^2-12 j_\alpha +2) -\frac{1}{2}\text{sdim}\gg_{1/2},
\end{equation}
where $u_\alpha\in \gg_{j_\alpha}$ \cite{DK06}.

\begin{comment}
The conformal weight of $a\in \gg_j$ and $\Phi_m$ are $\Delta_a=1-j$ and $1/2,$ respectively. For $u_\alpha \in \gg_{j_\alpha},$ the conformal weight of $\phi_{u_\alpha}$ and $\phi^{\bar{\alpha}}$ are $1-j_\alpha$ and $j_\alpha.$

 Note that the conformal weight of $J_a$ is $\Delta_a$. The differential subalgebra of $\mathcal{C}^k(\gg,F)$ generated by $J_a$ for $a\in \mathfrak{p}$ is a vertex subalgebra $V^{\psi_k}(J_{\mathfrak{p}})$ with the OPE relation 
\begin{equation}
    J_a(z)J_b(w) \sim \frac{\psi_k(a|b)}{(z-w)^2}+ \frac{J_{[a,b]}}{(z-w)}
\end{equation}
where $\psi_k(a|b)=k(a|b) + \text{str}_{\mathfrak{n}}(\text{ad} a \, \text{ad} b)$. It is proved that $W^k(\gg,F)$ is a subalgebra of $V^{\psi_k}(J_{\mathfrak{p}})\otimes \Phi_{1/2}$ and moreover the following Proposition holds.
\end{comment}

\begin{proposition} [\cite{DK06}] \label{prop: generators of W-alg}
    Let $\{a_i\}_{i\in I^F}$ be a basis of $\gg^F$ and $a_i \in \gg_{1-\Delta_i}.$
    The $\mathcal{W}$-algebra $\cW^k(\gg,F)$ is a freely generated vertex algebra with a free generating set $\{\omega_{a_i}\}_{i\in I^f}\subset V^{\psi_k}(\mathfrak{p}) \otimes \Phi_{1/2}=:V$ such that the conformal weight of $\omega_{a_i}$ is  $\Delta_i$ and the linear term of $\sigma^0(\omega_{a_i})$ is $a_i,$ where $\sigma^0:V \to F^0(V)/F^1(V)$ is the canonical map for the Li's filtration $F^{\bullet}(V).$
\end{proposition}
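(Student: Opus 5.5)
The plan follows the standard De Sole--Kac argument for the Kac--Roan--Wakimoto reduction: compute the cohomology first at the associated graded level, where it reduces to a Koszul-type computation, and then lift the result through a spectral sequence. Concretely, I would use the decomposition $\mathcal{C}^k(\gg,F) = \mathcal{C}_+ \otimes \mathcal{C}_-$. Here $\mathcal{C}_+$ is the subalgebra generated by the $\phi_{\bar{a}}$ for $a\in\mathfrak{n}_+$ together with the elements $d_{(0)}\phi_{\bar a}$. The complementary subalgebra $\mathcal{C}_-$ is generated by the $J_a$ for $a\in\mathfrak{p}$, the $\Phi_\alpha$, and the $\phi^{\bar b}$. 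One checks that both are preserved by $d_{(0)}$ and that they supercommute up to the required filtration terms.

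\textbf{Step 1 (contractible factor).} The pairs $(\phi_{\bar a}, d_{(0)}\phi_{\bar a})$ form a free acyclic Koszul system, so $H(\mathcal{C}_+)=\mathbb{C}$. By Künneth, $\cW^k(\gg,F)\cong H(\mathcal{C}_-,d_{(0)})$.

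\textbf{Step 2 (spectral sequence on $\mathcal{C}_-$).} I would filter $\mathcal{C}_-$ by a grading combining conformal weight with charge and $\mathrm{ad}\,x$-degree, chosen so that the leading part of $d_{(0)}$ is $d_0 = \sum_\alpha \phi^{\bar\alpha}\otimes [F,\cdot]$-type terms. The cohomology of $d_0$ is the Chevalley-type cohomology of $\mathfrak{n}_+$ with coefficients in the symmetric algebra on $\mathfrak{p}\oplus\gg_{1/2}$, twisted by $F$. Since $\mathrm{ad}\,F:\gg_{j}\to\gg_{j-1}$ is injective for $j>0$ and has cokernel complementary to $\gg^F$, this cohomology is concentrated in charge $0$. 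It is the free differential polynomial algebra on a space isomorphic to $\gg^F$, with the element corresponding to $a_i\in\gg_{1-\Delta_i}$ sitting in conformal weight $\Delta_i$ (after the shift by $x$).

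\textbf{Step 3 (degeneration and lifting).} Because the $E_1$ page lives in charge $0$, the spectral sequence degenerates, so $H(\mathcal{C}_-)$ has an associated graded equal to this free algebra. I would then lift each class $a_i$ to a genuine cocycle $\omega_{a_i}\in V^{\psi_k}(\mathfrak p)\otimes\Phi_{1/2}$ by the usual recursive correction. Suppose $a_i$ is corrected by adding terms of strictly lower filtration degree, killing $d_{(0)}$ of the previous approximation at each stage. The obstruction lies in $E_1$ in charge $1$, which vanishes, so the recursion succeeds. The recursion terminates by finite-dimensionality of weight spaces.

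\textbf{Step 4 (free generation and identification).} By construction the linear term of $\sigma^0(\omega_{a_i})$ is $a_i$. Passing to Li's filtration, I would consider the images of the $\omega_{a_i}$ in $\mathrm{gr}^F$. They are the coordinate functions of the Slodowy slice, paired with $\Phi_{1/2}$ in the Kazhdan-graded sense. Algebraic independence of these images, together with a character comparison against the graded dimension from Step 2, then gives both strong and free generation.

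\textbf{Main obstacle.} The main difficulty is Step 2 in the super setting. One must choose the filtration so that the leading differential is exactly Koszul, and verify that the $\Phi_{1/2}$ terms and odd root vectors do not spoil the charge-$0$ concentration. This concentration needs nondegeneracy of $(F|[\cdot,\cdot])$ on $\gg_{1/2}$ and the $\mathfrak{sl}_2$-decomposition of $\gg$, both of which hold for basic $\gg$.
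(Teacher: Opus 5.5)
The paper gives no proof here; it quotes the result from De Sole--Kac. Your outline is the standard Kac--Wakimoto/De Sole--Kac strategy: discard the acyclic $\mathcal{C}_+$, compute $H(\mathcal{C}_-)$ by a spectral sequence whose first page is a Koszul computation, collapse by charge-$0$ concentration, lift, then get freeness from symbols plus a character count. (A minor point on Step 1: $\mathcal{C}_\pm$ do not supercommute, since for instance $\phi_{\bar a}$ pairs with $\phi^{\bar b}$. K\"unneth applies simply because $a\otimes b\mapsto{:}ab{:}$ is a PBW isomorphism of complexes, as $d_{(0)}$ is a derivation of the normally ordered product.)

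\textbf{First problem: the linear algebra input in Step 2 is wrong.} On generators, the Koszul part of $d_{(0)}$ is the linear map $T\colon\mathfrak{g}_{\le 1/2}\to\mathfrak{n}_-$, $T=\pm\,\mathrm{ad}\,F$; here $\mathfrak{g}_{1/2}$ enters through the $\Phi_m$ and $\mathfrak{n}_-$ indexes the $\phi^{\bar b}$. It contains no $\mathfrak{n}_+$-action, so $E_1$ is not a Chevalley cohomology. The cohomology of such a Koszul complex is the free differential superpolynomial algebra on $\ker T$ (in charge $0$) tensored with the one on the parity-shifted $\operatorname{coker}T$ (in charge $1$).
\begin{itemize}
\item Charge-$0$ concentration therefore needs $T$ to be \emph{surjective}, and identifying $E_1$ needs $\ker T=\mathfrak{g}^F$.
\item Injectivity of $\mathrm{ad}\,F$ on $\mathfrak{g}_j$ for $j>0$ gives neither. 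A cokernel ``complementary to $\mathfrak{g}^F$'' would be nonzero and would produce exactly the positive-charge classes you want to exclude.
\item The correct $\mathfrak{sl}_2$ fact: $\mathrm{ad}\,F\colon\mathfrak{g}_j\to\mathfrak{g}_{j-1}$ is onto for $j\le\frac12$, because $j-1<0$ is never a highest $\mathrm{ad}\,x$-weight. It is injective for $j\ge\frac12$.
\item Hence $\mathfrak{g}_{1/2}\cong\mathfrak{g}_{-1/2}$, which is the nondegeneracy of $(F|[\cdot,\cdot])$ on $\mathfrak{g}_{1/2}$, and $\ker T=\mathfrak{g}^F\subset\mathfrak{g}_{\le 0}$.
\end{itemize}

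\textbf{Second problem: the obstacle you flag at the end is real, and your filtration does not remove it.} Take the natural grading $\deg J_a=-j$ for $a\in\mathfrak{g}_j$, $\deg\phi^{\bar\alpha}=j_\alpha$, $\deg\Phi_m=-\frac12$. On generators, the Koszul terms shift (charge, degree) by $(1,1)$ and all other terms of $d_{(0)}$ by $(1,0)$. However, this grading is not compatible with $\Phi_{m_1}(z)\Phi_{m_2}(w)\sim(F|[m_1,m_2])(z-w)^{-1}$.
\begin{itemize}
\item Concretely, $d_{(0)}J_a$ contains ${:}\phi^{\bar\beta}\Phi_m{:}$ whenever $[u_\beta,a]$ has a nonzero $\mathfrak{g}_{1/2}$-component $m$.
\item In $d_{(0)}{:}J_a\Phi_\gamma{:}$ with $\gamma\in S_{1/2}$, re-associating ${:}({:}\phi^{\bar\beta}\Phi_m{:})\Phi_\gamma{:}$ produces $\pm(F|[m,u_\gamma])\,\partial\phi^{\bar\beta}$.
\item This term has the same shift as the Koszul terms in conformal weight, charge and degree.
\end{itemize}
So your ``leading part'' is neither the Koszul differential nor a derivation, and $E_1$ is not what you compute.

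\textbf{Fix.} The clean repair, consistent with the Li-filtration form of the statement, is to run Steps 2--3 on $\mathrm{gr}^F\mathcal{C}_-$ first. There the product is supercommutative, $\mathrm{gr}\,d_{(0)}$ is a derivation, and the $\mathrm{ad}\,x$-grading is multiplicative. The Koszul/double-complex argument with the corrected surjectivity then shows that $H(\mathrm{gr}^F\mathcal{C}_-)$ is a differential polynomial algebra on lifts of a basis of $\mathfrak{g}^F$, concentrated in charge $0$. The Li spectral sequence collapses, so $\mathrm{gr}^F\cW^k(\mathfrak{g},F)\cong H(\mathrm{gr}^F\mathcal{C}_-)$, which is exactly what Step 4 needs. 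Alternatively, filter ascendingly by $\deg-\text{charge}-\epsilon\cdot\#\partial$ with $0<\epsilon<1$. Then $d_{(0)}$ is filtration non-increasing and these contraction terms fall strictly below the Koszul part.
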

    
In the vertex algebra $V^{\psi_k}(\mathfrak{p})$, the differential algebra $V^{\psi_k}(\gg_0)$ generated by $\gg_0$ forms a vertex subalgebra. The canonical projection map $V^{\psi_k}(\mathfrak{p})\otimes \Phi_{1/2} \to V^{\psi_k}(\gg_0)\otimes \Phi_{1/2}$ induces the map $\cW^k(\gg,F) \to V^{\psi_k}(\gg_0)\otimes \Phi_{1/2}$ called Miura map, which is injective whenever $k\neq -h^\vee.$

\subsection{SUSY $\mathcal{W}$-algebras}

Suppose there is an $\mathfrak{osp}_{1|2}$ subalgebra $\mathfrak{s}$ of $\gg$ whose even part is spanned by the $\mathfrak{sl}_2$ triple $(E,H=2x,F)$ and let $f\in \mathfrak{s}$ be the odd nilpotent with degree $-1/2$ such that $F=\frac{1}{2}[f,f].$ 
The following two SUSY vertex algebras are the main ingredients of a SUSY $\mathcal{W}$-algebra $\cW^k_{\mathcal{N}=1}(\gg,f)$:
\begin{itemize}
    \item The SUSY affine vertex algebra $V^k_{\mathcal{N}=1}(\gg)$ is the vertex algebra introduced in Example \ref{ex:SUSY affine}.
    \item The SUSY charged free fermion vertex algebra $\mathcal{F}_{\mathcal{N}=1}(\mathfrak{A})$ is freely generated by a basis of $\phi_{\mathfrak{n}_+}\oplus D\phi_{\mathfrak{n}_+} \oplus \phi^{\bar{\mathfrak{n}}_-}\oplus D\phi^{\bar{\mathfrak{n}}_-}$ with the OPE relation 
    \begin{equation}
 D\phi^{\bar{a}}(z)  \phi_{b}(w) \sim (a|b)(z-w)^{-1}, \quad  \phi^{\bar{a}}(z)  D\phi_{b}(w)\sim (-1)^{p(a)}(a|b)(z-w)^{-1}.
    \end{equation}
\end{itemize}
The SUSY BRST complex is 
$   \mathcal{C}^k_{\mathcal{N}=1}(\gg,f)= V^k_{\mathcal{N}=1}(\gg) \otimes \mathcal{F}_{\mathcal{N}=1}(\mathfrak{A})$
and the odd operator $(Dd)_{(0)}$  for 
\begin{equation}
    d=\sum_{\alpha\in S}:(\bar{u}_\alpha-(f|u_{\alpha})) \phi^{\bar{\alpha}}:+\frac{1}{2}\sum_{\alpha,\beta\in S}(-1)^{p(\alpha)p(\beta)+p(\alpha)}:\phi_{[u_\alpha,u_\beta]}\phi^{\bar{\beta}}\phi^{\bar{\alpha}}: \, 
\end{equation}
is the differential on $\mathcal{C}^k_{\mathcal{N}=1}(\gg,f)$. Then the cohomology $H( \mathcal{C}^k_{\mathcal{N}=1}(\gg,f), (Dd)_{(0)})$ is called the SUSY $\cW$-algebra $\cW^k_{\mathcal{N}=1}(\gg,f)$ of level $k$.

Let us denote by $V^{\tau_k}_{\mathcal{N}=1}(\mathfrak{p})$ the SUSY vertex subalgebra of $V^k_{\mathcal{N}=1}(\gg)$ generated by $\bar{\mathfrak{p}}$ and $\mathfrak{p}.$ Here, we denote the bilinear form $\tau_k(a|b)=(k+h^{\vee})(a|b)$ for the dual coxeter number $h^{\vee}$ of $\gg$. The SUSY $\mathcal{W}$-algebra $\cW^k_{\mathcal{N}=1}(\gg,f)$ is an $\mathcal{N}=1$ SUSY vertex subalgebra of $V^{\tau_k}_{\mathcal{N}=1}(\mathfrak{p})$ by the map induced from the injective homomorphism 
\begin{equation}
    V^{\tau_k}_{\mathcal{N}=1}(\mathfrak{p})\to \mathcal{C}^k_{\mathcal{N}=1}(\gg,f), \quad \bar{a}\mapsto  J_{\bar{a}}=\bar{a}+\sum_{\beta\in S}(-1)^{p(\bar{a})p(\bar{\beta})} :\phi^{\bar{\beta}}\phi_{\pi_+([u_\beta,a])}:.
\end{equation}
for $a\in \mathfrak{p}.$ 
\begin{comment}
    The complex has a superconformal vector $\tau= \tau_{KT}+\tau_{\mathcal{F}}+2\partial \bar{x}$ where  $\tau_{KT}$ is the Kac-Todorov superconformal vector 
and \begin{equation}
    \begin{aligned}
    \tau_{\mathcal{F}} =&\sum_{\alpha\in S} (-1)^{p(\alpha)}2j_{\alpha}:(\partial\phi_{\alpha})\phi^{\alpha}:-\sum_{\alpha\in S} (-1)^{p(\alpha)}(1-2j_{\alpha}):\phi_{\alpha}\partial \phi^{\alpha}:+\sum_{\alpha\in S}:(D\phi_{\alpha})(D\phi^{\alpha}):,
    \end{aligned}
  \end{equation}
where $u_\alpha\in \gg_{j_\alpha}.$
\end{comment}
When $k\neq -h^\vee$, the SUSY $\mathcal{W}$-algebra $\cW^k_{\mathcal{N}=1}(\gg,f)$ has a superconformal vector $\tau$ with the central charge 
\begin{equation} \label{eq: susy central charge}
    c_k^{\mathcal{N}=1}=\frac{k\, \textup{sdim}\, \gg}{k+h^{\vee}}+\frac{1}{2}\,\textup{sdim}\, \gg +12\sum_{\alpha\in I_+}(-1)^{p(\alpha)}j_{\alpha}-3\, \textup{sdim}\\gn-12(k+h^{\vee})(x|x).
\end{equation}
For $a\in \gg_{j_a}\in \mathfrak{p},$ its conformal weight with respect to $\tau$ is  $\Delta_{\bar{a}}:=\frac{1}{2}-j_{a}$ and $\cW^k_{\mathcal{N}=1}(\gg,f)$ has a free generating 
set consisting of homogeneous elements \cite{MRS21}. 

\begin{comment}
    \begin{proposition} [\cite{MRS21}]
    Let  $\{a_i\}_{i\in I^f}$ be a basis of $\gg^f$ and $a_i \in \gg_{\frac{1}{2}-\Delta_i}.$
    The SUSY W-algebra $\cW^k_{N=1}(\gg,f)$ is a freely generated vertex algebra with a free generating set $\{\nu_{a_i}, D \nu_{a_i}\}_{i\in I^f}\subset V^{k}_{N=1}(\mathfrak{p})=:V $ such that the conformal weight of $\nu_{a_i}$ is $\Delta_i$ and the linear term in $\sigma^0_D(\nu_{a_i})$ is $\bar{a}_i,$ where $\sigma^0_D:V \to F^0_D(V)/F^1_D(V)$ is the canonical map for the SUSY filtration $F^{\bullet}_D(V).$
\end{proposition}

\end{comment}

As for ordinary $\mathcal{W}$-algebras, when $k\neq -h^\vee$, there is an injective SUSY vertex algebra homomorphism
\begin{equation} \label{eq:SUSY Miura}
    \mu_D: \cW_{\mathcal{N}=1}^k(\gg,f) \to V_{\mathcal{N}=1}^{\tau_k}(\gg_0)
\end{equation}
called the SUSY Miura map, induced from the canonical projection $V_{\mathcal{N}=1}^k(\gg) \to V_{\mathcal{N}=1}^{\tau_k}(\gg_0).$ By comparing the images of the Miura map of $\cW^k(\gg,F)$ and the SUSY Miura map of $\cW^k_{\mathcal{N}=1}(\gg,f)$, one can obtain the relation between the two vertex algebras \cite{GSS25,LSS}.

\begin{theorem}[\cite{GSS25,LSS}] \label{thm:ordinary vs SUSY}
    Let $\gg^{f}_0= \gg^{f} \cap \gg_0$ be the subalgebra of $\gg$, where $\gg^f:=\ker(\textup{ad}f)\subset \gg$. Let $\mathcal{F}(\gg^{f}_0)$ be the free field vertex algebra generated by the parity reversed space $\bar{\gg}^{f}_0$ of $\gg^{f}_0$, where the OPEs between the generators are $\bar{a}(z)\bar{b}(w) \sim \frac{(a|b)}{z-w}$ for $a,b\in \gg^f_0$. When $k\neq -h^\vee$, we have
    \begin{equation} \label{eq: susy vs. nonsusy}
       \cW^k_{\mathcal{N}=1}(\gg,f) \cong \cW^k(\gg,F) \otimes \mathcal{F}(\gg^{f}_0)
    \end{equation}
     as vertex algebras.
\end{theorem}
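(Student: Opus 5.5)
The plan is to compare images under the two Miura maps and to exhibit an explicit splitting of the free field part. First, the SUSY Miura map \eqref{eq:SUSY Miura} embeds $\cW^k_{\mathcal{N}=1}(\gg,f)$ into $V^{\tau_k}_{\mathcal{N}=1}(\gg_0)\cong V^{\tau_k'}(\gg_0)\otimes \mathcal{F}(\gg_0)$. Here the second factor is the free fermion algebra generated by $\bar{\gg}_0$, and the first factor is generated by the "corrected" currents $a-\frac{1}{\tau}\sum :\bar{u}\,\overline{[u^*,a]}:$, which commute with $\bar{\gg}_0$. The level is shifted accordingly, and one checks that the shifted form $\tau'_k$ agrees with the form $\psi_k$ restricted to $\gg_0$ appearing in the ordinary Miura map, after accounting for the contribution of $\Phi_{1/2}$. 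Second, the ordinary Miura map sends $\cW^k(\gg,F)$ into $V^{\psi_k}(\gg_0)\otimes\Phi_{1/2}$. So the goal becomes: inside $V^{\tau_k'}(\gg_0)\otimes\mathcal{F}(\gg_0)$, identify a subalgebra isomorphic to the ordinary Miura image, together with a commuting copy of $\mathcal{F}(\gg^f_0)$, which together generate the SUSY Miura image.

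Concretely, I would decompose $\gg_0=\gg^f_0\oplus [f,\gg_{1/2}]$, using that $\mathrm{ad} f:\gg_{1/2}\to\gg_0$ has image complementary (orthogonally) to $\gg^f_0$ by $\mathfrak{osp}_{1|2}$ representation theory. This yields $\mathcal{F}(\gg_0)\cong\mathcal{F}(\gg^f_0)\otimes\mathcal{F}([f,\gg_{1/2}])$. The fermions on $\overline{[f,\gg_{1/2}]}$ are identified with $\Phi_{1/2}$ via $\Phi_m\mapsto \overline{[f,m]}$, suitably rescaled. The pairing matches because $(F|[m_1,m_2])=\frac12([f,[f,m_1]]|m_2)$-type identities relate it to $(\,[f,m_1]\,|\,[f,m_2]\,)$. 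This gives an embedding
\[V^{\psi_k}(\gg_0)\otimes\Phi_{1/2}\otimes\mathcal{F}(\gg^f_0)\hookrightarrow V^{\tau_k}_{\mathcal{N}=1}(\gg_0),\]
possibly after modifying the currents by quadratic fermion terms so that they commute with $\mathcal{F}(\gg^f_0)$. The generators $\bar{a}$, for $a\in\gg^f_0$, lie in the SUSY $\cW$-algebra, since they are weight $\frac12$ and are annihilated by the screening-type conditions. This shows that the factor $\mathcal{F}(\gg^f_0)$ sits inside the image.

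Next, I would show that the SUSY image is contained in, and equals, $\mu(\cW^k(\gg,F))\otimes\mathcal{F}(\gg^f_0)$ under this embedding. For generic $k$, both Miura images are characterized as intersections of kernels of screening operators attached to simple roots of $\gg$ relative to the grading, and the SUSY screenings $\int :\bar{e}_\alpha$-exponentials: should reduce to the ordinary screenings. Once the containment $\mu(\cW^k(\gg,F))\otimes\mathcal{F}(\gg^f_0)\subseteq\mu_D(\cW^k_{\mathcal{N}=1})$ is established, equality follows from a graded character count. Both sides are freely generated: Proposition \ref{prop: generators of W-alg} handles $\cW^k(\gg,F)$, and the SUSY side, with generators $\nu_{a},D\nu_a$ for $a\in\gg^f$, is handled by \cite{MRS21}. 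The generating types match because $\gg^f$ and $\gg^F$ differ exactly by $\gg^f_0$ in the way required. Genericity then extends to all $k\neq -h^\vee$ by the freeness of both sides over $\mathbb{C}[k]$ localized.

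The main obstacle is the containment step. Showing that the ordinary screening kernels and the SUSY screening kernels correspond under the fermion re-identification requires a careful computation with the modified currents. I would first try to avoid screenings entirely by working with Li filtrations. The idea is to show that the $\cW^k(\gg,F)$ generators $\omega_{a}$ can be built explicitly from $\nu_a$ and $D\nu_a$ by subtracting normally ordered products with $\bar{\gg}^f_0$. Then $\mathrm{Com}(\mathcal{F}(\gg^f_0),\cW^k_{\mathcal{N}=1})$ has the right graded character, and it must be identified with $\cW^k(\gg,F)$ via the uniqueness of its BRST cohomology description.
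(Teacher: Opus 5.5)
Your overall strategy is the route the paper attributes to \cite{GSS25,LSS}: place both Miura images inside one free field algebra and compare them. Most of your bookkeeping is correct.
\begin{enumerate}
\item $\mathrm{ad}\,f\colon\gg_{1/2}\to\gg_0$ is injective, with image orthogonal and complementary to $\gg^f_0$.
\item The identity $(F|[m_1,m_2])=([f,[f,m_1]]|m_2)$ identifies $\Phi_{1/2}$ with $\mathcal{F}([f,\gg_{1/2}])$ after rescaling by $\sqrt{k+h^\vee}$.
\item The characters agree summand by summand over the $\mathfrak{osp}_{1|2}$-decomposition of $\gg$, with the trivial summands producing exactly $\bar{\gg}^f_0$.
\item Passing from generic $k$ to all $k\neq -h^\vee$ works if you phrase it as a closedness statement. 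By injectivity of both Miura maps and free generation, the two images have the same dimension in each weight at every noncritical $k$, so agreement for generic $k$ propagates.
\end{enumerate}
Also, no $\Phi_{1/2}$-correction enters the level check. The currents of $V^{\tau_k}_{\cN=1}(\gg_0)$ decoupled from all of $\bar{\gg}_0$ already have level $(k+h^\vee)(a|b)-\tfrac12\mathrm{str}_{\gg_0}(\mathrm{ad}\,a\,\mathrm{ad}\,b)=\psi_k(a|b)$. So your map $\iota$ is an isomorphism $V^{\psi_k}(\gg_0)\otimes\Phi_{1/2}\otimes\mathcal{F}(\gg^f_0)\cong V^{\tau_k}_{\cN=1}(\gg_0)$, with no further modification needed.

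The genuine gap is the one step that carries the content of the theorem: $\iota(\mu(\cW^k(\gg,F)))\subseteq\mu_D(\cW^k_{\cN=1}(\gg,f))$ for generic $k$. You only assert it (``should reduce''), and it is not automatic, for three reasons.

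First, $\iota$ is fixed only up to automorphisms of $\gg_0$ preserving $\psi_k$ and pairing-preserving changes of the fermions. The images can match for one choice and fail for another; already for $\gg=\mathfrak{osp}_{1|2}$ only one of $J_x\mapsto\pm x$ works, because of the background charge. So $\iota$ must be pinned down and checked.

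Second, a screening argument needs screening characterizations of both images for generic $k$. For nonabelian $\gg_0$ these involve free field realizations of $V^{\psi_k}(\gg_0)$, and the SUSY analogue has to be supplied. It then needs an actual comparison. The ordinary screening for $\alpha\in\Pi_{1/2}$ carries $\Phi_{e_\alpha}$, which $\iota$ sends to $\overline{[f,e_\alpha]}$. When $f$ is a sum of negative simple root vectors (the principal case, e.g.\ $\gg=\gs\gl_{n+1|n}$), this is a multiple of $\bar h_\alpha$, which is what makes the comparison transparent there. In general it is not. For $f_{n+1|n}\in\gs\gl_{n+r+1|n+s}$ with $r+s>0$, some $\alpha\in\Pi_{1/2}$ joins the two diagonal blocks, e.g.\ $e_\alpha=e_{n,m}$ with $m$ an index of the $r|s$ block. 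Then $[f,e_{n,m}]=e_{n+1,m}$ is a root vector of $\gg_0$, so the fermions mix with the $\gg_0$ free fields and the matching has to be computed.

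Third, the Li-filtration fallback cannot close the gap. Elements of $\mathrm{Com}(\mathcal{F}(\gg^f_0),\cW^k_{\cN=1}(\gg,f))$ with the right leading terms and graded character do not determine the OPEs. Indeed, $\cW^k(\gg,F)$ itself has the same generating type and the same symbols of generators for every $k$, while its central charge varies. There is also no ``uniqueness of the BRST description'' that turns a character match into an isomorphism.

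What is missing is an actual identification of the OPE algebras under a specified $\iota$.
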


By Theorem \ref{thm:ordinary vs SUSY}, if $\gg^{f}_0$ is trivial then the ordinary $\mathcal{W}$-algebra $\cW^k(\gg,F)$ itself has a supersymmetry. In particular, when $\gg=\mathfrak{sl}_{n+1|n}$ and $F$ is the even principal nilpotent element, the centralizer of the corresponding $f$ is trivial and the corresponding $\cW$-algebra has a supersymmetry. 

\begin{remark}
 By \cite[Example 1.2]{KRW04}, the free field vertex algebra $\mathcal{F}(\gg_{0}^f)$ is conformal of central charge $\frac{1}{2}\textup{sdim}(\gg_0^f)$. For the central charges $c_k$ and $c^{\mathcal{N}=1}_k$ of $\cW$-algebras in \eqref{eq: W-alg central charge} and \eqref{eq: susy central charge}, one can easily compute that
 \begin{equation*}
     c^{\mathcal{N}=1}_k=c_k+\frac{1}{2}\textup{sdim}(\gg_0^f),
 \end{equation*}
 which is compatible with the isomorphism \eqref{eq: susy vs. nonsusy}.
\end{remark}

\section{$\mathcal{N}=2$ superconformal algebra}\label{sect: N=2}

Recall the OPEs of the universal $N=2$ superconformal algebra at central charge $c$, presented earlier in \eqref{eq:N=2}. It is generated by two even fields: the Virasoro field $L$ and a primary Heisenberg field $H$ of conformal weight $1$, satisfying
\begin{equation}\label{eq:N=2 OPEs 1}
    \begin{aligned}
        H(z)H(w) &\sim \frac{c}{3}(z-w)^{-2}, \\
L(z)H(w) &\sim H(w)(z-w)^{-2} + \partial H(w)(z-w)^{-1}, \\
L(z)L(w) &\sim \frac{c}{2}(z-w)^{-4} + 2L(w)(z-w)^{-2} + \partial L(w)(z-w)^{-1}.
    \end{aligned}
\end{equation}
These are extended by two odd fields $G^{\pm}$ of conformal weight $\tfrac{3}{2}$, which are primary with respect to both the Heisenberg and Virasoro fields:
\begin{equation}
        H(z)G^{\pm}(w) \sim \pm G^{\pm}(w)(z-w)^{-1}, \ 
L(z)G^{\pm}(w) \sim \tfrac{3}{2} G^{\pm}(w)(z-w)^{-2} + \partial G^{\pm}(w)(z-w)^{-1}
\end{equation}
satisfying the following OPE relation
\begin{equation} \label{eq:N=2 OPEs 3} G^+(z)G^-(w)\sim \frac{c}{3}(z-w)^{-3}+H(w)(z-w)^{-2}+\left(L+\frac{1}{2}\partial H\right)(w)(z-w)^{-1}.\end{equation} 
We denote this vertex superalgebra by $\T{Vir}^c_{\cN=2}$, as we regard it as the $\cN=2$ analogue of the universal Virasoro algebra.
For instance, just as the Virasoro algebra can be realized as $\cW^k(\fr{sl}_2)$ for some $k \in \mathbb{C}$, the $\mathcal{N}=2$ superconformal algebra $\T{Vir}^c_{\mathcal{N}=2}$ can likewise be realized as $\cW^k(\fr{sl}_{2|1})$.

The vertex superalgebra $\T{Vir}^c_{\cN=2}$ has the automorphism group $\text{O}_2$.
The subgroup $\text{SO}_2\cong \mathbb{C}^\times$ arises from the exponentiation of the zero mode $H_{(0)}$, and acts by scaling
\begin{equation} \label{eq:N=2 automorphism constant}
G^{+}\mapsto \mu G^+,\quad G^{-}\mapsto \mu^{-1} G^-, \quad \mu\in\mathbb{C}\setminus\{0\},
\end{equation}
and the outer involution $\sigma$ is given by
\begin{equation}\label{eq: N=2 automorphism}
    \sigma: H\mapsto -H,\quad G^{\mp}\mapsto G^{\pm}, \quad L\mapsto L.
\end{equation}

Another perspective on $\T{Vir}^c_{\cN=2}$ is to view it as an extension of \textit{two} $\mathcal{N}=1$ superconformal algebras.
A direct computation shows that
\begin{equation}\label{eq: change of basis 1}
G:=G^++G^- ,\quad \widetilde G:=\sqrt{-1}(G^+ - G^-)
\end{equation}
satisfy the OPE relations \eqref{eq: N=1 superconformal conditions}) of $\cN=1$ superconformal algebra  for the same conformal element $L$.
Setting $J=\sqrt{-1}H$ we have the following OPE relations
\begin{equation} \label{eq: N=2 superconformal conditions}
\begin{split}
      & J(z) J(w) \sim -\frac{c}{3}(z-w)^{-2},\quad L(z)J(w) \sim  J(w) (z-w)^{-2}+\partial J(w) (z-w)^{-1}, \\
     & \widetilde{G}(z) J(w) \sim -G(w) (z-w)^{-1}, \quad  G(z) J(w) \sim \widetilde{G}(w) (z-w)^{-1}, \\ 
     & G(z) \widetilde{G}(w) \sim 2 J(w) (z-w)^{-2}+ \partial J(w) (z-w)^{-1}.
\end{split}
\end{equation} 
In particular, zero modes $(G)_{(0)}=D$ and $(\widetilde G)_{(0)}=\widetilde D$ define \textit{two} commuting odd derivations.
For completeness we record how two bases are related to each other
\begin{equation} \label{eq: Gpm notation for N=2 superconformal}
    L:=-\frac{1}{2}D\widetilde{D}J, \quad G^+:=-\frac{1}{2}\big(\widetilde{D}J+\sqrt{-1}DJ\big), \quad G^-:=-\frac{1}{2}\big(\widetilde{D}J-\sqrt{-1}DJ\big),\quad H:=\sqrt{-1}J.
\end{equation}

In contrast to Lemma \ref{lem: N=1 superconformal conditions} in the $\cN=1$ setting, the two supersymmetries $D$ and $\widetilde{D}$ are not uniquely determined by a given choice of $J$.
Using OPE relations \eqref{eq: N=2 superconformal conditions}, one can see that 
\begin{equation} \label{eq: non-uniqueness of N=2}
    G_{\textup{new}}=p_1 G+p_2 \widetilde{G}, \quad \widetilde{G}_{\textup{new}}=p_1 \widetilde{G}-p_2 G,\quad p_1^2+p_2^2=1 \ \textup{ for }\ p_1, p_2\in \mathbb{C}
\end{equation}
 again make two SUSY $\cN=1$ superconformal algebras for the same $J$ and $L$, and satisfy the same OPE relations in \eqref{eq: N=2 superconformal conditions}. This non-uniqueness can be explained via the automorphism \eqref{eq:N=2 automorphism constant}.
 For the new supersymmetries $D_{\textup{new}}:=(G_{\textup{new}})_{(0)}$ and $\widetilde{D}_{\textup{new}}:=(\widetilde{G}_{\textup{new}})_{(0)}$, consider the corresponding $G^{+}_{\textup{new}}$ and $G^-_{\textup{new}}$ similarly to \eqref{eq: Gpm notation for N=2 superconformal}. Replacing $p_1$ and $p_2$ in \eqref{eq: non-uniqueness of N=2} with $p_1=\frac{1}{2}(\mu+\frac{1}{\mu})$ and $p_2=\frac{1}{2\sqrt{-1}}(\mu-\frac{1}{\mu})$ for $\mu\in \mathbb{C}^\times$, one can show  $G^{\pm}_{\textup{new}}$ and $G^{\pm}$ are related by \eqref{eq:N=2 automorphism constant}.

We call a conformal vertex algebra $V$ to be $\cN=2$ \textit{superconformal} if it is a conformal extension of the $\cN=2$ superconformal algebra, i.e., we have a conformal embedding 
\[\T{Vir}^c_{\cN=2} \hookrightarrow V,\]
where the Virasoro field $L\in \T{Vir}^c_{\cN=2}$ makes $V$ conformal.
In this case, we call the field $H$ the $\cN=2$ \textit{superconformal element}.

\subsection{$\cN=2$ diamonds}

Let $M$ be a $\text{Vir}^c_{\mathcal{N}=2}$-module\footnote{In our examples $M$ will be a vertex algebra containing $\text{Vir}^c_{\mathcal{N}=2}$ as a subVOA.}.
Let $v$ be a homogeneous vector with conformal weight $\Delta(v)=\Delta$ and Heisenberg charge $h(v)=h$, i.e. we have relations
\begin{equation}\label{eq:N=2 label}
    L_{(1)} v= \Delta v,\quad  H_{(0)}v=h v.
\end{equation}
We define the $\cN=2$ \textit{diamond} of weight $(\Delta,h)$ generated by $v$ to be the vector space spanned by the elements 
\begin{equation}\label{eq:N=2 diamond}
v^{\bot}:=v,\quad v^\pm := \mp G^\pm_{(0)}v, \qquad v^{\top} := \pm G^{\pm}_{(0)}G^{\mp}_{(0)}v\mp \frac{1}{2}\partial v,
\end{equation}
and denote it by $v_{\cN=2}^{(\Delta,h)}$. Note that $v^{\top}=\frac{1}{2}\sqrt{-1}D\widetilde{D}v$ in terms of the two supersymmetries $D$ and $\widetilde{D}$ in \eqref{eq: change of basis 1}.

An element $v \in M$ is called $\text{Vir}^c_{\mathcal{N}=2}$ \textit{primary} of weight $(\Delta,h)$ if $L_{(n+1)}v = H_{(n)}v = G^\pm_{(n)} v = 0$ for $n \in \mathbb Z_{>0}$ in addition to (\ref{eq:N=2 label}).
We then call the $\cN=2$ diamond generated by $v$ a \textit{primary} $\cN=2$ diamond of weight $(\Delta,h)$.
In this case, in addition to vectors $v^{\bot},v^{\pm},v^{\top}$ being primary for $L$ of conformal weights $\Delta,\Delta+1/2,\Delta+1$, and $v^{\bot},v^{\pm}$ being primary for $H$ of Heisenberg weights $h,h\pm1$, one derives the following OPE relations:
\begin{equation}\label{eq:N=2 diamond OPEs}
\begin{split}
G^\pm(z)v^{\bot}(w) &\sim \mp v^\pm(w)(z-w)^{-1},\quad H(z)v^{\top}(w) \sim h v^{\bot}(w)(z-w)^{-2},\\
G^\pm(z) v^\mp(w) &\sim \pm {\left(\Delta - \frac{1}{2}h\right) v^{\bot}(w) }(z-w)^{-2} + \left(v^{\top}(w) \pm \frac{1}{2} \partial v^{\bot}(w)\right)(z-w)^{-1}, \\
G^\pm(z) v^{\top}(w) &\sim  {\left(\Delta - \frac{1}{2}h+\frac{1}{2}\right)v^\pm(w) }(z-w)^{-2}+ {\frac{1}{2}  \partial v^\pm(w)}(z-w)^{-1}.
\end{split}
\end{equation}
These OPE relations are conveniently summarized in the Figure \ref{fig:diamond}.
Such $\cN=2$ diamonds feature prominently in all of our examples.

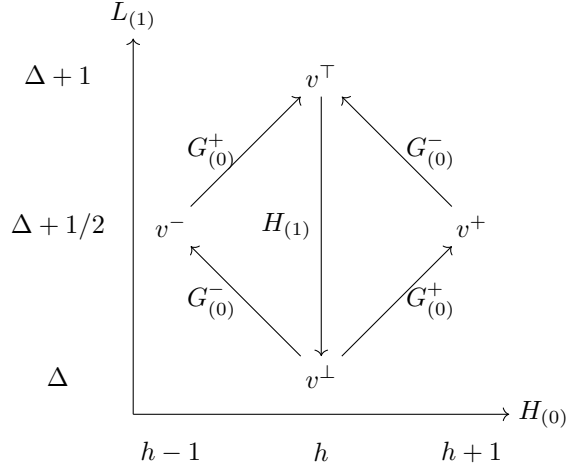
\begin{figure}[h]
\begin{tikzpicture}

% Diamond shape nodes
\node (top) at (0, 2) {$v^{\top}$};
\node (plus) at (2, 0) {$v^{+}$};
\node (minus) at (-2, 0) {$v^{-}$};
\node (bottom) at (0, -2) {$v^{\bot}$};

\node at (-3.5,-2) {$\Delta$};
\node at (-3.5, 0) {$\Delta+1/2$};
\node at (-3.5, 2) {$\Delta+1$};

\node at (-2,-3) {$h-1$};
\node at (0,-3) {$h$};
\node at (2,-3) {$h+1$};

% Arrows and labels
\draw[->] (plus) -- (top) node[midway, right] {$G^-_{(0)}$};
\draw[->] (minus) -- (top) node[midway, left] {$G^+_{(0)}$};
\draw[->] (bottom) -- (minus) node[midway, left] {$G^-_{(0)}$};
\draw[->] (bottom) -- (plus) node[midway, right]{$G^+_{(0)}$};
\draw[->] (top) -- (bottom) node[midway, left] {$H_{(1)}$};

\draw[->] (-2.5,-2.5) -- (-2.5,2.5) node[above]{$L_{(1)}$};
\draw[->] (-2.5,-2.5) -- (2.5,-2.5) node[right]{$H_{(0)}$};
\end{tikzpicture}
\caption{Structure of the $\mathcal{N}=2$ diamond generated by $v$. The coordinate axis correspond the conformal and Heisenberg weights.}\label{fig:diamond}
\end{figure}

\begin{comment}
and the fields corresponding to $v,y^\pm$ and $z$ by
\begin{equation}
\begin{split}
 & v(z) = \sum_{n \in \mathbb Z} v_n z^{-n- h}, \quad  y^\pm(z) =   \sum_{n \in \mathbb Z+\frac{1}{2}} y^\pm_n z^{-n- h-\frac{1}{2}}, \quad x(z) =   \sum_{n \in \mathbb Z} x_n z^{-n- h-1}.
\end{split}
\end{equation}
\end{comment}

\begin{comment}
and hence into the commutation relations
\begin{equation}
\begin{split}
[G^\pm_r, v_n] &= y^\pm_{r+n}, \qquad [G^\pm_r, y^\pm_s] = 0, \\
[G^\pm_r, y^\mp_s] &= \mp x_{r+s} + \left(\left(r+s+h \right) + (2h \pm j) \left(r + \frac{1}{2} \right) \right) v_{r+s}, \\
[G^\pm_r, x_n] &=  \left(\left(r+n+h +\frac{1}{2}\right) \mp (2h +1 \pm ( \pm 1)\right) \left(r + \frac{1}{2} \right)  y^\pm_{r+n}. \\
\end{split}
\end{equation}
\end{comment}

Sometimes a degeneration occurs in (\ref{eq:N=2 diamond}) where we have an additional relation
\begin{equation}\label{eq:N=2 degenerate diamond}
    G^{+}_{(0)}v^{\bot}= 0.
\end{equation}
In this case, the $\cN=2$ diamond degenerates to a bottom left leg in Figure \ref{fig:diamond}, and is spanned by just two\footnote{In fact, $v^{\top}=\pm \partial v^{\bot}$ in this case, however since $\partial v^{\bot}$ is not a new strong generator, we do not consider it as an element of the degenerate diamond.} elements $v^{\bot},v^{-}$.
We call their span the $\cN=2$ \textit{degenerate diamond} generated by $v$, and continue to denote it by  $v^{(\Delta,h)}_{\cN=2}$.
The primary $\cN=2$ degenerate diamond is defined similarly. In this case we find the following OPE relations
\begin{equation}\label{eq:N=2 degenerate diamond OPEs}
    \begin{split}
         G^-(z)v^{\bot}(w)\sim&  v^{-}(w) (z-w)^{-1}, \quad G^+(z)v^{\bot}(w)\sim0,\\
   \quad G^+(z)v^{-}(w)\sim& \left(\Delta-\frac{1}{2}h\right) v^{\bot}(w)(z-w)^{-2}+\frac{1}{2}\partial v^{\bot}(w) (z-w)^{-1}.
    \end{split}
\end{equation}
Note that we do \textit{not} require $v^{\bot}$ and $v^{-}$ to be even or odd elements. In fact, their parities are exchanged in half of our examples.

Lastly, we note that instead of (\ref{eq:N=2 degenerate diamond}) one may consider the relation $G^-_{(0)}v^{\bot}= 0$.
In this case the $\cN=2$ diamond is spanned by $v^{\bot},v^+$ and relations analogous to (\ref{eq:N=2 degenerate diamond OPEs}) can be derived.
If $v^{\bot}$ is primary, then the $\T{Vir}^c_{\cN=2}$-module is isomorphic to the one generated by $v^{\bot},v^-$, thanks to the automorphism (\ref{eq: N=2 automorphism}).
Such $\cN=2$ diamonds and their degenerate versions feature prominently in the $\cN=2$ hook type $\cW$-algebras defined in Section \ref{sect:supersymmetricY}.

\section{$\cN=2$ principal $\cW$-algebras} \label{sec:W-algebra type A(n,n-1)}

Let $\fr{g}=\fr{sl}_{n+1|n}$ and $F=F_{n+1|n}$ be an \textit{even} \textit{principal} nilpotent in $\mathfrak{sl}_{n+1|n}$, that is, the sum of principal nilpotents in $\gs\gl_{n+1}$ and $\gs\gl_n$.
We define the $\cN=2$ \textit{principal} $\cW$-algebra
$$\cW^k(\gs\gl_{n+1|n}):=\cW^k(\gs\gl_{n+1|n}, F),$$
which we regard as an $\cN=2$ analogue of principal $\cW$-algebras in type $A$.
Similarly, we consider an \textit{odd} \textit{principal} nilpotent $f=f_{n+1|n}$ such that $f^2=F$, and define the SUSY $\cN=2$ principal $\cW$-algebra
$$\cW^k_{\cN=1}(\mathfrak{sl}_{n+1|n}):=\cW^k_{\cN=1}(\mathfrak{sl}_{n+1|n},f).$$
By  Theorem \ref{thm:ordinary vs SUSY}, the two $\cW$-superalgebras are isomorphic for noncritical values of the level parameter
\[\cW^k(\gs\gl_{n+1|n})\cong \cW^k_{\cN=1}(\mathfrak{sl}_{n+1|n}),\quad k\neq -h^\vee.\]

For a concrete picture, let us fix the standard basis $\{e_{ij}|i,j=1,\cdots, 2n+1\}$ of $\mathfrak{gl}_{n+1|n}$ with the parity $p(e_{ij})=\bar{\imath}+\bar{\jmath}$, where $\bar{\imath}\equiv i+1 \Mod{2}$ is the parity of each index $i$. 
Since the dual Coxeter number of $\mathfrak{sl}_{n+1|n}$ is $h^{\vee}=1$, the normalized bilinear form is 
\begin{equation}\label{eq:bi form}
    (A|B)=\textup{str}(AB), \quad A,B\in \mathfrak{sl}_{n+1|n}.
\end{equation}

\subsection{$\cN=2$ structure}
First we consider the following low spin content of the centralizer $\fr{g}^F$,
\begin{equation} \label{eq:F and u}
    \begin{split}
        u&=n\sum_{i=1}^{n+1}e_{2i-1\, 2i-1}+(n+1)\sum_{i=1}^{n}e_{2i\, 2i}\in \gg^F_0,\\
          \mathrm{f}^+&= \sum_{i=1}^{n}e_{2i\, 2i-1} \in \gg^F_{-1/2},\quad \mathrm{f}^-= \sum_{i=1}^{n}e_{2i+1\, 2i}\in \gg^F_{-1/2}\\
         F&=\sum_{i=1}^{2n-1} e_{i+2\, i}\in \gg^F_{-1},
           \end{split}
\end{equation}
satisfying the Lie bracket relations 
\begin{equation}\label{eq:gF n=2}
    [u,\T{f}^{\pm}]=\pm \T{f}^{\pm}, \quad [\T{f}^{+},\T{f}^{-}]=F.
\end{equation}

\begin{proposition}\label{prop:sl(n|n-1), N=2}
 Let $k\neq -1$, and let $u$, $\mathrm{f}^{\pm}$, $F$ be as in \eqref{eq:basis of centralizer}.
    Then $\cW^{k}(\fr{sl}_{n+1|n})$ has $\cN=2$ superconformal structure 
    \begin{equation}\label{eq:N=2 in principal}
    H={-{\sqrt{-1}}}\omega_u,\quad c = -3 n (k n + k + n),
    \end{equation} 
    with the remaining fields being uniquely determined by (\ref{eq:N=2 diamond}),
    \begin{equation*}
    G^{+}=\frac{1+\sqrt{-1}}{2\sqrt{k+1}}\omega_{\mathrm{f}^{+}},\quad G^{-}=\frac{-1+\sqrt{-1}}{2\sqrt{k+1}}\omega_{\mathrm{f}^{-}},\quad L=-\frac{1}{k+1}\omega_{F}.
     \end{equation*}
\end{proposition}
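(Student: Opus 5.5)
We will realize the two commuting $\cN=1$ structures on $\cW^k(\gs\gl_{n+1|n})$ through two odd principal nilpotents, and then pass to the $\cN=2$ basis via \eqref{eq: Gpm notation for N=2 superconformal}. The odd elements of $\gg^F_{-1/2}$ are spanned by $\mathrm{f}^\pm$. Set
\[
f=\alpha(\mathrm{f}^++\mathrm{f}^-),\qquad \tilde f=\beta(\mathrm{f}^+-\mathrm{f}^-).
\]
By \eqref{eq:gF n=2} and the fact that $\mathrm{f}^\pm$ are odd with $[\mathrm{f}^\pm,\mathrm{f}^\pm]=0$, we get $\tfrac12[f,f]=\alpha^2F$ and $\tfrac12[\tilde f,\tilde f]=-\beta^2F$. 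Hence for suitable $\alpha,\beta$ (with $\beta=\sqrt{-1}\alpha$) both square to the same $F$, and each sits in an $\mathfrak{osp}_{1|2}$. Theorem~\ref{thm:ordinary vs SUSY} then gives two SUSY structures on $\cW^k(\gs\gl_{n+1|n})$, since $\gg^f_0=0$.

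Concretely, I would work only with the Kac--Roan--Wakimoto/De Sole--Kac generators $\omega_u,\omega_{\mathrm{f}^\pm},\omega_F$ of Proposition~\ref{prop: generators of W-alg}. The main step is to compute their OPEs. I would do this in the Miura image $V^{\psi_k}(\gg_0)\otimes\Phi_{1/2}$, or equivalently extract the leading terms from the structure of $\gg^F$.

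Since $u\in\gg^F_0$ and $\psi_k$ restricted to $\gg_0$ is explicit, the first computation is $\omega_u(z)\omega_u(w)\sim\psi_k(u|u)(z-w)^{-2}$. Here $\psi_k(u|u)=k\,\mathrm{str}(u^2)+\mathrm{str}_{\gn}(\mathrm{ad}\,u)^2$. Next, I would take $L$ to be the conformal vector of \eqref{eq: W-alg central charge}, identified with a multiple of $\omega_F$ up to the ambiguity of adding normally ordered terms. I would argue that it can be chosen as $-\frac1{k+1}\omega_F$ by normalizing the generator so that $\omega_F$ is the conformal vector, following the conventions of \cite{DK06}.

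With this choice, the standard pieces follow. The fields $\omega_{\mathrm{f}^\pm}$ are primary of weight $3/2$, and $\omega_u$ is primary of weight $1$ with charge $\pm1$ on $\omega_{\mathrm{f}^\pm}$. This comes from $[u,\mathrm{f}^\pm]=\pm\mathrm{f}^\pm$ and the equivariance of the $\omega$-map under $\gg^F_0$.

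The remaining data are $\omega_{\mathrm{f}^\pm}(z)\omega_{\mathrm{f}^\pm}(w)\sim0$ and
\[
\omega_{\mathrm{f}^+}(z)\omega_{\mathrm{f}^-}(w)\sim \frac{a}{(z-w)^3}+\frac{b\,\omega_u}{(z-w)^2}+\frac{\gamma\,\omega_F+\delta\,\partial\omega_u}{(z-w)}.
\]
The vanishing of $\omega_{\mathrm{f}^\pm}\omega_{\mathrm{f}^\pm}$ should follow from $H$-charge $\pm2$: there are no fields of charge $\pm2$ in weight $\le 2$. I would fix $a,b,\gamma,\delta$ with the Miura map, using the lowest terms of the free field images. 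Alternatively, I would fix them by Jacobi identities with the already known $L$ and $H$ OPEs, which determine all four constants up to one overall scalar.

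After this, $c$ is read off from $H(z)H(w)\sim\frac c3(z-w)^{-2}$ with $H=-\sqrt{-1}\omega_u$. One should check that it matches the central charge \eqref{eq: W-alg central charge}, namely $c=-3n(kn+k+n)$. The normalization of $G^\pm$, including the factor $(1\pm\sqrt{-1})/(2\sqrt{k+1})$, is then forced by matching \eqref{eq:N=2 OPEs 3}. Uniqueness of $G^\pm$ and $L$ given $H$ follows from \eqref{eq:N=2 diamond}, with $G^\pm$ determined up to the automorphism \eqref{eq:N=2 automorphism constant}.

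I expect the main obstacle to be the explicit computation of the constants $a,b,\gamma,\delta$ in the $\omega_{\mathrm{f}^+}\omega_{\mathrm{f}^-}$ OPE, together with $\psi_k(u|u)$ for general $n$. For $\psi_k(u|u)$, the supertrace over $\gn$ must be summed with signs: $\mathrm{str}(u^2)=n^2(n+1)-(n+1)^2n=-n(n+1)$, plus the $\gn$ contribution. I would first do $n=1$, where everything is $\text{Vir}^c_{\cN=2}$, as a consistency check. For general $n$, I would use the SUSY Miura map \eqref{eq:SUSY Miura}, where $\omega_{\mathrm{f}^\pm}$ and $\omega_u$ have short explicit images.
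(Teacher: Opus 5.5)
Your outline is the paper's own route. The paper takes $L=-\frac{1}{k+1}\omega_F$ with $c$ given by \eqref{eq: W-alg central charge}, obtains the charges and primarity of $\omega_{\mathrm f^\pm}$ from $[u,\mathrm f^\pm]=\pm\mathrm f^\pm$ and conformal weights, writes an ansatz for $\omega_{\mathrm f^+}(z)\omega_{\mathrm f^-}(w)$ dictated by $[\mathrm f^+,\mathrm f^-]=F$, and fixes its constants by Jacobi identities. Your opening paragraph on $f,\tilde f$ is not needed here.

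\textbf{The gap.} The one computation you commit to explicitly is wrong: $\omega_u(z)\omega_u(w)$ is \emph{not} $\psi_k(u|u)(z-w)^{-2}$.
\begin{itemize}
\item The grading here is not even. $\gg_{1/2}$ is spanned by the $2n$ odd elements $e_{i,i+1}$, and $\mathrm{ad}\,u$ acts on them by $\pm1$.
\item Hence $J_u$ alone does not lie in $\cW^k$. The generator $\omega_u\in V^{\psi_k}(\mathfrak p)\otimes\Phi_{1/2}$ is $J_u$ plus a bilinear in the neutral fermions.
\item This bilinear adds $-\tfrac12\mathrm{str}_{\gg_{1/2}}(\mathrm{ad}\,u)^2=n$ to the level.
\item Since $\mathrm{str}(u^2)=\mathrm{str}_{\gn_+}(\mathrm{ad}\,u)^2=-n(n+1)$, your formula gives $\psi_k(u|u)=-(k+1)n(n+1)$. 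Only $\psi_k(u|u)+n=-n(kn+k+n)$ equals $c/3$.
\item Already for $n=1$ your formula gives $-2(k+1)$ instead of $-(2k+1)$.
\end{itemize}

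\textbf{Your Jacobi identities would expose this.} Write $G^+_{(2)}G^-=a$, $G^+_{(1)}G^-=bH$, and $G^+_{(0)}G^-=\gamma L+\delta\partial H$, with $H$ of level $h_0$ and charges $\pm1$ on $G^\pm$.
\begin{itemize}
\item $J_{1,0}$ and $J_{2,0}(H,G^+,G^-)$, together with $J_{2,0}$ and $J_{3,0}(L,G^+,G^-)$, give $b=\gamma=2\delta$, $a=2\delta h_0$ and $a=\gamma c/3$.
\item Since $\gamma\neq0$ (from $[\mathrm f^+,\mathrm f^-]=F$), this forces $h_0=c/3$. With your value of the level these identities are inconsistent.
\item Conversely, this argument lets you skip the direct level computation altogether.
\end{itemize}

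\textbf{Normalization of $H$.} You use $\omega_{u(0)}\omega_{\mathrm f^\pm}=\pm\omega_{\mathrm f^\pm}$, so the field satisfying the $H$-relations of \eqref{eq:N=2} is $\omega_u$ itself. The field $-\sqrt{-1}\,\omega_u$ has charges $\mp\sqrt{-1}$ and level $-c/3$. Reading $c$ off its two-point function, as you propose, therefore gives the wrong sign, and this normalization has to be reconciled.

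\textbf{The ansatz.} Your four-term ansatz needs a reason to exclude $:\!\omega_u\omega_u\!:$ from the first-order pole. (For $n\ge2$, the other weight-$2$ generator is already excluded by the leading term $[\mathrm f^+,\mathrm f^-]=F$.) The missing reason is the Jacobi identity for the triple $G^+,G^+,G^-$, which the paper also imposes:
\begin{itemize}
\item $G^+_{(0)}G^+=0$ forces $G^+_{(0)}(G^+_{(0)}G^-)=0$.
\item But $G^+_{(0)}\!:\!HH\!:\;=-2\!:\!HG^+\!:\!+\partial G^+$, and $:\!HG^+\!:$ appears nowhere else, so its coefficient must vanish.
\end{itemize}
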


\begin{proof}
It is well known that $L=-\frac{1}{k+1}\omega_F$ is a conformal vector of central charge \eqref{eq: W-alg central charge}, which evaluates to (\ref{eq:N=2 in principal}).
Since $\omega_u$ has conformal weight one, it must generate a Heisenberg field, and a computation shows that it is at level $-n (k n + k + n)$.
The first pair of relations in (\ref{eq:gF n=2}) implies that strong generators $\omega_{\T{f}^{\pm}}$ are primary for $\omega_F$ and $\omega_u$ of conformal and Heisenberg weights $\frac{3}{2}$ and $\pm1$, respectively.
Then the second relation (\ref{eq:gF n=2}) implies that the most general OPE compatible with relations (\ref{eq:gF n=2}) is
\[\omega_{\T{f}^{\pm}}(z)\omega_{\T{f}^{\pm}}(w)\sim 0,\quad \omega_{\T{f}^{+}}(z)\omega_{\T{f}^{-}}(w)\sim \alpha_1(z-w)^{-3}+\alpha_H H(w)(z-w)^{-2}+\left(L+\alpha_{\partial H}\partial H\right)(w)(z-w)^{-1}.\]
The Jacobi identities $J(H,G^{\pm},G^{\mp})$, $J(L,G^{\pm},G^{\mp})$, $J(G^{\pm},G^{\mp},G^{\mp})$ uniquely determine the structure constants $\alpha_1, \alpha_H, \alpha_{\partial H}$, and these fields generate a copy of $\T{Vir}^{c}_{\cN=2}$ as in (\ref{eq:N=2 OPEs 1}-\ref{eq:N=2 OPEs 3}).
Finally, since $L$ is the conformal vector for $\cW^k(\fr{sl}_{n+1|n})$, we conclude that $\cW^k(\fr{sl}_{n+1|n})$ has $\cN=2$ structure.
\end{proof}
\begin{remark}
This statement also follows from \cite[Theorem 24]{MTY}. 
While omitting the details of Jacobi computation, our sketch here is included for convenience of the reader.
\end{remark}

Alternatively, we may consider the odd principal nilpotents $f$ and $\tilde{f}$ of $\mathfrak{sl}_{n+1|n}$ given by 
\begin{equation} \label{eq: odd nilpotents}
    f=\sum_{i=1}^{2n}e_{i+1\, i}, \quad \tilde{f}=\sqrt{-1}\sum_{i=1}^{2n}(-1)^{i}e_{i+1\, i},
\end{equation}
satisfying the Lie bracket relations
\begin{equation}\label{eq:nilpotents square}
\begin{gathered}
    [f,f]=2F=[\tilde{f},\tilde{f}],\\
    \tilde{f} =  [-\sqrt{-1}u,f],\quad   f=[\sqrt{-1}u,\tilde{f}].
\end{gathered}
\end{equation}
A change of basis (\ref{eq: change of basis 1}) implies the following.
\begin{corollary} \label{cor: N=2}
    Fields $\omega_{{f}}$ and $\omega_{{\tilde {f}}}$ give rise to two $\cN=1$ structures as in (\ref{eq: N=2 superconformal conditions}), specifically
    \[ G=\frac{\sqrt{-1}}{\sqrt{2(k+1)}}\omega_{{f}}\ , \quad \tilde G=\frac{\sqrt{-1}}{\sqrt{2(k+1)}}\omega_{\tilde{f}}.\]
\end{corollary}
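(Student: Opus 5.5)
The plan is to reduce Corollary \ref{cor: N=2} to Proposition \ref{prop:sl(n|n-1), N=2} by the change of basis \eqref{eq: change of basis 1}. By that change of basis, $G:=G^++G^-$ and $\widetilde G:=\sqrt{-1}(G^+-G^-)$ satisfy \eqref{eq: N=1 superconformal conditions} with the same $L$, and together with $J=\sqrt{-1}H$ they satisfy \eqref{eq: N=2 superconformal conditions}. So it suffices to show that $G^++G^-$ and $\sqrt{-1}(G^+-G^-)$ coincide with $\frac{\sqrt{-1}}{\sqrt{2(k+1)}}\omega_f$ and $\frac{\sqrt{-1}}{\sqrt{2(k+1)}}\omega_{\tilde f}$, respectively. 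Lemma \ref{lem: N=1 superconformal conditions} then yields the two $\cN=1$ structures with $D=G_{(0)}$ and $\widetilde D=\widetilde G_{(0)}$.

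The key step is linearity of $a\mapsto\omega_a$ on $\gg^F$, which is part of the choice of free generators in Proposition \ref{prop: generators of W-alg}. Here I would fix the weight-$\frac32$ generators so that they are linear in $a\in\gg^F_{-1/2}$, e.g.\ $\omega_a=d$-closed lift of $J_a+\cdots$. From \eqref{eq:F and u} and \eqref{eq: odd nilpotents}, the odd index $i=2j-1$ contributes $e_{2j\,2j-1}$, a summand of $\mathrm f^+$, and the even index $i=2j$ contributes $e_{2j+1\,2j}$, a summand of $\mathrm f^-$. Hence
\[
f=\mathrm f^++\mathrm f^-,\qquad \tilde f=\sqrt{-1}\bigl(-\mathrm f^++\mathrm f^-\bigr),
\]
and so $\omega_f=\omega_{\mathrm f^+}+\omega_{\mathrm f^-}$ and $\omega_{\tilde f}=\sqrt{-1}(\omega_{\mathrm f^-}-\omega_{\mathrm f^+})$.

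Substituting the normalizations $G^+=\frac{1+\sqrt{-1}}{2\sqrt{k+1}}\omega_{\mathrm f^+}$ and $G^-=\frac{-1+\sqrt{-1}}{2\sqrt{k+1}}\omega_{\mathrm f^-}$ then gives explicit linear combinations of $\omega_{\mathrm f^{\pm}}$. For $G^++G^-$ to equal $\frac{\sqrt{-1}}{\sqrt{2(k+1)}}\omega_f$, the coefficients of $\omega_{\mathrm f^+}$ and $\omega_{\mathrm f^-}$ must agree. As written, $\frac{1+\sqrt{-1}}{2}$ and $\frac{-1+\sqrt{-1}}{2}$ differ by the factor $\sqrt{-1}$, and their moduli are $1/\sqrt2$.

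This reflects the residual freedom \eqref{eq: non-uniqueness of N=2}, equivalently the $\mathbb C^\times$-rescaling \eqref{eq:N=2 automorphism constant}, which only fixes the product of the normalizations of $G^+$ and $G^-$. I would therefore first check whether applying the rescaling $G^\pm\mapsto\mu^{\pm1}G^\pm$ with $\mu^2=\sqrt{-1}$ (or equivalently $p_1,p_2$ in \eqref{eq: non-uniqueness of N=2}) produces exactly the stated $G,\widetilde G$. The corollary is stated up to this choice of $\cN=2$-compatible pair. With $\mu=e^{-\pi\sqrt{-1}/4}$ one gets $\mu\frac{1+\sqrt{-1}}{2}=\frac{1}{\sqrt2}$ and $\mu^{-1}\frac{-1+\sqrt{-1}}{2}=\frac{\sqrt{-1}}{\sqrt2}$. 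These two coefficients still differ, so instead I expect the correct identification is with $\omega_f$ up to sign conventions in $\omega_{\mathrm f^-}$, which are fixed only up to scalars by Proposition \ref{prop: generators of W-alg}.

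The main obstacle is precisely this normalization bookkeeping, which I would resolve by direct computation. I would compute $\omega_f(z)\omega_f(w)$, whose leading term is $(k+1)$ times a multiple of $\mathrm{str}(f\cdot)$ paired against $F$, using \eqref{eq:nilpotents square}, $[f,f]=2F$ and $L=-\frac1{k+1}\omega_F$. This gives $\omega_f(z)\omega_f(w)\sim -2(k+1)\bigl(2L(z-w)^{-1}+\tfrac{2c}{3}(z-w)^{-3}\bigr)$, and hence the prefactor $\frac{\sqrt{-1}}{\sqrt{2(k+1)}}$. For $\omega_{\tilde f}$ the same holds since $[\tilde f,\tilde f]=2F$. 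For the mixed OPE, the relations $\tilde f=[-\sqrt{-1}u,f]$ and $f=[\sqrt{-1}u,\tilde f]$ give $G(z)J(w)\sim\widetilde G(w)(z-w)^{-1}$ with $J=\sqrt{-1}H=\omega_u$. The remaining relations in \eqref{eq: N=2 superconformal conditions} follow from the Jacobi identities as in the proof of Proposition \ref{prop:sl(n|n-1), N=2}.
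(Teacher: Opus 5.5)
Your route is the paper's: the corollary is read off in one line from Proposition \ref{prop:sl(n|n-1), N=2} via the change of basis \eqref{eq: change of basis 1}. Your decomposition $f=\mathrm{f}^{+}+\mathrm{f}^{-}$, $\tilde f=\sqrt{-1}(\mathrm{f}^{-}-\mathrm{f}^{+})$, together with linearity of $a\mapsto\omega_a$ on $\gg^F_{-1/2}$, is exactly what that line needs. You are also right that, with the constants literally displayed for $G^\pm$, $G^++G^-$ is not a multiple of $\omega_f$; the paper's one-line proof passes over this.

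The gap is in how you resolve it.
\begin{itemize}
\item Your criterion $\mu^2=\sqrt{-1}$ is correct, but you then use $\mu=e^{-\pi\sqrt{-1}/4}$, whose square is $-\sqrt{-1}$.
\item You also miscompute: $e^{\pi\sqrt{-1}/4}\cdot\frac{-1+\sqrt{-1}}{2}=-\frac{1}{\sqrt2}$, not $\frac{\sqrt{-1}}{\sqrt2}$. From this you wrongly conclude that the rescaling fails.
\item Rescaling $\omega_{\mathrm{f}^-}$ on its own is not available. The weight-$\frac32$ space is spanned by $\omega_{\mathrm{f}^{\pm}}$, so $\omega_a$ is pinned down by its linear term. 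That is exactly what gives $\omega_f=\omega_{\mathrm{f}^+}+\omega_{\mathrm{f}^-}$. The only freedom is the automorphism \eqref{eq:N=2 automorphism constant} (and $\sigma$).
\item The direct OPE computation is only asserted. The pole coefficients you write for $\omega_f(z)\omega_f(w)$ are precisely what has to be proved. They do not follow from $[f,f]=2F$ and $L=-\frac{1}{k+1}\omega_F$ alone.
\end{itemize}

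The repair is one line. Take $\mu=e^{\pi\sqrt{-1}/4}$. Then $\mu\cdot\frac{1+\sqrt{-1}}{2}=\mu^{-1}\cdot\frac{-1+\sqrt{-1}}{2}=\frac{\sqrt{-1}}{\sqrt2}$, so $G^{\pm}=\frac{\sqrt{-1}}{\sqrt{2(k+1)}}\,\omega_{\mathrm{f}^{\pm}}$ is again an $\cN=2$ basis with the same $H,L$, and
\[
G^++G^-=\frac{\sqrt{-1}}{\sqrt{2(k+1)}}\,\omega_f,\qquad \sqrt{-1}\,(G^+-G^-)=-\frac{\sqrt{-1}}{\sqrt{2(k+1)}}\,\omega_{\tilde f}.
\]
The leftover sign on $\widetilde G$ is immaterial. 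Both $\pm\widetilde G$ satisfy \eqref{eq: N=1 superconformal conditions}, so Lemma \ref{lem: N=1 superconformal conditions} gives the two $\cN=1$ structures with exactly the stated $G,\widetilde G$.

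For the mixed relations in \eqref{eq: N=2 superconformal conditions}, use $[u,f]=\sqrt{-1}\,\tilde f$ and $[u,\tilde f]=-\sqrt{-1}\,f$. They hold for the stated $G,\widetilde G$ with $J=\sqrt{-1}\,\omega_u$. Your choice $J=\omega_u$ instead gives $G_{(0)}J=-\sqrt{-1}\,\widetilde G$. Moreover, $J=\sqrt{-1}\,\omega_u$, i.e. $H=\omega_u$, is what $[u,\mathrm{f}^{\pm}]=\pm\mathrm{f}^{\pm}$ and the level $c/3$ of $\omega_u$ require. So the factor $-\sqrt{-1}$ in the formula for $H$ in Proposition \ref{prop:sl(n|n-1), N=2} cannot be taken literally.
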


\subsection{$\cN=2$ diamond structure}

The higher weight generators complete to a basis of the centralizer $\gg^F$,
\begin{equation} \label{eq:basis of centralizer}
   \gg^F = \text{Span}_{\mathbb{C}}\{u, \mathrm{f}^+, \mathrm{f}^-, F\}\oplus \T{Span}_{\mathbb{C}}\{ a^{j,\bot}, a^{j,+}, a^{j,-},a^{j,\top} \, | 2\leq j\leq n\}, 
\end{equation}
where elements
\begin{equation} \label{eq: as}
    \begin{split}
        a^{j,\bot}&=\sum_{i=1}^{n-j+1} (-n+j)e_{2i+2j-1\ 2i-1}-\sum_{i=1}^{n-j}(n+j+1)e_{2i+2j\ 2i}\in \fr{g}^{F}_{-j},\\
        a^{j,+}&= \sum_{i=1}^{n-j}e_{2i+2j\ 2i-1}\in \gg^F_{-j-1/2},\quad a^{j,-}=\sum_{i=1}^{n-j}e_{2i+2j+1\ 2i}\in \gg^F_{-j-1/2},\\
        a^{j, \top}&=\sum_{i=1}^{2n-1-2j} e_{i+2j+2\ i} \in \gg^F_{-j-1},
    \end{split}
\end{equation}
satisfy relations 
\begin{equation}\label{eq:c2 u f and a}
[u,a^{j,\bot}]=[u,a^{j,\top}]=0,\quad [u,a^{j,\pm}]=\pm a^{j,\pm},\quad [\mathrm{f}^{\pm}, a^{j,\bot}]=\pm (2j+1) a^{j,\pm}, \quad [\mathrm{f}^{\pm}, a^{j,\mp}]= a^{j,\top}.
\end{equation}
Thanks to Proposition \ref{prop: generators of W-alg} it is immediate that $\cW^k(\gs\gl_{n+1|n})$ has a strong generating set 
\begin{equation}\label{eq:princ gen}
    \omega_{a^{j,\bot}}=J_{a^{j,\bot}}+\dotsb,\quad\omega_{a^{j,\pm}}=J_{a^{j,\pm}}+\dotsb, \quad \omega_{a^{j, \top}}=J_{a^{j, \top}}+\dotsb,
\end{equation}
with omitted terms being the corrections in the BRST complex (\ref{eq:diff_W}), 
and is of strong generating type
\begin{equation} 
\label{princ:gentype} \cW\bigg(1,2^2, 3^2,\dots, n^2, n+1; \bigg(\frac{3}{2}\bigg)^2, \bigg(\frac{5}{2}\bigg)^2,\dots, \bigg(\frac{2n+1}{2}\bigg)^2\bigg).\end{equation}

Let $V$ be a vertex algebra containing $\T{Vir}^{c}_{\cN=2}$ as a subVOA, and let $S$ be a strong generating set for $V$.
We say that $S$ has \textit{$\cN=2$ diamond structure} if there exists a subset of elements $S^{\bot} \subset S$, so that all the $\cN=2$ diamonds generated by elements $S^{\bot}$ are nondegenerate, and they span the same vector space as $S$\footnote{Often one can choose the set $B$ to be \textit{primary} for $\T{Vir}^{c}_{\cN=2}$. This is explained in Appendix \ref{sect:primary structures}.}.

\begin{proposition}\label{cor:diamond}
Let $k\neq -1$.
Then there exists a set of strong generators for $\cW^k(\fr{sl}_{n+1|n})$ with $\cN=2$ diamond structure.
\end{proposition}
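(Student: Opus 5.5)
For each $2\leq j\leq n$ I would take $v_j:=\omega_{a^{j,\bot}}$ as the bottom of a diamond and show that the diamond it spans via \eqref{eq:N=2 diamond} can replace the four generators $\omega_{a^{j,\bot}},\omega_{a^{j,\pm}},\omega_{a^{j,\top}}$. Set $S^{\bot}=\{v_2,\dots,v_n\}$ and let $S$ be the union of the diamonds $v^{(j,0)}_{\cN=2}$ together with $H,G^{\pm},L$. The diamonds are nondegenerate with $h=0$, and $H,G^\pm,L$ form the diamond of $H$ itself: $H^{\pm}=\mp G^{\pm}_{(0)}H=G^\pm$, and $H^\top$ equals $L$ up to a nonzero scalar plus derivatives. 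So it remains to show that $S$ is a strong generating set.

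The key tool is the associated graded (Li filtration) description in Proposition \ref{prop: generators of W-alg}. The linear term of $\sigma^0(\omega_a)$ is $a$, and modulo $F^1$ and decomposables, the zero mode of a weight $3/2$ generator acts on linear parts through the Lie bracket. Concretely, up to the nonzero normalizations of Proposition \ref{prop:sl(n|n-1), N=2}, $G^{\pm}_{(0)}\omega_{a}$ has linear term $c_\pm[\mathrm{f}^{\pm},a]$, with $c_\pm\neq 0$ a multiple of $(k+1)^{-1/2}$. I would justify this from the OPE in $V^{\psi_k}(\mathfrak p)\otimes\Phi_{1/2}$: the $(z-w)^{-1}$ coefficient of $J_{\mathrm f^\pm}(z)J_a(w)$ is $J_{[\mathrm f^\pm,a]}$, and all correction terms are normally ordered products of at least two fields. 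Applying \eqref{eq:c2 u f and a} then gives
\[
v_j^{\pm}\equiv \mp c_\pm(\pm(2j+1))\,\omega_{a^{j,\pm}},\qquad
v_j^{\top}\equiv (\text{nonzero const})\,\omega_{a^{j,\top}}
\]
modulo normally ordered polynomials in generators of lower weight and their derivatives. In $v_j^\top$ the two terms $G^\pm_{(0)}G^\mp_{(0)}$ each contribute via $[\mathrm f^\pm,a^{j,\mp}]=a^{j,\top}$, and the $\partial v$ term is a derivative, so it is harmless.

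By a triangular change of basis, weight by weight and inducting on $j$, the set $S$ therefore still strongly generates $\cW^k(\gs\gl_{n+1|n})$. Since it has the same cardinality and weights as the free generating set of type \eqref{princ:gentype}, it is again a minimal, indeed free, generating set.

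The main obstacle is the constant in $v_j^\top$. The two contributions $G^+_{(0)}G^-_{(0)}$ and the $\pm$ convention in \eqref{eq:N=2 diamond} could in principle cancel. One has to check carefully that the resulting coefficient is proportional to $(2j+1)c_+c_-\neq0$, and that $c_+c_-$ is a nonzero multiple of $(k+1)^{-1}$. This is consistent with the $j$-independent normalization of $G^\pm$ in Proposition \ref{prop:sl(n|n-1), N=2}, and it requires $k\neq-1$. A secondary point is to confirm that the weight and $H$-charge grading of $\gg^F$ rules out any other linear term (for instance a $J_{a^{i,\cdot}}$ with $i\neq j$) from mixing in. This follows since $\mathrm{ad}\,u$ and $\mathrm{ad}\,x$ eigenvalues single out the $a^{j,\cdot}$.
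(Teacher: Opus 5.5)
Your proposal is correct and follows the paper's proof: you take $\omega_{a^{j,\bot}}$ as diamond bottoms, read off the linear leading terms of $v_j^{\pm}$ and $v_j^{\top}$ from the brackets \eqref{eq:c2 u f and a} via Proposition~\ref{prop: generators of W-alg}, and then make a triangular change of basis. The ``main obstacle'' you flag is settled at once by those brackets, since $[\mathrm{f}^+,[\mathrm{f}^-,a^{j,\bot}]]=-(2j+1)a^{j,\top}\neq 0$. Your grading remark, however, is inaccurate: $a^{j,\top}$ and $a^{j+1,\bot}$ have the same $\mathrm{ad}\,x$- and $\mathrm{ad}\,u$-eigenvalues, so the gradings do not separate them. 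This is harmless, because the bracket computation already determines the linear term exactly, and any admixture of $\omega_{a^{j+1,\bot}}=v_{j+1}$ would not affect the triangular change of basis.
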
 

\begin{proof}
Let $H,G^{\pm}, L$ be the generators of $\T{Vir}^{c}_{\cN=2}$ from Proposition \ref{prop:sl(n|n-1), N=2}.
By Proposition \ref{prop: generators of W-alg}, there is a strong generating set consisting of $H,G^{\pm}, L$ and $\{\omega_{a^{i,\bot}},\omega_{a^{i,\pm}},\omega_{a^{i,\top}}:1\leq i\leq n-1\}$.
Consider the $\cN=2$ diamonds (\ref{eq:N=2 diamond}) generated by $a^{i,\bot}$ in (\ref{eq: as}),
\begin{equation}\label{eq: princ w algebra generators}
    \omega^{i,\bot}:= \omega_{a^{i,\bot}}, \quad \omega^{i,\pm}:= \mp G^{\pm}{}_{(0)}\omega^{i,\bot}, \quad \omega^{i,\top}:= \pm G^{\pm}_{(0)}G^{\mp}_{(0)}\omega^{i,\bot}\mp 1/2\partial \omega^{i,\bot},
\end{equation}
and denote the resulting set of strong generators by $S$.
Thanks to the relations (\ref{eq:c2 u f and a}) and Proposition \ref{prop: generators of W-alg} it follows that we have relations
\begin{equation} \label{eq:omega i and omega a}
    \omega^{i,\bot}= \omega_{a^{i,\bot}}, \quad \omega^{i,\pm}=  \pm (2i+1)\omega_{a^{i,\pm}}+\cdots , \quad \omega^{i,\top}=-4 \omega_{a^{i,\top}}+\cdots,
\end{equation}
where the rest of the terms on the RHS only depend on the fields $\omega_{a^{*,*}}$ with conformal weights strictly lower than the LHS.
This establishes the $\cN=2$ diamond structure of $S$.
\end{proof}

\subsection{Automorphism}
There is a natural automorphism of order two on $\cW^k(\fr{sl}_{n+1|n})$ that extends the automorphism (\ref{eq: N=2 automorphism}) of the $\T{Vir}^c_{\cN=2}$ subVOA. It arises naturally from two automorphisms of the Lie superalgebra $\fr{sl}_{n+1|n}$. 
First, recall the automorphism on $\gs\gl_{n+1|n}$
\begin{equation*}
    -\textup{str}: \gs\gl_{n+1|n}\rightarrow \gs\gl_{n+1|n}, \quad e_{i j} \mapsto -(-1)^{(j+1)(i+j)}e_{j i}
\end{equation*}
given by the supertranspose with a minus sign.
Second, let $\tau$ be the inner automorphism on $\gs\gl_{n+1|n}$ given by the conjugation with the anti-diagonal matrix
\begin{equation} \label{eq: conjugation matrix}
    T=\sum_{i=1}^{2n+1}(-\sqrt{-1})^{n-1+i}e_{i\, 2n+2-i},\quad \det T=1,
\end{equation}
and denote the composition of the two automorphisms
\begin{equation} \label{eq:automorphism,sl(n+1|n)}
    \sigma:=\tau \circ (-\textup{str}): e_{i j}\mapsto(-1)^{ij+1}\sqrt{-1}^{i+j}e_{2n+2-j \ 2n+2-i}.
\end{equation}
\begin{comment}
\begin{equation}
    \sigma \ : 
    \left\{ \begin{array}{l} \ e_{i+1\, i} \mapsto (-1)^{i}\sqrt{-1}e_{2n+2-i\, 2n+1-i}, \\ \  e_{i\, i+1} \mapsto (-1)^{i}\sqrt{-1}e_{2n+1-i\, 2n+2-i}.\end{array} \right. 
\end{equation}
\end{comment}

\begin{proposition} \label{prop:automorphism}
Let $\omega^{\bot,i},\omega^{i,\pm},\omega^{i,\top}$ denote strong generating set (\ref{eq: princ w algebra generators}) satisfying $\cN=2$ diamond structure.
Then automorphism $\sigma$ in \eqref{eq:automorphism,sl(n+1|n)} induces an order two automorphism of $\cW^k(\mathfrak{sl}_{n+1|n})$,
\begin{equation}\label{eq:auto principal W}
        \sigma(\omega^{i,\bot})=(-1)^{i}\omega^{i,\bot}, \quad \sigma(\omega^{i,\pm})=(-1)^{i+1}\omega^{i,\mp}, \quad 
        \sigma(\omega^{i,\top})=(-1)^{i+1}\omega^{i,\top}.
     \end{equation}
\end{proposition}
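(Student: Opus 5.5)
Our plan is to check that $\sigma$ is an automorphism of $\gs\gl_{n+1|n}$ preserving all the data used to build $\cW^k(\gs\gl_{n+1|n})$, and then to compute its action on the generators through the centralizer $\gg^F$.

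\textbf{Step 1: $\sigma$ is an automorphism of the BRST data.} The map $-\textup{str}$ is a Lie superalgebra automorphism, $\tau$ is inner, and both preserve the supertrace form \eqref{eq:bi form}. From the formula \eqref{eq:automorphism,sl(n+1|n)}:
\begin{itemize}
\item $\sigma$ sends $e_{i+2\,i}$ to $e_{2n+2-i\,2n-i}$ with coefficient $(-1)^{i(i+2)+1}\sqrt{-1}^{2i+2}=(-1)^{i+1}(-1)^{i+1}=1$, so $\sigma(F)=F$;
\item $\sigma(x)=x$, so $\sigma$ preserves the grading and $\gn_\pm$, $\gp$, $\gg_{1/2}$.
\end{itemize}
Hence $\sigma$ lifts to $V^k(\gg)\otimes\cF(\mathfrak{A})\otimes\Phi_{1/2}$ (acting on the ghosts via the induced maps on $\gn_\pm$) and fixes the differential $d$ of \eqref{eq:diff_W}. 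So it induces a vertex algebra automorphism of $\cW^k(\gs\gl_{n+1|n})$. It also sends $J_a\mapsto J_{\sigma(a)}$, is compatible with the Miura map, and by Proposition~\ref{prop: generators of W-alg} satisfies $\sigma(\omega_a)=\omega_{\sigma(a)}$ up to corrections by lower-weight normally ordered terms. Since $\sigma$ fixes $F$, it fixes $L=-\frac1{k+1}\omega_F$, because $L$ is canonical as the conformal vector.

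\textbf{Step 2: action on $\gg^F$.} We compute directly:
\begin{itemize}
\item $\sigma(u)=-u$, since $n$ and $n+1$ swap under $i\mapsto 2n+2-i$ and the diagonal sign is $-1$;
\item $\sigma(\mathrm f^\pm)$ is a multiple of $\mathrm f^\mp$;
\item $\sigma(a^{j,\bot})=(-1)^j a^{j,\bot}$, $\sigma(a^{j,\pm})=\epsilon_j a^{j,\mp}$ and $\sigma(a^{j,\top})=\pm a^{j,\top}$, using the index reflection with the signs $(-1)^{ij+1}\sqrt{-1}^{i+j}$.
\end{itemize}
Consequently $H\mapsto -H$ and $G^\pm\mapsto G^\mp$, possibly after rescaling. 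Restricted to $\T{Vir}^c_{\cN=2}$, the automorphism $\sigma$ preserves $L$ and negates $H$. The $\cN=2$ OPE \eqref{eq:N=2 OPEs 3} then forces $\sigma(G^\pm)=\mu^{\pm1}G^\mp$. We fix $\mu=1$ by checking the coefficients $\frac{\pm1+\sqrt{-1}}{2\sqrt{k+1}}$ against the sign $\sigma(\mathrm f^+)=c\,\mathrm f^-$; if needed, we compose with the $\mathbb{C}^\times$ automorphism \eqref{eq:N=2 automorphism constant}, which is generated by $H_{(0)}$ and hence lies in the identity component.

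\textbf{Step 3: the diamonds and order two.} The hardest point is that $\omega^{i,\bot}=\omega_{a^{i,\bot}}$ is only determined up to lower-order corrections, so a priori $\sigma(\omega^{i,\bot})=(-1)^i\omega^{i,\bot}+(\text{lower terms})$. To handle this, we choose the generators $\omega_{a}$ $\sigma$-equivariantly. Because $\sigma$ is semisimple of finite order on each weight space, we can average: replace $\omega_a$ by its projection onto the appropriate eigenspace. This preserves the leading term $J_a$ and the lower-weight properties of Proposition~\ref{prop: generators of W-alg}.

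Applying $\sigma$ to \eqref{eq: princ w algebra generators} then gives
\[
\sigma(\omega^{i,\pm})=\mp\sigma(G^\pm)_{(0)}\,\sigma(\omega^{i,\bot})=\mp(-1)^i G^\mp_{(0)}\omega^{i,\bot}=(-1)^{i+1}\omega^{i,\mp}.
\]
Similarly,
\[
\sigma(\omega^{i,\top})=(-1)^i\bigl(\pm G^\mp_{(0)}G^\pm_{(0)}\omega^{i,\bot}\mp\tfrac12\partial\omega^{i,\bot}\bigr)=(-1)^{i+1}\omega^{i,\top},
\]
using the other sign choice in the definition of $v^\top$. Finally, $\sigma^2$ fixes all the strong generators, so $\sigma^2=\mathrm{id}$.
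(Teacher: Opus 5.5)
This is the paper's argument. You extend $\sigma$ to the BRST complex, use $\sigma(F)=F$ and $\sigma(x)=x$ to see that it commutes with $d_{(0)}$, pass to cohomology, and read off the signs from leading terms. Your Step 3 improves on the paper in rigor. Leading terms alone only give $\sigma(\omega^{i,\bot})=\pm\omega^{i,\bot}+(\text{lower-order terms})$, so the statement really needs a suitable choice of generators, and your averaging supplies it. For that averaging, the order-two property is already available from $\sigma^2=\mathrm{id}$ on $\mathfrak{g}$, hence on the complex, so you need not wait until the end to get it. Deriving the side and top signs from $\sigma(G^\pm)=G^\mp$ is also cleaner than computing them separately.

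There are two bookkeeping points you should correct.

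First, one computes $\sigma(\mathrm f^+)=\sqrt{-1}\,\mathrm f^-$ and $\sigma(\mathrm f^-)=-\sqrt{-1}\,\mathrm f^+$. So $\sigma(G^\pm)=G^\mp$ holds exactly, and no composition with the $\mathbb{C}^\times$-action is needed. Composing would in any case no longer give the automorphism induced by $\sigma$. Likewise, $\sigma(u)=-u$ holds because $i\mapsto 2n+2-i$ preserves the parity of $i$ and each $e_{ii}$ goes to $-e_{2n+2-i\,2n+2-i}$. It is not because the coefficients $n$ and $n+1$ swap; they do not.

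Second, take $a^{j,\bot}\in\mathfrak{g}^F_{-j}$ as defined in \eqref{eq: as}. The index reflection actually gives $\sigma(a^{j,\bot})=(-1)^{j+1}a^{j,\bot}$ and $\sigma(a^{j,\top})=(-1)^{j}a^{j,\top}$; for example, $a^{1,\bot}$ is fixed. Since $\omega_{a^{j,\bot}}$ has conformal weight $j+1$, the sign $(-1)^i$ in \eqref{eq:auto principal W} is correct only when $i$ denotes the conformal weight of $\omega^{i,\bot}$. With that reading $\omega^{1,\bot}=H$, which is consistent with \textup{(\textbf{U}3)}. Your Step 2 bullet should therefore be stated with this index shift and actually checked, rather than asserted to match the target.
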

\begin{proof}
    Let $\gg=\mathfrak{sl}_{n+1|n}$. 
    It is easy to see that $\sigma$ is an automorphism of order two on $\gg$.
    Consider its extension to the vertex algebra automorphism on $V^{\psi_k}(\mathfrak{p})\otimes \Phi_{1/2}$,
    \begin{equation} \label{eq:sigma}
    \sigma\ : \ a\mapsto \sigma(a), \quad \Phi_{[m]}\mapsto\Phi_{[\sigma(m)]}.
    \end{equation}
    Since $\sigma$ fixes $F$, it preserves the differential, and hence is an automorphism of the complex $\mathcal{C}^k(\gg,F)$; consequently, it descends to cohomology.
Using the explicit formulas \eqref{eq: as} for the leading terms of the strong generators \eqref{eq:princ gen}, it is straightforward to verify that $\sigma$ acts as in \eqref{eq:auto principal W} on the strong generators arising from the set $S$ constructed in Proposition~ \ref{cor:diamond}.
\end{proof}

\begin{remark} Recall that $\cW^k(\gs\gl_{2|1}) \cong \text{Vir}^{-3(2k+1)}_{\cN=2}$ for all noncritical levels. At the critical level $k = -1$, $\cW^{-1}(\gs\gl_{2|1}) \ncong \text{Vir}^3_{\cN=2}$; instead, $\cW^{-1}(\gs\gl_{2|1})$ has a nontrivial center and its OPE algebra is given in \cite{AN24} right before Proposition 4.1. More generally, it was conjectured by Adamovi\'c, Feigin and Nakatsuka in \cite{AFN26} that $\cW^{-1}(\gs\gl_{n+1|n})$ is isomorphic to the orbifold $(\cH_{\text{deg}}(2n) \otimes \cE(n))^{\text{GL}_n}$. Here $\cH_{\text{deg}}(2n)$ is the degenerate Heisenberg algebra on $2n$ commuting generators which transform under $\text{GL}_n$ as $\mathbb{C}^n \oplus (\mathbb{C}^n)^*$. This clearly holds for $n=1$ from the description in \cite{AN24}, and it was checked for $n=2$ in \cite{AFN26}. In Section 9, we will interpret this conjecture as a limiting case of Ito's conjecture.
\end{remark}

\section{$\mathcal{N}=2$ hook-type $\cW$-algebras and $Y$-algebras} \label{sect:supersymmetricY}
In this section we generalize considerations of the previous one. 
First we introduce a family that generalizes $\cN=2$ principal $\cW$-superalgebras called $\cN=2$ \textit{hook-type} $\cW$-superalgebra; these are associated to even $\cN=2$ \textit{hook-type} nilpotent element 
$$F=F_{n+1,1^r|n,1^s}\in\fr{sl}_{n+r+1|n+s},$$
which is \textit{principal} in $n+1|n$ block and \textit{trivial} in $r|s$ block.
Although we call these algebras $\cN=2$ hook-type, these do \textit{not} have the $\cN=2$ structure.
In addition to the strong generators arising from the $\cN=2$ principal block as in Section \ref{sec:W-algebra type A(n,n-1)}, the trivial block gives rise an affine subVOA, and certain modules over it.
We regard this family as the $\cN=2$ analogue of the hook-type algebras in type $A$.

The $\cN=2$ $Y$-algebras will be defined as the cosets of these $\cW$-superalgebras by the affine subVOA arising from the trivial block. They are a generalization of $\cW^k(\gs\gl_{n+1|n})$ which is the case $r =s = 0$, and they satisfy analogous properties to Propositions \ref{prop:sl(n|n-1), N=2}, \ref{cor:diamond}, \ref{prop:automorphism}; see Theorem \ref{thm: Y alg}.
There is one more family of such algebras which do {\it not} arise as $\cW$-algebras or their cosets, but are the $\cN=2$ analogues of the GKO cosets \eqref{intro:D0m}; see Theorem \ref{thm: GKOs}. The $\cN=2$ $Y$-algebras are generally not simple, but they are simple in the cases $s = 0$ or $r=0$. We will introduce a different notation \eqref{def:DandEfamilies} for these two families, and some of our results such as Proposition \ref{lem:Dnsgenerators} and Theorem \ref{N=2:duality}, will only be proven in these cases.

\subsection{$\cN=2$ hook-type $\cW$-algebras}
Let $\gg = \gs\gl_{n+r+1|n+s}$ which has a natural embedding of $\gs\gl_{n+1|n}$. 
For a concrete picture, consider $\gg$ as composed of block matrices with the index set $I=\{1,2,\dots, 2n+r+s+1\}$ as in Figure \ref{eq: hook type block matrix}. In particular, we set the parity of the matrix by $p(e_{i j})=\bar{\imath}+\bar{\jmath} \Mod{2}$, where $\bar{\imath}$ is the parity of the index $i\in I$ defined as follows:
\begin{enumerate}
    \item The first $n+1|n$ block is as in Section \ref{sec:W-algebra type A(n,n-1)}, i.e., the first $2n+1$ indices have parity $\bar{\imath}=i+1 \Mod{2}$.
    \item The second diagonal $r|s$ block forms $\fr{gl}_{r|s}$, where we let the first $r$ are even, and the last $s$ are odd. To be explicit, the parity of the index is $\overline{2n+1+l}=|l|$, where
    \begin{equation} \label{eq: parity of extension index}
        |l|=\left\{\begin{array} {ll}
            0 & \textup{ if } 1\leq l\leq r,\\
            1 & \textup{ if } r+1 \leq l \leq r+s.
        \end{array}\right.
    \end{equation}
\end{enumerate}

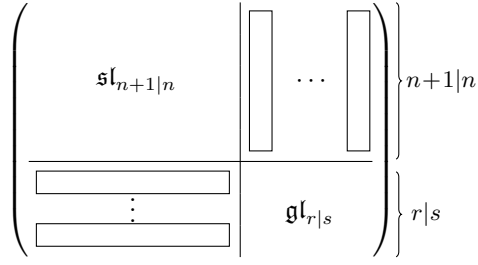
\begin{figure}[h]
    \centering
    \begin{tikzpicture}[baseline=(current  bounding  box.center), B/.style = {decorate,
            decoration={brace, amplitude=2pt,
            pre=moveto,pre length=1pt,post=moveto,post length=1pt,
            raise=1mm,}}]
\node(mat) at (0,0) {$   \left(\begin{array}{cccccccc|ccccc}
         & & & & & & & & & & & & \\
         & & & & & & & & & & & & \\
         & & & & & & & & & & & & \\
         & & & & & & & & & & & & \\
         & & & & & & & & & & & & \\
         \hline
         & & & & & & & & & & & & \\
         & & & & & & & & & & & & \\
         & & & & & & & & & & & &        
    \end{array}\right)$};
\node(AA) at (2.5,1.85) {};
\node(BB) at (2.5,-0.55) {};
\node(BBBB) at (2.5,-0.4) {};
\node(CC) at (2.5,-1.85) {};
\node at(3.2,0.65) {\small{$n\!+\!1|n$}};
\node at(3,-1.125) {\small{$r|s$}};
\draw[B] (AA)--(BB);
\draw[B] (BBBB)--(CC);
\node(sl) at (-0.85,0.65) {$\mathfrak{sl}_{n+1|n}$};
\node(gl) at (1.45,-1.125) {$\mathfrak{gl}_{r|s}$};
\node[draw, rectangle, minimum width=0.3cm, minimum height=5.3em] 
  at (0.8,0.65) {};
  \node at (1.5,0.65) {$\cdots$};
\node[draw, rectangle, minimum width=0.3cm, minimum height=5.3em] 
  at (2.1,0.65) {};
  \node[draw, rectangle, minimum width=7.3em, minimum height=0.3cm] 
  at (-0.9,-0.7) {};
  \node at (-0.9,-0.96) {$\vdots$};
\node[draw, rectangle, minimum width=7.3em, minimum height=0.3cm] 
  at (-0.9,-1.4) {};
  \end{tikzpicture}
    \caption{Block decomposition of $\fr{sl}_{n+r+1|n+s}$.}
    \label{eq: hook type block matrix}
\end{figure}

Recall the even principal nilpotent $F$ in (\ref{eq:F and u}) and odd (\ref{eq: odd nilpotents}) $f,\tilde{f}$ of $\mathfrak{sl}_{n+1|n}$, whose images in $\fr{sl}_{n+r+1|n+s}$ we denote the same. 
To ease notation, in this section we often denote $$F=F_{n+1|n}= F_{n+1,1^r|n,1^s}, \quad f=f_{n+1|n}=f_{n+1,1^r|n,1^s},\quad \tilde{f}=\tilde{f}_{n+1|n}=\tilde f_{n+1,1^r|n,1^s}.$$ 
As before (\ref{eq:nilpotents square}), $F$ is a square of an odd nilpotent $f$ (or $\tilde f$), so we have the corresponding SUSY $\cW$-algebra $\cW^k_{\cN=1}(\gs\gl_{r+n+1|n+s}, f)$.
By Theorem \ref{thm:ordinary vs SUSY} the ordinary and SUSY $\cW$-superalgebras are related by
\begin{equation}\label{eq:hook type SUSY and not}
    \cW^k_{\cN=1}(\gs\gl_{n+r+1|n+s}, f)\cong\cW^k(\gs\gl_{n+r+1|n+s}, F)\otimes \cF(\fr{gl}_{r|s}) \cong \cW^k_{\cN=1}(\gs\gl_{n+r+1|n+s}, \tilde f).
\end{equation}
We call such algebras 
of $\cN=2$ \textit{hook-type} and $\cW^k_{\cN=1}(\gs\gl_{n+r+1|n+s}, f)$ of \textit{SUSY $\cN=2$ hook-type}.
Next we record some structural features of these algebras.

The trivial $r|s$ block gives rise to an affine subVOA $V^{k+1}(\fr{gl}_{r|s})$ associated to the elements 
\begin{equation}\label{eq:affine gen}
\begin{aligned}
    a_{i,j} :=& e_{2n+i+1\ 2n+1+j} \in\gg^F_0,\quad 1\leq i\neq j\leq r+s, \\
    a_{i,i} :=& e_{2n+1+i\ 2n+1+i}-(-1)^{|i|}\sum_{m=1}^{2n+1}e_{m\, m}\in \gg_0^F, \quad 1\leq i \leq r+s,
\end{aligned}
\end{equation}
where $|i|$ is as \eqref{eq: parity of extension index}. The off-diagonal blocks in Figure \ref{eq: hook type block matrix} give rise to the elements
\begin{equation}\label{eq: Q and P}
\begin{array}{ll}
    p^{i,\bot}:&=e_{2n+1+i\ 2} \in \fr{g}^F_{-\frac{n-1}{2}},\quad p^{i,+}:=e_{2n+1+i\ 1}\in \fr{g}^F_{-\frac{n}{2}},\\
    q^{i,\bot}:&=e_{2n\ 2n+1+i}\in \fr{g}^F_{-\frac{n-1}{2}},\quad q^{i,-}:=e_{2n+1\ 2n+1+i}\in \fr{g}^F_{-\frac{n}{2}},
\end{array}\quad1\leq i \leq r+s.
\end{equation}
Each row in the lower off-diagonal block forms an irreducible representation of $\mathfrak{osp}_{1|2}$ subalgebra containing $F$ and $f$, where the adjoint action of $f$ is given by going right in each row. Thus, one obtains the elements $p^{i, \bot}$ and $p^{i,+}$ in \eqref{eq: Q and P} from the two left columns of the lower off-diagonal block. Similarly, from the upper off-diagonal block, one obtains $q^{i,\bot}$ and $q^{i,-}$ in \eqref{eq: Q and P} from the two bottom rows.

It is immediate that the corresponding ordinary $\mathcal{N}=2$ hook-type $\cW$-algebra has a generating type 
        \begin{equation}
\label{hooktype:gentype} \cW\bigg(1^{r^2+s^2+1},2^2, \dots, n^2, n+1, \bigg(\frac{n+1}{2}\bigg)^{2s}, \bigg(\frac{n+2}{2}\bigg)^{2r}; 1^{2rs},\bigg(\frac{3}{2}\bigg)^2,\dots, \bigg(\frac{2n+1}{2}\bigg)^2,\bigg(\frac{n+1}{2}\bigg)^{2r},\bigg(\frac{n+2}{2}\bigg)^{2s}\bigg).\end{equation}

Thanks to Proposition \ref{prop: generators of W-alg}, the following Lie bracket relations in $\fr{g}^F$,
\begin{equation}\label{eq:affine on p and q}
    \begin{aligned}
        [a_{i j},p^{j,\bot}]&=p^{i,\bot},\quad  [a_{i j},q^{i,\bot}]=(-1)^{p(a_{i,j})p(q^{i,\bot})+1}q^{j,\bot}, \\ 
        [a_{i  j},p^{j,+}]&=p^{i,+}, \quad [a_{i j},q^{i,-}]=(-1)^{p(a_{i,j})p(q^{i,-})+1}q^{j,-},
    \end{aligned}
\end{equation}
imply that $\{\omega_{p^{i,\bot}},\omega_{q^{i,\bot}}|\ 1\leq i\leq r+s\}$ transform as $\mathbb{C}^{s|r} \oplus (\mathbb{C}^{s|r})^*$, and $\{\omega_{p^{i,+}},\omega_{q^{i,-}}:1\leq i\leq r+s\}$ transform as $\mathbb{C}^{r|s} \oplus (\mathbb{C}^{r|s})^*$ under $\fr{gl}_{r|s}$\footnote{In particular fields $\{\omega_{p^{i,\bot}},\omega_{q^{i,\bot}}|\ 1\leq i\leq r\}$ are odd and $\{\omega_{p^{i,\bot}},\omega_{q^{i,\bot}}|\ r+1\leq i\leq r+s\}$ are even, and opposite parities for $\{\omega_{p^{i,-}},\omega_{q^{i,+}}|\ 1\leq i\leq r+s\}$.}.
The affine subVOA decomposes as $V^{k+1}(\gg\gl_{r|s}) \cong V^{k+1}(\gs\gl_{r|s}) \otimes \cH^\ell$, where the Heisenberg algebra is generated by $\omega_v$ for
\begin{equation} \label{eq: heisenberg in gl affine}
    v=-\frac{r-s}{r-s+1}\sum_{i=1}^{2n+1}e_{i i}+\frac{1}{{r-s+1}}\sum_{i=2n+2}^{2n+r+s+1}e_{i i},\quad \ell=-\frac{(r-s)(k+r-s+1)}{(r-s+1)},
\end{equation}
and the OPE relations with the fields in \eqref{eq: Q and P} are
\begin{equation}
\begin{split}
      \omega_v(z)\omega_{p^{i,\bot}}(w)&\sim \omega_{p^{i,\bot}}(w)(z-w)^{-1},\quad \omega_v(z)\omega_{q^{i,\bot}}(w)\sim -\omega_{q^{i,\bot}}(w)(z-w)^{-1},\\
       \omega_v(z)\omega_{p^{i,+}}(w)&\sim\omega_{p^{i,+}}(w)(z-w)^{-1},\quad \omega_v(z)\omega_{q^{i,-}}(w)\sim -\omega_{q^{i,-}}(w)(z-w)^{-1}.
\end{split}
\end{equation} 
When $r=s-1$ then the Heisenberg field decouples and commutes with itself and the off-diagonal generators.

By \cite{KW22} and the formula \eqref{eq: W-alg central charge}, one can let $-\frac{1}{k+r-s+1}\omega_F$ be the conformal vector of the $\cW$-algebra $\cW^k(\gg, F)$ with central charge
\begin{equation} \label{eq: cc of hook type}
\begin{split}
c= \frac{(r-s)(k(r-s)+r-s+1)}{k+r-s+1} -\frac{3 (k + n + k n + n r - n s) (n + k n - r + n r + s - n s)}{k+r-s+1},
\end{split}
\end{equation}
where we note that the first term is the central charge of the $\fr{gl}_{r|s}$ Sugawara vector. It is tempting to think that $\cW^k(\gg,F)$ has $\cN=2$ structure with respect to Kac-Roan-Wakimoto conformal vector, however according to our definition at the end of Section \ref{sect: N=2}, it does \textit{not}; the central charge \eqref{eq: cc of hook type} does not appear in the OPE $\omega_u(z)\omega_u(w)$ as expected, while only the second term in \eqref{eq: cc of hook type} appears\footnote{We will soon see in Theorem \ref{thm: Y alg}, that these fields generate a $\cN=2$ superconformal structure using the modified field $H$ for the associated $\cN=2$ $Y$-algebra.}. 

We have the following Lie bracket relations in $\gg^F$,
\begin{equation}\label{eq: f on p and q}
    \begin{gathered}
[\T{f}^{+},q^{i,\bot}]=[\T{f}^{-},p^{i,\bot}]=0,\quad  [\T{f}^{+},p^{i,\bot}]=-(-1)^{p(p^{i, \bot})}p^{i,+},\quad [\T{f}^{-},q^{i,\bot}]=q^{i,-},\\
 [u,p^{i,\bot}]=-(n+1)p^{i,\bot},\quad [u,p^{i,+}]=-np^{i,+}, \quad [u,q^{i, \bot}]=(n+1)q^{i,\bot}, \quad [u,q^{i,-}]=n q^{i,-}.
    \end{gathered}
\end{equation}
By conformal weight and $u$-grading restrictions, above Lie brackets translate into the obvious OPE relations. It is tempting to say that $\{\omega_{p^{i,\bot}},\omega_{p^{i,+}}\}$ and $\{\omega_{q^{i,\bot}},\omega_{q^{i,-}}\}$ transform as $\cN=2$ degenerate diamonds of weights 
$(\frac{n+1}{2}, n)$, and $(\frac{n+1}{2}, -n)$, however they do \textit{not}\footnote{We will soon see that upon considering an affine coset (\ref{eq: Yalg}), these will have a desired transformation property, albeit for different weights}, as $\cW^k(\gg,F)$ does not have $\cN=2$ structure.

Finally, we note that the automorphism $\sigma$ of the $\cN=2$ principal family (\ref{eq:automorphism,sl(n+1|n)}) extends to hook-type one by considering $\sigma=\tau\circ(-\textup{str})$, where $\tau$ is replaced by the conjugation with the block matrix $T\oplus \textup{Id}$ for $T$ in \eqref{eq: conjugation matrix}.
In addition to transforming (\ref{eq:auto principal W}), it exchanges off-diagonal fields up to constant multiple
\begin{equation}\label{eq: auto pq}
    \sigma:\begin{array}{ll}
         \omega_{p^{i,\bot}}\mapsto -(-1)^{|i|}(-\sqrt{-1})^{3n-1}\omega_{q^{i,\bot}}, & \omega_{p^{i,+}}\mapsto (-\sqrt{-1})^{3n}\omega_{q^{i,-}}, \\
         \omega_{q^{i,\bot}}\mapsto (-\sqrt{-1})^{3n-1} \omega_{p^{i,\bot}}, & \omega_{q^{i,-}}\mapsto (-1)^{|i|}(-\sqrt{-1})^{3n} \omega_{p^{i,+}}.
    \end{array}
\end{equation}
Moreover, we have $\sigma(V^{k+1}(\gg\gl_{r|s}))\subset V^{k+1}(\gg\gl_{r|s})$, since $\tau$ acts as an identity on $\gg\gl_{r|s}$. 
{Note that the automorphism $\sigma$ is not of order $2$ in the $\cN=2$ hook-type $\cW$-algebras}.
After restricting $\sigma$ to the subalgebra \eqref{eq: Yalg}, we obtain an automorphism of order $2$.

\subsection{$\cN=2$ $Y$-algebras}
For $n\geq 1$ and $r,s \geq 0$, set $\psi = k+r-s+1$. We define $\cN=2$ $Y$-algebra as the SUSY affine coset
\begin{equation}\label{eq: Yalg}
    \cC^{\psi}_{\cN=2}(n,r|s):=
    \begin{cases}
        \textup{Com}(V^{k+1}_{\mathcal{N}=1}(\mathfrak{gl}_{r|s}),\mathcal{W}^k_{\mathcal{N}=1}(\gs\gl_{n+r+1|n+s},f_{n+1|n})) &r\neq s-1,\\
        \textup{Com}(V^{k+1}_{\mathcal{N}=1}(\mathfrak{sl}_{r|r+1}),\mathcal{W}^k_{\mathcal{N}=1}(\gp\gs\gl_{n+r+1|n+r+1},f_{n+1|n}))^{U(1)} &r= s-1,
    \end{cases}
\end{equation}
by analogy with the $Y$-algebras in type $A$ \cite{GR}.
In the case $r=s-1$, $U(1)$ action arises as outer automorphism of $\fr{psl}_{n+r+1|n+r+1}$, while the Heisenberg field arising from (\ref{eq: heisenberg in gl affine}) degenerates, so the coset would have incorrect generating type\footnote{If we had worked with $\fr{gl}_{n+r+1|n+s}$ instead of $\fr{sl}_{n+r+1|n+s}$, then this distinction would be unnecessary, and the generating type would be $\cW(1^2,2^2,\dotsb;(3/2)^2,(5/2)^2,\dotsb)$. }.
Equivalently, these algebras are isomorphic to the non-SUSY cosets
\[\cC^{\psi}_{\cN=2}(n,r|s)\cong
\begin{cases}
    \textup{Com}(V^{{k+1}}(\mathfrak{gl}_{r|s}),\mathcal{W}^k(\gs\gl_{n+r+1|n+s},F_{n+1|n}))& r \neq s-1,\\
    \textup{Com}(V^{{k+1}}(\mathfrak{sl}_{r|r+1}),\mathcal{W}^k(\fr{psl}_{n+r+1|n+r+1},F_{n+1|n}))^{U(1)}& r = s-1.
\end{cases}\]
We consider this family as the natural generalization of $\cN=2$ principal $\cW$-algebras, since they satisfy analogues of Propositions \ref{prop:sl(n|n-1), N=2}, \ref{cor:diamond}, \ref{prop:automorphism}.

In \cite{CL1}, the first and third authors introduced a method to study cosets of affine vertex algebras $V^k(\gg)$ inside larger vertex algebras $\cA^k$ when $\gg$ is a reductive Lie algebra. Under fairly general hypotheses that hold when $\cA^k$ is any $\cW$-(super)algebra, the coset has the same generating type as its large level limit, which is the $G$-orbifold of a free field algebra, where $G$ is the adjoint group of $\gg$. By \cite[Theorem 5.3]{CL3}, the analogous result holds when $\gg = \go\gs\gp_{1|2n}$. In order to describe the generating type of $\cC^{\psi}_{\cN=2}(n,r|s)$, we will extend this to the case when $\gg$ is any basic Lie superalgebra or $\gg\gl_{n|n}$; see Theorem \ref{thm:orbifoldlimit} in Appendix \ref{appendix:largerstructures}.

Additionally, it was shown in \cite[Corollary 3.6]{CL2} that for any $\cW$-superalgebra $\cW^k(\gg,F)$ with $\mathfrak{g}^{\natural}$ a reductive Lie algebra, the generators of $\cW^k(\gg,F)$ of weight $d>1$ can be assumed primary for the action of $\hat{\gg}^{\natural}$ when $k$ is generic. To describe $\cC^{\psi}_{\cN=2}(n,r|s)$, we will also extend this to the case when $\gg$ is any basic Lie superalgebra or $\gg\gl_{r|s}$; see Theorem \ref{cor:primary} in Appendix \ref{appendix:stronggenerators}.

We introduce the following notation.
\begin{equation}
\cO(r|s,n):=\begin{cases}
    \cS_{\T{even}}(r,n) \otimes \cO_{\T{odd}}(2s,n)& n \T{ odd},\\
    \cO_{\T{even}}(2r,n) \otimes \cS_{\T{odd}}(s,n)& n \T{ even}.
\end{cases}    
\end{equation}
The following generalizes Propositions \ref{prop:sl(n|n-1), N=2}, \ref{cor:diamond}, \ref{prop:automorphism}.

 \begin{theorem}\label{thm: Y alg}
     For $n\geq 1$ and $k\not=r-s+1$, $\cC^{\psi}_{\cN=2}(n,r|s)$ has the following features:
     \begin{enumerate}[$(\B{\textup{Y}}1)$]
         \item Let $u$ be as in \eqref{eq:basis of centralizer} and $v$ as in \eqref{eq: heisenberg in gl affine}.
         Then there is the $\cN=2$ superconformal vector
           \begin{equation} \label{crs}
     H:=-\sqrt{-1}\omega_{u}-\frac{\sqrt{-1}(r-s+1)}{k+r-s+1}\omega_v, \quad c = -\frac{3 (k + n + k n + n r - n s) (n + k n - r + n r + s - n s)}{k+r-s+1},
     \end{equation}
     with the remaining fields uniquely determined by (\ref{eq:N=2 diamond}),
     \[G^{+}=(\omega_{\textup{f}^+})_{(0)}H,\quad G^{-}=-(\omega_{\textup{f}^-})_{(0)}H,\quad L=-\frac{1}{k+1}\omega_{F}-L^{\fr{gl}_{r|s}}.\]
    
     \item The exists a non-minimal strong generating set $\{\omega^{i,\bot},\omega^{i,\pm},\omega^{i,\top}|\ i\geq 1\}$ with $\cN=2$ diamond structure, so that algebra is of type
     \begin{equation}\label{eq: non min Y}
         \cW\bigg(1,2^2,3^2,\dots, n^2, (n+1)^2,\dots ; \bigg(\frac{3}{2}\bigg)^2, \bigg(\frac{5}{2}\bigg)^2,\dots, \left(\frac{2n+1}{2}\right)^2, \left(\frac{2n+3}{2}\right)^2,\dots\bigg).
     \end{equation}
          \item The restriction of $\sigma$ to $\cC^{\psi}_{\cN=2}(n,r|s)$ is an automorphism of order two with the same action as in (\ref{eq:auto principal W}).
     \end{enumerate}

 \end{theorem}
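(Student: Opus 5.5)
We prove the three items in order, working in the non-SUSY realization $\cC^{\psi}_{\cN=2}(n,r|s)\cong\text{Com}(V^{k+1}(\gg\gl_{r|s}),\cW^k(\gs\gl_{n+r+1|n+s},F))$ (or its $\gs\gl_{r|r+1}$ and $U(1)$-orbifold variant when $r=s-1$). Throughout we treat $k$ as generic and pass to individual levels by the usual specialization argument.

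For (Y1), the first step is to check that $H$ and $G^\pm$ lie in the coset. The correction term $-\frac{\sqrt{-1}(r-s+1)}{k+r-s+1}\omega_v$ is chosen exactly so that $H$ has no first-order pole with $\omega_v$. The field $\omega_u$ already commutes with $V^{k+1}(\gs\gl_{r|s})$, since $u$ lies in $\gg^F_0$ and commutes with the $\gs\gl_{r|s}$ block; so we only need $\omega_u(z)\omega_v(w)$ and $\omega_v(z)\omega_v(w)$, which are computed from $\psi_k$. We then define $G^{+}=(\omega_{\mathrm f^+})_{(0)}H$ and $G^-=-(\omega_{\mathrm f^-})_{(0)}H$. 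These lie in the coset because $\mathrm f^\pm$ commute with $\gg\gl_{r|s}$, and because the $\omega_v$ part of $H$ contributes only the modification needed to cancel the $\omega_v$-charge of $\omega_{\mathrm f^\pm}$. Equivalently, $G^\pm$ equals $\omega_{\mathrm f^\pm}$ minus normally ordered products with $\omega_v$, up to a scalar; we must verify that $\omega_{\mathrm f^\pm}$ is $\gs\gl_{r|s}$-invariant to leading order, which follows from Theorem \ref{cor:primary}. The field $L=-\frac1{k+1}\omega_F-L^{\gg\gl_{r|s}}$ is the standard coset Virasoro field. Its central charge is the KRW central charge \eqref{eq: cc of hook type} minus the $\gg\gl_{r|s}$ Sugawara contribution, which gives \eqref{crs}. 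With these fields in hand, the Jacobi identity argument of Proposition \ref{prop:sl(n|n-1), N=2} applies verbatim. The OPEs among $H,G^\pm,L$ have the most general form allowed by weights and charges, and the Jacobi identities $J(H,G^\pm,G^\mp)$, $J(L,G^\pm,G^\mp)$ and $J(G^\pm,G^\mp,G^\mp)$ force them to be \eqref{eq:N=2 OPEs 1}--\eqref{eq:N=2 OPEs 3}. The level of $H$ then follows from $c/3$, and this can be cross-checked directly from $\psi_k$.

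For (Y2), we use Theorem \ref{thm:orbifoldlimit}. At large level the coset has the same generating type as the $GL_{r|s}$-orbifold of the free field limit. This limit consists of the principal part of type \eqref{princ:gentype} (with one copy of weight $1$) tensored with $\cO(r|s,n)$-type algebras coming from the fields $p^{i,\bot},p^{i,+},q^{i,\bot},q^{i,-}$. By the first fundamental theorem of invariant theory for $GL_{r|s}$, the invariants of the off-diagonal part are generated by the quadratics $\sum_i :\!\partial^a p^{i,*}\,\partial^b q^{i,*}\!:$. Their weights are $n+1+m$, $n+\frac32+m$ (twice) and $n+2+m$ for $m\geq0$, and modulo derivatives and relations these supply the fields of \eqref{eq: non min Y} in weights $\geq n+1$. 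The principal fields supply those in weights $\leq n+1$. To obtain the diamond structure, we argue as in Proposition \ref{cor:diamond}. We choose one bottom field $\omega^{i,\bot}$ in each weight $i$ with $H$-charge $0$: either $\omega_{a^{i,\bot}}$ projected to the coset, or the invariant $\sum_i :\!p^{i,\bot}\partial^{i-n-1}q^{i,\bot}\!:$ corrected into the coset. We then apply $G^\pm_{(0)}$. We must check that the resulting leading terms are nonzero, which comes from \eqref{eq:c2 u f and a} and \eqref{eq: f on p and q}, since $\mathrm f^\pm$ map the $\bot$ components to the $\pm$ components. We also need the resulting set to span the same space as the generators. \textbf{This is the main obstacle}: the spanning property must be checked in the free field limit, where $G^\pm_{(0)}$ acts on the quadratic invariants by Leibniz. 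We first try showing that, in each weight, the map from diamonds to the invariants modulo composites is triangular with nonzero diagonal. Whether the generating set is minimal is not needed.

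For (Y3), $\sigma$ preserves $V^{k+1}(\gg\gl_{r|s})$ and hence preserves the coset. By \eqref{eq: auto pq}, it sends $\omega_v$ to $-\omega_v$ up to the supertranspose sign, and sends $\omega_u$ to $-\omega_u$. It therefore acts on $H$ as in \eqref{eq: N=2 automorphism} and, by construction, on the diamonds as in \eqref{eq:auto principal W}. On the $p,q$ bilinears, the phases in \eqref{eq: auto pq} multiply to signs, so $\sigma^2$ acts trivially on all strong generators. Hence $\sigma$ has order two on the coset, and this holds even though it does not have order two on the full $\cW$-algebra.
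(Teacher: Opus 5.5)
Your route is the paper's route. For (Y1) you correct $\omega_u$ by a multiple of $\omega_v$ and rerun the Jacobi argument of Proposition~\ref{prop:sl(n|n-1), N=2}. For (Y2) you use Theorem~\ref{thm:orbifoldlimit} together with the first fundamental theorem for $\mathrm{GL}_{r|s}$. For (Y3) you use that $\sigma$ preserves $\mathfrak{gl}_{r|s}$.

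\textbf{The gap in (Y2).} The diamond-spanning step in (Y2), which you flag as the main obstacle and leave open, cannot be checked in the limit algebra as you literally set it up. In $\cA\otimes(\cO(s|r,n+1)\otimes\cO(r|s,n+2))^{\mathrm{GL}_{r|s}}$ the fields $G^\pm$ land in the decoupled factor $\cA$, so the zero modes of the limiting $G^\pm$ annihilate every $p,q$ bilinear. The reason is that $[\mathrm{f}^+,p^{i,\bot}]=\pm p^{i,+}$ enters the OPE with an $O(1)$ coefficient while all fields are rescaled by $\kappa^{-1}$, so this action is $O(\kappa^{-1})$.

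What you need instead is the leading-term action at generic $k$. Equivalently, take the limit of the unrescaled derivation $G^\pm_{(0)}$ acting on the rescaled fields. This is what the paper's appeal to \eqref{eq: f on p and q} and Proposition~\ref{prop: generators of W-alg} amounts to. The zero mode $G^+_{(0)}$ is an exact derivation of normally ordered products (the $r=0$ case of \eqref{quasi-derivation}). On leading terms it does the following:
\begin{itemize}
\item it sends $p^{i,\bot}\mapsto\pm p^{i,+}$;
\item it kills $q^{i,\bot}$ and $p^{i,+}$;
\item it sends $q^{i,-}$ to a nonzero multiple of $\partial q^{i,\bot}$, because $\{G^+_{(0)},G^-_{(0)}\}=\partial$ to leading order.
\end{itemize}
The action of $G^-_{(0)}$ is symmetric. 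Hence the diamond on $\sum_i\pm:\!\partial^dp^{i,\bot}q^{i,\bot}\!:$ has sides $\sum_i\pm:\!\partial^dp^{i,+}q^{i,\bot}\!:$ and $\sum_i\pm:\!\partial^dp^{i,\bot}q^{i,-}\!:$. Its top is $\sum_i\pm:\!\partial^dp^{i,+}q^{i,-}\!:$ plus $p^\bot q^\bot$-terms.

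Your triangularity is then immediate. In each of the four sectors, the elements $\partial^{m-j}\bigl(\sum_i\pm:\!\partial^jp^{i,*}q^{i,*}\!:\bigr)$ for $0\le j\le m$ are unitriangular in the basis $\sum_i\pm:\!\partial^ap^{i,*}\partial^{m-a}q^{i,*}\!:$. The extra terms in the tops only feed the $p^\bot q^\bot$ sector, which the bottoms already span.

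\textbf{The gap in (Y3).} Here ``by construction'' hides the actual work, since nothing in your (Y2) makes the bottoms $\sigma$-eigenvectors. Given that $\sigma$ acts on $\mathrm{Vir}^c_{\mathcal{N}=2}$ as in \eqref{eq: N=2 automorphism}, \eqref{eq:auto principal W} follows from $\sigma(\omega^{N,\bot})=(-1)^N\omega^{N,\bot}$, and this has to be arranged:
\begin{itemize}
\item For the principal bottoms, do it as in Proposition~\ref{prop:automorphism}.
\item For the quadratic bottom of weight $N=n+d+1$, check from \eqref{eq: auto pq} that $\sigma$ multiplies its leading term by $(-1)^{n+d+1}$, modulo derivatives of lower fields. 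This uses $\bigl((-\sqrt{-1})^{3n-1}\bigr)^2=(-1)^{n+1}$ and the fact that $p^{i,\bot}$ has parity $1+|i|$.
\item Then replace $\omega^{N,\bot}$ by $\tfrac12\bigl(\omega^{N,\bot}+(-1)^N\sigma(\omega^{N,\bot})\bigr)$.
\end{itemize}

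This projection needs $\sigma^2=1$ on the coset beforehand. Your argument for that, namely that $\sigma^2$ is trivial on the leading bilinears, does not suffice, because the actual generators carry corrections. Instead, note from \eqref{eq: auto pq} that $\sigma^2$ is the identity on the $n+1|n$ block and multiplies all $p$- and $q$-fields of index $i$ by $(-1)^{n+|i|}$. So $\sigma^2$ is conjugation by a diagonal sign matrix, which is implemented by a torus element of the group integrating the zero modes of $V^{k+1}(\mathfrak{gl}_{r|s})$. It therefore fixes the coset pointwise.

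\textbf{A small correction in (Y1).} We have $[v,\mathrm{f}^\pm]=0$, so $\omega_{\mathrm{f}^\pm}$ has no $\omega_v$-charge to cancel. Since $\cW^k$ has no weight-$\tfrac12$ fields, $\omega_{\mathrm{f}^\pm}$ already commutes with $V^{k+1}(\mathfrak{gl}_{r|s})$, and $G^\pm$ is simply a multiple of it.
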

 \begin{proof}
    To prove (1) it is sufficient to find a suitable correction to the generators $\T{Vir}^c_{\cN=2}$ subVOA, so that the modified fields commute with $V^{k+1}(\gg\gl_{r|s})$ and $L$ matches the conformal vector of the coset.
    A calculation shows that the modified Heisenberg field\footnote{Note that when $r=s-1$ then the $\omega_v$ contribution vanishes.}
     \[\omega_{u}+\frac{r-s+1}{k+r-s+1}\omega_v\]
    is contained in the coset, giving rise to the Heisenberg generator $H$ at level $\frac{1}{3}c$ via (\ref{crs}).
    Using the same argument as in Proposition \ref{prop:sl(n|n-1), N=2} one shows that there is an $\cN=2$ structure uniquely determined by (\ref{eq:N=2 diamond}).

    Now we prove (2).
    The remaining generators arising from the principal block (\ref{eq: as}) thanks to Lemma \ref{cor:primary}, can be corrected\footnote{Such corrections are not unique for higher conformal weight generators.} to commute with $V^{k+1}(\gg\gl_{r|s})$ for generic level $k$, which we continue to denote by $\{\omega^{i,\bot},\omega^{i,\pm},\omega^{i,\top}|\ 2\leq i\leq n\}$.
    Since the Lie brackets for the $n+1|n$ block remain the same\footnote{Although $u$ is corrected and is different from the one written in (\ref{eq:F and u}), the desired Lie bracket relations continue to hold, as $\fr{gl}_{r|s}$ commutes with $\fr{sl}_{n+1|n}$.} as in (\ref{eq:c2 u f and a}), these fields organize into $\cN=2$ diamonds by the same argument as in Proposition \ref{cor:diamond}.
    
    In the following we will consider large level limit of $\cW^k(\fr{sl}_{n+r+1|n+s},F)$.
    We note that generators coming from the $n+1|n$ block give rise to a free field limit
    \[\cA:=\cH\otimes\cO_{\T{even}}(1,2n+2) \otimes\bigotimes_{i=2}^{n}\cS_{\T{odd}}\left(1,2i+1\right)\otimes \cO_{\T{even}}(2,2i).\]
    Taken together with the affine $V^{k+1}(\fr{gl}_{r|s})$ and the off-diagonal fields (\ref{eq:affine on p and q}), the large level limit of $\cW^k(\gs\gl_{n+r+1|n+s}, F)$ is therefore
$${\cA \otimes \cO(r^2+s^2|rs,2) \otimes \cO(s|r,n+1) \otimes \cO(r|s,n+2)},
$$
and by Theorem \ref{thm:orbifoldlimit}, the coset $\cC^{\psi}_{\cN=2}(n,r|s)$ has large level limit 
\begin{equation} \label{orbifoldlimit:Dns} 
\cA \otimes \bigg(\cO(s|r,n+1) \otimes \cO(r|s,n+2)\bigg)^{\text{GL}_{r|s}}.
\end{equation}
 The above orbifold has a filtration with associated graded algebra isomorphic to the classical invariant ring
\begin{equation} \label{invariantring} \left(\text{Sym} \bigoplus_{i=0}^{\infty} \left(\mathbb{C}^{r|s}_i \oplus (\mathbb{C}^{r|s}_i)^*\right) \bigotimes \bigwedge_{i= 0}^{\infty}  \left(\mathbb{C}^{s|r}_i \oplus (\mathbb{C}^{s|r}_i)^*\right)\right)^{\text{GL}_{r|s}},\quad \mathbb{C}^{r|s}_i\cong \mathbb{C}^{r|s}.\end{equation} 
Generators of \eqref{invariantring} are given Sergeev's first fundamental theorem of invariant theory for $\text{GL}_{r|s}$ \cite[Theorem 1.1]{SI}, which reduces to Weyl's first fundamental theorem in the case $r = 0$ or $s = 0$.
\begin{equation}\label{eq: Y gen}
    \begin{split}
    \omega^{n+d+1,\bot}_{\infty}:&= \sum_{i=1}^r:\!\partial^{d}\omega_{p^i,\bot}\omega_{q^i,\bot}\!:-\sum_{i=1}^s:\!\partial^{d}\omega_{p^{r+i,\bot}}\omega_{q^{r+i,\bot}}\!:\\
        \omega^{n+d+1,+}_{\infty}:&=  \sum_{i=1}^r:\!\partial^{d}\omega_{p^i,\bot}\omega_{q^i,+}\!:-\sum_{i=1}^s:\!\partial^{d}\omega_{p^{r+i,\bot}}\omega_{q^{r+i,+}}\!:,\\
        \omega^{n+d+1,-}_{\infty}:&=\sum_{i=1}^r:\!\partial^{d}\omega_{p^i,-}\omega_{q^i,\bot}\!:-\sum_{i=1}^s:\!\partial^{d}\omega_{p^{r+i,-}}\omega_{q^{r+i,\bot}}\!:,\\
        \omega^{n+d+1,\top}_{\infty}:&=\sum_{i=1}^r:\!\partial^{d}\omega_{p^i,+}\omega_{q^i,-}\!:-:\!\omega_{p^i,+}\partial^{d}\omega_{q^i,-}\!:-\sum_{i=1}^s:\!\partial^{d}\omega_{p^{r+i,+}}\omega_{q^{r+i,-}}\!:-:\!\omega_{p^{r+i,+}}\partial^{d}\omega_{q^{r+i,-}}\!:,
    \end{split}
\end{equation}
giving rise to strong generators for the orbifolds in \eqref{orbifoldlimit:Dns}, and an infinite strong generating set of type \[\cW\left( (n+1), (n+2)^2,\dots; \left(\frac{2n+3}{2}\right)^2, \left(\frac{2n+5}{2}\right)^2,\dots\right).\] 
Combining with the principal block generators we recover (\ref{eq: non min Y}).
These generators admit modifications, so that the leading term is the same as in (\ref{eq: Y gen}), and modified fields are elements of the $Y$-algebra.
Moreover, thanks to relations (\ref{eq: f on p and q}) and Proposition (\ref{prop: generators of W-alg}) it is easy to see that such modified fields can be chosen with the $\cN=2$ diamonds structure.

Lastly, we show (3).
Note that $\sigma$ fixes $\fr{gl}_{r|s}$ as a subspace, and so it extends to the coset.
Now recall that $\sigma$ when restricted to $\T{Vir}^{c}_{\cN=2}$ acts as (\ref{eq: N=2 automorphism}).
First, we consider the principal block generators.
Inductively, the fields $\{\omega^{i,\bot}|\ 1\leq i\leq n\}$ can be chosen so that $\sigma$ acts as in (\ref{eq:auto principal W}), and the associated $\cN=2$ diamonds will span the rest of principal generators.
Next, we consider the bottoms $\omega^{n+d+i,\bot}$ of higher weight fields.
Since $\sigma$ exchanges off-diagonal fields as in (\ref{eq: auto pq}), we find that the leading term of the quadratic $\omega^{n+d+1,\bot}$, is left invariant up to a sign $(-1)^{n+d+1}$.
Inductively, as in the case of principal generators, one can form the modification of the field $\omega^{n+d+1,\bot}_{\infty}$ so that $\sigma$ acts as expected (\ref{eq:auto principal W}) and $\cN=2$ diamond structure is preserved.
\end{proof}

If $n=0$, \eqref{eq: Yalg} makes sense if we interpret $f_{1|0}$ as the zero (odd) nilpotent. Then $\cW^k_{\cN=1}(\gs\gl_{r+1|s},f_{1|0}) = V^k_{\cN=1}(\fr{sl}_{r+1|s})$ for $r \neq s-1$, and $\cW^k_{\cN=1}(\gp\gs\gl_{r+1|r+1},f_{1|0}) = V^k_{\cN=1}(\fr{psl}_{r+1|r+1})$, so \eqref{eq: Yalg} becomes
\begin{equation}\label{eq:parafermions}
    \cC_{\cN=2}^{\psi}(0,r|s):=\begin{cases}
        \T{Com}(V^{k+1}_{\cN=1}(\fr{gl}_{r|s}),V^{k}_{\cN=1}(\fr{sl}_{r+1|s})) & r\neq s-1,\\
        \T{Com}(V^{k+1}_{\cN=1}(\fr{sl}_{r|r+1}),V^{k}_{\cN=1}(\fr{psl}_{r+1|r+1}))^{U(1)} & r= s-1.
        \end{cases}
\end{equation}
which is the SUSY analogue of the {\it generalized parafermion} algebra $\text{Com}(V^k(\gg\gl_m), V^k(\gs\gl_{m+1}))$ introduced in \cite{Lin}.
Equivalently,
\begin{equation}\label{eq:GKO}
 \cC_{\cN=2}^{\psi}(0,r|s)\cong
 \begin{cases}
     \T{Com}(V^{k+1}(\fr{gl}_{r|s}),V^{k}(\fr{sl}_{r+1|s})\otimes \cE(r)\otimes \cS(s))& r\neq s-1\\
     \T{Com}(V^{k+1}(\fr{sl}_{r|r+1}),V^{k}(\fr{psl}_{r+1|r+1})\otimes \cE(r)\otimes \cS(r+1))^{U(1)} & r=s-1,
 \end{cases}
\end{equation}
which is the analogue of the GKO cosets \eqref{intro:D0m}. For this family, the principal block generators do not exist, and $\cN=2$ structure arises via invariant theory. Observe that if $n=0$, our basis for $\mathfrak{g} = \gs\gl_{n+1+r|n+s}$ still makes sense. The elements $p^{i,\bot}, q^{i,\bot}$ given by \eqref{eq: Q and P} no longer exist, but the elements $\{p^{i,+}, q^{i,-}|\ 1 \leq i \leq r + s\}$ still make sense and lie in $\mathfrak{g}^F_0$. We still use the notation $\omega_{p^{i,+}}, \omega_{q^{i,-}}$ as before, even though these elements have weight $1$. Finally, note that the automorphism $\sigma$ still makes sense on $V^{k}(\fr{sl}_{r+1|s})$ and its action on  $\omega_{p^{i,+}}, \omega_{q^{i,-}}$ is still given by \eqref{eq: auto pq}.

\begin{theorem}\label{thm: GKOs}
Let $k\neq r-s+1$. The family of algebras (\ref{eq:parafermions}) has the following features:
\begin{enumerate}[$(\B{\textup{GKO}}1)$]
    \item There exists $\cN=2$ structure with $\cN=2$ superconformal vector 
\begin{equation}
    H=\frac{\sqrt{k}}{\sqrt{-k-r+s-1}}\left(\sum_{i=1}^{r}:\!b^ic^i\!:-\sum_{i=1}^{s}:\!\beta^i\gamma^i\!:\right),\quad c=\frac{3 k (r-s)}{k+r-s+1},
\end{equation}
with the fields $G^{\pm}$ given up to scaling by
\begin{equation*}
    G^+ := \sum_{i=1}^r:\!\omega_{p^{i,+}} c^i\!: - \sum_{i=1}^{s}:\! \omega_{p^{r+i,+}} \gamma^i\!:,\quad G^-:= \sum_{i=1}^r:\! \omega_{q^{i,-}} b^i\!: - \sum_{i=1}^{s}:\! \omega_{q^{r+i,-}}  \beta^i\!:.
\end{equation*}
    \item There exists a (possibly non-minimal) set of strong generators invariants have $\cN=2$ diamond structure, giving rise to the generating type
    \begin{equation}\label{eq: non min GKO}
        \cW\bigg(1,2^2,3^2,\dotsb; \bigg(\frac{3}{2}\bigg)^2, \bigg(\frac{5}{2}\bigg)^2,\dots\bigg).
    \end{equation}
    \item The automorphism $\sigma$ on $V^{k}(\fr{sl}_{r+1|s})$ extends to an automorphism $\sigma$ on $V^{k}(\fr{sl}_{r+1|s})\otimes \cE(r) \otimes \cS(s)$ by
    \[ \beta^i\mapsto \gamma^i,\quad \gamma^i\mapsto -\beta^i, \quad b^i\mapsto c^i,\quad c^i\mapsto b^i,\] and similarly for $V^{k}(\fr{psl}_{r+1|r+1})\otimes \cE(r) \otimes \cS(r+1)$ in the case $s = r+1$.
 This induces an automorphism of order two on the algebras (\ref{eq:parafermions}), in agreement with (\ref{eq:auto principal W}).
\end{enumerate}
\end{theorem}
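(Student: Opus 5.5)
We follow the same three-step structure as the proof of Theorem \ref{thm: Y alg}, working throughout in the non-SUSY realization \eqref{eq:GKO}. Here the coset sits inside $V^{k}(\fr{sl}_{r+1|s})\otimes \cE(r)\otimes \cS(s)$, and $V^{k+1}(\fr{gl}_{r|s})$ acts diagonally: on the first factor through the $\fr{gl}_{r|s}$ block embedded in $\fr{sl}_{r+1|s}$, and on the second through the free field realization of $L_1(\fr{gl}_{r|s})$ on $\cE(r)\otimes\cS(s)$.

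\textbf{Step (GKO1).} We first check that the three proposed fields lie in the coset.
\begin{enumerate}
\item The field $\sum_i :b^ic^i: - \sum_i :\beta^i\gamma^i:$ is the $\fr{gl}_{r|s}$-invariant Heisenberg field of the free field factor. Its OPE with the currents of $V^{k+1}(\fr{gl}_{r|s})$ is nonzero only through the supertrace part.
\item We therefore correct it by the element of $\fr{sl}_{r+1|s}$ proportional to $\mathrm{diag}(r-s,-1,\dots)$, which commutes with $\fr{gl}_{r|s}$. This is the analogue of $\omega_v$ in \eqref{eq: heisenberg in gl affine}.
\item Choosing the correction coefficient so that the sum commutes with the diagonal supertrace current fixes a combination. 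After rescaling we expect this to give exactly the displayed $H$; the displayed formula omits this term, so we should verify the normalization, including the factor $\sqrt{k}/\sqrt{-k-r+s-1}$.
\item The fields $G^\pm$ are the unique (up to scale) $\fr{gl}_{r|s}$-invariant contractions of the off-diagonal currents $\omega_{p^{i,+}}$, $\omega_{q^{i,-}}$, which transform as $\mathbb{C}^{r|s}$ and its dual, with the free fields $c^i,\gamma^i$ and $b^i,\beta^i$.
\item Invariance of a contracted bilinear together with the level matching $k+1=k+1$ should show that $G^\pm$ commute with the diagonal currents.
\end{enumerate}
We then compute the OPEs $H(z)G^\pm(w)$, $G^\pm(z) G^\pm(w)$ and $G^+(z) G^-(w)$ directly by Wick's theorem. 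We fix the relative scaling of $G^\pm$ so that the third-order pole equals $c/3$, and read off $c$ from $H(z)H(w)$.

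The main obstacle in Step (GKO1) is to show that the first-order pole of $G^+(z) G^-(w)$ is $L+\frac12\partial H$, where $L$ is the coset Virasoro field $L^{\fr{sl}_{r+1|s}}+L^{\cE\cS}-L^{\fr{gl}_{r|s},\mathrm{diag}}$. This is a Sugawara-type identity: the product $:\omega_{p^{i,+}}\omega_{q^{i,-}}:$ summed over $i$ reconstructs the off-block part of the $\fr{sl}_{r+1|s}$ Casimir. We would verify it by expanding both sides in the quadratic basis. Alternatively, we can invoke uniqueness, as in the proof of Proposition \ref{prop:sl(n|n-1), N=2}: the Jacobi identities $J(H,G^\pm,G^\mp)$, $J(L,G^\pm,G^\mp)$ and $J(G^\pm,G^\mp,G^\mp)$ force the $\text{Vir}^c_{\cN=2}$ relations once we know that the weight-$2$ term is the coset conformal vector.

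\textbf{Step (GKO2).} We apply Theorem \ref{thm:orbifoldlimit}. The large level limit of the coset is $\cH\otimes\bigl(\cO(s|r,1)\otimes\cO(r|s,1)\otimes(\text{weight-}1\text{ off-diagonal free fields})\bigr)^{\mathrm{GL}_{r|s}}$. By Sergeev's first fundamental theorem \cite{SI}, the invariants are generated by the quadratics $:\partial^d X\, Y:$ pairing a vector with a covector. Arranging these exactly as in \eqref{eq: Y gen}, now with $p^{i,+},q^{i,-}$ and the free fields $b,c,\beta,\gamma$ playing the roles of the four diamond corners, gives the generating type \eqref{eq: non min GKO}. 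The weight-$1$ invariant is $H$ and the weight-$\frac32$ invariants are $G^\pm$. For higher weights, we take bottoms built from the free field pairing $:\partial^d c\, b:$, corrected into the coset, and generate the diamonds by $G^\pm_{(0)}$. The triangularity argument of Proposition \ref{cor:diamond}, comparing leading terms in the limit, shows that these diamonds span the same space as the invariant generators.

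\textbf{Step (GKO3).} We check directly that the given map preserves the free field OPEs. For instance, $\beta(z)\gamma(w)\sim (z-w)^{-1}$ is sent to $\gamma(z)(-\beta(w))$, which is consistent by skew-symmetry. We also check that, combined with \eqref{eq: auto pq}, the map preserves the diagonal $\fr{gl}_{r|s}$ and hence the coset. It sends $H\mapsto -H$ and swaps $G^\pm$ up to sign, so it restricts to \eqref{eq: N=2 automorphism} on $\text{Vir}^c_{\cN=2}$. The diamond bottoms can then be chosen inductively as eigenvectors with the signs in \eqref{eq:auto principal W}, as in part (3) of Theorem \ref{thm: Y alg}. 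Order two holds on the coset because any sign discrepancies, such as $\beta\mapsto\gamma\mapsto-\beta$, cancel on invariant quadratics.
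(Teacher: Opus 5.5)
Your plan matches the paper's proof, which says only that (GKO1) is a direct free-field computation and that (GKO2)--(GKO3) go exactly as in Theorem~\ref{thm: Y alg}. There is one concrete gap, in how you set up the diagonal action, and it affects both (GKO1) and (GKO2).

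\textbf{The $\mathfrak{gl}_1$-normalization of the diagonal action.}
\begin{enumerate}
\item You let $\mathfrak{gl}_{r|s}$ act on $\cE(r)\otimes\cS(s)$ through the standard free-field realization of $L_1(\mathfrak{gl}_{r|s})$. There the identity $I$ acts by $\pm1$ on each of $b^i,c^i,\beta^i,\gamma^i$.
\item Under the block embedding $\iota$ into $\mathfrak{sl}_{r+1|s}$, however, $\iota(I)=\mathrm{diag}(-(r-s),1,\dots,1)$. So $\omega_{p^{i,+}}=e_{1+i,1}$ has charge $r-s+1$.
\item Hence $:\!\omega_{p^{i,+}}c^i\!:$ is not invariant under the center, and your item (5) fails. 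The level matching $k+1=k+1$ only concerns $\mathfrak{sl}_{r|s}$ and does not help here.
\item In the large level limit of that coset, the center acts on $\omega_{p^{i,+}}$ and on the weight-$\frac12$ fields through incompatible characters. So for $r\neq s$ there are in general no weight-$\frac32$ invariants at all.
\end{enumerate}

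The fix is to take the $\mathfrak{gl}_1$-direction of the diagonal to be $\omega_v+J$. Here $v=\iota(I)/(r-s+1)$ as in \eqref{eq: heisenberg in gl affine}, and $J=\sum_i:\!b^ic^i\!:-\sum_i:\!\beta^i\gamma^i\!:$, signed so that $c^i,\gamma^i$ have charge $-1$. This is what the SUSY definition \eqref{eq:parafermions} forces: $\cE(r)\otimes\cS(s)$ arises from the off-diagonal free fields $\bar a$ of $V^k_{\cN=1}(\mathfrak{sl}_{r+1|s})$, and these carry the adjoint $\iota(I)$-charge.

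With this choice, your correction procedure gives
\[
H=\frac{1}{k+r-s+1}\bigl(\omega_{\iota(I)}-kJ\bigr).
\]
This field gives $G^\pm$ charge $\pm1$. Its norm is
\[
\frac{k(r-s)(r-s+1)+k^2(r-s)}{(k+r-s+1)^2}=\frac{k(r-s)}{k+r-s+1}=\frac{c}{3}.
\]
So your suspicion is justified: the purely free-field expression is not in the coset when $r\neq s$.

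\textbf{The large level limit.} Your stated limit is also wrong for $n=0$.
\begin{enumerate}
\item There is no principal-block Heisenberg field when $n=0$.
\item $\cO(s|r,1)\cong\cS(s)\otimes\cE(r)$ already contains both the vector and the covector weight-$\frac12$ fields.
\item By Theorem~\ref{thm:orbifoldlimit}, the limit is $\bigl(\cO(s|r,1)\otimes\cO(r|s,2)\bigr)^{\mathrm{GL}_{r|s}}$, with $\cO(r|s,2)$ the weight-one off-diagonal fields. This is the same limit used in the proof of Theorem~\ref{trunc:c0rs}.
\item Your extra factors $\cH$ and $\cO(r|s,1)$ would produce a second weight-one generator and further invariants, contradicting \eqref{eq: non min GKO}.
\end{enumerate}

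With the corrected limit and the corrected $\mathrm{GL}_1$-action, $\omega_{p^{i,+}},b^i,\beta^i$ are vectors and $\omega_{q^{i,-}},c^i,\gamma^i$ are covectors. Your diamond count then goes through:
\begin{itemize}
\item bottoms $:\!b\,\partial^d c\!:$,
\item sides $:\!\omega_p\,\partial^d c\!:$ and $:\!\partial^d b\,\omega_q\!:$,
\item tops $:\!\omega_p\,\partial^d\omega_q\!:$.
\end{itemize}

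Your treatment of (GKO3) is fine.
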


\begin{proof}
Proving (1) is a straightforward free field algebra computation that we omit.
Proof of (2) and (3) is the same as in Theorem \ref{thm: Y alg}.
\end{proof}

By \cite[Theorem 4.1]{CL2}, if $\gg$ is a simple Lie superalgebra, $F \in \gg$ is a nilpotent element, and $\ga \subseteq \gg^{\natural}$ is a reductive Lie subalgebra, the coset of $\cW^k(\gg,F)$ by the affine subVOA corresponding to $\ga$ is strongly finitely generated. We expect that the strong generating sets \eqref{eq: non min Y} and \eqref{eq: non min GKO} for $\cC^{\psi}_{\cN=2}(n, r|s)$ are not minimal, and that these vertex algebras are strongly finitely generated. The ideal of relations among the generators of the invariant ring \eqref{invariantring} are given by Sergeev's second fundamental theorem of invariant theory for the standard representation of $\text{GL}_{r|s}$ \cite[Theorem 2.2]{SII}. These relations are certain linear combinations of products of determinants with appropriate signs, and it is straightforward to check that the relation of minimal conformal weight has even parity and conformal weight given by
\begin{equation}\label{sing vectors Y}
\zeta(n,r|s):=\begin{cases}
   (1 + s) (2 + n + r + n r + r s), \quad &r \geq s,\\
    (1 + r) (1 + n + 2 s + n s + r s),\quad &r < s.
\end{cases}
\end{equation}
We expect, but do not prove, that these relations give rise to {\it decoupling relations} for the field $\omega^{\zeta(n,r|s), \bot} \in \cC^{\psi}_{\cN=2}(n, r|s)$; in other words, in the corresponding normally ordered relation, the coefficient of $\omega^{\zeta(n,r|s), \bot}$ is nonzero. If this is the case, it is easy to construct higher decoupling relations inductively for all fields $\{\omega^{i, \bot}, \omega^{i, \pm} ,\omega^{i, \top}|\ i \geq \zeta(n,r|s)\}$. This motivates the following.

\begin{conjecture}\label{conj: gen type}
$\cC^{\psi}_{\cN=2}(n,r|s)$ has minimal strong generating type
\begin{equation}\label{gen type Yalg}\cW\bigg(1,2^2, 3^2,\dots,  (\zeta(n,r|s)-1)^2, \zeta(n,r|s);  \bigg(\frac{3}{2}\bigg)^2, \bigg(\frac{5}{2}\bigg)^2,\dots, \bigg(\zeta(n,r|s)-\frac{1}{2}\bigg)^2\bigg).\end{equation}
\end{conjecture}

Even though $\cW^k(\gs\gl_{n+1+r|n+s}, F)$ is simple for generic $k$ \cite[Theorem 3.6]{CL2}, $\cC^{\psi}_{\cN=2}(n,r|s)$ is {\it not} simple except in the cases $r = 0$ or $s=0$; simplicity in these cases follows from \cite[Theorem 4.1]{ACK}. For this reason, we consider these cases to be most important and we introduce the notation
\begin{equation}\label{def:DandEfamilies}
\cE^{\psi}_{\mathcal{N}=2}(n,r) :=\cC^{\psi}_{\cN=2}(n,r|0),\quad \cD^{\psi}_{\mathcal{N}=2}(n,s):=\cC^{\psi}_{\mathcal{N}=2}(n,0|s+1).
\end{equation}

\begin{proposition}
 \label{lem:Dnsgenerators}
Conjecture \ref{conj: gen type} is true for the families $\cD^{\psi}_{\mathcal{N}=2}(n,s)$ and $\cE^{\psi}_{\mathcal{N}=2}(n,r)$.
\end{proposition}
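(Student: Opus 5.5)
We prove the two families separately but by the same mechanism: exhibit an explicit decoupling relation for the field $\omega^{\zeta,\bot}$, where $\zeta=\zeta(n,r|s)$, then propagate it upward using the $\mathcal{N}=2$ diamond structure. For $\cE^{\psi}_{\cN=2}(n,r)$ we have $s=0$, so $\zeta = 2+n+r+nr = (n+1)(r+1)+1$. For $\cD^{\psi}_{\cN=2}(n,s)$, with $r=0$ and $s$ replaced by $s+1$, we get $\zeta = 1+n+2(s+1)+n(s+1) = (n+2)(s+1)+1$ up to the indexing shift. In both cases the group is $\text{GL}_r$ or $\text{GL}_{s+1}$, an ordinary reductive group, so Weyl's first and second fundamental theorems apply to the orbifold limit \eqref{orbifoldlimit:Dns}. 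The generators are the quadratics \eqref{eq: Y gen}, and the relations of minimal weight are $(r+1)\times(r+1)$ (respectively $(s+2)\times(s+2)$) determinants in the pairings $\omega^{\bot}_{\infty},\omega^{\pm}_{\infty},\omega^{\top}_{\infty}$.

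\textbf{Step 1: the lowest relation in the free field limit.} First I will identify the unique relation of minimal weight in the invariant ring \eqref{invariantring} and check that it is even of weight $\zeta$. In the $\cE$ case, $p^{i,\bot}$ and $q^{i,\bot}$ are odd of weight $\frac{n+1}{2}$ and $p^{i,+},q^{i,-}$ are even of weight $\frac{n+2}{2}$. The second fundamental theorem says the lowest relation comes from antisymmetrizing $r+1$ copies of the even (symplectic-type) vectors of lowest weight. Here that means $p^{i,+}$, $q^{i,-}$ and their derivatives, so the relation is $\det\big(:\partial^{a}\omega_{p,+}\partial^{b}\omega_{q,-}:\big)_{0\le a,b\le r}=0$ in the $\beta\gamma$-type sector. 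Its weight is $(r+1)(n+2)+2\binom{r+1}{2}=(r+1)(n+2+r)$, which matches $\zeta$. I will then use the standard argument of \cite{CL2,Lin}: in the orbifold of a free field algebra, this classical relation deforms to a quantum normally ordered relation whose coefficient of $\omega^{\zeta,\bot}_{\infty}$ is nonzero. The cleanest route is to restrict to the rank-one case and compute that coefficient explicitly, exactly as for $\cS_{\text{ev}}(r,k)^{\text{GL}_r}$ and $\cO$-type orbifolds in \cite{CL3}.

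\textbf{Step 2: lift to the coset.} Next I will lift the decoupling relation to $\cC^{\psi}_{\cN=2}(n,r|s)$ for generic $\psi$, by deformation from the large level limit. A nonzero coefficient at $\psi=\infty$ remains nonzero for generic $\psi$, so $\omega^{\zeta,\bot}$ decouples in the coset itself. Applying $G^{\pm}_{(0)}$ then decouples $\omega^{\zeta,\pm}$ and $\omega^{\zeta,\top}$. These operators carry the relation to new relations, and the leading coefficients are preserved because the diamond structure of Theorem~\ref{thm: Y alg}(Y2) is triangular with nonzero constants, as in \eqref{eq:omega i and omega a}.

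\textbf{Step 3: higher weights and minimality.} For the higher weights, I will use the OPE of $\omega^{3,\bot}$ (or the weight-$3$ diamond) with the decoupling relation. This raises weight by one, as in the $\cW_{\infty}$ argument of \cite{Lin}, and gives decoupling relations for all $\omega^{i,*}$ with $i>\zeta$. Minimality is immediate from the classical limit: below weight $\zeta$ there are no relations at all. The only exception is the field $\omega^{\zeta,\top}$ of weight $\zeta+1$ and the odd fields of weight $\zeta+\frac12$. I need to check the type in \eqref{gen type Yalg}: it keeps one even field of weight $\zeta$ but drops its diamond partners. So I will in fact show that the weight-$\zeta$ relation removes exactly one of the two even weight-$\zeta$ fields, namely the bottom $\omega^{\zeta,\bot}$ rather than $\omega^{\zeta-1,\top}$, and I will match counts by comparing graded characters of the orbifold with the proposed free generating type up to weight $\zeta+1$. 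The main obstacle is the nonvanishing of the decoupling coefficient in Step 1. I will try first to reduce it to the known results for the orbifolds $\cS_{\text{ev}}(r,n)^{\text{GL}_r}$ and $\cO_{\text{odd}}(2s,n)^{\text{GL}_s}$ from \cite{CL3}, and otherwise do a direct computation in the smallest case, followed by an induction on $r$.
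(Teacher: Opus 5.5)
Your architecture is the paper's: Weyl's first and second fundamental theorems on the orbifold limit \eqref{orbifoldlimit:Dns}, lift the lowest classical relation to a normally ordered one, show the coefficient of $\omega^{\zeta,\bot}$ is nonzero by a recursion, then propagate upward. The gap is in Step~1, which carries the whole argument: it picks the wrong relation.

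In this super setting, the second fundamental theorem is governed by the super exterior power of the multiplicity space. That power is antisymmetric in even copies of the standard representation but symmetric in odd copies, so odd copies may be repeated.

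\textbf{The $\cE$ family.} In $\cE^{\psi}_{\cN=2}(n,r)$ the lowest fields $\omega_{p^{i,\bot}},\omega_{q^{i,\bot}}$, of weight $\frac{n+1}{2}$, are odd. Each odd $\omega_{p^{i,\bot}}$ squares to zero classically and there are only $r$ indices, so $\big(\sum_{i=1}^r \omega_{p^{i,\bot}}\omega_{q^{i,\bot}}\big)^{r+1}=0$, i.e.\ $(\omega^{n+1,\bot}_\infty)^{r+1}=0$ in the notation of \eqref{eq: Y gen}. This relation has weight $(n+1)(r+1)$ and is the minimal one. Your determinant in $\partial^a\omega_{p^{\cdot,+}}$, $\partial^b\omega_{q^{\cdot,-}}$ has weight $(r+1)(n+r+2)$. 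That exceeds $nr+n+r+2$ by $r(r+2)$, so your claim that it ``matches $\zeta$'' is false, and it is not the lowest relation.

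\textbf{The $\cD$ family.} In $\cD^{\psi}_{\cN=2}(n,s)$ the parities are reversed. The minimal relation super-antisymmetrizes one copy of the even $\omega_{p^{\cdot,\bot}}$ with $s+1$ repeated copies of the odd $\omega_{p^{\cdot,+}}$, and dually on the $q$ side. Its weight is $(n+1)+(s+1)(n+2)=(n+2)(s+2)-1$, which agrees with \eqref{sing vectors Y}. Your closed form $(n+2)(s+1)+1$ is an arithmetic slip. A genuine $(s+2)\times(s+2)$ determinant in the even pairings $\partial^a\omega_{p^{\cdot,\bot}}\partial^b\omega_{q^{\cdot,\bot}}$ would have weight $(s+2)(n+s+2)$, again too high.

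\textbf{Consequences.} Everything downstream is anchored to this relation: the decoupling coefficient you plan to compute, the claim in Step~3 that nothing happens below weight $\zeta$, and the count of surviving generators. As written, you would compute the coefficient of a non-minimal relation and miss the actual truncation.

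Note also that for the $\cE$ family the correct minimal weight $(n+1)(r+1)$ is one less than \eqref{sing vectors Y} at $s=0$. Two independent checks support $(n+1)(r+1)$:
\begin{itemize}
\item $\cE^{\psi}_{\cN=2}(0,1)$ is the Kazama--Suzuki coset $\text{Vir}^c_{\cN=2}$, the case $n=1$ of Ito's conjecture recalled in the introduction; its type is the conjectured one with $\zeta=2$.
\item By Theorem~\ref{N=2:duality}, $\cE^{\psi}_{\cN=2}(0,r)\cong\cW^{k}(\gs\gl_{r+1|r})$ with $k+1=\psi^{-1}$, whose type corresponds to $\zeta=r+1$.
\end{itemize}
So do not force a relation to land at weight $nr+n+r+2$. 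Run the decoupling argument on $(\omega^{n+1,\bot}_\infty)^{r+1}$ for $\cE$, and on the mixed relation above for $\cD$.
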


\begin{proof}
In these cases, Sergeev's first and second fundamental theorems of invariant theory for the standard representation of $\text{GL}_{r|s}$ reduces to Weyl's first and second fundamental theorems for the standard representation of the general linear group \cite{W}. The relations are now ordinary determinants with appropriate signs, and the conformal weight of the relation of minimal weight is given by \eqref{sing vectors Y}. As usual, there is a normally ordered relation among the generators and their derivatives of $\cE^{\psi}_{\mathcal{N}=2}(n,r)$ and $\cD^{\psi}_{\mathcal{N}=2}(n,s)$ whose leading term is this classical relation. The lower order terms are quantum corrections, and one can show that the lowest weight relation is a decoupling relation for $\omega^{\zeta(n,r|0),\bot}$ or $\omega^{\zeta(n,0,s+1),\bot}$, respectively, by finding a recursive formula for the coefficient of this term. This argument was introduced by the third authors in a series of papers \cite{LHeis,Lin2,LinKWY}, and was used extensively in \cite{CL2}, so the details are omitted.
\end{proof}

\section{Universal $2$-parameter vertex algebra $\WNtwo$} \label{sect:main}
In this section we will construct the freely generated \textit{universal 2-parameter} vertex algebra $\WNtwo$ of \textit{infinite} minimal strong generating type \begin{equation} \label{eq: n=2 inf} \cW\bigg(1, 2^2, 3^2, 4^2, \dots;\Big(\frac{3}{2}\Big)^2,\Big(\frac{5}{2}\Big)^2,\Big(\frac{7}{2}\Big)^2,\dots\bigg),\end{equation}
defined over the ring $R$ which is the localization of $\mathbb{C}[c,\lambda]$ along the multiplicatively closed set generated by
\begin{equation}\label{localization set}
   \{c,c-1,c+3\}.
\end{equation}

\subsection{Set-up} \label{setup:Uproperties}
Motivated by Theorems \ref{thm: Y alg} and \ref{thm: GKOs}, we make the following assumptions:
\begin{enumerate}[$(\B{\textup{U}}1)$]\label{list:features of WN2}
	\item $\WNtwo$ has $\cN=2$ structure\footnote{Given the generating type (\ref{eq: n=2 inf}), the existence of $\cN=2$ structure generically is forced from Jacobi identities.} that we denote by
    \begin{equation}
        W^1=\T{Span}_{\mathbb{C}}\{H,G^{\pm},L\},
    \end{equation}
    so that OPEs (\ref{eq:N=2 OPEs 1}-\ref{eq:N=2 OPEs 3}) hold.
      \item The strong generating set arising from type (\ref{eq: n=2 inf}) has $\cN=2$ diamond structure. We denote the diamonds by
      \begin{equation}\label{LCA:generators}
        W^N:=W_{\cN=2}^{(N,0)}=\T{Span}_{\mathbb{C}}\{W^{N,\bot},W^{N,\pm},W^{N,\top}\},
    \end{equation}
    consistent with notation (\ref{eq:N=2 diamond}).
    Moreover, the second diamond $W^2$ is $\cN=2$ primary\footnote{Given the generating type (\ref{eq: n=2 inf}), and feature (\B{U}1) the second diamond must be primary.}.
         \item There exists\footnote{In view of feature (\B{U}4), this property of the automorphism follows from requiring that natural automorphism of $\T{Vir}^c_{\cN=2}$ from (\ref{eq:sigma}) satisfies $\sigma:W^{2,\bot}\mapsto W^{2,\bot}$.} an automorphism of order two
        \begin{equation}\label{eq:aut}
            \sigma: W^{N,\bot}\mapsto (-1)^N W^{N,\bot},\quad W^{N,\pm} \mapsto(-1)^{N+1} W^{N,\mp},\quad W^{N,\top}\mapsto (-1)^{N+1} W^{N,\top}.
        \end{equation}
         \item $\WNtwo$ is weakly generated by the first two diamonds $W^1$ and $W^2$. 
         Specifically, we make the following weak generation hypothesis
        \begin{equation}\label{eq:raise}
            W^{2,\top}_{(1)}W^{N,\bot}=W^{N+1,\bot}, \quad N\geq 2.
        \end{equation}
        \end{enumerate}

\vskip 5mm

    Note that properties $(\B{U}1-\B{U}3)$ are already established for the $Y$-algebras (\ref{eq: Yalg}) and (\ref{eq:parafermions}), while the last property $(\B{U}4)$ is not yet known.
    We prove in Section \ref{sec:one param q} that this property holds for generic values of level parameter $k$ for these families. Presently, it is an essential assumption for us to prove the universality of 2-parameter algebra $\WNtwo$.

	We will construct the OPE algebra of $\WNtwo$ with strong generators \eqref{LCA:generators}, such that identities (\ref{conformal identity}-\ref{quasi-derivation}) are imposed, and the Jacobi identities (\ref{Jacobi}) hold as a consequence of (\ref{conformal identity}-\ref{quasi-derivation}) alone. 
    Equivalently, there exist no nontrivial Jacobiators (Sec. 3, \cite{Lin}).
$\WNtwo$ will be the universal enveloping vertex algebra of a \textit{nonlinear} Lie conformal algebra $\nlcalg$ with generators \eqref{LCA:generators} and conformal weight grading 
\begin{equation} \label{eq:grading}
    \Delta(W^{N,\bot})=N,\quad \Delta(W^{N,\pm})=N+\frac{1}{2},\quad \Delta(W^{N,\top})=N+1,
\end{equation}
 in the sense of DeSole and Kac \cite{DSKI}.
Therefore, except for the above postulates $(\B{U}1-\B{U}4)$, we posit that the remaining OPEs have the \textit{most general} form compatible with the conformal weight and Heisenberg charge gradations,
    \begin{equation}\label{eq:OPEgeneralG}
		W^{N,\alpha}(z)W^{M,\beta}(w)\sim \sum_{r=0}^{\Delta(W^{N,\alpha})+\Delta(W^{M,\beta})-1}\sum_{\Omega \in PBW_r} w^{W^{N,\alpha},W^{M,\beta}}_{\Omega}\Omega (w) (z-w)^{-r-1},\quad \alpha,\beta \in \{\bot,\pm,\top\},
	\end{equation}
    where $\Omega$ denotes a PBW monomial of conformal weight $\Delta(W^{N,\alpha})+\Delta(W^{M,\beta})-r-1$.
    Between any two distinct diamonds $W^N$ and $W^M$ there are 16 OPEs to consider (and 8 between those arising out of the same diamond $W^N(z)W^N(w)$).   
    We use the action of zero modes $G^+_{(0)}$ and $G^{-}_{(0)}$ to express all such OPEs in terms of just 3, which we chose to be of lowest conformal weight possible: $W^{N,\bot}(z)W^{M,\bot}(w)$, $W^{N,\bot}(z)W^{M,+}(w)$, $W^{N,\top}(z)W^{M,\bot}(w)$ with $N\leq M$.
    The remaining interactions can be determined by using automorphism $\sigma$ with the following relations\footnote{It is possible to do better and reduce it to even less free parameters, for instance as in \cite{Blu}, Eqs. 2.26-2.28. However, this redundancy still allows us to solve the problem.},
\begin{equation}\label{eq:G descendend}
    \begin{split}
        W^{n,\bot}_{(r)}W^{m,+}=&-W^{m,\bot}_{(r)}W^{n,+}-G^+_{(0)}W^{n,\bot}_{(r)}W^{m,\bot},\\
        W^{n,\top}_{(r)}W^{m,\bot}=&W^{n,\bot}_{(r)}W^{m,\top}+G^+_{(0)}W^{n,-}_{(r)}W^{m,\bot}+G^-_{(0)}W^{n,\bot}_{(r)}W^{m,+}\\
       &+G^+_{(0)}G^-_{(0)}W^{n,\bot}_{(r)}W^{m,\bot}+\frac{1}{2}\partial W^{n,\bot}_{(r)}W^{m,\bot},\\
       W^{n,+}_{(r)}W^{m,-}=&W^{n,\top}_{(r)}W^{m,\bot}+G^-_{(0)}W^{n,\bot}_{(r)}W^{m,+}-G^+_{(0)}G^-_{(0)}W^{n,\bot}_{(r)}W^{m,\bot}\\
       &+\frac{1}{2}\partial W^{n,\bot}_{(r)}W^{m,\bot} +\frac{r}{2} W^{n,\bot}_{(r-1)}W^{m,\bot},\\
        W^{n,+}_{(r)}W^{m,+}=&-G^+_{(0)}W^{n,\bot}_{(r)}W^{m,+},\\
       W^{n,\top}_{(r)}W^{m,+}=&-G^+_{(0)}W^{n,\top}_{(r)}W^{m,\bot}-\frac{r}{2}G^-_{(0)}W^{n,+}_{(r-1)}W^{m,\bot},\\
       W^{n,\top}_{(r)}W^{m,\top}=&G^+_{(0)}G^-_{(0)}W^{n,\top}_{(r)}W^{m,\bot}-\frac{1}{2}\partial W^{n,\top}_{(r)}W^{m,\bot}\\
       &+\frac{r}{2}W^{n,+}_{(r-1)}W^{m,-}-\frac{r}{2}W^{n,-}_{(r-1)}W^{m,+}-\frac{r(r-1)}{4} W^{n,\bot}_{(r-2)}W^{m,\bot},
    \end{split}
\end{equation}
which can be read from the Figure \ref{fig:diamonds}.

    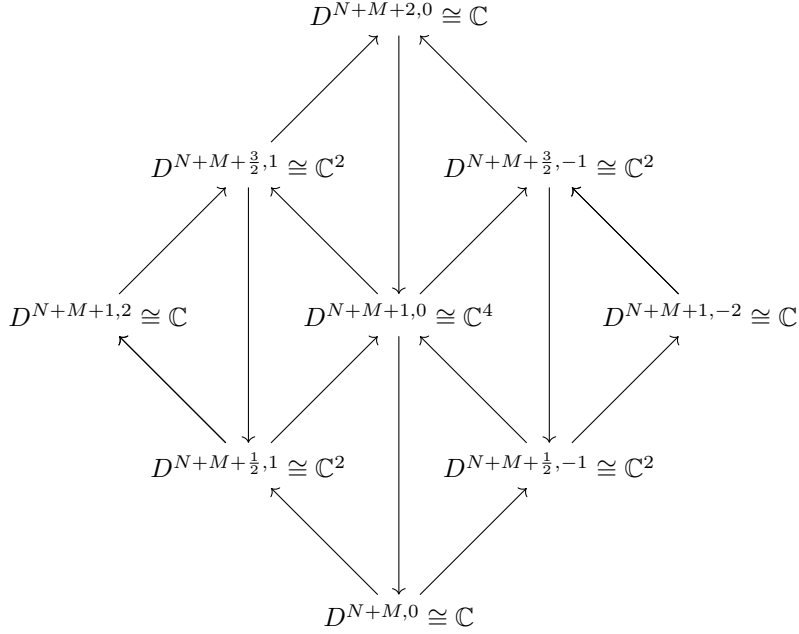
\begin{figure}
\centering
\begin{tikzpicture}
% Diamond shape nodes
\node (top-top) at (0, 4) {$D^{N+M+2,0}\cong \mathbb{C}$};
\node (top-minus) at (2, 2) {$D^{N+M+\frac{3}{2},-1}\cong \mathbb{C}^2$};
\node (top-plus) at (-2, 2) {$D^{N+M+\frac{3}{2},1}\cong \mathbb{C}^2$};
\node (minus-minus) at (4, 0) {$D^{N+M+1,-2}\cong \mathbb{C}$};
\node (center-center) at (0, 0) {$D^{N+M+1,0}\cong \mathbb{C}^4$};
\node (plus-plus) at (-4, 0) {$D^{N+M+1,2}\cong \mathbb{C}$};
\node (bottom-minus) at (2, -2) {$D^{N+M+\frac{1}{2},-1}\cong \mathbb{C}^2$};
\node (bottom-plus) at (-2, -2) {$D^{N+M+\frac{1}{2},1}\cong \mathbb{C}^2$};
\node (bottom-bottom) at (0, -4) {$D^{N+M,0}\cong \mathbb{C}$};
% Arrows and labels
\draw[->] (bottom-bottom) -- (bottom-minus) node[midway, right] {};
\draw[->] (bottom-bottom) -- (bottom-plus)  node[midway, left] {};
\draw[->] (bottom-plus) -- (plus-plus) node[midway, left] {};
\draw[->] (bottom-plus) -- (plus-plus) node[midway, left] {};
\draw[->] (bottom-plus) -- (center-center) node[midway, left]{};
\draw[->] (bottom-minus) -- (minus-minus) node[midway, right]{};
\draw[->] (bottom-minus) -- (center-center) node[midway, right]{};
\draw[->] (minus-minus) -- (top-minus) node[midway, right]{};
\draw[->] (plus-plus) -- (top-plus) node[midway, left]{};
\draw[->] (minus-minus) -- (top-minus) node[midway, right]{};
\draw[->] (center-center) -- (top-plus) node[midway, left]{};
\draw[->] (center-center) -- (top-minus) node[midway, right]{};
\draw[->] (top-plus) -- (top-top) node[midway, left]{};
\draw[->] (top-minus) -- (top-top) node[midway, right]{};
\draw[->] (top-top) -- (center-center) node[midway, right]{};
\draw[->] (top-plus) -- (bottom-plus) node[midway, right]{};
\draw[->] (top-minus) -- (bottom-minus) node[midway, right]{};
\draw[->] (center-center) -- (bottom-bottom) node[midway, right]{};
\end{tikzpicture}
\caption{Structure of the OPEs between two diamonds $W^{N}(z)W^M(w)$. 
The nodes $D^{\Delta,h}$ represent the vector subspaces spanned by products $W^{N,\alpha}(z)W^{M,\beta}(w)$ for $\alpha,\beta \in \{\bot,\pm,\top\}$ with $\Delta=\Delta(W^{N,\alpha})+\Delta(W^{M,\beta})$ and $h=h(W^{N,\alpha})+h(W^{M,\beta})$.
For instance, $D^{N+M,0}$ is spanned only by $W^{N,\bot}(z)W^{M,\bot}(w)$.
The down arrows represent an action of $H_{(1)}$, the lines pointing up-left or up-right represent action of $G^+_{(0)}$ or $G^-_{(0)}$, respectively.} \label{fig:diamonds}
\end{figure}

We have to consider all OPEs \eqref{eq:OPEgeneralG}, which we organize by weight, that we set to be $N+M$. Let $S^n_r$ denote the set of all $r^{ \T{th} }$ products among generators of weight $n$. Further, write $S^n = \bigcup_{r\leq n} S^{n}_r$ and $S^{\leq n} = \bigcup_{m\leq n} S^m$. Lastly, let $J^m$ denote the set of all Jacobi identities $J_{r,s}(a,b,c)$ among generating fields $a,b,c$ of total weight exactly $m$, and $J^{\leq n} = \bigcup_{m\leq n} J^m$.

The outline of this section is as follows.
	\begin{enumerate}
		\item We begin with the base case computation. Here, we write down the most general OPEs in $S^{\leq 9}$ compatible with the $\mathcal{N}=2$ symmetry, and we impose vertex algebra relations (\ref{conformal identity}-\ref{quasi-derivation}) along with Jacobi identities $J^{\leq 9}$ to uniquely determine $S^{\leq 7}$ in terms of two parameters $c$ and $\lambda$. Furthermore, we are able to find two commuting $\cW_{\infty}(c_{\pm},\lambda_{\pm})$ subVOAs, see Proposition \ref{prop:base case prop}.
		\item Next, we compute an infinite set of structure constants in Proposition \ref{prop:structure constants} and describe an induction procedure which expresses all OPEs in $S^{n+1}$ in terms of $S^{\leq n}$ thanks to Jacobi relations in $J^{n+2}$ (\ref{lem:Induction2}).
		\item Lastly, we exhibit a family of vertex algebras with known characters to prove free generation of $\WNtwo$. Specifically, we use the family of $\cW$-superalgebras $\cW^k(\fr{sl}_{n+1|n})$, see Proposition \ref{PrincW:quot}.
	\end{enumerate}
	
    Before we begin, we would like to comment on a closely related work of Candu and Gaberdiel \cite{CanGab}, where authors have done a similar computation to the following subsection under the assumption that \textit{all} diamonds $W^N$ are $\T{Vir}_{\cN=2}^c$ \textit{primary} for $N\geq 2$.
    There, the authors have conjectured the existence and uniqueness of $\WNtwo$ as a $2$-parameter vertex algebra. 
    In this section and Appendix \ref{sect:primary structures}, we prove their conjecture.

\subsection{Base case computation}
In the ansatz (\ref{eq:OPEgeneralG}) we have already imposed the Heisenberg and conformal weight gradation arising from the zero mode $H_{(0)}$ of the Heisenberg generator, and the first mode $L_{(1)}$ of the Virasoro generator, together with relations (\ref{eq:G descendend}); equivalently, the Jacobi identities of the form $J_{1,r}(L,W^2,W^2)$ and $J_{0,r}(H,W^2,W^2)$ vanish.
    We proceed to impose the full superconformal symmetry constraints on OPEs $W^{2}(z)W^{2}(w)$; equivalently, we impose the Jacobi identities of the form $J(W^1, W^{2}, W^{2})$.
    Remarkably, the imposition of $\mathcal{N}=2$ constraints $J(W^1,W^2,W^2)$ expresses OPEs in $W^2(z)W^2(w)$ in terms of the scaling parameter $\omega$ and two parameters $c$ and $\lambda$, defined in the following.
    \begin{equation}\label{eq:22}
    \begin{split}
        W^{2,\bot}(z)W^{2,\bot}(w)\sim& \omega(z-w)^{-4}+\left(\frac{4\omega}{c-1}L-\frac{6\omega}{c(c-1)}:\!HH\!:+\lambda  W^{2,\bot}\right)(w)(z-w)^{-2}\\
        &+\left(\frac{2\omega}{c-1}\partial L-\frac{6\omega}{c(c-1)}:\!\partial HH\!:+\frac{1}{2}\lambda \partial W^{2,\bot}\right)(w)(z-w)^{-1},\\
        W^{2,\bot}(z)W^{2,+}(w)\sim& \frac{6\omega}{c}G^+(w)(z-w)^{-4}+\left(\frac{2\omega}{c-1}\partial G^+-\frac{6\omega}{c(c-1)}:\!HG^+\!:+\frac{1}{2}\lambda  W^{2,+}\right)(w)(z-w)^{-2}\\
        &+\Big(\frac{1}{3}W^{3,+}+\frac{4\lambda}{c+3}:\!G^+W^{2,\bot}\!:-\frac{2\lambda}{c+3}:\!HW^{2,+}\!:+\frac{(c+11)\lambda}{4(c+3)}\partial W^{2,+}\\
        +&\frac{4\omega}{c(c-1)}:\!LG^+\!:+\frac{\omega}{c(c-1)}:\!H\partial G^+\!:-\frac{9\omega}{c(c-1)}:\!\partial H G^+\!:-\frac{(c+1)\omega}{2c(c-1)}\partial^2 G^+\Big)(w)(z-w)^{-1},\\
         W^{2,\top}(z)W^{2,\bot}(w)\sim& \frac{6\omega}{c}H(w)(z-w)^{-4}+\frac{6\omega}{c}\partial H(w)(z-w)^{-3}+W^{3,\bot}(w)(z-w)^{-2}+\Big(\frac{2}{3}\partial W^{3,\bot}\\&+\frac{3\lambda}{c+3}:\!G^-W^{2,+}\!:+\frac{3\lambda}{c+3}:\!G^+W^{2,-}\!:-\frac{\lambda}{c+3}:\!H\partial W^{2,\bot}\!:+\frac{2\lambda}{c+3}:\!\partial HW^{2,\bot}\!:\\
        -&\frac{2\lambda}{c+3}\partial W^{2,\top}+\frac{8\omega}{c(c-1)}:\!L\partial H\!:-\frac{4\omega}{c(c-1)}:\!\partial L H\!:\Big)(w)(z-w)^{-1},
    \end{split}
  \end{equation}
    and the rest follow by applying zero modes $G^{+}_{(0)}$ and $G^{-}_{(0)}$ via (\ref{eq:G descendend}).

    More generally, relations $J(W^1,W^N,W^M)$ give rise to both linear and quadratic constraints among the yet undetermined structure constants over the field of rational functions $\mathbb{C}(c)$. 
    Upon solving these, one finds that the only undetermined constants are those attached to the $\mathcal{N}=2$ primaries arising in a given OPE\footnote{This is a nontrivial statement and we do not prove it in generality, as we only require for $N+M\leq 7$ which we verify by computer.}; importantly, any additional parameters of $\WNtwo$ must arise this way.
    Since our aim is to show that $\WNtwo$ is a 2-parameter VOA, it suffices to restrict our attention to these structure constants. 
    As in \cite{CanGab}, we write
    \begin{equation}\label{OPEs refined}
    \begin{split}
		 W^{N} \times  W^{M} =&\sum_{\Delta(\Omega)\leq N+M +1}w^{N,M}_{\Omega}W^{(\Omega)},
    \end{split}
	\end{equation}
    where $W^{(\Omega)}$ denote $\mathcal{N}=2$ primary fields and
    \begin{equation*}
        \omega^{N,M}_{\Omega}=\left\{\begin{array} {ll}
        \omega^{W^{N, \bot}, W^{M,+}}_{W^{\Omega, \bot}} & \textup{ if } H(W^{\Omega,\bot})\neq 0, \\
        \omega^{W^{N, \bot}, W^{M,\bot}}_{W^{\Omega, \bot}} & \text{ if }H(W^{\Omega,\bot})=0\ \textup{ and } \ \omega^{W^{N, \bot}, W^{M,\bot}}_{W^{\Omega, \bot}}\textup{ is nonzero,}\\
        \omega^{W^{N, \top}, W^{M,\bot}}_{W^{\Omega, \bot}} & \textup{otherwise}
        \end{array}
        \right.
    \end{equation*}
    for the coefficients $\omega$'s in \eqref{eq:OPEgeneralG}.
    For example, OPEs in $W^2(z)W^2(w)$ presented in (\ref{eq:22}) are succinctly expressed as
    \[W^{2} \times W^2 =\omega\B{1}+\lambda W^{2}+W^{(3)}, \quad w^{2,2}_{0}=\omega, \quad w^{2,2}_{2}=\lambda,\quad w^{2,2}_{(3)}=1,\]
    where the diamond $W^3$ is uniquely corrected to become $\mathcal{N}=2$ primary 
    \begin{align*}
        W^{(3),\bot}=&W^{3,\bot}-\frac{3(c-8)\lambda}{2(5c-12)}W^{2,\top}-\frac{21\lambda}{5c-12}:\!HW^{2,\bot}\!:+\frac{18 (4 c+3) \omega }{(c-1) c (c+6) (2 c-3)}:\!H^3\!:\\
        &-\frac{6 \left(8 c^2-9 c+36\right) \omega }{(c-1) c (c+6) (2 c-3)}:\!LH\!:-\frac{18 (5 c-12) \omega }{(c-1) (c+6) (2 c-3)}:\!G^+G^-\!:\\
        &+\frac{9 (5 c-12) \omega }{(c-1) (c+6) (2 c-3)}\partial L-\frac{6 \left(c^3-3 c^2-3 c-9\right) \omega }{(c-1) c (c+6) (2 c-3)}\partial^2 H.
    \end{align*}

    \begin{remark}
         Note that primary field exists generically, except for a finite set of algebraic values, e.g. in the example of $W^{(3),\bot}$, this expression exists $c\neq \frac{12}{5},\frac{3}{2},1,0,-6$. 
         However, upon solving further Jacobi relations, the expressions we obtain for $w^{n,m}_{\Omega}$ be such, that most of these denominators cancel. Specifically, all except $c,c-1,c+3$.
    \end{remark}  
    Thanks to this short computation, we conclude the following proposition, which is proved by direct calculation.
	\begin{proposition}\label{prop:Virasoros}
    Assume that $\omega \neq -\frac{1}{32} (c-1) \lambda ^2$. Then $\WNtwo$ contains two commuting Virasoro subalgebras generated by the fields $L_-$ and $L_+$ with central charges $c_-$ and $c_+$, respectively, which add up to the Virasoro field $T=L-\frac{3}{2c}:\!HH\!:$ of central charge $c_T=c-1$.
    Specifically, these admit the following form.
	\begin{equation}\label{eq:virs}
		\begin{split}
	L_{\mp}=&\frac{1}{2}T\pm\frac{\lambda }{\sqrt{ \lambda ^2+
   \frac{32\omega}{c-1} }} \left(\frac{1}{2} T-\frac{2}{\lambda}W^{2,\bot}\right), \quad c_{\mp}=\frac{1}{2}c_T\left(1\pm\frac{\lambda }{\sqrt{ \lambda ^2+
   \frac{32\omega}{c-1} }}\right).
		\end{split}
	\end{equation}
	\end{proposition}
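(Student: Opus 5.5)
The plan is a direct computation inside the weight $\leq 4$ part of the OPE algebra, using only $\text{Vir}^c_{\cN=2}$ and the OPEs \eqref{eq:22}. We work with the even fields $T=L-\frac{3}{2c}:\!HH\!:$ and $W:=W^{2,\bot}$. First we check that $T$ commutes with $H$ and is a Virasoro field of central charge $c-1$. This is the standard Sugawara subtraction: $\frac{3}{2c}:\!HH\!:$ is the Heisenberg Virasoro of central charge $1$ for $H(z)H(w)\sim \frac{c}{3}(z-w)^{-2}$, and $H$ is $L$-primary of weight $1$. Next, $W$ has Heisenberg charge $0$. Since $W^2$ is $\cN=2$ primary by (\textbf{U}2), $H_{(n)}W=0$ for $n\geq 0$, so $W$ commutes with $H$ and $T(z)W(w)$ equals $L(z)W(w)$, i.e. $W$ is $T$-primary of weight $2$.

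We then compute $W(z)W(w)$ projected onto the Heisenberg coset. In \eqref{eq:22}, substitute $L=T+\frac{3}{2c}:\!HH\!:$. The coefficient of $(z-w)^{-2}$ becomes
$$\frac{4\omega}{c-1}T+\Big(\frac{6\omega}{c(c-1)}-\frac{6\omega}{c(c-1)}\Big):\!HH\!:+\lambda W=\frac{4\omega}{c-1}T+\lambda W.$$
So the $:\!HH\!:$ terms cancel exactly, and the $(z-w)^{-1}$ term is half the derivative of this. Hence $\{T,W\}$ closes as
$$W(z)W(w)\sim \omega(z-w)^{-4}+\Big(\tfrac{4\omega}{c-1}T+\lambda W\Big)(w)(z-w)^{-2}+\tfrac12\partial(\cdots)(z-w)^{-1},$$
together with $T(z)T(w)$ at central charge $c-1$ and $T(z)W(w)$ primary. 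This checking of the cancellation is the only place the specific form of \eqref{eq:22} is used.

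Now we look for $L_\pm=\alpha_\pm T+\beta_\pm W$ with $L_++L_-=T$, each Virasoro, and commuting with each other. Writing $L_-=\frac12T+x\big(\frac12T-\frac{2}{\lambda}W\big)$ as in \eqref{eq:virs}, the condition that $L_-(z)L_-(w)$ closes on $L_-$ at the $(z-w)^{-2}$ pole (coefficient $2L_-$) gives a quadratic equation in $x$. Collecting the $T$ and $W$ coefficients, both reduce to
$$x^2\Big(1+\frac{32\omega}{(c-1)\lambda^2}\Big)=1,\qquad\text{so}\qquad x=\pm\frac{\lambda}{\sqrt{\lambda^2+\frac{32\omega}{c-1}}}.$$
The hypothesis $\omega\neq-\frac{1}{32}(c-1)\lambda^2$ is exactly what makes this solvable and nondegenerate. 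Then $L_+=T-L_-$ corresponds to $-x$. Commutativity $L_+(z)L_-(w)\sim0$ follows because $L_+(z)L_-(w)=T(z)L_-(w)-L_-(z)L_-(w)$, and both terms equal the same Virasoro OPE with the quartic term $\frac{c_-}{2}$ matching. The central charges are read off from the fourth-order pole: $c_-=(c-1)\big(\tfrac12+\tfrac x2\big)^2+\tfrac{4x^2}{\lambda^2}\cdot 2\omega$. Using the quadratic relation, this simplifies to $\frac12 c_T(1+x)$, as claimed, and $c_++c_-=c_T$.

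The main obstacle is just bookkeeping: verifying that the quadratic constraints from the $T$-coefficient and the $W$-coefficient of the second-order pole coincide, and that the quartic pole gives the stated $c_\mp$. The case $\lambda=0$ should be handled by continuity or by directly reading \eqref{eq:virs} as $L_\mp=\frac12T\mp\frac{1}{\sqrt{32\omega/(c-1)}}\cdot(-2W)$.
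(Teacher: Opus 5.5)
Your proposal is correct and follows the paper's approach. The paper only says the result follows by direct calculation from \eqref{eq:22}, and your steps are exactly that calculation: the cancellation of the $:\!HH\!:$ terms after substituting $L=T+\frac{3}{2c}:\!HH\!:$, the quadratic $x^2\bigl(1+\frac{32\omega}{(c-1)\lambda^2}\bigr)=1$, the value $c_-=\frac12 c_T(1+x)$, and the match of the quartic pole of $T(z)L_-(w)$ with $c_-/2$, which gives commutativity.

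Two harmless slips: with your ansatz the $W$-coefficient of the second-order pole holds identically (it is what fixes the ansatz) rather than reducing to the same quadratic, and at $\lambda=0$ the formula reads $L_\mp=\frac12T\mp\frac{2}{\sqrt{32\omega/(c-1)}}W^{2,\bot}$, although both signs work there since $c_+=c_-$.
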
  

    \begin{remark}
    \
    \begin{itemize}
        \item    We may regard the parameter $\lambda$ as related to the conformal weight of diamonds $W^{2}$ with respect to the two commuting Virasoros $L_{\pm}$.
        \item The singularity at $c=1$ corresponds to the case when field $W^{2,\bot}$ becomes singular.
    \end{itemize}
    \end{remark}

    Next, we proceed to evaluate all OPEs in $S^5$, which are $W^2(z)W^3(w)$ and $W^1(z)W^{4}(w)$. 
    Jacobi identities $J(W^{2}, W^{2},W^{2})$ and $J(W^1,W^2,W^3)$ express all of the structure constants arising in OPEs $W^2(z)W^3(w)$ and $W^1(z)W^4(w)$ as rational functions in the central charge $c$, and polynomials in $\lambda$ and $\omega$.
    We have
    \begin{equation*}
        \begin{split}
        W^{1} \times  W^{4}  =&  w^{1,4}_{0} \B{1} + w^{1,4}_{2}  W^{2}+w^{1,4}_{(3)}  W^{(3)}+4  W^{(4)},\\
            W^{2} \times  W^{3}  =&  w^{2,3}_{0} \B{1} + w^{2,3}_{2}  W^{2}+w^{2,3}_{(3)}  W^{(3)}+W^{(4)}+w^{2,3}_{(2,2)}  W^{(2,2)}.
        \end{split}
    \end{equation*}
    In the above, we have two new primaries appearing in weight 4 with Heisenberg charge 0, which are uniquely specified by their bottoms
    \begin{align*}
           W^{(4),\bot}=&W^{4,\bot}-\frac{\left(4 c^2-15 c-126\right) \lambda }{(c+3) (7 c-18)}W^{3,\top}-\frac{3 (23 c+102) \lambda }{(c+3) (7 c-18)}:\!HW^{3,\bot}\!:+\dotsb,\\
     W^{(2,2),\bot}=&:\!W^{2,\bot}W^{2,\bot}\!:-\frac{2 c}{3 (7 c-18)}W^{3,\top}+\frac{6}{7 c-18}:\!HW^{3,\bot}\!:+\dotsb,
    \end{align*}
       where the omitted terms involve fields arising from diamonds $W^1,W^2$; the structure constants are as follows
    \begin{align*}
        w^{2,3}_{0} =&\frac{3}{2}\lambda\omega,\quad w^{2,3}_{2}=\frac{3 (3 c-23) \lambda ^2}{4 (c+3)}+\frac{6 (21 c-26) \omega }{(c-1) c},\quad  w^{2,3}_{(3)}=\frac{3 (c+5) \lambda }{2 (c+3)},\quad w^{2,3}_{(2,2)}=0,\\
        w^{1,4}_{0} =&\frac{27}{2}\lambda\omega,\quad  w^{1,4}_{2}=\frac{15 (3 c-23) \lambda ^2}{4 (c+3)}+\frac{30 (23 c-28) \omega }{(c-1) c},\quad \quad w^{1,4}_{(3)}=\frac{(5 c+21) \lambda }{c+3},\quad w^{1,4}_{(2,2)}=0.
    \end{align*}
            \begin{remark}
            Notation $W^{(2,2)}$ reflects the presence of monomial $:\!W^{2,\bot}W^{2,\bot}\!:$ in the above primary field.
        \end{remark}
     From this computation we conclude the following proposition, which is again proved by direct computation.
     \begin{proposition}\label{prop:weight3}
     Assume that $c\neq 0$ and $\omega\neq \frac{(c-1) (c+5) \lambda ^2}{4 (c+3)^2}$.
    Then $\WNtwo$ contains two fields $W^3_{\pm}$ of conformal weight three which are primary for the Heisenberg subVOA and respective Virasoro generators $L_{\pm}$, that is, 
    \[L_{\pm}(z)W_{\pm}^3(w) \sim 3 W^3_{\pm}(w)(z-w)^{-2} + \partial W^3_{\pm}(w)(z-w)^{-1},\quad L_{\pm}(z)W_{\mp}^3(w)\sim 0\sim H(z)W^3_{\pm}(w).\]
    Moreover, the above conditions specify fields $W^3_{\pm}$ up to a scaling constants $\omega_{\pm}$. Specifically, these admit the following form.
	\begin{equation}\label{eq:W3s}
		\begin{split}
	W^3_{\mp}=&{\omega_{\mp}}\left(W^{2,\top}+T^3\pm \sqrt{\lambda ^2+
   \frac{32\omega}{c-1}} Q^3\right),\\
   T^3=&-\frac{(c-1) (c+3) (c+7) \lambda }{6 \left((c-1) (c+5) \lambda ^2-4 (c+3)^2 \omega \right)}W^{3,\bot}+\frac{(c-1) \left(c^2+4 c-9\right) \lambda ^2+24 (c+3)^2 \omega }{c \left((c-1) (c+5) \lambda ^2-4 (c+3)^2
   \omega \right)}:\!HW^{2,\bot}\!:\\
   &+\frac{3 \lambda  \left((1-c) \lambda ^2+8 (c+3) \omega \right)}{2 \left((c-1) (c+5) \lambda ^2-4 (c+3)^2 \omega
   \right)}:\!G^{+}G^-\!:+\frac{\lambda  \left(9 (c-1) \lambda ^2+8 (c-2) (c+3) \omega \right)}{2 c \left((c-1) (c+5) \lambda ^2-4
   (c+3)^2 \omega \right)}:\!LH\!:\\
   &-\frac{3 \lambda  \left(3 (c-1) \lambda ^2+4 (c+1) (c+3) \omega \right)}{2 c^2 \left((c-1) (c+5) \lambda ^2-4
   (c+3)^2 \omega \right)}:\!H^3\!:+\frac{3 \lambda  \left((c-1) \lambda ^2-8 (c+3) \omega \right)}{4 \left((c-1) (c+5) \lambda ^2-4 (c+3)^2 \omega
   \right)}\partial L\\
   &+\frac{(c-1) \lambda  \left((c-9) \lambda ^2+2 (c-1) (c+3) \omega \right)}{4 c \left((c-1) (c+5) \lambda ^2-4
   (c+3)^2 \omega \right)}\partial^2 H,\\
  Q^3=&\frac{(c-1) (c+3)^2}{6 \left((c-1) (c+5) \lambda ^2-4 (c+3)^2 \omega \right)}W^{3,\bot}-\frac{(c-1) (c+3)^2 \lambda }{c \left((c-1) (c+5) \lambda ^2-4 (c+3)^2 \omega \right)}:\!HW^{2,\bot}\!:\\
   &-\frac{3 (c-1) \lambda ^2}{2 \left((c-1) (c+5) \lambda ^2-4 (c+3)^2 \omega \right)}:\!G^{+}G^-\!:+\frac{9 (c-1) \lambda ^2-8 (c+3)^2 \omega }{2 c \left((c-1) (c+5) \lambda ^2-4 (c+3)^2 \omega \right)}:\!LH\!:\\
   &\frac{3 \left(4 (c+3)^2 \omega -3 (c-1) \lambda ^2\right)}{2 c^2 \left((c-1) (c+5) \lambda ^2-4 (c+3)^2 \omega
   \right)}:\!H^3\!:+\frac{3 (c-1) \lambda ^2}{4 \left((c-1) (c+5) \lambda ^2-4 (c+3)^2 \omega \right)}\partial L\\
   &+\frac{(c-9) (c-1) \lambda ^2-2 (c-5) (c+3)^2 \omega }{4 c \left((c-1) (c+5) \lambda ^2-4 (c+3)^2 \omega
   \right)}\partial^2 H.
			\end{split}
	\end{equation}
	\end{proposition}

    Next, we proceed to evaluate all OPEs in $S^6$, which are $W^3(z)W^3(w)$, $W^2(z)W^{4}(w)$ and $W^1(z)W^{5}(w)$.
    As before, imposing the Jacobi identities $J(W^{2}, W^{2},W^{3})$ and $J(W^1,W^2,W^4)$ allows us to express all structure constants arising in $W^2(z)W^4(w)$ and $W^3(z)W^3(w)$ as rational functions in the central charge $c$, and polynomials in $\lambda$ and $\omega$. 
     We have
	\begin{equation}\label{eq:D5}
		\begin{split}
			\\
			 W^{3} \times  W^{3}  =&  w^{3,3}_{0} \B{1} + w^{3,3}_{2}  W^{(2)}+w^{3,3}_{(3)}  W^{(3)}+w^{3,3}_{(4)}  W^{(4)}+w^{3,3}_{(2,2)}  W^{(2,2)}\\
             &+w^{3,3}_{(5)}  W^{(5)}+w^{3,3}_{(\frac{5}{2},\frac{5}{2})}W^{(\frac{5}{2},\frac{5}{2})}+w^{3,3}_{(2,3)}W^{(2,3)}+w^{3,3}_{(2,\frac{7}{2})}W^{(2,\frac{7}{2})}+w^{3,3}_{(3,\frac{5}{2})}W^{(3,\frac{5}{2})},\\
             W^{2} \times  W^{4}  =&  w^{2,4}_{0}\B{1}+w^{2,4}_{2}  W^{(2)}+w^{2,4}_{(3)}  W^{(3)}+w^{2,4}_{(4)}  W^{(4)}+w^{2,4}_{(2,2)}  W^{(2,2)}\\
             &+W^{(5)}+w^{2,4}_{(\frac{5}{2},\frac{5}{2})}W^{(\frac{5}{2},\frac{5}{2})}+w^{2,4}_{(2,3)}W^{(2,3)}+w^{2,4}_{(2,\frac{7}{2})}W^{(2,\frac{7}{2})}+w^{2,4}_{(2,\frac{5}{2})}W^{(2,\frac{5}{2})},\\
              W^{1} \times  W^{5}  =&w^{1,5}_{0}  \B{1}+ w^{1,5}_{2}  W^{(2)}+w^{1,5}_{(3)}  W^{(3)}+w^{1,5}_{(4)}  W^{(4)}+w^{1,5}_{(2,2)} W^{(2,2)}+5W^{(5)},
		\end{split}
	\end{equation}
       In the above, we have two new primaries appearing in weight 5 with Heisenberg charge 0, and two more in weight $5\frac{1}{2}$ with Heisenberg charge $1$\footnote{In fact, there are 4 more primaries in weight $5\frac{1}{2}$, where 2 are Heisenberg charge $\pm1$. However, these are related by automorphism $\sigma$, so omit them in the description \eqref{eq:D5}}
       \begin{align*}
                W^{(5),\bot}=&W^{5,\bot}-\frac{\left(10 c^2-29 c-376\right) \lambda }{4 (c+3) (3 c-8)}W^{4,\top}-\frac{3 (17 c+98) \lambda }{(c+3) (3 c-8)}:\!HW^{4,\bot}\!:+\dotsb,\\
                W^{(2,3),\bot}=&:\!W^{2,\bot}W^{3,\bot}\!:-\frac{2}{3 c-8}:\!HW^{4,\bot}\!:-\frac{c}{6 (3 c-8)}W^{4,\top}+\dotsb,\\
                W^{(\frac{5}{2},\frac{5}{2}),\bot}=&:\!W^{2,+}W^{2,-}\!:+\dotsb,\quad       W^{(2,\frac{5}{2}),\bot}=:\!W^{2,\top}W^{2,+}\!:+\dotsb,\\
                W^{(2,\frac{7}{2}),\bot}=&:\!W^{2,\bot}W^{3,+}\!:-\frac{3}{2}:\!W^{3,\bot}W^{2,+}\!:-\frac{3}{8 (c+9)}:\!HW^{4,+}\!:+\frac{c+12}{8 (c+9)}\partial W^{4,+}+\dotsb,
       \end{align*}
       where the omitted terms involve fields arising in diamonds $W^1,W^2,W^3$; the structure constants are
        \begin{align*}
            w^{1,5}_{0}=&\frac{45 (3 c-23) \lambda ^2 \omega }{2 (c+3)}+\frac{900 (7 c-8) \omega ^2}{(c-1) c},\quad w^{1,5}_{2}=\frac{9 \left(15 c^2-86 c-585\right) \lambda ^3}{4 (c+3)^2}+\frac{36 (69 c-110) \lambda  \omega }{(c-1) c},\\
            w^{1,5}_{(3)}=&\frac{15 \left(37 c^2-188 c-1281\right) \lambda ^2}{8 (c+3)^2}+\frac{195 (17 c-24) \omega }{(c-1) c},\quad w^{1,5}_{(4)}=\frac{(9 c+47) \lambda }{c+3},\\
            w^{1,5}_{(2,2)}=&-\frac{12 (c+5) \lambda ^2}{(c+3)^2}-\frac{720 \omega }{(c-1) c},\quad w^{1,5}_{(2,\frac{7}{2})}=w^{1,5}_{(2,\frac{5}{2})}=0,\\
            w^{2,4}_{0}=&\frac{9 (3 c-23) \lambda ^2 \omega }{4 (c+3)}+\frac{90 (7 c-8) \omega ^2}{(c-1) c},\quad w^{2,4}_{2}=\frac{3 \left(15 c^2-86 c-585\right) \lambda ^3}{8 (c+3)^2}+\frac{6 (67 c-108) \lambda  \omega }{(c-1) c},\\
            w^{2,4}_{(3)}=&\frac{3 \left(31 c^2-160 c-1143\right) \lambda ^2}{8 (c+3)^2}+\frac{3 (175 c-256) \omega }{(c-1) c},\quad w^{2,4}_{(4)}=\frac{(2 c+13) \lambda }{c+3},\\
             w^{2,4}_{(2,2)}=&-\frac{12 \lambda ^2}{(c+3)^2}-\frac{192 \omega }{(c-1) c},\quad
            w^{2,4}_{(3,\frac{5}{2})}=\frac{9 \lambda ^2}{(c+3)^2}+\frac{36 \omega }{(c-1) c},\quad w^{2,4}_{(2,\frac{7}{2})}=\frac{2 \lambda }{c+3},\\
            w^{3,3}_{0}=&\frac{3 (3 c-23) \lambda ^2 \omega }{4 (c+3)}+\frac{30 (15 c-16) \omega ^2}{(c-1) c},\quad w^{3,3}_{2}=\frac{3 \left(9 c^2-58 c-447\right) \lambda ^3}{8 (c+3)^2}+\frac{6 (39 c-70) \lambda  \omega }{(c-1) c},\\
            w^{3,3}_{(3)}=&\frac{3 \left(21 c^2-106 c-867\right) \lambda ^2}{8 (c+3)^2}+\frac{180 (2 c-3) \omega }{(c-1) c},\quad w^{3,3}_{(4)}=\frac{3 (c+7) \lambda }{2 (c+3)},\quad w^{3,3}_{(5)}=\frac{3}{4},\\
            w^{3,3}_{(2,2)}=&-\frac{6 (c+9) \lambda ^2}{(c+3)^2}-\frac{216 \omega }{(c-1) c},\quad
            w^{3,3}_{(3,\frac{5}{2})}=\frac{54 \omega }{(c-1) c}-\frac{27 (c+13) \lambda ^2}{16 (c+3)^2},\quad w^{3,3}_{(2,\frac{7}{2})}=\frac{21 \lambda }{2 (c+3)}.
        \end{align*}

        Finally, we proceed to evaluate all OPEs in $S^7$, which are $W^3(z)W^4(w)$, $W^2(z)W^{5}(w)$, and $W^1(z)W^{6}(w)$.
        Again, imposing the Jacobi identities $J(W^{2}, W^{2},W^{4})$, $J(W^{2}, W^{3},W^{3})$, and $J(W^{1}, W^{2},W^{5})$ determines all structure constants arising in $W^2(z)W^5(w)$, $W^3(z)W^4(w)$, $W^1(z)W^6(w)$ as rational functions in the central charge $c$, and polynomials in $\lambda$ and $\omega$. Remarkably, knowledge of OPEs in $S^{\leq 7}$ is sufficient to conclude the following.

         \begin{proposition}\label{prop:lambdas}
         Recall the weight $3$ fields $W^3_{\pm}\in \cW^{\cN=2}_{\infty}$ in Proposition \ref{prop:weight3}. Let $W^n_{\pm}$ be defined recursively as $(W^3_{\pm})_{(1)}W^{n}_{\pm} = W^{n+1}_{\pm}$.
         Then fields $W^n_{\pm}$ satisfy the OPE relations of the $\cW_{\infty}(c_{\pm},\lambda_{\pm})$, where the parameters $\lambda_{\pm}$ are as follows.
          \begin{equation}\label{eq:lambdas}
        \begin{split}
            \lambda_{\mp}(\omega,c,\lambda)=&\frac{2 (3 + c)\left(A(\omega,c,\lambda)\mp\sqrt{\lambda ^2+\frac{32 \omega }{c-1}}B(\omega,c,\lambda)\right)}{(\left(c^2-1\right) \lambda ^2+4 (c+3)^2 \omega)\left(C(\omega,c,\lambda)\mp\sqrt{\lambda ^2+\frac{32 \omega }{c-1}}D(\omega,c,\lambda)\right)},\\
            A(\omega,c,\lambda)=&(c+3) \lambda  \left((c-1) \lambda ^2+32 \omega \right) \left((c-1) \left(c^2+8 c-1\right) \lambda ^2+4 (c+3)^3 \omega \right),\\
            B(\omega,c,\lambda)=&4 (c-1) \left(c^2+14 c+9\right) (c+3)^2 \lambda ^2 \omega +(c-1)^2 \left(c^3+9 c^2+27 c+3\right) \lambda ^4+32 (c+3)^4 \omega ^2,\\
            C(\omega,c,\lambda)=&8 (c+3)^2 \left(7 c^2+13 c-132\right) \lambda  \omega +(c-1) \left(c^4-5 c^3-141 c^2-447 c+144\right) \lambda ^3,\\
            D(\omega,c,\lambda)=&(c+3) \left((c-1) \left(c^3-8 c^2-85 c-48\right) \lambda ^2+8 (5 c-12) (c+3)^2 \omega \right).
        \end{split}
    \end{equation}
	\end{proposition}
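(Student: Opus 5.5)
We will verify Proposition \ref{prop:lambdas} by reducing it to the recognition criterion for $\cW_{\infty}$ from \cite{Lin}. Any vertex algebra weakly generated by a Virasoro field $L$ and a weight $3$ primary $W^3$ with $W^3(z)W^3(w)$ nondegenerate, and satisfying the weak generation condition $W^{n+1} = W^3_{(1)}W^n$, has OPEs among the $W^n$ governed by the structure constants of $\cW_{\infty}(c,\lambda)$. The constants in $S^{\leq 7}$, namely the OPEs $W^3\times W^3$, $W^3\times W^4$ and $L\times W^n$, already determine $\lambda$. So the plan is to exhibit $(L_\pm, W^3_\pm)$, for each sign, as such a pair, and to read off $\lambda_\pm$ from a single structure constant.

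\textbf{Step 1: setting up the pair.} For each sign we take $L_\pm$ from Proposition \ref{prop:Virasoros} and $W^3_\pm$ from Proposition \ref{prop:weight3}. These are commuting Virasoro fields, and $W^3_\pm$ is $L_\pm$-primary and commutes with $L_\mp$ and $H$. Using the OPEs in $S^{\leq 7}$ determined above, we compute $W^3_\pm(z)W^3_\pm(w)$. The fourth-order pole $(z-w)^{-4}$ coefficient must be a multiple of $L_\pm$, and we fix the scaling $\omega_\pm$ so that the normalization matches the convention of \cite{Lin}. We then define $W^4_\pm := (W^3_\pm)_{(1)}W^3_\pm$; this is computable because $W^3_\pm$ involves only diamonds $W^1,W^2,W^3$.

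\textbf{Step 2: extracting $\lambda_\pm$.} We compute $W^3_\pm(z)W^4_\pm(w)$, whose total weight is $7$, so it lies within $S^{\leq 7}$. Alternatively we can use $L_\pm(z)W^4_\pm(w)$ together with the norm of $W^4_\pm$. In \cite{Lin} the parameter $\lambda$ is encoded by the coefficient of $W^3$ in the second-order pole of $W^3(z)W^4(w)$, or equivalently by the fact that $W^4$ is primary up to a prescribed multiple of $:\!LL\!:$. Matching this coefficient against the universal formula, with central charge $c_\pm$ taken from \eqref{eq:virs}, gives a rational expression in $c$, $\lambda$, $\omega$ and $\sqrt{\lambda^2+32\omega/(c-1)}$. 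We then simplify it to the form \eqref{eq:lambdas}, rationalizing where needed.

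\textbf{Step 3: extending to all $n$.} The remaining OPEs $W^n_\pm(z)W^m_\pm(w)$ follow from the uniqueness theorem of \cite{Lin}: the subalgebra generated by $L_\pm$ and $W^3_\pm$ is then a quotient of $\cW_{\infty}(c_\pm,\lambda_\pm)$. The one substantive input is the weak generation property $W^{n+1}_\pm = (W^3_\pm)_{(1)}W^n_\pm$, which holds by definition. We also need nondegeneracy, which holds away from the excluded loci.

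\textbf{Main obstacle.} The hard part is the explicit computation in Step 2. It requires the full $S^7$ OPEs (in particular $W^3\times W^4$ and $W^2\times W^5$), evaluated on composite fields involving square roots. This will be done by computer, using the OPE package together with the structure constants determined above. Subtle points include sign conventions in the normalization of $\cW_{\infty}$ and checking that the generic-parameter assumptions of \cite{Lin} hold. Specifically, the denominators must not vanish, which requires $(c^2-1)\lambda^2+4(c+3)^2\omega\neq 0$ together with the conditions already imposed in Propositions \ref{prop:Virasoros} and \ref{prop:weight3}.
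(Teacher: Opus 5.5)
Your outline matches the paper's: fix $\omega_\pm$, compute low-weight OPEs inside $S^{\le 7}$ by computer, read off $\lambda_\pm$, and invoke \cite{Lin}. However, Step 3 rests on a misquoted recognition theorem, and that hides most of the actual work.

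Weak generation by $L$ and a nondegenerate primary $W^3$, with $W^{n+1}:=W^3_{(1)}W^n$, does not force $\cW_\infty(c,\lambda)$ OPEs. For example, in $\cW^{k_1}(\gs\gl_3)\otimes\cW^{k_2}(\gs\gl_3)$, taking $L=L_1+L_2$ and $W^3=aW_1+bW_2$ with $a^2\neq b^2$ gives $W^3_{(3)}W^3=2a^2L_1+2b^2L_2\not\propto L$.

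The criterion of \cite{Lin} (the analogue of Theorem \ref{one-parameter quotients theorem}) instead has a concrete hypothesis. The OPEs $L_\pm(z)W^4_\pm(w)$, $L_\pm(z)W^5_\pm(w)$, $W^3_\pm(z)W^3_\pm(w)$ and $W^3_\pm(z)W^4_\pm(w)$ must close on normally ordered polynomials in $L_\pm,W^3_\pm,W^4_\pm,W^5_\pm$. They must also coincide, pole by pole, with the $\cW_\infty(c_\pm,\lambda_\pm)$ OPEs for one and the same $\lambda_\pm$. Concretely:
\begin{itemize}
\item the coefficient $185-80\lambda_\pm(c_\pm+2)$ of $W^3_\pm$ in the fourth-order pole of $L_\pm(z)W^5_\pm(w)$ must agree with the coefficient $31-16\lambda_\pm(c_\pm+2)$ in $W^3_\pm(z)W^4_\pm(w)$;
\item the third-order pole of $W^3_\pm(z)W^4_\pm(w)$ must be $\frac{8}{3}(5-\lambda_\pm(c_\pm+2))\partial W^3_\pm$;
\item its first-order pole must be exactly $\frac{2}{5}\partial W^5_\pm+\frac{32}{5}\lambda_\pm:\!L_\pm\partial W^3_\pm\!:-\frac{48}{5}\lambda_\pm:\!\partial L_\pm W^3_\pm\!:+\frac{2}{15}(2\lambda_\pm(c_\pm-1)-5)\partial^3W^3_\pm$.
\end{itemize}
Matching a single coefficient only produces a candidate $\lambda_\pm$, and you never examine $L_\pm(z)W^5_\pm(w)$ at all.

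There is a second missing check. Passing from these OPEs to all $W^n_\pm$ is driven by Jacobi identities, and at this point $\WNtwo$ is only an OPE algebra determined through $S^{\le 7}$. The paper therefore checks by computer that every Jacobi identity in $J^{\le 9}$ holds. The alternative would be to use the later Theorem \ref{Wn=2 freely generated}, together with an argument that it does not rely on this proposition. Weak generation is not ``the one substantive input''; these verifications are.

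Step 2 also misplaces $\lambda$. Since $W^5:=W^3_{(1)}W^4$, the second-order pole of $W^3(z)W^4(w)$ is $W^5$ by definition. The parameter $\lambda$ lives in the fourth- and third-order poles and in $L(z)W^5(w)$.

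Your alternative extraction does not work either. The OPE $L(z)W^4(w)$ is $\lambda$-independent, so the $:\!LL\!:$, $\partial^2L$ correction that makes $W^4$ primary depends only on $c$ and encodes nothing about $\lambda$. The norm of $W^4_\pm$ does depend on $\lambda$, but computing it needs the central term of $W^{4,\bot}(z)W^{4,\bot}(w)$. That term lies in $S^8$, outside the data you restrict yourself to.

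The paper's choice avoids this: it uses $L_\pm\times W^4_\pm$ and $W^3_\pm\times W^3_\pm$ to fix $\omega_\pm$, and $L_\pm\times W^5_\pm$ and $W^3_\pm\times W^4_\pm$ to fix $\lambda_\pm$, all inside $S^{\le 7}$.
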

    \begin{proof}
        At this stage of our base case computation, we have enough OPEs in the algebra up to weight 7. 
        By Theorem 5.1 in \cite{Lin}, it is enough to verify that OPEs among fields $L_{\pm},W^{3}_{\pm},W^{4}_{\pm},W^{5}_{\pm}$ satisfy the OPE relations of $\cW_{\infty}(c_{\pm},\lambda_{\pm})$ algebra. 
        The normalization constants $\omega_{\pm}$ of the weight three generators are determined by imposition of the following OPE relations
        \begin{equation*}
        \begin{split}
         L_{\pm}(z)W^{4}_{\pm}(w)\sim &3c_{\pm}(z-w)^{-6}+10L_{\pm}(w)(z-w)^{-4}+3L_{\pm}(w)(z-w)^{-3}\\
        &+4W_{\pm}(w)(z-w)^{-2}+\partial W_{\pm}(w)(z-w)^{-1},\\
            W^{3}_{\pm}(z)W^{3}_{\pm}(w) \sim& \frac{c_{\pm}}{3}(z-w)^{-6}+2L_{\pm}(w)(z-w)^{-4}+\partial L_{\pm}(w)(z-w)^{-3}+W^4_{\pm}(w)(z-w)^{-2}\\
            &+\left(\frac{1}{2}\partial W^{4}_{\pm}-\frac{1}{12}\partial^3 L_{\pm}\right)(w)(z-w)^{-1},
            \end{split}
        \end{equation*}
        which yield the following results
        \begin{equation*}
        \begin{split}
            \omega^2_{\mp}(\omega,c,\lambda)=&\frac{64 (c-1) c \left(\left(c^2+4 c-5\right) \lambda ^2-4 (c+3)^2 \omega \right)^2}{3 \left((c-1) \lambda
   ^2+32 \omega \right) \left(X(\omega,c,\lambda)\mp\sqrt{\lambda ^2+\frac{32 \omega }{c-1}} Y(\omega,c,\lambda)\right)},\\
        X(\omega,c,\lambda)= &128 (5 c-12) (c+3)^4 \omega ^2+32 (c-1) \left(2 c^3-c^2-64 c-21\right) (c+3)^2 \lambda ^2 \omega\\
   &+(c-1)^2 \left(c^5-4 c^4-130 c^3-468 c^2-231c-288\right) \lambda ^4,\\
        Y(\omega,c,\lambda)=&(c-1) (c+3)^2 \lambda  \left((c-1) \left(c^3-10 c^2-79 c+32\right) \lambda ^2+16 (3 c-10) (c+3)^2 \omega
   \right).
        \end{split}
        \end{equation*}
    Next, to determine the $\lambda_{\pm}$ parameters we impose the following OPE relations of $\Winf(c_{\pm},\lambda_{\pm})$ algebras
    \begin{equation}
        \begin{split}
        L_{\pm}(z)W^5_{\pm}(w)\sim& (185-80\lambda_{\pm}(c_{\pm}+2))W^3_{\pm}(w)(z-w)^{-4}+(55-16\lambda_{\pm}(c_{\pm}+2))\partial W^3_{\pm}(z-w)^{-3}\\
        &+5W^5_{\pm}(w)(z-w)^{-2}+\partial W^5_{\pm}(w)(z-w)^{-1},\\
           W^3_{\pm}(z)W^4_{\pm}(w)\sim& (31-16\lambda_{\pm}(c_{\pm}+2))W^3_{\pm}(w)(z-w)^{-4}+\frac{8}{3}(5-\lambda_{\pm}(c_{\pm}+2))\partial W^3_{\pm}(w)(z-w)^{-3}+W^{5}_{\pm}(w)(z-w)^{-2}\\
           &+\Big(\frac{2}{5}\partial W^5_{\pm}+\frac{32}{5}\lambda_{\pm}:\!L\partial W^3_{\pm}\!:-\frac{48}{5}\lambda_{\pm}:\!\partial  LW^3_{\pm}\!:+\frac{2}{15}(2\lambda_{\pm}(c_{\pm}-1)-5)\partial^3 W^3_{\pm}\Big)(w)(z-w)^{-1}.
        \end{split}
    \end{equation}
    which yields us the desired solutions (\ref{eq:lambdas}).
    A direct computer computation verifies that all Jacobi identities in $J^{\leq 9}$ hold, and therefore the subalgebras strongly generated by fields $\{L_{\pm},W_{\pm}^n|\ n\geq 3\}$ are respective quotients of $\Winf(c_{\pm},\lambda_{\pm})$.
    \end{proof}

    \begin{remark}
    The automorphism $\sigma$ on $\cW^{\cN=2}_{\infty}$ restricts to the nontrivial automorphism of each $\cW_{\infty}$ factor described in \cite[Corollary 5.3]{Lin}.
    \end{remark}
    
    We summarize this subsection with the following.
    \begin{proposition}\label{prop:base case prop}
    The Jacobi relations in $J_9$ express all OPEs in $D_7$ in terms of two parameters, the central charge $c$ and $\lambda$ defined in (\ref{eq:22}). 
    All denominators of structure constants are powers of elements in the set $\{(c),(c-1),(c+3)\}$.
    Moreover, there exists two commuting quotients of $\cW_{\infty}(c_{\pm},\lambda_{\pm})$ with values $c_{\pm}$ and $\lambda_{\pm}$ given in Propositions \ref{prop:Virasoros} and \ref{prop:lambdas}.
    \end{proposition}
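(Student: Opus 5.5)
This summary proposition collects the base case, so the plan is to assemble the computations of this subsection into one argument. Everything is computed with the most general ansatz \eqref{eq:OPEgeneralG} for the OPEs in $S^{\leq 7}$. Heisenberg and conformal gradings are already imposed, and \eqref{eq:G descendend} together with the automorphism $\sigma$ from $(\B{U}3)$ reduces every OPE between diamonds $W^N, W^M$ to the three OPEs $W^{N,\bot}W^{M,\bot}$, $W^{N,\bot}W^{M,+}$ and $W^{N,\top}W^{M,\bot}$.

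\textbf{Step 1: the $\mathcal{N}=2$ constraints.} Impose the identities $J(W^1,W^N,W^M)$ with $N+M\leq 7$. These are linear and quadratic equations over $\mathbb{C}(c)$. Solving them shows that all coefficients are fixed except those attached to $\mathcal{N}=2$ primaries $W^{(\Omega)}$ appearing in each product, as recorded in \eqref{OPEs refined}. At weight $4$ this gives \eqref{eq:22}, with free data $\omega$ and $\lambda$. The hypothesis \eqref{eq:raise} normalizes $W^{3,\bot}$ and forces $w^{2,2}_{(3)}=1$.

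\textbf{Step 2: eliminate the primary coefficients.} Go up in weight and impose in turn $J(W^2,W^2,W^2)$ and $J(W^1,W^2,W^3)$ for $S^5$; then $J(W^2,W^2,W^3)$ and $J(W^1,W^2,W^4)$ for $S^6$; then $J(W^2,W^2,W^4)$, $J(W^2,W^3,W^3)$ and $J(W^1,W^2,W^5)$ for $S^7$. At each stage the new primary coefficients, for example $w^{2,3}_{\Omega}$, $w^{1,4}_{\Omega}$, $w^{3,3}_{\Omega}$, $w^{2,4}_{\Omega}$ and $w^{1,5}_{\Omega}$, come out as polynomials in $\lambda,\omega$ with coefficients rational in $c$. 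The constant $w^{N,N+1}_{(N+2)}$ in front of the next primary diamond is fixed by \eqref{eq:raise} combined with the $G^\pm_{(0)}$ descendant relations.

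The parameter $\omega$ is a normalization: rescaling $W^{N}\mapsto t^{N-1}W^{N}$ is compatible with \eqref{eq:raise}. One can therefore fix $\omega$ to a convenient nonzero value, for instance a multiple of $c$ or of $c(c-1)$ chosen to clear denominators, while keeping $\lambda$ rescaled accordingly. This leaves exactly the two parameters $c$ and $\lambda$.

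\textbf{Step 3: consistency and denominators.} Check by computer that all remaining identities in $J^{\leq 9}$ vanish identically, so no further constraint is imposed on $c$ and $\lambda$. Then inspect the denominators. The intermediate primary corrections carry factors such as $5c-12$, $2c-3$, $c+6$, $7c-18$ and $3c-8$, but these cancel in the final structure constants, leaving only powers of $c$, $c-1$ and $c+3$. I expect this to be the main obstacle. It is a large symbolic computation, and the cancellation of the spurious factors is not conceptually explained. I would first try expressing everything in the non-primary basis $W^{N,\alpha}$ rather than $W^{(\Omega)}$, since the spurious factors come only from the primary projections.

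\textbf{Step 4: the two $\mathcal{W}_\infty$ subalgebras.} Invoke Proposition \ref{prop:Virasoros} for the commuting Virasoro fields $L_\pm$ and Proposition \ref{prop:weight3} for the weight-$3$ primaries $W^3_\pm$. Then Proposition \ref{prop:lambdas}, via \cite[Theorem 5.1]{Lin}, shows that the fields generated from $W^3_\pm$ satisfy the $\mathcal{W}_\infty(c_\pm,\lambda_\pm)$ relations. Commutativity of the two subalgebras reduces, by weak generation, to the OPEs $L_\pm(z)W^3_\mp(w)\sim 0$ and $W^3_+(z)W^3_-(w)\sim 0$. These are checked directly from the $S^{\leq 6}$ data.

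The genericity conditions in those propositions define proper Zariski closed conditions, away from which the statements hold. Hence $\WNtwo$ contains quotients of $\mathcal{W}_\infty(c_\pm,\lambda_\pm)$ over the appropriate extension of $R$.
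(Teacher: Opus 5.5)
Your proposal is correct and follows essentially the same route as the paper. The paper's proof of this summary proposition is simply the computation of the subsection: $J(W^1,W^2,W^2)$ yields \eqref{eq:22}, the listed Jacobi identities fix $S^5$, $S^6$, $S^7$, a computer check of $J^{\leq 9}$ and of the denominators follows, and Propositions \ref{prop:Virasoros}, \ref{prop:weight3}, \ref{prop:lambdas} (via \cite[Theorem 5.1]{Lin}) give the two $\cW_{\infty}$-quotients. Two points you make explicit that the paper leaves implicit are useful clarifications rather than departures: the rescaling $W^N\mapsto t^{N-1}W^N$, which shows that only $\lambda^2/\omega$ is invariant and is consistent with the paper's later normalization $\omega=c(c-1)$, and the check that $W^3_+(z)W^3_-(w)\sim 0$, which the paper's commutativity claim needs but never states.
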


    \begin{remark}
    The two $\cW_{\infty}(c_{\pm},\lambda_{\pm})$ are defined over the localization of a quadratic extension of the polynomial ring $\mathbb{C}[c,\lambda](\gamma)$, where 
        $$\gamma=\sqrt{\lambda ^2+\frac{32 \omega }{c-1}}.$$
        There is an automorphism of the base ring mapping
        $$\gamma\mapsto -\gamma,$$
        and acting as identity on the strong generators of $\WNtwo$, which exchanges two $\cW_{\infty}$-subalgebras.
    \end{remark}

     \begin{remark}
    Using the renormalization 
    \[\lambda = (c+3)\tilde \lambda, \quad \omega= c(c-1),\]
    all structure constants in $D_7$ become polynomial in $\tilde\lambda$ and $c$.
     \end{remark}

        \subsection{Induction}
        To motivate this section we note the following,
				\begin{equation}\label{eq:C3}
                    \begin{split}
				W^{2,\bot}\times W^{2,\bot} \equiv & 0 ,\quad  W^{2,\bot}\times W^{2,\pm} \equiv \frac{1}{3}W^{3,\pm},\\
				    W^{2,\top}\times W^{2,\bot} \equiv&W^{3,\bot}+\frac{2}{3} \partial W^{3,\bot}, \quad W^{2,+}\times W^{2,-} \equiv W^{3,\bot}+\frac{1}{2} \partial W^{3,\bot}+\frac{1}{3}W^{3,\top},\\
                    W^{2,\top}\times W^{2,\pm} \equiv&\frac{7}{6}W^{3,\pm}+\frac{2}{3} \partial W^{3,\pm},\quad W^{2,\top}\times W^{2,\top} \equiv \frac{4}{3}W^{3,\top}+\frac{2}{3} \partial W^{3,\top},
                    \end{split}
				\end{equation}
        where $\equiv$ means projecting onto the differential subspace spanned by the third diamond $W^3$.
        Specifically, the structure constants displayed above in (\ref{eq:C3}) are remarkably simple, giving one hope that they have a simple origin. 
        Thankfully, they do, and we will be able to obtain their explicit form in Proposition \ref{prop:structure constants} for an arbitrary interaction of $i^{\T{th}}$ with $j^{\T{th}}$ diamonds. Next, we use such expressions to produce enough linear relations involving OPEs in $S^{i+j}$, allowing us to solve them explicitly in terms of OPE data computed in previous subsection, see Proposition \ref{lem:Induction2}.

        \begin{remark}
         This feature continues to hold for primary generators also. So we regard this property as an important feature the algebra $\WNtwo$, independent of the choice of strong generators.
        \end{remark}

        First, we introduce some notation describing the interaction of fields in diamonds $W^i$ and $W^j$.
	\begin{itemize}
            \item Bottom with Bottom. The conformal weight and Heisenberg charge only permit $W^{i+j-1}$ to appear in the first order pole. 
            However, this would be inconsistent with existence of automorphism $\sigma$. So we may write
            \begin{equation}\label{ansatzBottomBottom}
			\begin{split}
				W^{i,\bot}_{(0)}W^{j,\bot}=&W_{0}^{i,\bot;j,\bot}(L,H,G^+,G^-,\dots,W^{i+j-3,\top},W^{i+j-3,+},W^{i+j-3,-},W^{i+j-2,\bot}),
			\end{split}
		\end{equation}
                where the right side is some normally ordered polynomial in the above generators and their derivatives. 
		\item Bottom with Side. The first order pole of $W^{i,\bot}$ and $W^{j,+}$ has conformal weight $i+j-\frac{1}{2}$ and Heisenberg charge $1$. Therefore we may write
			\begin{equation}\label{ansatzWW}
			\begin{split}
				W^{i,\bot}_{(0)}W^{j,\pm}=&\pm w_{W^{i+j-1,\pm}}^{W^i,W^j}W^{i+j-1,\pm}+W_{0}^{i,\bot;j,\pm},\\
                W^{i,\bot;j,+}_{0} =& W^{i,\bot;j,+}_{0} (L,H,G^+,G^-,\dots,W^{i+j-2,\top},W^{i+j-2,\bot},W^{i+j-2,+},W^{i+j-2,-}).
			\end{split}
		\end{equation}
            \item Bottom with Top.  The first order pole of $W^{i,\top}$ and $W^{j,\bot}$ has conformal weight $i+j$ and Heisenberg charge 0, and transforms with sign $(-1)^{i+j+1}$ under $\sigma$. This permits only $\partial W^{i+j-1,\bot}$ to appear, so we write
		\begin{equation}\label{ansatzXH}
			\begin{split}
				W^{i,\top}_{(0)}W^{j,\bot}=& w^{W^{i,\top},W^{j,\bot}}_{\partial W^{i+j-1,\bot}} \partial W^{i+j-1,\bot}+ W_{0}^{i,\top;j,\bot},\\
				W^{i,\top}_{(1)}W^{j,\bot}=& w^{W^{i,\top},W^{j,\bot}}_{W^{i+j-1,\bot}} W^{i+j-2,\bot}+W_{1}^{i,\top;j,\bot},\\
                W^{i,\top;j,\bot}_{r} =& W^{i\top,j\bot}_{r} (L,H,G^+,G^-,\dots,W^{i+j-2,\top},W^{i+j-2,\bot},W^{i+j-2,+},W^{i+j-2,-}).
			\end{split}
		\end{equation}
	\end{itemize}
	The remaining OPEs are determined via relations (\ref{eq:G descendend}).
	
	\begin{proposition} \label{prop:structure constants} Let the notation be fixed as in (\ref{ansatzWW}-\ref{ansatzXH}), and $i,j\geq 2$.
    			\begin{equation*}
					\begin{split}
                    W^{i,\bot}_{(0)}W^{j,\pm} &= \frac{i!j!}{2(i+j-1)!}W^{i+j-1,\pm}+W^{i,\bot;j,\pm}_{0}, \\
				    W^{i,\top}_{(1)}W^{j,\bot} &= \frac{i!j!}{2(i+j-2)!}W^{i+j-1,\bot}+W^{i,\top;j,\bot}_{1},\quad W^{i,\top}_{(0)}W^{j,\bot} =\frac{i i!j!}{2(i+j-1)!}\partial W^{i+j-1,\bot}+W^{i,\top;j,\bot}_{0}.
                    \end{split}
			\end{equation*}
        Using relations all the remaining structure constants follow, which are recorded here for completeness.
                 \begin{equation*}
					\begin{split}
                    W^{i,+}_{(1)} W^{j,-} \equiv&\frac{i!j!}{2(i+j-2)!}W^{i+j-1,\bot},\quad  W^{i,+}_{(0)} W^{j,-}\equiv\frac{(i+\frac{1}{2})i!j!}{2(i+j-1)!}\partial W^{i+j-1,\bot}+\frac{i!j!}{2(i+j-1)!}W^{i+j-1,\top},\\
                    W^{i,\top}_{(1)} W^{j,\pm} \equiv& \frac{(i+j-\frac{1}{2})i!j!}{2(i+j-1)!}W^{i+j-1,\pm},\quad W^{i,\top}_{(0)} W^{j,\pm}\equiv\frac{ii!j!}{2(i+j-1)!} \partial W^{i+j-1,\pm},\\
                    W^{i,\top}_{(1)} W^{j,\top} \equiv&\frac{(i+j-1)i!j!}{2(i+j-2)(i+j-2)!}W^{i+j-1,\top},\quad W^{i,\top}_{(0)}W^{j,\top} \equiv \frac{i i!j!}{2(i+j-1)!}\partial W^{i+j-1,\top}.
					\end{split}
			\end{equation*}
	\end{proposition}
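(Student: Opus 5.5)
The leading structure constants are forced by the weak generation hypothesis (\B{U}4) together with Jacobi identities, computed modulo lower diamonds. Throughout we work modulo the differential ideal spanned by normally ordered monomials in diamonds $W^1,\dots,W^{i+j-2}$, and regard each $W^{m,\alpha}$ for $m\geq 2$ as a ``linear'' generator. Modulo such terms, the $\cN=2$ relations \eqref{eq:G descendend} together with the diamond OPEs \eqref{eq:N=2 diamond OPEs} become linear identities. Hence once the three basic constants in the first display are known, all the remaining ones follow mechanically; so the second display is a routine consequence of \eqref{eq:G descendend} applied to the first.

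Set $a_{i,j}$ to be the coefficient of $W^{i+j-1,+}$ in $W^{i,\bot}_{(0)}W^{j,+}$, and define $b_{i,j}$ and $b'_{i,j}$ analogously for $W^{i,\top}_{(1)}W^{j,\bot}$ and $W^{i,\top}_{(0)}W^{j,\bot}$. The base values $a_{2,2}=\tfrac13$ and $b_{2,2}=1$ are read off from \eqref{eq:C3}; this normalisation is consistent with \eqref{eq:raise}, which says $b_{2,N}$ is the coefficient $1\cdot$ up to the normalisation $N!2!/(2N!)\cdot\ldots$, and we check that the proposed formula gives $b_{2,N}=\tfrac{2N!}{2N!}\cdot\ldots=1$ after the shift. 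Skew-symmetry \eqref{skew-symmetry}, applied modulo lower terms, relates $b_{i,j}$ and $b'_{i,j}$ to $b_{j,i}$. The first pole of $W^{i,\top}W^{j,\bot}$ is a multiple of $\partial W^{i+j-1,\bot}$, so skew-symmetry gives $b'_{i,j}+b'_{j,i}=b_{i,j}$ and $b_{i,j}=b_{j,i}$. This yields $b'_{i,j}=\tfrac{i}{i+j-1}b_{i,j}$, which matches the claimed ratio $i/(i+j-1)$.

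To pin down $b_{i,j}$ we induct on $i$, using the Jacobi identity $J_{1,1}(W^{2,\top},W^{i,\top},W^{j,\bot})$ together with $J_{1,1}(W^{2,\top},W^{i,\bot},\cdot)$ and $W^{i+1,\bot}=W^{2,\top}_{(1)}W^{i,\bot}$. Projecting onto $W^{i+j,\bot}$ gives a recursion of the shape $b_{i+1,j}\,\cdot(\text{const})=b_{i,j}b_{2,i+j-1}-b_{2,j}b_{i,j+1}+\ldots$, whose terms come from $(W^{2,\top}_{(r)}W^{i,\top})_{(s)}$. We first need $W^{2,\top}_{(r)}W^{i,\top}$ modulo lower terms, which follows from the already established diamond relations at level $(2,i)$. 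Solving the recursion with the initial data then gives $b_{i,j}=\tfrac{i!j!}{2(i+j-2)!}$, and one checks by direct substitution that this closed form satisfies the recursion.

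Similarly, $a_{i,j}$ follows by applying $G^+_{(0)}$ to the bottom--top relation via \eqref{eq:G descendend}, i.e.\ from $J(G^+,W^{i,\bot},W^{j,\bot})$ combined with $J(W^{2,\top},\dots)$; alternatively, use the first line of \eqref{eq:G descendend} and antisymmetry to get $a_{i,j}=a_{j,i}$, then run a parallel recursion. The main obstacle is the bookkeeping in the recursion: one must show that all contributions from composite fields and derivatives of lower diamonds drop out after projection, and that the coefficients of $W^{2,\top}_{(r)}$ acting on tops are themselves already of factorial form. I would first verify the recursion numerically against the base case data in $S^{\le7}$ before doing the general computation.
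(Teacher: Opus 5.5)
\textbf{The recursion you propose does not start.} Write $c_i$ for the coefficient of $W^{i+1,\top}$ in $W^{2,\top}_{(1)}W^{i,\top}$ and $c_i'$ for the coefficient of $\partial W^{i+1,\top}$ in $W^{2,\top}_{(0)}W^{i,\top}$; these are $c_i=\frac{i+2}{i+1}$ and $c_i'=\frac{2}{i+1}$.

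Project $J_{1,1}(W^{2,\top},W^{i,\top},W^{j,\bot})$ onto $W^{i+j,\bot}$, using \eqref{eq:raise} and $(\partial X)_{(2)}=-2X_{(1)}$. This gives exactly $b_{i,j}-b_{i,j+1}=(c_i-2c_i')\,b_{i+1,j}$, with $c_i-2c_i'=\frac{i-2}{i+1}$. At $i=2$ the coefficient vanishes. This is not an accident: $J_{1,1}(a,a,c)$ is a formal consequence of skew-symmetry, because $a_{(0)}a=\tfrac12\partial(a_{(1)}a)+\cdots$. So at $i=2$ the identity collapses to $b_{2,j}=b_{2,j+1}$, which is just (\textbf{U}4) again. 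The row $b_{3,\cdot}$ is therefore never determined, and for $i\ge 3$ your recursion needs that whole row as input.

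The paper uses $J_{2,0}(W^{2,\top},W^{i,\top},W^{j,\bot})$ instead. There the left side gives $2b'_{i,j}$, and the terms $(a_{(0)}b)_{(2)}c+2(a_{(1)}b)_{(1)}c$ give $2(c_i-c_i')\,b_{i+1,j}=\frac{2i}{i+1}b_{i+1,j}$, whose coefficient never vanishes. Combined with $b'_{i,j}=\frac{i}{i+j-1}b_{i,j}$, this is the one-step recursion $b_{i+1,j}=\frac{i+1}{i+j-1}b_{i,j}$ in $i$ with $j$ fixed, started from $b_{2,j}=1$. Your unspecified companion identity $J_{1,1}(W^{2,\top},W^{i,\bot},\cdot)$ could fill in the row $i=3$ if it were $J_{1,1}(W^{2,\top},W^{2,\bot},W^{j,\top})$, but only together with a real proof that $b_{i,j}=b_{j,i}$. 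The proposal supplies neither.

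\textbf{The auxiliary steps are also not justified as written.}

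\emph{The ratio $b'/b$.} Skew-symmetry pairs $W^{i,\top}_{(r)}W^{j,\bot}$ with $W^{j,\bot}_{(r)}W^{i,\top}$, not with $W^{j,\top}_{(r)}W^{i,\bot}$. Passing between these needs \eqref{eq:G descendend} and $\sigma$, and the same is true of the symmetry $b_{i,j}=b_{j,i}$. Your relation $b'_{i,j}+b'_{j,i}=b_{i,j}$ is actually false for the target values, which sum to $\frac{i+j}{i+j-1}b_{i,j}$. In any case, one linear relation between $b'_{i,j}$ and $b'_{j,i}$ cannot fix the ratio. The ratio $\frac{i}{i+j-1}$ comes from conformal covariance, i.e.\ the Jacobi identity with $W^{1,\top}=L$; this is the paper's $k=1$ case.

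\emph{Order of dependencies.} The inputs $c_i,c_i'$ are only available once the side constant $a_{2,i}$ is known, because \eqref{eq:G descendend} expresses $W^{2,\top}_{(1)}W^{i,\top}$ through $W^{2,\pm}_{(0)}W^{i,\mp}$. So the link $a_{i,j}=\frac{1}{i+j-1}b_{i,j}$ must be established before the $b$-recursion, not after it. The paper obtains it from the $W^{i+j-1,\bot}$ coefficient of $J_{1,0}(G^+,W^{i,\bot},W^{j,-})$. The symmetry $a_{i,j}=a_{j,i}$ together with an unspecified ``parallel recursion'' does not provide it.

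\emph{A caution about the second display.} If you do carry out the ``mechanical'' derivation, you will get $(i-\tfrac12)$ rather than $(i+\tfrac12)$ in $W^{i,+}_{(0)}W^{j,-}$. You will also get $\frac{(i+j)\,i!\,j!}{2(i+j-1)!}$ for the coefficient in $W^{i,\top}_{(1)}W^{j,\top}$. Both agree with \eqref{eq:C3} and with the value $c_i=\frac{i+2}{i+1}$ used in the paper's proof, so the printed versions are misprints.
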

	
	\begin{proof}
        Extracting the coefficient of fields $W^{i+j+k-2,\bot}$ in identities $J_{2,0}(W^{k,\top},W^{i,\top},W^{j,\bot})$ gives a relation
		\begin{equation}\label{20identity}
			 w^{W^{i,\top}, W^{j,\bot}}_{\partial W^{i+j-1,\bot}}w^{W^{k,\top},W^{i+j-1,\bot}}_{W^{i+j+k-2,\bot}}+ \left(w^{W^{k,\top}, W^{i,\top}}_{\partial W^{i+k-1,\top}} - w^{W^{k,\top}, W^{i,\top}}_{W^{i+k-1,\top}}\right) w^{W^{i+k-1,\top}, W^{j,\bot}}_{W^{i+j+k-2,\bot}}=0.
		\end{equation}
		First, setting $k=1$ in(\ref{20identity}) we find the following relation.
		\begin{equation}\label{tempEq}
			w^{W^{i,\top}, W^{j,\bot}}_{\partial W^{i+j-1,\bot}}=\frac{i}{i+j-1}w^{W^{i,\top}, W^{j,\bot}}_{W^{i+j-1,\bot}}.
		\end{equation}
        Next, extracting the coefficient of $W^{i+j-1,\bot}$ arising in identities $J_{1,0}(G^+,W^{i,\bot},W^{j,-})$ yields the relation
		\begin{equation*}\label{xyw}
        w^{W^{i,\bot},W^{j,+}}_{W^{i+j-1,+}}=\frac{1}{i+j-1}w^{W^{i,\top},W^{j,\bot}}_{W^{i+j-1,\top}}.
		\end{equation*}
		Lastly, set $k=2$ in (\ref{20identity}), and recall the weak generation property $w^{W^{2,\top},W^{i,\bot}}_{W^{i+1,\bot}}=1$; thanks to (\ref{eq:G descendend}), it implies $w^{W^{2,\top},W^{i,\top}}_{W^{i+1,\top}}=\frac{i+2}{i+1}$, and (\ref{tempEq}) gives $w^{W^{2,\top},W^{i,\top}}_{\partial W^{i+1,\top}}=\frac{2}{i+1}$. Using (\ref{tempEq}) we obtain
		\begin{equation}\label{inductLeft}
			w^{W^{i+1,\top}, W^{j,\bot}}_{W^{i+j,\bot}}=-\frac{i w^{W^{2,\top},W^{i+j-1,\bot}}_{W^{i+j,\bot}}w^{W^{i,\top}, W^{j,\bot}}_{W^{i+j-1,\bot}} }{(i+j-1)(w^{W^{2,\top},W^{i,\top}}_{\partial W^{i+1,\top}} - w^{W^{2,\top},W^{i,\top}}_{W^{i+1,\top}})}=\frac{i+1}{i+j-1}w^{W^{i,\top}, W^{j,\bot}}_{W^{i+j-1,\bot}}.
		\end{equation}
            This recursion implies the desired formula.
            Finally, the remaining structure constants may be obtained by applying (\ref{eq:G descendend}).
	\end{proof}

	Now we proceed with the induction argument.
	By inductive data we mean the set of OPEs in $S^{\leq n}$, where we recall
	\[S^{\leq n} = \bigcup_{k=2}^n S^{k},\quad S^k=S^k_{\bot,\bot}\cup S^k_{\bot,+}\cup S^k_{\top,\bot},\] and that they are fully expressed in terms of parameters $c$ and $\lambda$.
    Our base case can be taken $n=7$ as in Proposition \ref{prop:base case prop}, but one can do better and set it to $n=4$\footnote{We went up higher than necessary to conclude the existence of two $\cW_{\infty}$ subVOAs.}.
	At this stage, the OPEs in $S^{n+1}$ are yet undetermined.
	We will use a subset of Jacobi identities in $J^{n+2}$ to express $S^{n+1}$ in terms of inductive data $S^{\leq n}$.
	We write $A\equiv 0$ to denote that $A$ is computable from inductive data.
    We remark that this notation differs from the one used in (\ref{eq:W3s}) and in Proposition \ref{prop:structure constants}.
	We use Proposition \ref{prop:structure constants} to establish the following Lemma.
	\begin{lemma}\label{lem:Induction1}
 $W^2(z)W^{n-1}(w)$ and $W^3(z)W^{n-2}(w)$ together with inductive data determine OPEs $S^{n+1}$.
	\end{lemma}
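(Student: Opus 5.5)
The plan follows the corresponding argument for $\cW_{\infty}$ in \cite{Lin}: we show that every OPE $W^a(z)W^b(w)$ with $a+b=n+1$ and $2\le a\le b$ can be rewritten, via Jacobi identities in $J^{n+2}$, in terms of OPEs with a smaller first index plus inductive data. By \eqref{eq:G descendend} and the automorphism $\sigma$, it suffices to treat the three families $W^{a,\bot}_{(r)}W^{b,\bot}$, $W^{a,\bot}_{(r)}W^{b,+}$ and $W^{a,\top}_{(r)}W^{b,\bot}$. The remaining twelve types then follow by applying $G^\pm_{(0)}$, whose action on lower-weight data is already known. We argue by induction on $a$. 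The cases $a=2$ and $a=3$ are exactly the hypotheses $W^2(z)W^{n-1}(w)$ and $W^3(z)W^{n-2}(w)$; the case $a=1$ is fixed by the $\cN=2$ diamond structure (U1)--(U2).

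For the inductive step $a\ge 4$, use the weak generation hypothesis \eqref{eq:raise}, $W^{a,\bot}=W^{2,\top}_{(1)}W^{a-1,\bot}$, and consider the Jacobi identity $J_{1,r}(W^{2,\top},W^{a-1,\bot},X)$ with $X\in\{W^{b,\bot},W^{b,+}\}$:
\[W^{2,\top}_{(1)}\big(W^{a-1,\bot}_{(r)}X\big)=W^{a-1,\bot}_{(r)}\big(W^{2,\top}_{(1)}X\big)+\big(W^{2,\top}_{(0)}W^{a-1,\bot}\big)_{(r+1)}X+\big(W^{2,\top}_{(1)}W^{a-1,\bot}\big)_{(r)}X.\]
We check each term separately.
\begin{enumerate}
\item The left side is computable. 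Here $W^{a-1,\bot}_{(r)}X$ has weight at most $n$, so it is inductive data; it is a normally ordered polynomial in generators of weight below $n$. Applying $W^{2,\top}_{(1)}$ to it uses \eqref{quasi-derivation} together with OPEs of $W^2$ against generators $W^m$ with $m\le n-1$, which are either inductive data or the given $W^2(z)W^{n-1}(w)$.
\item The first right-hand term is known. By Proposition \ref{prop:structure constants}, $W^{2,\top}_{(1)}X$ equals a multiple of $W^{b+1,\ast}$ plus lower terms. So this term is an OPE $W^{a-1}\times W^{b+1}$, known by induction on $a$, plus inductive data.
\item The second right-hand term reproduces the unknown. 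By Proposition \ref{prop:structure constants}, $W^{2,\top}_{(0)}W^{a-1,\bot}=\tfrac{2}{a}\partial W^{a,\bot}+(\text{lower})$. Combined with \eqref{conformal identity}, this term equals $-\tfrac{2(r+1)}{a}W^{a,\bot}_{(r)}X$ plus known terms.
\end{enumerate}
Hence $\big(1-\tfrac{2(r+1)}{a}\big)W^{a,\bot}_{(r)}X$ is computable from inductive data. The $W^{a,\top}_{(r)}W^{b,\bot}$ family is handled in the same way with $J_{1,r}(W^{2,\top},W^{a-1,\top},W^{b,\bot})$, using $W^{2,\top}_{(1)}W^{a-1,\top}=\tfrac{a+1}{a}W^{a,\top}+\cdots$; alternatively one applies $G^\pm_{(0)}$ to the bottom results via \eqref{eq:G descendend}.

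The main obstacle is the degenerate index $r+1=a/2$, where the coefficient above vanishes, together with the analogous vanishing coefficients in the top case. There I would first try other Jacobi identities: $J_{0,r+1}(W^{2,\top},W^{a-1,\bot},X)$, which weights the $\partial W^{a}$ contribution differently, or $J_{2,r-1}$. Another option is to derive the missing pole from neighbouring poles through skew-symmetry \eqref{skew-symmetry}, since $W^a\times W^b$ and $W^b\times W^a$ are related. If these fail, I would raise $a$ in two steps instead, using the given $W^3(z)W^{n-2}(w)$ in Jacobi identities $J(W^{3,\top},W^{a-2},X)$ to obtain an independent linear equation. The fact that the lemma assumes the $W^3$ data as well as the $W^2$ data suggests that exactly this second equation is what removes the degeneracies. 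Finally, one needs the invertible coefficients, such as $1-\tfrac{2(r+1)}{a}$, to be nonzero constants independent of $c,\lambda$, so that no new denominators beyond \eqref{localization set} appear.
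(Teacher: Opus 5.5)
Your architecture matches the paper's. You raise the first diamond index with Jacobi identities whose first entry is $W^{2,\top}$, using \eqref{eq:raise} and the leading coefficients of Proposition \ref{prop:structure constants}, and you recover the other OPE types from \eqref{eq:G descendend}. The gap is the uniform use of $J_{1,r}$.

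In $J_{1,r}$ the unknown enters only through the $l=0$ and $l=1$ terms of the Jacobi sum. Its coefficient is therefore $1-\frac{2(r+1)}{a}$ for $W^{a,\bot}_{(r)}W^{b,\bot}$ and $W^{a,\bot}_{(r)}W^{b,+}$. For $W^{a,\top}_{(r)}W^{b,\bot}$ it is $\frac{a+1}{a}-\frac{2(r+1)}{a}=\frac{a-1-2r}{a}$.

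These coefficients vanish at $a=2r+2$, resp.\ $a=2r+1$, for \emph{every} $r\geq 1$; examples are $W^{6,\bot}_{(2)}W^{n-5,\bot}$ and $W^{5,\top}_{(2)}W^{n-4,\bot}$. Because your induction on $a$ is a chain, one degenerate step leaves every larger $a$ at that pole order undetermined. So the invertibility your argument rests on fails along an infinite family as $n$ grows, and at those points you only list identities you would try.

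Two further points are wrong:
\begin{itemize}
\item Your reading of the $W^3(z)W^{n-2}(w)$ hypothesis as what removes this family is not how the paper uses it.
\item Your claim that $W^1(z)W^n(w)$ is fixed by (U1)--(U2) is false. Only $W^2$ is assumed primary, and these OPEs carry genuine data, e.g.\ $w^{1,4}_0=\frac{27}{2}\lambda\omega$. The paper derives them from $W^2(z)W^{n-1}(w)$ by a Jacobi identity with $W^{2,\top}$, using $W^{n,\bot}=W^{2,\top}_{(1)}W^{n-1,\bot}$.
\end{itemize}

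The missing idea is to put the pole order on $W^{2,\top}$. For the higher poles the paper uses $J_{r,1}(W^{2,\top},W^{a-1},X)$ instead of $J_{1,r}$. Then the $l=1$ term carries the binomial factor $r$, and the coefficients become $r-\frac{2(r+1)}{a}$ and $\frac{r(a+1)-2(r+1)}{a}$. These vanish only for finitely many small $a$, independent of $n$. Moreover, for $r\geq 2$ the term $W^{a-1}_{(1)}(W^{2,\top}_{(r)}X)$ is already inductive data, so no chain is needed at all.

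The low-order degeneracy relevant to the hypothesis is at $a=3$. There $J_{1,1}$ gives the coefficient $\frac{a-3}{a}=0$ for $W^{3,\top}_{(1)}W^{n-2,\bot}$. This is why the paper's recursion (3) starts at $i\geq 3$, and why $W^3(z)W^{n-2}(w)$ is an input of the lemma.

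More generally, $J_{s,r+1-s}(W^{2,\top},W^{a-1,\bot},X)$ gives the coefficient $s-\frac{2(r+1)}{a}$. So your candidate $J_{2,r-1}$ would in fact repair your degenerate points: the coefficient is $1$ at $a=2r+2$, and $\frac{2(r+1)}{2r+1}$ in the top case at $a=2r+1$. In that identity the extra $l=2$ term and $W^{a-1}_{(r-1)}(W^{2,\top}_{(2)}X)$ are inductive data. That verification is the actual content of the step, though, and it has to be carried out rather than left as an option.
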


    \begin{proof}
    This proof consists of producing many linear relations relating OPEs in $S^{n+1}$.
    The manner in which such relations arise is similar, so we present (1) in detail, while only highlighting relevant features for the rest.
    We must proceed by the order of the pole in OPEs; this is because the recurrence relations will be different, and depend on our choice of imposed Jacobi identities. 
    In broad strokes, the imposition of Jacobi identities $J(W^2,W^i,W^j)$ with $i+j=n$ is sufficient to obtain the desired conclusion. Specifically, we have the following.
		\begin{enumerate}
			\item  $W^{2,\top}_{(0)}W^{n-1,\bot}$  determine $\{W^{i+1,\top}_{(0)}W^{n-i,\bot} | i\geq 2\}$ thanks to $J_{1,0}(W^{2,\top},W^{i,\top},W^{j,\bot})$;
            indeed, expanding this Jacobi relation yields
		\begin{equation*}\label{Recurion0Full}
			\left(1-w^{W^{2,\top},W^{i,\top}}_{\partial W^{i+2,\top}}\right)W^{i+1,\top;j,\bot}_0 =
	w^{W^{i,\top},W^{j,\bot}}_{\partial W^{i+j-1,\top}} W^{2,\top;i+j-1,\bot}_0-W^{i,\top;j+1,\bot}_0+R^{i,\top;j,\bot}_0,
		\end{equation*}
		where 
		\begin{equation*}
		R^{i,\top;j,\bot}_{0}=W^{2,\top}_{(1)}W^{i,\top;j,\bot}_{0} - (W^{2,\top;i,\top}_{0})_{(1)}W^{j,\bot}.
		\end{equation*}
		Term $R^{i,\top;j,\bot}_{0}$ is known from inductive data, so we may write
		\begin{equation*}\label{Recurion0Partial}
			W^{i+1,\top;j,\bot}_{0}\equiv \frac{w^{W^{i,\top},W^{j,\bot}}_{W^{i+j-1,\bot}}}{1-w^{W^{2,\top},W^{i,\top}}_{\partial W^{i+1,\top}}} W^{2,\top;i+j-1,\bot}_{0}-\frac{1}{1-w^{W^{2,\top},W^{i,\top}}_{\partial W^{i+1,\top}}}W^{i,\top;j+1,\bot}_0.
		\end{equation*}
		We iterate above recursion to obtain desired relations.
			\item $W^{2,\top}_{(0)}W^{n-1,+}$  determine $\{W^{i+1,\top}_{(0)}W^{n-i,+} | i\geq 2\}$ thanks to $J_{1,0}(W^{2,\top},W^{i,\bot},W^{j,+})$.\\
            \item $W^{2,\top}_{(1)}W^{n-1,\bot}$  determine $\{W^{i+1,\top}_{(1)}W^{n-i,\bot} | i\geq 3\}$ thanks to $J_{1,1}(W^{2,\top},W^{i,\top},W^{j,\bot})$.\\
            \item Let $r>1$. $W^{2,\top}_{(r)}W^{n-1,\bot}$  determine $\{W^{i+1,\top}_{(r)}W^{n-i,\bot} | i\geq 2\}$ thanks to $J_{r,1}(W^{2,\top},W^{i},W^{j})$.
            \item Let $r>0$. $W^{2,\bot}_{(r)}W^{n-1,+}$  determine $\{W^{i+1,\bot}_{(r)}W^{n-i,+} | i\geq 2\}$ thanks to $J_{r,1}(W^{2,\top},W^{i,\bot},W^{j,+})$.
            \item Let $r\geq 0$. $W^{2,\bot}_{(r)}W^{n-1,\bot}$  determine $\{W^{i+1,\bot}_{(r)}W^{n-i,\bot} | i\geq 2\}$ thanks to $J_{r,1}(W^{2,\top},W^{i,\bot},W^{j,\bot})$.
            \item OPEs in $W^{2}(z)W^{n-1}(w)$ determine $W^1(z)W^{n}(w)$ thanks to $J_{1,r}(W^{2,\top},W^{1},W^{n})$.
    
		\end{enumerate}
    \end{proof}
	
	 By Lemma \ref{lem:Induction1}, it suffices to determine OPEs $W^{2,\bot}(z)W^{n-1,\bot}(w)$, $W^{2,\bot}(z)W^{n-1,+}(w)$, $W^{2,\top}(z)W^{n-1,\bot}(w)$, and $W^{3,\top}(z)W^{n-2,\bot}(w)$ to determine all OPEs in $S^{n+1}$.
	In the following Lemma we write down a small set of Jacobi identities to obtain linear relations among the desired products expressing the OPEs in $S^{n+1}$ in terms of the inductive data $S^{\leq n}$.
	\begin{lemma}\label{lem:Induction2}
            Let $n\geq 5$. Then all OPEs in $D^{n+1}$ are determined modulo inductive data $D_n$. 
            Specifically, we have the following.
		\begin{enumerate}
			\item $J_{1,0}(W^{2,\top},W^{2,\top},W^{n-2,\bot})$, $J_{1,0}(W^{2,\top},W^{3,\top},W^{n-3,\bot})$, and $J_{1,0}(W^{3,\top},W^{2,\top},W^{n-3,\bot})$ express $W^{3}_{(0)}W^{2n-1}$ in terms of inductive data, and $W^{3,\top;n-2,\bot}_1$. 
                \[W^{2,\top;n-1,\bot}_0\equiv \frac{2 (n-1)}{n (n+1)}\partial  W^{3,\top;n-2,\bot}_1  .\]
            \item $J_{1,0}(W^{2,\top},W^{n-2,\bot},W^{2,+})$, $J_{1,0}(W^{2,\top},W^{n-3,\bot},W^{3,+})$, and $J_{1,0}(W^{3,\top},W^{3,\bot},W^{n-2,\bot})$ express $W^{n-1,\bot}_{(0)}W^{2,+}$ in terms of inductive data, and $W^{2,\bot;n-2,\bot}_1$.
			\begin{equation*}
				\begin{split}
			W^{n-1,\bot;2,+}_0 \equiv & -\frac{7 (n-1)}{3 n (2 n-7)} G^+_{(0)} W^{3,\top;n-2,\bot}_1 .
				\end{split}
                    \end{equation*}
            \item $J_{2,0}(W^{2,\top},W^{2,\top},W^{n-2,\bot})$ and $J_{1,1}(W^{2,\top},W^{3,\top},W^{n-3,\bot})$ express $W^{3,\top}_{(1)}W^{n-3,\bot}$ in terms of inductive data and $W^{2,\top}_{(2)}W^{n-1,\bot}$.
			\begin{equation*}
				\begin{split}
					W^{3,\top;n-2,\bot}_1\equiv & \frac{3}{2 (n-1)} \partial (W^{2,\top}_{(2)}W^{n-1}).\\
				\end{split}
			\end{equation*}
            \item $J_{r+1,0}(W^{2,\top},W^{n-2,\bot},W^{2,\top})$, $J_{r,1}(W^{2,\top},W^{2,\top},W^{n-2,\bot})$ express $W^{2,\top}_{(r)}W^{n-1,bot}$ in terms of inductive data.
                \begin{equation*}
				\begin{split}
					W^{2,\top}_{(r)}W^{n-1,\bot}\equiv & \frac{1}{r+1}\partial (W^{2,\top}_{(r+1)}W^{n-1,\bot}).
				\end{split}
			\end{equation*}
            \item  $J_{r,1}(W^{2,\top},W^{n-2,\bot},W^{2,+})$ express $W^{2,\bot}_{(r)}W^{n-1,+}$ in terms of inductive data.
            \item  $J_{r+1,1}(W^{2,\bot},W^{2,\bot},W^{n-2,\bot})$ express $W^{2,\bot}_{(r)}W^{n-1,\bot}$ in terms of inductive data.
          \end{enumerate}
	\end{lemma}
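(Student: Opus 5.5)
The plan is to treat each item as a linear-algebra extraction from a small set of Jacobi identities, as in the proof of Lemma \ref{lem:Induction1}. In each identity $J_{r,s}(a,b,c)$ with $a,b,c$ of total weight $n+2$, every term is either a product computable from $S^{\le n}$ (hence $\equiv 0$), or one of the unknown products in $S^{n+1}$. The unknowns enter in one of two ways. Either they appear directly, as $W^{i,\cdot}_{(r)}W^{j,\cdot}$ with $i+j=n+1$. Or they appear through the leading structure constants of Proposition \ref{prop:structure constants}, where a known product contains $W^{m,\cdot}$ with $m$ of maximal weight, and acting on it by a weight-two generator produces an unknown. The weak generation hypothesis \eqref{eq:raise} fixes the normalization of the top terms.

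Concretely, I expand each identity using the quasi-derivation formula \eqref{quasi-derivation}. In the normally ordered inductive terms I replace every product of generators of weight below $n+1$ by inductive data. I also use \eqref{eq:G descendend} to rewrite all products in terms of the three basic types $\bot\bot$, $\bot+$, $\top\bot$, and \eqref{conformal identity}, \eqref{skew-symmetry} to reorder. What remains in each identity is a linear combination of the unknowns $W^{2,\top;n-1,\bot}_r$, $W^{3,\top;n-2,\bot}_r$, $W^{2,\bot;n-1,+}_r$ and $W^{2,\bot;n-1,\bot}_r$, together with their derivatives and $G^\pm_{(0)}$-images. The coefficients are rational numbers coming from Proposition \ref{prop:structure constants}.

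I would then proceed in the order (4), (3), (1), (2), (5), (6), from high poles down.

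For (4), $J_{r+1,0}(W^{2,\top},W^{n-2,\bot},W^{2,\top})$ combined with skew-symmetry gives the top pole products in terms of lower ones. Then $J_{r,1}(W^{2,\top},W^{2,\top},W^{n-2,\bot})$ gives a second relation. The difference of the two isolates $W^{2,\top}_{(r)}W^{n-1,\bot}$ up to $\frac{1}{r+1}\partial$ of the next pole. Descending induction on $r$, starting from the top pole, which is a scalar or inductive, then determines all $r\ge1$.

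For (3), I use the same mechanism with $J_{2,0}$ and $J_{1,1}$. The coefficients $\frac{i!j!}{2(i+j-2)!}$ enter here.

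For (1) and (2), I use three identities each to eliminate the two other unknowns. This leaves $W^{2,\top;n-1,\bot}_0$ and $W^{n-1,\bot;2,+}_0$ expressed through $W^{3,\top;n-2,\bot}_1$, which is already known from (3).

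Items (5) and (6) are analogous to (4), using the $G^+_{(0)}$ relations.

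The main obstacle is checking that the linear systems are nondegenerate for every $n\ge5$. Each $2\times2$ or $3\times3$ determinant is a rational function of $n$, built from the factorial constants and the values $w^{W^{2,\top},W^{i,\top}}_{W^{i+1,\top}}=\frac{i+2}{i+1}$ and $w^{W^{2,\top},W^{i,\top}}_{\partial W^{i+1,\top}}=\frac{2}{i+1}$. I would compute these determinants symbolically and confirm they have no zeros for $n\ge5$. The factor $2n-7$ appearing in (2) is harmless there, and it explains why the base case was taken large enough.

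A secondary difficulty is bookkeeping. I must make sure that the leftover terms $R$ truly involve only fields in $W^{\le n-1}$ together with $W^{n,\bot}$-type fields whose OPEs lie in $S^{\le n}$. This follows from the weight and Heisenberg-charge restrictions recorded in \eqref{ansatzBottomBottom}--\eqref{ansatzXH}.

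Finally, combining (1)–(6) with Lemma \ref{lem:Induction1} gives all of $S^{n+1}$.
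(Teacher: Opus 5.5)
Your plan is essentially the paper's proof: expand exactly the listed Jacobi identities modulo inductive data and solve the resulting small linear systems. The paper does this explicitly only for (1), with three equations in $W^{2,\top;n-1,\bot}_0$, $W^{3,\top;n-2,\bot}_0$, $W^{4,\top;n-3,\bot}_0$ and inhomogeneous term $\partial W^{3,\top;n-2,\bot}_1$, and your ordering (4), (3), (1), (2) correctly records how the items depend on one another. One small correction: $2n-7$ never vanishes for integer $n$, so it has nothing to do with the base case. The hypothesis $n\geq 5$ is needed simply so that $W^{n-3}$ is a genuine diamond ($n-3\geq 2$) in the identities being used.
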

\begin{proof}We prove only part (1), and the rest is similar. 
    We proceed to expand the identities in part (1), which are as follows
    \begin{align*}
        J_{1,0}(W^{2,\top},W^{2,\top},W^{n-2,\bot})\equiv&\frac{n-3}{n-1}W^{2,\top,n-1;\bot}_0-\frac{2}{3}W^{3,\top,n-2;\bot}_0\equiv 0,\\
         J_{1,0}(W^{2,\top},W^{3,\top},W^{n-3,\bot})\equiv&\frac{9}{(n-2) (n-1)}W^{2,\top,n-1;\bot}_0-W^{3,\top,n-2;\bot}_0-\frac{3}{4}W^{4,\top,n-3;\bot}_0\equiv 0,\\
         J_{1,0}(W^{3,\top},W^{2,\top},W^{n-3,\bot})\equiv&-\frac{3}{n-2}W^{2,\top,n-1;\bot}_0+\frac{2}{n-2}W^{3,\top,n-2;\bot}_0-\frac{1}{2}W^{4,\top,n-3;\bot}_0\\
         &+\frac{2}{n-2}\partial W^{3,\top,n-2;\bot}_1 \equiv 0.
    \end{align*}
    These form a linear system in the desired unknown, yielding
    \begin{align*}
        W^{2,\top,n-1;\bot}_0\equiv& \frac{2 (n-1)}{n (n+1)}\partial W^{3,\top,n-2;\bot}_1,\quad W^{3,\top,n-2;\bot}_0\equiv -\frac{3 (n-3)}{n (n+1)}\partial W^{3,\top,n-2;\bot}_1,\\
        W^{4,\top,n-3;\bot}_0\equiv& \frac{4 \left(n^2-5 n+12\right)}{(n-2) n (n+1)}\partial W^{3,\top,n-2;\bot}_1,
    \end{align*}
    and looking at first relation we obtain claim (1). 
\end{proof}

	\begin{theorem}\label{thm:induction}
    Let {$\nlcalg$} be a nonlinear conformal algebra over the ring $R$ defined as the localization $\mathbb{C}[c,\lambda]$ along \eqref{localization set},  generators as in (\ref{LCA:generators}) with the conformal grading $\eqref{eq:grading}$, 
    and satisfying features in list $\textup{(\textbf{U}1)}-\textup{(\textbf{U}4)}$. The universal enveloping vertex algebra $\WNtwo$ of $\nlcalg$ has the following properties:
	\begin{enumerate}\label{WN2 2 parameters}
		\item It has conformal weight grading \[\WNtwo=\bigoplus_{N\in\frac{1}{2}\mathbb{N}}\WNtwo[N],\quad \WNtwo[0]=R^{-1}\mathbb{C}[c,\lambda].\]

		\item  It is strongly generated by $\cN=2$ generators $\{H,G^{\pm},L\}$ together with higher weight fields $$\{W^{i,\top},W^{i,\bot};W^{i,+},W^{i,-}| i\geq 2\},$$ and satisfies the OPE relations in $S^{\leq 5}$, which is a subset of the OPE relations in Proposition \ref{prop:base case prop}, and those Jacobi identities which appear in Lemmas \ref{lem:Induction1} and \ref{lem:Induction2}.

        \item It is the unique initial object in the category of vertex algebras with the above properties.
        
	\end{enumerate} 
	\end{theorem}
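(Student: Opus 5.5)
The argument follows the construction of $\cW_\infty$ in \cite{Lin}, with the base case, Proposition \ref{prop:structure constants}, and Lemmas \ref{lem:Induction1} and \ref{lem:Induction2} supplying the $\cN=2$-specific input.

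\textbf{Step 1: the nonlinear Lie conformal algebra over $R$.} I first build $\nlcalg$ as a nonlinear Lie conformal algebra over $R$. I take the free $R[\partial]$-module on the generators (\ref{LCA:generators}). On it I define all $\lambda$-brackets recursively in total weight, as follows.
\begin{itemize}
\item The brackets in $S^{\leq 7}$ are the ones from Proposition \ref{prop:base case prop}. All their structure constants lie in $R$, since the denominators are powers of $c$, $c-1$, $c+3$; after rescaling, $\omega$ is absorbed into the normalization of $W^2$.
\item Given $S^{\leq n}$, I define $S^{n+1}$ by the recursive formulas of Lemmas \ref{lem:Induction1} and \ref{lem:Induction2}. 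The leading coefficients there come from Proposition \ref{prop:structure constants}.
\item The remaining products are fixed by (\ref{eq:G descendend}) and the automorphism $\sigma$.
\end{itemize}
I need every coefficient inverted in these recursions to be a nonzero rational number, so that the recursion stays inside $R$. Examples are $1-w^{W^{2,\top},W^{i,\top}}_{\partial W^{i+1,\top}}=\frac{i-1}{i+1}$ and $n(n+1)$, $2n-7$, $n-1$ for $n\geq 5$. Conformal weight grading and Heisenberg charge grading are built into the ansatz (\ref{eq:OPEgeneralG}). Property (1) then follows because weight $0$ is spanned by $\mathbf{1}$ over $R$.

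\textbf{Step 2: the Jacobi identities.} Next I must show that the brackets so defined satisfy all Jacobi identities in $J^{n}$, and not only the subset used to solve for them. I would argue by induction on $n$, following \cite[Sect.~3–4]{Lin}. The Jacobiators of total weight $n+2$ form a module over the already consistent algebra of lower weight. By the $\cN=2$ symmetry, $J(W^1,\cdot,\cdot)$ together with (\ref{eq:G descendend}) reduce every Jacobiator to those of the form $J(W^{2,\top},\cdot,\cdot)$ or $J(W^{2,\bot},\cdot,\cdot)$. The weak generation hypothesis (\ref{eq:raise}) then lets me write each $W^{N}$ as an iterated $W^{2,\top}_{(1)}$-product, which transports these Jacobiators back to lower weight, where they vanish by induction.

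\textbf{Step 3: strong generation.} With all Jacobi identities in hand, De Sole–Kac \cite{DSKI} gives the universal enveloping vertex algebra $\WNtwo$. It is strongly generated by the stated fields, and its PBW basis is free over $R$, which gives property (2).

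\textbf{Step 4: universality.} For property (3), let $\cV$ be any vertex algebra satisfying (\textbf{U}1)–(\textbf{U}4) with these OPE relations and Jacobi identities. The same recursion forces all OPEs of $\cV$ to equal the specialization of those of $\nlcalg$ along a ring map $R\to\cV[0]$, given by $c\mapsto c$ and $\lambda\mapsto\lambda$. This yields a unique map of nonlinear Lie conformal algebras, and hence a unique vertex algebra map $\WNtwo\to\cV$. So $\WNtwo$ is initial, and as an initial object it is unique up to isomorphism.

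\textbf{Main obstacle.} I expect Step 2 to be the hard part: the solutions for $S^{n+1}$ must satisfy every Jacobi identity in $J^{n+2}$, not just the ones used to determine them. If the reduction argument fails, I would instead exhibit enough consistent quotients to rule out further relations. These are the algebras $\cW^k(\gs\gl_{n+1|n})$ from Proposition \ref{cor:diamond}, together with the generic-level $Y$-algebras. I would then use them to show the Jacobiators vanish identically on a Zariski dense set of $(c,\lambda)$.
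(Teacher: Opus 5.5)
\textbf{The gap is in Step 2, and that step is also not needed.}

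\textbf{Why Step 2 fails.}
\begin{itemize}
\item The relations \eqref{eq:G descendend} and the identities $J(W^1,\cdot,\cdot)$ only relate Jacobi identities among the components of one fixed triple of diamonds. They never reduce $J(W^i,W^j,W^k)$ with $i,j,k\geq 3$ to an identity with a $W^2$ entry.
\item Writing $W^{N}=W^{2,\top}_{(1)}W^{N-1,\bot}$ via \eqref{eq:raise} only records how the brackets of $W^N$ were \emph{defined}, namely through $J(W^{2,\top},W^{N-1},\cdot)$. Deducing $J(W^N,b,c)$ from Jacobi identities for $W^{2,\top}$ and $W^{N-1}$ is an associativity statement for that product. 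That is exactly what is in question, so the ``transport to lower weight'' is circular.
\item At each weight, the identities in $J^{n+2}$ outnumber the unknowns in $S^{n+1}$. The ones not used in Lemmas \ref{lem:Induction1} and \ref{lem:Induction2} are genuinely extra polynomial conditions, which could a priori force relations between $c$ and $\lambda$. Nothing internal to the recursion rules them out. This is why the paper obtains the full set of Jacobi identities only later, in Theorem \ref{Wn=2 freely generated}. There the external input is that the algebras $\cW^k(\gs\gl_{n+1|n})$ are quotients that are freely generated in larger and larger weight.
\end{itemize}
Consequently, your Step 3, which invokes De Sole--Kac for a PBW basis free over $R$, is unsupported, and Steps 3--4 as written rest on it.

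\textbf{What the statement actually requires.} Theorem \ref{thm:induction} does not claim that all Jacobi identities hold; the paper says explicitly right after it that this is not yet known. Here $\WNtwo$ is the universal enveloping vertex algebra of a possibly degenerate nonlinear conformal algebra. Concretely, it is the PBW-spanned object modulo the ideal generated by the Jacobiators not used in the Lemmas. It is strongly generated, but not a priori freely generated.

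With that definition, your Steps 1 and 4 are the whole proof, and they match the paper's argument:
\begin{itemize}
\item The base case fixes $S^{\le 5}$.
\item Lemmas \ref{lem:Induction1} and \ref{lem:Induction2} determine $S^{n+1}$ from $S^{\le n}$, inverting only nonzero rational numbers, so everything stays over $R$.
\item Hence every vertex algebra with the listed properties has all of its OPEs forced, which gives the unique map from $\WNtwo$.
\end{itemize}

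Your fallback does not repair this theorem; it is the proof of Theorem \ref{Wn=2 freely generated}. In the paper the logical order runs the other way: the initial-object property proved here is what makes $\cW^k(\gs\gl_{n+1|n})$ a quotient, via Theorem \ref{one-parameter quotients theorem} and Proposition \ref{PrincW:quot}.
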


	\begin{proof} As we have seen, all OPEs in $D_5$ follows from our assumptions together with Jacobi identities. By Lemmas \ref{lem:Induction1} and \ref{lem:Induction2}, all remaining OPEs among the generators are determined uniquely from the subset of Jacobi identities imposed in the proof of these lemmas.
\end{proof}
	
	\subsection{One-parameter quotients}
	There are more Jacobi identities than those imposed in Lemmas \ref{lem:Induction1} and \ref{lem:Induction2}.
    Therefore, it is not yet clear that all Jacobi identities among the strong generators hold as a consequence of (\ref{conformal identity}-\ref{quasi-derivation}) alone.
Let $I\subseteq R \cong \WNtwo [0]$
	be an ideal, and let $I\cdot \WNtwo$ denote the vertex algebra ideal generated by $I$. 
	The quotient 
	\begin{equation}
		\cW^{\cN=2,I}_{\infty} = \WNtwo / I\cdot \WNtwo
	\end{equation}
	has strong generators $\{W^{N} | N\geq 1\}$ satisfying the same OPE relations as the corresponding  generators of $\WNtwo$ where all structure constants in $R$ are replaced by their images in $R/I$. Further, we may consider a localization of $\cW^{\cN=2,I}_{\infty}$.
	Let $E \subseteq R/I$ be a multiplicatively closed set, and let $S= E^{-1}(R/I)$ denote the localization of $R/I$ along $S$.
	Thus we have the localization of $R/I$-modules
	\[\cW^{\cN=2,I}_{\infty, S}= S\otimes_{R/I}\cW^{\cN=2,I}_{\infty},\]
	which is a vertex algebra over $S$.
	
	\begin{theorem}\label{one-parameter quotients theorem}
		Let $R$, $I$, $E$, and $S$ be as above, and let $\cW$ be a vertex algebra over $S$ with the following properties:
		\begin{enumerate}[(i)]
			 \item $\cW$ is weakly generated by $\mathcal{N}=2$ fields $\bar {H}$,$\bar{G}^{\pm}$, $\bar{L}$ of central charge $c$ and $\mathcal{N}=2$ primary field $\bar {W}^{2,\bot}$.
       
            \item Setting $\bar{W}^{N+1,\bot} =\bar{W}^{2,\top}_{(1)} \bar{W}^{N,\bot}$ for all $N\geq 2$, the fields $\{\bar{W}^{N}|\ N\geq 2\}$ together with the $\mathcal{N}=2$ fields, satisfy conditions $\textup{(\textbf{U}1)}-\textup{(\textbf{U}4)}$.
            \item OPE relations for $\bar{W}^{N}(z)\bar{W}^{M}(w)$ for $N+M \leq 5$ are the same as in $\WNtwo$ if the structure constants are replaced with their images in $S$.
		\end{enumerate}
		Then $\cW$ is a quotient of $\cW^{\cN=2,I}_{\infty,S}$ by some graded ideal $\cI \subseteq \cW^{\cN=2,I}_{\infty,S}$. If $\cW$ is simple, $\cI$ is the maximal graded ideal of $\cW^{\cN=2,I}_{\infty,S}$.
	\end{theorem}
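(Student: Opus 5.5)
We follow the strategy of \cite[Theorem 6.2]{Lin}, adapted to the $\cN=2$ setting. The approach is to show that every OPE among the strong generators of $\cW$ is forced by the same recursion that determines the OPE algebra of $\cW^{\cN=2,I}_{\infty,S}$. Once that is established, the universal property of the universal enveloping vertex algebra of $\nlcalg$ produces the surjection.

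\textbf{Step 1: base case.} By (i) and (ii), the fields $\bar H,\bar G^{\pm},\bar L$ and $\{\bar W^{N}\mid N\ge 2\}$, with $\bar W^{N+1,\bot}:=\bar W^{2,\top}_{(1)}\bar W^{N,\bot}$, strongly generate $\cW$. They also satisfy $(\B{U}1)$--$(\B{U}4)$, so they span $\cN=2$ diamonds with the automorphism $\sigma$. By (iii), the OPEs in $S^{\le 5}$ agree with those of $\WNtwo$ after reducing the structure constants to $S$.

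\textbf{Step 2: induction on total weight.} Assume all OPEs in $S^{\le n}$ among the $\bar W$'s agree with those of $\WNtwo$ reduced mod $I$ and localized. The Jacobi identities used in Proposition \ref{prop:structure constants}, Lemma \ref{lem:Induction1} and Lemma \ref{lem:Induction2} all hold in $\cW$, because $\cW$ is an honest vertex algebra. The same is true of the descendant relations \eqref{eq:G descendend}, which only use the $\cN=2$ action and $\sigma$.

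These identities express every OPE in $S^{n+1}$ as a universal polynomial in the inductive data. The coefficients of that polynomial lie in $\mathbb{Q}$ or in $R$: they are the rational numbers appearing in Lemma \ref{lem:Induction2}, and the constants from Proposition \ref{prop:structure constants}. Hence the OPEs in $S^{n+1}$ for $\cW$ are the images of those for $\WNtwo$.

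One subtlety needs care. The linear systems solved in Lemma \ref{lem:Induction2} must have determinants that are nonzero rational numbers depending only on $n$, such as $n(n+1)$ and $2n-7$ for $n\ge 5$. This guarantees they remain invertible after passing to $S$, so the recursion never divides by an element that could vanish in $R/I$.

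\textbf{Step 3: the surjection and its kernel.} It follows that the generators of $\cW$ satisfy the defining OPEs of $\nlcalg\otimes_R S$ modulo normally ordered relations that may hold in $\cW$. By the universal property in Theorem \ref{thm:induction}(3), tensored with $S$, there is a vertex algebra homomorphism $\cW^{\cN=2,I}_{\infty,S}\to\cW$ sending generators to generators. It is surjective because $\cW$ is strongly generated by their images. Its kernel $\cI$ is a graded ideal, since the map preserves conformal weight.

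If $\cW$ is simple, then $\cI[0]=0$, because $\cW[0]\cong S$. Every proper graded ideal with zero weight-$0$ part is contained in the maximal graded ideal. So if $\cI$ were not maximal, its image in $\cW$ would give a nonzero proper graded ideal, which simplicity rules out. Hence $\cI$ is the maximal graded ideal.

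\textbf{Main obstacle.} I expect the hardest part to be Step 2. There we must check that the induction in Lemmas \ref{lem:Induction1} and \ref{lem:Induction2} genuinely uses only Jacobi identities and $(\B{U}1)$--$(\B{U}4)$, with denominators invertible in $S$. In particular, the weak generation normalization $w^{W^{2,\top},W^{i,\bot}}_{W^{i+1,\bot}}=1$ must hold in $\cW$ by construction of $\bar W^{N+1,\bot}$. I would verify this by tracing the denominators appearing in each recursion.
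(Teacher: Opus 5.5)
Your proposal is correct and follows essentially the same route as the paper: the OPEs among the $\bar W^N$ are formal consequences of those with $N+M\le 5$ via the same Jacobi-identity recursion, so $\cW$ is a quotient of the initial object, and simplicity forces the kernel to be the maximal graded ideal. The only difference is ordering. The paper derives that $\{\bar H,\bar G^{\pm},\bar L,\bar W^{N}\}$ strongly generates $\cW$ from this OPE closure together with the weak generation hypothesis (i), whereas you assert it already in Step 1; for surjectivity, hypothesis (i) alone suffices anyway.
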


\begin{proof} As we have seen, all OPE relations among the generators of $\cW^{\cN=2,I}_{\infty,S}$ must hold among the corresponding fields $\{ \bar{L}, \bar {H}, \bar{G}^{\pm}, \bar{W}^{N,\bot},\bar{W}^{N,\pm}, \bar{W}^{N,\top}|\ N \geq 2\}$, since they are formal consequences of the OPEs $\bar{W}^N(z) \bar{W}^M(w)$ for $N+M \leq 5$, together with Jacobi identities, which hold in $\cW$. It follows that 
$$\{ \bar {H}, \bar{G}^{\pm},\bar{L},  \bar{W}^{N,\bot},\bar{W}^{N,\pm}, \bar{W}^{N,\top}|\ N \geq 2\}$$ strongly generates a vertex subalgebra $\cW' \subseteq \cW$, which must coincide with $\cW$ since $\cW$ is assumed to be generated by $\{  \bar {H}, \bar{G}^{\pm},\bar{L}, \bar{W}^{2,\bot},\bar{W}^{2,\pm}, \bar{W}^{2,\top}\}$ as a vertex algebra. Hence $\cW$ has the same strong generating set and OPE algebra as $\cW^{\cN=2,I}_{\infty,S}$. Since $\cW^{\cN=2,I}_{\infty,S}$ is freely generated by the above generators, it is the initial object in the category of such vertex algebras, so $\cW$ is a quotient of $\cW^{\cN=2,I}_{\infty,S}$ by some graded ideal $\cI$. Finally, if $\cW$ is simple, since the category of vertex algebras over $S$ with this strong generating set and OPE algebra has a unique simple graded object, $\cW$ must be the quotient of $\cW^{\cN=2,I}_{\infty,S}$ by its maximal graded ideal. \end{proof}

The following corollary gives a useful criterion for when a vertex algebra of strong generating type \eqref{princ:gentype} for some $n$, is a quotient of $\cW^{\cN=2,I}_{S,\infty}$ for some $I$ and $S$.

\begin{corollary} \label{cor:gradedcharacter} Let $\cW$ be a vertex algebra of type \eqref{princ:gentype} satisfying conditions (\textup{\textbf{U}1})--(\textup{\textbf{U}4}), which is defined over some localization $S$ of $\mathbb{C}[c,\lambda] / I$, for some prime ideal $I$. Suppose that $\cW$ is generated by $\{H,G^{\pm}, L, W^{2,\bot}\}$. If in addition, the graded character of $\cW$ agrees with that of $\cW^{\cN=2}_{\infty}$ up to weight $6$, then $\cW$ is a (possibly non-simple) $1$-parameter quotient of $\cW^{\cN=2,I}_{\infty,S}$.
\end{corollary}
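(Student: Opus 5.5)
To prove Corollary \ref{cor:gradedcharacter}, we reduce to Theorem \ref{one-parameter quotients theorem}. Hypotheses (i) and (ii) of that theorem are essentially given. $\cW$ is generated by $\{H,G^{\pm},L,W^{2,\bot}\}$, and since $G^{\pm}_{(0)}$ produce the rest of the diamond $W^2$, it is weakly generated by these fields together with the diamond $W^2$. Conditions (\textbf{U}1)--(\textbf{U}4) are assumed, and by (\textbf{U}2) the diamond $W^2$ is primary. What remains is hypothesis (iii): the OPEs $W^N(z)W^M(w)$ with $N+M\leq 5$ in $\cW$ must agree with those of $\cW^{\cN=2}_{\infty}$ after specializing $(c,\lambda)$ to points of the curve $V(I)$. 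The corollary then follows, with $\cW$ a quotient of $\cW^{\cN=2,I}_{\infty,S}$ by a graded ideal.

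To establish (iii), we write the most general ansatz \eqref{eq:OPEgeneralG} for the OPEs in $S^{\leq 5}$ in $\cW$, using the $\cN=2$ structure and the relations \eqref{eq:G descendend}. We then rerun the base-case argument leading to \eqref{eq:22} and to the formulas for $W^2\times W^3$ and $W^1\times W^4$. That argument used only the Jacobi identities $J(W^1,W^2,W^2)$, $J(W^2,W^2,W^2)$ and $J(W^1,W^2,W^3)$, which hold in any vertex algebra. It also used a linear-independence input: that the normally ordered monomials appearing in the ansatz are linearly independent, so that coefficients can be compared. In $\cW^{\cN=2}_{\infty}$ this is free generation. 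In $\cW$ we use the character hypothesis. Agreement of graded characters up to weight $6$ means that in weights $\leq 6$ the PBW monomials in the generators are linearly independent in $\cW$, because the PBW monomials always span and the dimension counts are equal. Weight $6$ suffices: the Jacobi identities in $J^{\leq 6}$ involve relations among fields of weight at most $6$ (the top component of $W^2\times W^3$ has weight $\leq 6$), which is enough to determine $S^{\leq 5}$.

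The coefficient comparison then shows the following. After normalizing $W^{2,\bot}$ so that $\omega$ matches (for instance $\omega=c(c-1)$, as in the remark after Proposition \ref{prop:base case prop}), every structure constant in $S^{\leq 5}$ of $\cW$ is given by the same universal rational expression in $c$ and $\lambda$. Here $c$ and $\lambda$ are read off from the $W^{2,\bot}W^{2,\bot}$ OPE, and they lie in $S$. This yields a ring map $R\to S$ through which all these structure constants factor. We take its kernel on $\mathbb{C}[c,\lambda]$, which is the prime ideal $I$ since $S$ is a localization of a domain. This is exactly hypothesis (iii). The denominators $c,c-1,c+3$ must be invertible in $S$; this is implicit in $\cW$ being a quotient over a localization and is arranged by enlarging $E$ if necessary.

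The main obstacle is the linear-independence step. Character agreement gives independence of PBW monomials only if the weight spaces of $\cW$ are free $S$-modules of the expected rank. Then equal graded ranks together with spanning force a basis; otherwise torsion could spoil the coefficient comparison. We first check this generically, over the fraction field of $R/I$, where it is a field argument. Since each structure constant is uniquely determined as an element of $S$ and agrees generically, the identity holds over $S$ as well.
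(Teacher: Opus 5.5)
Your proposal is essentially the paper's proof. Both reduce to Theorem \ref{one-parameter quotients theorem} and obtain hypothesis (iii) from the same fact: agreement of graded characters up to weight $6$ means there are no null vectors in that range, so the PBW monomials are independent there. The Jacobi identities therefore force the same $S^{\leq 5}$ OPEs as in $\cW^{\cN=2}_{\infty}$.

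Your extra remarks on freeness over $S$ and on normalizing $\omega$ go beyond what the paper spells out. Your parenthetical justification of the weight-$6$ threshold is loose: $J(W^2,W^2,W^2)$ already contains identities in conformal weight $7$, e.g.\ $J_{0,0}(W^{2,\top},W^{2,\top},W^{2,\top})$. However, the paper asserts the same threshold with no further argument.
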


\begin{proof} By Theorem \ref{one-parameter quotients theorem}, it suffices to prove that the OPEs $W^{N,*}(z) W^{M,*}(w)$ for $N+M \leq 5$ in $\cW$ are the same as the corresponding OPEs in $\cW^{\cN=2}_{\infty}$ if the structure constants are replaced with their images in $S$. But this is automatic because the graded character assumption implies that there are no null vectors of weight less than $7$ in the (possibly degenerate) nonlinear conformal algebra corresponding to $\{H,G^{\pm}, L, W^{N,\bot},W^{N,\pm},W^{N,\top}|\ N \geq 2\}$. \end{proof}

\subsection{$\cW^k(\fr{sl}_{n+1|n})$ revisited} 
 We will now realize $\cW^k(\fr{sl}_{n+1|n})$ explicitly as a $1$-parameter quotient of the $2$-parameter algebra $\WNtwo$. 
Recall the generators in (\ref{eq: as}), and note that we have the following nonzero brackets,
\begin{equation} \label{eq:lem_weak}
\begin{gathered}
        [a^{2,\bot},a^{j,\pm}]=\mp a^{j+1,\pm}, \quad [a^{2,+},a^{j,-}]= a^{j+1,\top}.
\end{gathered}
\end{equation}

\begin{proposition} \label{lem:weakgen} For all noncritical values of $k$, $\cW^k(\gs\gl_{n+1|n})$ is weakly generated by $\{H, G^{\pm}, L, \omega^{2,\bot}\}$.
\end{proposition}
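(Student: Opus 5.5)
Let $\cV\subseteq \cW^k(\gs\gl_{n+1|n})$ be the vertex subalgebra generated by $\{H,G^{\pm},L,\omega^{2,\bot}\}$. By Proposition \ref{cor:diamond}, $\cW^k(\gs\gl_{n+1|n})$ is strongly generated by $\text{Vir}^c_{\cN=2}$ together with the diamonds $\{\omega^{i,\bot},\omega^{i,\pm},\omega^{i,\top}\mid 2\le i\le n\}$. Since $\omega^{i,\pm},\omega^{i,\top}$ are obtained from $\omega^{i,\bot}$ by the zero modes $G^{\pm}_{(0)}$ and $\partial$, it suffices to show by induction on $i$ that $\omega^{i,\bot}\in\cV$ modulo elements of $\cV$. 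The natural candidate is $(\omega^{2,\top})_{(1)}\omega^{i,\bot}$. We will show that it equals $c_i\,\omega^{i+1,\bot}+(\text{normally ordered polynomials in lower generators})$ with $c_i\neq 0$. The lower terms lie in $\cV$ by induction, so this gives $\omega^{i+1,\bot}\in\cV$.

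To compute the leading coefficient, we pass to the associated graded with respect to Li's filtration, or equivalently take the large level limit. By Proposition \ref{prop: generators of W-alg}, the linear term of $\sigma^0(\omega_{a})$ is $a$, and the top-weight linear part of the OPE coefficient of two generators is governed by the Lie bracket in $\gg^F$. This is the standard argument, as in \cite{ACL} and \cite{CL2}: the term $W^{i+1,\bot}$ arising at pole order $2$ with linear coefficient comes from $[\,a^{2,\top}$-type element$,a^{i,\bot}]$ paired through the $\mathfrak{osp}$/diamond structure. Concretely, rather than computing $\omega^{2,\top}_{(1)}\omega^{i,\bot}$ directly, we will compute $\omega^{2,\bot}_{(0)}\omega^{i,\pm}$ and $\omega^{2,+}_{(0)}\omega^{i,-}$. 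By \eqref{eq:lem_weak}, their linear leading terms are $\mp\omega_{a^{i+1,\pm}}$ and $\omega_{a^{i+1,\top}}$, which are nonzero. This uses \eqref{eq:omega i and omega a}, which shows that $\omega^{i,\pm}$ and $\omega^{i,\top}$ have nonzero multiples of $\omega_{a^{i,\pm}},\omega_{a^{i,\top}}$ as leading parts. The first-order pole of two strong generators $\omega_a,\omega_b$ in a $\cW$-algebra has linear part $\omega_{[a,b]}$ plus lower-weight corrections, by the Kac--Wakimoto description of the OPE $J_a(z)J_b(w)\sim J_{[a,b]}(w)(z-w)^{-1}+\cdots$ in $V^{\psi_k}(\mathfrak p)\otimes\Phi_{1/2}$, projected to leading terms. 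It follows that $\omega^{i+1,\pm}\in\cV$ modulo lower generators. Applying $G^{\mp}_{(0)}$ then gives $\omega^{i+1,\top}$. Finally, $\omega^{i+1,\bot}$ is recovered via $H_{(1)}$, or from the identity $G^{+}_{(1)}\omega^{i+1,-}=(\Delta-\tfrac{h}{2})\omega^{i+1,\bot}+\cdots$ in \eqref{eq:N=2 diamond OPEs}, whose coefficient $i+1\neq 0$. Since the diamond need not be primary, the lower terms are again lower generators.

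The main obstacle is making rigorous that the leading coefficient is nonzero for \emph{all} noncritical $k$, not just generically. The large level argument only gives this for generic $k$. To handle every $k\neq -1$, we will use the Miura map, which is injective for $k\neq -1$. Concretely, we track the term in $\omega^{2,\bot}_{(0)}\omega^{i,+}$ that is linear in the Cartan currents of highest differential degree, or we use the filtration by $\gg_0$-degree. The bracket coefficient $\mp1$ in \eqref{eq:lem_weak} is independent of $k$, and the normalizations in \eqref{eq:omega i and omega a} involve only $\pm(2i+1)$ and $-4$, so the coefficient should be a nonzero constant independent of $k$. If this fails at isolated levels, we would instead argue with the $k$-independent structure of the $C_2$-algebra, $\mathrm{gr}\,\cW^k\cong \mathbb{C}[\mathcal{S}_F]$. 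In that algebra the Poisson bracket $\{\bar\omega_a,\bar\omega_b\}$ has linear part $\bar\omega_{[a,b]}$ for every $k$, and weak generation follows from $\gg^F$ being generated as a Lie superalgebra by $u,\mathrm{f}^\pm,F,a^{2,*}$, which \eqref{eq:lem_weak} shows.
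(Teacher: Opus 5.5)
Your argument is correct for the statement as posed, but it is organized differently from the paper's proof. Both proofs rest on the same input. The coefficient of $\omega^{i+1,\pm}$ in $\omega^{2,\bot}_{(0)}\omega^{i,\pm}$ is a structure constant of the Poisson structure on the associated graded (your $C_2$-algebra), so it is independent of $k$. It is nonzero because it is proportional to the bracket in \eqref{eq:lem_weak}.

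The paper then works with the fields $W^{i,*}$ produced by the universal recursion $W^{i+1,\bot}=W^{2,\top}_{(1)}W^{i,\bot}$ and writes $W^{i,*}=\lambda_i\,\omega^{i,*}+\cdots$. It compares the universal coefficient $\frac{1}{j+1}$ of $W^{j+1,\pm}$ in $W^{2,\bot}_{(0)}W^{j,\pm}$ (Proposition \ref{prop:structure constants}) with the Poisson coefficient, and obtains $\lambda_{j+1}\neq 0$ inductively. You instead pass from $\omega^{i+1,-}$ to $\omega^{i+1,\bot}$ via $G^{+}_{(1)}\omega^{i+1,-}=(i+1)\,\omega^{i+1,\bot}+\cdots$, where the error terms have lower weight.

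Your route is self-contained and avoids the Jacobi-identity computations inside $\cW^{\cN=2}_{\infty}$, but it gives only weak generation. The paper's route gives the stronger fact that the recursively defined $W^{i,*}$ are strong generators at every noncritical level, which is the form needed in Proposition \ref{PrincW:quot}. Relatedly, the claim in your first paragraph that $(\omega^{2,\top})_{(1)}\omega^{i,\bot}$ has nonzero leading coefficient is never established. It also cannot be read off from the $C_2$-algebra, whose Poisson bracket on generators only records $(0)$-products.

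One assertion in your last paragraph is false and should be deleted: $\gg^F$ is \emph{not} generated as a Lie superalgebra by $u$, $\mathrm{f}^{\pm}$, $F$ and $a^{2,*}$.
\begin{itemize}
\item The even part of $\gg^F$ is abelian: it is spanned by $u$ and the powers of the principal nilpotents in the $\gg\gl_{n+1}$ and $\gg\gl_n$ blocks.
\item Two odd elements of the same charge bracket to zero.
\item Two odd elements of opposite charge bracket to a multiple of a power of $F$, i.e.\ to a top, as in \eqref{eq:lem_weak}.
\end{itemize}
Hence for $n\geq 3$ the bottoms of the diamonds of weight $\geq 3$ are never produced, and weak generation cannot be deduced from the Poisson bracket alone. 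The $(1)$-product step with $G^{+}$ is therefore essential. Use the $C_2$-algebra only to certify that the $(0)$-product coefficient is a nonzero constant for all $k\neq -1$, and drop the Miura-map remark.
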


\begin{proof} 
Set $W^{2,\bot} = \omega^{2,\bot}$, $W^{2,\pm} = \pm G^{\mp}_{(0)} W^{2,\bot}$, and $W^{2,\top} = G^{+}_{(0)} W^{2,-}-\frac{1}{2}\partial W^{2,\bot}$, and define the remaining fields $W^{i,\bot}, W^{i,\pm}, W^{i,\top}$ for $i = 3,\dots, n$ as in the universal object. For $i = 3,\dots, n$, we have
$$W^{i,\bot} = \lambda_{i,\bot} \omega^{i,\bot} + \cdots,\qquad 	W^{i,\pm} = \lambda_{i,\pm} \omega^{i,\pm} + \cdots, \qquad W^{i,\top} = \lambda_{i,\top} \omega^{i,\top} + \cdots,$$ where the remaining terms depend only on the fields $\omega^{j,*}$ for $j < i$, and their derivatives. It suffices to show that $ \lambda_{i,\bot}, \lambda_{i,\pm}, \lambda_{i,\top}$ are nonzero for all the above values of $k$. In fact, they are all equal since the actions of $G^{\mp}$ on both $W^{i,*}$ and $\omega^{i,*}$ are compatible, so it suffices to show that any of them is nonzero.

Next, we have
\begin{equation} 
\omega^{2,\bot}_{(0)} \omega^{j,\pm}  = c^{2,\bot}_{j,\pm} \omega^{j+1,\pm} + \cdots, \qquad \omega^{2,+}_{(0)} \omega^{j,-} = c^{2,+}_{j,-} \omega^{j+1,\top} + \cdots
\end{equation}
Here the remaining terms depend only of the fields $\omega^{i,*}$ for $i \leq j$, and $c^{2,\bot}_{j,\pm}$ and $c^{2,+}_{j,-}$ are independent of $k$ since they are structure constants for the Poisson vertex algebra structure on $\text{gr}(\cW^k(\gs\gl_{n+1|n}))$. Moreover, we know from \eqref{eq:omega i and omega a} and \eqref{eq:lem_weak} that 
$c^{2,\bot}_{j,\pm}$ and $c^{2,+}_{j,-}$ are nonzero constants, since they are proportional to the corresponding structure constants in the Lie superalgebra $\gg^{f}$.

By Proposition \ref{prop:structure constants}, for $j\geq 2$ we have
$$W^{2,\bot} _{(0)} W^{j,\pm} = \frac{2j!}{2(j+1)!} W^{j+1,\pm} + \cdots,$$ On the other hand,
$$ W^{2,\bot} _{(0)} W^{j,\pm} = \omega^{2,\bot}_{(0)}( \lambda_{j,\pm} \omega^{j,\pm} + \cdots ) =  \lambda_{j,\pm} c^{2,\bot}_{j,\pm} \omega^{j+1,\pm} + \cdots  = \frac{\lambda_{j,\pm} c^{2,\bot}_{j,\pm}}{\lambda_{j+1,\pm} }W^{j+1,\pm} + \cdots .$$
It follows that
$$\frac{\lambda_{j,\pm} c^{2,\bot}_{j,\pm}}{\lambda_{j+1,\pm} } =  \frac{2j!}{2(j+1)!}.$$ Since $\lambda_{2,\pm} = 1$ and $c^{2,\bot}_{j,\pm}$ is a nonzero constant, it follows by induction on $j$ that $\lambda_{j,\pm}$ is a nonzero constant for all $j = 3,\dots,n$. This completes the proof since  $ \lambda_{i,\bot}, \lambda_{i,\pm}, \lambda_{i,\top}$ are all equal. \end{proof}

\begin{proposition} \label{PrincW:quot} For all $n \geq 1$, $\cW^k(\fr{sl}_{n+1|n})$ arise as $1$-parameter quotient of $\WNtwo$.
\end{proposition}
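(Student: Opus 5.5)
The plan is to verify, for $\cW^k(\gs\gl_{n+1|n})$, the hypotheses of Theorem \ref{one-parameter quotients theorem}, or more conveniently of Corollary \ref{cor:gradedcharacter}, after rescaling the generators to match the normalization of $\WNtwo$.

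First, I will assemble the structural input already established in Section \ref{sec:W-algebra type A(n,n-1)}. Proposition \ref{prop:sl(n|n-1), N=2} provides the copy of $\text{Vir}^c_{\cN=2}$ with $c=-3n(kn+k+n)$, which is (\textbf{U}1). Proposition \ref{cor:diamond} supplies a strong generating set with $\cN=2$ diamond structure, which is (\textbf{U}2). Proposition \ref{prop:automorphism} gives the order two automorphism acting as in \eqref{eq:aut}, which is (\textbf{U}3).

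Second, I will replace the bottom $\omega^{2,\bot}$ of the second diamond by its $\cN=2$ primary correction. This correction is unique, since at weight $2$ one only has to subtract multiples of $L$, $:\!HH\!:$ and $\partial H$. Its descendants then form $W^2$. Defining $W^{N+1,\bot}=W^{2,\top}_{(1)}W^{N,\bot}$ gives (\textbf{U}4), and by Proposition \ref{lem:weakgen} these fields agree with $\omega^{N,*}$ up to nonzero constants and lower terms for $N\le n$. Hence the algebra is generated by $H,G^\pm,L,W^{2,\bot}$, and for $N>n$ the fields $W^{N,*}$ are normally ordered polynomials in the earlier generators.

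Third, I will identify the parameters. The normalization $\omega$ is a function of $k$, and we may rescale $W^2$ so that it equals the chosen normalization (for instance $\omega=c(c-1)$). The parameter $\lambda$ is then read off from the second order pole of $W^{2,\bot}(z)W^{2,\bot}(w)$. This is a rational function of $k$, computable for instance from the free field (Miura) realization in \cite{Song24free}. The pair $(c(k),\lambda(k))$ traces a curve, whose prime ideal $I\subseteq\mathbb{C}[c,\lambda]$ I take, and $S$ is a localization of $\mathbb{C}[c,\lambda]/I$ avoiding the finitely many bad levels.

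Fourth, I will check the OPE condition (iii) for $N+M\le 5$. Here I split into cases by $n$. For $n\ge 3$ there are no relations among the generators below weight $n+2\ge 5$. Thus the graded character agrees with that of $\WNtwo$ through the weights needed, and all Jacobi identities used in the base case hold in the freely generated algebra, so Corollary \ref{cor:gradedcharacter} applies. Strictly, this requires agreement up to weight $6$, which holds only for $n\ge 5$. For $n\le 4$ (the case $n=1$ being trivial, since then $\cW^k=\text{Vir}^c_{\cN=2}$ is a quotient in which $W^2$ vanishes), I will instead argue directly with Theorem \ref{one-parameter quotients theorem}. The base case OPEs in \eqref{eq:22} and in $S^5$ were derived solely from Jacobi identities together with the ansatz, and they involve only $c$, $\lambda$ and $\omega$. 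So the same OPEs are forced in any algebra where those Jacobi identities hold and the relevant monomials are linearly independent. Where a relation occurs, for example a decoupling of $W^{n+1,\bot}$ in low weight, the OPE still holds modulo that relation, which is exactly what passing to a quotient requires. Alternatively, for small $n$ the OPEs can be verified directly by computer in the free field realization.

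The main obstacle I expect is this fourth step for small $n$: making rigorous that the universal base case OPEs descend correctly even though $\cW^k(\gs\gl_{n+1|n})$ has null relations in weights $\le 6$. I would first try to show that the Jacobi identities $J^{\le 5}$ determining $S^{\le 5}$ only use linear independence of monomials in weights where $\cW^k$ is still free. Failing that, I would fall back on explicit free field computation for $n=2,3,4$.
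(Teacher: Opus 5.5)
Your route is the paper's. Its proof is the single remark that Propositions \ref{prop:sl(n|n-1), N=2}, \ref{cor:diamond}, \ref{prop:automorphism} and \ref{lem:weakgen} give (\textbf{U}1)--(\textbf{U}4) together with weak generation, so that Theorem \ref{one-parameter quotients theorem} applies. The paper treats hypothesis (iii) of that theorem as automatic and does not single out small $n$. Your extra scrutiny of (iii) therefore goes beyond the paper, but as written your fourth step has two problems.

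First, the thresholds are off by one. The diamond $W^n$ already reaches weight $n+1$ through $W^{n,\top}$, and the first generator of $\WNtwo$ absent from $\cW^k(\gs\gl_{n+1|n})$ is $W^{n+1,\bot}$, of weight $n+1$. So the graded characters first differ in weight $n+1$, not $n+2$. Agreement through weight $6$, which Corollary \ref{cor:gradedcharacter} needs, therefore holds only for $n\geq 6$. For $n=5$ the field $W^{6,\bot}$ is already missing in weight $6$, so $n=5$ belongs with the small cases.

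Second, for $2\leq n\leq 5$ the claim that the OPE ``still holds modulo that relation'' is not an argument. The universal OPEs in $S^{\le 5}$ were obtained by solving for the coefficients of linearly independent PBW monomials in the Jacobi identities. Once monomials of weight at most $6$ become dependent, those identities only constrain the coefficients modulo null relations, so they no longer force the universal values. The statement that the actual OPE agrees with the universal one up to null terms is precisely hypothesis (iii). These cases therefore rest entirely on your fallback: an explicit free field (Miura) verification of the OPEs $W^N(z)W^M(w)$ with $N+M\le 5$, which still has to be carried out. The paper does not supply this either.

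Finally, $n=1$ is not trivial in your normalization. If $W^{2,\bot}$ lay in a graded ideal, so would $W^{2,\bot}_{(3)}W^{2,\bot}=\omega\mathbf{1}$, and with $\omega=c(c-1)$ this is a unit in $R$. Correspondingly, the factor $d-n\psi+\psi$ in the denominator of \eqref{gentruncationcurve} vanishes for $n=1$, $d=0$. So $\text{Vir}^c_{\cN=2}$ sits at the degenerate value $\lambda^2/\omega=\infty$, not on an honest truncation curve.
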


\begin{proof} 
This is immediate from Propositions \ref{prop:sl(n|n-1), N=2}, \ref{cor:diamond}, \ref{prop:automorphism}, and \ref{lem:weakgen}.
\end{proof}

	\begin{theorem}\label{Wn=2 freely generated}
		All Jacobi identities among generators $\{W^{N} | N\geq 1\}$ holds as consequences of (\ref{conformal identity}-\ref{quasi-derivation}) alone, so $\nlcalg$ is a nonlinear Lie conformal algebra with generators $\{W^{N} | N\geq 1\}$. Equivalently, $\WNtwo$ is freely generated by $\{W^{N} | N\geq 1\}$ and has graded character
		\begin{equation}\label{character}
			\chi (\WNtwo,q) =\sum_{n=0}^{\infty}rank_{X}(\WNtwo [n])q^n = \prod_{i,j,l=1}^{\infty}\frac{(1+q^{i+l+\frac{1}{2}})(1+q^{j+l+\frac{1}{2}})}{(1-q^{2i+2j+1})(1-q^{2i+2l+2})},
		\end{equation}
        where $X$ is any localization $S=(E^{-1}R)/I$ with respect to a multiplicatively closed set $E$, and for some prime ideal $I \subset R$. 
       	\end{theorem}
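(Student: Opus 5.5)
This follows the strategy Linshaw used for free generation of $\mathcal{W}_\infty$ in \cite{Lin}. We show that no nontrivial Jacobiator can survive by exhibiting enough $1$-parameter quotients whose graded characters are known. Suppose some Jacobi identity among the generators fails to follow from (\ref{conformal identity}--\ref{quasi-derivation}). Then $\WNtwo$ has a nontrivial null field, i.e. a normally ordered relation among the generators. This null field has some conformal weight $N$, and its coefficients form a nonzero element of $R$, or more precisely generate a nonzero ideal $J\subseteq R$. The relation then holds in every quotient $\cW^{\cN=2,I}_{\infty,S}$, and so also in every vertex algebra of the kind classified by Theorem \ref{one-parameter quotients theorem}.

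The key input is Proposition \ref{PrincW:quot}: for each $n\geq1$, $\cW^k(\gs\gl_{n+1|n})$ is a $1$-parameter quotient of $\WNtwo$ along some truncation curve $I_n\subseteq R$. It is freely generated of type (\ref{princ:gentype}) by Proposition \ref{prop: generators of W-alg}. Hence its graded character agrees with (\ref{character}) up to weight $n+1$, because the first missing generator is $W^{n+1,\pm}$ in weight $n+\frac32$. So for $n$ large compared with $N$, this quotient has no null fields in weight $N$. That forces the coefficients of the putative relation to lie in $I_n$, for infinitely many $n$.

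It remains to show that $\bigcap_n I_n=0$, which is the main point to verify. For this I would compute the truncation curve explicitly. Take the parameters $c=-3n(kn+k+n)$ and $\lambda(k)$, the latter read off from $W^{2,\bot}(z)W^{2,\bot}(w)$ in $\cW^k(\gs\gl_{n+1|n})$ after normalizing $\omega$. Check that these curves are pairwise distinct irreducible curves in $\mathrm{Spec}\,R$. A nonzero element of $R$ vanishes on only finitely many irreducible curves, so infinitely many distinct curves force the coefficients to be zero. This is a contradiction, so all Jacobi identities hold formally.

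To check that the curves are distinct without computing $\lambda$ in closed form, I would use Proposition \ref{prop:lambdas}. The subalgebra $\mathcal{H}\otimes\mathcal{W}^+\otimes\mathcal{W}^-$ of $\cW^k(\gs\gl_{n+1|n})$ should recover the known cosets of type $\mathcal{W}_\infty$. Their truncation curves $(c_\pm,\lambda_\pm)$ from \cite{Lin} depend visibly on $n$, and $(c,\lambda)$ determine $(c_\pm,\lambda_\pm)$ and vice versa. Failing that, the central charge line $c=-3n(kn+k+n)$ alone, viewed as the varying relation between $c$ and $k$, together with a first-order computation of $\lambda$ near $k\to\infty$, should suffice to separate the curves.

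Once free generation is known, the character formula follows immediately. The strong generators of type (\ref{eq: n=2 inf}) are free: one even field of weight $1$, two even fields of each weight $m\geq2$ from $W^{m,\bot}$ and $W^{m-1,\top}$, and two odd fields of each weight $m+\frac12$. The product of the corresponding factors can be rewritten in the indexed form (\ref{character}). Freeness over any localization $S=(E^{-1}R)/I$ follows because a PBW basis over $R$ base-changes to a basis over $S$.
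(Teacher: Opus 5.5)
Your argument is the one the paper intends (Linshaw's proof from \cite{Lin} transplanted: the freely generated quotients $\cW^k(\gs\gl_{n+1|n})$ of Proposition \ref{PrincW:quot} force the coefficients of any weight-$N$ Jacobiator into the truncation ideals $I_n$ for all $n\geq N$, and infinitely many distinct height-one primes of $R$ have zero intersection), and it is correct. Three small corrections. First, the first generator missing from $\cW^k(\gs\gl_{n+1|n})$ is $W^{n+1,\bot}$ in weight $n+1$, not $W^{n+1,\pm}$, which is harmless once $n\geq N$. Second, distinctness of the $I_n$ needs no computation of $\lambda$: if $I_n=I_m$ with $n<m$, then at a common generic point the generically simple algebras $\cW^k(\gs\gl_{n+1|n})$ and $\cW^{k'}(\gs\gl_{m+1|m})$ would both be the simple quotient of the same specialization of $\WNtwo$, contradicting their different characters in weight $n+1$. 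Third, state the character directly as $\prod_{n\geq 1}(1+q^{n+1/2})^{2n}(1-q^{n})^{-(2n-1)}$, because the triple product displayed in the statement is not well defined as written (some of its factors do not depend on one of the three indices).
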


	\begin{corollary} 
		The vertex algebra $\WNtwo$ is simple.
	\end{corollary}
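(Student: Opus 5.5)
The plan is the standard argument used for $\cW_{\infty}$ in \cite{Lin}: specialize to the infinite family of $1$-parameter quotients $\cW^k(\gs\gl_{n+1|n})$ given by Proposition \ref{PrincW:quot}, and combine free generation (Theorem \ref{Wn=2 freely generated}) with the generic simplicity of these $\cW$-superalgebras.

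First, suppose for contradiction that $\cI\subseteq \WNtwo$ is a nonzero proper graded ideal with $\cI[0]=\{0\}$. Choose a nonzero element $\omega\in\cI[d]$ of minimal weight $d$. By Theorem \ref{Wn=2 freely generated}, $\WNtwo[d]$ is a free $R$-module of finite rank with a basis of PBW monomials in the generators. Write $\omega$ in this basis and pick one coefficient $p(c,\lambda)\in R$ that is nonzero. Clearing denominators, we may regard $p$ as a nonzero polynomial, up to a unit in $R$.

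Second, by Proposition \ref{PrincW:quot}, for each $n\geq 1$ there is a prime ideal $I_n\subseteq R$ cutting out an irreducible curve $C_n\subseteq\mathrm{Spec}\,R$. Over a localization of $R/I_n$, the algebra $\cW^{\cN=2,I_n}_{\infty}$ surjects onto $\cW^k(\gs\gl_{n+1|n})$. I need the curves $C_n$ to be pairwise distinct. I would deduce this from the fact that, for generic $k$, $\cW^k(\gs\gl_{n+1|n})$ has minimal strong generating type \eqref{princ:gentype}, which depends on $n$. If $C_n=C_m$ with $n\neq m$, then by \eqref{tautological} the simple quotients at a generic point would coincide, but their generating types differ. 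A nonzero polynomial can vanish identically on only finitely many distinct irreducible curves. So I may choose $n>d$ such that $p$ does not vanish identically on $C_n$, and then a generic point $(c_0,\lambda_0)\in C_n$, corresponding to a generic level $k$, at which $p(c_0,\lambda_0)\neq 0$.

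Third, I specialize. Comparing the character \eqref{character} with that of the freely generated algebra of type \eqref{princ:gentype} shows that they agree through weight $n+1>d$. Hence the surjection $\WNtwo\to\cW^k(\gs\gl_{n+1|n})$ obtained by evaluating at $(c_0,\lambda_0)$ is an isomorphism in weight $d$, since there the PBW monomials map to PBW monomials. Because $p(c_0,\lambda_0)\neq 0$, the image $\bar\omega$ of $\omega$ is nonzero. The image of $\cI$ is a graded ideal of $\cW^k(\gs\gl_{n+1|n})$, and its weight-zero part is the image of $\cI[0]=0$, so this image is a nonzero proper ideal. However, $\cW^k(\gs\gl_{n+1|n})$ is simple for generic $k$ by \cite[Theorem 3.6]{CL2}. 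This is a contradiction, so $\WNtwo$ is simple.

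The main obstacle is the second step. It requires that infinitely many distinct truncation curves are available and that the point $(c_0,\lambda_0)$ can be taken generic enough for simplicity of $\cW^k(\gs\gl_{n+1|n})$ to hold. Both of these rely on knowing $I_n$ somewhat explicitly. Failing that, the generating-type comparison above should suffice, together with the fact that each $C_n$ meets the simple locus in a dense open set.
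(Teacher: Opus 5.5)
Your argument is correct and is essentially the paper's: specialize along the truncation curve of $\cW^k(\gs\gl_{n+1|n})$ with $n>d$, use that the graded characters agree in weight $d$, and contradict generic simplicity. Strictly, the characters agree only below weight $n+1$, since $W^{n+1,\bot}$ already appears there, but this is harmless because $d<n$.

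The only real difference is how the nonvanishing of $\omega$ after specialization is secured. The paper fixes one such curve and divides $\omega$ by powers of its defining polynomial $p$ until $\omega$ survives modulo $p$. This removes your need to show that infinitely many of the curves $C_n$ are distinct, and the paper then argues with the simple $1$-parameter algebra rather than at a point. Your route avoids dividing elements of $\cI$, at the cost of that distinctness step.
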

   \begin{proof}
		If $\cW^{\cN=2}_{\infty}$ were not simple, it would have a singular vector $\omega$ in some weight conformal $N$. Let $p\in R$ be an irreducible polynomial, and let $I$ be the ideal $(p)\subseteq R$. By rescaling $\omega$ if necessary, we may assume that $\omega$ is not divisible by $p$, and hence descends to a nontrivial singular vector in $\cW^{\cN=2,I}_{\infty}$. For any localization $S$ of $R/I$, the simple quotient of $\cW^{\cN=2,I}_{\infty,S}$ would then have a smaller weight $N$ submodule than $\cW^{\cN=2,I}_{\infty,S}$. This contradicts Proposition \ref{PrincW:quot}, since $\cE^{\psi}_{\mathcal{N}=2}(n,0)$ is a quotient of $\cW^{\cN=2,I}_{\infty,S}$ for some $I$ and $S$, and has the same character as $\cW^{\cN=2}_{\infty}$ up to weight $n+1$.
	\end{proof}

Recall that if $\cA$ is an $\mathcal{N}=2$ supersymmetric vertex algebra and $\cV$ embeds in $\cA$ (not necessarily conformally), $\cA$ is called an {\it $\mathcal{N}=2$ supersymmetric extension} of $\cV$ \cite{SY25}. We call $\cA$ a {\it minimal} $\mathcal{N}=2$ supersymmetric extension if no proper $\mathcal{N}=2$-superconformal subalgebra of $\cA$ is an $\mathcal{N}=2$ supersymmetric extension of $\cV$. 

 \begin{corollary} \label{minextension} $\cW^{\cN=2}_{\infty}$ is a minimal $\mathcal{N}=2$ supersymmetric extension of either $\cW^+_{\infty}$ or $\cW^-_{\infty}$.
      \end{corollary}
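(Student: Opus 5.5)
We need to prove Corollary \ref{minextension}: $\cW^{\cN=2}_{\infty}$ is a minimal $\cN=2$ supersymmetric extension of $\cW^+_{\infty}$ (and likewise of $\cW^-_{\infty}$).

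By Proposition \ref{prop:base case prop}, $\cW^{\cN=2}_{\infty}$ contains the commuting subalgebras $\cW^{\pm}_{\infty}$, strongly generated by $L_{\pm}, W^n_{\pm}$. It is $\cN=2$ superconformal, so it is an $\cN=2$ supersymmetric extension of each. It remains to show minimality. Let $\cA \subseteq \cW^{\cN=2}_{\infty}$ be an $\cN=2$-superconformal subalgebra containing $\cW^+_{\infty}$; the plan is to show $\cA = \cW^{\cN=2}_{\infty}$.

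First, $\cA$ contains $H, G^{\pm}, L$. The definition of $\cN=2$ supersymmetric extension is not completely explicit here; the natural reading, which we adopt, is that the $\cN=2$ structure of $\cA$ is the one inherited from $\cW^{\cN=2}_{\infty}$. If instead one only asks that $\cA$ be $\cN=2$ superconformal with respect to some $\cN=2$ subalgebra, one must first argue uniqueness of the weight-one Heisenberg field generically. That argument goes as follows: $\cW^{\cN=2}_{\infty}[1]$ is spanned by $H$, so any $\cN=2$ structure with weight-one field $H'$ has $H' \propto H$. Then $G'^{\pm}$ lie in the weight-$\frac32$, charge-$\pm1$ spaces, which are spanned by $G^{\pm}$. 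So the structure is unique up to the $\text{O}_2$ symmetry.

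Next, $\cA$ contains $T = L - \frac{3}{2c}:\!HH\!:$ and $L_+$. By \eqref{eq:virs}, $L_+ - \frac12 T$ is a nonzero multiple of $\frac12 T - \frac{2}{\lambda}W^{2,\bot}$. Since $\lambda/\gamma \neq 0$ generically, with $\gamma = \sqrt{\lambda^2+32\omega/(c-1)}$, this gives $W^{2,\bot} \in \cA$. The key point is to work over the base ring where $\lambda \neq 0$ and $\gamma \neq 0$. Then $W^{2,\pm} = \mp G^{\pm}_{(0)}W^{2,\bot} \in \cA$ and $W^{2,\top} \in \cA$. By the weak generation hypothesis (\textbf{U}4), $W^{N+1,\bot} = W^{2,\top}_{(1)}W^{N,\bot}$, so the diamonds $W^1, W^2$ generate everything. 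Hence $\cA = \cW^{\cN=2}_{\infty}$. The argument for $\cW^-_{\infty}$ is identical with $\gamma \mapsto -\gamma$, using the base-ring automorphism from the remark following Proposition \ref{prop:base case prop}.

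The main obstacle is precisely the ring-theoretic care. The coefficients in \eqref{eq:virs} involve $\gamma$ and $1/\lambda$, so $\cW^{\pm}_{\infty}$ are defined only over a quadratic extension localized at $\lambda$ and $\gamma$. We therefore state minimality after base change to this localized extension, or equivalently for generic $(c,\lambda)$. We also note that minimality requires $\lambda \neq 0$: otherwise $L_{\pm} = \frac12 T$ and $W^{2,\bot}$ is not recovered from $L_+$ alone. In that degenerate case one would instead recover $W^{2,\bot}$ from $W^3_+$ in \eqref{eq:W3s}, whose $W^{2,\top}$ coefficient is $\omega_+ \neq 0$. One then extracts $W^{2,\bot}$ from $G^{\pm}$-descendants, and I would include this as a fallback.
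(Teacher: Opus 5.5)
Your argument is correct and is essentially the paper's proof. The paper observes that $L_- = T - L_+$ lies in $\cA$, hence so does $W^{2,\bot}$, and concludes by weak generation; this is the same as your extraction of $W^{2,\bot}$ from $L_+$ and $T$.

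One correction: your remark about $\lambda = 0$ is wrong. Expanding \eqref{eq:virs} gives $L_+ = \frac{1}{2}T - \frac{\lambda}{2\gamma}T + \frac{2}{\gamma}W^{2,\bot}$, so the coefficient of $W^{2,\bot}$ is $2/\gamma$ regardless of $\lambda$. In particular $L_\pm \neq \frac{1}{2}T$ at $\lambda = 0$, so neither the localization at $\lambda$ nor the $W^3_+$ fallback is needed; only $\gamma$ (and $c$) must be invertible.
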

      
    \begin{proof} Suppose that $\cA \subseteq \cW^{\cN=2}_{\infty}$ is an $\mathcal{N}=2$ superconformal extension of $\cW^+_{\infty}$. In particular, $\cA$ contains $H, G^{\pm}, L$, and $L_+$. Therefore $\cA$ also contains $L_-$, hence $W^{2,\bot}$, and hence $\cA = \cW^{\cN=2}_{\infty}$. The proof for $\cW^-_{\infty}$ is the same.
      \end{proof}
      
	Next we show that there are no automorphisms besides those arising as automorphisms of $\mathcal{N}=2$ superconformal algebra.
	\begin{proposition}\label{prop:autos}
	The vertex algebra $\WNtwo$ has full automorphism group $O_2$. 
	\end{proposition}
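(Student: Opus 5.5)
Let $\phi$ be an automorphism of $\WNtwo$ (over $R$, i.e.\ $R$-linear and grading preserving). The plan has three steps: (i) show $\phi$ preserves the $\cN=2$ subalgebra $W^1$ and acts there by an element of $O_2$; (ii) show that, after composing with that element, $\phi$ fixes $W^{2,\bot}$; (iii) conclude by weak generation (\B{U}4) that $\phi$ is that element of $O_2$. Conversely, we must check that every element of $O_2$ acting on $\text{Vir}^c_{\cN=2}$ extends to $\WNtwo$. For $\mathrm{SO}_2$ this is given by exponentiating $H_{(0)}$, which acts on diamonds by the Heisenberg charge. For the involution it is the automorphism $\sigma$ of (\B{U}3). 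These generate $O_2$.

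For step (i), we argue by weight. $\WNtwo[1]$ is spanned by $H$, since $\WNtwo[1/2]=0$ and the only weight one generator is $H$. So $\phi(H)=aH$ with $a\in R$. Preservation of $H_{(1)}H=\frac{c}{3}$ gives $a^2=1$. By composing with $\sigma$ if necessary, we assume $a=1$. The weight $3/2$ space is spanned by $G^\pm$. Since $\phi$ commutes with $H_{(0)}$, it preserves charge, so $\phi(G^\pm)=\mu_\pm G^\pm$. The OPE \eqref{eq:N=2 OPEs 3} forces $\mu_+\mu_-=1$ (the $(z-w)^{-3}$ term, using $c\neq0$) and then $\phi(L)=L$ from the first order pole. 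Composing with the $\mathrm{SO}_2$ element with $\mu^{-1}$, we may assume $\phi$ fixes $W^1$ pointwise.

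For step (ii), $\phi(W^{2,\bot})$ is a weight $2$, charge $0$ field commuting appropriately with $W^1$. Weight $2$ charge $0$ is spanned by $L$, $:\!HH\!:$, $\partial H$, $W^{2,\bot}$. Because $\phi$ fixes $W^1$, $\phi(W^{2,\bot})$ must be $\cN=2$ primary, since $W^{2,\bot}$ is primary by (\B{U}2). The primary condition kills the $L,:\!HH\!:,\partial H$ components: no nonzero combination of these is $\text{Vir}^c_{\cN=2}$ primary when $c\neq 0,1$ (e.g.\ $H_{(1)}$ and $L_{(3)}$ conditions; the only candidate would be the Virasoro-type combination $T$, which fails primarity for $H$ unless $c$ degenerate). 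Thus $\phi(W^{2,\bot})=bW^{2,\bot}$. Comparing the $(z-w)^{-4}$ term of \eqref{eq:22} gives $b^2\omega=\omega$, and the $\lambda W^{2,\bot}$ term gives $b^2\lambda=b\lambda$. For generic $\lambda\neq0$ this forces $b=1$; on $R$ (a domain with $\lambda$ nonzero) this suffices. One might instead use the $W^3$ normalization in \eqref{eq:raise} to pin $b$, since $W^{3,\bot}=W^{2,\top}_{(1)}W^{2,\bot}$ scales by $b^2$ but also appears in $W^{2,\top}\times W^{2,\bot}$ with coefficient $1$, forcing $b^2=b$.

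For step (iii), $\phi$ then fixes $W^{2,\pm},W^{2,\top}$ via $G^\pm_{(0)}$, and by \eqref{eq:raise} fixes all $W^{N,*}$, hence is the identity, since $\WNtwo$ is generated by $W^1,W^2$. The main obstacle is step (ii), the primarity argument ruling out mixing with $W^1$-composites. I would verify it directly with the OPEs $G^\pm_{(1)}$ and $H_{(1)}$ on a general combination, using the localization at $c,c-1$.
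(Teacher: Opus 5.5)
Your proposal is correct and follows the paper's route. The paper likewise reduces modulo $O_2$ to an automorphism fixing $H,G^{\pm},L$, shows that $W^{2,\bot}$ is then fixed, and concludes by weak generation \eqref{eq:raise}. Your primarity computation together with the $\lambda W^{2,\bot}$ term of \eqref{eq:22} is a more explicit version of the paper's one-line claim that the only admissible deformation of $W^{2,\bot}$ is the identity.

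Two small corrections. First, the alternative via $W^{3,\bot}$ is vacuous. Since $W^{3,\bot}$ is \emph{defined} as $W^{2,\top}_{(1)}W^{2,\bot}$, its coefficient $1$ in $W^{2,\top}\times W^{2,\bot}$ holds for every $b$, and only the $\lambda$-dependent terms pin down $b$. Second, in your parenthetical, $T=L-\frac{3}{2c}:\!HH\!:$ does commute with $H$; what excludes it is $L_{(3)}T=\frac{c-1}{2}\neq 0$.
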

	\begin{proof}
		Let $g$ be any automorphism of $\WNtwo$ that is not contained in $\T{O}_2$-subgroup of automorphisms, so it fixes the generators $H,L, G^+,G^-$. 
        The most general deformation of weight two field $\tilde W^{2,\bot}$ compatible with
		\[H(z) \tilde W^{2,\bot}(w) \sim 0, \quad \tilde W^{2,\bot}(z)\tilde W^{2,\bot}(w)\sim \dotsb\] is given by the identity map.
        The weak generation property then forces this automorphism to be identity on $\WNtwo$.
	\end{proof}

	\subsection{Quotients by maximal ideals of $\WNtwo$}
	So far, we have considered quotients of the form $\cW^{\cN=2,I}_R$ which are 1-parameter vertex algebras in the sense that $R$ has Krull dimension 1. Here, we consider simple quotients of $\cW^{\cN=2,I}$ where $I \subseteq R$ is a maximal ideal.
	Such an ideal always has the form $I=(c-c_0,\lambda-\lambda_0)$ for $c_0,\lambda_0 \in\mathbb{C}$, and $\cW^{\cN=2,I}$ is a vertex algebra over $\mathbb{C}$.
	We first need a criterion for when the simple quotients of two such vertex algebras are isomorphic.
	\begin{theorem}\label{max quotients}
		Let $c_0,c_1,\lambda_0,\lambda_1$ be complex numbers and let \[I_0 = (c-c_0,\lambda-\lambda_0),\quad I_1 =(c-c_1,\lambda-\lambda_1)\]
		be the corresponding maximal ideals in $R$.
		Let $\cW_0$ and $\cW_1$ be the simple quotients of $\cW^{\cN=2,I_0}_{\infty}$ and $\cW^{\cN=2,I_1}_{\infty}$, respectively. 
		Then $\cW_0 \cong \cW_1$ if and only if:

        \begin{enumerate}
            \item $c_0 = c_1$ and $\lambda_0=\lambda_1$.
            \item $c_0=0=c_1$, then $\cW_0 \cong \mathbb{C} \cong \cW_1$, for all $\lambda$.
            \item $c_0=1=c_1$, then $\cW_0 \cong \text{Vir}_{\cN=2, 1} \cong \cW_1$, for all $\lambda$.
            \item $c_0=-3=c_1$, then $\cW_0 \cong (\cS(1) \otimes \cA(1))^{\textup{U}(1)} \cong \cW_1$, for all $\lambda$.
        \end{enumerate}
	\end{theorem}
	
	\begin{corollary}\label{max quotients2}
		Let $I=(p)$ and $J=(q)$ be prime ideals in $R$ such that $\cW^{\cN=2,I}_{\infty}$ and $\cW^{\cN=2,J}_{\infty}$ are not simple. Then any pointwise coincidences between the simple quotients of $\cW^{\cN=2,I}_{\infty}$ and $\cW^{\cN=2,J}_{\infty}$ must correspond to intersection points of the truncation curves $V(I)\cap V(J)$.
	\end{corollary}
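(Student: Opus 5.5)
The plan is to deduce the corollary directly from Theorem \ref{max quotients} together with the tautological isomorphism \eqref{tautological}, without reopening the proof of that theorem.

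First I make precise what a pointwise coincidence is. Take points $p_0=(c_0,\lambda_0)\in V(I)$ and $p_1=(c_1,\lambda_1)\in V(J)$, with maximal ideals $M_{p_0}\supseteq I$ and $M_{p_1}\supseteq J$. Write $\cW_{I,p_0}$ for the simple quotient of the specialization at $p_0$ of the simple $1$-parameter quotient $\cW^{\cN=2}_{\infty,I}$. Define $\cW_{J,p_1}$ the same way. A coincidence means an isomorphism $\cW_{I,p_0}\cong\cW_{J,p_1}$.

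The first step is to pass from the $1$-parameter quotients to the quotients by maximal ideals. The graded ideal that is killed in passing from $\cW^{\cN=2,I}_{\infty}$ to $\cW^{\cN=2}_{\infty,I}$ specializes at $p_0$ into a proper graded ideal with trivial weight-zero part. Therefore $\cW_{I,p_0}$ is also the simple quotient of $\cW^{\cN=2,M_{p_0}}_{\infty}$, which is the first isomorphism in \eqref{tautological}. Likewise $\cW_{J,p_1}$ is the simple quotient of $\cW^{\cN=2,M_{p_1}}_{\infty}$.

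The second step applies Theorem \ref{max quotients} to $M_{p_0}$ and $M_{p_1}$. An isomorphism $\cW_{I,p_0}\cong\cW_{J,p_1}$ then forces one of two outcomes.
\begin{enumerate}
\item Generically, $(c_0,\lambda_0)=(c_1,\lambda_1)$. Then $p_0=p_1$ lies in $V(I)\cap V(J)$.
\item Otherwise $c_0=c_1\in\{0,1,-3\}$.
\end{enumerate}
The exceptional case $c\in\{0,1,-3\}$ is excluded from $X=\operatorname{Spec}R$, because $R$ inverts $c$, $c-1$ and $c+3$. So for points of the truncation curves inside $X$ only the generic case can occur, and every coincidence corresponds to an intersection point of $V(I)$ and $V(J)$.

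The main thing to check carefully is the first step, namely that taking the simple quotient commutes with specialization. The maximal graded ideal of $\cW^{\cN=2,I}_{\infty}$ must map into the maximal graded ideal of the specialization at $p_0$. This holds because its image is still a graded ideal that does not meet weight $0$, since the weight-zero space is just $R/M_{p_0}\cong\mathbb{C}$. Any such ideal lies in the unique maximal one, so the simple quotients agree.

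I would also add a remark for points where $c\in\{0,1,-3\}$ in the closure of the curves. There the simple quotient is $\mathbb{C}$, $\text{Vir}_{\cN=2,1}$, or $(\cS(1)\otimes\cA(1))^{U(1)}$, independently of $\lambda$. Such coincidences are degenerate and are excluded by the localization.
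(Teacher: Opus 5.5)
Your argument is correct and is essentially the intended one: the paper states this corollary without a written proof, as an immediate consequence of Theorem \ref{max quotients}, and the passage from the simple quotient at a point of $V(I)$ to the simple quotient of $\cW^{\cN=2,M_p}_{\infty}$ is exactly the tautological isomorphism \eqref{tautological}, which you justify correctly. Your observation that the exceptional cases $c\in\{0,1,-3\}$ of that theorem cannot arise, because $R$ inverts $c$, $c-1$ and $c+3$, is also the right reason the statement needs no exceptions.
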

	
	\begin{corollary} \label{cor:uniquenessofcurve}
		Suppose that $\cA$ is a simple, 1-parameter vertex algebra which is isomorphic to the simple quotient of $\cW^{\cN=2,I}_{\infty}$ for some prime ideal $I\subseteq R$, possibly after localization. 
		Then if $\cA$ is the quotient of $\cW^{\cN=2,J}_{\infty}$ for some prime ideal $J$, possibly localized, we must have $I=J$.
	\end{corollary}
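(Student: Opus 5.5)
The statement to prove is Corollary \ref{cor:uniquenessofcurve}. The plan is to derive it from Theorem \ref{max quotients} and Corollary \ref{max quotients2}, together with the tautological identification \eqref{tautological}. Suppose $\cA$ is the simple quotient of (a localization of) $\cW^{\cN=2,I}_{\infty}$ and also of (a localization of) $\cW^{\cN=2,J}_{\infty}$, with $I=(p)$ and $J=(q)$ prime. We argue by contradiction and assume $I\neq J$. Then $V(I)$ and $V(J)$ are distinct irreducible curves in $X$, so $V(I)\cap V(J)$ is a finite set of points.

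\textbf{Step 1: pair up the specializations.} Since $\cA$ is a $1$-parameter vertex algebra over some localization of $R/I$, and also over some localization of $R/J$, we obtain two parametrizations of the same family. Concretely, the two realizations give an identification of the parameter spaces on dense open subsets: the central charge of the Heisenberg field $H$, the normalization $\omega$ and the structure constant $\lambda$ from \eqref{eq:22} are intrinsic invariants of $\cA$. (The choice of $H$ is unique up to the $O_2$ automorphisms of Proposition \ref{prop:autos}, which fix $c$ and $\lambda$ up to sign conventions.) Hence, for generic points $p_0$ on the parameter curve of $\cA$, the specialization $\cA_{p_0}$ has a simple quotient isomorphic both to the simple quotient of $\cW^{\cN=2,I_0}_{\infty}$ and to that of $\cW^{\cN=2,I_1}_{\infty}$. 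Here $I_0$ is a maximal ideal containing $I$, and $I_1$ is a maximal ideal containing $J$.

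\textbf{Step 2: apply Theorem \ref{max quotients}.} For points $(c_0,\lambda_0)$ avoiding the exceptional values $c_0\in\{0,1,-3\}$, the isomorphism of simple quotients forces $(c_0,\lambda_0)=(c_1,\lambda_1)$. This point therefore lies in $V(I)\cap V(J)$, which is the conclusion of Corollary \ref{max quotients2}. Infinitely many generic points (all but finitely many of the curve) would then lie in the finite set $V(I)\cap V(J)$, a contradiction. The only escape is that the curve $V(I)$ is contained in one of the lines $c=0$, $c=1$, or $c=-3$. These lines are excluded, however, since $c$, $c-1$ and $c+3$ are inverted in $R$. Thus $I=J$.

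\textbf{Main obstacle.} The delicate point is Step 1. We must make sure that "isomorphic as $1$-parameter vertex algebras" really lets us match infinitely many points of $V(I)$ with points of $V(J)$ whose simple quotients agree, in a way compatible with specialization of simple quotients. The plan for this is the following. First, observe that the maximal graded ideal of a $1$-parameter simple algebra specializes into the maximal ideal at each point, so that the simple quotient at a point is a quotient of the specialization of $\cA$. Second, discard the finitely many bad points where denominators vanish or localizations remove points.
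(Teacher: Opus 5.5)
Your argument is correct and is essentially the paper's proof. The paper also derives the corollary directly from Theorem \ref{max quotients} and Corollary \ref{max quotients2}, using that distinct primes $I\neq J$ give curves $V(I)$, $V(J)$ meeting in only finitely many points, so the simple quotients cannot agree along a whole $1$-parameter family; your Steps 1--2 just make the specialization bookkeeping via \eqref{tautological} and the exclusion of $c\in\{0,1,-3\}$ explicit.

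One small caution: the aside in Step 1 that $\omega$ and $\lambda$ are intrinsic invariants of $\cA$ is unnecessary, and not quite right. Rescaling $W^{2,\bot}$ by $\mu$ sends $\omega\mapsto\mu^2\omega$ and $\lambda\mapsto\mu\lambda$, so only $c$ and $\lambda^2/\omega$ are intrinsic. The point-matching you actually need comes from the base-ring identification built into the two realizations of $\cA$, exactly as you describe under ``Main obstacle''.
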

	\begin{proof}
		This is immediate from Theorem (\ref{max quotients}) and Corollary (\ref{max quotients2}), since if $I$ and $J$ are distinct prime ideals, their truncation curves $V(I)$ and $V(J)$ can intersect in at most finitely many points. The simple quotients of $\cW^{\cN=2,I}_{\infty}$ and $\cW^{\cN=2,J}_{\infty}$ therefore cannot coincide as $1$-parameter families. \end{proof}

 \subsection{Reconstruction} As we shall see in the next section, all the vertex algebras $\cC^{\psi}_{\mathcal{N}=2}(n,r|s)$ arise is $1$-parameter quotients of $\cW^{\cN=2}_{\infty}$, and we need to compute their explicit truncation curves. The GKO cosets $\cC^{\psi}_{\mathcal{N}=2}(0,r|s)$ are the easiest to handle because they do not involve quantum Hamiltonian reduction. The case $\cC^{\psi}_{\mathcal{N}=2}(n,0|0) = \cW^k(\gs\gl_{n+1|n})$ is the most difficult and requires an explicit formula for the fields in weights up to $2$ in the BRST complex. However, the truncation curves in all other cases can be computed uniformly by a {\it reconstruction theorem}.

 \begin{theorem}\label{recon:first} Let $n\geq 0$ and $r+s >0$, and let $\cA^{\psi}(n,r|s)$ be a simple $1$-parameter vertex algebra with the following properties, which are shared with 
 \begin{itemize}
 \item $\cW^{k}(\gs\gl_{n+r+1|n+s}, F_{n+1|n})$ for $n\geq 1$ and $s \neq r+1$,
  \item $\cW^{k}(\gp\gs\gl_{n+r+1|n+r+1}, F_{n+1|n})$ for $n\geq 1$ and $r\geq 1$ and $s = r+1$,
   \item $V^{k}(\gs\gl_{r+1|s}) \otimes \cE(r) \otimes \cS(s)$ for $n = 0$ and $s \neq r+1$,
     \item $V^{k}(\gp\gs\gl_{r+1|r+1}) \otimes \cE(r) \otimes \cS(r+1)$ for $n = 0$ and $s = r+1$:
     \end{itemize}
\begin{enumerate}
			\item 
			$\cA^{\psi}(n,r|s)$ is a conformal extension of $\cW\otimes V^{k+1}(\ga)$, where 
			$ \ga \cong 
 \begin{cases}
\gg\gl_{r|s}, & s \neq r+1,\\
\gs\gl_{r|s}, & s = r+1,
 \end{cases}$
and $\cW$ is some 1-parameter quotient of $\cW^{\cN=2}_{\infty}$.
		
			\item The extension is generated by even fields $\{P^{i,\bot}|\ i = i,\dots, r+s\}$ and $\{Q^{i,\bot}|\ i = 1,\dots, r+s\}$ in weight $\frac{n+1}{2}$ which transform under $\ga$ as $\mathbb{C}^{r|s}$ and $(\mathbb{C}^{r|s})^*$, and odd fields $\{P^{i,-}|\ i = i,\dots, r+s\}$ and $\{Q^{i,+}|\ i = 1,\dots, r+s\}$ in weight $\frac{n+2}{2}$ which transform under $\ga$ as $\mathbb{C}^{r|s}$ and $(\mathbb{C}^{r|s})^*$.
			
			\item The extension fields are primary for the $\mathcal{N}=2$ superconformal algebra and primary for $V^{k+1}(\ga)$.
						
			\item $\cA^{\psi}(n,r|s)$ is strongly generated by these fields, together with the generators of  $\cW\otimes V^{k+1}(\ga)$.
			\item The restriction of the Shapovalov form to the extension fields is nondegenerate.
		\end{enumerate}
		Then we have isomorphisms of $1$-parameter vertex superalgebras 
		\begin{equation}\label{eq:reconsio} \cA^{\psi}(n,r|s) \cong 
 \begin{cases}
 \cW^{k}(\gs\gl_{n+r+1|n+s}, F_{n+1|n}), & n\geq 1,\  s \neq r+1,\\
 \cW^{k}(\gp\gs\gl_{n+r+1|n+r+1}, F_{n+1|n}), & n\geq 1,\  r\geq 1,\  s = r+1,\\
V^{k}(\gs\gl_{r+1|s}) \otimes \cE(r) \otimes \cS(s), & n = 0,\  s \neq r+1,\\
 V^{k}(\gp\gs\gl_{r+1|r+1}) \otimes \cE(r) \otimes \cS(r+1), & n = 0,\  s = r+1.
 \end{cases}
\end{equation}
\end{theorem}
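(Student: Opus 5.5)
The plan is the reconstruction strategy of \cite{CL2} (Theorem 4.2 there and its analogues), adapted to the $\cN=2$ setting. The idea is to show that the full OPE algebra of $\cA^{\psi}(n,r|s)$ is determined, up to rescaling the extension fields, by the data listed in (1)--(5) together with a single parameter specifying the $1$-parameter quotient $\cW$ of $\cW^{\cN=2}_{\infty}$. One then checks that each algebra on the right-hand side of \eqref{eq:reconsio} realizes this data.

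First, I would verify that each candidate on the right side of \eqref{eq:reconsio} has properties (1)--(5). For $n\geq 1$, the affine subalgebra $V^{k+1}(\ga)$ and its coset $\cC^{\psi}_{\cN=2}(n,r|s)$ come from Section \ref{sect:supersymmetricY}. The coset is a quotient of $\cW^{\cN=2}_{\infty}$ by Theorem \ref{thm: Y alg} and Theorem \ref{one-parameter quotients theorem}; I assume here that weak generation (\B{U}4) holds generically, which Section \ref{sec:one param q} is said to establish. The extension fields are the corrections of $\omega_{p^{i,\bot}},\omega_{p^{i,+}},\omega_{q^{i,\bot}},\omega_{q^{i,-}}$ from \eqref{eq: Q and P}, made primary for $V^{k+1}(\ga)$ via Theorem \ref{cor:primary}, and then made $\cN=2$ primary using the Heisenberg-corrected $H$ of \eqref{crs}. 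In the coset these form degenerate diamonds. Strong generation follows from the large level limit computation in the proof of Theorem \ref{thm: Y alg}. Nondegeneracy of the Shapovalov form follows from genericity of $k$, since the pairing of $P$ with $Q$ has leading term $(k+\dots)$ times the supertrace pairing. The case $n=0$ is analogous, using Theorem \ref{thm: GKOs} and $\omega_{p^{i,+}}\otimes c^i$-type fields.

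Second, for uniqueness I would write the most general OPEs $P^{i,*}(z)Q^{j,*}(w)$ compatible with the following constraints: conformal weight, $H$-charge, $\ga$-covariance, primarity under $\cN=2$ and $V^{k+1}(\ga)$, and the degenerate diamond relations \eqref{eq:N=2 degenerate diamond OPEs}. I would also impose $P(z)P(w)\sim 0\sim Q(z)Q(w)$, which follows from the $\ga$-weight and $H$-charge considerations for the Heisenberg part. By (4) and (5), the pole terms lie in $\cW\otimes V^{k+1}(\ga)$. After normalizing the leading pole $P^{i,\bot}_{(n)}Q^{j,\bot}=\delta_{ij}\cdot$const (possible by (5)), I would impose the Jacobi identities $J(P,Q,W^{2})$, $J(P,Q,P)$ and $J(Q,P,Q)$. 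The expectation is that these force every structure constant to be a rational function of $k$. They should also force the parameters $(c,\lambda)$ of $\cW$ to lie on one explicit curve, namely the truncation curve. In particular, $\lambda$ is determined by the eigenvalue of $W^{2,\bot}_{(1)}$ on $P^{i,\bot}$, which Jacobi identities tie to $k$.

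Third, once the OPE algebra is uniquely determined, strong generation (4) and simplicity give the conclusion. Both sides are simple $1$-parameter algebras with identical strong generators and OPEs, so each is the simple quotient of the same universal object, and the isomorphism follows. Simplicity of the right-hand sides holds for generic $k$, by \cite[Theorem 3.6]{CL2} and its superalgebra analogue.

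The main obstacle is the second step. One must show that the Jacobi identities genuinely pin down the action of $W^{2,\bot}$ (and hence of all of $\cW$, by weak generation) on the extension fields. The risk is that they might only constrain it up to extra free parameters. I would first try to solve the low-weight identities $J(W^{2,\top},P^{i,\bot},Q^{j,\bot})$ by computer for small $n,r,s$, then extract a uniform recursion in $n$ as in Proposition \ref{prop:structure constants}. The eigenvalue of $W^{2,\bot}_{(1)}$ on $P^{i,\bot}$ should turn out to be a rational function of $k$, $c$, and $\lambda$ independent of $r,s$ beyond $\mathrm{sdim}$.
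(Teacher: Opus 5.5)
Your overall plan is the paper's: determine the OPE algebra of the extension from (1)--(5) by Jacobi identities, show it depends only on $k$, and conclude by uniqueness of the simple quotient. There are, however, two problems.

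\textbf{Step 1 is circular.} For $n\geq 1$, weak generation of $\cC^{\psi}_{\cN=2}(n,r|s)$ by its fields of weight at most $2$ is Theorem \ref{trunc:Cnmthird}. That proof, through Lemma \ref{thm:tildednm}, uses the truncation curve \eqref{eq:trunc}, which comes from the very theorem you are proving. The paper does not verify (1)--(5) for the listed algebras inside this proof; the statement builds it in (``which are shared with''). So either drop Step 1, or establish (1) for those algebras without invoking Section \ref{sec:one param q}.

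\textbf{The crux of Step 2 is left open.} You correctly name it as the main obstacle, but you leave it as an expectation, and the identities you propose are not the ones that close it. Used first, $J(W^2,P,Q)$ and $J(P,Q,P)$ couple the unknown action of $W^2$ on $P$ to every unknown coefficient of $W^2$, $W^3$ and composite fields in the lower poles of $P\times Q$. You give no reason that this coupled system has a unique solution.

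The paper closes the system instead with $J(W^2,W^2,P)$, which is self-contained. By (3), (4) and weight and charge counting, $W^2\times P$ can contain only $P$ itself. Likewise $W^3\times P$ can contain only $P$ and the two composite primaries $M_3=:W^{2,\bot}P^{\bot}:+\cdots$ and $M_4=:W^{2,\bot}P^{-}:-2(n+1):W^{2,-}P^{\bot}:+\cdots$, up to $\cN=2$ and $\ga$ descendants. Meanwhile $W^2\times W^2$ is already fixed in terms of $c,\lambda,\omega$ by \eqref{eq:22}.

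Your proposal also lacks the ingredient that ties this to $k$: the $L^{\cW}$-weight of $P^{\bot}$. Since $P^{\bot}$ has total weight $\frac{n+1}{2}$ and is $\ga$-primary in $\mathbb C^{r|s}$, its $\cW$-weight is $\alpha=\frac{n+1}{2}-\frac{r-s+1}{2(k+r-s+1)}$. The degenerate-diamond condition then fixes its $H$-charge to $\pm 2\alpha$.

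Solving $J(W^2,W^2,P)$ gives $p_1^2,p_2,p_3^2,p_4^2$ and $\lambda^2$ as functions of $(c,\omega,\alpha)$; this is \eqref{eq:pretrunc}--\eqref{eq:trunc}, i.e.\ the truncation curve. Only squares appear, reflecting the freedom $W^2\mapsto -W^2$.

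Everything else then follows by recursion:
\begin{itemize}
\item $J_{r,1}(W^{2,\top},W^{q,\bot},P)$ gives $W^{q+1}\times P$ from $W^{q}\times P$. This is what your ``by weak generation'' has to mean concretely.
\item $J_{r,1}(W^{2,\top},P,Q)$ determines $P_{(r-2)}Q$ from the higher poles, with the top pole fixed by (5).
\end{itemize}
So $J(W^2,P,Q)$ is the right tool for $P\times Q$, but only after the action of $\cW$ on $P$ and $Q$ has been pinned down. The paper never needs $J(P,Q,P)$ or $J(Q,P,Q)$.
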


\begin{proof}
We recall the $\mathcal{N}=2$ degenerate diamonds 
$$P^{\frac{n+1}{2}}:=P^{(\alpha,2\alpha)}_{\cN=2}=\T{Span}\{P^{i,\bot},P^{i,-}|1\leq i\leq r+s\},\quad Q^{\frac{n+1}{2}}:=Q^{(\alpha,-2\alpha)}_{\cN=2}=\T{Span}\{Q^{i,\bot},Q^{i,+}|1\leq i\leq r+s\}$$ so that
        \[G^-(z)P^{i,\bot}(w)\sim 0 \sim G^+(z)Q^{i,\bot}(w),\]
and $\alpha$ is the constant
\begin{equation}\label{eq:alpha}
 \alpha = \frac{n+1}{2}-\frac{r-s+1}{2(k+r-s+1)}.
\end{equation}
By conformal and Heisenberg weight considerations, the $\cN=2$ degenerate diamonds $P^{\frac{n+1}{2}},Q^{\frac{n+1}{2}}$ are in fact primary over the universal $\mathcal{N}=2$ subVOA.

      Consider $W^2(z)P^{\frac{n+1}{2}}(w)$. 
        The most general form compatible with $\mathcal{N}=2$ symmetry is
        \begin{equation}
            W^2\times P^{\frac{n+1}{2}} = p_1 P^{\frac{n+1}{2}}+\dotsb,\quad W^3\times P^{\frac{n+1}{2}} = p_2 P^{\frac{n+1}{2}}+p_3M_3+ p_4M_4+\dotsb,
        \end{equation}
         where the omitted terms are normally ordered monomials in 
         $W^1$, generators of $\fr{gl}_m$, and their derivatives, and the fields $M_3$, $M_4$ denote the $\mathcal{N}=2$ primaries determined by the following bottom elements
        \begin{align*}
            M^{i,\bot}_3 =& :\!W^{2,\bot}P^{i,\bot}\!:+\dotsb,\\
            M^{i,\bot}_4 = &:\!W^{2,\bot}P^{i,-}\!:-2(n+1):\!W^{2,-}P^{i,\bot}\!:+\dotsb.
        \end{align*}
         Imposition of Jacobi identities $J(W^{2}, W^{2},P^{\frac{n+1}{2}})$ allows us to express all of the structure constants arising in $W^2(z)P^{\frac{n+1}{2}}(w)$ and $W^3(z)P^{\frac{n+1}{2}}(w)$ as algebraic functions in the central charge $c$, and polynomials in $\sqrt{\omega}$,
            \begin{equation}\label{eq:pretrunc}
            \begin{split}
                p_1^2=&-\frac{\alpha ^2 (2 \alpha +1) \omega  (c-6 \alpha ) (12 \alpha +c-3)}{(c-1) c (-3 \alpha +\alpha  c-c)},\\
              p_2 =&\frac{3 \alpha  \omega  \left(96 \alpha ^3+24 \alpha ^2+2 \alpha  c^2-5 c^2-16 \alpha ^2 c-18 \alpha  c+7 c\right)}{(c-1) c (-3 \alpha +\alpha  c-c)}\\
             p_3^2=&-\frac{144 \omega  \left(12 \alpha ^2+4 \alpha  c-c\right)^2}{(2 \alpha +1) (c-1) c (c-6 \alpha ) (12 \alpha +c-3) (-3 \alpha +\alpha  c-c)},\\
             p_4^2=&-\frac{144 \alpha ^2 \omega  \left(-48 \alpha ^2-18 \alpha +2 \alpha  c-5 c+9\right)^2}{(2 \alpha +1) (c-1) c (c-6 \alpha ) (12 \alpha +c-3) (-3 \alpha +\alpha  c-c)}.
              \end{split}
            \end{equation}
            Moreover, we obtain the following truncation curve
           \begin{equation}\label{eq:trunc}
              \lambda^2=-\frac{4 (c+3)^2 \omega  \left(12 \alpha ^2+4 \alpha  c-c\right)^2}{(2 \alpha +1) (c-1) c (c-6 \alpha ) (12 \alpha +c-3) (-3 \alpha +\alpha  c-c)}.
        \end{equation}
        There is an analogous system for $W^2(z)Q^{\frac{n+1}{2}}(w)$.

        Note  $J(W^2,W^2,P^{\frac{n+1}{2}})$ have uniquely determined OPEs in $W^3(z)P^{\frac{n+1}{2}}(w)$.
        More generally, using the weak generation (\ref{eq:raise}) of $\WNtwo$, together with Proposition \ref{eq:pretrunc}, it is not hard to see that Jacobi identities $J(W^2,W^q,P^{\frac{n+1}{2}})$ determine OPEs $W^{q+1}(z)P^{\frac{n+1}{2}}(w)$ in terms of $W^{q}(z)P^{\frac{n+1}{2}}(w)$; specifically, one can make use of identity $J_{r,1}(W^{2,\top},W^{q,\bot},P^{\frac{n+1}{2}})$. 
        Similar consideration allow us to determine $W^{q}(z)Q^{\frac{n+1}{2}}(w)$.
        Furthermore, using Proposition \ref{eq:pretrunc} and Jacobi relations $J(W^2,P^{\frac{n+1}{2}},Q^{\frac{n+1}{2}})$ one can uniquely determine OPEs among the extension fields $P^{\frac{n+1}{2}}(z)Q^{\frac{n+1}{2}}(w)$; specifically, the identity $J_{r,1}(W^{2,\top},P^{\frac{n+1}{2}},Q^{\frac{n+1}{2}})$ determines $P^{\frac{n+1}{2}}_{(r-2)}Q^{\frac{n+1}{2}}$ inductively in terms of higher order poles, with base case determined by condition (5) of Theorem \ref{recon:first}.
    \end{proof}
\begin{remark}
    The details of the evaluation of $J_{r,1}(W^2,P^{\frac{n+1}{2}},Q^{\frac{n+1}{2}})$ are tedious but straightforward, and can be worked out from VOA axioms together with the representation theory of $\fr{gl}_{r|s}$ or $\fr{sl}_{r|s}$. 
    However, the existence of recursive determination is sufficient for our purposes, so we omit the details.
\end{remark}

\begin{remark} \label{recon:cased0n} Theorem \ref{recon:first} also applies in the case $n\geq 1$, $r= 0$ and $s = 1$. The action of $V^{k+1}(\gs\gl_{r|r+1})$ on $\cW^k(\gp\gs\gl_{n+r+1|n+r+1}, F_{n+1|n})$ is replaced with the outer action of $U(1)$, and the argument is otherwise unchanged.
\end{remark}

\section{One-parameter quotients of $\WNtwo$}\label{sec:one param q} Our main result in this section is the following.
\begin{theorem}\label{trunc:main} For all $n, r, s$, $\cC^{\psi}_{\cN=2}(n,r|s)$ arises as $1$-parameter quotient of $\cW^{\cN=2}_{\infty}$ after a suitable finite localization.
Recall that $\lambda$ is the structure constant first appearing in the OPEs (\ref{eq:22}), and $\omega$ is a scaling parameter associated with the second diamond $W^2$. Setting $d = r-s$, the truncation curve is given by 
\begin{equation}\label{gentruncationcurve}
\begin{split}
    \frac{\lambda^2}{\omega}&=\frac{8 \psi  (d-n \psi +1)^2 (d-n \psi -\psi )^2 (2 d-2 n \psi -\psi +1)^2}{(d-n \psi -1) (d-n \psi ) (d-n \psi +\psi ) (d-(n+1) \psi +1) (d-(n+1) \psi +2) (d-(n+2) \psi +1)Q},\\
     Q&=\left(3 d^2-3 d (2 n \psi +\psi -1)+3 n \psi  (n \psi +\psi -1)+\psi \right).
   \end{split}
\end{equation}
Evaluating \eqref{eq:lambdas} and \eqref{eq:virs} we determine the associated parameters of the $\cW_{\infty}(c_{\pm},\lambda_{\pm})$ subVOAs are given by 
\begin{equation}
\begin{split}
    \lambda_+ &= \frac{-(\psi -1) \psi }{(d -n \psi -3 \psi +1) (d -n \psi -\psi +3) (d -n \psi +\psi -1)},\quad c_+= \frac{(n \psi -d ) (d -n \psi -\psi +2) (d -n \psi -2 \psi +1)}{-(1-\psi ) \psi },\\
\lambda_-&=  \frac{-(1-\psi ) \psi }{(d -n \psi -2) (d -n \psi -2 \psi +2) (d -n \psi +2 \psi )},\quad c_-= \frac{(d-n \psi -1) (d -n \psi +\psi ) (d -n \psi -\psi +1)}{-(\psi -1) \psi }.
\end{split}
\end{equation}

\end{theorem}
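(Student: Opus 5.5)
The proof has two parts. First, each $\cC^{\psi}_{\cN=2}(n,r|s)$ is a quotient of $\cW^{\cN=2}_{\infty}$. Second, its truncation curve is computed. Both rest on the universality statements of Section \ref{sect:main}.

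For the first part, by Theorems \ref{thm: Y alg} and \ref{thm: GKOs}, properties $(\B{U}1)$--$(\B{U}3)$ already hold for $\cC^{\psi}_{\cN=2}(n,r|s)$ for generic $\psi$. We take the strong generating set with $\cN=2$ diamond structure constructed there, and normalize the bottom of the second diamond so that it is $\cN=2$ primary, as is possible by Appendix \ref{sect:primary structures}. What remains is the weak generation hypothesis $(\B{U}4)$, i.e. $W^{2,\top}_{(1)}W^{N,\bot}=W^{N+1,\bot}$ up to a nonzero scalar. We plan to prove this in the large level limit, mirroring Proposition \ref{lem:weakgen}:
\begin{itemize}
\item In the orbifold limit \eqref{orbifoldlimit:Dns}, the principal block generators behave as in $\cW^k(\gs\gl_{n+1|n})$.
\item The quadratic generators \eqref{eq: Y gen} are produced from $\omega^{n+1,\bot}_{\infty}$ by repeated action of $W^{2,\top}_{(1)}$. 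In the free field limit this is a first-order differential operator acting on the bilinears $:\!\partial^d\omega_{p}\omega_q\!:$.
\item The leading coefficients are nonzero constants (independent of $k$), so weak generation holds generically and hence after a finite localization.
\end{itemize}
Theorem \ref{one-parameter quotients theorem} then gives $\cC^{\psi}_{\cN=2}(n,r|s)$ as a quotient of $\cW^{\cN=2,I}_{\infty,S}$ for some prime ideal $I$. For the simple members ($r=0$ or $s=0$), Corollary \ref{cor:uniquenessofcurve} shows that $I$ is unique.

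For the truncation curve, we split into cases:
\begin{itemize}
\item \emph{$r+s>0$.} We use the reconstruction Theorem \ref{recon:first}. The hook-type algebra $\cW^k(\gs\gl_{n+r+1|n+s},F_{n+1|n})$, or the GKO-type free field tensor product when $n=0$, is an extension of $\cC^{\psi}_{\cN=2}(n,r|s)\otimes V^{k+1}(\ga)$. The extension fields $\omega_{p^{i,\bot}},\omega_{p^{i,+}},\omega_{q^{i,\bot}},\omega_{q^{i,-}}$ are corrected to be $\cN=2$ primary and $\hat\ga$-primary (Appendix \ref{appendix:stronggenerators}), and they form degenerate diamonds. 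Their conformal weight is $\alpha$ from \eqref{eq:alpha}, which we compute from the coset Virasoro $L=-\frac{1}{k+1}\omega_F-L^{\gg\gl_{r|s}}$. The Jacobi identities $J(W^2,W^2,P^{\frac{n+1}{2}})$ then force \eqref{eq:trunc}. Substituting $\alpha$ and $c$ from \eqref{crs}, with $\psi=k+d+1$, should give \eqref{gentruncationcurve} after simplification.
\item \emph{$r=s=0$.} This case is $\cW^k(\gs\gl_{n+1|n})$, which has no extension fields. We compute $\lambda^2/\omega$ directly from the OPE $W^{2,\bot}(z)W^{2,\bot}(w)$ via the SUSY Miura map \cite{Song24free}. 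Concretely, we write $\omega^{2,\bot}$ as an explicit element of the free field algebra, make it $\cN=2$ primary, and read off $\omega$ and $\lambda$ from \eqref{eq:22}.
\end{itemize}
Finally, we substitute into \eqref{eq:lambdas} and \eqref{eq:virs}, choosing the branch of $\gamma=\sqrt{\lambda^2+32\omega/(c-1)}$. This yields $c_\pm,\lambda_\pm$, and the result is a routine rational simplification. As a check, $c_++c_-=c-1$.

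I expect the main obstacle to be the case $r=s=0$. There we need explicit BRST or Miura formulas for weight-2 fields for general $n$, together with the primary correction, and the reconstruction shortcut is unavailable. If the direct computation is too unwieldy, I would try the following fallback:
\begin{itemize}
\item Compute the curve for small $n$.
\item Show that $\cW^k(\gs\gl_{n+1|n})$ contains commuting Virasoro fields whose central charges are $c_\pm$. Their product with the Heisenberg field should be identifiable with known principal $\cW$-subalgebras or cosets of type $A$.
\item Since $\gamma$ is determined by $c_+-c_-$ and $\lambda_\pm$ are known truncation data of $\cW_\infty$ quotients, this determines the curve through Proposition \ref{prop:lambdas}.
\end{itemize}
A second, milder obstacle is the generic-$\psi$ verification of $(\B{U}4)$ in the non-simple cases $r,s>0$. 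There Sergeev invariant theory, rather than Weyl invariant theory, is needed to see that the quadratic generators are reached by $W^{2,\top}_{(1)}$.
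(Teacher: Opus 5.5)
Your quotient arguments for $n=0$ and $r=s=0$ are fine (the former is essentially Theorem \ref{trunc:c0rs}), and computing the curve via Theorem \ref{recon:first} is what the paper does. The gap is your verification of (\B{U}4) for $n\geq 1$, $r+s>0$, which is the heart of the theorem in that range.

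You want to see weak generation in the large level limit \eqref{orbifoldlimit:Dns}, with $W^{2,\top}_{(1)}$ acting as a first-order operator on the bilinears \eqref{eq: Y gen}. This fails for $n\geq 2$:
\begin{itemize}
\item The weight-$2$ diamond comes from the principal block, so at $\kappa=\infty$ it lies in the free field factor $\cA$. That factor commutes with the orbifold factor containing the quadratic generators.
\item By Lemma \ref{lemma:norm}, the free generators of $\cA$ have only a top-order pole, and only with generators of equal weight.
\item Hence at $\kappa=\infty$ the mode $W^{2,\top}_{(1)}$ annihilates every generator, principal or quadratic. The coefficient of the new generator in $W^{2,\top}_{(1)}W^{N,\bot}$ is $0$, not a nonzero constant, so generic nonvanishing of the $\lambda_r$ in \eqref{lambdaterm} is invisible in the limit.
\end{itemize}
Proposition \ref{lem:weakgen} does not transfer either. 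It is a fixed-level statement about Li-filtration leading terms in $\gg^F$, driven by $[a^{2,\bot},a^{j,\pm}]=\mp a^{j+1,\pm}$. Once the generators are rescaled to have a large level limit, its $k$-independent constants tend to $0$. Moreover, the coset generators beyond the principal block have no linear leading term: they are $\mathrm{GL}_{r|s}$-invariant quadratics in the $\omega_{p^{i,*}},\omega_{q^{i,*}}$. And for $n\geq 2$ the principal-block elements of $\gg^F$ underlying the weight-$2$ diamond bracket to zero with all $p^{i,*},q^{i,*}$, so there is no Lie-bracket term to track. This is exactly why the limit only yields Lemma \ref{lem:genofcpsi}, i.e.\ weak generation by the diamonds up to weight $n+2$. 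Without (\B{U}4) you also cannot feed $\cC^{\psi}_{\cN=2}(n,r|s)$ into Theorem \ref{recon:first}, whose hypothesis (1) presupposes a $\cW^{\cN=2}_{\infty}$-quotient.

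The missing idea is to bootstrap through $\tilde{\cC}^{\psi}_{\cN=2}(n,r|s)$, the subalgebra generated in weights $1,\frac{3}{2},2$, which satisfies (\B{U}4) by construction. Lemmas \ref{lem:tildecpsi} and \ref{thm:tildednm} show it is a $\cW^{\cN=2}_{\infty}$-quotient. They use Corollary \ref{cor:gradedcharacter} when $M\geq 6$, the inductive redefinition of higher fields when $M<6$, and a separate Jacobi-identity analysis for $(n,r|s)=(1,1|0)$. Theorem \ref{recon:first} then gives its curve \eqref{gentruncationcurve}. Only after this does Theorem \ref{trunc:Cnmthird} prove $\tilde{\cC}^{\psi}_{\cN=2}(n,r|s)=\cC^{\psi}_{\cN=2}(n,r|s)$:
\begin{itemize}
\item The curve identifies $\cA^{\pm}$ as $\cW_{\infty}$-quotients with the truncation curves of simple $Y$-algebras whose generating types \cite{CL2} extend well past weight $n+2$.
\item By \eqref{eq:W3s}, $W^3_{\pm}\equiv W^{2,\top}+a_{\pm}(\psi)W^{3,\bot}$ with $a_{\pm}$ non-constant, so the $W^d_{\pm}$ generically have nonzero coefficient on $W^{d,\bot}$.
\item Hence $\tilde{\cC}$ cannot truncate before weight $n+2$, so it contains the weak generators of Lemma \ref{lem:genofcpsi}, and equality follows.
\end{itemize}
So the order is the reverse of yours: the truncation curve of $\tilde{\cC}$ is the input for weak generation, not a consequence of it.

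Finally, for $r=s=0$ your main plan (the second-order pole of a primary $W^{2,\bot}$ for general $n$) is heavier than necessary. The paper's route is close to your fallback but needs only one fourth-order pole: the central charge of the Virasoro field $\widetilde{W}[2]$ commuting with $W[1]$. By Proposition \ref{prop:Virasoros} this already fixes $\lambda^2/\omega$, without identifying $\cA^{\pm}$ beforehand.
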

The proof will be divided into three cases (see Theorems \ref{trunc:c0rs}, \ref{trunc:en0}, and \ref{trunc:Cnmthird}), and has the following precise meaning. Recall the localization $R$ of $\mathbb{C}[c,\lambda]$ along \eqref{localization set}, and let $A_{n,r|s} \subseteq R$ be the ideal generated by \eqref{gentruncationcurve} after clearing denominators. There exists a localization  $E^{-1} (R/A_{n,r|s})$, and a (possibly non-maximal) vertex algebra ideal $\cA_{n,r|s} \subseteq \cW^{\cN=2,  A_{n,r|s}}_{\infty}$, such that we have isomorphisms of $1$-parameter vertex algebras
\begin{equation} \label{eq:localizationgeneral} E^{-1} (R/A_{n,r|s}) \otimes_{R/A_{n,r|s}} \cW^{\cN=2, A_{n,r|s}}_{\infty} / (\cA_{n,r|s}  \cdot \cW^{\cN=2, A_{n,r|s}}_{\infty}) \cong \cC^{\psi}_{\cN=2}(n,r|s).\end{equation}
Here $E$ is the (finite) set corresponding to the denominators of structure constants of $\cW^{\cN=2}_{\infty}$ after replacing $c, \lambda$ with the corresponding functions of $\psi$. In the case when $\cC^{\psi}_{\cN=2}(n,r|s)$ is one of the simple vertex algebras $\cE^{\psi}_{\cN=2}(n,r)$ or $\cD^{\psi}_{\cN=2}(n,s)$, we denote $A_{n,r|s}$ by $I^{\cN=2}_{n,r}$ and $J^{\cN=2}_{n,s}$, respectively. In this case, the vertex algebra ideal $\cA_{n,r|s}$ is maximal, so that $\cW^{\cN=2, A_{n,r|s}}_{\infty} / (\cA_{n,r|s}  \cdot \cW^{\cN=2, A_{n,r|s}}_{\infty})$ is in fact the simple quotient $\cW^{\cN=2}_{\infty, I^{\cN=2}_{n,r}}$ or $\cW^{\cN=2}_{\infty, J^{\cN=2}_{n,s}}$, respectively. Then \eqref{eq:localizationgeneral} becomes 
\begin{equation} E^{-1} (R/I^{\cN=2}_{n,r}) \otimes_{R/I^{\cN=2}_{n,r}} \cW^{\cN=2}_{\infty, I^{\cN=2}_{n,r}} \cong \cE^{\psi}_{\cN=2}(n,r),\quad F^{-1} (R/J^{\cN=2}_{n,s}) \otimes_{R/J^{\cN=2}_{n,s}} \cW^{\cN=2}_{\infty, J^{\cN=2}_{n,s}} \cong \cD^{\psi}_{\cN=2}(n,s),
\end{equation} where $E$ and $F$ are finite sets as above. Throughout this section, we suppress these localizations from our notation.

The fact that $\cW^{\cN=2}_{\infty}$ is a conformal extension of $\cH \otimes \cW^+_{\infty} \otimes \cW^-_{\infty}$ implies that $\cC^{\psi}_{\cN=2}(n,r|s)$ is a conformal extension of $\cH \otimes \cA^+ \otimes \cA^-$, where $\cA^{\pm}$ are $1$-parameter quotients of $\cW^{\pm}_{\infty}$, respectively. By Proposition \ref{prop:lambdas}, the truncation curve for $\cC^{\psi}_{\cN=2}(n,r|s)$, whose defining ideal is $A_{n,r|s}$, completely determine the parameters $\lambda_{\pm}$, or equivalently, the truncation curves for $\cA^{\pm}$. 

%The most useful way to formulate this information will be to give the truncation curves for $\cA^{\pm}$. In the case when $\cC^{\psi}_{\cN=2}(n,r|s)$ is one of the simple vertex algebras $\cE^{\psi}_{\cN=2}(n,m)$ and $\cD^{\psi}_{\cN=2}(n,m)$, we will see that $\cA^{\pm}$ are {\it simple} quotients of $\cW^{\pm}_{\infty}$. 

\begin{remark} \label{rem:honestcoset} We will always regard $\cC^{\psi}_{\cN=2}(n,r|s)$ as a $1$-parameter vertex algebra with formal parameter $\psi$. At a given point $\psi_0 \in \mathbb{C}$, the specialization
$$\cC^{\psi_0}_{\cN=2}(n,r|s) := \cC^{\psi}_{\cN=2}(n,r|s)   / (\psi-\psi_0) \cC^{\psi}_{\cN=2}(n,r|s)$$ makes sense as long as $\psi_0$ is not in the (finite) set of poles of denominators of structure constants. Here $(\psi - \psi_0)  \cC^{\psi}_{\cN=2}(n,r|s)$ is the vertex algebra ideal generated by $\psi - \psi_0$. For generic values of $\psi_0$, $\cC^{\psi_0}_{\cN=2}(n,r|s)$ coincides with the \lq\lq honest" coset 
\begin{equation} \label{eq:honestcoset} \text{Com}(V^{k_0+1}_{\mathcal{N}=1}(\gg\gl_{r|s}), \cW^{k_0}_{\mathcal{N}=1}(\gs\gl_{r+n+1|n+s}, f_{n+1|n})),\qquad \psi _0= k_0+r-s+1,\end{equation} 
which is obtained by first specializing $\cW^{k}_{\mathcal{N}=1}(\gs\gl_{r+n+1|n+s}, f_{n+1|n})$ to $k = k_0$, and then taking the coset by $V^{k_0+1}(\gg\gl_{r|s})$. However, $\cC^{\psi_0}_{\cN=2}(n,r|s)$ can be a proper subalgebra of \eqref{eq:honestcoset} for special values of $\psi_0$. The reason is that $\cW^{\cN=2}_{\infty}$ is {\it weakly generated} by the fields in weight at most $2$, so the same holds for the specialization $\cC^{\psi_0}_{\cN=2}(n,r|s)$, but this weak generation property of the honest coset can fail at special values $\psi_0$. Throughout this section, we always use the notation $\cC^{\psi}_{\cN=2}(n,r|s)$ for the $1$-parameter vertex algebra, $\cC^{\psi_0}_{\cN=2}(n,r|s)$ for its specialization to $\psi = \psi_0$, and \eqref{eq:honestcoset} for the honest coset. In the case of $\cC^{\psi}_{\cN=2}(n,0|0) = \cW^k(\gs\gl_{n+1|n})$ for $\psi = k+1$, we have already proven that this weak generation property holds for all noncritical levels $k$ in Lemma \ref{lem:weakgen}, and this fact will be useful to us.  %We will make use of this fact several times throughout this section, mainly to extract information from the isomorphisms \eqref{tautological} coming from intersection points on truncation curves.
\end{remark}

We begin with the easiest case $\cC^{\psi}_{\cN=2}(0,r|s)$. The most difficult case is $\cC^{\psi}_{\cN=2}(n,0|0) = \cW^k(\gs\gl_{n+1|n})$. We have already shown that $\cW^k(\gs\gl_{n+1|n})$ is a $1$-parameter quotient of $\cW^{\cN=2}_{\infty}$, but the truncation curve for $\cW^k(\gs\gl_{n+1|n})$ cannot be obtained using \eqref{eq:trunc}. Instead, using the isomorphism $\cW^k(\gs\gl_{n+1|n}) \cong \cW^k_{\cN=1}(\gs\gl_{n+1|n})$, we will compute it directly from the generators of $\cW^k_{\cN=1}(\gs\gl_{n+1|n})$ in the SUSY BRST complex. In the remaining cases $\cC^{\psi}_{\cN=2}(n,r|s)$ for $n > 0$ and $r+s >0$, the truncation curve can be deduced by specializing \eqref{eq:trunc}, but it requires some further effort to prove that $\cC^{\psi}_{\cN=2}(n,r|s)$ is weakly generated by the fields in weights $1, \frac{3}{2}, 2$. Finally, we will prove that in the cases $\cC^{\psi}_{\cN=2}(n,r|0)$ and $\cC^{\psi}_{\cN=2}(n,0|s)$ which are simple $1$-parameter vertex algebras, $\cA^{\pm}$ are also simple.

\subsection{The case $\cC^{\psi}_{\cN=2}(0,r|s)$} 
\begin{theorem} \label{trunc:c0rs} $\cC^{\psi}_{\mathcal{N}=2}(0,r|s)$ is a $1$-parameter quotient of $\cW^{\cN=2}_{\infty}$ with truncation curve given by \eqref{gentruncationcurve}.  \end{theorem}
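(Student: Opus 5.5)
We combine the reconstruction theorem with the universality theorem. First we show that $\cC^{\psi}_{\cN=2}(0,r|s)$ is a $1$-parameter quotient of $\cW^{\cN=2}_{\infty}$. By Theorem \ref{thm: GKOs}, the coset has an $\cN=2$ structure with the central charge $c=\frac{3k(r-s)}{k+r-s+1}$, a strong generating set with $\cN=2$ diamond structure, and the order two automorphism $\sigma$. So properties (\textbf{U}1)--(\textbf{U}3) hold. It remains to check the weak generation hypothesis (\textbf{U}4), i.e.\ that $W^{2,\top}_{(1)}W^{N,\bot}$ produces the next bottom field with nonzero leading coefficient.

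We verify (\textbf{U}4) in the free field (large level) limit, which by Theorem \ref{thm:orbifoldlimit} is the orbifold $\big(\cO(s|r,1)\otimes\cO(r|s,2)\big)^{\mathrm{GL}_{r|s}}$, with quadratic generators as in \eqref{eq: Y gen} (only the $p^{i,+},q^{i,-}$ and the $b,c,\beta,\gamma$ fields survive). In this limit the computation of $W^{2,\top}_{(1)}\omega^{N,\bot}_\infty$ is a Wick contraction of quadratics. We expect the leading coefficient to be a nonzero rational number times $\omega^{N+1,\bot}_\infty$, by the same combinatorics that produced the structure constants of Proposition \ref{prop:structure constants}. Since the generating fields of the $1$-parameter coset deform those of the limit, nonvanishing in the limit gives nonvanishing for generic $\psi$. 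Hence the coset, localized at finitely many $\psi$, is weakly generated by $H,G^{\pm},L,W^{2,\bot}$. Theorem \ref{one-parameter quotients theorem} then makes it a quotient of $\cW^{\cN=2,I}_{\infty,S}$ for some prime $I$.

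To identify $I$, we apply Theorem \ref{recon:first} with $n=0$ to $V^k(\gs\gl_{r+1|s})\otimes\cE(r)\otimes\cS(s)$ (or the $\gp\gs\gl$ variant when $s=r+1$). This algebra is a conformal extension of $\cC^{\psi}_{\cN=2}(0,r|s)\otimes V^{k+1}(\ga)$, since the coset is its Heisenberg-corrected commutant. The extension fields are $P^{i,\bot}=c^i$ (resp.\ $\gamma^i$) and $Q^{i,\bot}=b^i$ (resp.\ $\beta^i$) in weight $\frac12$, together with their $G^{\mp}$-partners built from $\omega_{p^{i,+}},\omega_{q^{i,-}}$ in weight $1$.

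Next we check the hypotheses of Theorem \ref{recon:first}. We need primarity of these fields for the $\cN=2$ algebra of Theorem \ref{thm: GKOs} and for $V^{k+1}(\ga)$, that they strongly generate together with the coset and the affine algebra, and nondegeneracy of the Shapovalov form. The last is clear, since the $bc\beta\gamma$ pairing is nondegenerate. The proof of Theorem \ref{recon:first} then forces the truncation relation \eqref{eq:trunc}, with $\alpha=\frac12-\frac{r-s+1}{2(k+r-s+1)}$ from \eqref{eq:alpha} at $n=0$.

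Finally, we substitute $\psi=k+r-s+1$, $d=r-s$, $c=\frac{3k(r-s)}{\psi}$ and this $\alpha$ into \eqref{eq:trunc}, and check algebraically that it simplifies to \eqref{gentruncationcurve} at $n=0$. The values of $\lambda_\pm,c_\pm$ then follow by substituting into \eqref{eq:lambdas} and \eqref{eq:virs}; the sign of $\gamma$ is fixed by matching $c_\pm$.

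The main obstacle is the weak generation step (\textbf{U}4). It requires the explicit leading coefficient in the orbifold limit, and care at $s=r+1$, where the $U(1)$-orbifold replaces the Heisenberg coset. The algebraic simplification of \eqref{eq:trunc} is routine but must be done carefully, in particular the choice of sign for $\lambda$.
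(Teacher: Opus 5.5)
Your proposal follows the paper's proof essentially step for step: weak generation of the coset is deduced from the orbifold limit $(\cO(s|r,1)\otimes\cO(r|s,2))^{\mathrm{GL}_{r|s}}$ together with Theorem~\ref{thm: GKOs}, then Theorem~\ref{one-parameter quotients theorem} is applied, and the truncation curve is read off from Theorem~\ref{recon:first} at $n=0$. The one step you leave as an expectation, nonvanishing of the leading coefficient in the free-field limit, is exactly the step the paper only asserts is ``easily seen''. Do not support it by appeal to Proposition~\ref{prop:structure constants}: that proposition is derived assuming (\textbf{U}4), so using it here would be circular, and the nonvanishing has to come from the direct Wick computation in the free-field orbifold.
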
 
 \begin{proof} For all $r,s \geq 0$, $\cC^{\psi}_{\mathcal{N}=2}(0,r|s) = \text{Com}(V^{\ell+1}(\mathfrak{gl}_{r|s}), V^{\ell}(\mathfrak{sl}_{r+1|s}) \otimes  \cE(r) \otimes \cS(s))$ for $\psi = \ell + 1 + r - s$. This has large level limit
$$\big(\cO(s|r,1) \otimes \cO(r|s,2)\big)^{GL_{r|s}},$$ which is easily seen to be weakly generated by the fields in weights $1,\frac{3}{2}, 2$. It follows that for generic levels, $\cC^{\psi}_{\mathcal{N}=2}(0,r|s)$ is also weakly generated by the fields in weights $1,\frac{3}{2}, 2$, and hence by the $\mathcal{N}=2$ fields together with $W^{2,\bot}, W^{2,\pm}, W^{2,\top}$. Moreover, the automorphism $\sigma$ given by \eqref{eq:sigma} acts on this subalgebra, and hence on all of $\cC^{\psi}_{\mathcal{N}=2}(0,r|s)$. By Corollary \ref{cor:diamond}, $\cC^{\psi}_{\cN=2}(0,r|s)$ satisfies the hypotheses of Theorem \ref{one-parameter quotients theorem}, and is therefore a $1$-parameter quotient of $\cW^{\cN=2}_{\infty}$. Finally, the truncation curve for $\cC^{\psi}_{\mathcal{N}=2}(0,r|s)$ can be computed directly using Theorem \ref{recon:first}, and it agrees with \eqref{gentruncationcurve} for $n=0$. 
\end{proof}

Recall from Proposition \ref{prop:lambdas} that the truncation curve for $\cC^{\psi}_{\mathcal{N}=2}(0,r|s)$ as a $1$-parameter quotient of $\cW^{\cN=2}_{\infty}$, determines the truncation curves for the $\cW_{\infty}$-quotients $\cA^{\pm}$, and vice versa. We can deduce these independently as follows. For $s \neq r+1$, $V^{\ell}(\gs\gl_{r+1|s}) \otimes \cE(r) \otimes \cS(s)$ has a subVOA $V^{\ell}(\gg\gl_{r|s}) \otimes  \cE(r) \otimes \cS(s)$, and 
$$\text{Com}(V^{\ell}(\gg\gl_{r|s}) \otimes  \cE(r) \otimes \cS(s),  V^{\ell}(\gs\gl_{r+1|s}) \otimes  \cE(r) \otimes \cS(s)) \cong \text{Com}(V^{\ell}(\gg\gl_{r|s}), V^{\ell}(\gs\gl_{r+1|s})).$$ 
Next, $V^{\ell}(\gg\gl_{r|s}) \otimes  \cE(r) \otimes \cS(s)$ is an extension of 
$$V^{\ell+1}(\gg\gl_{r|s}) \otimes \text{Com}(V^{\ell+1}(\gg\gl_{r|s}), V^{\ell}(\gg\gl_{r|s}) \otimes  \cE(r) \otimes \cS(s)).$$

Therefore $\cC^{\psi}_{\mathcal{N}=2}(0,r|s)$ contains a subalgebra 
\begin{equation} \label{embeddedsubVOA} \text{Com}(V^{\ell}(\gg\gl_{r|s}), V^{\ell}(\gs\gl_{r+1|s})) \otimes \text{Com}(V^{\ell+1}(\gg\gl_{r|s}), V^{\ell}(\gg\gl_{r|s}) \otimes \ \cE(r) \otimes \cS(s)),\end{equation}
 which is conformally embedded because it has the same Virasoro element 
 $$(L^{V^{\ell}(\gs\gl_{r+1|s})} - L^{V^{\ell}(\gg\gl_{r|s})}) + (L^{V^{\ell}(\gg\gl_{r|s})} + L^{ \cE(r) \otimes \cS(s)} - L^{V^{\ell+1}(\gg\gl_{r|s})}) =L^{V^{\ell}(\gs\gl_{r+1|s})} + L^{ \cE(r) \otimes \cS(s)} - L^{V^{\ell+1}(\gg\gl_{r|s})}.$$ 
 Similarly, if $s = r+1$, $\cC^{\psi}_{\mathcal{N}=2}(0,r|r+1)$ has a conformally embedded subalgebra
 \begin{equation} \label{embeddedsubVOApsl}  \text{Com}(V^{\ell}(\gs\gl_{r|r+1}), V^{\ell}(\gp\gs\gl_{r+1|r+1}))^{U(1)}  \otimes \text{Com}(V^{\ell+1}(\gs\gl_{r|r+1}), V^{\ell}(\gs\gl_{r|r+1}) \otimes  \cE(r) \otimes \cS(r+1))^{U(1)}.\end{equation}
 
For $r\geq s$, we have
 \begin{equation}  \text{Com}(V^{\ell}(\gg\gl_{r|s}), V^{\ell}(\gs\gl_{r+1|s})) \cong \cA^+,\qquad  \text{Com}(V^{\ell+1}(\gg\gl_{r|s}), V^{\ell}(\gg\gl_{r|s}) \otimes  \cE(r) \otimes \cS(s)) \cong \cH \otimes \cA^-.
 \end{equation}
 It is straightforward to check using \cite[Theorem 9.1]{CL2} that $\cA^+$ and $\cA^-$ are $1$-parameter quotients of $\cW_{\infty}$ along the truncation curves of $\cC^{\psi}(1,r-s)$ and $\cD^{-\psi+1}(0,r-s)$, respectively.
 
 Similarly, for $s - r > 1$, we have
  \begin{equation}  \text{Com}(V^{\ell}(\gg\gl_{r|s}), V^{\ell}(\gs\gl_{r+1|s})) \cong \cA^+,\qquad  \text{Com}(V^{\ell+1}(\gg\gl_{r|s}), V^{\ell}(\gg\gl_{r|s}) \otimes  \cE(r) \otimes \cS(s)) \cong \cH \otimes \cA^-.
 \end{equation}
 Here $\cA^+$ and $\cA^-$ are $1$-parameter quotients of $\cW_{\infty}$ along the truncation curves of $\cD^{\psi}(1,s-r)$ and $\cC^{-\psi+1}(0,s-r)$, respectively.

 Finally, when $s = r+1$,
  \begin{equation}  \text{Com}(V^{\ell}(\gs\gl_{r|r+1}), V^{\ell}(\gp\gs\gl_{r+1|r+1}))^{U(1)} \cong \cA^+,\quad \text{Com}(V^{\ell+1}(\gs\gl_{r|r+1}), V^{\ell}(\gs\gl_{r|r+1}) \otimes  \cE(r) \otimes \cS(r+1))^{U(1)}   \cong \cH \otimes \cA^-  \end{equation}
 Here $\cA^+$ and $\cA^-$ are $1$-parameter quotients of $\cW_{\infty}$ along the truncation curves of $\cD^{\psi}(1,1)$ and $\cC^{-\psi+1}(0,1)$, respectively.

 \begin{corollary} \label{trunc:e0md0m} Let $r,s \geq 1$. Then
 \begin{enumerate}
 \item $\cE^{\psi}_{\mathcal{N}=2}(0,r)$ is a conformal extension of $\cH \otimes \cC^{\psi}(1,r) \otimes \cD^{-\psi+1}(0,r)$.
  \item $\cD^{\psi}_{\mathcal{N}=2}(0,s)$ is a conformal extension of $\cH \otimes \cD^{\psi}(1,s+1) \otimes \cC^{-\psi+1}(0,s+1)$.
 \end{enumerate}
 In particular, the subalgebras $\cA^{\pm}$ are both simple in these cases.
 \end{corollary}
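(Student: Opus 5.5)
The plan is to specialize the discussion immediately preceding the corollary to the two families. For (1), take $s=0$, so $r\geq s$ and $\cE^{\psi}_{\cN=2}(0,r)=\cC^{\psi}_{\cN=2}(0,r|0)$. The conformally embedded subalgebra \eqref{embeddedsubVOA} becomes
\[\text{Com}(V^{\ell}(\gg\gl_r),V^{\ell}(\gs\gl_{r+1}))\otimes \text{Com}(V^{\ell+1}(\gg\gl_r),V^{\ell}(\gg\gl_r)\otimes\cE(r)).\]
The first factor is the generalized parafermion algebra of \cite{Lin}, which \cite{CL2} identifies with $\cC^{\psi}(1,r)$: it is the coset of $\cW^{\psi}(1,r)=V^{\psi-r-1}(\gs\gl_{r+1})$ by $V(\gg\gl_r)$, and the levels match because $\psi=\ell+1+r$. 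For the second factor, I would split $V^{\ell}(\gg\gl_r)=V^{\ell}(\gs\gl_r)\otimes\cH$ and $\cE(r)\supseteq L_1(\gs\gl_r)\otimes\cH$. The coset by the diagonal Heisenberg leaves one Heisenberg $\cH$, which is the factor $\cH$ in the statement. The $\gs\gl_r$ part is the GKO coset $\text{Com}(V^{\ell+1}(\gs\gl_r),V^{\ell}(\gs\gl_r)\otimes L_1(\gs\gl_r))$. Comparing with \eqref{intro:D0m}, this is $\cD^{\psi'}(0,r)$ once the parameter is fixed, and the matching gives $\psi'=-\psi+1$, consistent with the text preceding the statement. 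Conformality is already established by the Virasoro computation given there.

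For (2), take $r=0$ and replace $s$ by $s+1$, so $\cD^{\psi}_{\cN=2}(0,s)=\cC^{\psi}_{\cN=2}(0,0|s+1)$. If $s\geq 1$ we have $s+1-0>1$, so the case ``$s-r>1$'' applies. The subalgebra is then
\[\text{Com}(V^{\ell}(\gg\gl_{s+1}),V^{\ell}(\gs\gl_{1|s+1}))\otimes\text{Com}(V^{\ell+1}(\gg\gl_{s+1}),V^{\ell}(\gg\gl_{s+1})\otimes\cS(s+1)).\]
The first factor is $\cV^{\psi}(1,s+1)$ modulo its affine subalgebra, i.e.\ $\cD^{\psi}(1,s+1)$. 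The second factor is $\cH\otimes\cC^{-\psi+1}(0,s+1)$, since $\cS(s+1)$ carries $L_{-1}(\gg\gl_{s+1})$. Again, the main work is checking that the level conventions agree with those used in the definitions of $\cC^\psi(0,m)$ and $\cD^\psi(1,m)$. One also has to note that the $\gs\gl_{1|s+1}$ block realizes the $\gg\gl$ of the ``$\ga=\gg\gl_{0|s+1}$'' case.

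For simplicity of $\cA^{\pm}$, I would use Theorem \ref{intro:trialitytheorem} together with \cite{CL2}. The algebras $\cC^{\psi}(n,m)$ and $\cD^{\psi}(n,m)$ appearing here are simple as $1$-parameter vertex algebras, and each is the simple quotient of $\cW_\infty$ along its truncation curve. Since $\cA^{\pm}$ are isomorphic to these cosets and not merely quotients of them, simplicity follows directly. The step I expect to be most delicate is exactly this identification: checking that the subalgebra generated inside $\cC^{\psi}_{\cN=2}(0,r|s)$ is the full coset and not a proper subalgebra. I would handle it by comparing large-level limits and characters, in the manner of \cite[Theorem 9.1]{CL2}.
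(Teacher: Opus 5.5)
Your proposal is correct and follows essentially the same route as the paper: specialize the conformally embedded subalgebra \eqref{embeddedsubVOA} to $s=0$ (resp.\ $r=0$ with $s\mapsto s+1$), identify the first factor with $\cC^{\psi}(1,r)$ (resp.\ $\cD^{\psi}(1,s+1)$) and the second with $\cH\otimes\cD^{-\psi+1}(0,r)$ (resp.\ $\cH\otimes\cC^{-\psi+1}(0,s+1)$), and read off simplicity of $\cA^{\pm}$ from simplicity of these $Y$-algebras. The differences are only cosmetic. You pass through the GKO form with $L_1(\gs\gl_r)$, whereas the paper splits off the Heisenberg directly inside $\cE(r)$. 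In (2) the paper writes the levels in the standard $\gg\gl_{s+1}$ normalization as $V^{-\ell}$ and $V^{-\ell-1}$ with $\psi=\ell-s$, which is the sign flip you anticipated. Finally, the paper simply asserts the identification of $\cA^{\pm}$ with the full cosets, which you propose to check via large-level limits.
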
 

 \begin{proof} 
 Specializing \eqref{embeddedsubVOA} to the case $s=0$, we see that $\cE^{\psi}_{\mathcal{N}=2}(0,r)$ is a conformal extension of
 $$\text{Com}(V^{\ell}(\gg\gl_{r}), V^{\ell}(\gs\gl_{r+1})) \otimes \text{Com}(V^{\ell+1}(\gg\gl_{r}), V^{\ell}(\gg\gl_{r}) \otimes \cE(r)),\qquad \psi = \ell + r+1.$$
We obtain
 \begin{equation} 
\begin{split} \cA^+ & \cong \text{Com}(V^{\ell}(\gg\gl_{r}), V^{\ell}(\gs\gl_{r+1})) = \cC^{\psi}(1,r),
\\ \cA^- & \cong \text{Com}(\cH, \text{Com}(V^{\ell+1}(\gg\gl_{r}), V^{\ell}(\gg\gl_{r}) \otimes \cE(r)) )
\\ & \cong \text{Com}(\cH, \text{Com}(V^{\ell+1}(\gg\gl_{r}), \cH \otimes V^{\ell}(\gs\gl_{r}) \otimes \cE(r)))
\\ & \cong \text{Com}(V^{\ell+1}(\gg\gl_{r}), V^{\ell}(\gs\gl_{r}) \otimes \cE(r))
\\ & =  \cD^{-\psi+1}(0,r).
\end{split}
\end{equation}

Similarly, $\cD^{\psi}_{\mathcal{N}=2}(0,s)$ is an extension of
$$\text{Com}(V^{-\ell}(\gg\gl_{s+1}), V^{\ell}(\gs\gl_{1|s+1})) \otimes \text{Com}(V^{-\ell-1}(\gg\gl_{s+1}), V^{-\ell}(\gg\gl_{s+1}) \otimes \cS(s+1), \qquad \psi = \ell - s.$$ 
We have
 \begin{equation} 
\begin{split} \cA^+ \cong \text{Com}(V^{-\ell}(\gg\gl_{s+1}), V^{\ell}(\gs\gl_{1|s+1})) & = \cD^{\psi}(1,s+1), 
\\ \cA^- \cong  \text{Com}(V^{-\ell-1}(\gg\gl_{s+1}), V^{-\ell}(\gs\gl_{s+1}) \otimes \cS(s+1)) & = \cC^{-\psi+1}(0,s+1).
\end{split}
\end{equation}
\end{proof}

\subsection{The case $\cC^{\psi}_{\cN=2}(n,0|0)$}  \label{subsec:truncation W(sl)}
We have already proven in Proposition \ref{PrincW:quot} that $\cC^{\psi}_{\cN=2}(n,0|0) = \cW^k(\mathfrak{sl}_{n+1|n})$ is a $1$-parameter quotient of $\WNtwo$. Hence $\cW^k(\mathfrak{sl}_{n+1|n})$ has two commuting Virasoro fields $L_{\pm}$ which are in the centralizer of the weight 1 field $H$. Using the isomorphism $\cW^k(\mathfrak{sl}_{n+1|n}) \cong \cW^k_{\cN=1}(\mathfrak{sl}_{n+1|n})$, which we regard as a subalgebra of $\cW^k_{\cN=1}(\mathfrak{gl}_{n+1|n})$, we will compute the central charge of one of these Virasoro fields, namely $L_-$. Together with the central charge of $\cW^k(\mathfrak{sl}_{n+1|n})$, this specifies the central charge of $L_+$, and hence the parameter $\lambda$ that specifies the truncation curve for $\cW^k(\mathfrak{sl}_{n+1|n})$.

In \cite{MRS21}, it was shown that a strong generating set of $\cW^k_{\cN=1}(\mathfrak{gl}_{n+1|n})$ can be obtained as the coefficients of the column determinant of the following $(2n+1)\times (2n+1)$ matrix 
\begin{equation}
    \mathcal{A}_n(\mathfrak{gl}) := \left[ \mathcal{A}^{\mathfrak{gl}}_{ij} \right]_{i,j=1}^{2n+1}\, , \quad \text{ where } \quad \mathcal{A}^{\mathfrak{gl}}_{ij}=\left\{
    \begin{array}{ll}
    \delta_{ij}\psi D +(-1)^{i+1} J_{\bar{e}_{ij}} & \textup{if }i \leq j,\\
    1 & \textup{if }i=j+1,\\
    0 & \textup{otherwise.}
    \end{array}
    \right.
\end{equation}
In the above, $\psi=k+r-s+1=k+1$, and $J_{\bar{e}_{ij}}$ are the building blocks of the SUSY BRST complex for the elements $e_{ij}\in \mathfrak{gl}_{n+1|n}$ introduced in Section \ref{sec:W-algebra type A(n,n-1)}. Let $I:= \sum_{i=1}^{2n+1} e_{ii}$ and $E_{ij}:= e_{ij}+(-1)^i \delta_{i,j} I$. Then $E_{ij}$ is in $\mathfrak{sl}_{n+1|n}$ and the coefficients of the column determinant of 
\begin{equation}
    \mathcal{A}_n := \left[ \mathcal{A}_{ij} \right]_{i,j=1}^{2n+1}\, , \quad \text{ where } \quad \mathcal{A}_{ij}= \left\{ \begin{array}{ll} \delta_{ij}\psi D +(-1)^{i+1} J_{\bar{E}_{ij}} & \textup{if } i \leq j, \\
    1& \textup{if }i=j+1, \\
    0& \textup{otherwise}\end{array}\right.
\end{equation}
generates $\cW^k_{\cN=1}(\gs\gl_{n+1|n}).$ The column determinant of $\mathcal{A}_n$ is 
\begin{equation} \label{eq:cdet}
\begin{aligned}
        \text{cdet}\mathcal{A}_n & = \sum_{\substack{i_0=0<i_1< \cdots < i_N=2n+1\\ N\geq 1}} \mathcal{A}_{i_{1}\,  i_0+1} \mathcal{A}_{i_2\, i_1+1}\cdots \mathcal{A}_{i_N\, i_{N-1}+1} \\
        & = \psi^{2n+1} D^{2n+1} + W[1] D^{2n-1} + W\big[\frac{3}{2}\big] D^{2n-2}+W[2] D^{2n-3} + \cdots + W\big[n+\frac{1}{2}\big],
\end{aligned}
\end{equation}
and $W[m]$ is a weight $m$ free generator of $\cW^k_{\cN=1}(\mathfrak{sl}_{n+1|n}).$
In this section, we denote $\bar{E}_{ij}:=J_{\bar{E}_{ij}}$, $D(\bar{E}_{ij}):=D(J_{\bar{E}_{ij}})$,  $\bar{e}_{ij}:=J_{\bar{e}_{ij}}$ and $D(\bar{e}_{ij}):=D(J_{\bar{e}_{ij}})$ for the simplicity of notations. Similarly, we denote $\partial(\bar{e}_{i j}):=\partial(J_{\bar{e}_{i j}})$. In the rest of this section, we intensively use the following OPE 
\begin{equation}
\begin{aligned}
    & \bar{E}_{ii}(z)\bar{E}_{jj}(w)\sim \frac{\delta_{ij}\psi(-1)^{i+1}+\psi(-1)^{i+j+1}}{z-w}, \\
    & D(\bar{E}_{ii})(z)D(\bar{E}_{jj})(w)\sim \frac{\delta_{ij}\psi(-1)^{i+1}+\psi(-1)^{i+j+1}}{(z-w)^2},
\end{aligned}
\end{equation}
which are induced from $\bar{e}_{ii}(z)\bar{e}_{jj}(w)\sim \delta_{ij}\frac{\psi(-1)^{i+1}}{z-w}$ and  $D(\bar{e}_{ii})(z)D(\bar{e}_{jj})(w)\sim \delta_{ij}\frac{\psi(-1)^{i+1}}{(z-w)^2.}$
\begin{lemma}\label{lem:commuting virasoro-1}
    The OPE between $W[1]$ and itself is 
    \begin{equation}
        W[1](z)W[1](w) \sim \frac{n \psi^{4n}(1-(n+1)\psi)}{(z-w)^2}.
    \end{equation}
\end{lemma}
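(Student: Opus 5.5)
The plan is to compute $W[1]$ explicitly inside the image of the SUSY Miura map \eqref{eq:SUSY Miura} and then evaluate the OPE by Wick's theorem. Since $\mu_D$ is injective for $k\neq -1$, it is enough to compute the OPE of $\mu_D(W[1])$ with itself in $V^{\tau_k}_{\cN=1}(\gg_0)$. Under $\mu_D$ every entry $J_{\bar E_{ij}}$ with $i<j$ (which lies in $\bar{\gn}_+$ directions) is sent to zero. So $\mu_D(\text{cdet}\,\mathcal{A}_n)$ is the ordered product of the bidiagonal factors $(\psi D + (-1)^{i+1}\bar E_{ii})$ and the subdiagonal entries $1$.

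First I will expand $\text{cdet}\,\mathcal{A}_n$ as in \eqref{eq:cdet} and collect the coefficient of $D^{2n-1}$. Only the full chain, with every $i_m=m$, survives at leading order, giving
\[\prod_{i=1}^{2n+1}\big(\psi D+(-1)^{i+1}\bar E_{ii}\big).\]
Moving all $D$'s to the right with the rule $D\circ \bar E = D(\bar E)+(-1)^{p}\bar E\circ D$, the $D^{2n-1}$ coefficient has two parts, each of conformal weight $1$:
\begin{itemize}
\item a quadratic part $\psi^{2n-1}\sum_{i<j}\epsilon_{ij}:\!\bar E_{ii}\bar E_{jj}\!:$, with explicit signs $\epsilon_{ij}$ coming from $(-1)^{i+j}$ and the odd parity of $\bar E_{ii}$;
\item a linear part $\psi^{2n}\sum_i \kappa_i\, D(\bar E_{ii})$, where $\kappa_i$ counts (with signs) the number of $D$'s that $\bar E_{ii}$ passes, roughly proportional to $2n+1-i$ with alternating signs.
\end{itemize}
The $D^{2n}$ coefficient is $\psi^{2n}\sum_i(-1)^{i+1}\bar E_{ii}$, which vanishes because $\sum_i(-1)^{i+1}E_{ii}$ is the supertrace; this is a useful consistency check.

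Next I will compute the double pole using the stated OPEs for $\bar E_{ii}\bar E_{jj}$ and $D(\bar E_{ii})D(\bar E_{jj})$. The free-fermion-like quadratic part contributes a double pole from contractions of two pairs; each contraction carries a factor $\psi$, so this part is of order $\psi^{4n-2}\cdot\psi^2=\psi^{4n}$. The linear part contributes $\psi^{4n}\sum_{i,j}\kappa_i\kappa_j\big(\delta_{ij}\psi(-1)^{i+1}+\psi(-1)^{i+j+1}\big)$. Cross terms between the quadratic and linear parts have odd fermion number and produce no double pole. Simple poles cancel by skew-symmetry, since $W[1]$ is even of weight $1$.

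The main obstacle is the sign bookkeeping. The generators $\bar E_{ii}$ are not orthogonal, because of the rank-one correction $(-1)^{i+j+1}\psi$, and the parity of each index alternates. I will first handle this by working with $\bar e_{ii}$, where the OPE is diagonal, and substituting $\bar E_{ii}=\bar e_{ii}+(-1)^i\bar I$. I will then check the resulting closed sums, $\sum_i\kappa_i^2(-1)^{i+1}$ and $(\sum_i(-1)^i\kappa_i)^2$, against the target $n\psi^{4n}(1-(n+1)\psi)$. The quadratic part should produce the $n\psi^{4n}$ term and the linear part the $-n(n+1)\psi^{4n+1}$ term. As a sanity check, for $n=1$ this should reproduce the $\text{Vir}^c_{\cN=2}$ normalization $H(z)H(w)\sim \frac{c}{3}(z-w)^{-2}$ up to the rescaling by $\psi^{2n}$.
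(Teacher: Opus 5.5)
Your route is the same as the paper's. You push $W[1]$ through the SUSY Miura map, where only the diagonal chain $\prod_i(\psi D+(-1)^{i+1}\bar E_{ii})$ survives. You then split the $D^{2n-1}$ coefficient into a part linear in $D(\bar E_{ii})$ and a part quadratic in the $\bar E_{ii}$, and read off the double pole by Wick's theorem: the quadratic part gives $n\psi^{4n}$ and the linear part gives $-n(n+1)\psi^{4n+1}$.

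The one step that needs care, however, is described wrongly: commuting the odd operator $D$ past the odd fields $\bar E_{ii}$. With your $\kappa_i$ (growing like $2n+1-i$), the final comparison with $n\psi^{4n}(1-(n+1)\psi)$ would fail. For odd $a$ one has $D^m a=(-1)^m a\,D^m+\theta_m\,D(a)\,D^{m-1}+\cdots$, where the omitted terms involve $D^2(a)=\partial a$ and higher. Here $\theta_m=1$ for $m$ odd and $\theta_m=0$ for $m$ even: since $D^2=\partial$ is even, the single-derivative contributions cancel in pairs rather than accumulate. The relevant $m$ is the number $i-1$ of factors $\psi D$ to the \emph{left} of the $i$-th factor; the $D$'s to the right never act. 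Hence $\kappa_i=(-1)^{i+1}\theta_{i-1}$, which is $-1$ for $i$ even and $0$ for $i$ odd. The quadratic signs are uniform, $\epsilon_{ij}=(-1)^{i-1}(-1)^{j-2}(-1)^{i+j}=-1$, so
\[
\mu_D(W[1])=-\psi^{2n}\sum_{i=1}^{n}D(\bar E_{2i\,2i})-\psi^{2n-1}\sum_{1\le i<j\le 2n+1}:\!\bar E_{ii}\bar E_{jj}\!: .
\]
The linear part: $D(\bar E_{2i\,2i})(z)D(\bar E_{2j\,2j})(w)\sim-(1+\delta_{ij})\psi\,(z-w)^{-2}$, which gives $-n(n+1)\psi^{4n+1}$. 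The quadratic part: rewrite $\sum_{i<j}:\!\bar E_{ii}\bar E_{jj}\!:=\sum_{i<j}:\!\bar e_{ii}\bar e_{jj}\!:$, since the $\bar I$-terms cancel. Each pair then contributes $-\psi^2(-1)^{i+j}$, and $\sum_{i<j}(-1)^{i+j}=-n$, giving $n\psi^{4n}$.

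The same sign slip affects your consistency check. The $D^{2n}$ coefficient is $\psi^{2n}\sum_i(-1)^{i-1}(-1)^{i+1}\bar E_{ii}=\psi^{2n}\sum_i\bar E_{ii}$. It vanishes because $\sum_{i=1}^{2n+1}E_{ii}=I+I\sum_{i=1}^{2n+1}(-1)^i=0$; the alternating sum $\sum_i(-1)^{i+1}E_{ii}$ you wrote is not zero. Also, the cross terms vanish because $\gg_0$ is abelian, so in $V^{\tau_k}_{\cN=1}(\gg_0)$ the fermions $\bar E_{ii}$ have no OPE with the currents $D(\bar E_{jj})$. Both parts of $\mu_D(W[1])$ are even, so ``odd fermion number'' is not the reason. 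With these corrections your argument coincides with the paper's proof.
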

\begin{proof}
    The image of the SUSY Miura map $\mu_D$ of $W[1]$ is 
    \begin{equation}
    \mu_D(W[1])= -\psi^{2n}\sum_{i=1}^n  D(\bar{E}_{2i\, 2i}) \ -\ \psi^{2n-1}\sum_{1\leq i<j \leq 2n+1}:\bar{E}_{ii}\bar{E}_{jj}:.
    \end{equation}
    We can easily see that the first order pole of $\mu_D(W[1])(z) \mu_D(W[1])(w)$ is  trivial. For the second order pole, it is enough to show 
   \begin{equation}\label{eq:commuting virasoro-1-1}
   \begin{aligned}
          \sum_{i,j=1}^n\quad \ &  \big(D(\bar{E}_{2i\, 2i})\big)(z)\big(  D(\bar{E}_{2j\, 2j})\big)(w) \sim -\frac{\psi n(n+1)}{(z-w)^2}, \\
          \sum_{\substack{1\leq i<j \leq 2n+1\\ 1\leq s<t \leq 2n+1}}& \big(:\bar{E}_{ii}\bar{E}_{jj}:\big)(z)\big(:\bar{E}_{ss}\bar{E}_{tt}:\big)(w) \sim \frac{\psi^2 n }{(z-w)^2}.
   \end{aligned}
   \end{equation}
   The first equality in \eqref{eq:commuting virasoro-1-1} is obtained directly from the fact  that $D(\bar{E}_{2i\, 2i})(z)D(\bar{E}_{2j\, 2j})(w)\sim -\frac{2 \psi}{(z-w)^2}$ if $i=j$ and $-\frac{\psi}{(z-w)^2}$ otherwise. For the second equality in \eqref{eq:commuting virasoro-1-1}, observe that 
   \begin{equation}
       \sum_{1\leq i<j\leq 2n+1}:\bar{E}_{ii}\bar{E}_{jj}:= \sum_{1\leq i<j\leq 2n+1}:\big(\bar{e}_{ii}+(-1)^i \bar{I}\big) \big(\bar{e}_{jj}+(-1)^j \bar{I}\big):\  = \sum_{1\leq i<j\leq 2n+1}:\bar{e}_{ii} \bar{e}_{jj}:\, . 
   \end{equation}
   Since $\bar{e}_{ii}(z)\bar{e}_{jj}(z) \sim \delta_{ij} \frac{\psi(-1)^{i+1}}{(z-w)}, $ we have 
   \begin{equation}
       :\bar{e}_{ii}\bar{e}_{jj}:(z):\bar{e}_{ii}\bar{e}_{jj}:(w) \sim -\frac{\psi^2(-1)^{i+1}(-1)^{j+1}}{(z-w)^2}.
   \end{equation}
   By comparing the number of ways to choose two distinct numbers from  $\{1,2, \dots, 2n+1\}$ with the same parity and with opposite parities, we obtain the second equality in \eqref{eq:commuting virasoro-1-1}.
\end{proof}

\begin{lemma} \label{lem:commuting virasoro-2}
 The weight 2 subspace of $\cW^k_{\cN=1}(\mathfrak{sl}_{n+1|n})$ commuting with $W[1]$ is spanned by the following two elements 
 \begin{equation} 
     \widetilde{W}[2]:= W[2] + C(W[2]), \quad \widetilde{V}\big[2\big]:= D\Big(W\big[\frac{3}{2}\big]\Big) + C(V[2]),
 \end{equation}
 where $C(W[2])$ and $C(V[2])$ are in the vertex subalgebra generated by $W[1].$  Precisely, we have 
 \begin{equation}
     \widetilde{W}[2]=W[2]-\frac{n-1}{(2n)\,  \psi^{2n+1}}: W[1]W[1]: -\frac{n-1}{2}\partial(W[1]).
 \end{equation}
\end{lemma}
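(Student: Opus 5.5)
Since $W[1]$ is a multiple of the Heisenberg field $H$ (Lemma \ref{lem:commuting virasoro-1} gives its level $n\psi^{4n}(1-(n+1)\psi)$, nonzero for generic $\psi$), the plan is to use the standard Heisenberg decomposition of the weight $2$ space. First list a basis of the weight $2$ subspace of $\cW^k_{\cN=1}(\gs\gl_{n+1|n})$. By the free generation in \eqref{eq:cdet}, the fields of weight $\le 2$ are $W[1]$, $W[\frac{3}{2}]$, $D W[1]$ (weight $\frac{3}{2}$), $W[2]$, $D W[\frac32]$ and $\partial W[1]$. Hence the weight $2$ subspace is spanned by
\[
W[2],\quad D\big(W[\tfrac32]\big),\quad :\!W[1]W[1]\!:,\quad \partial W[1].
\]
The subspace $\langle :\!W[1]W[1]\!:, \partial W[1]\rangle$ lies in the Heisenberg subalgebra and pairs nondegenerately with $W[1]$ through the second and third order poles. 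So the commutant of $W[1]$ in weight $2$ is exactly two dimensional. It is obtained by correcting $W[2]$ and $D(W[\frac32])$ by the unique combinations $C(W[2])$ and $C(V[2])$ of $:\!W[1]W[1]\!:$ and $\partial W[1]$ that kill $W[1]_{(0)}$ and $W[1]_{(1)}$. The correction cannot also kill $W[1]_{(2)}$, but by weight and charge considerations only $W[1]_{(0)}$ and $W[1]_{(1)}$ can produce nonzero weight $\ge1$ terms, and $W[1]_{(1)}$ lands in weight $0$. I would check this last point through the general Heisenberg-coset formula: for weight $2$ fields $X$, write $X_{(1)}$-charge, $X_{(0)}$, and pole of order 3.

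To get explicit coefficients for $\widetilde W[2]$, I would compute $W[1](z)W[2](w)$ through the SUSY Miura map $\mu_D$, exactly as in Lemma \ref{lem:commuting virasoro-1}. Expanding $\mathrm{cdet}\,\mathcal{A}_n$, $\mu_D(W[2])$ is the coefficient of $D^{2n-3}$: a sum of cubic terms $\psi^{2n-2}:\!\bar E\bar E\bar E\!:$, mixed terms $\psi^{2n-1}:\!\bar E\, D\bar E\!:$, and linear terms $\psi^{2n}\partial\bar E$. These arise from choosing which positions $i_1<\dots<i_N$ carry $\psi D$, with $D$ acting on the fields to its right.

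Only diagonal entries contribute to the Miura image. Using the diagonal OPEs $\bar E_{ii}\bar E_{jj}$ and $D\bar E_{ii}\,D\bar E_{jj}$, I would compute two quantities:
\begin{enumerate}
\item the third order pole of $W[1]$ with $W[2]$, call it $\beta\,W[1]$ (it must be proportional to $W[1]$, the only weight $1$ field);
\item the second order pole, which is a multiple $\gamma$ of $W[1]\cdot$ plus charge terms, determining the $:\!W[1]W[1]\!:$ correction.
\end{enumerate}
Then I solve the $2\times 2$ linear system. This uses
\[
W[1]_{(1)}:\!W[1]W[1]\!: = 2\kappa W[1], \qquad W[1]_{(1)}\partial W[1]=0,
\]
\[
W[1]_{(2)}\partial W[1]=2\kappa,
\]
where $\kappa=n\psi^{4n}(1-(n+1)\psi)$. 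Matching should yield the coefficients $-\frac{n-1}{2n\psi^{2n+1}}$ and $-\frac{n-1}{2}$.

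\textbf{The main obstacle is the Miura computation in step two.} It requires careful bookkeeping of the column-determinant ordering, the signs $(-1)^{i+1}$, and the shift $E_{ii}=e_{ii}+(-1)^i I$. I would first reduce, as in Lemma \ref{lem:commuting virasoro-1}, to the $\bar e_{ii}$, for which many $\bar I$ contributions cancel. I would then count index pairs and triples by parity class. A consistency check is $n=1$: there $\widetilde W[2]=W[2]$ must already commute with $W[1]$, and the coefficients indeed vanish.
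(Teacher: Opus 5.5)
Your strategy is sound, and the dimension count is in substance the paper's remark that $\langle W[1]\rangle$ has no center. The weight-$2$ space is spanned by $W[2]$, $D(W[\frac32])$, $:\!W[1]W[1]\!:$ and $\partial W[1]$, and $W[1]_{(0)}$ vanishes on it because every weight-$2$ field has charge $0$. The two remaining conditions are $W[1]_{(1)}X=\alpha(X)W[1]$ and $W[1]_{(2)}X=\beta(X)\in\mathbb{C}$. On $\mathrm{span}\{:\!W[1]W[1]\!:,\partial W[1]\}$ they form the matrix $2\kappa\,\mathrm{Id}$, where $\kappa=n\psi^{4n}(1-(n+1)\psi)$; here you also need $W[1]_{(2)}:\!W[1]W[1]\!:=0$ and $W[1]_{(1)}\partial W[1]=0$. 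This rank argument gives existence and uniqueness of $C(W[2])$ and $C(V[2])$, and the exact dimension $2$, all at once.

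Where you genuinely differ from the paper is in how the coefficients are obtained. The paper uses $[\mathfrak{g}^f_0,\mathfrak{g}^f_{-1}]=0$ to place the first two poles of $W[2](z)W[1](w)$ in $\langle W[1]\rangle$. It then reads off the $:\!W[1]W[1]\!:$ coefficient by tracking the off-diagonal linear term $-\psi^{2n-1}\bar E_{21}$ of $W[1]$ against the component $:\!(-\bar E_{21})\mu_D(W_{n-1}[1])\!:$ of $W[2]$. Only the scalar pole is computed in free fields, and $\widetilde V[2]$ comes from recognizing $D(W[\frac32])$ as the total Virasoro modulo $\langle W[1]\rangle$, with $C(V[2])$ given explicitly. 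You get membership in $\langle W[1]\rangle$ for free, since weight $1$ is one-dimensional and the charge is $0$. You then compute both $\alpha(W[2])$ and $\beta(W[2])$ in the Miura image, where only diagonal fields occur. This is more uniform and uses only injectivity of $\mu_D$, at the price of a full free-field OPE. For the stated formula you must find $\alpha(W[2])=(n-1)\psi^{2n-1}(1-(n+1)\psi)$ and $\beta(W[2])=(n-1)\kappa$.

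Three details need fixing.
\begin{enumerate}
\item Your middle paragraph and item (1) shift the modes by one. $W[1]_{(1)}X$ has weight $1$; it is the second-order pole and is removed by $:\!W[1]W[1]\!:$. $W[1]_{(2)}X$ is a scalar; it is the third-order pole, \emph{not} a multiple of $W[1]$, and is removed by $\partial W[1]$. The claim that the correction ``cannot also kill $W[1]_{(2)}$'' is false and contradicts your own nondegeneracy statement. If it were true, the corrected field would not commute with $W[1]$.
\item The terms you list for $\mu_D(W[2])$, namely $\partial\bar E$, $:\!\bar E\,D\bar E\!:$ and $:\!\bar E\bar E\bar E\!:$, have weight $\frac32$ and odd parity. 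The weight-$2$ image consists of $D\partial\bar E_{ii}$, $:\!D\bar E\,D\bar E\!:$, $:\!\bar E\,\partial\bar E\!:$, $:\!D\bar E\,\bar E\bar E\!:$ and quartic $:\!\bar E\bar E\bar E\bar E\!:$ terms, as written out before Lemma~\ref{lem:commuting virasoro-4}.
\item The $n=1$ check is vacuous. For $n=1$ the column determinant is $\psi^3D^3+W[1]D+W[\frac32]$, so there is no $W[2]$ and the weight-$2$ space is $3$-dimensional. The lemma concerns $n\ge 2$.
\end{enumerate}
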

\begin{proof}
    Let $\gg=\mathfrak{sl}_{n+1|n}$. From the fact that $[a,b]=0$ for any $a\in \gg^f_0$ and $b\in \gg^f_{-1}$, we can conclude that the first and the second order pole of $W[2](z)W[1](w)$ are in the vertex subalgebra $\left< W[1]\right>$ generated by $W[1].$ Hence to determine the first and second order poles of $W[2](z)W[1](w)$, it suffices to track those terms in $W[2](z)W[1](w)$ that involve only $\bar{E}_{21}(w)$ and its derivatives. Now let us show
    \begin{equation} \label{eq:W[2](m)W[1]}
        W[2]_{(m)}W[1]= \frac{1}{\psi^{2n-1}}\cdot \frac{n-1}{2n\psi^2}  :W[1]W[1]:_{(m)}W[1]
    \end{equation}
    for $m=0,1.$ By examining the column determinant formula \eqref{eq:cdet}, to identify the terms in the LHS of \eqref{eq:W[2](m)W[1]} that involve only $\bar{E}_{21}(w)$ and its derivatives, it is enough to consider the component \[:(-\bar{E}_{21}) \big(\mu_D(W_{n-1}[1])\big) :\] in $W[2]$, where $\mu_D(W_{n-1}[1]) = -\psi^{2n-2}\sum_{i=2}^n D(\tilde{E}_{2i\, 2i}) -\psi^{2n-3}\sum_{3 \leq i<j\leq 2n+1}:\bar{E}_{ii}\bar{E}_{jj}:=\frac{1}{\psi^2}\big(\mu_D(W[1])-D(\tilde{E}_{22})-\sum_{i=1,2, \, j\geq 3} \bar{E}_{ii}\bar{E}_{jj}\big)$ together with the free field part $\mu_D(W[1])$ of $W[1].$
    For the RHS of \eqref{eq:W[2](m)W[1]}, notice that the linear part of $W[1]$ is $-\psi^{2n-1} \bar{E}_{21}$. Finally,  one can get \eqref{eq:W[2](m)W[1]} by direct computations. 

    For the third order pole, we need to compute $W[2]_{(3)}W[1]$ and compare with $\partial(W[1])_{(3)}W[1]$. This process is similar but simpler than Lemma \ref{lem:commuting virasoro-4}. Hence we omit the detailed proof.
    For $\widetilde{V}[2]$, let us first note that the weight $1$ and $\frac{3}{2}$ fields in $W^{k}_{\cN=1}(\gg)$ and the weight 2 field $D\Big(W \big[\frac{3}{2}\big]\Big)$ comprise the initial diamond part. Consequently, $D\Big(W \big[\frac{3}{2}\big]\Big)$ is a constant multiple of the total Virasoro of $\cW^k_{\cN=1}(\gg)$. Hence one can find an element $C(V[2])$ in $\left< W[1]\right>$ which let $\widetilde{V}[2]$ commutes with $W[1].$ Indeed, for 
    \begin{equation}
    C(V[2])=-\frac{1}{(2n \psi^{2n} (-1+(n+1)\psi)} :W[1]W[1]:-\frac{1}{2} \partial(W[1]),
    \end{equation}
    $\widetilde{V}[2]$ commutes with $W[1].$ Now, since there is no center in $\left< W[1]\right>$, it is obvious that the weight 2 space commuting with $W[1]$ has at most dimension $2.$ Hence we proved the lemma.
\end{proof}

\begin{lemma}\label{lem:commuting virasoro-3}
 In Lemma \ref{lem:commuting virasoro-2}, $\widetilde{W}[2]$ is a constant multiple of one of the commuting Virasoros. More precisely, 
 \begin{equation}
Vir:=\frac{1}{\psi^{2n-1}(\psi-1)} \widetilde{W}[2]
 \end{equation}
 is one of the commuting Virasoros.
\end{lemma}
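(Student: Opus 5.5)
Since $\cW^k(\gs\gl_{n+1|n})$ is a $1$-parameter quotient of $\WNtwo$ (Proposition \ref{PrincW:quot}), Proposition \ref{prop:Virasoros} applies. It provides two commuting Virasoro fields $L_\pm$, both commuting with $H$, with $L_++L_-=T=L-\frac{3}{2c}:\!HH\!:$. Both lie in the weight $2$ subspace of the centralizer of $H$. Up to normalization $H$ is $W[1]$, so by Lemma \ref{lem:commuting virasoro-2} this subspace is spanned by $\widetilde{W}[2]$ and $\widetilde{V}[2]$. The plan is to show that $\widetilde{W}[2]$ itself generates a Virasoro algebra after rescaling, i.e.\ that it is proportional to $L_-$ (or $L_+$). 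Then the constant is fixed by the requirement that the second order pole of $Vir(z)Vir(w)$ equals $2Vir(w)$.

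\textbf{Step 1: identify $\widetilde{W}[2]$ with $L_-$ or $L_+$.} Among the Virasoro-type fields in this $2$-dimensional space, the candidates $aT+bL_-$ whose self-OPE closes as a Virasoro OPE are $T$, $L_+$, and $L_-$ (for generic $\psi$, since $L_\pm$ commute). So it suffices to check three things.
\begin{enumerate}
\item $\widetilde{W}[2]$ is not proportional to $T$. The total Virasoro sits in the $\cN=1$ diamond through $D(W[\frac32])$, and $\widetilde{W}[2]$ has a different leading term, $W[2]$, under the SUSY Miura map.
\item $\widetilde{W}[2]_{(3)}\widetilde{W}[2]$ is a scalar.
\item $\widetilde{W}[2]_{(1)}\widetilde{W}[2]=2\kappa\,\widetilde{W}[2]$ for some scalar $\kappa$, with no $T$-component.
\end{enumerate}
Write $\widetilde{W}[2]=aT+bL_-$. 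Its OPE with itself then has second order pole $a^2\cdot 2T+b^2\cdot 2L_-+2ab(\dots)$. Proportionality to $\widetilde{W}[2]$ forces $a(a-\kappa)=0$ and $b(b-\kappa)=0$ in the appropriate basis. Hence it is enough to compute the second order pole of $\widetilde{W}[2](z)\widetilde{W}[2](w)$ and extract one coefficient.

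\textbf{Step 2: compute that coefficient via the SUSY Miura map.} We compute $\mu_D(\widetilde{W}[2])$ in the free field algebra $V^{\tau_k}_{\cN=1}(\gg_0)$, generated by $\bar{E}_{ii}$ and $D\bar{E}_{ii}$, using the column determinant \eqref{eq:cdet}. The weight $2$ coefficient $W[2]$ is a sum over compositions with $2n-3$ copies of $\psi D$ removed. Its image is a combination of cubic terms $:\!\bar E\bar E\bar E\!:$ and quartic-type terms, together with $:\!D\bar E\,\bar E\!:$, $D^2$-terms and $\psi$-weighted derivative terms. We add the correction $-\frac{n-1}{2n\psi^{2n+1}}:\!W[1]W[1]\!:-\frac{n-1}{2}\partial W[1]$.

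Rather than computing the full OPE, we track only the coefficient of one convenient monomial in the second order pole. A good choice is $D(\bar{E}_{11})$, or $\partial\bar E_{11}$, whose coefficient in $\widetilde W[2]$ is easy to read off. Applying Wick's theorem with the Cartan OPEs listed before Lemma \ref{lem:commuting virasoro-1}, we compare this coefficient in $\widetilde W[2]_{(1)}\widetilde W[2]$ with its coefficient in $\widetilde W[2]$. The ratio should be $2\psi^{2n-1}(\psi-1)$, which gives exactly the normalization $Vir=\frac{1}{\psi^{2n-1}(\psi-1)}\widetilde W[2]$.

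\textbf{Step 3: consistency and the main obstacle.} As a consistency check, we confirm that the fourth order pole gives a central charge $c_-$ with $c_-+c_+=c-1$, where $c+ $ is as in \eqref{eq: cc of hook type} with $r=s=0$. The main obstacle is the bookkeeping in Step 2: the product of column determinant entries produces many terms, and the $\bar I$-shifts in $E_{ii}$ introduce cross terms. To control this, we would work with $\bar e_{ii}$ as in the proof of Lemma \ref{lem:commuting virasoro-1}, where the $\bar I$ contributions cancel in symmetric sums. We would also organize terms by parity of indices, counting same-parity and opposite-parity index pairs as in that lemma. If the single-coefficient argument in Step 1 proves delicate, the fallback is to verify directly that the first pole is $\partial Vir$ and the second pole is $2Vir$ by a full free field computation.
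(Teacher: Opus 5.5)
\textbf{The gap: one coefficient cannot show proportionality.} The problem is in how Step 2 is supposed to establish item (3) of Step 1. The second-order pole $\widetilde{W}[2]_{(1)}\widetilde{W}[2]$ lies in the two-dimensional space spanned by $\widetilde{W}[2]$ and $\widetilde{V}[2]$. So it has two unknown components, $x\,\widetilde{W}[2]+y\,\widetilde{V}[2]$, and the lemma is essentially the statement $y=0$. A single coefficient ratio gives only $x$, or some mixture of $x$ and $y$ if your functional does not vanish on $\widetilde{V}[2]$. Your step ``proportionality forces $a(a-\kappa)=0$, $b(b-\kappa)=0$, hence one coefficient suffices'' assumes the proportionality that has to be proved.

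Concretely, write $\widetilde{W}[2]=\alpha L_++\beta L_-$. Then $\widetilde{W}[2]_{(1)}\widetilde{W}[2]=2\alpha^2L_++2\beta^2L_-$. Expanding this as $x\,\widetilde{W}[2]+y\,T$ gives $x=2(\alpha+\beta)$ and $y=-2\alpha\beta$. Your ratio therefore determines $\alpha+\beta$ but says nothing about whether $\alpha\beta=0$, which is exactly what ``proportional to one of $L_\pm$'' means once $T$ is excluded. You need a second, independent check that the $\widetilde{V}[2]$-component (equivalently the $T$-component) vanishes.

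\textbf{How the paper closes it.} The paper makes both checks by reading off linear terms in $V^{\tau_k}_{\cN=1}(\mathfrak{p})$, before the Miura projection. There $\bar{E}_{41}$ occurs linearly only in $W[2]$, with coefficient $-\psi^{2n-3}$, and $D(\bar{E}_{31})$ occurs linearly only in $D(W[\frac32])$. The $\bar{E}_{41}$-coefficient of $W[2]_{(1)}W[2]$ is $2\psi^{4n-4}(1-\psi)$, which gives $x=2\psi^{2n-1}(\psi-1)$. The absence of a $D(\bar{E}_{31})$ term gives $y=0$.

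\textbf{Your choice of monomial.} This part also fails as written. $D(\bar{E}_{11})$ and $\partial\bar{E}_{11}$ have conformal weights $1$ and $\frac32$, so they never occur in a weight-$2$ field. Moreover, inside the Miura image $V^{\tau_k}_{\cN=1}(\gg_0)$, the weight-$2$ linear terms of both $\widetilde{W}[2]$ and $\widetilde{V}[2]$ are combinations of $\partial D(\bar{E}_{ii})$. No single Cartan coefficient isolates one component there; you would need two functionals that separate $\widetilde{W}[2]$ from $\widetilde{V}[2]$.

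Your stated fallback, a full free-field computation of the second-order pole using injectivity of $\mu_D$, would close the gap provided it determines both components. Finally, the detour through Proposition \ref{prop:Virasoros} is unnecessary, since Lemma \ref{lem:commuting virasoro-2} already gives the two-dimensional space. Using it would also require checking $\omega\neq-\frac{1}{32}(c-1)\lambda^2$ along a truncation curve that is not yet known at this point.
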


\begin{proof}
   To prove that $Vir$ is a Virasoro, it suffices to show that the second order pole of the OPE of $Vir$ and itself is $2\cdot\, Vir$. The first  and third order poles are then determined by skew-symmetry.
   On the other hand, $\widetilde{W}[2]$ commutes with $W[1]$and by Lemma \ref{lem:commuting virasoro-2}, the weight‑2 subspace commuting with  $W[1]$ is spanned by $\widetilde{W}[2]$ and $\widetilde{V}[2]$. Hence, the second-order pole of the OPE of $\widetilde{W}[2]$ and itself is a linear combination of $\widetilde{W}[2]$ and $\widetilde{V}[2].$ Therefore, to prove that $Vir$ is a Virasoro field, it suffices to show that the linear part of $W[2]_{(1)}W[2]$ contains the term $-2\psi^{4n-4}(\psi-1) \bar{E}_{41}$ but does not contain $D(\bar{E}_{31}).$ Here we note that by the formula of $cdet \mathcal{A}_n$, one can see that $\bar{E}_{41}$ appears as a linear term $-\psi^{2n-3}\bar{E}_{41}$ in $W[2]$ but not in $DW\big[\frac{3}{2}\big]$ or $W[1]$. Similarly, $D(\bar{E}_{31})$ only appears in $DW\big[\frac{3}{2}\big]$ but not in $W[2]$ or $W[1].$
   
   By the column determinant formula of $\mathcal{A}_n$, one can show the parts with $\bar{E}_{41}$ in $W[2]_{(1)}W[2]$ can arise in the following terms in $W[2]_{(1)}W[2]$:
   \begin{equation}
   \begin{aligned}
       &  \big(-\psi^{2n-2}D(\bar{E}_{42})\big)_{(1)}\big(\psi^{2n-2}\bar{E}_{21}D (\bar{E}_{44})\big)+ \big(\psi^{2n-2}\bar{E}_{21}D (\bar{E}_{44})\big)_{(1)}\big(-\psi^{2n-2}D(\bar{E}_{42})\big)\\
       & + \big( -\psi^{2n-3}\bar{E}_{41}\big)_{(1)}\big( -\psi^{2n}D^3(\bar{E}_{44}\big)\big)+ \big( -\psi^{2n}D^3(\bar{E}_{44})\big)_{(1)}\big( -\psi^{2n-3}\bar{E}_{41}\big)\\
        & = \psi^{4n-4}2(1-\psi)\bar{E}_{41}.    \end{aligned}
   \end{equation}
   In a similar way, one can show that there is no linear term with $D(\bar{E}_{31})$ in $W[2]_{(1)}W[2]$.
   Hence we can conclude $Vir$ is a Virasoro.   Similarly, one can show that $\frac{1}{\psi^{2n}}\big(\widetilde{V}[2]+\frac{\psi}{\psi-1}\widetilde{W}[2]\big)$ is another Virasoro, commuting with $Vir.$
\end{proof}

In order to specify the point $\cW^k(\mathfrak{sl}_{n+1|n})\cong \cW^k_{\cN=1}(\mathfrak{sl}_{n+1|n})$ in $\WNtwo$, we need to find the central charge of $Vir$. To this end, let us find the 4th order pole of $W[2](z)W[2](w)$ which is completely determined by 
the free field part $\mu_D(W[2])$ of $W[2].$ From the column determinant formula of $\mathcal{A}_n,$ we have 
\begin{align}
    & \label{eq:commuting virasoro-4-1} \mu_D(W[2]) = -\psi^{2n}\sum_{i=1}^{n} (i-1)D(\partial(\bar{E}_{2i\, 2i})) \\
    &\label{eq:commuting virasoro-4-2} +\psi^{2n-1}\sum_{1\leq i<j\leq n}:D(\tilde{E}_{2i\, 2i})D(\tilde{E}_{2j\, 2j}):\\
    & \label{eq:commuting virasoro-4-3}-\psi^{2n-1} \Big( \sum_{1\leq i <2j \leq 2n-1} (j-1):\bar{E}_{ii} \, \partial(\bar{E}_{2j\, 2j}):+\sum_{1\leq i <2j+1 \leq 2n-1} (j-1):\bar{E}_{ii} \, \partial(\bar{E}_{2j+1\, 2j+1}):\\
    & \label{eq:commuting virasoro-4-4}\hskip 1cm + \sum_{\substack{i\geq 1\\ 2i+1<j\leq 2n-1}}(i-1):\partial(\bar{E}_{2i-1\, 2i-1})\,\bar{E}_{jj}:+\sum_{\substack{i\geq 1\\ 2i<j\leq 2n-1}}(i-1):\partial(\bar{E}_{2i\, 2i})\,\bar{E}_{jj}:\Big)\\
    &\label{eq:commuting virasoro-4-5} +\psi^{2n-2}\Big(\sum_{\substack{i<j<k\\ i:\text{even}}}:D(\bar{E}_{ii})\bar{E}_{jj}\bar{E}_{kk}:-\sum_{\substack{i<j<k\\ j:\text{odd}}}:D(\bar{E}_{jj})\bar{E}_{ii} \bar{E}_{kk}:+ \sum_{\substack{i<j<k\\ k:\text{even}}}:D(\bar{E}_{kk})\bar{E}_{ii} \bar{E}_{jj}:  \Big)\\
    & \label{eq:commuting virasoro-4-6}+ \psi^{2n-3} \sum_{i<j<s<t}:\bar{E}_{ii}\bar{E}_{jj}\bar{E}_{ss}\bar{E}_{tt}:\, .
\end{align}

\begin{lemma}\label{lem:commuting virasoro-4}
The 4th order pole of OPE between $W[2]$ and itself is $p=p_1+p_2+p_3+p_4$, where 
\begin{equation}
\begin{aligned}
    & p_1= \Big((n - 1) n (2 n - 1) + \frac{3(n (n - 1))^2}{2} \Big) \psi^{4n+1}\, , \\
    & p_2= \Big(-(n - 1) n (2 n - 1) + \frac{n (n - 1)}{
    2} (\, 5 + 6 (n - 2) + (n - 2) (n - 3)\, )\Big)\,    \psi^{4n}\, ,  \\
    & p_3= -(n - 1) n^2  \psi^{4n-1}\, ,  \\
    & p_4= \frac{n (n - 1)}{2}  \psi^{4n-2}\, .
\end{aligned}
\end{equation}
\end{lemma}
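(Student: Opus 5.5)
The plan is to compute the fourth order pole entirely inside the free field algebra $V^{\tau_k}_{\cN=1}(\gg_0)$, using the SUSY Miura image $\mu_D(W[2])$ written out in \eqref{eq:commuting virasoro-4-1}--\eqref{eq:commuting virasoro-4-6}. Since $\mu_D$ is an injective homomorphism and the fourth order pole between two weight $2$ fields is a scalar, it equals $\mu_D(W[2])_{(3)}\mu_D(W[2])$. The free fields $\bar E_{ii}$ and $D(\bar E_{ii})$ have only the simple-pole and double-pole OPEs recorded before Lemma \ref{lem:commuting virasoro-1}. It is convenient to pass to the $\bar e_{ii}$, which are diagonal: $\bar e_{ii}(z)\bar e_{jj}(w)\sim \delta_{ij}\psi(-1)^{i+1}(z-w)^{-1}$. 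For the quadratic and higher terms I would use the same trick as in Lemma \ref{lem:commuting virasoro-1}: symmetric sums over $i<j$ of products of the $\bar E$'s coincide with the corresponding sums of the $\bar e$'s, because the $\bar I$ contributions cancel.

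The scalar $(3)$-product only pairs monomials of equal degree in the free fields, with full Wick contraction. So I would split $\mu_D(W[2])$ into its linear part \eqref{eq:commuting virasoro-4-1}, its quadratic part \eqref{eq:commuting virasoro-4-2}--\eqref{eq:commuting virasoro-4-4}, its cubic part \eqref{eq:commuting virasoro-4-5} and its quartic part \eqref{eq:commuting virasoro-4-6}. The powers of $\psi$ then separate cleanly:
\begin{itemize}
\item the linear-linear pairing gives $\psi^{4n}\cdot\psi=\psi^{4n+1}$;
\item each quadratic-quadratic pairing gives $\psi^{4n-2}\cdot\psi^2=\psi^{4n}$;
\item the cubic-cubic pairing gives $\psi^{4n-4}\cdot\psi^3=\psi^{4n-1}$;
\item the quartic-quartic pairing gives $\psi^{4n-6}\cdot\psi^4=\psi^{4n-2}$.
\end{itemize}
These should be $p_1,p_2,p_3,p_4$ respectively. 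One caveat is that the $DD$ quadratic term \eqref{eq:commuting virasoro-4-2} contributes at order $\psi^{4n}$ as well, since $D\bar E(z)\,D\bar E(w)\sim\psi/(z-w)^2$. I would double-check the bookkeeping here: $p_1$ plausibly also contains a cross term mixing \eqref{eq:commuting virasoro-4-1} with something, but the degree argument rules that out, so the $\frac32(n(n-1))^2$ in $p_1$ must arise from \eqref{eq:commuting virasoro-4-1} alone.

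I would then evaluate each block in turn.
\begin{itemize}
\item \textbf{Linear term.} $D\partial\bar E$ against $D\partial\bar E$ has fourth order pole $-6$ times the $D\bar E D\bar E$ coefficient. Summing the weights $(i-1)(j-1)$ against the matrix $\psi(\delta_{ij}+1)$, up to sign, gives $\sum_i(i-1)^2+(\sum_i(i-1))^2$. This produces exactly the combination $(n-1)n(2n-1)+\frac32(n(n-1))^2$, after matching the factor $6$ with $\frac16$ and $\frac14$.
\item \textbf{Quadratic terms.} I would compute the pairings of \eqref{eq:commuting virasoro-4-2}, of \eqref{eq:commuting virasoro-4-3}--\eqref{eq:commuting virasoro-4-4}, and their cross terms, which vanish by parity since $D\bar E$ is even and $\bar E$ is odd. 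Each reduces to counting index pairs with signs $(-1)^{i+j}$ and weights linear in the indices. This yields the polynomial $-(n-1)n(2n-1)+\frac{n(n-1)}2(5+6(n-2)+(n-2)(n-3))$.
\item \textbf{Cubic and quartic terms.} These follow by counting triples and quadruples of indices by parity classes, exactly as the parity counting at the end of the proof of Lemma \ref{lem:commuting virasoro-1}.
\end{itemize}

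The main obstacle is the quadratic block. The sums \eqref{eq:commuting virasoro-4-3}--\eqref{eq:commuting virasoro-4-4} have index-dependent weights $(j-1)$ and asymmetric ranges, and contractions of $\partial\bar E$ with $\bar E$ carry derivative factors and signs from odd fields. Here I would first convert to the diagonal $\bar e$ basis, which should be legitimate after checking that the $\bar I$ shifts cancel in each symmetric combination. I would then organize the double sum by the four parity types of $(i,j)$, and finally verify the closed form against small cases $n=1,2$ (e.g.\ $p=0$ contributions vanish for $n=1$ except the linear terms), which is a cheap consistency check on signs.
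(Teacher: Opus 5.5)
Your plan is correct and follows the paper's proof. Both compute $\mu_D(W[2])_{(3)}\mu_D(W[2])$ in the free field algebra, use the fact that only equal-degree blocks of \eqref{eq:commuting virasoro-4-1}--\eqref{eq:commuting virasoro-4-6} give scalar contributions, and evaluate the linear, quadratic, cubic and quartic self-pairings separately; your linear computation via $6\psi(1+\delta_{ij})$ reproduces $p_1$ exactly as the paper does. One caution: the replacement $\bar E_{ii}\to\bar e_{ii}$ is legitimate only for the odd $\bar E$ factors in \eqref{eq:commuting virasoro-4-3}--\eqref{eq:commuting virasoro-4-6}. In \eqref{eq:commuting virasoro-4-2}, and for the $D(\bar E)$ factor in \eqref{eq:commuting virasoro-4-5}, only restricted parities occur and the $D\bar I$ shifts do not cancel. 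There you must keep the non-diagonal contraction $-\psi(1+\delta_{ij})$, as the paper does; this is what produces the $5\psi^2,3\psi^2,2\psi^2$ counts in $p_2$.
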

\begin{proof}
    The value $p_1$ is the 4th order pole of \eqref{eq:commuting virasoro-4-1} and itself. To be clear, note that 
    \begin{equation}
        D(\partial (\bar{E}_{2i\, 2i}))\ {}_{(3)}\, D(\partial(\bar{E}_{2i\, 2i}))=12\psi, \quad   D(\partial(\bar{E}_{2i\, 2i}))\ {}_{(3)}\, D(\partial(\bar{E}_{2j\, 2j}))=6\psi
    \end{equation}
    for distinct $i$ and $j.$ Hence 
    \begin{equation}
        \eqref{eq:commuting virasoro-4-1}_{(3)}\eqref{eq:commuting virasoro-4-1}= \psi^{4n+1} \big(\, 6 (1^2+2^2 +\cdots + (n-1)^2) + 6 \big( \frac{n(n-1)}{2}\big)^2\, \big)=p_1.
    \end{equation}
    Let us show the value $p_2$ is the 4th order pole of \eqref{eq:commuting virasoro-4-2}+ \eqref{eq:commuting virasoro-4-3}+\eqref{eq:commuting virasoro-4-4} and itself. In addition, it is the same as $\eqref{eq:commuting virasoro-4-2}_{(3)}\eqref{eq:commuting virasoro-4-2}+ (\eqref{eq:commuting virasoro-4-3}+\eqref{eq:commuting virasoro-4-4})_{(3)}(\eqref{eq:commuting virasoro-4-3}+\eqref{eq:commuting virasoro-4-4})$. In order to find $\eqref{eq:commuting virasoro-4-2}_{(3)}\eqref{eq:commuting virasoro-4-2}$, note that for $i\neq j$ and $s\neq t$
    \begin{equation}
        (:D(\bar{E}_{2i\, 2i})D(\bar{E}_{2j\, 2j}):)_{(3)}(:D(\bar{E}_{2s\, 2s})D(\bar{E}_{2t\, 2t}):)=\left\{ \begin{array}{ll} 5 \psi^2 & \text{ if } i=s \text{ and } j=t, \\
        3 \psi^2 & \text{ if exactly two of $i,j,s,t$ are the same,}  \\
        2 \psi^2 & \text{ if all $i,j,s,t$ are distinct.} \end{array} \right.
    \end{equation}
    Hence 
    \begin{equation} \label{eq:p2-part1}
        \eqref{eq:commuting virasoro-4-2}_{(3)}\eqref{eq:commuting virasoro-4-2}= \psi^{4n} \Big(5\cdot \frac{n(n-1)}{2}+3\cdot n(n-1)(n-2)+ 2 \cdot 2 \cdot \frac{n(n-1)}{2}\frac{(n-2)(n-3)}{2}  \Big).
    \end{equation} 
    Now, for $\eqref{eq:commuting virasoro-4-3}+\eqref{eq:commuting virasoro-4-4})_{(3)}(\eqref{eq:commuting virasoro-4-3}+\eqref{eq:commuting virasoro-4-4}$, observe that 
    \begin{equation} 
    \begin{aligned}
        & \eqref{eq:commuting virasoro-4-3}+\eqref{eq:commuting virasoro-4-4} \\
        & =-\psi^{2n-1} \Big( \sum_{1\leq i <2j \leq 2n-1} (j-1):\bar{e}_{ii} \,\partial(\bar{e}_{2j\, 2j})+\sum_{1\leq i <2j+1 \leq 2n-1} (j-1):\bar{e}_{ii}\, \partial(\bar{e}_{2j+1\, 2j+1}):\\
    & \hskip 1cm + \sum_{\substack{i\geq 1\\ 2i+1<j\leq 2n-1}}(i-1):\partial(\bar{e}_{2i-1\, 2i-1})\,\bar{e}_{jj}:+\sum_{\substack{i\geq 1\\ 2i<j\leq 2n-1}}(i-1):\partial(\bar{e}_{2i\, 2i}),\bar{e}_{jj}:\Big).
    \end{aligned}
    \end{equation}
    This equality can be obtained by replacing $E_{ii}$ by $e_{ii}+(-1)^i I$ in \eqref{eq:commuting virasoro-4-3}+\eqref{eq:commuting virasoro-4-4} and see all the coefficients of $:\bar{e}_{ii} \,\partial(\bar{I}):$, $:\partial(\bar{e}_{ii})\, \bar{I}:$ and $:\bar{I}\,\partial(\bar{I}):$ are trivial. Moreover, observe that 
    \begin{equation}
    \begin{aligned}
        & \big(a:\bar{e}_{2i-1\, 2i-1}\,\partial(\bar{e}_{jj}):+\, b :\partial(\bar{e}_{2i-1\, 2i-1})\,\bar{e}_{jj}:\big)_{(3)}\big(a:\bar{e}_{2i-1\, 2i-1}\,\partial(\bar{e}_{jj}):+\, b :\partial(\bar{e}_{2i-1\, 2i-1})\,\bar{e}_{jj}:\big)\\
        & \quad  =- \big(a:\bar{e}_{2i\, 2i}\,\partial(\bar{e}_{jj}):+\, b :\partial(\bar{e}_{2i\, 2i})\,\bar{e}_{jj}:\big)_{(3)}\big(a:\bar{e}_{2i\, 2i}\,\partial(\bar{e}_{jj}):+\, b :\partial(\bar{e}_{2i\, 2i})\,\bar{e}_{jj}:\big)
    \end{aligned}
    \end{equation}
    for any constant $a,b$ and $j \geq 2i.$ In conclusion, we have 
    \begin{equation} \label{eq:p2-part2}
    \begin{aligned}
         & \big(\eqref{eq:commuting virasoro-4-3}+\eqref{eq:commuting virasoro-4-4}\big)_{(3)} \big(\eqref{eq:commuting virasoro-4-3}+\eqref{eq:commuting virasoro-4-4}\big)\\
         & = \psi^{4n-2}\sum_{i=2}^{n}\, (i-1)^2 \big(:\bar{e}_{2i-1\, 2i-1}\,\partial(\bar{e}_{2i\, 2i}):+:\partial(\bar{e}_{2i-1\, 2i-1})\,\bar{e}_{2i\, 2i}:\big){}_{(3)}\\
         & \hskip 5cm \big(:\bar{e}_{2i-1\, 2i-1}\,\partial(\bar{e}_{2i\, 2i}):+:\partial(\bar{e}_{2i-1\, 2i-1})\,\bar{e}_{2i\, 2i}:\big)\\
         & = \ -6\psi^{4n}(1^2+2^2+\cdots (n-1)^2) \ =\ -\psi^{4n}(n-1)n(2n-1).
    \end{aligned}        
    \end{equation}
    By \eqref{eq:p2-part1} and \eqref{eq:p2-part2}, we get $p_2= \big(\eqref{eq:commuting virasoro-4-2}+\eqref{eq:commuting virasoro-4-3}+\eqref{eq:commuting virasoro-4-4}\big)_{(3)} \big(\eqref{eq:commuting virasoro-4-2}+\eqref{eq:commuting virasoro-4-3}+\eqref{eq:commuting virasoro-4-4}\big).$

    Next, let us show that $p_3= \eqref{eq:commuting virasoro-4-5}_{(3)}\eqref{eq:commuting virasoro-4-5}.$ One can see that 
    \begin{equation}
        \eqref{eq:commuting virasoro-4-5}= \psi^{2n-2}\Big(\sum_{\substack{i<j<k\\ i:\text{even}}}:D(\bar{E}_{ii})\bar{e}_{jj}\bar{e}_{kk}:-\sum_{\substack{i<j<k\\ j:\text{odd}}}:D(\bar{E}_{jj})\bar{e}_{ii} \bar{e}_{kk}:+ \sum_{\substack{i<j<k\\ k:\text{even}}}:D(\bar{E}_{kk})\bar{e}_{ii} \bar{e}_{jj}:  \Big).
    \end{equation}
    In addition, we have \begin{equation}
    \begin{aligned}
        & (:D(\bar{E}_{2s\, 2s})\bar{e}_{2i-1\, 2i-1}\bar{e}_{j\, j}:+:D(\bar{E}_{2s\, 2s})\bar{e}_{2i\,2i}\bar{e}_{j\, j}:)_{(3)}\eqref{eq:commuting virasoro-4-5}=0, \quad j>2i \text{ and } 2s<2i-1\text{ or }2s>j, \\
        & (:D(\bar{E}_{2s\, 2s})\bar{e}_{ii}\bar{e}_{2j\, 2j}:+:D(\bar{E}_{2s\, 2s})\bar{e}_{ii}\bar{e}_{2j+1\, 2j+1}:)_{(3)}\eqref{eq:commuting virasoro-4-5}=0, \quad i<2j \text{ and }2s<i \text{ or }2s>2j+1, \\
        & (:D(\bar{E}_{2s+1\, 2s+1})\bar{e}_{2i-1\, 2i-1}\bar{e}_{j\, j}):+:D(\bar{E}_{2s+1\, 2s+1})\bar{e}_{2i\,2i}\bar{e}_{j\, j}:)_{(3)}\eqref{eq:commuting virasoro-4-5}=0, \ i<s\text{ and } 2s+1<j.
     \end{aligned}
    \end{equation}
    This implies $\eqref{eq:commuting virasoro-4-5}_{(3)}\eqref{eq:commuting virasoro-4-5}$ is the sum of $n(n-1)$ terms of the form $\big(:D(\bar{E}_{2i \, 2i})\bar{e}_{ss}\bar{e}_{tt}:\big)_{(3)}\eqref{eq:commuting virasoro-4-5}$ with $s-t \equiv 1$ (mod 2) and each of the term has the same value. In other words,  
    \begin{equation} \label{eq:p3-1}
    \begin{aligned}
         & \eqref{eq:commuting virasoro-4-5}_{(3)}\eqref{eq:commuting virasoro-4-5}= \psi^{4n-4}n(n-1)\Big(:D(\bar{E}_{22})\bar{e}_{33}\bar{e}_{44}:\Big)_{(3)}\eqref{eq:commuting virasoro-4-5}.
    \end{aligned}
    \end{equation}
    We can compute \eqref{eq:p3-1} directly since 
    \begin{equation}
    \begin{aligned}
    & \psi^{4n-4}(:D(\bar{E}_{22})\bar{e}_{33}\bar{e}_{44}:)_{(3)}\eqref{eq:commuting virasoro-4-5}\\
    & = (:D(\bar{E}_{22})\bar{e}_{33}\bar{e}_{44}:)_{(3)}\Big(:\big(D(\bar{E}_{22})+\sum_{i=3}^{n}D(\bar{E}_{2i\, 2i})\, \big)\bar{e}_{33}\bar{e}_{44}:\Big)= -\psi^{4n-1}n.
    \end{aligned}
    \end{equation}
    Here we used $D(\bar{E}_{22})_{(3)}D(\bar{E}_{2i\, 2i})=-\psi$ if $i\neq 1$ and $D(\bar{E}_{22})_{(3)}D(\bar{E}_{22})=-2\psi$. Hence we can conclude $p_3=\eqref{eq:commuting virasoro-4-5}_{(3)}\eqref{eq:commuting virasoro-4-5}.$

    The last part is $\eqref{eq:commuting virasoro-4-6}_{(3)} \eqref{eq:commuting virasoro-4-6}.$ First, we can see show that 
    \begin{equation}
        \eqref{eq:commuting virasoro-4-6}= \psi^{2n-3}\sum_{i<j< s< t} :\bar{e}_{ii}\bar{e}_{jj}\bar{e}_{ss}\bar{e}_{tt}:
    \end{equation}
    and $:\bar{e}_{ii}\bar{e}_{jj}\bar{e}_{ss}\bar{e}_{tt}:_{(3)}:\bar{e}_{i'i'}\bar{e}_{j'j'}\bar{e}_{s's'}\bar{e}_{t't'}:\, =\, \delta_{(i,j,s,t)(i',j',s',t')} \psi^4(-1)^{i+j+k+l}.$ Hence $ \eqref{eq:commuting virasoro-4-6}_{(0)}\eqref{eq:commuting virasoro-4-6}= \psi^{4n-2}(N_{ev}-N_{odd})$,
    where $N_{ev}$ (resp. $N_{odd}$) is the number of quadruples with even (resp. odd) number of odd parity indices. One can find 
    \begin{equation}
    \begin{aligned}
        & N_{ev}= \frac{(n+1)n(n-1)(n-2)}{4!}+ \frac{(n+1)n}{2}\frac{n(n-1)}{2}+ \frac{n(n-1)(n-2)(n-3)}{4!},\\
        & N_{odd}= \frac{(n+1)n(n-1)}{3!}\cdot n + \frac{n(n-1)(n-2)}{3!}\cdot (n+1),
    \end{aligned}
    \end{equation}
    and hence $\eqref{eq:commuting virasoro-4-6}_{(0)}\eqref{eq:commuting virasoro-4-6}= \psi^{4n-2}(N_{ev}-N_{odd})= \psi^{4n-2}\frac{n(n-1)}{2}.$

    As a conclusion, $p=p_1+p_2+p_3+p_4$ is the 4th order pole of $W[2](z)W[2](w).$
    \end{proof}

\begin{lemma}\label{lem:commuting virasoro-5}
    Recall the correction term $C(W[2])=-\frac{n-1}{(2n)\,  \psi^{2n+1}}: W[1]W[1]: -\frac{n-1}{2}\partial(W[1])$ in Lemma \ref{lem:commuting virasoro-2}. The 4th order pole of $C(W[2])$ and itself is 
    \begin{equation}
        p_c=\frac{3}{2}  \ n\, (1 - n)^2  \, \big(\psi\, (1 + n)-1\big)\, \psi^{4 n} + \frac{1}{2}  \, (1 - n)^2 \, \big(\, \psi (1 + n))^2-1\, \big)\psi^{4n-2}.
    \end{equation}
\end{lemma}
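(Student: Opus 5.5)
The plan is to reduce everything to Lemma \ref{lem:commuting virasoro-1}. That lemma shows that $W[1]$ generates a rank one Heisenberg algebra: $W[1](z)W[1](w)\sim \kappa (z-w)^{-2}$ with $\kappa=n\psi^{4n}(1-(n+1)\psi)$ and no first order pole. Hence $C(W[2])$ lies in this Heisenberg vertex algebra $\langle W[1]\rangle$, and its fourth order pole with itself is a pure Wick-type computation that never touches the BRST complex. I will write
\begin{equation*}
C(W[2])=a\,{:}W[1]W[1]{:}+b\,\partial W[1],\qquad a=-\frac{n-1}{2n\psi^{2n+1}},\quad b=-\frac{n-1}{2},
\end{equation*}
and expand $C(W[2])_{(3)}C(W[2])$ bilinearly into four pieces.

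\emph{Step 1 (quadratic piece).} Compute ${:}W[1]W[1]{:}_{(3)}{:}W[1]W[1]{:}$ with the free-field Wick theorem, or equivalently with \eqref{quasi-derivation} and \eqref{skew-symmetry}, using that $W[1]_{(1)}W[1]=\kappa$ and all other nonnegative products of $W[1]$ with itself vanish. The double contractions give $2\kappa^2$. The single contractions produce only terms that still contain a field $W[1]$, so they cannot contribute to the constant fourth order pole.

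\emph{Step 2 (derivative piece).} From \eqref{conformal identity}, the OPE of $\partial W[1]$ with itself is obtained by differentiating $\kappa(z-w)^{-2}$ once in $z$ and once in $w$. This gives $\partial_z\partial_w (z-w)^{-2}=-6(z-w)^{-4}$, so $(\partial W[1])_{(3)}\partial W[1]=-6\kappa$.

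\emph{Step 3 (cross terms).} Argue that $({:}W[1]W[1]{:})_{(3)}\partial W[1]$ and $(\partial W[1])_{(3)}{:}W[1]W[1]{:}$ vanish. In each, only a single contraction is available, which leaves an uncontracted $W[1]$ of weight $1$. Since a fourth order pole between weight $2$ fields has weight $0$, no such term can survive.

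\emph{Step 4 (assembly).} Combining the pieces gives $p_c=2a^2\kappa^2-6b^2\kappa$. Substituting $a,b,\kappa$: the term $-6b^2\kappa$ equals $\frac32 n(1-n)^2(\psi(1+n)-1)\psi^{4n}$, which matches the first summand of the stated $p_c$ exactly. The term $2a^2\kappa^2$ equals $\frac12(1-n)^2\psi^{4n-2}$ times a quadratic in $(n+1)\psi$.

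The main obstacle I expect is reconciling this last quadratic factor. My computation naturally produces $((n+1)\psi-1)^2$, whereas the statement has $(\psi(1+n))^2-1$. The first thing I would check is the sign and normalization of $\kappa$ in Lemma \ref{lem:commuting virasoro-1}, to see whether that accounts for the difference. I would then cross-check numerically at $n=1$, where $C(W[2])=0$ and both expressions vanish, and at $n=2$ against a direct free-field evaluation through the Miura image $\mu_D(W[1])$. If the discrepancy persists, I would use the Miura computation to decide which form is correct.
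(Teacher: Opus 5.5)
Your argument is correct and is exactly the paper's route: the paper simply says the lemma follows directly from Lemma~\ref{lem:commuting virasoro-1}, which amounts to the Heisenberg Wick computation $p_c=2a^2\kappa^2-6b^2\kappa$ that you carry out. The discrepancy you flag is a typo in the printed statement (note its unbalanced parentheses), and your factor $\big((n+1)\psi-1\big)^2$ is the intended one. Two checks confirm this, so no Miura cross-check is needed. First, $\kappa$ enters that term squared, so its sign is irrelevant. Second, with your $p_c$ and the $p$ of Lemma~\ref{lem:commuting virasoro-4} one gets $2(p-p_c)=(n-1)\psi^{4n-2}(\psi-1)\big(n(n+1)\psi^2+\psi-1\big)$, which reproduces exactly the central charge $\frac{(n-1)(-1+\psi+\psi^2n(n+1))}{\psi-1}$ of the next proposition. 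The literal form $(n+1)^2\psi^2-1$ would instead leave $p-p_c=-n(n-1)\neq 0$ at $\psi=1$, giving a double pole there.
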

\begin{proof}
    This lemma directly follows from Lemma \ref{lem:commuting virasoro-1}.
\end{proof}

\begin{proposition}
    The central charge of $Vir$ is 
    \begin{equation} \label{eq:ccofVir}
        \frac{2(p-p_c)}{\psi^{4n-2}(\psi-1)^2}=\frac{(n-1) (-1 + \psi + \psi^2 n (1 + n))}{\psi-1}= \frac{(n-1)(1 + n + k n) (k + n + k n)}{k}\ 
    \end{equation}
    where $k=\psi-1$ is the level of $\cW^k(\gg,F).$
\end{proposition}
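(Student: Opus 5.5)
The plan is to read off the central charge from the fourth-order pole of $Vir(z)Vir(w)$, using the normalization $Vir = \frac{1}{\psi^{2n-1}(\psi-1)}\widetilde{W}[2]$ from Lemma \ref{lem:commuting virasoro-3}. For a Virasoro field the fourth-order pole equals $c/2$, so the central charge is
\[
c_{Vir} = \frac{2\, \widetilde{W}[2]_{(3)}\widetilde{W}[2]}{\psi^{4n-2}(\psi-1)^2}.
\]
It therefore suffices to compute the scalar $\widetilde{W}[2]_{(3)}\widetilde{W}[2]$.

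To compute this scalar, I expand $\widetilde{W}[2] = W[2] + C(W[2])$ bilinearly:
\[
\widetilde{W}[2]_{(3)}\widetilde{W}[2] = W[2]_{(3)}W[2] + W[2]_{(3)}C + C_{(3)}W[2] + C_{(3)}C .
\]
Lemma \ref{lem:commuting virasoro-4} gives the first term as $p$, and Lemma \ref{lem:commuting virasoro-5} gives the last term as $p_c$. For the cross terms, I use that $\widetilde{W}[2]$ commutes with $W[1]$ (Lemma \ref{lem:commuting virasoro-2}), and hence with the whole subalgebra $\langle W[1]\rangle$, which contains $C = C(W[2])$. Thus $\widetilde{W}[2]_{(3)}C = 0$, which gives $W[2]_{(3)}C = -C_{(3)}C$. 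The scalar fourth pole between weight-two fields is symmetric by skew-symmetry \eqref{skew-symmetry}, so $C_{(3)}W[2] = -p_c$ as well. The total is
\[
p - p_c - p_c + p_c = p - p_c ,
\]
which produces the first expression in \eqref{eq:ccofVir}.

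What remains is algebra: substitute $p_1,\dots,p_4$ and $p_c$, factor out $\psi^{4n-2}(n-1)$, and simplify to $\frac{(n-1)(-1+\psi+\psi^2 n(n+1))}{\psi-1}$. Setting $\psi = k+1$ then gives the factorization
\[
-1+\psi+\psi^2 n(n+1) = (1+n+kn)(k+n+kn),
\]
since both sides expand to $k + kn(n+1)(k+2) + n(n+1)$, and $\psi - 1 = k$.

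I expect the main obstacle to be the polynomial simplification of $p - p_c$. I would organize it by powers of $\psi$. The $\psi^{4n+1}$ terms combine as $p_1$ minus $\frac{3}{2}n(n-1)^2(n+1)\psi^{4n+1}$. The $\psi^{4n}$ terms combine as $p_2$ plus $\frac{3}{2}n(n-1)^2\psi^{4n}$ minus the $\psi^{4n}$ part of $\frac{1}{2}(n-1)^2(n+1)^2\psi^{4n}$. The lower powers are collected similarly. I would then check that the resulting cubic in $\psi$ is divisible by $(\psi-1)$, with quotient $(n-1)(n(n+1)\psi^2+\psi-1)$, and confirm the result at $n=1$, where it vanishes as expected.
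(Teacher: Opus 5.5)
Your proposal is the paper's argument. The paper also uses that $\widetilde{W}[2]$ commutes with $C(W[2])\in\langle W[1]\rangle$ to get $\widetilde{W}[2]_{(3)}\widetilde{W}[2]=p-p_c$; your cross-term bookkeeping just spells that step out. It then divides by the square of the normalization from Lemma~\ref{lem:commuting virasoro-3} and sets $\psi=k+1$. Your final factorization is correct.

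One warning about the step you call routine algebra: if you take $p_c$ literally as printed in Lemma~\ref{lem:commuting virasoro-5}, your divisibility check fails. The printed second summand is $\tfrac12(n-1)^2\big((n+1)^2\psi^2-1\big)\psi^{4n-2}$, and with it you get
\[
\frac{2(p-p_c)}{\psi^{4n-2}}=(n-1)\big(n(n+1)\psi^3-(n^2+n-1)\psi^2-2n^2\psi+2n-1\big).
\]
At $\psi=1$ this equals $-2n(n-1)^2$, which is nonzero for $n\geq 2$.

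That summand is misprinted; recompute it from the formula for $C(W[2])$. Take $J=W[1]$ and $\kappa=n\psi^{4n}(1-(n+1)\psi)$ from Lemma~\ref{lem:commuting virasoro-1}. Then:
\begin{itemize}
\item $:\!JJ\!:_{(3)}:\!JJ\!:=2\kappa^2$;
\item $(\partial J)_{(3)}\partial J=-6\kappa$;
\item both mixed fourth-order poles vanish.
\end{itemize}
So the second summand is $\tfrac12(n-1)^2\big((n+1)\psi-1\big)^2\psi^{4n-2}$; the first summand is correct as printed.

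This changes only the linear and constant coefficients, to $-2$ and $1$. You then get
\[
\frac{2(p-p_c)}{\psi^{4n-2}}=(n-1)\big(n(n+1)\psi^3-(n^2+n-1)\psi^2-2\psi+1\big)=(n-1)(\psi-1)\big(n(n+1)\psi^2+\psi-1\big),
\]
which is exactly the quotient you predicted and yields \eqref{eq:ccofVir}.

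As an independent check, $\frac{(n-1)(k+n(n+1)(k+1)^2)}{k}$ is the central charge of $\cW^{-k-n}(\gs\gl_n)$, as Corollary~\ref{minextension:wsln} requires.
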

\begin{proof}
    We know that $\widetilde{W}[2]$ commutes with $C(W[2]).$ Hence we have 
    \begin{equation}
        \widetilde{W}[2]_{(3)}\widetilde{W}[2]= W[2]_{(3)}W[2]-C(W[2])_{(3)}C(W[2])=p-p_c.
    \end{equation}
    Since $Vir=\frac{1}{\psi^{2n-1}(\psi-1)}\widetilde{W}[2],$ the central charge of $Vir$ is $\frac{2(p-p_c)}{\psi^{4n-2}(\psi-1)^2}.$ The rest of the proposition follows from direct computations and the fact that $\psi= k+1.$
\end{proof}

\begin{theorem} \label{trunc:en0} $\cC^{\psi}_{\cN=2}(n,0|0)=\cW^k(\gs\gl_{n+1|n})$ for $\psi = k+1$, is a conformal extension of $\cH \otimes \cD^{\psi}(n+1,n) \otimes \cD^{-\psi + 1}(n,0)$.
\end{theorem}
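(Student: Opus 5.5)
The plan is to combine Proposition \ref{PrincW:quot} with the structure of the base case (Propositions \ref{prop:Virasoros}, \ref{prop:lambdas} and \ref{prop:base case prop}). Since $\cW^k(\gs\gl_{n+1|n})$ is a $1$-parameter quotient of $\WNtwo$, it contains the Heisenberg field $H$ together with two commuting subalgebras $\cA^{\pm}$. These are generated by $L_{\pm}, W^3_{\pm}$, commute with $H$, and are $1$-parameter quotients of $\cW_{\infty}(c_{\pm},\lambda_{\pm})$. The Virasoro element $L$ decomposes as $\frac{3}{2c}:\!HH\!: + L_+ + L_-$, so $\cH\otimes\cA^+\otimes\cA^-$ is conformally embedded. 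It therefore remains to identify $\cA^{\pm}$ with $\cD^{\psi}(n+1,n)$ and $\cD^{-\psi+1}(n,0)$, and to check simplicity.

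\textbf{Step 1 (central charges).} The field $Vir$ of Lemma \ref{lem:commuting virasoro-3} commutes with $W[1]\propto H$ and is one of the two commuting Virasoro fields $L_{\pm}$. By \eqref{eq:ccofVir} its central charge is $\frac{(n-1)(1+n+kn)(k+n+kn)}{k}$. The other central charge is $c-1$ minus this value, where $c=-3n(kn+k+n)$. I would check directly that these two numbers are the central charges of $\cD^{-\psi+1}(n,0)$ and $\cD^{\psi}(n+1,n)$ respectively, using the central charge formulas for hook-type cosets in \cite{CL2}. The first of these is a principal $\cW$-algebra of $\gs\gl_n$ at level shifted by $\psi\mapsto -\psi+1$, and its central charge visibly factors as $(n-1)(\cdots)(\cdots)/k$. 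This fixes which of $L_{\pm}$ is $Vir$. Inverting \eqref{eq:virs} then gives $\lambda^2/\omega$ as a function of $\psi$, which yields the truncation curve \eqref{gentruncationcurve} at $r=s=0$. The overall sign of $\lambda$ is fixed by the automorphism $\gamma\mapsto-\gamma$, which swaps the two $\cW_\infty$ factors.

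\textbf{Step 2 (truncation curves of $\cA^{\pm}$).} Substituting this curve into \eqref{eq:lambdas} gives $\lambda_{\pm}$ as rational functions of $\psi$. I would compare the resulting points $(c_{\pm},\lambda_{\pm})$ with the truncation curves of $\cD^{\psi}(n+1,n)$ and $\cD^{-\psi+1}(n,0)$ from \cite[Theorem 9.1]{CL2}. They must agree identically in $\psi$, which is a routine rational-function identity. Then, by the uniqueness statement in \cite{Lin} (the analogue of Corollary \ref{cor:uniquenessofcurve} for $\cW_\infty$), the simple quotients of $\cA^{\pm}$ are isomorphic to these $\cD$-algebras, since the latter are simple $1$-parameter quotients of $\cW_\infty$.

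\textbf{Step 3 (simplicity of $\cA^{\pm}$).} This is the step I expect to be the main obstacle: a priori $\cA^{\pm}$ are possibly non-simple quotients of $\cW_\infty$. My first attempt would compare characters via the large level limit $\psi\to\infty$. The free field limit of $\cW^k(\gs\gl_{n+1|n})$ is $\cA$ as in the proof of Theorem \ref{thm: Y alg}. Its Heisenberg coset splits, at the level of graded characters, into even and odd parts whose generating types are $\cW(2,\dots,n)$ and $\cW(2,\dots,n+1)\cdot$(extra). The sum of $\cH\otimes\cD^{\psi}(n+1,n)\otimes\cD^{-\psi+1}(n,0)$ has strictly fewer states than $\cW^k(\gs\gl_{n+1|n})$. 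I would therefore argue differently, by realizing $\cA^-$ as a double commutant. Set $\cB=\text{Com}(\cH\otimes\cA^+,\cW^k(\gs\gl_{n+1|n}))$. Because $\cW^k(\gs\gl_{n+1|n})$ is generically simple and $\cA^+$ is realized inside via the SUSY Miura map as a coset of simple free field type, any singular vector of $\cA^{\pm}$ would generate a proper ideal that is invisible at generic $\psi$. Its image in the free field limit would then contradict the freeness of the limiting generators up to the weight where the first singular vector of $\cW_\infty$ along these curves appears, namely $n+1$ and $(n+1)(n+2)$-type weights. If this fails, the fallback is to note that the simple quotients already give a conformal embedding of $\cH\otimes\cD^{\psi}(n+1,n)\otimes\cD^{-\psi+1}(n,0)$ into the simple quotient, which suffices for the applications.
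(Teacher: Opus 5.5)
Your Steps 1 and 2 match the paper. The central charge \eqref{eq:ccofVir} of $Vir$, together with $c_++c_-=c-1$, identifies $Vir=L_-$. Proposition \ref{prop:Virasoros} then fixes $\lambda^2/\omega$, and Proposition \ref{prop:lambdas} places $\cA^{\pm}$ on the truncation curves of $\cD^{\psi}(n+1,n)$ and $\cD^{-\psi+1}(n,0)$.

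The genuine gap is Step 3, and simplicity of $\cA^{\pm}$ is part of the statement. Your argument assumes that a singular vector of $\cA^{\pm}$ would generate a proper ideal of the (generically simple) algebra $\cW^k(\gs\gl_{n+1|n})$. This is false in general. Such a vector is annihilated by the weight-lowering modes of $L_+$ and $W^3_+$, but the weight-lowering modes of the fields of $\cW^k(\gs\gl_{n+1|n})$ lying outside $\cH\otimes\cA^+\otimes\cA^-$ can map it back to the vacuum. So a $\cW_\infty$-quotient sitting inside a simple algebra need not be simple. Indeed, the paper does not expect the first $\cW_\infty$-factor of the simple algebras $\cE^{\psi}_{\cN=2}(n,r)$ to be simple in general; see the remark after Corollary \ref{trunc:DandE}.

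Your free-field-limit reasoning also points the wrong way. $\cA^{+}$ is $\cW^{I}_{\infty}/\mathcal{J}$ with $\mathcal{J}$ contained in the maximal ideal. Simplicity means the singular vectors of $\cW^{I}_{\infty}$ actually \emph{vanish} in $\cA^{\pm}$: the decoupling relation in weight $(n+1)(n+2)$ for $\cA^+$, the one in weight $n+1$ for $\cA^-$, and everything they generate. That is an upper bound on the character of $\cA^{\pm}$. Absence of relations among the limiting generators below those weights gives only a lower bound, which is automatic.

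Your fallback does not help either. As a $1$-parameter algebra, $\cW^k(\gs\gl_{n+1|n})$ is already simple, so passing to its simple quotient does not change the image of $\cA^{\pm}$. In any case it would not prove the theorem as stated.

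The missing idea is to transport the question to the GKO side. Theorem \ref{N=2:duality} gives $\cE^{\psi}_{\cN=2}(n,0)\cong\cE^{\psi^{-1}}_{\cN=2}(0,n)$. Its proof uses only the truncation curves of $\cA^{\pm}$ from your Steps 1 and 2, not their simplicity, so there is no circularity. Because it is an isomorphism of quotients of $\WNtwo$, it matches $H$, $L_{\pm}$, $W^3_{\pm}$ on both sides. It therefore carries $\cA^{\pm}$ onto the corresponding subalgebras of $\cE^{\psi^{-1}}_{\cN=2}(0,n)$.

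By Corollary \ref{trunc:e0md0m}, with $\psi^{-1}=\ell+n+1$, these subalgebras are the explicit affine cosets $\text{Com}(V^{\ell}(\gg\gl_n),V^{\ell}(\gs\gl_{n+1}))=\cC^{\psi^{-1}}(1,n)$ and $\text{Com}(V^{\ell+1}(\gg\gl_n),V^{\ell}(\gs\gl_n)\otimes\cE(n))=\cD^{1-\psi^{-1}}(0,n)$. Both are simple $1$-parameter algebras. By triality they are $\cD^{\psi}(n+1,n)$ and $\cD^{-\psi+1}(n,0)$, which is exactly the simplicity you need.
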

\begin{proof} We have shown that the central charge of one of the Virasoro fields $Vir$ commuting with $H$ is given by \eqref{eq:ccofVir}. Since $\cW^k(\gs\gl_{n+1|n})$ has central charge $-3 n (k + n + k n)$, it follows that the central charge of the other Virasoro field commuting with $H$ is $-\frac{n (-1 + k + n + k n) (1 + 2 k + n + k n)}{k}$. Therefore $Vir = L_-$ and the other one is $L_+$, and by Proposition \ref{prop:Virasoros}, this completely determines the parameter $\lambda$, or equivalently the truncation curve for $\cE^{\psi}(n,0)$. 

By Proposition \ref{prop:lambdas}, this determines the parameters $\lambda_{\pm}$, or equivalently, the truncation curves for the quotients $\cA^{\pm}$ of $\cW^{\pm}_{\infty}$, respectively. In particular, $\cA^+$ and $\cA^-$ are possibly non-simple quotients of $\cW^{\pm}_{\infty}$ along the truncation curves for $\cD^{\psi}(n+1,n)$ and $\cD^{-\psi + 1}(n,0)$, respectively. Finally, the fact that $\cA^+$ and $\cA^-$ are the {\it simple} quotients $\cD^{\psi}(n+1,n)$ and $\cD^{-\psi + 1}(n,0)$ is immediate from Corollary \ref{trunc:e0md0m} together with the duality $\cE^{\psi}_{\mathcal{N}=2}(n,0) \cong \cE^{\psi^{-1}}_{\cN=2}(0,n)$ given by Theorem \ref{N=2:duality}, which will be proved in the next section.
\end{proof}

\begin{corollary} \label{minextension:wsln} In the terminology of \cite{SY25}, $\cW^k(\gs\gl_{n+1|n})$ is a minimal $\mathcal{N}=2$ supersymmetric extension of $\cW^{-k-n}(\gs\gl_n)$.
\end{corollary}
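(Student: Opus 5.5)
We first identify $\cD^{-\psi+1}(n,0)$ with a principal $\cW$-algebra of $\gs\gl_n$. By definition, $\cD^{\psi'}(n,0)=\cV^{\psi'}(n,0)=\cW^{k'}(\gs\gl_n)$ with $\psi' = k'+n$; there is no affine subalgebra to take a coset by. Setting $\psi' = -\psi+1$ and using $\psi = k+1$ gives
$$k'+n = -k,\qquad\text{so}\qquad k' = -k-n.$$
Hence $\cD^{-\psi+1}(n,0)\cong \cW^{-k-n}(\gs\gl_n)$ as $1$-parameter vertex algebras. As a consistency check, we compare its central charge with the central charge of $Vir=L_-$ computed in \eqref{eq:ccofVir}. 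The Fateev--Lukyanov formula gives $c = -(n-1)\bigl(n(n+1)(k'+n-1)^2/(k'+n) - \ldots\bigr)$. Substituting $k'=-k-n$ should reproduce $\frac{(n-1)(1+n+kn)(k+n+kn)}{k}$; we will record this as a one-line check.

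By Theorem \ref{trunc:en0}, $\cW^k(\gs\gl_{n+1|n})$ contains $\cA^-\cong \cD^{-\psi+1}(n,0)\cong\cW^{-k-n}(\gs\gl_n)$, strongly generated by $L_-$ and the fields $W^m_-$ of Proposition \ref{prop:lambdas}. So $\cW^k(\gs\gl_{n+1|n})$ is an $\cN=2$ supersymmetric extension of $\cW^{-k-n}(\gs\gl_n)$ in the sense of \cite{SY25}. The embedding need not be conformal, which the definition allows.

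For minimality we copy the argument of Corollary \ref{minextension}, specialized along the truncation curve. Let $\cA\subseteq\cW^k(\gs\gl_{n+1|n})$ be an $\cN=2$ superconformal subalgebra containing the image of $\cW^{-k-n}(\gs\gl_n)$. Then $\cA$ contains $H,G^\pm,L$, hence $T=L-\frac{3}{2c}:\!HH\!:$. It also contains $L_-$, so it contains $L_+=T-L_-$. By \eqref{eq:virs}, $W^{2,\bot}$ is a linear combination of $T$ and $L_+-L_-$, provided the coefficient $\lambda/\sqrt{\lambda^2+32\omega/(c-1)}$ is nonzero and finite along the curve \eqref{gentruncationcurve} with $r=s=0$. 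Generically $\lambda\neq 0$ there, so $W^{2,\bot}\in\cA$, and applying $G^\pm_{(0)}$ gives the whole diamond $W^2$. Proposition \ref{lem:weakgen} says $\cW^k(\gs\gl_{n+1|n})$ is weakly generated by $H,G^\pm,L,\omega^{2,\bot}$ for all noncritical $k$, so $\cA$ is everything.

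The main obstacle is the nondegeneracy step: we must check that $c_+\neq c_-$, equivalently $\lambda\neq0$ and $\omega\neq -\frac{1}{32}(c-1)\lambda^2$, so that $L_\pm$ are distinct and $W^{2,\bot}$ is recovered from them. We plan to verify this from the explicit central charges. One has $c_-=\frac{(n-1)(1+n+kn)(k+n+kn)}{k}$ and $c_+=-\frac{n(-1+k+n+kn)(1+2k+n+kn)}{k}$, and these differ as rational functions of $k$. Since the claim is about $1$-parameter vertex algebras, generic $k$ suffices. For $n=1$ the subalgebra $\cW^{-k-1}(\gs\gl_1)$ is trivial; we will handle this degenerate case separately, interpreting the statement as the trivial extension or excluding it.
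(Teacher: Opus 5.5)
Your proposal is correct and follows the paper's proof. You identify $\cD^{-\psi+1}(n,0)\cong\cW^{-k-n}(\gs\gl_n)$ via $k'+n=-k$, obtain the embedding as $\cA^-$ from Theorem~\ref{trunc:en0}, and get minimality by the argument of Corollary~\ref{minextension}: containing $L_-$ forces $L_+$, hence $W^{2,\bot}$, hence everything by weak generation. One small remark: the nondegeneracy you flag as the main obstacle is milder than you state, since by \eqref{eq:virs} one has $L_--L_+=\frac{\lambda}{\gamma}T-\frac{4}{\gamma}W^{2,\bot}$ with $\gamma=\sqrt{\lambda^2+32\omega/(c-1)}$, so only $\gamma\neq0$ is needed (already required for $L_\pm$ to exist), not $\lambda\neq0$ or $c_+\neq c_-$.
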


\begin{proof} Since $\cD^{-\psi+1}(n,0) \cong \cW^{-k-n}(\gs\gl_n)$ for $\psi = k+1$, it is immediate that $\cW^k(\gs\gl_{n+1|n})$ is an $\mathcal{N}=2$ supersymmetric extension of $\cW^{-k-n}(\gs\gl_n)$. Minimality follows from the same argument as Corollary \ref{minextension}.
\end{proof}

\subsection{The cases $\cC^{\psi}_{\mathcal{N}=2}(n, r|s)$ with $n\geq 1$ and $r+s > 0$}

We need a different argument to show that in the remaining cases, $\cC^{\psi}_{\mathcal{N}=2}(n, r|s)$ is a $1$-parameter quotient of $\cW^{\mathcal{N}=2}_{\infty}$ because even though it has the correct strong generating type, it is not yet obvious that it is generated by the fields in weight $1, \frac{3}{2}, 2$. 

\begin{lemma} \label{lem:genofcpsi} For $n\geq 1$ and $r+ s > 0$, $\cC^{\psi}_{\mathcal{N}=2}(n, r|s)$ is weakly generated by the fields 
$$\{H,G^{\pm},L, W^{i,\bot}, W^{i,\pm}, W^{i,\top}|\ i = 2,3,\dots, n+2\},$$ for generic levels.
\end{lemma}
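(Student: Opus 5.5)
Since weak generation is an open condition in the parameter and $\cC^{\psi}_{\cN=2}(n,r|s)$ deforms to its large level limit, the plan is to prove the statement in the limit and lift it back. By Theorem \ref{thm: Y alg} and Theorem \ref{thm:orbifoldlimit}, the large level limit is
\[\cA \otimes \big(\cO(s|r,n+1) \otimes \cO(r|s,n+2)\big)^{\text{GL}_{r|s}},\]
and it is strongly generated by the principal-block generators together with the quadratic invariants $\omega^{n+d+1,*}_{\infty}$ of \eqref{eq: Y gen}, $d\geq 0$. The standard argument (as in \cite{CL2}, \cite{Lin}) is that if the subalgebra generated by a finite set of fields contains all strong generators in the limit, then for generic $\psi$ it contains all strong generators of $\cC^{\psi}_{\cN=2}(n,r|s)$. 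The reason is that the statement "a given strong generator lies in the span of normally ordered monomials in the listed fields" is a rank condition, and this condition holds at $\psi=\infty$. So it suffices to check, in the limit algebra, that the fields $H,G^{\pm},L$ and the diamonds $W^{i}$ for $2\leq i\leq n+2$ generate everything.

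The principal-block fields of weights at most $n+1$ are among the listed ones, so only the diamonds $W^{n+d+1}$ with $d\geq 2$ need to be produced. These come from the invariant-theoretic part. The diamonds $W^{n+1}$ and $W^{n+2}$ have bottoms $\omega^{n+1,\bot}_\infty=\sum \pm :\!\omega_{p^{i,\bot}}\omega_{q^{i,\bot}}\!:$ and $\omega^{n+2,\bot}_\infty=\sum\pm:\!\partial\omega_{p^{i,\bot}}\omega_{q^{i,\bot}}\!:$, up to mixing with principal generators. In the free field limit, taking $(j)$-products of these quadratic fields with one another yields the quadratics with more derivatives. Concretely, $(\omega^{n+2,\bot}_\infty)_{(1)}\omega^{n+d+1,\bot}_\infty$ equals a nonzero multiple of $\omega^{n+d+2,\bot}_\infty$ plus total derivatives and lower terms. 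This is the usual computation for $\beta\gamma$/$bc$-type $\text{GL}$-orbifolds, where one contraction between a $p$ and a $q$ shifts the derivative count. To get the $\pm$ and $\top$ members of each new diamond, I apply $G^{\pm}_{(0)}$, which is legitimate because the $\cN=2$ fields are among the generators. This gives an induction on $d$ showing all $W^{n+d+1}$ are generated.

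The main obstacle is showing that the coefficient of the new generator in this product is nonzero, and that it is not cancelled by cross terms between the two tensor factors $\cO(s|r,n+1)$ and $\cO(r|s,n+2)$. My first attempt is to work in the associated graded and track only terms of the form $:\!\partial^{a}\omega_{p}\,\partial^{b}\omega_{q}\!:$. The contraction coefficients are then explicit binomials of the form $\binom{a}{j}$ times the pairing constants of $\cO(\cdot,n+1)$, and they are nonzero for generic parameters. If a single product turns out to vanish by parity or symmetry (the $\text{GL}_{r|s}$ supertrace signs could cancel when $r=s$), I would use instead $W^{n+2,\top}_{(1)}W^{N,\bot}$, mirroring \eqref{eq:raise}, or $W^{2,\top}_{(1)}$ acting on principal generators, and take a suitable linear combination. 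The inclusion of weights up to $n+2$ in the statement, rather than only $n+1$, is meant to guarantee that at least one such product is nondegenerate.
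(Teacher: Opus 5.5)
Your reduction to the large level limit is exactly the paper's route. Its proof only asserts that $\big(\cO(s|r,n+1)\otimes\cO(r|s,n+2)\big)^{\text{GL}_{r|s}}$ is generated by its fields of weights $n+1,\frac{2n+3}{2},n+2$. The verification you sketch inside the limit, however, fails as written.

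\textbf{The $G^{\pm}_{(0)}$ step.} The limit is a tensor product $\cA\otimes(\cdots)^{\text{GL}_{r|s}}$. The suitably rescaled limits of $G^{\pm}$, like those of all principal-block generators, are free fields in $\cA$ with regular OPE with the orbifold factor. So in the limit $G^{\pm}_{(0)}$ annihilates the orbifold factor and cannot produce the $\pm$ and $\top$ members of the new diamonds. Your fallbacks fail for the same reason:
\begin{itemize}
\item $W^{2,\top}_{(1)}$ applied to principal generators gives only scalars in the limit.
\item $W^{n+2,\top}$ has weight $n+3$, so its $(1)$-product raises weight by $n+1$, not by one. Moreover, its new part in the limit is a $p^{+}q^{-}$-quadratic in $\cO(r|s,n+2)$, which has no contractions with the $p^{\bot}q^{\bot}$-quadratics.
\end{itemize}
There are two ways to repair this.
\begin{itemize}
\item Generate all four quadratic families in the limit by OPEs among the quadratics themselves. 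For instance, $(\omega^{n+2,\bot}_{\infty})_{(n)}$ raises the mixed $p^{\bot}q^{-}$ family through its single $q^{\bot}$--$p^{\bot}$ contraction.
\item Use the limit only for the bottoms. Suppose all $W^{e,*}$ with $e<d$ already lie in the generated subalgebra. The limit then shows that this subalgebra contains $W^{d,\bot}$, with a coefficient nonzero for generic $\psi$, modulo normally ordered polynomials in lower diamonds. Only then apply $G^{\pm}_{(0)}$, at generic $\psi$. There it is a derivation of all $(j)$-products and preserves the span of the lower diamonds, hence the subalgebra they generate.
\end{itemize}

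\textbf{The product you use.} It has the wrong mode. $(\omega^{n+2,\bot}_{\infty})_{(1)}\omega^{n+d+1,\bot}_{\infty}$ has weight $2n+d+1$, which equals $n+d+2$ only for $n=1$. To raise weight by one with a field of weight $n+2$ you need the $(n)$-product. Nonvanishing is also not a genericity question: no parameters are left in the limit, so every coefficient is a fixed number that must be checked.

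With the correct mode, a rank-one Wick computation gives, modulo derivatives, $(\omega^{n+2,\bot}_{\infty})_{(n)}\omega^{n+d+1,\bot}_{\infty}\equiv\gamma\big(\binom{n+d}{d}+(-1)^{n+1}(n+1)\big)\,\omega^{n+d+2,\bot}_{\infty}$ with $\gamma\neq 0$. This is nonzero for $d\geq 2$, but for $d=1$ it vanishes when $n$ is even. That agrees with skew-symmetry, which makes $A_{(n)}A$ a total derivative when $A$ and $n$ are both even. So for even $n$ the first step needs another product, e.g.\ $(\omega^{n+1,\bot}_{\infty})_{(n-1)}\omega^{n+2,\bot}_{\infty}$, whose coefficient is a nonzero multiple of $\frac{n+1}{2}+(-1)^{n}$.

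Your worry about supertrace cancellations when $r=s$ is unfounded. The product is $\text{GL}_{r|s}$-invariant, and invariant quadratics of a fixed type and weight are one-dimensional modulo derivatives. So the coefficient can be read off from a single index.
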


\begin{proof} It suffices to show that the free field limit has this property. This is given in the proof of Theorem \ref{thm: Y alg}, and it suffices to show that  
$(\cO(s|r, n+1) \otimes \cO(r|s, n+2))^{\text{GL}_{r|s}}$ is generated by the fields in weights $n+1, \frac{2n+3}{2}, n+2$. This is easy to verify directly.
\end{proof}

Next, let $$\tilde{\cC}^{\psi}_{\mathcal{N}=2}(n, r|s) \subseteq \cC^{\psi}_{\mathcal{N}=2}(n, r|s)$$ be the subalgebra generated by the fields in weights $1, \frac{3}{2}, 2$, which clearly has an action of the automorphism $\sigma$ given by \eqref{eq:sigma}. A priori, $\tilde{\cC}^{\psi}_{\mathcal{N}=2}(n, r|s)$ can be a proper subalgebra of $\cC^{\psi}_{\mathcal{N}=2}(n, r|s)$ and therefore need not be simple even if $r = 0$ or $s = 0$. Let $\{\omega^{i, \bot}, \omega^{i,\pm}, \omega^{i,\top}| \ r \geq 1 \}$ be the strong generators of $\cC^{\psi}_{\mathcal{N}=2}(n, r|s)$ corresponding to the large level limits which are given by Theorem \ref{thm: Y alg}. For $r \geq 3$, we can write 
\begin{equation} \label{lambdaterm} W^{r,*} = \lambda_r \omega^{r,*} + \cdots,\qquad \lambda_r \in \mathbb{C},\end{equation} where the remaining terms are normally ordered monomials in lower fields. If $\lambda_r \neq 0$ for all $r$, then $\tilde{\cC}^{\psi}_{\mathcal{N}=2}(n, r|s) = \cC^{\psi}_{\mathcal{N}=2}(n, r|s)$. Otherwise, let $M\geq 2$ be the first integer such that $\lambda_{M+1} = 0$.

\begin{lemma} \label{lem:tildecpsi} With $M$ as above, $\{H, G^{\pm}, L, \omega^{i,\bot}, \omega^{i,\pm}, \omega^{i,\top}|\ 2\leq i \leq M\}$ close under OPE, so that $\tilde{\cC}^{\psi}_{\mathcal{N}=2}(n, r|s)$ is of type $\cW\big(1,2^2,3^3,\dots, M^2, M+1; \big(\frac{3}{2}\big)^2,  \big(\frac{5}{2}\big)^2,\dots, \big(\frac{2M+1}{2}\big)^2 \big)$. 
\end{lemma}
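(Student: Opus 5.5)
The plan is to show that the span $\mathcal{A}$ of normally ordered monomials in $H,G^{\pm},L,\omega^{i,\bot},\omega^{i,\pm},\omega^{i,\top}$ ($2\leq i\leq M$) and their derivatives is all of $\tilde{\cC}^{\psi}_{\mathcal{N}=2}(n,r|s)$. Closure under OPE then follows at once, and the generating type follows from counting.

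\emph{Step 1: $\tilde{\cC}^{\psi}_{\mathcal{N}=2}(n,r|s)$ is a quotient of $\cW^{\cN=2}_{\infty}$.}
\begin{itemize}
\item $\tilde{\cC}^{\psi}_{\mathcal{N}=2}(n,r|s)$ is generated by the fields of weights $1,\frac32,2$ by definition.
\item It carries the action of $\sigma$ from \eqref{eq:sigma}.
\item Its strong generators can be taken with $\cN=2$ diamond structure by Theorem \ref{thm: Y alg}.
\item Hence, exactly as in Theorem \ref{trunc:c0rs}, it satisfies the hypotheses of Theorem \ref{one-parameter quotients theorem}. So it is a quotient of $\cW^{\cN=2}_{\infty}$, and the fields $W^{N,*}$ defined by \eqref{eq:raise} strongly generate it.
\end{itemize}

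\emph{Step 2: the fields $W^{i,*}$ with $i\leq M+1$ lie in $\mathcal{A}$.}
\begin{itemize}
\item By \eqref{lambdaterm}, $W^{i,*}=\lambda_i\omega^{i,*}+\cdots$ with $\lambda_i\neq 0$ for $3\leq i\leq M$. The correction terms are normally ordered polynomials in lower fields.
\item This change of variables is triangular and invertible, so for $i\leq M$ the fields $W^{i,*}$ and $\omega^{i,*}$ generate the same span $\mathcal{A}$.
\item Since $\lambda_{M+1}=0$, the field $W^{M+1,*}$ equals a normally ordered polynomial in $\omega^{j,*}$ with $j\leq M$. Hence $W^{M+1,*}\in\mathcal{A}$.
\item Here I must check that "lower fields" in \eqref{lambdaterm} means fields of lower conformal weight, i.e. the $\omega^{j,*}$ with $j\leq M$ together with $\omega^{M,\top},\omega^{M,\pm}$ where the weights allow. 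That is what the triangular structure of the generators of Theorem \ref{thm: Y alg} gives.
\end{itemize}

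\emph{Step 3: $\mathcal{A}$ is closed under all modes of $W^1$ and $W^2$.}
\begin{itemize}
\item From the ansatz (\ref{ansatzBottomBottom}--\ref{ansatzXH}) and weight and charge considerations, every product $W^{2,*}_{(r)}W^{j,*}$ is a normally ordered polynomial in the diamonds $W^1,\dots,W^{j+1}$.
\item For $j\leq M-1$ this product lies in $\mathcal{A}$ trivially. For $j=M$ it lies in $\mathcal{A}$ by Step 2.
\item The products $W^{1}_{(r)}W^{j}$ involve only diamonds up to $W^j$ (compare the form of $W^1\times W^N$ in the base case), so they lie in $\mathcal{A}$ as well.
\item The quasi-derivation identity \eqref{quasi-derivation}, together with \eqref{conformal identity}, then shows that $W^{1,*}_{(r)}$ and $W^{2,*}_{(r)}$ for $r\geq 0$ preserve $\mathcal{A}$.
\end{itemize}

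\emph{Step 4: every $W^{N,*}$ lies in $\mathcal{A}$.}
\begin{itemize}
\item Induct on $N$, using $W^{N+1,\bot}=W^{2,\top}_{(1)}W^{N,\bot}$ from \eqref{eq:raise}.
\item The other members of the diamond are obtained by applying $G^{\pm}_{(0)}$, and $\mathcal{A}$ is stable under these modes by Step 3.
\item Hence every $W^{N,*}$ lies in $\mathcal{A}$. Since these fields strongly generate $\tilde{\cC}^{\psi}_{\mathcal{N}=2}(n,r|s)$ by Step 1, we get $\mathcal{A}=\tilde{\cC}^{\psi}_{\mathcal{N}=2}(n,r|s)$.
\item In particular the listed $\omega$'s close under OPE.
\end{itemize}

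\emph{Step 5: minimality and generating type.}
\begin{itemize}
\item The fields $\omega^{i,*}$ with $i\leq M$ admit no normally ordered relations among themselves in these weights. Their large level limits are independent strong generators of the free field limit of $\cC^{\psi}_{\mathcal{N}=2}(n,r|s)$ by Theorem \ref{thm: Y alg}, so the generating set is minimal.
\item Counting the diamonds $W^1,\dots,W^M$ gives the stated type. Weight $i$ occurs twice for $2\leq i\leq M$, as $\omega^{i,\bot}$ and $\omega^{i-1,\top}$. Weight $M+1$ occurs once, as $\omega^{M,\top}$. The weights $\frac{2i+1}{2}$ each occur twice.
\end{itemize}

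The main obstacle is the bookkeeping in Step 3. One must verify that no OPE $W^{2}\times W^{j}$ with $j\leq M$ can produce a diamond beyond $W^{j+1}$. This is forced by conformal weight: the top possible new field has weight $j+1+\frac12$ or $j+2$, and those lie in $W^{j+1}$.
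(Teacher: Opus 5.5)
There is a genuine gap in Step~1, and Steps~3--4 rest on it. Applying Theorem~\ref{one-parameter quotients theorem} requires its hypothesis (iii): the OPEs $W^{N}(z)W^{N'}(w)$ with $N+N'\leq 5$ in $\tilde{\cC}^{\psi}_{\mathcal{N}=2}(n,r|s)$ must agree with a specialization of those of $\cW^{\cN=2}_{\infty}$. This is not automatic, and Theorem~\ref{trunc:c0rs} (which concerns $n=0$) does not supply it. The available tool is Corollary~\ref{cor:gradedcharacter}, which needs the graded characters to agree through weight $6$. That fails whenever $M<6$, because the relation from your own Step~2, expressing $W^{M+1,*}$ through lower fields, sits in weight $M+1\leq 6$; the case $(n,r|s)=(1,1|0)$ is exceptional as well. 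In the paper, realizing $\tilde{\cC}^{\psi}_{\mathcal{N}=2}(n,r|s)$ as a $\cW^{\cN=2}_{\infty}$-quotient is Lemma~\ref{thm:tildednm}. That lemma comes after the present one, and for $M<6$ it uses it as input: the closed strong generating set $\{H,G^{\pm},L,\omega^{i,*}\,|\,i\leq M\}$, free of relations in weight $\leq M$, is what allows the universal OPE algebra to be rebuilt on top of it. So Step~1 assumes something logically downstream of the statement. Your route might be completed for $M\geq 6$, but not in the case $M<6$, which is where the lemma is really needed.

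Steps~3 and~4 inherit the problem. The claim that $W^{2,*}_{(r)}W^{j,*}$ involves only the diamonds $W^1,\dots,W^{j+1}$ is the ansatz of the universal algebra. Without Step~1 you would need the $\omega^{M+1,*}$-components of all the products $W^{2,*}_{(r)}W^{M,*}$ to vanish, whereas Step~2 only controls $W^{2,\top}_{(1)}W^{M,\bot}=W^{M+1,\bot}$. Even granting the universal OPEs, \eqref{quasi-derivation} does not formally show that $W^{2,*}_{(r)}$ with $r\geq 1$ preserves $\mathcal{A}$, since only zero modes such as $G^{\pm}_{(0)}$ are derivations. The correction terms, and the re-normal-ordering of $:\!(W^{2,*}_{(r)}x)y\!:$ into monomials in generators, are non-negative products between elements of $\mathcal{A}$. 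For instance, the expansion of $W^{2,\top}_{(2)}:\!W^{M,\bot}W^{M,\bot}\!:$ contains $2\,W^{M+1,\bot}_{(0)}W^{M,\bot}$. By \eqref{ansatzBottomBottom} this can involve diamonds up to $W^{2M-1}$, which for $M\geq 3$ Step~4 has not yet placed in $\mathcal{A}$. Controlling such products is exactly the closure being proved, so Steps~3 and~4 would have to be merged into a single induction on conformal weight. The paper instead defers to the argument of \cite[Lemma 5.10]{CL3}, which does not presuppose that $\tilde{\cC}^{\psi}_{\mathcal{N}=2}(n,r|s)$ is a $\cW^{\cN=2}_{\infty}$-quotient. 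That property is established only afterwards, in Lemma~\ref{thm:tildednm}.
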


\begin{proof} This is the same as the proof of \cite[Lemma 5.10]{CL3}. \end{proof}

\begin{lemma} \label{thm:tildednm} For $n \geq 1$ and $r+s > 0$, $\tilde{\cC}^{\psi}_{\mathcal{N}=2}(n, r|s)$ is a $1$-parameter quotient of $\cW^{\mathcal{N}=2}_{\infty}$ with truncation curve given by \eqref{gentruncationcurve}.
 \end{lemma}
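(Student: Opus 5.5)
The plan is to verify the hypotheses of Theorem \ref{one-parameter quotients theorem} (or Corollary \ref{cor:gradedcharacter}) for $\tilde{\cC}^{\psi}_{\mathcal{N}=2}(n, r|s)$, and then to compute its truncation curve with the reconstruction Theorem \ref{recon:first}.

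\textbf{Step 1: structural hypotheses.} By construction, $\tilde{\cC}^{\psi}_{\mathcal{N}=2}(n, r|s)$ is generated by the fields in weights $1,\frac{3}{2},2$, namely $H,G^{\pm},L$ and the second diamond $W^2$.
\begin{itemize}
\item By Theorem \ref{thm: Y alg}, (\textbf{U}1) holds and the strong generators can be organized into $\mathcal{N}=2$ diamonds.
\item By Appendix \ref{sect:primary structures}, the diamond $W^2$ may be taken $\mathcal{N}=2$ primary.
\item The automorphism $\sigma$ of Theorem \ref{thm: Y alg}(3) preserves the subalgebra generated by these fields. We normalize $W^{2,\bot}$ so that $\sigma(W^{2,\bot})=W^{2,\bot}$, which gives (\textbf{U}3).
\item Define $W^{N+1,\bot}:=W^{2,\top}_{(1)}W^{N,\bot}$, so (\textbf{U}4) holds by definition.
\end{itemize}
Lemma \ref{lem:tildecpsi} identifies the strong generating type of $\tilde{\cC}^{\psi}_{\mathcal{N}=2}(n, r|s)$: it is of the form \eqref{princ:gentype} with $n$ replaced by $M$, or of infinite type \eqref{eq: n=2 inf} if no $\lambda_r$ vanishes. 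In either case the subalgebra is a $1$-parameter vertex algebra over a localization of $\mathbb{C}[\psi]$.

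\textbf{Step 2: OPEs of weight at most $5$.} Write $c(\psi)$ for the central charge \eqref{crs}. Condition (iii) of Theorem \ref{one-parameter quotients theorem} requires the OPEs $W^N(z)W^M(w)$ with $N+M\le 5$ to match those of $\cW^{\cN=2}_{\infty}$ after specializing $c=c(\psi)$, $\lambda=\lambda(\psi)$ for some function $\lambda(\psi)$. We derive this from the graded character, as in the proof of Corollary \ref{cor:gradedcharacter}. The large level limit \eqref{orbifoldlimit:Dns} has no relations in low weight; by \eqref{sing vectors Y} the first classical relation sits in weight $\zeta(n,r|s)\ge 7$ whenever $n\ge1$ and $r+s>0$. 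Consequently no null vectors occur below weight $7$, and the Jacobi identities force the OPEs to agree with the universal ones along a curve $\lambda=\lambda(\psi)$. The case where $M$ is small needs care, since the character bound is no longer immediate there. In that case we only need the weights up to $M+1$ to be free, and we check this from the explicit leading terms. Theorem \ref{one-parameter quotients theorem} then realizes $\tilde{\cC}^{\psi}_{\mathcal{N}=2}(n, r|s)$ as a quotient of $\cW^{\cN=2,I}_{\infty}$ for some prime ideal $I$.

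\textbf{Step 3: identifying the curve.} This is the expected obstacle. The subalgebra $\tilde{\cC}$ does not contain the extension fields $P,Q$, so Theorem \ref{recon:first} cannot be applied to it directly. We apply the reconstruction to the ambient algebra instead. The algebra $\cW^{k}(\gs\gl_{n+r+1|n+s},F_{n+1|n})$ (or its $\gp\gs\gl$ version) contains the subalgebra $\tilde{\cC}\otimes V^{k+1}(\ga)$, so we replace $\cW$ in condition (1) of Theorem \ref{recon:first} by $\tilde{\cC}$. The remaining data come from the known generators:
\begin{itemize}
\item the extension fields $P^{i,\bot},P^{i,-},Q^{i,\bot},Q^{i,+}$ are the corrected $\omega_{p^{i,*}},\omega_{q^{i,*}}$;
\item their $\mathcal{N}=2$ and affine primarity follows from weight and charge considerations together with Theorem \ref{cor:primary};
\item nondegeneracy of the Shapovalov form holds for generic $k$, as one sees in the free field limit.
\end{itemize}
We only need the Jacobi identities $J(W^2,W^2,P)$ used to derive \eqref{eq:trunc}, and these involve only $W^1,W^2,W^3$ and $P$. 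Substituting $\alpha$ from \eqref{eq:alpha} and $c(\psi)$ into \eqref{eq:trunc} then yields \eqref{gentruncationcurve}. If the degeneracy $\lambda_{M+1}=0$ enters at weight $3$, we fall back on a separate check. For the extension fields, $p_1$ is nonzero generically, so the computation of $\lambda^2/\omega$ still goes through.
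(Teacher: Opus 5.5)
Your outline follows the paper's proof. You check (\textbf{U}1)--(\textbf{U}4), use Corollary \ref{cor:gradedcharacter} when the character agrees with that of $\cW^{\cN=2}_{\infty}$ through weight $6$, and treat small $M$ by the absence of relations through weight $M$. You then read off the curve from Theorem \ref{recon:first}. Your remark that only $J(W^2,W^2,P)$ enters, so one can work inside the ambient $\cW$-superalgebra even if $\tilde{\cC}\neq\cC$, is a sensible elaboration of something the paper leaves implicit.

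The gap is the assertion in Step 2 that $\zeta(n,r|s)\geq 7$ whenever $n\geq 1$ and $r+s>0$. This is false. By \eqref{sing vectors Y}, $\zeta(1,1|0)=1\cdot(2+1+1+1+0)=5$, and likewise $\zeta(1,0|1)=5$; these are exactly the cases with $\zeta<7$. Concretely, for $(1,1|0)$ the limit fields $\omega_{p^{1,\bot}},\omega_{q^{1,\bot}}$ are odd of weight $1$, and $\omega_{p^{1,+}},\omega_{q^{1,-}}$ are even of weight $\frac{3}{2}$. The two ways of pairing $\omega_{p^{1,\bot}}\omega_{q^{1,-}}\omega_{p^{1,+}}\omega_{q^{1,\bot}}$ into $\mathrm{GL}_1$-invariants give a relation in weight $5$. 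So the character of the coset already differs from that of $\cW^{\cN=2}_{\infty}$ in weight $5$. Corollary \ref{cor:gradedcharacter} is therefore unavailable, and your argument does not force the OPEs $W^N(z)W^M(w)$ with $N+M\leq 5$ to be the universal ones. Your small-$M$ fallback does not help either. It addresses $\tilde{\cC}$ being a proper subalgebra because some $\lambda_{M+1}$ vanishes, whereas here the weight-$5$ relation is present in the full coset $\cC^{\psi}_{\cN=2}(1,1|0)$ itself.

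This is why the paper excludes $(1,1|0)$ from the main argument and treats it separately:
\begin{itemize}
\item From the large level limit, $\cC^{\psi}_{\cN=2}(1,1|0)$ is generically of type $\cW(1,2^2,3^2,4^2,5;(\frac{3}{2})^2,(\frac{5}{2})^2,(\frac{7}{2})^2,(\frac{9}{2})^2)$.
\item Imposing the Jacobi identities for this type under (\textbf{U}1)--(\textbf{U}3) leaves exactly two $1$-parameter OPE algebras. One is the freely generated principal $\cN=2$ $\cW$-superalgebra of this type, so the other must be the coset, which is not freely generated because its large level limit is not.
\item One then checks directly that this OPE algebra is the specialization of $\cW^{\cN=2}_{\infty}$ along \eqref{gentruncationcurve} with $n=r=1$, $s=0$. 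This gives both the quotient statement and the curve without Corollary \ref{cor:gradedcharacter}.
\end{itemize}
You need an argument of this kind for each case with $\zeta(n,r|s)<7$ to complete Step 2.
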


\begin{proof} 
 We first exclude the case $\tilde{\cC}^{\psi}_{\mathcal{N}=2}(1, 1|0)$ which must be handled separately. In all other cases, let $\{H, G^{\pm}, L,\omega^{i,\bot}, \omega^{i,\pm}, \omega^{i,\top}|\ 2\leq i \leq M\}$ be the strong generating set for $\tilde{\cC}^{\psi}_{\mathcal{N}=2}(n, r|s)$ given by Lemma \ref{lem:tildecpsi}. We need a slightly different argument in the cases $M \geq 6$ and $M < 6$.

Suppose first that $M \geq 6$. By Corollary \ref{cor:diamond} and Theorem \ref{one-parameter quotients theorem}, to show that $\tilde{\cC}^{\psi}_{\mathcal{N}=2}(n, r|s)$ is a $1$-parameter quotient of $\cW^{\mathcal{N}=2}_{\infty}$, it suffices to prove that $\{H,G^{\pm}, L, W^{i,\bot},W^{i,\pm},W^{i,\top}|\ 2\leq i \leq 4\}$ satisfy the OPE relations $D_5$ in $\cW^{\mathcal{N}=2}_{\infty}$; equivalently, all Jacobi identities in $J_7$ hold as a consequence of \eqref{skew-symmetry}-\eqref{quasi-derivation}.

By Corollary \ref{cor:gradedcharacter} this condition is automatic if the graded character of $\tilde{\cC}^{\psi}_{\mathcal{N}=2}(n, r|s)$ coincides with that of $\cW^{\mathcal{N}=2}_{\infty}$ up to weight $6$. Recall from \eqref{sing vectors Y} that the first relation among the generators $$\{H,G^{\pm}, L, W^{i,\bot},W^{i,\pm},W^{i,\top}|\ i \geq 2\}$$ and their derivatives occurs in weight $(1 + s) (2 + n + r + n r + r s)$ if $r \geq s$, and in weight $(1 + r) (1 + n +2s + ns +rs)$ if $r < s$. Aside from the case $\tilde{\cC}^{\psi}_{\mathcal{N}=2}(1, 1|0)$, there are no normally ordered relations in $\cC^{\psi}_{\mathcal{N}=2}(n, r|s)$ among these fields in weight below $8$. Therefore the character of $\cD^{\psi}_{\mathcal{N}=2}(n,m)$ coincides with that of $\cW^{\mathcal{N}=2}_{\infty}$ in weight up to $6$. Since $M \geq 6$, $\tilde{\cC}^{\psi}_{\mathcal{N}=2}(n, r|s)$ and $\cC^{\psi}_{\mathcal{N}=2}(n, r|s)$ have the same graded character up to weight $6$, so the conclusion holds.

Next, suppose that $M < 6$. Since the constants in \eqref{lambdaterm} satisfy $\lambda_{M+1} = 0$ and $\lambda_r \neq 0$ for $2\leq r \leq M$, there can be no nontrivial normally ordered relations among the generators $\{J,G^{\pm}, L, W^{i,\bot}, W^{i,\pm}, W^{i,\top}|\ 2 \leq i \leq M\}$ of $\tilde{\cC}^{\psi}_{\mathcal{N}=2}(n, r|s)$ in weight $w \leq M$, since this property holds for the corresponding fields in the large level limit. Equivalently, all Jacobi relations among $\{H,G^{\pm}, L, W^{i,\bot}, W^{i,\pm}, W^{i,\top}|\ 2 \leq i \leq M\}$ in $D_{M+2}$ must hold as a consequence of \eqref{skew-symmetry}-\eqref{quasi-derivation} alone. Therefore the OPEs $W^{i,*}(z) W^{j,*}(w)$ for $i,j \leq M$ and $i + j \leq M$, are the same as those of $\cW^{\mathcal{N}=2,I}_{\infty}$ for some ideal $I \subseteq \mathbb{C}[c,\lambda]$.

If we use the same procedure as our construction of $\cW^{\mathcal{N}=2}_{\infty}$ beginning with the fields in $\tilde{\cC}^{\psi}_{\mathcal{N}=2}(n, r|s)$,
 $$\{H,G^{\pm}, L, W^{i,\bot}, W^{i,\pm}, W^{i,\top}|\ 2 \leq i \leq M\},$$ and the OPEs $W^{i,*}(z) W^{j,*}(w)$ for $i+j \leq M$, we can inductively define new fields in $\tilde{\cC}^{\psi}_{\mathcal{N}=2}(n, r|s)$ for all $r\geq 1$ by 
$$W^{M+r,\bot} = W^{2,\top}_{(0)} W^{M+r-1,\bot},\quad W^{M+r,\mp} = G^{\pm}_{(0)} W^{M+r,\bot},\quad  {W^{M+r,\top} = \pm G^{\pm}_{(0)}G^{\mp}_{(0)} W^{M+r,\bot}}\mp 1/2 \partial W^{M+r,\bot},$$ and then define the OPE algebra of all fields $\{H,G^{\pm}, L, W^{N,\bot},W^{N,\pm},W^{N,\top}|\ N \geq 2\}$ recursively so that they are the same as the OPEs in $\cW^{\mathcal{N}=2,I}_{\infty}$. In particular, this realizes $\tilde{\cC}^{\psi}_{\mathcal{N}=2}(n, r|s)$ as a one-parameter quotient of $\cW^{\mathcal{N}=2, I}_{\infty}$ by some vertex algebra ideal $\cI$ containing a field in weight $M+1$ of the form $W^{M+1,\bot} - P(H,G^{\pm}, L, W^{i,\bot},W^{i,\pm},W^{i,\top})$, where $P$ is a normally ordered polynomial and $i \leq M$. The truncation curve for $\tilde{\cC}^{\psi}_{\mathcal{N}=2}(n, r|s)$ can be computed using Theorem \ref{recon:first}, and it agrees with \eqref{gentruncationcurve}. 

Finally, we consider the case $\tilde{\cC}^{\psi}_{\mathcal{N}=2}(1, 1|0)$. The first normally ordered relation among the generators $\{H, G^{\pm}, L,\omega^{i,\bot}, \omega^{i,\pm}, \omega^{i,\top}|\ 2\leq i \leq M\}$ given by Lemma \ref{lem:tildecpsi} occurs at weight $5$, and it can checked by passing to the large level limit that $\cC^{\psi}_{\mathcal{N}=2}(1, 1|0)$ is generically of type $$\cW\bigg(1,2^2, 3^2, 4^2, 5; \bigg(\frac{3}{2}\bigg)^2,  \bigg(\frac{5}{2}\bigg)^2,  \bigg(\frac{7}{2}\bigg)^2,  \bigg(\frac{9}{2}\bigg)^2\bigg).$$ By imposing Jacobi identities, one finds that there are exactly two $1$-parameter vertex algebras with this structure satisfying the properties $(\B{U}1-\B{U}3)$ in Subsection \ref{setup:Uproperties}. One of them is $\cW^k(\gs\gl_{4|3})$, which is freely generated of this type, and the other must therefore be $\cC^{\psi}_{\mathcal{N}=2}(1, 1|0)$, which is not freely generated since its large level limit $(\cO(1|0,1) \otimes \cO(0|1,2))^{\text{GL}_1}$ is not freely generated. One can then check that this OPE algebra is a specialization of the OPE algebra $\cW^{\cN=2}_{\infty}$ along the ideal \eqref{gentruncationcurve} for $n=1$, $r = 1$, and $s = 0$. \end{proof}

\begin{theorem} \label{trunc:Cnmthird} For $n \geq 1$ and $r+s > 0$, $\tilde{\cC}^{\psi}_{\mathcal{N}=2}(n, r|s) = \cC^{\psi}_{\mathcal{N}=2}(n, r|s)$. Therefore $\cC^{\psi}_{\mathcal{N}=2}(n, r|s)$ is a $1$-parameter quotient of $\cW^{\mathcal{N}=2}_{\infty}$, with truncation curve given by \eqref{gentruncationcurve}.
\end{theorem}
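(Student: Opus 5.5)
We need to show $\lambda_r\neq 0$ for all $r\geq 3$ in \eqref{lambdaterm}, i.e.\ that the integer $M$ in Lemma \ref{lem:tildecpsi} does not exist. We argue by contradiction, following the analogous step in \cite{CL3} and \cite{CL2}. Suppose $M$ exists. Then by Lemma \ref{thm:tildednm}, $\tilde{\cC}^{\psi}_{\mathcal{N}=2}(n,r|s)$ is a $1$-parameter quotient of $\cW^{\mathcal{N}=2}_{\infty}$ along the curve \eqref{gentruncationcurve}. By Lemma \ref{lem:tildecpsi}, it has a singular vector (decoupling relation) for $W^{M+1,\bot}$ in weight $M+1$. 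For generic $\psi$, the simple quotient along this curve would therefore be of type $\cW(1,2^2,\dots,M^2,M+1;\dots,(\frac{2M+1}{2})^2)$, or smaller.

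The key point is that the simple quotient of $\cW^{\mathcal{N}=2}_{\infty}$ along a given curve is determined by the curve (Theorem \ref{one-parameter quotients theorem}). We compare it with the large level behaviour of the honest algebra $\cC^{\psi}_{\mathcal{N}=2}(n,r|s)$. We take the limit $\psi\to\infty$ along \eqref{gentruncationcurve}: $c$ has a finite limit determined by $n,d$, and $\lambda^2/\omega$ has a computable limit. In this limit, $\tilde{\cC}$ degenerates to the subalgebra of the free field limit \eqref{orbifoldlimit:Dns} generated in weights $\le 2$. The plan has two parts.

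\textbf{Step (a).} Using Lemma \ref{lem:genofcpsi} and the explicit quadratic generators \eqref{eq: Y gen}, show that the operators $W^{2,\top}_{(1)}$ raise the diamonds with nonzero leading coefficient in the free field limit. Concretely, the $\lambda_r$ are rational functions of $\psi$ whose $\psi\to\infty$ limit is the corresponding structure constant in the free field algebra $\cA\otimes(\cO(s|r,n+1)\otimes\cO(r|s,n+2))^{\mathrm{GL}_{r|s}}$. These are nonzero by a direct computation with the bilinear generators. Indeed, $W^{2,\top}_{(1)}$ acting on $:\!\partial^d p\, q\!:$ produces $:\!\partial^{d+1}p\, q\!:$ plus lower terms, with a nonzero coefficient, once the principal-block generators $\cA$ are accounted for.

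\textbf{Step (b).} Conclude that each $\lambda_r$ is a nonzero rational function of $\psi$. Then $\lambda_r\neq 0$ for generic $\psi$, so no finite $M$ exists as a $1$-parameter statement. This gives $\tilde{\cC}=\cC$. The final claim then follows from Lemma \ref{thm:tildednm}.

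The expected obstacle is step (a) for $r\le n+1$, i.e.\ within the principal block. There, $W^{r}$ is a combination of the principal-block generator and the quadratic invariants, and in the free field limit these can decouple. If the direct computation is awkward, we fall back on an alternative argument. Using the truncation curve, find an intersection point with the curve of $\cW^{k'}(\gs\gl_{N+1|N})$ for large $N$, which is weakly generated at all noncritical levels by Proposition \ref{lem:weakgen}. At such a point, \eqref{tautological} shows that the simple quotient has generators beyond weight $M+1$. This contradicts the existence of a decoupling relation in weight $M+1$, since a decoupling relation persists generically on the curve.
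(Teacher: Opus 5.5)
\textbf{There is a genuine gap.} Your Step (a) rests on a false premise, and the difficulty is not confined to the principal block.

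First, for $n\geq 1$ the central charge does not have a finite limit along \eqref{gentruncationcurve}. By \eqref{crs}, $c=-3\big((n+1)\psi-d-1\big)(n\psi-d)/\psi$, which grows linearly in $\psi$. The large level limit is the tensor product $\cA\otimes\big(\cO(s|r,n+1)\otimes\cO(r|s,n+2)\big)^{\text{GL}_{r|s}}$ of the free field algebra $\cA$ of the principal block with the orbifold.

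For $n\geq 2$, $W^{2,\bot}$ tends to an element of $\cA$. The orbifold factor commutes with $\cA$. Inside a free field algebra, a $(1)$-product of elements of weights $3$ and $2$ is computed by contractions, so it cannot contain a new weight-$3$ generator. Hence the limit of $W^{3,\bot}=W^{2,\top}_{(1)}W^{2,\bot}$ is a polynomial in lower fields. With $W^2$ normalized to have a finite limit, this gives $\lambda_3\to 0$.

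Whether the $\lambda_r$ are nonzero rational functions is therefore decided at subleading order in $1/\psi$. The free field algebra does not see this, and you do not compute it. This is exactly why Lemma~\ref{lem:genofcpsi} only extracts weak generation by fields of weight up to $n+2$ from the limit. Passing from weight $2$ to weight $n+2$ is the entire content of the theorem. Relatedly, the limit of the subfamily $\tilde{\cC}^{\psi}_{\cN=2}(n,r|s)$ is in general larger than the subalgebra generated by the limits of its weight $\leq 2$ fields, so your description of that limit is not justified.

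Your fallback does not close the gap either. Weak generation of the universal algebra $\cW^{k'}(\gs\gl_{N+1|N})$ (Proposition~\ref{lem:weakgen}) gives no lower bound on the strong generating type of its simple quotient at a special level. Moreover, \eqref{tautological} identifies simple quotients, which are the same algebra seen from both curves. A decoupling relation in weight $M+1$ on the $\tilde{\cC}$ side then just means $\cW^{k'}(\gs\gl_{N+1|N})$ has a singular vector at that intersection point. That is no contradiction unless you independently know it is simple there, or has no singular vector in weight $\leq M+1$, and that the relation has no pole at that point.

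The missing idea is the one the paper uses, which stays at generic $\psi$:
\begin{enumerate}
\item Lemma~\ref{thm:tildednm} places $\tilde{\cC}$ on \eqref{gentruncationcurve} whether or not $M$ exists.
\item By Proposition~\ref{prop:lambdas}, the two commuting $\cW_{\infty}$-subalgebras generated by $L_{\pm},W^3_{\pm}$ are then quotients along the truncation curves of $\cD^{\psi}(n+1,n+s-r)$ and $\cC^{-\psi+1}(n,s-r)$ (for $s>r$; analogously for $r\geq s$).
\item By \cite{CL2} there are no normally ordered relations among $L_{\pm},W^d_{\pm}$ up to weights far beyond $n+2$.
\item Since $W^d_{\pm}\equiv p_{\pm}W^{d-1,\top}+q_{\pm}W^{d,\bot}$ with $p_{\pm},q_{\pm}$ nonzero for generic $\psi$ (from \eqref{eq:W3s} and Proposition~\ref{prop:structure constants}), $\tilde{\cC}$ cannot truncate in weight $\leq n+2$.
\item Lemma~\ref{lem:genofcpsi} then gives $\tilde{\cC}=\cC$.
\end{enumerate}
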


\begin{proof} Suppose first that $s - r \geq 1$. By Proposition \ref{prop:lambdas}, the truncation curve for $\tilde{\cC}^{\psi}_{\mathcal{N}=2}(n, r|s)$ determines the truncation curves for the $\cW_{\infty}$-quotients $\cA^{\pm}$. A computation shows that $\cA^+$ and $\cA^-$ have the same truncation curves as the simple $Y$-algebras $\cD^{\psi}(n+1,n+s-r)$ and $\cC^{-\psi +1}(n,s-r)$, respectively. Therefore $\tilde{\cC}^{\psi}_{\mathcal{N}=2}(n, r|s)$ is a conformal extension of 
$$\cH \otimes \overline{\cD^{\psi}(n+1,n+s-r)} \otimes \overline{\cC^{-\psi +1}(n,s-r)},$$ where $\overline{\cD^{\psi}(n+1,n+s-r)}$ and $\overline{\cC^{-\psi +1}(n,s-r)}$ are possibly non-simple $1$-parameter quotients of $\cW_{\infty}$ which have simple quotients $\cD^{\psi}(n+1,n+s-r)$ and $\cC^{-\psi +1}(n,s-r)$, respectively.

By \cite[Theorems 6.1 and 7.1]{CL2}, $\cD^{\psi}(n+1,n+s-r)$ and $\cC^{-\psi +1}(n,s-r)$ have strong generating types $\cW(2,3,\dots, (n + s-r +1) (n + 2) - 1)$ and $\cW(2,3,\dots, (s-r + 1) (n+ s-r +1) - 1)$ respectively, for generic $\psi$. So there are no normally ordered relations among the generator $\{L_+, W^d_+|\ 3 \leq d \leq (n + s-r+1) (n + 2) - 1\}$ of $\overline{\cD^{\psi}(n+1,n+s-r)}$, or among the generators $\{L_-, W^d_-|\ 3 \leq d \leq (s-r + 1) (n+s-r +1) - 1)\}$ of $\overline{\cC^{-\psi+1}(n,s-r)}$. 

Up to the scaling constant $\omega_{\pm}$, it follows from the explicit form of the weight $3$ fields $W^3_{\pm}$ given by \eqref{eq:W3s} and the truncation curve \eqref{eq:trunc} that these fields have the form
$$W^3_{\pm} \equiv W^{2,\top} +  a_{\pm}(\psi) W^{3,\bot},$$ modulo terms which depend only on $H, G^{\pm}, L,W^{2,\bot}$ and their derivatives. Moreover, $a_{\pm}(\psi)$ are non-constant rational functions of $\psi$. 
Next, using the fact that $(W^3_{\pm})_{(1)} W^{d-1}_{\pm} = W^{d}_{\pm}$ for $d\geq 4$, together with Proposition \ref{prop:structure constants}, we see that for all $d \geq 4$, $W^{d}_{\pm}$ has the form
$$W^d_{\pm} \equiv p_{\pm}(a_{\pm}(\psi)) W^{d-1,\top} + q_{\pm}(a_{\pm}(\psi))W^{d,\bot},$$ modulo terms which depend only on $W^{n,\bot}, W^{n,\top}, W^{n,\pm}$ for $n < d-1$ as well as $W^{d-1,\bot}$, and their derivatives. 
Here $p_{\pm}(a_{\pm}(\psi))$ and $q_{\pm}(a_{\pm}(\psi))$ are nonzero polynomials in $a_{\pm}(\psi)$ with rational coefficients. Since $a_{\pm}(\psi)$ are not constants, it follows that $p_{\pm}(a_{\pm}(\psi))$ and $q_{\pm}(a_{\pm}(\psi))$ are also non-constant rational functions of $\psi$, and hence are nonzero for generic values of $\psi$.
Therefore if $\psi$ is generic, the subalgebra $\tilde{\cC}^{\psi}_{\mathcal{N}=2}(n, r|s)$ generated by the fields in weights $1, \frac{3}{2}, 2$, which contains $W^3_{\pm}$, also contains $\{L_+, W^d_+|\ 3 \leq d \leq (n + s-r+1) (n + 2) - 1\}$ as well as $\{L_-, W^d_-|\ 3 \leq d \leq (s-r + 1) ( n+s-r+1) - 1)\}$. Therefore $\tilde{\cC}^{\psi}_{\mathcal{N}=2}(n, r|s)$ contains $W^{d,\bot}$ for all $d \leq \text{max}\{(n + s-r+1) (n + 2) - 1),  (s-r + 1) (n + s-r+1) - 1\}$,  which is greater than $n+2$. Hence $\tilde{\cC}^{\psi}_{\mathcal{N}=2}(n, r|s)$ contains all the fields $W^{d,\pm}, W^{d,\top}$ for $d \leq n+2$, so by Lemma \ref{lem:genofcpsi}, $\tilde{\cC}^{\psi}_{\mathcal{N}=2}(n, r|s) = \cC^{\psi}_{\mathcal{N}=2}(n, r|s)$.

In the case $r \geq s$, one checks similarly that $\tilde{\cC}^{\psi}_{\mathcal{N}=2}(n, r|s)$ is a conformal extension of 
$$
\left\{
\begin{array}{ll}
\cH \otimes \overline{\cD^{\psi}(n+1,n-(r-s))} \otimes \overline{\cD^{-\psi+1}(n,r-s)}, & n- (r-s) \geq 0,
\smallskip
\\  \cH \otimes \overline{\cC^{\psi}(n+1, r-s-n)} \otimes \overline{\cD^{-\psi+1}(n, r-s)} & n -(r-s) < 0, \\
\end{array} 
\right.
$$
where as above, $\overline{\cC^{\psi}(a,b)}$ and $\overline{\cD^{\psi}(a,b)}$ are possibly non-simple $\cW_{\infty}$-quotients with the same truncation curves as $\cC^{\psi}(a,b)$ and $\cD^{\psi}(a,b)$, respectively. 
The proof that $\tilde{\cC}^{\psi}_{\mathcal{N}=2}(n, r|s) = \cC^{\psi}_{\mathcal{N}=2}(n, r|s)$ in this case is similar to the case $s-r \geq 1$, and is omitted.
\end{proof}   

In the cases $r = 0$ and $s = 0$, recall the notation $$\cD^{\psi}_{\mathcal{N}=2}(n,s) := \cC^{\psi}_{\cN=2}(n,0|s+1),\qquad \cE^{\psi}_{\mathcal{N}=2}(n,r) := \cC^{\psi}_{\cN=2}(n,r|0),$$ which are both simple as a $1$-parameter vertex algebras.

\begin{corollary} \label{trunc:DandE} \begin{enumerate}
\item $\cD^{\psi}_{\mathcal{N}=2}(n,s)$ is a conformal extension of $\cH \otimes \overline{\cD^{\psi}(n+1,n+s+1)} \otimes \overline{\cC^{-\psi + 1}(n,s+1)}$.
\item $\cE^{\psi}_{\mathcal{N}=2}(n,r)$ is a conformal extension of $$
\left\{
\begin{array}{ll}
\cH \otimes \overline{\cD^{\psi}(n+1,n-r)} \otimes \overline{\cD^{-\psi+1}(n,r)}, & n- r \geq 0,
\smallskip
\\  \cH \otimes \overline{\cC^{\psi}(n+1, r-n)} \otimes \overline{\cD^{-\psi+1}(n, r)} & n -r < 0. \\
\end{array} 
\right.
$$
\end{enumerate}
\end{corollary}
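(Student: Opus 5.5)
This corollary specializes Theorem \ref{trunc:Cnmthird} and its proof, together with Theorem \ref{trunc:c0rs} and Corollary \ref{trunc:e0md0m} for the boundary cases, to $r=0$ or $s=0$.

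\emph{Part (1).} Take $\cD^{\psi}_{\mathcal{N}=2}(n,s)=\cC^{\psi}_{\cN=2}(n,0|s+1)$. For $n\geq 1$ we are in the regime $s'-r'=s+1\geq 1$ with $r'=0$, $s'=s+1$. The proof of Theorem \ref{trunc:Cnmthird} already establishes that $\cC^{\psi}_{\mathcal{N}=2}(n,r|s')$ is a conformal extension of $\cH\otimes\overline{\cD^{\psi}(n+1,n+s'-r)}\otimes\overline{\cC^{-\psi+1}(n,s'-r)}$. Substituting $r=0$, $s'=s+1$ gives exactly the claimed subalgebra.

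The ingredients are as follows. By Theorem \ref{trunc:Cnmthird}, $\cC^{\psi}_{\cN=2}=\tilde{\cC}^{\psi}_{\cN=2}$ is a $1$-parameter quotient of $\WNtwo$ along \eqref{gentruncationcurve}. It therefore contains $\cH\otimes\cA^+\otimes\cA^-$, conformally embedded by Propositions \ref{prop:Virasoros} and \ref{prop:lambdas}, since $H$, $L_+$, $L_-$ sum to $L$. The parameters $\lambda_\pm, c_\pm$ listed in Theorem \ref{trunc:main} match the truncation curves of $\cD^{\psi}(n+1,n+s+1)$ and $\cC^{-\psi+1}(n,s+1)$, by comparison with \cite[Theorem 9.1]{CL2}. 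Uniqueness of $\cW_\infty$-quotients with a given truncation curve (the analogue of Corollary \ref{cor:uniquenessofcurve} for $\cW_\infty$) then identifies $\cA^\pm$ with the barred, possibly nonsimple, quotients. For $n=0$ the statement reduces to Corollary \ref{trunc:e0md0m}(2), which even gives simplicity.

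\emph{Part (2).} Take $\cE^{\psi}_{\mathcal{N}=2}(n,r)=\cC^{\psi}_{\cN=2}(n,r|0)$, so $d=r\geq s=0$. The $r\geq s$ branch of the proof of Theorem \ref{trunc:Cnmthird} yields the two cases $n-r\geq 0$ and $n-r<0$ with $r-s=r$, which are exactly those listed. The boundary cases are handled separately: $r=0$ is Theorem \ref{trunc:en0}, giving $\cD^{\psi}(n+1,n)\otimes\cD^{-\psi+1}(n,0)$, consistent with the $n-r\geq0$ branch; and $n=0$ is Corollary \ref{trunc:e0md0m}(1).

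\emph{Main obstacle.} The only real work is the explicit verification that substituting $d=-s-1$ (respectively $d=r$) into the formulas for $\lambda_\pm, c_\pm$ in Theorem \ref{trunc:main} reproduces the known truncation curves of the stated $Y$-algebras, including correct assignment of the sign $\pm$. I would do this by matching central charges first. Since $c_\pm$ determine which factor is which, I would then check $\lambda_\pm$ against the formulas of \cite{CL2}, taking care with the convention $\psi\mapsto-\psi+1$ in the second factor.
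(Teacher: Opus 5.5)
Your proposal is correct and matches the paper, which obtains this corollary as the immediate $r=0$ (with $s$ replaced by $s+1$) and $s=0$ specialization of the proof of Theorem~\ref{trunc:Cnmthird}. The boundary cases come from Corollary~\ref{trunc:e0md0m} and from the truncation-curve part of Theorem~\ref{trunc:en0}; only the barred statement is needed there, which matters because the simplicity claim in that theorem rests on Theorem~\ref{N=2:duality}, whose proof cites this very corollary.

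For your matching step, note that the printed formula for $c_+$ in Theorem~\ref{trunc:main} has the wrong overall sign: for $n=1$, $d=0$ it gives $6\psi-2$, whereas $c_++c_-=c-1=2-6\psi$ with $c_-=0$. So compare with $c_+=c-1-c_-$ instead.
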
 

\begin{remark} It is an interesting problem to determine precisely which $\cW_{\infty}$-quotients $\cA^{\pm}$ appear in $\cC^{\psi}_{\mathcal{N}=2}(n, r|s)$. Based on reduction by stages, we expect both algebras $\overline{\cD^{\psi}(n+1,n+s+1)}$ and $\overline{\cC^{-\psi + 1}(n,s+1)}$ appearing in $\cD^{\psi}_{\mathcal{N}=2}(n,s)$ to be simple. Similarly, we expect the second algebra $\overline{\cD^{-\psi+1}(n,r)}$ appearing in $\cE^{\psi}_{\mathcal{N}=2}(n,r)$, but not the first one, to be simple. \end{remark}

\section{Dualities of Feigin-Frenkel type}\label{sect:FFduality}

The following conjectured dualities of Feigin-Frenkel type appeared in the work of Proch\'azka and Rap\v{c}\'ak \cite{PR}, which we specialize to the case of $\WNtwo$.
\begin{conjecture}[Eq. 4.4, \cite{PR}]\label{dualities:generalcase} Let $n, r, s$ be non-negative integers.
$$\cC^{\psi}_{\cN=2}(n,r|s)\cong\begin{cases}
  \cC_{\cN=2}^{\psi^{-1}}(r-s, n+s|s), & r\geq s,  \\
\cC_{\cN=2}^{\psi^{-1}}(s-r-1, r|n+r+1), & r < s. 
\end{cases}$$
\end{conjecture}
Our main application of the construction of $\cW^{\cN=2}_{\infty}$ is to prove Conjecture \ref{dualities:generalcase} in the case when either $r=0$ or $s=0$.

\begin{theorem} \label{N=2:duality} For all integers $n,m \geq 0$, we have isomorphisms of $1$-parameter vertex superalgebras
\begin{equation} \cD^{\psi}_{\mathcal{N}=2}(n,m) \cong \cD^{\psi^{-1}}_{\mathcal{N}=2}(m,n),\qquad \cE^{\psi}_{\mathcal{N}=2}(n,m) \cong \cE^{\psi^{-1}}_{\mathcal{N}=2}(m,n). \end{equation}
The case $\cE^{\psi}_{\mathcal{N}=2}(n,0) \cong \cE^{\psi^{-1}}_{\mathcal{N}=2}(0,n)$ is just Ito's conjecture. The case $\cD^{\psi}_{\mathcal{N}=2}(n,0) \cong \cD^{\psi^{-1}}_{\mathcal{N}=2}(0,n)$ is a similar statement which provides a coset realization of $\cW^{k}(\gp\gs\gl_{n+1|n+1}, F_{n+1|n})^{U(1)}$.
\end{theorem}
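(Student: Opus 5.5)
The plan is to compare truncation curves. By Theorem \ref{trunc:main} (proved in Theorems \ref{trunc:c0rs}, \ref{trunc:en0}, \ref{trunc:Cnmthird}), every $\cC^{\psi}_{\cN=2}(n,r|s)$ is a $1$-parameter quotient of $\cW^{\cN=2}_{\infty}$ along the curve \eqref{gentruncationcurve}. In the cases $r=0$ or $s=0$ the $1$-parameter algebra is simple, so $\cE^{\psi}_{\cN=2}(n,m)$ and $\cD^{\psi}_{\cN=2}(n,m)$ are the simple quotients $\cW^{\cN=2}_{\infty,I}$ along the corresponding prime ideals, after a finite localization. By Corollary \ref{cor:uniquenessofcurve}, two such simple $1$-parameter quotients are isomorphic as soon as their truncation ideals coincide, and the localizations can be matched because they differ only by finitely many points. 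The proof therefore reduces to checking that the curve \eqref{gentruncationcurve} for $(n,r|s)=(n,m|0)$ at parameter $\psi$ equals the curve for $(m,n|0)$ at $\psi^{-1}$. Likewise, the curve for $(n,0|m+1)$ at $\psi$ must equal the curve for $(m,0|n+1)$ at $\psi^{-1}$.

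This check is most cleanly done through the decomposition into $\cW_\infty$ pieces rather than through $\lambda^2/\omega$ directly. The pair $(c,\lambda)$, with $\omega$ normalized, is determined by $(c_\pm,\lambda_\pm)$ via Propositions \ref{prop:Virasoros} and \ref{prop:lambdas}, up to the swap $\gamma\mapsto-\gamma$. Corollary \ref{trunc:DandE} identifies $\cA^\pm$ with (possibly nonsimple) quotients of $\cW_\infty$ along the truncation curves of specific $Y$-algebras. For $\cE$, these are $\cD^\psi(n+1,n-m)$ or $\cC^\psi(n+1,m-n)$ together with $\cD^{-\psi+1}(n,m)$, and for $\cD$ they are $\cD^\psi(n+1,n+m+1)$ and $\cC^{-\psi+1}(n,m+1)$.

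For the dual side, replace $\psi$ by $\psi^{-1}$ and swap $n,m$. Then use the triality of Theorem \ref{intro:trialitytheorem} from \cite{CL2}, in the forms $\cD^\psi(a,b)\cong\cC^{\psi^{-1}}(a-b,b)\cong\cD^{\psi'}(b,a)$ with $1/\psi+1/\psi'=1$. For instance $\cD^{-\psi+1}(n,m)$ should match $\cD^{-\psi^{-1}+1}(m,n)$, since $(1-\psi)^{-1}+(1-\psi^{-1})^{-1}=1$. The $\cD^\psi(n+1,n-m)$ factor should similarly match the corresponding $(m,n)$ factor, possibly after exchanging which piece is called $\cA^+$ and which $\cA^-$. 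The central charge $c$ also has to agree, and this can be read off from \eqref{crs} and the central charge in Theorem \ref{thm: GKOs} by direct substitution.

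Since Proposition \ref{prop:lambdas} expresses $\lambda_\pm$ rationally in terms of $(c,\lambda,\gamma)$ and can be inverted, equality of the sets $\{(c_+,\lambda_+),(c_-,\lambda_-)\}$ together with equality of $c$ forces equality of $\lambda^2/\omega$. As a sanity check, one can instead substitute directly into \eqref{gentruncationcurve}. With $d=m$ or $d=-m-1$, the factors $(d-n\psi+1)$, $(d-(n+1)\psi+1)$, and so on should transform under $\psi\mapsto\psi^{-1}$, $n\leftrightarrow m$ into each other up to an overall factor $\psi^{-k}$.

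The main obstacle I anticipate is bookkeeping: correctly matching the $\cW_\infty$ factors across the duality and handling the $\gamma$ sign ambiguity. A further difficulty is that the simplicity of $\cA^\pm$ is not established, so the argument must rely only on truncation curves and never on simplicity of the factors. The boundary cases $n=0$ or $m=0$ also need care, since they mix the reconstruction-based curve of Theorem \ref{trunc:c0rs} with the BRST computation of Theorem \ref{trunc:en0}. For these I would verify the identity of curves by explicit substitution, since the proof of Theorem \ref{trunc:en0} itself appeals to this duality for the simplicity of $\cA^\pm$ and I want to avoid circularity.
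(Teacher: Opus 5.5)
Your proposal is correct and follows the paper's proof. Simple $1$-parameter quotients of $\cW^{\cN=2}_{\infty}$ are pinned down by their truncation curves (Corollary \ref{cor:uniquenessofcurve}), and these curves are determined by the truncation data of $\cA^{\pm}$ (Propositions \ref{prop:Virasoros} and \ref{prop:lambdas}) supplied by Corollary \ref{trunc:DandE}. The triality of \cite{CL2} then finishes the argument using only truncation curves and not the simplicity of $\cA^{\pm}$, exactly as the paper remarks to avoid circularity with Theorem \ref{trunc:en0}. For your bookkeeping: for $\cE$ no exchange is needed (e.g.\ $\cD^{\psi}(n+1,n-m)\cong\cC^{\psi^{-1}}(m+1,n-m)$ for $n\geq m$). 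For $\cD$ the two factors do swap, via $\cD^{\psi}(n+1,n+m+1)\cong\cC^{1-\psi^{-1}}(m,n+1)$ and $\cC^{1-\psi}(n,m+1)\cong\cD^{\psi^{-1}}(m+1,n+m+1)$. This swap is harmless, since $c$ and $\lambda^2/\omega$ depend only on the unordered pair $\{c_+,c_-\}$.
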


\begin{proof} By Corollary \ref{cor:uniquenessofcurve}, simple $1$-parameter quotients of $\cW^{\cN=2}_{\infty}$ are uniquely determined by their truncation curves. By Propositions \ref{prop:Virasoros} and \ref{prop:weight3}, the truncation curve for such a $\cW^{\cN=2}_{\infty}$-quotient is uniquely determined by the truncation curves for the two $\cW_{\infty}$-quotients $\cA^{\pm}$ that commute with the Heisenberg field. For $\cD^{\psi}_{\mathcal{N}=2}(n,m)$ and $\cE^{\psi}_{\mathcal{N}=2}(n,m)$, these are given by Corollary \ref{trunc:DandE}. The claim then follows from the triality, Theorem \ref{intro:trialitytheorem}. Note that this argument only makes use of the truncation curves for $\cA^{\pm}$ and not their simplicity, so the simplicity of $\cA^{\pm}$ in the case of $\cE^{\psi}_{\mathcal{N}=2}(n,0)$ which we claimed in Theorem \ref{trunc:en0}, is immediate from $\cE^{\psi}_{\mathcal{N}=2}(n,0) \cong \cE^{\psi^{-1}}_{\cN=2}(0,n)$ together with Corollary \ref{trunc:e0md0m}. \end{proof}

Next, we observe that the isomorphism $\cD^{\psi}_{\mathcal{N}=2}(n,0) \cong \cD^{\psi^{-1}}_{\mathcal{N}=2}(0,n)$ can be enhanced to a coset realization of $\cW^{k}(\gp\gs\gl_{n+1|n+1}, F_{n+1|n})$.
Consider the lattice $L = \sqrt{-n} \mathbb{Z}$, with generator $\alpha$ satisfying  $\langle \alpha, \alpha \rangle = -n$, and let $V_L$ be the corresponding lattice vertex (super)algebra. It is generated by $e^{\pm \alpha}$ which have conformal weights $-\frac{n}{2}$ and has a Heisenberg field $\alpha$ satisfying $\alpha(z) \alpha(w) \sim -n (z-w)^{-2}$ and $\alpha(z) e^{\pm \alpha}(w) \sim \pm n e^{\pm \alpha}(w)(z-w)^{-1}$.

Consider $V^k(\gs\gl_{1|n+1}) \otimes \cS(n+1) \otimes V_L$, which has an action of $V^{-k-1}(\gg\gl_{n+1}) := \cH \otimes V^{-k-1}(\gs\gl_{n+1})$ where $V^{-k-1}(\gs\gl_{n+1})$ acts diagonally on $V^k(\gs\gl_{1|n+1}) \otimes \cS(n+1)$ and the generator of $\cH$ is $J^1 + J^2 + \frac{n+1}{n}\alpha$. Here $J^1$ is the Heisenberg field in $V^k(\gs\gl_{1|n+1})$ normalized so the odd fields $\{\psi^{\pm,i}|\ i = 1,\dots, n+1\}$ have $J^1_{(0)}$-eigenvalues $\pm 1$, and $J^2$ is the Heisenberg field in $ \cS(n+1)$ normalized so that $\beta^i, \gamma^i$ have $J^2_{(0)}$-eigenvalues $\mp 1$, and $\alpha$ is the above Heisenberg field in $V_L$. Consider the coset 
\begin{equation} \label{GKO:psl} \text{Com}(V^{-k-1}(\gg\gl_{n+1}), V^k(\gs\gl_{1|n+1}) \otimes \cS(n+1) \otimes V_L),\end{equation} which contains $\cD^{\psi}_{\mathcal{N}=2}(0,n)$ as a subalgebra. To see where the additional elements of \eqref{GKO:psl} beyond $\cD^{\psi}_{\mathcal{N}=2}(0,n)$ come from, consider the coset
\begin{equation} \label{GKO:pslsecond} \text{Com}(V^{-k-1}(\gs\gl_{n+1}), V^k(\gs\gl_{1|n+1}) \otimes \cS(n+1)),\end{equation} which also contains $\cD^{\psi}_{\mathcal{N}=2}(0,n)$ as a subalgebra. This has large level limit
\begin{equation} \label{GKO:pslsecondlarge} \big(\cS_{\text{odd}}(n+1,2) \otimes \cS_{\text{ev}}(n+1,1)\big)^{\text{SL}_{n+1}},\qquad \cS_{\text{ev}}(n+1,1) \cong \cS(n+1).\end{equation} 
In addition to the quadratic generators which lie in $\big(\cS_{\text{odd}}(n+1,2) \otimes \cS_{\text{ev}}(n+1,1)\big)^{\text{GL}_{n+1}}$ and correspond to the generators of $\cD^{\psi}_{\mathcal{N}=2}(0,n)$, \eqref{GKO:pslsecondlarge} also contains determinants of degree $n+1$ in the generators of $\cS_{\text{odd}}(n+1,2) \otimes \cS_{\text{ev}}(n+1,1)$, with appropriate signs. Each of these determinants corresponds to a choice of $n+1$ distinct copies of the standard representation $\mathbb{C}^{n+1}$, or $n+1$ distinct copies of the dual representation $(\mathbb{C}^{n+1})^*$. These determinants do not lie in the large level limit of \eqref{GKO:psl} because the zero mode of the generator of $\cH$ acts nontrivially, but this can be corrected by multiplying them by $e^{\pm \alpha}$. The generators of this kind of lowest weight have the form
$$:\big(\sum_{i=1}^{n+1} :\phi^{+,1} \cdots \widehat{\phi^{+,i}} \cdots \phi^{+,n+1} \gamma^i: \big) e^{-\alpha}:,\qquad :\big(\sum_{i=1}^{n+1} :\phi^{-,1} \cdots \widehat{\phi^{-,i}} \cdots \phi^{-,n+1} \beta^i: \big) e^{\alpha}:$$ These are even of conformal weight $\frac{n+1}{2}$, and have $H_{(0)}$-eigenvalues $\pm (n+1)$.
We also have odd weight $\frac{n+2}{2}$ fields 
$$ :\phi^{+,1} \cdots \phi^{+,n+1} e^{-\alpha}:,\qquad :\phi^{-,1} \cdots \phi^{-,n+1} e^{\alpha}:$$ which have $H_{(0)}$-eigenvalues $\pm n$. It is straightforward to check that the large level limit of \eqref{GKO:psl} is strongly generated by the generators of $\lim_{\psi \rightarrow \infty}\cD^{\psi}_{\mathcal{N}=2}(0,n)$, together with the above $4$ fields. Therefore \eqref{GKO:psl} is strongly generated by the corresponding fields at generic levels. By Theorem \ref{recon:first} and Remark \ref{recon:cased0n}, we obtain

\begin{corollary} \label{GKO:pslnn}
For generic values of $k$, the coset \eqref{GKO:psl} is isomorphic to $\cW^{\ell}(\gp\gs\gl_{n+1|n+1}, F_{n+1|n})$ for $\psi = k-n$ and $\ell = \psi^{-1}$.
\end{corollary}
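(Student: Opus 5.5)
The plan is to apply the reconstruction Theorem \ref{recon:first}, in the form of Remark \ref{recon:cased0n} (the case $r=0$, $s=1$, with $V^{k+1}(\gs\gl_{r|r+1})$ replaced by the outer $U(1)$ action). Write $\cA$ for the coset \eqref{GKO:psl}. We will show that $\cA$ has the structure required in hypotheses (1)--(5) with $n$ in the role of $n$, $r=0$, $s=1$. Theorem \ref{recon:first} then identifies $\cA$ with $\cW^{\ell}(\gp\gs\gl_{n+1|n+1},F_{n+1|n})$, provided the underlying $\cW^{\cN=2}_\infty$-quotient has the truncation curve of $\cD^{\psi^{-1}}_{\cN=2}(n,0)$.

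The first step is the $U(1)$-grading. The zero mode of the Heisenberg field $\alpha$, suitably combined with $J^1+J^2$, commutes with $V^{-k-1}(\gg\gl_{n+1})$ and acts on $\cA$. Its charge-zero part is exactly $\text{Com}(V^{-k-1}(\gg\gl_{n+1}), V^k(\gs\gl_{1|n+1})\otimes\cS(n+1))$, which is $\cD^{\psi}_{\cN=2}(0,n)$. By Theorem \ref{N=2:duality}, this is isomorphic to $\cD^{\psi^{-1}}_{\cN=2}(n,0)$, so it is a simple $1$-parameter quotient $\cW$ of $\cW^{\cN=2}_\infty$ with the correct truncation curve. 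This gives hypothesis (1).

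The second step identifies the extension fields. The four fields exhibited above have charges $\pm1$: the two even fields of weight $\frac{n+1}{2}$ with $H_{(0)}$-eigenvalue $\pm(n+1)$, and the two odd fields of weight $\frac{n+2}{2}$ with eigenvalue $\pm n$. They play the roles of $P^{\bot},Q^{\bot},P^{-},Q^{+}$. For strong generation (hypothesis (4)), we pass to the large level limit as indicated before the statement and use the filtration argument of Appendix \ref{appendix:largerstructures} (Theorem \ref{thm:orbifoldlimit}). The limit is the $\text{GL}_{n+1}$-twisted orbifold with the $e^{\pm\alpha}$ dressing, and the first fundamental theorem for $\text{SL}_{n+1}$ shows that it is generated by the quadratic invariants together with the determinant-type invariants times $e^{\mp\alpha}$. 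Modulo the quadratic part, the lowest such invariants are the four listed fields. This strong generation then lifts to generic $k$.

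The third step handles the degenerate diamonds and primarity (hypotheses (2)--(3)). One has $G^{-}_{(0)}$ of the even charge-$(+1)$ field proportional to the odd one, and $G^{+}_{(0)}$ annihilates it, and symmetrically for the charge-$(-1)$ fields. Weight and charge considerations, as in the proof of Theorem \ref{recon:first}, force primarity with respect to $\text{Vir}^c_{\cN=2}$. We then compute the constant $\alpha$ from \eqref{eq:alpha} and check that it matches the $L_{(1)}$- and $H_{(0)}$-eigenvalues, which amounts to a central charge and conformal weight comparison under $\ell=\psi^{-1}$.

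The last step is nondegeneracy of the Shapovalov pairing between the charge $\pm1$ extension fields (hypothesis (5)). I expect this to be the main obstacle. To handle it, I will compute the leading pole of the OPE between the two even determinant fields in the large level limit. Up to a nonzero factor from $e^{\alpha}(z)e^{-\alpha}(w)\sim(z-w)^{-n}$, this pole is a product of free field pairings, hence a nonzero constant, so the pairing is nonzero for generic $k$. The remaining work is to confirm that the resulting algebra is simple, so that it matches the simple $1$-parameter $\cW$-algebra. This follows from the simplicity of $\cD^{\psi}_{\cN=2}(0,n)$ together with the nondegenerate pairing on each charge sector. Theorem \ref{recon:first} then yields the isomorphism.
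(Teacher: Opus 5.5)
Your route is the paper's: strong generation of the coset by the generators of $\cD^{\psi}_{\mathcal{N}=2}(0,n)$ and the four charged fields, obtained through the large level limit, followed by Theorem \ref{recon:first} in the form of Remark \ref{recon:cased0n}. You also address hypothesis (5) and simplicity, which the paper leaves implicit.

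\textbf{The gap is in your first step,} the claim that the charge-zero part of \eqref{GKO:psl} is exactly $\cD^{\psi}_{\mathcal{N}=2}(0,n)$.
\begin{enumerate}
\item The three Heisenberg fields $J^1$, $J^2$, $\alpha$ all commute with the diagonal $V^{-k-1}(\gs\gl_{n+1})$. You only take the commutant of the rank-one Heisenberg algebra generated by $X=J^1+J^2+\frac{n+1}{n}\alpha$.
\item Hence the coset contains the full orthogonal complement of $X$ in $\mathrm{span}(J^1,J^2,\alpha)$, which is two-dimensional. Besides $H$ (orthogonal to $J^1+J^2$ and to $\alpha$), it contains a field $Z\in\mathrm{span}(J^1+J^2,\alpha)$ with nonzero $\alpha$-component.
\item With the paper's normalizations, $\langle J^1+J^2,J^1+J^2\rangle=\frac{(n+1)\psi}{n}$ and $\langle\alpha,\alpha\rangle=-n$. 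So one can take $Z=n(J^1+J^2)+\psi\alpha$. This field is non-isotropic for generic $k$, commutes with $\cD^{\psi}_{\mathcal{N}=2}(0,n)$, and does not lie in it.
\item Any $U(1)$ zero mode you use annihilates $Z$, so the charge-zero part contains $\mathcal{H}_Z\otimes\cD^{\psi}_{\mathcal{N}=2}(0,n)$.
\item Correspondingly, in your Step 2 the large level limit contains the $\mathrm{GL}_{n+1}$-invariant field $\alpha$. It is not a normally ordered polynomial in the quadratic invariants and the four determinant fields.
\end{enumerate}
Consequently hypotheses (1) and (4) of Theorem \ref{recon:first} fail: the coset Virasoro field is $L^{\cD}+L^{Z}$, and $Z$ is not generated.

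You cannot repair this by passing to $\mathrm{Com}(\mathcal{H}_Z,\cdot)$. The charged fields have $X$-charge $0$ and $\alpha$-charge $\mp n\neq 0$. Since $X$ and $Z$ span $\mathrm{span}(J^1+J^2,\alpha)$ for generic $k$, these fields carry nonzero $Z$-charge $\pm n(n+1-\psi)$ and would be removed. For the same reason, their weight with respect to the conformal vector of $\cD^{\psi}_{\mathcal{N}=2}(0,n)$ is $\frac{n+1}{2}-\frac{n(n+1-\psi)}{2\psi}$, not $\frac{n+1}{2}$. So the matching with \eqref{eq:alpha} in your Step 3 does not go through.

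This is not only a defect of your write-up; the stated isomorphism itself fails.
\begin{itemize}
\item In $\cW^{\ell}(\gp\gs\gl_{n+1|n+1},F_{n+1|n})$ the weight-one space is one-dimensional for $n\geq 2$. Indeed, the degree-zero part of the centralizer of $F$ in $\gp\gs\gl_{n+1|n+1}$ is spanned by the image of $u$.
\item For $n=1$ the weight-one space is $\gs\gl_2$.
\item In neither case can it contain the two-dimensional abelian space $\mathrm{span}(H,Z)$ of commuting weight-one fields that the coset has.
\end{itemize}
The paper's own sketch asserts that the large level limit is strongly generated by $\lim\cD^{\psi}_{\mathcal{N}=2}(0,n)$ and the four fields. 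It therefore overlooks the same field and does not close the gap. Either the construction \eqref{GKO:psl} or the statement has to be modified to account for $Z$ before the reconstruction theorem can be applied.

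A minor slip: since $\langle\alpha,\alpha\rangle=-n$, one has $e^{\alpha}(z)e^{-\alpha}(w)\sim(z-w)^{n}$, not $(z-w)^{-n}$. The pole of order $n+1$ in your pairing computation comes from the determinant factors.
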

\begin{remark} In the case $n=1$, $F_{2|1}$ is the minimal nilpotent in $\gp\gs\gl_{2|2}$, so this is a coset construction of the small $\cN=4$ superconformal algebra \cite{KW}. This extends the coset realization of the large $\cN=4$ superconformal algebra of \cite{CFL} to the small one. \end{remark}

We remind the reader that the dualities given by Theorem \ref{N=2:duality} are isomorphisms of $1$-parameter vertex algebras, and they can fail to hold for the honest cosets at special points even though they hold for the honest cosets generically. We now prove that in the case of Ito's conjecture, the duality holds for the simple honest cosets at the levels where they are rational. Recall that $\cE^{\psi}_{\mathcal{N}=2}(0,n) = \text{Com}(V^{\ell+1}(\gg\gl_n), V^{\ell}(\gs\gl_{n+1}) \otimes \cE(n))$ for $\psi = \ell + n + 1$, as $1$-parameter vertex algebras. If $\ell_0 + n > 0$, we have a homomorphism
$$\tilde{V}^{\ell_0+1}(\gg\gl_n) \hookrightarrow L_{\ell_0}(\gs\gl_{n+1}) \otimes \cE(n),$$ where $\tilde{V}^{\ell_0+1}(\gg\gl_n)$ is a (possibly non-simple) homomorphic image of $V^{\ell_0+1}(\gg\gl_n)$. By \cite[Theorem 4.1]{ACK}, the simple quotient $\cE_{N=2,\psi_0}(0,n)$ of $\cE^{\psi_0}_{\cN=2}(0,n)$ for $\psi_0 = \ell_0 + n + 1$, is isomorphic to 
\begin{equation}\label{coset:admissible}  \text{Com}(\tilde{V}^{\ell_0+1}(\gg\gl_n), L_{\ell_0}(\gs\gl_{n+1}) \otimes \cE(n)).\end{equation} In particular, the specialization of the $1$-parameter VOA $\cE^{\psi}_{\mathcal{N}=2}(0,n)$ (where $\psi$ is regarded as a formal parameter) to $\psi = \psi_0$, maps surjectively to \eqref{coset:admissible}. Moreover, Lemma \ref{lem:weakgen}, which says that $\cE^{\psi}(n,0)$ is weakly generated by the fields in weight at most $2$ for all noncritical levels, together with the isomorphism $\cE^{\psi}(n,0) \cong \cE^{\psi^{-1}}(0,n)$ of $1$-parameter VOAs, implies that the simple coset \eqref{coset:admissible} is weakly generated by the fields in weight at most $2$ for all such levels $\ell_0$. It follows that in the induced isomorphism of simple vertex algebras $\cE_{N=2,\psi_0}(n,0) \cong \cE_{N=2,\psi^{-1}_0}(0,n)$, the right hand side can be replaced by the honest coset \eqref{coset:admissible} when $\ell_0= \psi^{-1}_0 - n-1$ satisfies $\ell_0 + n > 0$. In particular, we have the isomorphism
\begin{equation}\label{W:admissible}\cW_k(\gs\gl_{n+1|n}) \cong \text{Com}(\tilde{V}^{\ell_0+1}(\gg\gl_n), L_{\ell_0}(\gs\gl_{n+1}) \otimes \cE(n)), \qquad k = -1  + \frac{1}{n+\ell_0+1}.\end{equation}

We now specialize to the case when $\ell_0$ is a positive integer. In this case, $\tilde{V}^{\ell_0+1}(\gg\gl_n) = L_{\ell_0+1}(\gg\gl_n)$, so \eqref{W:admissible} is replaced with $\cW_k(\gs\gl_{n+1|n}) \cong \text{Com}(L_{\ell_0+1}(\gg\gl_n), L_{\ell_0}(\gs\gl_{n+1}) \otimes \cE(n))$. We obtain

\begin{theorem}\label{thm:rational} For $\ell_0 \in \mathbb{N}$ and $k = -1 + \frac{1}{n+\ell_0+1}$, the simple quotient $\cW_k(\gs\gl_{n+1|n})$ is strongly rational.
\end{theorem}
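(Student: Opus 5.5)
The plan is to transport strong rationality across the isomorphism of simple vertex algebras established just above,
\[
\cW_k(\gs\gl_{n+1|n}) \cong \text{Com}(L_{\ell_0+1}(\gg\gl_n), L_{\ell_0}(\gs\gl_{n+1}) \otimes \cE(n)), \qquad k=-1+\tfrac{1}{n+\ell_0+1},
\]
and then to invoke the known rationality of the right-hand side. Strong rationality (lisse together with rationality) is an isomorphism invariant, so it suffices to prove it for the coset.

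\emph{Step 1: validity of the isomorphism at these levels.} For $\ell_0\in\mathbb{N}$ we have $\ell_0+n>0$. We also need $\psi_0=\ell_0+n+1$ to avoid the finite set of poles of the structure constants, so that the specialization makes sense. I will check that this set of bad points is only where $c\in\{0,1,-3\}$ or where the truncation-curve parametrization degenerates, and that these do not occur for positive integral $\ell_0$. By \cite[Theorem 4.1]{ACK} the simple quotient of $\cE^{\psi_0}_{\cN=2}(0,n)$ is the honest coset. The weak generation statement (Proposition \ref{lem:weakgen}) holds at every noncritical level, and combined with Theorem \ref{N=2:duality} it identifies this coset with $\cW_k(\gs\gl_{n+1|n})$. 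Since $\ell_0$ is a positive integer, $L_{\ell_0}(\gs\gl_{n+1})$ is integrable, and the image of $V^{\ell_0+1}(\gg\gl_n)$ is the simple $L_{\ell_0+1}(\gg\gl_n)$. The reason is that a unitary (integrable) module's affine subalgebra generated inside a unitary VOA is simple. This gives the displayed isomorphism.

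\emph{Step 2: rationality of the coset.} Here I invoke \cite[Corollary 14.1]{ACL}, which states that $\text{Com}(L_{\ell+1}(\gg\gl_n), L_\ell(\gs\gl_{n+1})\otimes\cE(n))$ is strongly rational for $\ell\in\mathbb{N}$. The mechanism is that $L_\ell(\gs\gl_{n+1})\otimes\cE(n)$ is a strongly rational, unitary vertex superalgebra, and $L_{\ell+1}(\gg\gl_n)$ is a strongly rational subalgebra that is its own double commutant. Coset theory for rational subalgebras (Creutzig--Kanade--McRae, Arakawa--Creutzig--Linshaw) then gives rationality and the $C_2$-cofiniteness of the commutant.

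Combining Steps 1 and 2 proves the theorem. The main obstacle is Step 1: one has to confirm that the specialization of the $1$-parameter isomorphism descends to the simple quotients at exactly these values, which means ruling out that $\psi_0$ or $\psi_0^{-1}$ hits a pole or a special point where weak generation of the honest coset fails. I would handle this by using the full-level weak generation of $\cW^k(\gs\gl_{n+1|n})$ from Proposition \ref{lem:weakgen}, as sketched in the preceding discussion, together with the tautological isomorphism \eqref{tautological} at the point $p$ on the truncation curve.
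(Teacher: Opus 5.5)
Your proposal is the paper's proof. The theorem is deduced from the isomorphism \eqref{W:admissible}, which the paper obtains just before the statement from \cite[Theorem 4.1]{ACK}, Proposition \ref{lem:weakgen}, Theorem \ref{N=2:duality} and the identification $\tilde{V}^{\ell_0+1}(\gg\gl_n)=L_{\ell_0+1}(\gg\gl_n)$ for $\ell_0\in\mathbb{N}$, and this is then combined with \cite[Corollary 14.1]{ACL}.

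Two side remarks need correcting.

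\textbf{The mechanism you attribute to \cite[Corollary 14.1]{ACL}.} $L_{\ell+1}(\gg\gl_n)$ contains a Heisenberg factor, so it is neither strongly rational nor its own double commutant. Just cite \cite[Corollary 14.1]{ACL} and drop that explanation.

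\textbf{The promised check on degenerate points.} You claim that positive integral $\ell_0$ avoids the points where the truncation-curve parametrization degenerates; this fails at $\ell_0=1$. For $\cW^k(\gs\gl_{n+1|n})$ we have $d=0$ and $\psi=\frac{1}{n+\ell_0+1}$, so the denominator factor $d-(n+2)\psi+1$ of \eqref{gentruncationcurve} equals $\frac{\ell_0-1}{n+\ell_0+1}$, which vanishes at $\ell_0=1$. This is the level where both simple algebras collapse to the $\cN=2$ minimal model of central charge $\frac{3n}{n+2}$. That case therefore does not come from \eqref{tautological} at a point of the $(c,\lambda)$-chart, and you should handle it directly, e.g.\ via \cite{Ad99}.
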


\begin{proof} This follows from \eqref{W:admissible} together with  \cite[Corollary 14.1]{ACL}, which says that $\text{Com}(L_{\ell_0+1}(\gg\gl_n), L_{\ell_0}(\gs\gl_{n+1}) \otimes \cE(n))$ is strongly rational for $\ell_0 \in \mathbb{N}$.
\end{proof}

%By \cite[Corollary 14.1]{ACL}, \eqref{coset:admissible} is strongly rational for $\ell_0 \in \mathbb{N}$. %Combining this with the isomorphism$$\cW_k(\gs\gl_{n+1|n}) \cong \text{Com}(L_{\ell+1}(\gg\gl_n), L_{\ell}(\gs\gl_{n+1}) \otimes \cE(n)),$$ where $k = -1  + \frac{1}{n+\ell+1}$ and $\ell \in \mathbb{N}$, 

\subsection{$\cW^k(\gs\gl_{n+1|n})$ at the critical level} Recall the conjecture of Adamovi\'c, Feigin, and Nakatsuka \cite{AFN26} that $\cW^{-1}(\gs\gl_{n+1|n})$ is isomorphic to the orbifold $(\cH_{\text{deg}}(2n) \otimes \cE(n))^{\text{GL}_m}$, where $\cH_{\text{deg}}(2n)$ is the degenerate Heisenberg algebra of rank $2n$, whose generators transform under $\text{GL}_n$ as $\mathbb{C}^n \oplus (\mathbb{C}^n)^*$. We can interpret this conjecture as a limiting case of Ito's conjecture \eqref{intro:itocoset}. Note that the critical level $k = -1$ corresponds to the large level limit $\lim_{\ell \rightarrow \infty} \cC^{\ell}(n)$ in \eqref{intro:itocoset}. However, there are several different ways to define this limit which lead to non-isomorphic algebras. For example, we can rescale the generators of $V^{\ell}(\gs\gl_{n+1})$ by $\frac{1}{\sqrt{\ell}}$, so that $\lim_{\ell \rightarrow \infty} V^{\ell}(\gs\gl_{n+1}) \cong \cH((n+1)^2 - 1)$, i.e., the Heisenberg algebra of rank $(n+1)^2 -1$. By  \cite[Theorem 6.10]{CL1}, we have $$\lim_{\ell \rightarrow \infty} \cC^{\ell}(n) \cong (\cH(2n) \otimes \cE(n))^{\text{GL}_n}$$ which is a simple VOA of central charge $c = 3n$ with the correct strong generating type. This algebra corresponds to specializing the OPE algebra to central charge $c = 3n$, and is {\it not} isomorphic to $\cW^{-1}(\gs\gl_{n+1|n})$ which has a nontrivial center.

There is another way to take the large level limit of $\cC^{\ell}(n)$ where we rescale the generators of $V^{\ell}(\gs\gl_{n+1})$ by $\frac{1}{\ell}$. With this rescaling, $\lim_{\ell \rightarrow \infty}V^{\ell}(\gs\gl_{n+1})$ is isomorphic to the degenerate Heisenberg algebra $\cH_{\text{deg}}((n+1)^2 - 1)$ of rank $(n+1)^2 -1$, and we have
$$\lim_{\ell \rightarrow \infty} \cC^{\ell}(n) \cong \cH_{\text{deg}}(n^2) \otimes (\cH_{\text{deg}}(2n) \otimes \cE(n))^{\text{GL}_n}.$$ This is too big because of the extra factor $\cH_{\text{deg}}(n^2) \cong \lim_{\ell\rightarrow \infty} V^{\ell+1}(\gg\gl_n)$. But this factor lies in the center, and the quotient
$$\cH_{\text{deg}}(n^2) \otimes (\cH_{\text{deg}}(2n) \otimes \cE(n))^{\text{GL}_n} / \langle \cH_{\text{deg}}(n^2) \rangle \cong (\cH_{\text{deg}}(2n) \otimes \cE(n))^{GL_n}$$ has the same graded character as $\cW^{-1}(\gs\gl_{n+1|n})$. According to the conjecture of \cite{AFN26}, this is the correct way to take the large level limit of $\cC^{\ell}(n)$ so that Ito's conjecture holds at the critical level.

\section{Level rank duality via mirror equivalence} \label{level-rankrationality} 
In this section, we study the representation theory of strongly rational vertex superalgebras, and we apply our results to the $\cW$-algebras $\cW_k(\mathfrak{sl}_{n+1|n})$ in Theorem \ref{thm:rational}.

\subsection{Mirror equivalence for vertex superalgebras}\label{sec:mirror}
Let $V$ be a simple vertex superalgebra with the decomposition $V=V_0\oplus V_1$ according to the parity. While for vertex algebras one usually is only interested in the category of local modules, in the super case one often also likes to consider so-called Ramond-twisted modules. Both categories together are best described via the even subalgebra $V_0$ of $V$. 

Let $\mathcal V_0$ be a vertex tensor category of $V_0$-modules that contains $V_1$ as an object. 
Let $\mathcal{V}$ be the supercategory of local and also Ramond twisted modules of $V$.
This category can be studied using the theory of vertex superalgebra extensions \cite{CKM}. In particular, $V$ can be identified with a commutative superalgebra in $\mathcal V_0$ \cite{CKL}. Let $\mathcal{SV}_0 = \mathcal V_0 \boxtimes \text{sVect}$ be the supercategory associated to $\mathcal V_0$. The underlying category $\underline{\mathcal{C}}$  of a supercategory $\mathcal{C}$ has the same objects as $\mathcal C$, but morphisms are required to be even.
Then $V$ can also be identified with a commutative algebra in $\mathcal{SV}_0$ whose underlying object is $(V_0, V_1)$ that is the object whose even part is $V_0$ and whose odd part is $V_1$. It comes with a multiplication morphism $m: V \boxtimes V \rightarrow V$. The category $\underline{\mathcal V}$ is then precisely the underlying category of modules for the commutative algebra $V$ that lie in $\underline{\mathcal{SV}}_0$.
Then there is an induction functor from $\underline{\mathcal{SV}}_0$ to $\underline{\mathcal V}$
\begin{equation} \label{eq: induction1}
\text{Ind}: \underline{\mathcal{SV}}_0 \rightarrow \underline{\mathcal V}, \qquad X \mapsto (V \boxtimes X, m \boxtimes \text{id}_X), \qquad f \mapsto \text{Id}_V \boxtimes f, 
\end{equation}
This functor is monoidal and by composing it with the embedding of $\mathcal V_0$ in $\underline{\mathcal{SV}}_0$, \eqref{eq: induction1} lifts to a functor
\begin{equation} \label{eq: induction2}
\text{Ind}^V: \mathcal V_0 \rightarrow \underline{\mathcal V}, \qquad X \mapsto (V \boxtimes (X, 0), m \boxtimes \text{id}_{(X,0)}), \qquad f \mapsto \text{Id}_V \boxtimes (f, 0),
\end{equation}
which we also call induction. According to \cite[Theorem A.1]{CFKLN}, the induction \eqref{eq: induction2} is an equivalence as tensor categories.
\begin{proposition}\textup{\cite[Prop. 4.22]{CKM}}
Under the above set-up, that is in particular $V = V_0 \oplus V_1$ is a vertex superalgebra with even  and odd part $V_0$ and $V_1$ and $V$ is an object in a vertex tensor category $\mathcal V_0$ of $V_0$-modules,  then $V_1$ is a simple current with $V_1 \boxtimes V_1 \cong V_0$ and 
    for $X$ a simple object in $\underline{\mathcal V}$ with $X = X_0 \oplus X_1$  its decomposition into even and odd part, then $X_0$ is simple and $X_1 \cong V_1 \boxtimes X_0$.
\end{proposition}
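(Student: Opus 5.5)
The plan is to work entirely inside the braided tensor category $\mathcal V_0$ and use that $V$ is a commutative superalgebra object there. First, show $V_1$ is a simple current with $V_1\boxtimes V_1\cong V_0$. Restricting the multiplication $m$ to the odd–odd component gives a morphism $m_{11}\colon V_1\boxtimes V_1\to V_0$ of $V_0$-modules. It is nonzero: since $V$ is simple, $V_1$ is a nonzero $V_0$-module and some products $a_{(n)}b$ with $a,b\in V_1$ are nonzero. Indeed, otherwise $V_1$ would be a proper ideal of $V$, because $V_0\cdot V_1\subseteq V_1$ and $V_1\cdot V_1=0$. Simplicity of $V$ also forces $V_0$ to be simple as a $V_0$-module, and $V_1$ to be simple as a $V_0$-module. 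For the latter, a $V_0$-submodule $U\subsetneq V_1$ generates the ideal $U\oplus(V_1\cdot U)$; it is proper by degree considerations unless $V_1\cdot U=V_0$, and in that case associativity gives $V_1=V_1\cdot(V_1\cdot U)\subseteq V_0\cdot U=U$. Hence $m_{11}$ is an epimorphism onto the simple object $V_0$.

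To upgrade $m_{11}$ to an isomorphism, I would use rigidity-free arguments as in \cite{CKM}. Associativity and commutativity of $m$ show that $m_{11}$ together with the unit makes $V_1$ a dualizable object with dual $V_1$, with $m_{11}$ serving as evaluation. The coevaluation is built from the module map $V_1\to V_1\boxtimes V_1\boxtimes V_1$ obtained by adjunction. The zigzag identities reduce to associativity, because $m\circ(m_{11}\boxtimes \mathrm{id}_{V_1})$ on $V_1^{\boxtimes 3}$ equals $m\circ(\mathrm{id}_{V_1}\boxtimes m_{11})$. An invertibility criterion then applies: an object $X$ with an epimorphism $X\boxtimes X\to \mathbf 1$ whose partner $\mathbf 1\to X\boxtimes X$ composes to a nonzero scalar (equal to a dimension, computed from the unit) is invertible, and $X\boxtimes X\cong\mathbf 1$. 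This is the step I expect to be the main obstacle, since $\mathcal V_0$ is not assumed rigid. If needed, I would instead cite \cite[Prop.~4.22]{CKM} in its precise form, which handles this using the algebra structure directly.

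For the module statement, let $X=X_0\oplus X_1$ be simple in $\underline{\mathcal V}$, meaning simple as a $V$-module with even morphisms. The action restricts to a $V_0$-module map $\mu\colon V_1\boxtimes X_0\to X_1$. By the same ideal-type argument as above, any $V_0$-submodule $Y\subseteq X_0$ generates the $V$-submodule $Y\oplus \mu(V_1\boxtimes Y)$. Simplicity of $X$ then forces $X_0$ to be simple, provided $X_0\neq 0$; if $X_0=0$, we swap the roles of parity using $V_1\boxtimes V_1\cong V_0$. Simplicity of $X$ likewise forces $\mu$ to be surjective. To see that $\mu$ is injective, apply $V_1\boxtimes-$ and compose with the action $V_1\boxtimes X_1\to X_0$. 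Associativity identifies the composite with $m_{11}\boxtimes\mathrm{id}$ followed by the unit isomorphism, which is an isomorphism. So $\mu$ is a split monomorphism, hence an isomorphism, and $X_1\cong V_1\boxtimes X_0$. Because $V_1$ is invertible, $V_1\boxtimes X_0$ is also simple.
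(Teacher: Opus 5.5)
\textbf{Gap: upgrading $m_{11}$ to an isomorphism.} The paper gives no argument of its own for this statement; it only quotes \cite[Prop.~4.22]{CKM}. Your easy steps are fine. Simplicity of $V$ gives that $V_0$ is simple and that $m_{11}\neq 0$, so $m_{11}\colon V_1\boxtimes V_1\to V_0$ is an epimorphism. (Here $V_1\neq 0$ is part of the set-up, not a consequence of simplicity.) Once $m_{11}$ is known to be an isomorphism, your module argument also works. The problem is the step in between, where you upgrade $m_{11}$ to an isomorphism.

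\begin{itemize}
\item In a non-rigid $\mathcal V_0$ no adjunction produces a map $V_1\to V_1\boxtimes V_1\boxtimes V_1$ or $\mathbf 1\to V_1\boxtimes V_1$. The only adjunction available is Frobenius reciprocity for induction, and it produces maps \emph{out of} induced modules.
\item The algebra structure likewise only gives maps out of tensor products, plus the unit $\mathbf 1\to V$, which lands in $V_0$.
\item So the coevaluation is exactly what is missing, and your invertibility criterion has nothing to apply to.
\item Even if you had a coevaluation, dualizability with an epimorphic evaluation would not rule out extra summands in $V_1\boxtimes V_1$. Ruling those out is the real content of the claim.
\item Falling back on \cite[Prop.~4.22]{CKM} is circular, because that is the statement you are proving.
\end{itemize}

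\textbf{How to close it, using only associativity.} Restricting associativity of $m$ to $V_1\boxtimes(V_1\boxtimes V_1)$ and using the unit axioms gives
\[
l_{V_1}\circ(m_{11}\boxtimes\mathrm{id}_{V_1})\circ\mathcal A=r_{V_1}\circ(\mathrm{id}_{V_1}\boxtimes m_{11}),
\]
where $\mathcal A$ is the associator.
\begin{enumerate}
\item By this identity, $K=\ker m_{11}\subseteq V_1\boxtimes V_1$ is a $V$-submodule of the induced module $V\boxtimes V_1$, and $V_1$ acts on $K$ by zero.
\item Any $V$-module $Y$ on which $V_1$ acts by zero is itself zero. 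Associativity restricted to $V_1\boxtimes(V_1\boxtimes Y)$ gives $l_Y\circ(m_{11}\boxtimes\mathrm{id}_Y)=0$. Since $\boxtimes$ is right exact, $m_{11}\boxtimes\mathrm{id}_Y$ is an epimorphism, so $l_Y=0$ and $Y=0$.
\item Hence $K=0$, so $m_{11}$ is both mono and epi, hence an isomorphism, and $V_1$ is invertible.
\end{enumerate}

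\textbf{The same lemma tidies the module part.}
\begin{itemize}
\item The map $V\boxtimes X_0\to X$ induced by $X_0\hookrightarrow X$ has kernel $\ker\mu\subseteq V_1\boxtimes X_0$. The action of $V_1$ sends this kernel into the summand $V_0\boxtimes X_0$, where the map to $X$ is injective. So $V_1$ acts on $\ker\mu$ by zero, and $\ker\mu=0$ directly, without your split-mono step.
\item A nonzero $X$ cannot have $X_0=0$, since then $V_1$ would act on $X$ by zero. Your parity swap is therefore never needed.
\end{itemize}
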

In the case of this proposition, we call $X$ a local (or a Neveu-Schwarz) module if it is a module for the vertex superalgebra $V$, and otherwise we call it a (Ramond) twisted module. 

Let $W$ be another simple vertex superalgebra with even subalgebra $W_0$ and vertex tensor category $\mathcal W_0$ of $W_0$-modules. Let $n: W \boxtimes W \rightarrow W$ denote the multiplication of the corresponding superalgebra object in $\mathcal W_0$. Let $\mathcal H: \mathcal V_0 \rightarrow \mathcal W_0$ be a functor such that $\mathcal H(V) \cong W$. If $\mathcal H$ is a functor of vertex tensor categories, then the multiplication on $V$ induces one on $\mathcal H(V)$, that is, $(\mathcal H(V), \mathcal H(m))$ is a commutative superalgebra in $\mathcal W_0$. The resulting algebra is simple if $\mathcal H$ is fully faithful. By \cite[Remark 3.11]{CKL}, the superalgebra structure is unique up to isomorphism and hence $(\mathcal H(V), \mathcal H(m)) \cong (W, n)$. It follows that if $\mathcal H$ is an equivalence, then it induces an equivalence $\underline V \cong \underline W$ of the corresponding objects in $\underline{\mathcal{V}}$ with the functor $\text{Ind}^W \circ \mathcal H \circ (\text{Ind}^V)^{-1}$.
This means that equivalences of categories $\mathcal{V}_0$ and $\mathcal{W}_0$ of even subalgebras can be lifted to equivalences of categories $\underline{\mathcal{V}}$ and $\underline{\mathcal{W}}$ for the vertex superalgebras that include the Ramond twisted modules. We apply this observation to the mirror equivalence.

The following theorem is called mirror equivalence for vertex algebras. A first version was proven for unitary vertex algebras \cite{Lin:2016hsa} and was generalized in \cite{CKM2, McRae:2021urf}.
  \begin{theorem} \label{thm:duality}
 Let $U$ and $V$ be simple self-contragredient vertex operator algebras such that:
\begin{itemize}
\item  There is an injective conformal vertex algebra homomorphism $\iota_A : U \otimes V \rightarrow A$, where
$A$ is a simple conformal vertex algebra.
\item $A$ is a $U \otimes V$-module, and
\[
A  = \bigoplus_{i \in I}  U^i \otimes V^i
\]
where the $U^i$ are distinct simple $U$-modules and the $V^i$ are semisimple $V$-modules. Let
$0  \in I$ denote the index such that $U^0 = U$.
\item  The vertex subalgebras $U$ and $V$ form a dual pair in $A$ in the sense that $V^0 = V$ and
$\text{dim Hom}_{V}(V,V^i) = \delta_{i,0}$
for all $i \in I$.
\item  The $U$-modules $U^i$ for $i \in I$ are objects of a locally-finite semisimple braided ribbon
tensor category $\mathcal U$ of $U$-modules that is closed under contragredients.
\item The $V$-modules $V^i$ for $i \in I$ are objects of a braided tensor category $\mathcal V$ of grading-
restricted generalized $V$-modules which is closed under submodules, quotients, and
contragredients.
\end{itemize}
Let $\mathcal U_A$ (respectively, $\mathcal V_A$) denote the category of $U$-modules (respectively, $V$-modules) whose
objects are finite direct sums of the $U^i$ (respectively, of the $V^i$) for $i \in I$. Then:
\begin{enumerate}
    \item  The category $\mathcal U_A$ of $U$-modules is a tensor subcategory of $\mathcal U$.
\item The category $\mathcal V_A$ of $V$-modules is a semisimple ribbon tensor subcategory of $\mathcal V$ with
distinct simple objects $\{V^i | i \in I\}$. In particular, $\mathcal V_A$ is rigid.
\item There is a braid-reversed tensor equivalence $\tau : \mathcal U_A \rightarrow  \mathcal V_A$ such that $\tau(U_i) \cong  (V^{i})'$ for $i \in I$, where $(V^i)'$ is the contragredient dual of $V^i$.
\end{enumerate}
\end{theorem}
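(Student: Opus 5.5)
The plan is to follow the standard proof of mirror equivalence for vertex operator algebras, in the form given in \cite{McRae:2021urf,CKM2}. The core idea is to build the functor $\tau$ from the algebra structure of $A$ inside a Deligne product of representation categories.

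\textbf{Step 1: $A$ as a commutative algebra.} Since $\mathcal U$ is locally finite, semisimple and rigid (ribbon), the category $\mathcal U\boxtimes\mathcal V$ is a braided tensor category of $U\otimes V$-modules. By the Huang--Kirillov--Lepowsky correspondence, in the form of \cite{CKM}, $A$ is then a commutative algebra in a suitable completion of $\mathcal U\boxtimes\mathcal V$ that contains $\bigoplus_i U^i\otimes V^i$.

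\textbf{Step 2: the dual-pair hypothesis.} This hypothesis says $\mathrm{Hom}_{U}(U,A)\cong V$ and $\mathrm{Hom}_V(V,A)\cong U$. Locality of $A$ means the monodromy is trivial on $A$, which forces the braiding on $U^i$ to be compensated by the reverse braiding on $V^i$.

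\textbf{Step 3: closure of $\mathcal U_A$ under tensor products.} Multiplication in $A$ gives nonzero maps $U^i\boxtimes U^j\to U^k$ whenever $V^k$ occurs in $V^i\boxtimes V^j$, and conversely. Simplicity of $A$ together with the fact that the $U^i$ are distinct then shows that the $U^i$ are closed under fusion and contragredients. This proves part (1).

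\textbf{Step 4: defining $\tau$.} On objects and morphisms, set
\[
\tau(X) = \mathrm{Hom}_{U}(X', A),\qquad X\in\mathcal U_A,
\]
viewed as a $V$-module via $A$, and act on morphisms by precomposition. This gives $\tau(U^i)\cong (V^i)'$ once one checks that the multiplicity of $V^i$ is consistent with the decomposition of $A$.

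\textbf{Step 5: monoidal structure, semisimplicity and rigidity.} Monoidality of $\tau$ comes from the multiplication $A\boxtimes A\to A$ restricted to the $U^i\otimes V^i$ components. The key input is the nondegeneracy argument: the induced maps $\mathrm{Hom}(U^i\boxtimes U^j,U^k)\to\mathrm{Hom}(V^i\boxtimes V^j,V^k)$ must be isomorphisms, not merely injections. This is where I expect the main obstacle. One would first prove $\dim\mathrm{End}_V(V^i)=1$ and semisimplicity of $\mathcal V_A$. To do this, consider the $U$-module $\mathrm{Hom}_V(V^i,A)$ and use the dual-pair condition together with rigidity of $\mathcal U$ to produce evaluation and coevaluation for $V^i$ by transport. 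Concretely, the multiplication $A\boxtimes A\to A\to U\otimes V$ projected to the vacuum gives a pairing $V^i\boxtimes (V^i)'\to V$ whose nondegeneracy follows from simplicity of $A$. That pairing yields rigidity, hence ribbon structure, which is part (2). The braid reversal then follows because the total monodromy on $U^i\otimes V^i\subset A$ is trivial. This gives part (3).

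Finally, since the statement is cited from \cite{Lin:2016hsa,CKM2,McRae:2021urf}, in the paper itself one would simply invoke those references after checking that the hypotheses match. The superalgebra extension discussed before the theorem is what lets this be applied to $V_0$ and $W_0$.
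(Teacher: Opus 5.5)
The paper does not prove this statement. It quotes it from \cite{Lin:2016hsa,CKM2,McRae:2021urf}, as your last sentence anticipates. Read as an argument, however, your Steps 3--5 have a genuine gap. It sits exactly at the ``main obstacle'' you flag, and the proposed way around it fails.

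\textbf{Step 5.} A nonzero pairing $V^i\boxtimes (V^i)'\to V$ says nothing about rigidity. In any category of $V$-modules closed under contragredients one has $\mathrm{Hom}_V(W\boxtimes W',V')\cong\mathrm{End}_V(W)$ and $V\cong V'$, so such pairings always exist. Rigidity additionally requires a coevaluation $V\to V^i\boxtimes (V^i)'$ satisfying the zig-zag identities. Nothing in the multiplication or unit of $A$ produces a map \emph{into} $V^i\boxtimes (V^i)'$. ``Transport from $\mathcal U$'' presupposes that $\tau$ is already a monoidal equivalence, which is what has to be proved.

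\textbf{Earlier steps.} The same unproven nondegeneracy is used silently before Step 5.
\begin{itemize}
\item In Step 3, nothing prevents the multiplication from vanishing on $W\otimes(V^i\boxtimes V^j)$ for some simple summand $W\subset U^i\boxtimes U^j$, and then $W$ need not lie in $\mathcal U_A$. Closure under contragredients is fine: the ideal generated by $U^i\otimes V^i$ is $A$, so some product lands nontrivially in $U\otimes V$.
\item $\mathrm{End}_V(V^i)=\mathbb C$ and $V^i\not\cong V^j$ for $i\neq j$ do not follow from the dual-pair condition, which only controls maps out of $V$.
\item Your $\tau$ gives $\tau(U^i)=V^{i^*}$ with $U^{i^*}\cong (U^i)'$. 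So $\tau(U^i)\cong (V^i)'$ requires simplicity of the $V^i$, not a multiplicity check.
\end{itemize}

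\textbf{The missing idea.} You need a mechanism that produces the $\mathcal V$-side structure maps from the $\mathcal U$-side ones, rather than from the multiplication alone. One way is as follows.
\begin{itemize}
\item Regard $A$ as a commutative algebra in the completed Deligne product. Use the monoidal induction functors $\mathcal F_U\colon X\mapsto A\boxtimes (X\otimes V)$ and $\mathcal F_V\colon W\mapsto A\boxtimes(U\otimes W)$ into $\mathrm{Rep}\,A$.
\item Frobenius reciprocity and $A=\bigoplus_i U^i\otimes V^i$ show both functors are fully faithful. For $\mathcal F_V$ this uses $U^0=U$, $V^0=V$ and that the $U^i$ are distinct simples. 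For $\mathcal F_U$ it uses $\dim\mathrm{Hom}_V(V,V^i)=\delta_{i,0}$.
\item The substantive step, where simplicity of $A$ and rigidity of $\mathcal U_A$ must be used, is the following. The canonical map $\mathcal F_V(V^{i^*})\to\mathcal F_U(U^i)$ must be shown to be an isomorphism. This map is adjoint to $U\otimes V^{i^*}\to (U^{i^*}\boxtimes U^i)\otimes V^{i^*}$, built from a coevaluation.
\end{itemize}
Granting this isomorphism, the remaining claims follow.
\begin{itemize}
\item $\tau=\mathcal F_V^{-1}\circ\mathcal F_U$ is monoidal on $\mathcal U_A$.
\item Full faithfulness gives closure of $\mathcal U_A$, and simplicity and distinctness of the $V^i$.
\item The evaluation and coevaluation of the rigid $A$-module $\mathcal F_V(V^i)\cong\mathcal F_U(U^{i^*})$ are morphisms between objects in the image of $\mathcal F_V$. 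Hence they come from morphisms in $\mathcal V$ obeying the zig-zag identities, which is rigidity of $\mathcal V_A$.
\item Commutativity of $A$ gives the braid reversal.
\end{itemize}
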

This theorem generalizes to $A$ being a vertex superalgebra \cite{McRae:2021urf}. The purpose of this section is to adapt  this theorem to the case where $U$ and $V$ are allowed to be vertex superalgebras. 

Before doing so, we note that in the special case that $U, V$ are strongly rational and that  $A$ is holomorphic, that is, every module of $A$ is isomorphic to a direct sum of copies of $A$, then the categories $\mathcal U_A, \mathcal V_A$ in Theorem \ref{thm:duality} are in fact
$\mathcal U \cong \mathcal U_A$ and $\mathcal V \cong \mathcal V_A$. The reason is that their Frobenius-Perron dimensions coincide, since the category of modules of the vertex algebra $A$ is the category of local modules of the corresponding commutative algebra object $A$ \cite{HKL, CKM} and since in this setting the following relation holds \cite[Cor.3.32]{DMNO} 
\[
1 = FP(\mathcal (\mathcal U \boxtimes \mathcal V)_A^{\text{loc}}) = \frac{FP(\mathcal U \boxtimes \mathcal V)}{FP(A)^2} = \frac{FP(\mathcal U)FP(\mathcal V)}{FP(\mathcal U_A)FP(\mathcal V_A)}.
\]

Our set-up is as follows:
\begin{enumerate}
    \item By a category of modules of a vertex superalgebra, we mean the underlying category, that is, morphisms are required to be even, see \cite{CKM}. 
    \item Let $F$ be a simple vertex superalgebra with $F = F_0 \oplus F_1$ its decomposition into even and odd part.
    \item Let $F^R$ be a simple Ramond twisted module with parity decomposition $F^R = F_0^R \oplus  F_1^R$.
    \item Assume that there is a conformal embedding $V \otimes W \hookrightarrow F$ with $V, W$ a mutually commuting pair of vertex subsuperalgebras and let
    \begin{equation} \label{eq: decomposition ramond}
    F = \bigoplus_{i\in I} V^i \otimes W^i, \qquad F^R = \bigoplus_{i \in I^R} V^{i,R} \otimes W^{i,R}
    \end{equation}
    be the decompositions into $V \otimes W$-modules. Here, $W^i, W^{i,R}$ are assumed to be distinct simple $W$-modules and $W \cong W^0$.
    \item Assume that $F_0$ has a vertex tensor category $\mathcal F_0$, that is, semisimple with simple objects $F_0, F_1, F_0^R, F_1^R$, allowing that possibly $F_0^R \cong F_1^R$. As $F_0$ is a $\mathbb Z/2\mathbb Z$-orbifold of $F$, it follows that $F_1$ is a simple current of order two: $F_1 \boxtimes F_1 \cong F_0$ \cite{McRae-orbifold}. We require that $F_1 \boxtimes F_0^R \cong F_1^R$.
    \item Let $V = V_0 \oplus V_1$, $W = W_0 \oplus W_1$ be the decompositions into even and odd parts. 
    Assume that there are vertex tensor categories $\mathcal V_0, \mathcal W_0$ of $V_0$ and $W_0$-modules such that $F$ is an object in the Deligne product $\mathcal V_0 \boxtimes \mathcal W_0$.
    Let $\underline{\mathcal V}$ be the underlying supercategory of local and also Ramond twisted modules of $V$.
    \item Let $\widetilde F = \widetilde F_0 \oplus \widetilde F_1$ be another vertex superalgebra with a vertex tensor category $\mathcal {\widetilde F}_0$ of modules of its even subalgebra, such that there is a braid-reversed equivalence $\tau: \mathcal F_0 \rightarrow \mathcal {\widetilde F}_0$ with the property that $\tau(F_1) \cong \widetilde F_1$.    
    \item Let $A$ be the corresponding canonical algebra, see \cite{CKM2}, that is
    \begin{equation} \label{eq: decomposition A}
    A = F_0 \otimes \widetilde F_0 \oplus F_1 \otimes \widetilde F_1 \oplus F_0^R \otimes \tau(F_0^R)' \oplus \left[ F_1^R \otimes \tau( F_1^R)' \right]
    \end{equation}
    with the convention here that terms in square brackets only appear if $F_0^R \not\cong F_1^R$:
    \[
    [X] := \begin{cases}
        X & \text{if} \ F_0^R \not\cong F_1^R \\ 0 & \text{if} \ F_0^R \cong F_1^R
    \end{cases}
    \]
     We set $\widetilde F_0^R = \tau(F_0^R)'$, $\widetilde F_1^R = \tau(F_1^R)'$.
    Let $V^i = V^i_0 \oplus V^i_1$ et cetera be the parity decompositions of modules. Combining \eqref{eq: decomposition ramond} and \eqref{eq: decomposition A}, we get
    \begin{equation}
        \begin{split}
            A \cong &\bigoplus_{i \in I} \left( V^i_0 \otimes W^i_0 \otimes \widetilde F_0 \oplus 
            V^i_1 \otimes W^i_1 \otimes \widetilde F_0 \oplus
            V^i_0 \otimes W^i_1 \otimes \widetilde F_1 \oplus 
            V^i_1 \otimes W^i_0 \otimes \widetilde F_1  \right)\ \ \oplus \\
            &\bigoplus_{i \in I^R} \left( V_0^{i, R} \otimes W_0^{i, R} \otimes \widetilde F_0^R \oplus 
            V_1^{i, R} \otimes W_1^{i, R} \otimes \widetilde F_0^R \oplus
            \left[V_0^{i, R} \otimes W_1^{i, R} \otimes \widetilde F_1^R \oplus 
            V_1^{i, R} \otimes W_0^{i, R} \otimes \widetilde F_1^R\right]  \right) \\
            \cong &\bigoplus_{i \in I} \left( V^i_0 \otimes X^i_0 \oplus V^i_1 \otimes X^i_1 \right) \ \ \oplus \ \ 
            \bigoplus_{i \in I^R} \left( V_0^{i, R} \otimes X_0^{i, R}  \oplus  V_1^{i, R} \otimes X_1^{i, R}  \right), \\
\text{where} \ \     &      X_0^i := W_0^i \otimes \widetilde F_0 \oplus  W_1^i \otimes \widetilde F_1, \qquad 
            X_1^i := W_1^i \otimes \widetilde F_0 \oplus  W_0^i \otimes \widetilde F_1  \\
          &   X_0^{i, R} := W_0^{i, R} \otimes \widetilde F^R_0 \oplus  \left[W_1^{i, R} \otimes \widetilde F_1^R\right] , \qquad X_1^{i, R} := W_1^{i, R} \otimes \widetilde F_0^R \oplus  \left[W_0^{i, R} \otimes \widetilde F^R_1\right] .
        \end{split}
    \end{equation}
\end{enumerate}
Under this set-up, the mirror equivalence for vertex algebras applies to the pair $(V_0, X_0:=X_0^0)$ inside $A$, that is, if the assumptions of the mirror equivalence theorem are satisfied for $V_0$ and $X_0$, then there is a corresponding braid-reversed equivalence. In particular, if $\mathcal F_0$ and $\mathcal {\widetilde F}_0$  are the categories of all $F_0$ and $\widetilde F_0$-modules, respectively, and if $V_0$ and $X_0$ are strongly rational, then there is a braid-reversed equivalence between $\mathcal V_0$ and the category $\mathcal X_0$ of $X_0$-modules sending 
\begin{equation} \label{eq: braidequiv even module}
V^i_0 \mapsto (X^i_0)', \qquad V^i_1 \mapsto (X^i_1)', \qquad V_0^{i, R} \mapsto (X_0^{i, R})', \qquad V_1^i \mapsto (X_1^{i, R})'.
\end{equation}
Note that since $V^0_1 \boxtimes V^0_1 \cong V_0$, it follows that its image under $\tau$ has the same property, 
$(X^0_1)' \boxtimes (X^0_1)' \cong X_0$, and hence both $V^0_1$ and $(X^0_1)'$ are their own contragredient duals. It follows that the equivalence $\tau$ sends $V$ to $X := X_0 \oplus X^0_1$. Hence, by the previous observation in \eqref{eq: induction2}, the functor \eqref{eq: braidequiv even module} induces an equivalence 
between $\underline{\mathcal V}$ and $\underline{\mathcal X}$ sending
\[
V^i :=(V^i_0, V^i_1) \mapsto X^i:= (X^i_0, X^i_1), \qquad
V^{i,R}:= (V_0^{i, R}, V_1^{i, R}) \mapsto X^{i,R}:= (X_0^{i, R}, X_1^{i, R}).
\]
Note that the modules for the superalgebras $X$ and $W$ are related, namely
 $X^i = W^i \otimes \widetilde{F}$  and $X^{i,R} = W^{i,R} \otimes \widetilde F^R$
with $\widetilde F^R = \widetilde F^R_0 \oplus [\widetilde F^R_1]$. 
Since $\widetilde F \boxtimes \widetilde F \cong \widetilde F$, we can immediately conclude that the Grothendieck rings of the local modules of $X$ and $W$ coincide. On the other hand, the relation between Ramond twisted modules has an obstruction. In the following example, we introduce vertex superalgebras $\widetilde F$ of type $B$ and $D$. The case $D$ is then the type such that the Grothendieck rings of $\underline{\mathcal W}$ and $\underline{\mathcal X}$ coincide, while for type $B$ there is a multiplicity two issue. 
\begin{example} 
Assume that $\widetilde F$ has a category that is semisimple with a uniqe simple local module $\widetilde{F}$ and a unique simple Ramond twisted module $\widetilde{F}^R$. Let $\widetilde F = \widetilde F_0 \oplus \widetilde F_1, \widetilde F^R = \widetilde F^R_0 \oplus \widetilde F^R_1$ be their decompositions into even and odd part. 
Assume that $\widetilde F_0$ admits a semisimple vertex tensor category that is closed under contragredient duals with simple objects $\{\widetilde F_0, \widetilde F_1, \widetilde F_0^R, \widetilde F_1^R\}$ allowing only possibly $\widetilde F^R_0\cong \widetilde F_1^R$. Since $\widetilde F$ is a simple vertex superalgebra, $\widetilde F_1$ is a simple current, in other words,  $\widetilde F_1 \boxtimes \widetilde F_1 \cong \widetilde F_0$. Moreover, since
$\widetilde F^R$ is a simple $\widetilde F$-module, we have $\widetilde F_1 \boxtimes \widetilde F^R_0 \cong \widetilde F^R_1$ and $\widetilde F_1 \boxtimes \widetilde F^R_1 \cong \widetilde F^R_0$. In particular, $\textup{Ind}^{\widetilde F}(\widetilde F_0^R) \cong \widetilde F^R$ and $\textup{Ind}^{\widetilde F}(\widetilde F_1^R) \cong  \pi(\widetilde F^R)$ with $\pi$ the parity reversal. As the tensor product of two Ramond twisted modules is local, it follows that $\widetilde F^R \boxtimes \widetilde F^R \cong a\,\widetilde F \oplus\, b \,\pi(\widetilde F)$ for some $a,b\in \mathbb{Z}_{\geq 0}$. Since the induction is an equivalence, we only need to compute the corresponding multiplicities $a$ and $b$ in $\widetilde F^R_0\boxtimes \widetilde F^R_0\cong a \widetilde F_0\oplus b \widetilde F_1$. There are three cases:

\noindent {\bf Type B:} If  $\widetilde F^R_0 \cong \widetilde F^R_1$, then $\widetilde F^R_0$ has to be its own contragredient dual and hence $\text{Hom}(\widetilde F^R_0 \boxtimes \widetilde F^R_0, \widetilde F_0) \cong \text{Hom}(\widetilde F^R_0, \widetilde F^R_0) = \mathbb C$, that is $a=1$. Since $\widetilde F_1 \boxtimes \widetilde F_0^R \cong \widetilde F_0^R$ it follows that
\[
\widetilde F_0 \oplus b \widetilde F_1 \cong \widetilde F^R_0 \boxtimes \widetilde F^R_0 \cong \widetilde F_1 \boxtimes (\widetilde F^R_0 \boxtimes \widetilde F^R_0) \cong \widetilde F_1 \boxtimes (\widetilde F_0 \oplus b \widetilde F_1) \cong \widetilde F_1 \oplus  b\widetilde F_0,
\]
that is $b=1$ as well. Via induction we get that 
\[
\widetilde F^R \boxtimes \widetilde F^R \cong \widetilde F \oplus \pi(\widetilde F).
\]
\noindent {\bf Type D$_{2n}$:} If  $\widetilde F^R_0 \not\cong \widetilde F^R_1$ and $(\widetilde F_0^R)' \cong \widetilde F_0^R$, then 
a similar  argument gives that
\[
\widetilde F_0^R \boxtimes (\widetilde F_0^R)' \cong \widetilde F_0 \oplus b \widetilde F_1, \qquad \widetilde F_1^R \boxtimes (\widetilde F_0^R)' \cong \widetilde F_1 \oplus b \widetilde F_0
\]
and since $ (\widetilde F_0^R)'$ is not the contragredient dual of $\widetilde F_1^R$ it follows that $b=0$, that is 
\[
\widetilde F^R \boxtimes \widetilde F^R \cong \widetilde F.
\]

\noindent {\bf Type D$_{2n+1}$:}
If  $\widetilde F^R_0 \not\cong \widetilde F^R_1$  and $(\widetilde F_0^R)' \cong \widetilde F_1^R$. Then the argument is the same as the type $D_{2n}$ case with  only a parity difference that is
\[
\widetilde F^R \boxtimes \widetilde F^R \cong 
\pi(\widetilde F).
\]  

We note that for $n \in \mathbb Z_{\geq 2}$, $\widetilde F_0 = L_1(\gs\go_n)$ gives an example of type $B$ if $n$ is odd and of type $D_{n/2}$ if $n$ is even, hence the naming of the cases.  
\end{example}

\subsection{Level-rank duality}

Let $\cH_{n, m}$ be the Heisenberg vertex algebra corresponding to $X^{\alpha_m} + \sum_{i=1}^m :b_ic_i:$ in $L_{n}(\gs\gl_{m+1}) \otimes \cE(m)$. In this section, we discuss
\[
C(n, m) :=
\text{Com}(L_{n+1}(\gs\gl_m)\otimes \cH_{n,m}, L_{n}(\gs\gl_{m+1}) \otimes \cE(m)), \quad n,m\in \mathbb{Z}_{\geq 2}.
\]
When $n=1$ or $m=1$, one can handle these via the conventions $L_{k}(\gs\gl_1) = \mathbb C = W_k(\gs\gl_1), A_0 = \{ 0 \}$ et cetera.

\begin{theorem}
For $n, m \in \mathbb Z_{\geq 2}$, $C(n, m) \cong C(m, n)$. 
\end{theorem}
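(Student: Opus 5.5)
Our plan is to derive $C(n,m)\cong C(m,n)$ from the isomorphism $\cE_{\cN=2,\psi}(m,0)\cong \cE_{\cN=2,\psi^{-1}}(0,m)$ of simple quotients obtained in the previous section, combined with Feigin--Frenkel type identifications of the coset pieces. First we identify $C(n,m)$ with the commutant of the $\cH\otimes\cW$-type subalgebra inside the rational $\cN=2$ coset. By \eqref{W:admissible} with $\ell_0=n$, we have $\cW_k(\gs\gl_{m+1|m})\cong \text{Com}(L_{n+1}(\gg\gl_m), L_n(\gs\gl_{m+1})\otimes\cE(m))$ for $k=-1+\frac{1}{m+n+1}$. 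Since $L_{n+1}(\gg\gl_m)=\cH\otimes L_{n+1}(\gs\gl_m)$ and the Heisenberg $\cH_{n,m}$ is exactly the field commuting with $L_{n+1}(\gs\gl_m)$ that is built from the $\gs\gl_{m+1}$-Cartan direction $X^{\alpha_m}$ together with the $bc$-charge, it follows that $C(n,m)$ is an extension (by the lattice part generated by the $\gg\gl_m$-Heisenberg) of $\text{Com}(\cH, \cW_k(\gs\gl_{m+1|m}))$, where $\cH$ is the $\cN=2$ Heisenberg field $H$. Dually, $C(m,n)$ relates to $\cW_{k'}(\gs\gl_{n+1|n})$ with $k'=-1+\frac{1}{m+n+1}$ in the same way.

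The key step is therefore to show that $\text{Com}(H,\cW_k(\gs\gl_{m+1|m}))$ depends on $(n,m)$ symmetrically. By Corollary \ref{trunc:e0md0m} and Theorem \ref{trunc:en0}, this commutant is, at generic level, a conformal extension of $\cA^+\otimes\cA^-$, where $\cA^+\cong\cD^{\psi}(m+1,m)$ and $\cA^-\cong\cD^{-\psi+1}(m,0)$. By triality (Theorem \ref{intro:trialitytheorem}), specialized to $\psi=\frac{1}{m+n+1}$-type values, these become the simple $\cW$-algebra cosets $\text{Com}(L_{n}(\gg\gl_m),L_n(\gs\gl_{m+1}))$ and the GKO coset of $\gs\gl_m$ at levels $n,1$. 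The roles of $n$ and $m$ are exchanged by classical level-rank duality between the parafermion/GKO cosets of $\gs\gl_m$ at level $n$ and $\gs\gl_n$ at level $m$. We would then apply the mirror equivalence for superalgebras (Theorem \ref{thm:duality} and its super version above), with $F=\cE(mn)$-type free fermions containing $L_n(\gs\gl_m)\otimes L_m(\gs\gl_n)\otimes\cH$. This would match the decompositions of both cosets, including Ramond sectors, and identify the multiplicity spaces so that the extension structures coincide.

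Concretely, the steps run in the following order. (i) Realize $L_n(\gs\gl_{m+1})\otimes\cE(m)\otimes L_m(\gs\gl_{n+1})\otimes\cE(n)$ inside a single free-fermion algebra $\cE((m+1)(n+1))$ via the standard $\gg\gl_{m+1}\times\gg\gl_{n+1}$ dual pair. (ii) Express $C(n,m)$ and $C(m,n)$ as commutants in this fermion algebra of $L_{n}(\gs\gl_{m+1})\otimes L_{m+1}(\gs\gl_n)\otimes$Heisenbergs and of the swapped subalgebra, respectively. (iii) Show that both coincide with the commutant of $L_{n+1}(\gs\gl_m)\otimes L_{m+1}(\gs\gl_n)\otimes\cH^{\otimes 2}$ restricted to the correct charge sector, using that $\gg\gl_m\times\gg\gl_{n+1}$ and $\gg\gl_{m+1}\times\gg\gl_n$ both sit inside $\gg\gl_{(m+1)(n+1)}$ compatibly.

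The main obstacle is step (iii), namely controlling the Heisenberg bookkeeping. One must verify that the two rank-one Heisenberg algebras $\cH_{n,m}$ and $\cH_{m,n}$ span the same lattice directions once the larger Heisenberg commutants are taken, so that no extra simple-current extension appears on only one side. To handle this, we would first compute the Heisenberg levels explicitly and check that the orthogonal complements agree. If they do not, we would instead use the simple-current argument that arises in mirror equivalence to match the extensions.
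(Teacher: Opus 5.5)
Your step (iii) and the obstacle you flag are the paper's endgame. But steps (i)--(ii), which are supposed to reach it, fail as written.

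In (i), the $\gg\gl_{m+1}\times\gg\gl_{n+1}$ dual pair in $\cE((m+1)(n+1))$ yields $L_{n+1}(\gs\gl_{m+1})\otimes L_{m+1}(\gs\gl_{n+1})$, which has the wrong levels. Moreover, $L_n(\gs\gl_{m+1})\otimes\cE(m)$ and $L_m(\gs\gl_{n+1})\otimes\cE(n)$ cannot be mutually commuting subalgebras of a single fermion algebra. In the natural realization they overlap on the $\mathbb{C}^m\otimes\mathbb{C}^n$ fermions. In any case, all factors are unitary and their central charges sum to $(m+1)(n+1)+\frac{3mn-m-n-1}{m+n+1}$, which is too large.

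In (ii), $C(n,m)\subset L_n(\gs\gl_{m+1})\otimes\cE(m)$ does not commute with $L_n(\gs\gl_{m+1})$. So it is not the commutant of anything containing that factor.

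The missing idea is to work in $\cE(nm+n+m)\cong\cE(n(m+1))\otimes\cE(m)\cong\cE(m(n+1))\otimes\cE(n)$. There, level-rank duality $L_n(\gs\gl_{m+1})\cong\text{Com}(L_{m+1}(\gs\gl_n)\otimes V_{L_{(m+1)n}},\cE((m+1)n))$ lets you \emph{eliminate} $L_n(\gs\gl_{m+1})$. Replace $\cH_{n,m}$ by its lattice completion $V_{L_{n,m}}$, which has the same commutant, and take a coset of a coset. This gives
\[
C(n,m)\cong\text{Com}\big(L_{n+1}(\gs\gl_m)\otimes L_{m+1}(\gs\gl_n)\otimes V_{L_{n,m}}\otimes V_{L_{(m+1)n}},\ \cE(nm+n+m)\big),
\]
and symmetrically for $C(m,n)$. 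The Heisenberg bookkeeping is then a one-line check, with no charge-sector restriction or simple-current fallback. Write $\epsilon_N$ for the generator of the $\gg\gl_1$-lattice of $\cE(N)$. Then $L_{n,m}$ and $L_{(m+1)n}$ are generated by $\epsilon_{nm}+\epsilon_m+m(\epsilon_m-\epsilon_n)$ and $\epsilon_{nm}+\epsilon_n$. Their $\mathbb{Q}$-span is $\text{Span}_{\mathbb{Q}}(\epsilon_{nm}+\epsilon_m,\epsilon_{nm}+\epsilon_n)$, which is symmetric in $m,n$.

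The $\cN=2$ route in your first two paragraphs does not close either. Since $L_{n+1}(\gs\gl_m)\otimes\cH_{n,m}$ is the image of $V^{n+1}(\gg\gl_m)$, $C(n,m)$ is the Ito coset itself. So $C(n,m)\cong\cW_k(\gs\gl_{m+1|m})$ with $k=-1+\frac{1}{m+n+1}$ by \eqref{W:admissible}. It is not an extension of $\text{Com}(H,\cW_k(\gs\gl_{m+1|m}))$ by the $\gg\gl_m$-Heisenberg.

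More importantly, suppose you grant the pointwise identification of $\cA^{\pm}$. Knowing that both sides extend isomorphic copies of $\cH\otimes\cA^+\otimes\cA^-$ still does not show the extensions agree. You give no mechanism by which mirror equivalence would force the two algebra objects to correspond.

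If you want to use the $\cN=2$ machinery, the workable version is the universal property. By Proposition \ref{lem:weakgen} (weak generation at every noncritical level), both $\cW_k(\gs\gl_{m+1|m})$ and $\cW_k(\gs\gl_{n+1|n})$ are simple quotients of $\cW^{\cN=2}_\infty$ at a point with $c=\frac{3mn}{m+n+1}$. Evaluating \eqref{gentruncationcurve} with its $n$ replaced by $m$, $d=0$, $\psi=\frac{1}{m+n+1}$ gives
\[
\frac{\lambda^2}{\omega}=-\frac{8(m+n+1)(m+1)^2(n+1)^2(m-n)^2}{mn(m-1)(n-1)(2m+n+1)(m+2n+1)(m+n+1-3mn)}.
\]
This is symmetric in $m,n$, so \eqref{tautological} would identify the two algebras.
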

\begin{proof}
The coset $C(n, m)$ has been discussed in \cite[Section 14]{ACL}. In particular it was shown that $C(n, m)$ is a conformal extension of $\cH \otimes W_{ 1 -m + \frac{1}{n+m}}(\gs\gl_m) \otimes
W_{ 1 -n + \frac{1}{n+m}}(\gs\gl_n)$. Also, note that $\cH_{n, m}$ extends to a lattice $L_{n, m}\cong \sqrt{m(m+1)(n+m+1)}\mathbb Z$ such that
$C(n, m) =
\text{Com}(L_{n+1}(\gs\gl_m)\otimes V_{L_{n,m}}, L_{n}(\gs\gl_{m+1}) \otimes \cE(m))$. This is in fact always the case for any Heisenberg coset $C = \text{Com}(\cH, V)$ as long as $V$ is simple as vertex algebra and an object in a completion of a vertex tensor category of $C \otimes \cH$-modules on which $\cH$ acts semisimply. In this case $\text{Com}(C, V)$ 
is either $\cH$ or a lattice vertex algebra $V_L$ \cite{CKLR}. 

Let $L_{N} = \epsilon_N\mathbb Z$ with $\epsilon_N^2=N$, that is $L_N \cong \sqrt{N}\mathbb Z$.
Let $n, m$ be positive integers and  let $\ell(n, m) = 1- m +\frac{1}{n+m}$. Recall the following cosets \cite{ACL, ABI}
\begin{equation}
\begin{split}
\text{Com}(L_n(\gs\gl_m), L_{n+1}(\gs\gl_m) \otimes \cE(m)) &\cong W_{ \ell(n, m) }(\gs\gl_m) \otimes V_{L_m} \\
\text{Com}(L_n(\gs\gl_m), \cE(nm)) &\cong L_m(\gs\gl_n) \otimes V_{L_{nm}} \\
\text{Com}(L_n(\gs\gl_m)  \otimes V_{L_{nm}} , \cE(nm)) &\cong L_m(\gs\gl_n)
\end{split}
\end{equation} 
This can be described via conformal embeddings
\begin{equation}\label{eq:embedding}
\begin{split}
\cE(nm+n+m) &\hookleftarrow L_m(\gs\gl_{n+1}) \otimes \cE(n) \otimes L_{n+1}(\gs\gl_{m}) \otimes V_{m(n+1)} \\
\cE(nm+n+m) &\hookleftarrow L_n(\gs\gl_{m+1}) \otimes \cE(m) \otimes L_{m+1}(\gs\gl_{n}) \otimes V_{n(m+1)} \\
\cE(nm+n+m) &\hookleftarrow L_m(\gs\gl_{n}) \otimes    L_{n}(\gs\gl_{m})  \otimes V_{mn}       \otimes   \cE(n) \otimes \cE(m) \\
&\hookleftarrow  L_{m+1}(\gs\gl_{n}) \otimes    L_{n+1}(\gs\gl_{m})  \otimes V_{L_{mn}} \otimes V_{L_n} \otimes V_{L_m} \otimes W_{ \ell(n, m) }(\gs\gl_m) \otimes
W_{ \ell(m, n) }(\gs\gl_n)
\end{split}
\end{equation}
 Via the embedding in $\cE(nm+n+m)$ the lattice $L_{n, m}$ is generated by $\epsilon_{nm}  + \epsilon_m + m(\epsilon_m-\epsilon_n)$. 
We verify that 
\begin{equation}\nonumber
\begin{split}\text{Span}_{\mathbb Q}( \epsilon_{nm}  + \epsilon_m + m(\epsilon_m-\epsilon_n), \epsilon_{nm} +\epsilon_n) &= 
\text{Span}_{\mathbb Q}(\epsilon_{nm} +\epsilon_m, \epsilon_{nm} +\epsilon_n) \\ &= 
\text{Span}_{\mathbb Q}( \epsilon_{nm}  + \epsilon_n + n(\epsilon_n-\epsilon_m), \epsilon_{nm} +\epsilon_m)
\end{split}
\end{equation}
and hence the intersection of the lattices $L_{n, m} + L_{(m+1)n}$ and $L_{m, n} + L_{(n+1)m}$ has rank two.

As a consequence we get
\begin{equation} \label{eq:levelrankduality}
\begin{split} 
C(n, m) &=
\text{Com}(L_{n+1}(\gs\gl_m)\otimes V_{L_{n,m}}, L_{n}(\gs\gl_{m+1}) \otimes \cE(m))  \\
&\cong  \text{Com}( L_{n+1}(\gs\gl_m) \otimes L_{n, m}, \text{Com}(L_{m+1}(\gs\gl_{n})  \otimes V_{L_{(m+1)n}}, \cE((m+1)n))) \otimes \cE(m))   \\
&\cong  \text{Com}( L_{n+1}(\gs\gl_m) \otimes L_{m+1}(\gs\gl_{n})  \otimes V_{L_{n, m}} \otimes  V_{L_{(m+1)n}},  \cE((m+1)n)) \otimes \cE(m))   \\
&\cong  \text{Com}( L_{n+1}(\gs\gl_m) \otimes L_{m+1}(\gs\gl_{n})  \otimes V_{L_{n, m}} \otimes  V_{L_{(m+1)n}},  \cE(mn+m+n) )   \\
&\cong  \text{Com}( L_{n+1}(\gs\gl_m) \otimes L_{m+1}(\gs\gl_{n})  \otimes V_{L_{m, n}} \otimes  V_{L_{(n+1)m}},  \cE(mn+m+n) )   \\
&\cong  \text{Com}( L_{m+1}(\gs\gl_n) \otimes L_{n+1}(\gs\gl_{m})  \otimes V_{L_{m, n}} \otimes  V_{L_{(n+1)m}},  \cE((n+1)m)) \otimes \cE(n))   \\
&\cong  \text{Com}( L_{m+1}(\gs\gl_n) \otimes L_{m, n}, \text{Com}(L_{n+1}(\gs\gl_{m})  \otimes V_{L_{(n+1)m}}, \cE((n+1)m))) \otimes \cE(n))   \\
& \cong \text{Com}(L_{m+1}(\gs\gl_n)\otimes V_{L_{m,n}}, L_{m}(\gs\gl_{n+1}) \otimes \cE(n))  = C(m, n).
\end{split}
\end{equation} 
\end{proof}

As a consequence of \eqref{eq:embedding} and \eqref{eq:levelrankduality} we get that 
$C(n, m)$ is a subalgebra of $\cE(nm+n+m)$  and its commutant is a conformal extension of $L_{n+1}(\gs\gl_{m}) \otimes L_{m+1}(\gs\gl_{n})$ times a rank two Heisenberg vertex algebra. 
We now want to determine this commutant more explicitly and then derive the branching rules, that is explain how $\cE(nm+n+m)$ decomposes into modules for the tensor product of these two commuting subalgebras and in particular via applying the mirror equivalence of the previous section we will then get the classification of simple modules and simple Ramond twisted modules of $C(n, m)$ as well as their fusion rules.

\subsection{Branching Rules}

We derive the branching rules. For this we continue to use \cite{ACL, ABI}. We list some notation needed for $n, m \in \mathbb Z_{\geq 2}$. 
\begin{itemize}
\item $\ell(n, m) = 1- m +\frac{1}{n+m}$
\item $Q_m = A_{m-1}$ the root lattice of $\gs\gl_m$
\item $P^+_{m, n}$ the set of admissible weights of $L_n(\gs\gl_m)$.
\item $\Lambda^N_i$ the $i^{\T{th}}$ fundamental weight of $\gs\gl_N$ (and $\Lambda_0^N = 0$) and define maps
\[
k(\nu) = i \qquad \text{iff} \qquad \nu = \Lambda_i^m \mod Q_m, \qquad \text{and} \qquad 
j(\lambda) = i \qquad \text{iff} \qquad \lambda = \Lambda_i^n \mod Q_n
\]
\item $\beta: P^+_{n, m} \rightarrow P^+_{m, n}$ a bijection of sets of admissible weights, that is explicitly given in Section 2 of \cite{ABI}. 
\item $\mathbb Z/m\mathbb Z$ acts on $P^+_{m,  n}$ via $\sigma(\Lambda^m_i) = \Lambda^m_{\sigma(i)}$. 
\item For $\mu = \sum\limits_{i=0}^{n-1} \mu_i \Lambda^n_i \in P^+_{n, m}$ and $\sigma \in \mathbb Z/n\mathbb Z$, set $\delta(\mu, \sigma) = \sum\limits_{i=0}^{n-1} i \mu_i + m \sigma \mod nm$, see Lemma 2 of \cite{ABI}.
\item Consider $\mathbb Z\beta_1$ and $N = \mathbb Z \beta_2 \oplus \mathbb Z \beta_3$ with $\beta_1^2 = nm(n+m+1),\beta_2^2 = n(m+1), \beta_2\beta_3= nm, \beta_3^2 = m(n+1)$. Let $\omega_1,\omega_2, \omega_3$ be the duals of $\beta_1,\beta_2, \beta_3$. 
\item Define  an equivalence  relation on $P^+_{m, n+1} \times P^+_{n, m+1} \times N'/N$ as follows  
let $\sigma^m_a$ be the cyclic permutation $(1, 2, \dots, m) \mapsto (m+1-a, m+2 - a, \dots, m-a)$ and similarly $\sigma_b^n$. Then
$(\nu, \lambda, \omega ) \sim (\nu', \lambda', \omega')$ if there exists  $a \in \mathbb Z/m\mathbb Z,\  b\in \mathbb Z/n\mathbb Z$ with 
\begin{itemize}
\item  $\nu' = \sigma^m_a(\nu)$, 
\item $ \lambda' = \sigma^n_b(\lambda)$, 
\item $\omega' = \omega + ((m+1)b+ na)\omega_2 + ((n+1)a+mb)\omega_3 \mod  N$.
\end{itemize}
\item Let $I$ be the set of equivalence classes. 
\item Let $S(\nu, \lambda, \omega) = \left\{ (a, \sigma, \mu) |  a \in \{ 0, \dots, n+m\},  \ \  \sigma \in \mathbb Z/m\mathbb Z,  \ \ \mu \in P^+_{n, m},\right.$\newline  $\left.\omega = (anm +\delta(\mu, \sigma)   +j(\nu-\mu))\omega_2+  (anm + \delta(\mu, \sigma)  +k(\lambda-\sigma(\beta(\mu))))\omega_3 \right\}$ and set
\begin{equation}\nonumber
\begin{split}
M_{\nu, \lambda, \omega} &=  \bigoplus_{(a, \sigma, \mu) \in S(\nu, \lambda, \omega) }   W_{\ell(m, n)}(\nu, \mu) \otimes 
   W_{\ell(n, m)}(\lambda, \sigma(\beta(\mu))) 
  \otimes V_{\mathbb Z \beta_1 + (anm + \delta(\mu, \sigma)  -j(\nu-\mu)m -k(\lambda- \sigma(\beta(\mu)))n)\omega_1} \\
  M^R_{\nu, \lambda, \omega} &=  \bigoplus_{(a, \sigma, \mu) \in S(\nu, \lambda, \omega) }   W_{\ell(m, n)}(\nu, \mu) \otimes 
   W_{\ell(n, m)}(\lambda, \sigma(\beta(\mu))) 
  \otimes V_{\mathbb Z \beta_1 + ((a-1/2)nm + \delta(\mu, \sigma)  -j(\nu-\mu)m -k(\lambda- \sigma(\beta(\mu)))n)\omega_1}.
  \end{split}
  \end{equation}
  Here $W_{k}(\lambda, \mu)$ is the $W_k(\gs\gl_n)$-module obtained from $L_k(\lambda)$ via the $\mu$-twisted quantum Hamiltonian reduction, see \cite{AF, AFC}.
\item For $x \in I$, set 
\begin{equation}\nonumber
\begin{split}
U_{x} 
&= \bigoplus_{ (\nu', \lambda', \omega') \in x}
L_{m+1}(\nu') \otimes L_{n+1}(\lambda') \otimes V_{N +\omega'}.
\end{split}
\end{equation}
\end{itemize}

\begin{theorem} \label{thm:fusionrulesW}
For $n, m \in \mathbb Z_{\geq 2}$, we have 
\begin{enumerate}
\item $C(n, m) \cong M_{0, 0, 0}$ and $U_{0, 0, 0}$ form a mutually commuting pair in $\cE(nm+n+m)$.
\item For $(\nu, \lambda, \omega) \in P^+_{n, m+1} \times P^+_{m, n+1} \times N'/N$ the module $M_{\nu, \lambda, \omega}$ is a simple $C(n, m)$-module with  $M_{\nu, \lambda, \omega} \cong M_{\nu', \lambda', \omega'}$  if and only if $(\nu, \lambda, \omega) \sim (\nu', \lambda', \omega')$.
\item Set $M_x = M_y$ for $x\in I$ and $y \in x$. The set $\{ M_x, \pi(M_x)| x \in I\} \cup \{ M^R_x, \pi(M^R_x)| x \in I\}$ is a complete set of inequivalent simple local and Ramond twisted modules for $C(n, m)$.
\item  $\cE(mn+m+n) \cong   \bigoplus\limits_{x\in I} M_x \otimes U_x$
\item  The fusion rules are 
  \begin{equation}
  \begin{split}
  M_{\nu, \lambda, \omega} \boxtimes M_{\nu', \lambda', \omega'} &\cong \bigoplus_{(\nu'', \lambda'') \in P^+_{n, m+1} \times P^+_{m, n+1}} N_{\nu, \nu'}^{ \ \ \nu''}  \widetilde N_{\lambda, \lambda'}^{ \ \ \lambda''}    M_{\nu'', \lambda'', \omega+\omega'}\\
  M_{\nu, \lambda, \omega} \boxtimes M^R_{\nu', \lambda', \omega'} &\cong \bigoplus_{(\nu'', \lambda'') \in P^+_{n, m+1} \times P^+_{m, n+1}} N_{\nu, \nu'}^{ \ \ \nu''}  \widetilde N_{\lambda, \lambda'}^{ \ \ \lambda''}    M^R_{\nu'', \lambda'', \omega+\omega'}\\
  M^R_{\nu, \lambda, \omega} \boxtimes M^R_{\nu', \lambda', \omega'} &\cong \bigoplus_{(\nu'', \lambda'') \in P^+_{n, m+1} \times P^+_{m, n+1}} N_{\nu, \nu'}^{ \ \ \nu''}  \widetilde N_{\lambda, \lambda'}^{ \ \ \lambda''}   \pi^{mn+m+n} \left(M_{\nu'', \lambda'', \omega+\omega'- nm(\omega_2+\omega_3)}\right)
  \end{split}
  \end{equation}
  with the fusion rules of $L_{n+1}(\gs\gl_m)$ and modules $L_{m+1}(\gs\gl_n)$
  \[
  L_{n+1}(\nu) \boxtimes L_{n+1}(\nu') = \bigoplus_{\nu'' \in P^+_{m, n+1}}  N_{\nu, \nu'}^{ \ \ \nu''} L_{n+1}(\nu''), \qquad
   L_{m+1}(\lambda) \boxtimes L_{m+1}(\lambda') = \bigoplus_{\lambda'' \in P^+_{n, m+1}}  \widetilde N_{\lambda, \lambda'}^{ \ \ \lambda''} L_{m+1}(\lambda'')
    \]
\end{enumerate}

\end{theorem}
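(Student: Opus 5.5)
The plan is to realize $C(n,m)$ as one member of a mutually commuting pair inside the holomorphic-type superalgebra $\cE(nm+n+m)$, decompose $\cE(nm+n+m)$ into $C(n,m)\otimes U_{0,0,0}$-modules, and then apply the super version of mirror equivalence from Section \ref{sec:mirror}. The commutant side $U_{0,0,0}$ is a simple current extension of $L_{m+1}(\gs\gl_n)\otimes L_{n+1}(\gs\gl_m)\otimes V_N$, whose modules and fusion rules are known. For (1), we use the chain of conformal embeddings \eqref{eq:embedding} and the level-rank isomorphism \eqref{eq:levelrankduality}. These show that the commutant of $C(n,m)$ in $\cE(nm+n+m)$ contains $L_{m+1}(\gs\gl_n)\otimes L_{n+1}(\gs\gl_m)\otimes V_N$, where $N$ is the rank-two lattice spanned by $\epsilon_{nm}+\epsilon_n$ and $\epsilon_{nm}+\epsilon_m$; its Gram matrix is $\beta_2^2=n(m+1)$, $\beta_3^2=m(n+1)$, $\beta_2\beta_3=nm$.

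To compute the branching rules (2) and (4), we decompose $\cE(nm+n+m)$ in stages. First, $\cE(mn)\otimes\cE(n)\otimes\cE(m)$ decomposes via the classical $\mathrm{GL}_n\times\mathrm{GL}_m$ level-rank branching of \cite{ABI}, indexed by $\mu\in P^+_{n,m}$, $\sigma$, and the lattice shifts $\delta(\mu,\sigma)$. Second, we use the coset decompositions
\[
L_n(\gs\gl_m)\otimes\cE(m)\supset L_{n+1}(\gs\gl_m)\otimes W_{\ell(n,m)}(\gs\gl_m)\otimes V_{L_m}
\]
from \cite{ACL}, where the $W$-modules arise as twisted reductions $W_\ell(\nu,\mu)$ \cite{AF,AFC}. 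Collecting the terms with fixed $L_{m+1}(\nu)\otimes L_{n+1}(\lambda)\otimes V_{N+\omega}$ multiplicity yields $M_{\nu,\lambda,\omega}$ as written. The overall integer $a$ records the $\mathbb{Z}\beta_1$-coset, and the identification under $\sim$ comes from the spectral flow, i.e. simple currents, of $L_{m+1}(\gs\gl_n)$ and $L_{n+1}(\gs\gl_m)$ combined with the lattice shift. Simplicity and the distinctness of the $M_x$ then follow from the fact that the $U_x$ are distinct simple $U_{0,0,0}$-modules, together with the mirror equivalence's assertion that multiplicity spaces are simple.

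For (3) and (5), we apply the set-up of Section \ref{sec:mirror} with $F=\cE(nm+n+m)$, $V=C(n,m)$, $W=U_{0,0,0}$. Here $F^R$ is the Ramond module, which is of type B or D according to the parity of $nm+n+m$, and $\widetilde F$ is a suitable free fermion algebra providing the braid-reversed dual. Both $C(n,m)$ (by \cite[Cor.~14.1]{ACL}) and $U_{0,0,0}$ (a simple current extension of rational algebras) are strongly rational. $F$ is holomorphic up to its Ramond sector, so by the Frobenius–Perron argument every simple module of $C(n,m)$ appears in $F$ or $F^R$. This gives completeness in (3), and it gives the Ramond modules $M^R_x$ as the multiplicity spaces in the analogous decomposition of $F^R$, whose half-integer lattice shift produces the $(a-\tfrac12)$. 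The fusion rules are transported from $U_x$, where they factor as affine $\gs\gl$ fusion coefficients $N,\widetilde N$ times lattice addition. The Ramond–Ramond rule acquires the parity $\pi^{mn+m+n}$ and the shift $-nm(\omega_2+\omega_3)$ from the type B/D analysis of $\widetilde F^R\boxtimes\widetilde F^R$.

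The main obstacle is the precise bookkeeping in the branching rule. We must verify that the lattice labels $\omega$ computed from $(a,\sigma,\mu)$ match exactly the $N'/N$ cosets, and that the simple current action generates the equivalence $\sim$ and nothing more, so that the multiplicity spaces are simple and not sums. We would first check this via conformal weights modulo $\mathbb{Z}$ and the $\mathbb{Z}/n\times\mathbb{Z}/m$ center actions, comparing the counts of $I$ against $P^+_{n,m}$-indexed data.
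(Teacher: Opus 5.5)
Your proposal follows essentially the same route as the paper. It branches $\cE(mn+m+n)=\cE(mn)\otimes\cE(n)\otimes\cE(m)$ via \cite{ABI} and \cite{ACL}, splits the dual lattice using $\beta_1,\beta_2,\beta_3$, identifies the commutant $U_{0,0,0}$, and applies the super mirror equivalence. Where you leave details open, the paper fills them in as follows. It pins down the commutant exactly by Frobenius reciprocity, using that $W_{\ell(m,n)}(\nu,\mu)\cong W_{\ell(m,n)}(0,0)$ only for $\nu=(m+1)\Lambda^n_k$, $\mu=m\Lambda^n_k$. It handles the modules of $U_{0,0,0}$ via the fixed-point-free simple current extension result of \cite{CKM}. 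It takes $\widetilde F=\cE(r)$ with $r\geq 3$ and $r\equiv -(mn+m+n) \bmod 8$, so that $V_{D^+_{mn+m+n+r}}$ is the holomorphic canonical algebra.

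One correction: $\cE(mn+m+n)$ consists of an even number of real fermions, so its even part is $V_{D_{mn+m+n}}$. Its Ramond sector is therefore always of type $D$ ($D_{2k}$ or $D_{2k+1}$ according to the parity of $mn+m+n$), never type $B$. Type $B$ would give $\widetilde F\oplus\pi(\widetilde F)$ in the Ramond--Ramond product rather than the single factor $\pi^{mn+m+n}$ that you (correctly) claim.
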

\begin{proof}
Choose $r \in \mathbb Z_{\geq 3}$ such that $mn+m+n+r = 0 \mod 8$. We then apply the mirror equivalence with $F = \cE(mn+m+n)$ and $\widetilde F = \cE(r)$. In this case $F_0 = L_1(\gs\go_{2(mn+m+n)}) = V_{D_{mn+m+n}}$ and $\widetilde F_0 =
L_1(\gs\go_{2r}) = V_{D_r}$. 
Note that $D_s$ for any $s \in \mathbb Z_{\geq 3}$ has four simple modules corresponding to the fundamental weights $\nu_1, \nu_{s-1}, \nu_s$ and to $0$, that is $V_{D_r}, V_{D_r+ \nu_1}, V_{D_r+ \nu_{s-1}}, V_{D_r+ \nu_s}$. The top levels of these modules are the corresponding integrable modules and the conformal weights of these top levels are $\frac{1}{2}, \frac{s}{4}, \frac{s}{4}$. The module $V_{D_r+ \nu_1}$ corresponds to $F_1,\widetilde F_1$ for $s = nm+n+m, r$ and the other two modules correspond to the two Ramond twisted modules and we set $F_0^R = V_{D_{nm+n+m} + \nu_{nm+n+m}}$ and $\widetilde F_{0}^R = V_{D_r + \nu_r}$. In particular this is an example of type $D_r$. Moreover in order to pass from local modules to Ramond twisted modules we can simply translate lattice vectors by $\nu_{nm+n+m}$.
The lattice $D_s + (D_s+ \nu_1)$ is isomorphic to $\mathbb Z^s$. Let $\epsilon_1, \dots, \epsilon_s$  be an orthonormal basis of $\mathbb Z^s$ identifying $\nu_1$ with $\epsilon_1$. Then $\nu_{s-1}, \nu_s$ are identified with $\frac{1}{2}(\epsilon_1 + \dots + \epsilon_{s-1} - \epsilon_s), \frac{1}{2}(\epsilon_1 + \dots + \epsilon_{s-1} + \epsilon_s)$. We will use this identification to pass from local modules to Ramond twisted modules. 

Let now $s = mn+m+r$. Then the lattice $D_s^+ = D_s \cup  (D_s+ \nu_s)$ is even and self-dual, i.e. the lattice vertex operator algebra $V_{D_s^+}$ is holomorphic and as a module for 
$V_{D_{nm+n+m}} \otimes D_{r}$ it is
\begin{equation}\nonumber
    \begin{split}
V_{D_s^+} \cong\ &V_{D_{nm+n+m}} \otimes V_{D_{r}} \oplus V_{D_{nm+n+m}+\nu_1} \otimes V_{D_{r+\nu_1}} \oplus \\ &V_{D_{nm+n+m}+ \nu_{nm+n+m-1}} \otimes V_{D_{r+ \nu_{r-1}}} \oplus V_{D_{nm+n+m}+\nu_{nm+n+m}} \otimes V_{D_{r+\nu_r}},         
    \end{split}
\end{equation}
that is the mirror equivalence as discussed above applies and all we have to do is to determine the branching rules for $\cE(mn+m+n)$.

We need the decompositions 
\begin{equation}
\begin{split}
L_n(\mu) \otimes L_1(\Lambda^m_i) &\cong \bigoplus_{\substack{\lambda \in P^+_{m, n+1} \\ \mu + \Lambda^m_i = \lambda \mod Q_m  }} L_{n+1}(\lambda) \otimes W_{\ell(n, m)}(\lambda, \mu) \\
\cE(n) &\cong \bigoplus_{i=0}^{n-1} L_1(\Lambda_i^n) \otimes V_{L_n + \frac{i \epsilon_n}{n}} \\ 
L_1(\Lambda^{nm}_i) & \cong \bigoplus_{\sigma \in \mathbb Z/m\mathbb Z} \bigoplus_{\substack{\mu \in P^+_{n, m} \\ \delta(\mu, \sigma) = i \mod nm}} L_m(\mu) \otimes L_n(\sigma(\beta((\mu))).
\end{split}
\end{equation}
The first one is from \cite{ACL}, the second one is obvious and the third one is the main result of \cite{ABI}.
Firstly the first two decompositions combine into the decomposition
\[
L_n(\mu) \otimes  \cE(m) \cong \bigoplus_{\lambda \in P^+_{m, n+1}} L_{n+1}(\lambda) \otimes W_{\ell(n, m)}(\lambda, \mu) \otimes V_{L_m + \frac{\epsilon_m}{m}k(\lambda-\mu)}
\]
with
\[
k(\nu) = i \qquad \text{iff} \qquad \nu = \Lambda_i^m \mod Q_m, \qquad \text{and} \qquad 
j(\lambda) = i \qquad \text{iff} \qquad \lambda = \Lambda_i^n \mod Q_n
\]
as defined in the set-up.
Putting these together with the third decomposition we get
\begin{equation}
\begin{split}
 \cE(mn+m+n) &\cong \bigoplus_{i=0}^{nm- 1} L_1(\Lambda^{nm}_i) \otimes V_{L_{nm} + \frac{i \epsilon_{nm}}{nm}} \otimes \cE(n) \otimes \cE(m) \\ 
 &\cong \bigoplus_{\sigma \in \mathbb Z/m\mathbb Z} \bigoplus_{\mu \in P^+_{n, m}} L_m(\mu) \otimes L_n(\sigma(\beta(\mu))) \otimes V_{L_{nm} + \frac{\delta(\mu, \sigma)\epsilon_{nm}}{nm}} \otimes \cE(n) \otimes \cE(m) \\ 
 &\cong \bigoplus_{\sigma \in \mathbb Z/m\mathbb Z} \bigoplus_{\mu \in P^+_{n, m}} \bigoplus_{\nu \in P^+_{n, m+1}} \bigoplus_{\lambda \in P^+_{m, n+1}}  L_{m+1}(\nu) \otimes L_{n+1}(\lambda) 
  \otimes W_{\ell(m, n)}(\nu, \mu) \otimes \\
  &\qquad 
  \otimes W_{\ell(n, m)}(\lambda, \sigma(\beta(\mu))) 
  \otimes V_{L_{nm} + \frac{\delta(\mu, \sigma) \epsilon_{nm}}{nm}} V_{L_m + \frac{\epsilon_m}{m}j(\nu-\mu)} V_{L_n + \frac{\epsilon_n}{n}k(\lambda-\sigma(\beta(\mu)))} \\ 
\end{split}
\end{equation}
We need to decompose the dual lattice of $L:= L_{mn} \oplus L_m \oplus L_n$.
Set $\epsilon_1 = \epsilon_{nm}, \epsilon_2 = \epsilon_n, \epsilon_3 = \epsilon_m$ and $\beta_1 = \epsilon_1 - m\epsilon_2 - n \epsilon_3, \beta_2 = \epsilon_1 + \epsilon_2, \beta_3 = \epsilon_1 + \epsilon_3$. So that $\mathbb Z \beta_2 = L_{(m+1)n}, \mathbb Z \beta_3 = L_{(n+1)m}$ and $\beta_1$ is orthogonal on both. 
Let $\omega_1, \omega_2, \omega_3 \in L'$ dual to $\beta_1, \beta_2, \beta_3$, explicitly.
\[
\omega_1 = \frac{\beta_1}{nm(n+m+1)}, \qquad
\omega_2 = \frac{\epsilon_1 + (n+1)\epsilon_2 - n \epsilon_3}{n(n+m+1)}, \qquad 
\omega_3 = \frac{\epsilon_1 -m\epsilon_2 + (m+1) \epsilon_3}{m(n+m+1)}.
\] 
Let $a, a' \in \mathbb Z$ and 
\[
v = \left( a + \frac{i}{nm}\right) \epsilon_1 +  \frac{j}{n} \epsilon_2 +   \frac{k}{m} \epsilon_3, \qquad
v' = \left( a' + \frac{i}{nm}\right) \epsilon_1 +   \frac{j}{n} \epsilon_2 +  \frac{k}{m} \epsilon_3,
\]
Then we check that $v = v' \mod L$ if and only if $a -a'\in (n+m+1)\mathbb Z$. We compute
\[
\beta_1 v = a nm + i  - jm  - kn, \qquad
\beta_2 v = a nm + i + j, \qquad
\beta_3 v =  a nm + i +k 
\]
so that
\[
v = (a nm + i - jm - kn)\omega_1  + (anm + i  +j )\omega_2 + (anm + i +k)\omega_3.
\]
Thus we have the decomposition of cosets
\begin{equation}
\begin{split}
L + \frac{i}{nm} \epsilon_1 + \frac{j}{n}\epsilon_2 + \frac{k}{m}\epsilon_3 =
\bigcup_{a =0}^{n+m}& (\mathbb Z \beta_1 + (anm + i -jm -kn)\omega_1) \oplus \\
& 
((\mathbb Z \beta_2 \oplus \mathbb Z \beta_3)+ (anm +i  +j)\omega_2 +(anm + i  +k)\omega_3)
\end{split}
\end{equation}
Hence 
\begin{equation}\label{eq:decomp}
\begin{split}
 \cE(mn+m+n) &\cong   \bigoplus_{\sigma \in \mathbb Z/m\mathbb Z} \bigoplus_{\mu \in P^+_{n, m}}\bigoplus_{\nu \in P^+_{n, m+1}} \bigoplus_{\lambda \in P^+_{m, n+1}}  L_{m+1}(\nu) \otimes L_{n+1}(\lambda) 
  \otimes W_{\ell(m, n)}(\nu, \mu) \otimes \\
  &\qquad 
  \otimes W_{\ell(n, m)}(\lambda, \sigma(\beta(\mu))) 
  \otimes V_{L_{nm} + \frac{i \epsilon_{nm}}{nm}} V_{L_m + \frac{\epsilon_m}{m}j(\nu-\mu)} V_{L_n + \frac{\epsilon_n}{n}k(\lambda-\sigma(\beta(\mu)))} \\ 
  &\cong  \bigoplus_{\sigma \in \mathbb Z/m\mathbb Z} \bigoplus_{\mu \in P^+_{n, m}} \bigoplus_{\nu \in P^+_{n, m+1}} \bigoplus_{\lambda \in P^+_{m, n+1}} \bigoplus_{a=0 }^{n+m} L_{m+1}(\nu) \otimes L_{n+1}(\lambda) 
  \otimes W_{\ell(m, n)}(\nu, \mu) \otimes \\
  &\qquad 
  \otimes W_{\ell(n, m)}(\lambda, \sigma(\beta(\mu))) 
  \otimes V_{\mathbb Z \beta_1 + (anm + \delta(\mu, \sigma)  -j(\nu-\mu)m -k(\lambda- \sigma(\beta(\mu)))n)\omega_1} \\
&\qquad  \otimes  V_{(\mathbb Z \beta_2 \oplus \mathbb Z \beta_3)+ (anm +\delta(\mu, \sigma)   +j(\nu-\mu))\omega_2 +(anm + \delta(\mu, \sigma)  +k(\lambda-\sigma(\beta(\mu))))\omega_3}
   \\  
\end{split}
\end{equation}
Define
\begin{equation}\nonumber
\begin{split}
S := \left\{ (\sigma, \mu, a) | \sigma \in \mathbb Z/m\mathbb Z , a \in \{ 0, \dots, n+m\}, \mu \in P^+_{n, m};  \right. \\ \left.  (anm +\delta(\mu, \sigma)   +j(-\mu))\omega_2 + (anm + \delta(\mu, \sigma)   +k(-\sigma(\beta(\mu))))\omega_3 \in \mathbb Z\beta_2 \oplus \mathbb Z \beta_3\right\} 
\end{split}
\end{equation}
Note that this condition is equivalent to 
\[
\omega_i ((anm +\delta(\mu, \sigma)   +j(-\mu))\omega_2 + (anm + \delta(\mu, \sigma)   +k(-\sigma(\beta(\mu))))\omega_3) \ \in \ \mathbb Z \ \text{for} \ i \in \{ 2, 3 \}
\]
which can be made explicit using
\[
\omega_2^2 = \frac{n+1}{n(n+m+1)}, \qquad 
\omega_3^2 = \frac{m+1}{m(n+m+1)}, \qquad 
\omega_2\omega_3 = -\frac{1}{(n+m+1)}, \qquad 
\]
It follows that 
\[
C(n, m) \cong \bigoplus_{(\sigma, \mu, a) \in S}
  W_{\ell(m, n)}(0, \mu) \otimes  W_{\ell(n, m)}(0, \sigma(\beta(\mu))) 
  \otimes V_{\mathbb Z \beta_1 + (anm + i -j(-\mu)m -k(- \sigma(\beta(\mu)))n)\omega_1}.
\]
Let $\mathcal D(n, m)$ be the category of modules of $L_m(\gs\gl_n)$ and $\mathcal C$ the one of $V:=W_{\ell(m, n)} \otimes W_{\ell(n, m)} \otimes V_{\mathbb Z \beta_1}$
Hence 
\[
\text{Hom}_{\mathcal D(n, m+1) \boxtimes \mathcal D(m, n+1) \boxtimes \mathcal C}\left(L_{m+1}(\mathfrak{sl_{n}}) \otimes L_{n+1}(\mathfrak{sl_{m}}) \otimes V, \cE(mn+m+n) \right) \cong V_{\mathbb Z \beta_2 \oplus \mathbb Z \beta_3}.
\]
Conversely let us consider the weights corresponding to the simple currents, that is
$\lambda = (n+1)\Lambda^m_j, \nu = (m+1) \Lambda^n_k, \mu = m\Lambda^n_k$ and we choose $\sigma, \beta$ such that $\sigma(\beta(\mu)) = n \Lambda^m_j$. This happens precisely if $\sigma = j \mod m$ and this sets $i = \delta(\lambda, \sigma)= mk+nj \mod mn$. 
We have $W_{\ell(m, n)}(\nu, \mu) \cong W_{\ell(m, n)}(0, 0)$ if and only if $(\mu, \nu) \in \{ ((m+1) \Lambda^n_k, m \Lambda^n_k), k =0, \dots, n-1 \}$ and similarly  
$W_{\ell(n, m)}(\lambda, \sigma(\beta(\mu))) \cong W_{\ell(m, n)}(0, 0)$ if and only if $(\lambda, \sigma(\beta(\nu))) \in \{ ((n+1) \Lambda^m_j, n \Lambda^m_j), j =0, \dots, m-1 \}$.
Let $\sigma^j = j \mod m$, Then $\delta(m \Lambda^n_k, \sigma^j) = mk +nj \mod nm$. 
It follows that 
\begin{equation}\nonumber
\begin{split}
U :&=\text{Hom}_{\mathcal C}\left(V, \cE(mn+m+n) \right) \\
&\cong \bigoplus_{j=0}^{m-1} \bigoplus_{k=0}^{n-1} L_{m+1}((m+1)\Lambda^n_k) \otimes L_{n+1}((n+1)\Lambda^m_j) \otimes V_{\mathbb Z \beta_2 + ((m+1)k+nj  )\omega_2 \oplus \mathbb Z \beta_3 +((n+1)j+mk)\omega_3}
\end{split}
\end{equation}
and thus by Frobenius reciprocity that $U$ is the commutant of $C(n, m)$\footnote{$C(n, m)$ corresponds to a commutative algebra $A$ in $\mathcal C$ \cite{HKL} and the category of local $A$-modules in $\mathcal C$ coincides with the category $\mathcal D$ of modules of the vertex algebra $C(n, m)$ \cite{CKM} and Frobenius reciprocity is referred to that the restriction functor is right adjoint to the induction functor, it says especially that 
$\text{Hom}_{\mathcal C}(V, M) \cong \text{Hom}_{\mathcal D}(C(n, m), M)$ for any $C(n, m)$-module $M$.}.
As $U$ is a double commutant, $U = \text{Com}\left( C(n, m),  \cE(mn+m+n) \right)$, it follows that $U$ and $C(n, m)$ are a mutually commuting pair.  
Note that the $ L_{m+1}((m+1)\Lambda^n_k), L_{n+1}((n+1)\Lambda^m_j)  $ are simple currents with fusion rules (see e.g. \cite[Section 2]{Ga}),
\[
 L_{m+1}((m+1)\Lambda^n_k) \boxtimes  L_{m+1}(\nu) \cong L_{m+1}(\sigma^n_k(\nu)), \qquad
  L_{n+1}((n+1)\Lambda^m_j) \boxtimes L_{n+1}(\lambda) \cong L_{n+1}(\sigma_j^m(\lambda))
\]
with $\sigma^m_0$ the identity and $\sigma^m_a$  the cyclic permutation $(1, 2, \dots, m) \mapsto (m+1-a, m+2 - a, \dots, m-a)$ for $a = 1, \dots, m-1$; and similarly $\sigma_b^n$. 
$U$ is a simple current extension of $L_{m+1}(\gs\gl_n) \otimes L_{n+1}(\gs\gl_m) \otimes V_N$ and since the action of $V_N$-modules is fixed-point free, that is $V_{N+\omega} \otimes V_{N+\omega'} \cong V_{N+\omega'}$ if and only if $\omega = 0$ the same is in particularly true for the simple currents appearing in the extension $U$. Thus Proposition 4.5 of \cite{CKM} applies, saying that every simple $U$-module is the induction of a simple $L_{m+1}(\gs\gl_n) \otimes L_{n+1}(\gs\gl_m) \otimes V_N$-module $L_{m+1}(\nu) \otimes L_{n+1}(\lambda) \otimes V_{N +\omega}$ for certain triples $(\nu,  \lambda, \omega) \in P^+_{n, m+1} \times P^+_{m, n+1} \times N'/N$. Denote the corresponding module $U_{\nu, \lambda, \omega}$ it decomposes as
\begin{equation}\nonumber
\begin{split}
U_{\nu, \lambda, \omega} &=\bigoplus_{j=0}^{m-1} \bigoplus_{k=0}^{n-1}   L_{m+1}(\sigma^n_k(\nu)) \otimes L_{n+1}(\sigma^m_j(\lambda)) \otimes V_{N +\omega+ + ((m+1)k+nj  )\omega_2    +((n+1)j+mk)\omega_3} \\
&= \bigoplus_{\substack{(\nu', \lambda', \omega') \in P^+_{n, m+1} \times P^+_{m, n+1} \times N'/N \\ (\nu', \lambda', \omega') \sim (\nu, \lambda, \omega)}}
L_{m+1}(\nu') \otimes L_{n+1}(\lambda') \otimes V_{N +\omega'}
\end{split}
\end{equation}
 in particular 
$ U_{\nu, \lambda, \omega}  \cong U_{\nu', \lambda', \omega'}$ if and only if  $(\nu', \lambda', \omega') \sim (\nu, \lambda, \omega)$. From \eqref{eq:decomp} we get that
  \[
  \cE(mn+m+n) \cong   \bigoplus_{\omega \in N'/N } \bigoplus_{\nu \in P^+_{n, m+1}} \bigoplus_{\lambda \in P^+_{m, n+1}} M_{\nu, \lambda, \omega} \otimes  L_{m+1}(\nu) \otimes L_{n+1}(\lambda) \otimes V_{N +\omega}. 
  \]
  Since $\cE(mn+m+n)$ is an extension of $C(n, m) \otimes U$ it follows that 
  $M_{\nu, \lambda, \omega} \cong M_{\nu', \lambda', \omega'}$  if $(\nu, \lambda, \omega) \sim (\nu', \lambda', \omega')$. Hence we can define for $x \in I$, $M_x := M_y$ and $U_x := U_y$ for any $y \in x$. Thus   
  \[
  \cE(mn+m+n) \cong   \bigoplus_{x\in I} M_x \otimes U_x.
  \]
  By mirror equivalence, $\{M_x | x \in I\}$ is a complete list of inequivalent simple local modules up to parity, and 
  the fusion rules of the $M_x$ are the same as the ones of the $U_x$. As the latter is a simple current extension of $L_{m+1}(\gs\gl_n) \otimes L_{n+1}(\gs\gl_m) \otimes V_N$, those can be read off easily, i.e.
  \[
  M_{\nu, \lambda, \omega} \boxtimes M_{\nu', \lambda', \omega'} \cong \bigoplus_{(\nu'', \lambda'')} N_{\nu, \nu'}^{ \ \ \nu''}  \widetilde N_{\lambda, \lambda'}^{ \ \ \lambda''}    M_{\nu'', \lambda'', \omega+\omega'}
  \]
  with the fusion rules
  \[
  L_{n+1}(\nu) \boxtimes L_{n+1}(\nu') = \bigoplus_{\nu'' \in P^+_{m, n+1}}  N_{\nu, \nu'}^{ \ \ \nu''} L_{n+1}(\nu''), \qquad
   L_{m+1}(\lambda) \boxtimes L_{m+1}(\lambda') = \bigoplus_{\lambda'' \in P^+_{n, m+1}}  \widetilde N_{\lambda, \lambda'}^{ \ \ \lambda''} L_{m+1}(\lambda'')
    \]
    The formulas for the Ramond twisted modules follow immediately as $\nu_{nm+n+m}$ via our identification coincides with the vector in $\frac{1}{2}(\mathbb Z\beta_1 \oplus N')$ satisfying $\omega_{nm+n+m} \beta_1 = -\frac{mn}{2}, \omega_{nm+n+m}\beta_2 = \frac{n(m+1)}{2}, \omega_{nm+n+m} \beta_3 = \frac{m(n+1)}{2}$. In particular the Ramond-twisted module associated to $M_{\nu, \lambda, \omega}$ is obtained via shifting the lattice vectors by $-\frac{mn}{2}\omega_1$.
   \end{proof}

\subsection{Results at admissible levels}

Recall that a level $\ell$ of $\gs\gl_n$ is called admissible if $\ell = - n + \frac{u}{v}$ with $u, v$ coprime positive integers and $u \geq n$. The representation theory of the simple affine vertex algebra $L_\ell(\gs\gl_n)$ at a non-integral admissible level is expected to be still quite nice. In general it is known that the category of $L_\ell(\gs\gl_n)$-modules that lie in the category $\mathcal O$ is semisimple \cite{Ar3} and that the category of $L_\ell(\gs\gl_n)$-modules that are integrable as $\gs\gl_n$-modules (also called the Kazhdan-Lusztig category or the category of ordinary modules) is a ribbon tensor category \cite{CHY2, C}. 
If the admissible level $k$ is integral then every $L_\ell(\gs\gl_n)$module is integrable as an $\gs\gl_n$-module. At non-integral admissible levels there are continuous families of modules that are not integrable as a $\gs\gl_n$-module and not even in the category $\mathcal O$. These modules have infinite dimensional conformal weight spaces and usually the conformal weight is not lower-bounded, see e.g. \cite{KR} and see \cite{ACK2} for descriptions of categories appearing. 
Hence the category of the dual $\cW$-algebra $\cW_k(\gs\gl_{n|n-1}) \cong \text{Com}(\tilde{V}^{\ell+1}(\gg\gl_{n-1}), L_\ell(\gs\gl_n) \otimes \cE(n-1))$ also have continuous families of modules. Note that here $k = -1 + \frac{v}{u}$. 

As not much is known beyond $\gs\gl_2$, we now restrict to this case. The category of weight modules of 
	$L_\ell(\gs\gl_2)$ at admissible level has been studied and there is a full classification \cite{ACK2}. This category is a vertex tensor category \cite{C2}, it is ribbon \cite{CMY, NORW}, fusion rules are determined \cite{C3, NORW} and the logarithmic Verlinde formula of \cite{CR} holds \cite{C3}. In other words, this category is under great control. The Kazama-Suzuki duality to $\cW_k(\gs\gl_{2|1})$ is such that all these results transport to the category of weight modules of $\cW_k(\gs\gl_{2|1})$ for $k = -1 + \frac{v}{u}$ and this has been worked out in great detail by many \cite{NORW, CLRW, S}. In particular the category of weight modules  of $\cW_k(\gs\gl_{2|1})$ for $k = -1 + \frac{v}{u}$ for $u, v$ coprime positive integers and $v \geq 2$ is a locally finite ribbon supercategory, modules are classified and fusion rules are known \cite{NORW}.

    We expect that similar results hold for higher rank, but likely only for certain good admissible levels. This is exemplified in the case of $L_\ell(\gs\gl_3)$ at the admissible level $\ell = -3 + \frac{u}{v}$ with $u, v$ coprime positive integers and $u\geq 3$. In this case the representation theory is much nicer for $v=2$ then for $v>2$, see \cite{ACG, CRR, FRR, KRW}.  Beyond the cases of $\gs\gl_2$ and $\gs\gl_3$ there are only few results \cite{KR}, and it is a major goal of the community to improve the situation.

\appendices

\section{Appendix}\label{sect:primary structures}

\subsection{Singular vectors and extensions}\label{sec:sing vectors and extensions}
Here we recall some notions of $\T{Vir}^c_{\cN=2}$-modules. Let $\mathcal C$ be the category of ordinary real weight $\text{Vir}^c_{\mathcal{N}=2}$-modules, i.e., an object $M$ of $\mathcal C$ carries an action of $\T{Vir}^c_{\cN=2}$ so that: 
\begin{enumerate}
    \item $M$ admits a grading by conformal weights $M = \bigoplus_{n \in \mathbb R} M_n$, with $M_n=0$ for $n<\!<0$,
    \item weight spaces $M_n$ are finite-dimensional,
    \item weight spaces $M_n$ decompose into Heisenberg weight spaces, on which $H_{(0)}$ acts semisimply.
\end{enumerate}
Let $V(c, h, j)$ be the Verma module whose lowest conformal weight is $h$ and the lowest conformal weight space has Heisenberg weight $j$, and $L(c, h, j)$ be its simple quotient.

\begin{lemma} 
For irrational $c$, $h \in \frac{1}{2} \mathbb Z_{\geq 0}$ and $j \in \mathbb Z$, $V(c, h, j)$ has
\begin{itemize} 
\item no singular vector if $ j \neq \pm 2h$,
\item a single singular vector of conformal weight $h + 1/2$ and $H_{(0)}$-weight $\pm(2h-1)$ if $j = \pm2h$.
\end{itemize}
\end{lemma}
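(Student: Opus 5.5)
The plan is to use the Kac determinant formula for the $\mathcal{N}=2$ superconformal algebra in the Neveu--Schwarz sector. It is due to Boucher--Friedan--Kent and Kato--Matsuda, and was adapted to the present normalization $H(z)H(w)\sim \frac{c}{3}(z-w)^{-2}$. In terms of $t$ with $c = 3-\frac{6}{t}$, i.e. $\frac{c-3}{3} = -\frac{2}{t}$, the determinant at level $N$ of $V(c,h,j)$ vanishes exactly when one of two families of factors vanishes.

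The first family is
\[
f_{r,s} = 2\Big(\tfrac{c-3}{3}\Big)h - j^2 - \tfrac14\Big(\tfrac{c-3}{3}\Big)^2 + \tfrac14\Big(\big(\tfrac{c-3}{3}+1\big)r + s\Big)^2 .
\]
Here $r\in\mathbb{Z}_{>0}$ and $s\in 2\mathbb{Z}_{>0}$, and the factor contributes at level $rs/2$.

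The second family is the charged (fermionic) factors
\[
g_k = 2h - 2kj + \Big(\tfrac{c-3}{3}\Big)(k^2-\tfrac14),
\]
with $k\in\mathbb{Z}+\frac12$, contributing at level $|k|$.

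A singular vector of minimal weight exists precisely when some factor vanishes. In that case one gets a singular vector at the corresponding level, with charge $j$ for $f_{r,s}$ and charge $j\pm1$ for $g_k$ according to the sign of $k$.

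First I would treat the $f_{r,s}$ factors. For $c$ irrational, $\frac{c-3}{3}$ is irrational. Expanding $f_{r,s}=0$ gives a polynomial identity in $x:=\frac{c-3}{3}$ with rational coefficients, since $h$, $j$, $r$, $s$ are all rational. Its $x^2$ coefficient is $\frac{r^2-1}{4}$, so $r=1$ would be forced. Its $x$ coefficient is then $2h+\frac{rs}{2}=2h+\frac{s}{2}>0$, which cannot vanish. Hence no $f_{r,s}$ factor vanishes for irrational $c$.

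Next I would treat the $g_k$ factors. The $x$-coefficient $k^2-\frac14$ must vanish, so $k=\pm\frac12$. The rational part then gives $2h\mp j=0$, that is, $j=\pm 2h$. This yields a singular vector at level $\frac12$, namely $G^{\pm}_{-1/2}$ applied to the highest weight vector. It has conformal weight $h+\frac12$ and charge $j\pm1$, which equals $\pm(2h+1)$ under the paper's convention. Up to the sign convention on $G^\pm$ (compare the degenerate diamond \eqref{eq:N=2 degenerate diamond OPEs}), this matches the claimed $\pm(2h-1)$. I will fix the convention by directly computing $G^{\mp}_{1/2}G^{\pm}_{-1/2}v$ from \eqref{eq:N=2 OPEs 3}, which equals $(2h\mp j)v$ up to sign. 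This direct computation also confirms the level-$\frac12$ singular vector without relying on the formula.

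For uniqueness, I note that both signs cannot occur simultaneously unless $h=j=0$. In that case the vacuum-type module has both vectors, and I would remark that the statement intends $h>0$ or treat this case separately. I would also check that no further singular vectors arise at higher levels. For that I would use the structure of the Verma module: its quotient by the submodule generated by the level-$\frac12$ vector has determinant with no further zeros, by the same irrationality argument applied to the quotient's determinant, which is the standard charged-Verma factorization.

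I expect the main obstacle to be this last point. The Kac determinant only detects the existence of some singular vector, so showing that there is exactly one requires either the embedding-diagram classification of $\mathcal{N}=2$ Verma modules, for which I would cite Dörrzapf's results, or the argument that the submodule generated by $G^\pm_{-1/2}v$ is itself a Verma-type module with no singular vectors for irrational $c$. I would try the latter first.
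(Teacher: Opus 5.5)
Your strategy matches the paper's, whose proof is just an appeal to the Shapovalov (Kac) determinant of \cite{KW}. The step that disposes of the factors $f_{r,s}$ is wrong, however, because irrational is not the same as transcendental. For $r\ge 2$, the equation $f_{r,s}=0$ is a genuine quadratic $\tfrac{r^2-1}{4}x^2+\big(2h+\tfrac{rs}{2}\big)x+\tfrac{s^2}{4}-j^2=0$ in $x=\tfrac{c-3}{3}$ with rational coefficients, and its roots can be irrational. (Incidentally, the Boucher--Friedan--Kent factor is $\tfrac14\big(\tfrac{c-3}{3}r+s\big)^2$, not $\tfrac14\big((\tfrac{c-3}{3}+1)r+s\big)^2$; your coefficient $2h+\tfrac{rs}{2}$ is the one for the correct factor.)

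Concretely, take $(h,j)=(\tfrac12,0)$ and $r=s=2$. Then $f_{2,2}=\tfrac34x^2+3x+1$, which vanishes at $x=-2\pm\tfrac{2\sqrt6}{3}$, i.e.\ at $c=-3\pm2\sqrt6$. At these irrational central charges the Shapovalov determinant of $V(c,\tfrac12,0)$ vanishes at weight $\tfrac52$ and charge $0$. So this Verma module has a singular vector even though $j\neq\pm2h$. Hence no argument can prove the first bullet under the stated hypothesis. The hypothesis must be strengthened to generic $c$ (e.g.\ $c$ transcendental, or $c$ outside the countable set of roots of all $f_{r,s}$). That is all the paper ever uses, since $c$ is a parameter of a $1$-parameter family. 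Under that hypothesis your coefficient comparison is correct as it stands: every $f_{r,s}$ is a nonzero polynomial in $x$, and $g_k$ can vanish only for $k=\pm\tfrac12$ and $j=\pm2h$.

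For uniqueness you need both of your two options, not one of them. Let $N$ be the submodule generated by $w=G^{\pm}_{-1/2}v$. Singular vectors outside $N$ are excluded by irreducibility of $V(c,h,\pm2h)/N$, and singular vectors inside $N$ are excluded by irreducibility of $N$. Since $(G^{\pm}_{-1/2})^2=0$, both are chiral Verma modules. $N$ is the quotient of $V(c,h+\tfrac12,\pm(2h+1))$ by the submodule generated by $G^{\pm}_{-1/2}$ applied to its top vector, and $V/N$ is the analogous quotient of $V(c,h,\pm2h)$. So what you need is irreducibility of chiral Verma modules for generic $c$.

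This can be read off from the full determinant via the Jantzen filtration. Deform $j$ off $\pm2h$. Then only $g_{\pm1/2}$ vanishes at the point, and only to first order. Its exponent in the determinant at each weight equals the dimension of $N$ there. The sum formula then forces $M^1=N$ and $M^2=0$. So $N$ carries a nondegenerate contravariant form and is therefore irreducible, and $V/N=V/M^1$ is irreducible as well.

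Finally, your direct computation $G^{\mp}_{1/2}G^{\pm}_{-1/2}v=(h\mp\tfrac j2)v$ is right. The resulting charge $\pm(2h+1)$ is not a matter of convention: the singular vector $w$ satisfies $G^{\pm}_{-1/2}w=0$, which forces $|j(w)|=2(h+\tfrac12)$. The $\pm(2h-1)$ in the statement is a slip. The same slip appears in the paper's degenerate-diamond OPE and in the exact sequences of the following corollary. So do not try to reconcile it by relabeling $G^{\pm}$.
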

\begin{proof}
This follows immediately from the Shapovalov form \cite[Section 8.3]{KW}.
\end{proof}

\begin{corollary} \label{cor:nonsplitexact}
For irrational $c$, $V(c, h, j)$ is simple for $h \in \frac{1}{2}\mathbb Z_{\geq 0}$ and $|j| < 2h$. 
Moreover there are the following non-split exact sequences
\begin{equation}
\begin{split}
& \ 0 \rightarrow L\Big(c, \frac{1}{2},  1 \Big) \oplus L\Big(c, \frac{1}{2}, - 1 \Big)  \rightarrow  V(c, 0, 0) \rightarrow L(c, 0, 0) \rightarrow  0 , \\
& 0 \rightarrow L(c, 1, 0) \rightarrow  V\Big(c, \frac{1}{2}, \pm 1\Big) \rightarrow L\Big(c, \frac{1}{2}, \pm 0\Big) \rightarrow 0.
\end{split}
\end{equation}
\end{corollary}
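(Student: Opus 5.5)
The plan is to treat the statement exactly as the preceding lemma suggests: simplicity of $V(c,h,j)$ is equivalent to nondegeneracy of its Shapovalov form. A proper submodule of a Verma module is generated by singular or subsingular vectors, and for irrational $c$ the only possible vanishing locus of the Kac determinant in the relevant range comes from the linear factor $j=\pm 2h$. That factor corresponds to the relation $G^{\pm}_{-1/2}v=0$ on the top space. Hence, for $|j|<2h$, the lemma gives no singular vector at all. I would still check that no subsingular vectors appear either. The cleanest way is to note that the Kac determinant for the $\mathcal N=2$ algebra (Boucher–Friedan–Kent / Kac–Wakimoto) factors into the ``vertical'' factors $f_{r,s}$ and the ``charged'' factors $g_k=2h-2kj'+\dots$. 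The vertical factors are nonvanishing for irrational $c$. The charged factors, in the normalization of \eqref{eq:N=2}, vanish at level $|k|$ only if $2h \mp j$ equals an expression involving $c$ times $k$ (up to an additive integer). This forces $k=\pm\tfrac12$, i.e.\ $j=\pm 2h$, when $c$ is irrational and $h,j$ are half-integral/integral. So the determinant is nonzero at every level, and $V(c,h,j)$ is simple.

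For the exact sequences, take first $V(c,0,0)$. Here $h=0=j$ satisfies $j=\pm 2h$ for both signs. Since the top vector $v$ has $L_{(1)}v=0$, both $G^{+}_{(0)}v$ and $G^{-}_{(0)}v$ (the modes $G^\pm_{-1/2}$) are singular, by a direct check using \eqref{eq:N=2 OPEs 3}: the relevant norms are $2h\mp j=0$. These vectors have weights $(\tfrac12,\pm1)$. They generate submodules that are quotients of $V(c,\tfrac12,\pm1)$. These Vermas have $j=\pm 2h$ with $h=\tfrac12$, so they have their own singular vector at weight $1$ and charge $\pm(2h-1)=0$. I would then show that the submodule generated by $G^{\pm}_{-1/2}v$ inside $V(c,0,0)$ is the simple quotient $L(c,\tfrac12,\pm1)$. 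Concretely, the image of that weight-$(1,0)$ singular vector is $G^{\mp}_{-1/2}G^{\pm}_{-1/2}v$ (up to a correction by $L_{-1}v$ and $H_{-1}v$), and one verifies it already lies in the other summand's span or vanishes, using $\{G^+_{-1/2},G^-_{-1/2}\}=L_{-1}$ and $L_{-1}v\in$ span of singular descendants. The two submodules intersect trivially by charge considerations in low weight. The quotient by their sum is $L(c,0,0)$, because no further singular vectors survive by the determinant argument. Non-splitness is automatic, since $V(c,0,0)$ is generated by its top vector.

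The second sequence is handled the same way. In $V(c,\tfrac12,\pm1)$ the lemma gives one singular vector of weight $(1,0)$. It generates a Verma $V(c,1,0)$, which is simple by the first part since $|0|<2$. Quotienting gives $L(c,\tfrac12,\pm1)$, again with no further singular vectors. Non-splitness follows because $V$ is cyclic on its top. (The statement's ``$\pm 0$'' I read as a typo for $\pm1$.) The main obstacle is the first step: ruling out subsingular vectors and extra singular vectors at higher level. This needs the explicit $\mathcal N=2$ Kac determinant, and I would import it from \cite{KW} rather than recompute it. The only real work is translating their normalization to \eqref{eq:N=2} and checking the irrationality argument.
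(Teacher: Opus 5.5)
The proof breaks at your identification of the singular vector in $V(c,\frac12,\pm1)$. Let $w$ be its top vector. The weight-$1$, charge-$0$ subspace is spanned by $G^{\mp}_{-1/2}w$ alone. From $\{G^+_r,G^-_s\}=L_{r+s}+\frac{r-s}{2}H_{r+s}+\frac{c}{6}(r^2-\frac14)\delta_{r+s,0}$ (the mode form of the $G^+G^-$ OPE) you get $G^{\pm}_{1/2}G^{\mp}_{-1/2}w=(L_0\pm\frac12H_0)w=w\neq0$. So there is no singular vector of weight $(1,0)$.

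For $j=\pm2h$ the singular vector is $G^{\pm}_{-1/2}v$ (the chiral-primary condition). Its charge is $\pm(2h+1)$, not $\pm(2h-1)$ as the preceding lemma says. In $V(c,\frac12,\pm1)$ it is $G^{\pm}_{-1/2}w$, of weight $(1,\pm2)$. Hence the second sequence cannot hold as printed, since no submodule with top $(1,0)$ exists. What is true, for generic $c$, is $0\to L(c,1,\pm2)\to V(c,\frac12,\pm1)\to L(c,\frac12,\pm1)\to0$. You corrected the ``$\pm0$'' misprint but built on this one.

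The same error sinks your argument for the first sequence. In $V(c,0,0)$ the vectors $G^{-}_{-1/2}G^{+}_{-1/2}v$ and $G^{+}_{-1/2}G^{-}_{-1/2}v$ are nonzero and linearly independent; their sum is $L_{-1}v$. So your claimed image neither vanishes nor lies in the other summand. The correct mechanism: under $w\mapsto G^{\pm}_{-1/2}v$, the true singular vector $G^{\pm}_{-1/2}w$ maps to $(G^{\pm}_{-1/2})^2v=0$. Therefore $\langle G^{\pm}_{-1/2}v\rangle$ is a quotient of $V(c,\frac12,\pm1)/\langle G^{\pm}_{-1/2}w\rangle$. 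Once that module is known to be simple, the two summands are simple with different top charges. That, not low-weight charge bookkeeping, is what makes their intersection trivial.

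Your use of the ``determinant argument'' also needs repair. For simplicity of $V(c,h,j)$ itself, subsingular vectors are irrelevant: the lowest-weight part of any proper submodule of a Verma module consists of singular vectors. So the first claim is immediate from the preceding lemma, which is all the paper uses. Subsingular vectors matter only when identifying kernels and quotients, and there your tools fall short. The submodule generated by a singular vector of an $\mathcal N=2$ Verma module is not a Verma module. Because $(G^{\pm}_{-1/2})^2=0$, $U(\mathfrak n_-)G^{\pm}_{-1/2}w$ is a charged (chiral) Verma module. So ``it generates $V(c,1,0)$, simple by part one'' fails even after the weights are fixed.

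Along your route you can close this with the Jantzen sum formula. For generic $c$, the only Boucher--Friedan--Kent factors vanishing on $V(c,\frac12,\pm1)$ are $g_{\pm1/2}$, and on $V(c,0,0)$ they are $g_{1/2}g_{-1/2}$. Each vanishes to first order, and its multiplicity is the graded dimension of the corresponding charged Verma module. By PBW this equals the character of $\langle G^{\pm}_{-1/2}\,\cdot\,\rangle$, which forces the maximal submodule to be the one generated by the $G_{-1/2}$-singular vectors and the quotients to be simple. The paper instead transports the known structure of relaxed highest-weight $\widehat{\mathfrak{sl}}_2$-modules at irrational level through Kazama--Suzuki duality.

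Finally, your assertion that the vertical factors $f_{r,s}$ never vanish for irrational $c$ is false. For $r\geq2$ they are quadratic in $x=\frac c3-1$ with rational coefficients. For example, $f_{2,2}=\frac34x^2+(2h+2)x+1-j^2$ vanishes at $h=\frac12$, $j=0$ for $c=-3\pm2\sqrt6$. There $V(c,\frac12,0)$ is reducible although $|j|<2h$. Irrationality only excludes the $g_k$ with $|k|\geq\frac32$ and the $f_{1,s}$. The statement shares this defect. It and the corrected argument hold for generic $c$ (e.g.\ $c$ not algebraic of degree $\leq2$), which is all the later applications need.
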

\begin{proof}
    From the previous Lemma we know that the kernel of the map $V(c, 0, 0) \rightarrow L(c, 0, 0)$ is a quotient of $V\Big(c, \frac{1}{2},  1 \Big) \oplus V\Big(c, \frac{1}{2}, - 1 \Big)$. The Kazama-Suzuki duality provides a blockwise equivalence between weight modules of $V^k(\gs\gl_2)$  and $\T{Vir}^c_{\cN=2}$-modules, where $c$ and $k$ are related via $c = \frac{3k}{k+2}$ \cite{FST, CGNS}.
    Under this duality Verma modules become the so-called {\it relaxed} highest-weight modules and the relaxed highest-weight module $R$ corresponding to $V(c, 0, 0)$ is the one satisfying the short exact sequence
    \[
    0 \rightarrow D_{k, 2}^- \oplus D_{k, -2}^+ \rightarrow R \rightarrow V^k(\mathfrak{sl}_2) \rightarrow 0    \]
if $k$ is irrational. Here $D_{k, 2}^\pm$ are the Weyl modules induced from the highest/lowest weight modules of $\gs\gl_2$ of highest/lowest weight $\lambda$; see \cite{CLRW, FST} for details on the duality. 
The second short exact sequence is seen similarly.
\end{proof}
Set 
\[
\mathcal S := \{ L(c, 0, 0) \} \cup \Big\{ \, V(c, h, j) \, \Big|\  h \in \frac{1}{2} \mathbb Z_{\geq 0},\, j \in \mathbb R,\, |j| < 2h \, \Big\}.
\]
\begin{corollary} \label{cor:ext} For irrational $c$, 
$\text{Ext}^1_{\mathcal C}(M, N) = 0$ for $M, N \in \mathcal S$. 
\end{corollary}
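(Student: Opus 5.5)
The Ext vanishing should follow from two inputs: the characterization of irreducibility in Corollary \ref{cor:nonsplitexact}, and the standard universal property of Verma modules in the category $\mathcal C$ of ordinary modules with lower-bounded, finite-dimensional weight spaces. Let
\[
0\to N\to E\to M\to 0
\]
be an extension in $\mathcal C$ with $M,N\in\mathcal S$. We will produce a splitting $M\to E$. Since every module in $\mathcal S$ is generated by a lowest weight vector, the whole problem reduces to lifting that vector to a singular vector of $E$ and then checking that the lift generates a copy of $M$.

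\textbf{Step 1: the case $M=V(c,h,j)$ with $|j|<2h$.} Pick a lift $\tilde v\in E$ of the highest weight vector of $M$ with the same $(L_{(1)},H_{(0)})$-weight $(h,j)$; this is possible because both operators act semisimply on weight spaces. If $\tilde v$ is annihilated by all positive modes, then the universal property of the Verma module gives a map $V(c,h,j)\to E$ splitting the sequence. The positive modes applied to $\tilde v$ land in $N$, in weights strictly below $h$. We must therefore show that no nonzero such vector can occur compatibly with the $\mathcal N=2$ relations, or that it can be absorbed by modifying $\tilde v$ by an element of $N$.

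The plan is a central character argument. The Casimir-like combination $L_{(1)}-\tfrac{3}{2c}H_{(0)}^2$, acting on lowest weight vectors, together with the Heisenberg charge, separates the blocks when $c$ is irrational. Concretely, for $N=V(c,h',j')$ or $N=L(c,0,0)$, the vector $\tilde v$ can be moved into $N$ only if $(h,j)$ appears as the weight of a singular or subsingular vector in $N$. The Lemma rules this out: for $|j'|<2h'$ there are no singular vectors at all, and $L(c,0,0)$ is simple. More precisely, the weight-$(h,j)$ vectors of $E$ that are killed by positive modes modulo $N$ form the preimage of a line. The obstruction to choosing a genuinely singular lift is then a homomorphism from the Verma module $V(c,h,j)$ into $N$ that is nonzero in some positive-degree component. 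Since $N$ is simple and $V(c,h,j)$ is simple, such a map is either zero or an isomorphism. An isomorphism would force $N\cong M$, a self-extension of a simple Verma module in the lowest weight space. In that case we use semisimplicity of $L_{(1)}$ and $H_{(0)}$, together with the Shapovalov form being nondegenerate on $V(c,h,j)$ by the Lemma, to split it.

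\textbf{Step 2: the case $M=L(c,0,0)$.} A lift $\tilde v$ of the vacuum gives a map $V(c,0,0)\to E$. By Corollary \ref{cor:nonsplitexact}, we must show that the images of the singular vectors of weight $\tfrac12$ and charge $\pm1$ vanish in $E$. These images $G^\pm_{(-1)}\tilde v$ lie in $N$ in weight $(\tfrac12,\pm1)$. Every module in $\mathcal S$ other than $L(c,0,0)$ has lowest weight $h\ge\tfrac12$, and in weight $\tfrac12$ the condition $|j|<2h$ forces $j=0$, so charge $\pm1$ in weight $\tfrac12$ cannot occur. The case $h=\tfrac12$, $j=\pm1$ is exactly the excluded non-split one. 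If $N=L(c,0,0)$, weight $\tfrac12$ is already zero because its singular vectors are quotiented out. Hence these images vanish and the map factors through $L(c,0,0)$, giving the splitting.

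\textbf{Main obstacle.} The difficult part is Step 1 when $N$ has lowest weight below $h$: the positive modes could map $\tilde v$ into lower, non-singular parts of $N$. I would handle this using Kazama--Suzuki duality, as in the proof of Corollary \ref{cor:nonsplitexact}. It transports the question to relaxed or Verma-type modules of $V^k(\mathfrak{sl}_2)$ at irrational $k$, where the Kazhdan--Lusztig-type category is semisimple on the relevant blocks. Failing that, I would compare Shapovalov-form determinants block by block.
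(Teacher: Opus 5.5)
Your Step 2 is sound, and it is the paper's lifting argument. Two small corrections there. In the paper's convention the weight-$\tfrac12$ vectors are $G^\pm_{(0)}\tilde v$, not $G^\pm_{(-1)}\tilde v$. Also, $\mathcal S$ allows real $j$, so what you need at weight $\tfrac12$ is $j\neq\pm1$, not $j=0$.

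Step 1 is also fine whenever $h_N\ge h$. Then $\tilde v$ lies in the lowest weight space of $E$, so it is automatically killed by the positive modes, and the universal property splits the sequence. In particular, self-extensions need no Shapovalov argument.

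The genuine gap is the case you flag yourself: $M=V(c,h,j)$ with $h_N<h$. The reasoning you offer there does not work, for three reasons.
\begin{itemize}
\item The obstruction to a singular lift is not a homomorphism $V(c,h,j)\to N$. It is the class of the cocycle $x\mapsto x\tilde v$ on the positive modes, modulo coboundaries $x\mapsto xn$ with $n\in N$ of weight $(h,j)$. That is a class in $H^1(\mathfrak n_+,N)$ of weight $(h,j)$, which is essentially $\mathrm{Ext}^1(V(c,h,j),N)$ again.
\item The operator $L_{(1)}-\tfrac{3}{2c}H_{(0)}^2$ is not central, since it does not commute with the $G^\pm$ modes. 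So it does not separate blocks.
\item Simplicity of $N$ together with absence of singular vectors cannot suffice. In category $\mathcal O$ for $\mathfrak{sl}_2$, the sequence $0\to L(0)\to M(0)^\vee\to M(-2)\to 0$ is non-split. Yet both ends are simple, the quotient is a simple Verma module, and the sub has the larger highest weight, which is exactly the analogue of your configuration $h_N<h_M$.
\end{itemize}

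The missing idea is contragredient duality, which is how the paper handles this case. Duality is an exact contravariant involution of $\mathcal C$ that takes non-split sequences to non-split ones. The set $\mathcal S$ is closed under it: $V(c,h,j)'\cong V(c,h,-j)$ since it is simple, and $L(c,0,0)'\cong L(c,0,0)$.

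Dualizing $0\to N\to E\to M\to 0$ with $h_N<h_M$ gives $0\to M'\to E'\to N'\to 0$, in which the quotient now has the lower top. Its top vector lifts to a singular vector of $E'$, so there are two cases.
\begin{itemize}
\item If $N'$ is a simple Verma module, the universal property splits the sequence.
\item If $N'=L(c,0,0)$, your Step 2 applies verbatim, because $M'\in\mathcal S$ has no vectors of weight $(\tfrac12,\pm1)$.
\end{itemize}
Replace the central-character and Kazama--Suzuki discussion with this observation and your proposal becomes the paper's proof. Lift the top of $M$ when $h_M\le h_N$, and dualize when $h_M>h_N$.
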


\begin{proof}
Let $N, M$ in $\mathcal S$ and consider a short exact sequence
\[
0 \rightarrow N \rightarrow R \rightarrow M \rightarrow 0.
\]
Let $h_N, h_M$ be the conformal weight of their top levels and $j_N, j_M$ be their Heisenberg weights. 

If $h_M < h_N$ then the top level of $R$ coincides with the top level of $M$. Since $M$ is generated by the top level it follows that either the short exact sequence splits or $R$ is also generated by the top level. Assume that the short exact sequence is non-split. By the universal property of Verma modules, $R$ needs to be a homomorphic image of $V(c, h_M, j_M)$.
This is impossible if $M \cong  V(c, h_M, j_M)$. If $M \cong L(c, 0, 0)$ then by the previous corollary the only possibilities would be $N\cong V(c, \frac{1}{2}, \pm 1)$ but these are not in $\mathcal S$. 

The contragredient dual functor is covariant and strong exact (maps non-split short exact sequences to non-split short exact sequences). The contragredient dual of $V(c, h, j)$ is $V(c, h, -j)$ and the one of $L(c, 0, 0)$ is $L(c, 0, 0)$ itself. That is, $\mathcal S$ is closed under the contragredient dual, and hence the corollary is also true for $h_M > h_N$.

If $h_M = h_N$, then necessarily $N \cong M$ and the top level of $R$ is $2$-dimensional and a module for the commutative algebra $\mathbb C L_{(1)} \oplus \mathbb C H_{(0)}$. 
As we assume that $L_0$ and $H_0$ act semisimply, the top level is a direct sum of two simple modules for this zero-mode subalgebra. By the universal property of Verma modules, $R$ is thus a homomorphic image of $V(c, h_M, j_M) \oplus V(c, h_M, j_M)$. In particular if $M$ is itself a Verma module then $R \cong  V(c, h_M, j_M) \oplus V(c, h_M, j_M)$ and if $M \cong L(c, 0, 0)$ then by the previous corollary the only possibility is $R \cong  L(c, 0, 0) \oplus L(c, 0, 0)$.
\end{proof}

\subsection{Diamond structure}
We now wish to prove some further structural properties possessed by the $\cN=2$ $\cW$-superalgebras $\cW(\fr{sl}_{n+1|n})$. The generating type \eqref{princ:gentype} suggests that the higher conformal weight fields should organize into $\cN=2$ diamonds. To be precise, we mean that generators $\omega_{a^{j,\bot}},\omega_{a^{j,\pm}},\omega_{a^{j,\top}}$ satisfy the relations in (\ref{eq:N=2 diamond}).
Here we will prove a more general claim, which will imply the former statement.

Suppose that $\cA^c$ is $1$-parameter vertex algebra with the following properties:
\begin{enumerate}[(P1)]
\item $\cA^c$ has strong generating type \eqref{princ:gentype}.
\item The strong generating fields $H, G^{\pm}, L$ of weights $1,\frac{3}{2}, \frac{3}{2}, 2$ generate $\text{Vir}^c_{\mathcal{N}=2}$ of central charge $c$, as in (\ref{eq:N=2}).
\item All even strong generators have Heisenberg weight zero.
\item For each half-integer conformal weight, the two odd generators have Heisenberg weights $1$ and $-1$.
\end{enumerate}

Recall the categories $\cC$ and $\cS$ defined in Section \ref{sec:sing vectors and extensions}.
Let $\cD$ be the subcategory of $\cC$ whose objects have finite composition series consisting only  of elements in $\mathcal S$. By Corollaries \ref{cor:nonsplitexact} and \ref{cor:ext}, this category is closed under kernels and cokernels and it is semisimple.

\begin{proposition} \label{primarydiamonds} Suppose $\cA^c$ satisfies the assumptions (P1)--(P5). Then there exists a set of even vectors $\{ v_1, \dots, v_{n-1}\} \subset \cA^c$ with the following properties:
\begin{itemize}
\item $v_1 = H$,
\item $v_d$ is $\text{Vir}^c_{\mathcal{N}=2}$-primary of conformal weight $d$ and Heisenberg weight 0 for $d =2, \dots, n-1$. 
\item The set $\{ \, v_d, \ G^+_{(0)}v_d, \ G^-_{(0)}v_d, \ G^+_{(0)}G^-_{(0)}v_d \, |\,  d =1, \dots, n-1 \, \}$ is a minimal strong generating set of $\cA^c$ as a $1$-parameter vertex algebra.
\end{itemize}
When the above three properties hold, we say the vertex algebra has a primary diamond structure. 
\end{proposition}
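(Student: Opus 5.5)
The plan is to use the semisimplicity of the category $\cD$ from Corollaries \ref{cor:nonsplitexact} and \ref{cor:ext}. Regard $\cA^c$, for generic (irrational) $c$, as a $\text{Vir}^c_{\mathcal{N}=2}$-module under the adjoint action. It is graded by conformal weight with finite-dimensional weight spaces, and by (P3)--(P4) all Heisenberg weights are integers bounded by the degree of the monomials. Filter $\cA^c$ by weight, $F_{\le N}\cA^c$, which is the $\text{Vir}^c_{\mathcal{N}=2}$-submodule generated by vectors of weight at most $N$. I will argue inductively that each graded piece $F_{\le N}/F_{<N}$ lies in $\cD$, so that $\cA^c$ decomposes as a direct sum of objects of $\mathcal S$. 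The vacuum module is $L(c,0,0)$, and every other summand is a Verma module $V(c,h,j)$ with $|j|<2h$.

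The key step is to identify which summands have a top level in integer weight $d$ and Heisenberg weight $0$. Compare characters. The character of $\cA^c$ is that of the free algebra of type \eqref{princ:gentype}, and it agrees with the large-$c$ (free field) limit. Each even generator of weight $d\ge 2$ together with its odd partners in weight $d+\frac12$ and even partner in weight $d+1$ contributes exactly the character of a Verma module $V(c,d,0)$, since a Verma module of $\text{Vir}^c_{\mathcal{N}=2}$ is freely generated by $G^\pm_{(0)}$, $G^+_{(0)}G^-_{(0)}$ and the negative modes. The vacuum contributes $L(c,0,0)$, which is generated by $H,G^\pm,L$.

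I will then show that the multiset of $\text{Vir}^c_{\mathcal{N}=2}$-summands of $\cA^c$ must be $L(c,0,0)$ together with the modules $V(c,d,0)$ for $d=2,\dots,n$, tensored with the composite contributions. More precisely, I will proceed weight by weight. Take the lowest weight $d$ at which the $\text{Vir}^c_{\mathcal{N}=2}$-submodule generated by previously found primaries, and their normally ordered products, fails to exhaust $\cA^c$. Semisimplicity of $\cD$ gives a complement, and its lowest-weight vector is a primary. (P3)--(P4), together with the counting above, force this primary to be even with Heisenberg weight $0$ and weight $d$; call it $v_d$. Because $V(c,d,0)$ is free over the diamond modes, the vectors $v_d, G^\pm_{(0)}v_d, G^+_{(0)}G^-_{(0)}v_d$ are nonzero. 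Modulo lower generators and normally ordered products, they replace the generators of weights $d, d+\frac12, d+1$. This yields a minimal strong generating set, since the total count matches \eqref{princ:gentype}.

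The main obstacle is justifying that $\cA^c$, and its associated graded pieces, actually lie in $\cD$. In particular, one must exclude Verma modules with $|j|\ge 2h$, such as those generated by normally ordered products like $:G^+G^+:$-type vectors, whose top levels could have large Heisenberg charge. I would handle this by working generically in $c$ and using the free field limit, where the module structure is explicit: there every summand is visibly a tensor product of diamonds, and semisimplicity passes to generic $c$ by upper semicontinuity of the dimensions of $\text{Ext}$ and of singular vectors. Degenerate diamonds should be excluded by the Shapovalov form computation in the Lemma preceding Corollary \ref{cor:nonsplitexact}, because Heisenberg weight $0$ at $h=d\ge 2$ gives $|j|<2h$.
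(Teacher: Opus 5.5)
Your argument is essentially the paper's. Both use the semisimplicity of $\cD$ (Corollaries \ref{cor:nonsplitexact} and \ref{cor:ext}) on the weight filtration, take a complement whose lowest-weight vector is $\text{Vir}^c_{\mathcal{N}=2}$-primary, and inductively replace generators by the diamond of that primary. Your complement is taken to the whole subalgebra generated by the earlier diamonds, which already contains everything of weight $<d$, so primarity is immediate. That is slightly cleaner than the paper's splitting $M^{m+1}=M^m\oplus R^{m+1}$ by integer weights. There a positive mode such as $G^{\pm}_{(1)}$ sends $x_{m+1}$ to weight $m+\frac12$, and vectors of that weight need not lie in $M^m$; for example, $\mathcal{N}=2$ primaries of weight $\frac{11}{2}$ occur.

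The step you call the main obstacle should not be handled as you propose. (P1)--(P4) provide no free-field limit, and no semicontinuity is needed, since Corollary \ref{cor:ext} already holds at every irrational $c$. All that is required is that every composition factor lies in $\mathcal S$, and this is elementary. By (P3)--(P4), and since $H_{(0)}$ is a derivation, the Heisenberg charge of a normally ordered monomial is at most its number of odd factors, each of weight at least $\frac32$. Hence every weight vector satisfies $|j|\le \frac23 h<2h$ for $h>0$, so vectors like the ones you worry about cannot occur. Together with the weight-zero space being $\mathbb{C}\mathbf{1}$ and the Lemma preceding Corollary \ref{cor:nonsplitexact}, this puts every factor in $\mathcal S$; the paper simply asserts this step.

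The character comparison is also dispensable, and it presupposes free generation, which (P1) does not assert. Minimality of the generating set already shows that the subalgebra generated by the earlier diamonds has codimension one in weight $d$ and charge $0$. Parity agrees with charge modulo $2$, so the primary is even.

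Finally, what lets the diamond of $v_d$ replace the old generators is not that its vectors are nonzero. It is that the simple module generated by $v_d$ meets that subalgebra trivially, which your complement guarantees, and that $G^+_{(0)}G^-_{(0)}v_d$ is independent of $H_{(-1)}v_d$ and $\partial v_d$. At weight $d+1$ you may need to trade it against a suitable combination of the two remaining generators, as the paper does.
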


\begin{proof}
Consider the decomposition of $\cA^c$ in conformal weight spaces,
\[
\cA^c = \bigoplus_{n \in \frac{1}{2}\mathbb Z_{\geq 0}} \cA_n.
\]
Define
\[
\cA^N = \bigoplus_{\substack{n \in \frac{1}{2}\mathbb Z_{\geq 0} \\ n \leq N}} \cA_n,\]
and
$M^N := \text{Vir}^c_{\mathcal{N}=2} \cA^N$.
By definition of $\cD$, $M^N$ is an object in $\cD$ for every $N \in \mathbb Z_{\geq 0}$ and since $\cD$ is semisimple each $M^N$ is completely reducible. 
Since $M^{N-1} \hookrightarrow M^N$ there exists an object $R^N$ in $\cD$, such that
\[
M^N \cong M^{N-1} \oplus R^N.
\] 
We prove the theorem by induction for $m \in \{ 0 , \dots, n-1\}$. The induction hypothesis is that there exists a minimal strong generating set $I_m$ consisting of vectors $$\{ v_d, G^+_{(0)}v_d, G^-_{(0)}v_d, G^+_{(0)}G^-_{(0)}v_d | \ d =1, \dots, m\} \cup \{ x_i, y^+_{i+\frac{1}{2}}, y^-_{i+\frac{1}{2}}, z_{i+1}|\ i = m+1, \dots, n-1 \}$$
where the $v_d$ satisfy the properties of the theorem and  $x_i, z_{i+1}$ are even of conformal weight $i$ resp. $i+1$ and Heisenberg weight zero, while $y^\pm_{i+\frac{1}{2}}$ are odd of conformal weight $i+\frac{1}{2}$ and Heisenberg weight $\pm 1$.  
For $m=0$, this set $I_0$ is just the minimal strong generating set that our set-up starts with. 
If $m=1$, then 
$ \{ H, G^+_{(0)}H, G^-_{(0)}H, G^+_{(0)}G^-_{(0)}H\}$ and $ \{H, \partial G^+, \partial G^-, 2L+\partial H\}$ span the same vector space.
Replacing $x_2$ if necessary by a suitable linear combination of $x_2$ and $z_2$ then gives the set $I_1$. 
Now assume that we have constructed the set $I_m$ for $m \in \{ 1 , \dots, n-2\}$. The vector $x_{m+1}$ is in $M^{m+1} \cong M^m \oplus R^{m+1}$. 
If $x_{m+1}$ were in $M^m$, then $I_m \setminus \{ x_{m+1}\}$ would already be a strong generating set, contradicting the minimality of $I_m$. 
So by a suitable redefinition we can assume that $x_{m+1}$ is in $R^{m+1}$. $x_{m+1}$ has to be a $\text{Vir}^c_{\mathcal{N}=2}$-primary: since if not, then there exists $X_n$ with $X_n x_{m+1} \neq 0$ and $n \in  \mathbb Z_{>0}$ and $X \in \{ H, L, G^+, G^-\}$. 
 Since all strong generators have conformal weight greater or equal to $m+1$, except for the ones in $\{ v_d, G^+_{(0)}v_d, G^-_{(0)}v_d, G^+_{(0)}G^-_{(0)}v_d | d =1, \dots, m\}$ it follows that $X_n x_{m+1}$ must be a polynomial in the negative modes of the fields of these strong generators, that is $X_n x_{m+1}$ must be in $M^m$, that is $X_n x_{m+1} \in M^m \cap R^{m+1} = \{ 0 \}$. Hence $x_{m+1}$ is the desired primary vector, i.e. we set $v_{m+1 } = x_{m+1}$. since the $\{ v_d | d =1, \dots, m\}$ 
are $\text{Vir}^c_{\cN=2}$-primary and since $\{ v_{m+1},  G^+_{(0)}v_{m+1}, G^-_{(0)}v_{m+1}, G^+_{(0)}G^-_{(0)}v_{m+1}\}$ belong to the same irreducible $\text{Vir}^c_{\cN=2}$-module none of these four can be a polynomial in  the negative modes of the fields of the strong generators $\{ v_d, G^+_{(0)}v_d, G^-_{(0)}v_d, G^+_{(0)}G^-_{(0)}v_d | d =1, \dots, m\}$. It follows that we can obtain the minimal strong generating set $I_{m+1}$ by replacing $x_{m+1}, y^\pm_{m+\frac{3}{2}},  z_{m+2}$ with these vectors and if we in addition replace $x_{m+2}$ by a suitable linear combination of $x_{m+2}$ and $z_{m+2}$.
\end{proof}

    \begin{corollary} \label{univeral:primarydiamond} For generic values of $c, \lambda$, $\cW^{\cN=2}_{\infty}$ has a primary diamond structure, which proves the conjecture of Candu and Gaberdiel in \cite{CG}.
    \end{corollary}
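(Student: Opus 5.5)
The plan is to deduce the corollary from Proposition \ref{primarydiamonds}. Its proof never uses that the generating type is finite; it uses only the weight-by-weight filtration $M^N=\text{Vir}^c_{\cN=2}\cA^N$, the semisimplicity of the category $\cD$, and the Heisenberg weights of the generators. So the first step is to check (P1)--(P4) for $\cW^{\cN=2}_{\infty}$, with the generating type \eqref{princ:gentype} replaced by the infinite type \eqref{eq: n=2 inf}. Then we rerun the induction for every $m\geq 1$ instead of stopping at $n-1$.

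The required properties are all available. Free generation of type \eqref{eq: n=2 inf} is Theorem \ref{Wn=2 freely generated}. The $\cN=2$ subalgebra comes from (\textbf{U}1). The Heisenberg weights follow from the diamond structure (\textbf{U}2): $W^{N,\bot}$ and $W^{N,\top}$ have $H_{(0)}$-weight $0$, and $W^{N,\pm}$ have weights $\pm1$. Since $c,\lambda$ are formal, or generic, we specialize to a generic point where $c$ is irrational. Corollaries \ref{cor:nonsplitexact} and \ref{cor:ext} then apply.

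The key step is to show that each $M^N$ lies in $\cD$. We have $\cA^N$ finite dimensional, and its $\text{Vir}^c_{\cN=2}$-span has a finite composition series. Its factors are generated by vectors of conformal weight $h\in\frac12\mathbb{Z}_{\geq 0}$ and Heisenberg weight $j$. Because the $H$-charge of a PBW monomial is bounded by the number of odd factors, we expect $|j|<2h$ except for the vacuum, whose factor is $L(c,0,0)$. Granting this, all factors lie in $\mathcal S$, and semisimplicity of $\cD$ gives $M^N\cong M^{N-1}\oplus R^N$.

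This bound $|j|<2h$ is the step I expect to be the main obstacle. For a monomial with $k$ odd factors $G^\pm$ or $W^{i,\pm}$, the charge is at most $k$ while the weight is at least $\frac{3k}{2}$, so the bound holds. The boundary cases, such as $:G^+G^+:$-type vectors, need $|j|=2h$, and that never happens for $h>0$. With this in hand, the inductive argument of Proposition \ref{primarydiamonds} goes through verbatim. At stage $m$, we project $W^{m+1,\bot}$ onto $R^{m+1}$. Minimality of the strong generating set forces this projection to be nonzero. Any positive mode $X_{(n)}$ of $H,L,G^\pm$ applied to it lands in $M^m\cap R^{m+1}=0$, so the projection is $\cN=2$ primary of Heisenberg weight $0$. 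We then replace the diamond $W^{m+1}$ by the diamond it generates, correcting the next bottom field by a multiple of the old top field if needed. This produces primary diamonds for all $m$, and hence the primary diamond structure conjectured by Candu and Gaberdiel, for generic $c,\lambda$.
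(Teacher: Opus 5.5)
You take the paper's route: run the induction of Proposition \ref{primarydiamonds} for every $m$. Your added checks are fine, namely specializing to irrational $c$ and the bound $|j|\le \tfrac{2h}{3}<2h$, which puts every composition factor of $M^N$ in $\mathcal S$. The gap is a step you import verbatim from that proof (the paper's proof has the same slip): the claim that every positive mode sends the projection $b$ of $W^{m+1,\bot}$ onto $R^{m+1}$ into $M^m\cap R^{m+1}=0$.

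The mode $G^\pm_{(1)}$ lowers weight by only $\tfrac12$, so $G^\pm_{(1)}b$ lies in $\cA_{m+\frac12}$. Being a polynomial in the lower generators does not put an element of weight $m+\tfrac12$ into $M^m=\text{Vir}^c_{\cN=2}\cA^m$. This fails exactly when $\cN=2$ primaries of weight $m+\tfrac12$ exist, and in $\cW^{\cN=2}_\infty$ they already exist at weight $\tfrac{11}{2}$. A character count gives $2(z+z^{-1})$ there; these are the fields $W^{(2,\frac72)}$, $W^{(2,\frac52)}$ and their $\sigma$-images listed after \eqref{eq:D5}.

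So for $m=5$ the complement $R^6$ contains the vectors $G^\mp_{(0)}p$ for these primaries $p$. Indeed $R^6$ is necessarily the sum of the summands of $M^6$ with top weight $\tfrac{11}{2}$ or $6$. Your $b$ can have components along these vectors. On such a component one of $G^\pm_{(1)}$ acts by $L_{(1)}\pm\tfrac12 H_{(0)}$, i.e.\ by the scalar $\tfrac{11}{2}+\tfrac12=6\neq 0$. Hence $b$ need not be primary.

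The repair is to filter by half-integers. Write $M^{m+1}=\text{Vir}^c_{\cN=2}\cA^{m+\frac12}\oplus R'$, so that $R'$ is the sum of the summands with top weight exactly $m+1$, and project $W^{m+1,\bot}$ onto $R'$. Positive modes now land in $\cA^{m+\frac12}\cap R'=0$, so the projection is primary.

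For nonvanishing, you need the weight-$(m+1)$ part of $\text{Vir}^c_{\cN=2}\cA^{m+\frac12}$ to consist of normally ordered polynomials in $W^1,\dots,W^m$. Since $\cA^{m+\frac12}$ is stable under weight-lowering and weight-preserving modes, PBW shows this module is obtained from $\cA^{m+\frac12}$ by weight-raising modes alone. These are $\partial$, normal ordering with derivatives of $H,G^\pm,L$, and $G^\pm_{(0)}$. The last acts as a derivation of normally ordered products and preserves the span of $W^1,\dots,W^m$ and their derivatives. Free generation then keeps $W^{m+1,\bot}$ out of this space.

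Finally, (\textbf{U}2) already expresses $W^{m+1,\pm}$ and $W^{m+1,\top}$ through $G^\pm_{(0)}$ applied to $W^{m+1,\bot}$. So changing the bottom by such a polynomial changes the whole diamond only by such polynomials, and no correction of the next bottom field is needed.
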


    \begin{proof} For any fixed conformal weight $N$, the fact that the fields $\{W^{d,\bot}, W^{d,\pm}, W^{d,\top}|\ 1\leq N\}$ can be replaced by primary diamonds follows immediately from Theorem \ref{primarydiamonds}. Since $N$ is arbitrary, the claim follows.
     \end{proof}

\section{Properties of strong generators}\label{app:generators} \label{appendix:stronggenerators}

The purpose of this section is to generalize some results of \cite[Section 3]{CL2} to Lie superalgebras, especially \cite[Lemma 3.1]{CL2}.
Let $\gg$ be a finite-dimensional basic, classical Lie superalgebra, or $\gg\gl_{n|n}$. In particular $\gg$ admits a non-degenerate, invariant, consistent, supersymmetric bilinear form $B$. Let $\gg = \gg_0 \oplus \gg_1$ be the decomposition into its even and odd parts. Let $\ga$ be a direct summand of $\gg$. Then $\ga_0$ is a direct sum of abelian and simple Lie algebras. If $\ga_0$ has a simple summand then we choose a simple summand and normalize $B$ on $\ga$ such that long roots of this simple summand have norm two, otherwise $\ga \in \{\gg\gl_1, \gg\gl_{1|1}, \gp\gs\gl_{1|1}\}$ and we normalize $B$ on $\ga$ such that the odd positive root has norm one for $\ga \in \{\gg\gl_{1|1}, \gp\gs\gl_{1|1}\}$ (for $\ga = \gg\gl_1$ we can choose any non-trivial normalization). We fix this normalization throughout the remainder of this text and call it the {\it normalized bilinear form}. 

Let $\gh$ be a Cartan subalgebra of $\gg$ which by definition is a Cartan subalgebra of $\gg_0$. Then $B$ restricts non-degenerately to $\gh$ and thus induces a bilinear form on $\gh^*$. Let $P_{\mathbb Q} = \{\lambda \in \gh^* | B(\lambda, \lambda) \in \mathbb Q\}$, and let $\gg\text{-mod}_{\mathbb Q}^{fin}$ be the category of finite-dimensional weight modules for $\gg$ whose weight support is in $P_{\mathbb Q}$. Given a $\gg$-module $M$, the corresponding Weyl module at level $k$ is 
\[
V^k(M) := U(\widehat\gg) \otimes_{U(\widehat\gg_{\geq 0})} M_k.
\]
In particular, $V^k(\mathbb C)$ has a vertex superalgebra structure that we denote as usual by $V^k(\gg)$.
\begin{definition}
Let $V$ be a vertex operator superalgebra and $V$-mod a category of $V$-modules. 
\begin{enumerate}
    \item An object $M$ in $V$-mod is called almost semisimple if it is lower-bounded by conformal weight and if every proper submodule $N$ intersects the top level of $M$ nontrivially.
    \item $V$-mod is called almost semisimple if every object is a direct sum of almost semisimple objects.
\end{enumerate}
\end{definition}

Let $\gg$ be a simple Lie superalgebra. We recall the definition of the Kazhdan-Lusztig category $KL_k(\gg)$.
\begin{definition}\label{def:KL}
The category $KL_k= KL_k(\gg)$ is the full subcategory of $\widehat \gg$-modules that satisfy the following properties.
\begin{enumerate}
\item The central element $K \in \widehat{\gg}$ acts by multiplication by the scalar $k$. 
\item An object $M$ of $KL_k$ is graded by conformal weight with finite-dimensional weight spaces
\[
M = \bigoplus_{n \in \mathbb C}M_n, \qquad  \text{dim} \ M_n < \infty,
\] 
such that conformal weight is lower bounded, i.e., $M_n = 0$ unless Re$(n) > N$ for some bound $N$. 
\item There exists a finite set $\{h_1, \dots, h_s\}$, such that $M_n= 0$ unless $n\in \mathbb Z_{\geq 0} +h_i$ for some $i \in \{1, \dots, s\}$. 
\item
A module in $KL_k$ is called {\it almost simple} if every submodule intersects the top level nontrivially, \item $KL_k$ is called {\it almost semisimple} if every indecomposable module is almost simple. 
    \item $KL^k_{\mathbb Q}(\gg)$ is the category of lower-bounded $V^k(\gg)$-modules with finite dimensional generalized conformal weight spaces and weight support in $P_{\mathbb Q}$, i.e., each generalized weight subspace is in $\gg\text{-mod}_{\mathbb Q}^{fin}$.
\end{enumerate}    
\end{definition}

\begin{definition}\label{def:goodLSA}
Let $\cS$ be the set of Lie superalgebras, such that every element is a finite direct sum of the Lie superalgebras 
\begin{enumerate}
\item $\gg$ is an abelian or simple Lie algebra,
    \item $\gg$ is a simple basic classical Lie superalgebra, excluding $\gd(2, 1; \alpha)$,
    \item $\gg = \gd(2, 1; \alpha)$ and $\alpha \in \mathbb Q$,
\item $\gg = \gg\gl_{n|n}$ for some $n \in \mathbb Z_{>0}$.
\end{enumerate}    
\end{definition}

\begin{proposition}\label{prop:almost}
    For $\gg \in \cS$  and  $k \notin \mathbb Q$, the category $KL^k_{\mathbb Q}(\gg)$ is almost semisimple.
\end{proposition}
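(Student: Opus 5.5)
The plan is to reduce to the simple summands and then transport the known result for Lie algebras, handling the genuinely super cases through an equivalence of $KL^k_{\mathbb Q}(\gg)$ with a category whose semisimplicity is already established. The first step is to dispose of direct sums. If $\gg=\bigoplus_i \gg^{(i)}$ with each summand in the list of Definition \ref{def:goodLSA}, then $V^k(\gg)=\bigotimes_i V^{k}(\gg^{(i)})$, where each level is taken relative to the normalized form. I will check that a lower-bounded module with finite-dimensional generalized weight spaces decomposes compatibly, and that almost simplicity is preserved under tensor products: a submodule meeting the tensor product of top levels trivially would project to a submodule of some factor meeting its top level trivially. After this it suffices to treat a single summand.

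For $\gg$ an abelian or simple Lie algebra and $k\notin\mathbb Q$ I will follow \cite[Lemma 3.1]{CL2}. The key input is that the Casimir eigenvalue $\frac{B(\lambda,\lambda+2\rho)}{2(k+h^\vee)}$ differs between weights $\lambda,\mu$ in $P_{\mathbb Q}$ by an integer only if the Weyl modules are linked. For irrational $k$ the Kac--Kazhdan determinant shows that $V^k(M)$ is simple for every simple $M$, and there are no extensions between distinct Weyl modules. Consequently every object is a direct sum of quotients of Weyl modules $V^k(M)$ with $M$ finite-dimensional; these quotients are generated by their top level and are therefore almost simple.

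For $\gg$ a basic classical simple Lie superalgebra, including $\gd(2,1;\alpha)$ with $\alpha\in\mathbb Q$ so that all root norms are rational, the argument runs the same way. I will use the super Kac--Kazhdan (Gorelik--Kac) determinant: for $k\notin\mathbb Q$ every singular vector in a Weyl module $V^k(M)$ has conformal weight whose difference from the top is rational in $k$ with nonzero irrational part unless it comes from the top level. Since the weight support lies in $P_{\mathbb Q}$, conformal weights within a block differ by integers only when the $\gg$-Casimir values agree. This gives the key observation: a submodule not meeting the top level would produce a singular vector at positive depth, which is excluded. Each indecomposable is then generated by its top level, which is a finite-dimensional, possibly non-semisimple $\gg$-module (whence \emph{almost} rather than genuinely semisimple), and is a quotient of $V^k$ of that top level with no submodule avoiding it.

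The case $\gg\gl_{n|n}$ needs separate care, since the form is not induced from a simple algebra and $\gp\gs\gl_{n|n}$ has a degenerate center. I intend to write $V^k(\gg\gl_{n|n})$ as an extension of $V^k(\gs\gl_{n|n})$ by its central Heisenberg pair and reduce to the $\gs\gl_{n|n}$ case via the determinant argument above, treating the nilpotent central element separately.

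The main obstacle is the determinant step for the superalgebras: the atypical (odd isotropic) factors are linear in $k$ and could vanish for rational-norm weights. I will try first to show that these factors involve $(k+h^\vee)$ with a nonzero coefficient, so that for irrational $k$ they vanish only at the top level, which is exactly where non-semisimplicity is permitted.
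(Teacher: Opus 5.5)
\textbf{Verdict: there are genuine gaps.} Your ``key observation'' about Casimir values is the right idea, but the machinery you build around it does not hold up.

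(i) The reduction to summands assumes that objects of $KL^k_{\mathbb Q}(\gg_1\oplus\gg_2)$ are sums of outer tensor products $M_1\otimes M_2$. This fails once the finite-dimensional categories are non-semisimple. For $\gg\gl_{1|1}\oplus\gg\gl_{1|1}$, take the three-dimensional weight module with basis $v,\psi_1v,\psi_2v$. Here $\psi_i$ is an odd raising element of the $i$-th summand, $\psi_1\psi_2v=\psi_2\psi_1v=0$, and the lowering elements and centres act by zero. This module is not an outer tensor product: one factor would have to be one-dimensional, forcing one $\psi_i$ to act by zero. Its Weyl module is therefore not a tensor product either, so there is no factor to project a submodule onto.

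(ii) The Kac--Kazhdan/Gorelik--Kac determinant controls singular vectors only inside Verma or Weyl modules, i.e.\ modules generated by their top. For a non-simple top there is not even a Shapovalov form. An arbitrary indecomposable $M$ could a priori contain a submodule whose top sits at positive depth and is not contained in the submodule generated by $M^{top}$. Your claim that every indecomposable is generated by its top is exactly what you would need before the determinant applies, so as written the argument is circular.

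(iii) The $\gg\gl_{n|n}$ plan is ill-posed. The identity matrix lies in $\gs\gl_{n|n}$ and is orthogonal to all of it, so the form on $\gs\gl_{n|n}$ is degenerate and there is no nondegenerate $V^k(\gs\gl_{n|n})$ for a determinant argument. Moreover, no element outside $\gs\gl_{n|n}$ is central, so there is no central Heisenberg pair to split off. For the same reason, your opening idea of an equivalence with a semisimple category cannot work in the super case, since tops need not be semisimple.

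\textbf{What proves the statement} is your key observation, made precise and applied to arbitrary objects; this is the paper's argument.
\begin{itemize}
\item Let $N\subseteq M$ be a submodule. Its lowest-weight space $N^{top}$ is killed by all positive modes. Hence on a $\gg$-highest-weight vector of weight $\mu$ in $N^{top}$, the Sugawara $L_0$ acts by $\frac{C_\mu}{2(k+h^\vee)}$, where $C_\mu=B(\mu,\mu+2\rho)\in\mathbb Q$. The same holds on $M^{top}$ for some highest weight $\lambda$.
\item For $M$ indecomposable, all conformal weights lie in one $\mathbb Z$-coset. So $C_\mu-C_\lambda=2n(k+h^\vee)$ with $n\in\mathbb Z_{\ge 0}$.
\item Since $k\notin\mathbb Q$, this forces $n=0$, i.e.\ $N\cap M^{top}\neq 0$. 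Applied to $M/\langle M^{top}\rangle$, the same comparison also shows that $M$ is generated by its top.
\end{itemize}
No determinant and no atypicality analysis are needed, so the obstacle you anticipate disappears. Only nondegeneracy of $B$ is used, so $\gg\gl_{n|n}$ (with $h^\vee=0$) needs no special treatment. Rationality of the Casimir values is exactly what $\alpha\in\mathbb Q$ secures for $\gd(2,1;\alpha)$.

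For a direct sum, do not factor modules. Instead, run the comparison with each commuting Sugawara operator $L^{(i)}_0$ separately: indecomposability puts each $L^{(i)}_0$-spectrum in a single $\mathbb Z$-coset, and each resulting integer difference vanishes because $k\notin\mathbb Q$.
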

\begin{proof}
    Let $M$ be an indecomposable object in $KL^k_{\mathbb Q}(\gg)$ and let $M^{top}$ be its top space, that is the subspace of lowest conformal weight. Then $M^{top}$ is in $\gg\text{-mod}_{\mathbb Q}^{fin}$ and in particular must have a highest-weight vector with respect to any choice of positive root system. Let $\lambda \in P_{\mathbb Q}$ be this highest-weight. Then the Casimir of $\gg$ acts on this vector by multiplication with $C_\lambda := B(\lambda, \lambda + 2\rho)$ with $\rho$ the Weyl vector. In particular $C_\lambda \in \mathbb Q$. 
    By the Sugawara construction, the conformal weight of this highest-weight vector is $\frac{C_\lambda}{2(k+h^\vee)}$ with $h^\vee$ the dual Coxeter number of $\gg$ (this is a rational number). In particular, since $k\notin\mathbb Q$ it follows that neither is $\frac{C_\lambda}{2(k+h^\vee)}$. Now assume that $M$ has a proper submodule $N$, and let $N^{top}$ be its top level. The conformal weight of this top level must be in $\frac{C_\lambda}{2(k+h^\vee)} + \mathbb Z_{\geq 0}$, since $M$ is indecomposable and hence is conformal weight graded by a single $\mathbb Z$-coset.
    On the other hand, $N^{top}$ must also have a highest-weight vector for $\gg$, say of highest-weight $\mu$. As above, the conformal weight must be $\frac{C_\mu}{2(k+h^\vee)}$, so that 
    \[
    \frac{C_\lambda}{2(k+h^\vee)} + n = \frac{C_\mu}{2(k+h^\vee)}
    \]
    must be satisfied for some $n \in \mathbb Z_{\geq 0}$. But for $n \neq 0$,
    \[
   \mathbb Q \not\ni 2(k+h^\vee)n = C_\mu - C_\nu \in \mathbb Q
    \]
    so that $n=0$ and hence $N \cap M^{top}$ is nonzero.
\end{proof}

\begin{definition} \textup{(\cite[Definition 3.5]{CL2})}
Let $\ga$ be a Lie superalgebra which is the sum of a reductive Lie algebra and finitely many simple Lie superalgebras, such that the bilinear form on each simple summand is normalized in the standard way. Let $M$ be an indecomposable module for the corresponding affine vertex superalgebra $V^k(\ga)$, such that
\[
M = \bigoplus_{n \in \mathbb Z_+ + h} M_n 
\]
for some $h \in \mathbb{C}$ and $M_h$ nonzero. 
A vector in $M_h$ is called $\ga$-primary, and a vector in $\bigoplus\limits_{n \in \mathbb Z_+ + h} M_{n+1}$ is called an $\ga$-descendant. Fields corresponding to primary or descendant vectors are called primary or descendant fields, respectively. 
\end{definition}
The following is \cite[Lemma 3.1]{CL1}, but under weaker assumptions. 
\begin{lemma}\label{lemma:primary}
Let $V = \bigoplus_{n \in \frac{1}{2}\mathbb Z_+} V_n$ be a vertex superalgebra graded by conformal weight, with
$\text{dim}\ V_0 = 1$, $V_{\frac{1}{2}}=0$, and $\text{dim} \ V_n < \infty$. Let $I_1, \dots, I_d$ be finite sets, such that 
$$\bigcup_{i=1}^d S_i, \qquad S_i = \{ X_j |\  j \in I_i, \ X_j \in V_i \},$$ 
is a minimal strong generating set for $V$, with $S_1$ a basis of $V_1$ generating an affine vertex superalgebra $V^k(\ga)$, where $\ga$ is as above. Suppose that either $KL_k(\ga)$ is almost semisimple, or that each $V_n$ for $n \in \frac{1}{2}\mathbb Z_+$ is an object in $\gg\text{-mod}_{\mathbb Q}^{fin}$ and that $KL^k_{\mathbb Q}(\ga)$ is almost semisimple. Then there exists a minimal strong generating set 
\[
\bigcup_{i=1}^d   \widetilde S_i, \qquad \widetilde S_i = \{ \widetilde X_j |\  j \in I_i, \ \widetilde X_j \in V_i \},  
\]
such that $\widetilde S_1 = S_1$, the fields $\widetilde X_j$  in $\widetilde S_i$ for $i\geq 2$ are all $\ga$-primary fields, and their linear span is an $\ga$-module. 
\end{lemma}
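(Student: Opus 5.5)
The proof will follow the argument of \cite[Lemma 3.1]{CL1}, the only new input being the almost semisimplicity hypothesis on the relevant Kazhdan--Lusztig category. We proceed by induction on conformal weight. For $n\geq 1$ let $V^{\le n}$ be the span of all normally ordered monomials in the generators of weight at most $n$ (with derivatives), and set $V_n' := V_n\cap V^{< n}$, the \emph{decomposable} part of $V_n$. Minimality of the strong generating set means that the images of $S_n$ form a basis of $V_n/V_n'$. The zero modes of $V^k(\ga)$ preserve conformal weight, and $V^k(\ga)$ preserves the subspace generated by lower-weight generators. Hence $V_n'$ is a $\ga$-submodule of $V_n$, and more generally $V^{<n}$ is a $V^k(\ga)$-submodule of $V$.

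The key step is to find, at each weight $n\ge 2$, a $\ga$-submodule of $V_n$ complementary to $V_n'$ that consists of $\ga$-primary vectors. Consider the $V^k(\ga)$-submodule $M\subseteq V$ generated by $V^{\le n}$, and the quotient $\overline M = M/(M\cap V^{<n})$ (or, more simply, the $V^k(\ga)$-module $V^{\le n}/V^{<n}$ truncated appropriately). Positive modes $J_{(m)}$, $m\ge1$, of $J\in\ga$ lower conformal weight, so they map $V_n$ into $\bigoplus_{j<n}V_j$. Using the quasi-derivation identity \eqref{quasi-derivation}, we claim that $J_{(m)}$ applied to any generator of weight $n$ lands in $V^{<n}$ modulo products, since the OPE $J(z)X(w)$ only involves fields of weight $<n$. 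Therefore, in the quotient module $V^{\le n}/V^{<n}$, the image of $V_n$ lies in the lowest weight space and is annihilated by positive modes. We then decompose $V^{\le n}$ as a $V^k(\ga)$-module in the relevant category: $KL_k(\ga)$, or, under the second hypothesis, $KL^k_{\mathbb Q}(\ga)$, which is legitimate because each $V_j$ lies in $\gg\text{-mod}^{fin}_{\mathbb Q}$. By almost semisimplicity, each indecomposable summand is almost simple, so every proper submodule meets its top level.

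Now we produce the primaries. Take an indecomposable summand $N$ of the submodule generated by $V_n$ whose image in $V_n/V_n'$ is nonzero. If its top level had weight $<n$, then $N\cap V^{<n}$ would be a submodule meeting the top, and we will argue that this forces the weight-$n$ part of $N$ to lie in $V^{<n}$, a contradiction. This step is the main obstacle: the almost simple property must be combined with the fact that $V^{<n}\cap N$ is a submodule, in order to conclude that the weight-$n$ component of $N$ not in $V_n'$ lives in the top level. We expect to handle it exactly as in \cite{CL1}, by replacing semisimplicity with ``every submodule meets the top''. Once this is established, the top levels of these summands at weight $n$ span a $\ga$-submodule $P_n$ consisting of primaries with $V_n=V_n'\oplus P_n$. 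Choosing a basis of $P_n$ gives $\widetilde S_n$ of the same cardinality as $S_n$, and replacing $S_n$ by $\widetilde S_n$ preserves strong generation and minimality by triangularity, since $\widetilde X_j\equiv$ a linear combination of the $X_j$ modulo $V^{<n}$. Finally, $S_1$ is kept unchanged and half-integer weights are treated identically, using $V_{1/2}=0$.
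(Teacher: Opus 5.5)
Your setup matches the paper's. Your $V_n'=V_n\cap V^{<n}$ is the paper's $W_n$, you correctly note it is $V^k(\ga)$-stable, and decomposing into indecomposables and invoking almost simplicity is the paper's plan. But the step you label ``the main obstacle'' is the whole content of the lemma, and the mechanism you sketch for it does not work as stated.

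\emph{Almost simplicity does not give the needed property.} Almost simplicity of $N$ says that every nonzero submodule meets $N^{\mathrm{top}}$. You need the quotient-side property: a submodule \emph{containing} $N^{\mathrm{top}}$ (here $N\cap V^{<n}$) is all of $N$, i.e.\ $N$ is generated by its top level. The first does not imply the second. A non-split extension $0\to A\to N\to B\to 0$ of simple modules, with $A$ having top in weight $h$ and $B$ having top in weight $h+1$, is almost simple but is not generated by its top. In \cite{CL1} this issue never arises because $N$ is simple.

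One way to close the gap is to apply almost simplicity to a \emph{different} module. Suppose $K=\langle N^{\mathrm{top}}\rangle\neq N$. Then the annihilator of $K$ in the contragredient module $N'$ is a nonzero proper submodule of the indecomposable module $N'$. It is supported in weights strictly above the top weight of $N'$, contradicting almost simplicity of $N'$. This uses that the categories in question are closed under contragredient duals, which holds since objects are lower bounded with finite-dimensional weight spaces. The paper's proof asserts exactly this implication from almost semisimplicity (``$X_j^i$ is already in a module that is generated by a vector of lower conformal weight''). You need to supply such an argument rather than defer it to \cite{CL1}.

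\emph{Your final step also overclaims.} Even granting the key step, the span $P_n$ of the tops of the summands you keep only satisfies $V_n=V_n'+P_n$, not a direct sum. Since $\ga$ is a Lie superalgebra, the top of an indecomposable summand can be a reducible indecomposable $\ga$-module meeting the $\ga$-submodule $V_n'$ in a proper nonzero submodule. Then a basis of $P_n$ has more than $|I_n|$ elements and is not a minimal generating set. You cannot in general cut $P_n$ down to a $\ga$-stable complement, because finite-dimensional $\ga$-modules need not be completely reducible.

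The paper avoids needing a complement by working one generator at a time:
\begin{itemize}
\item It decomposes the cyclic $V^k(\ga)$-module generated by a single $X_j$ as $M_1\oplus\cdots\oplus M_r$ and writes $X_j=\sum_i X_j^i$.
\item It keeps exactly the components $X_j^i\notin W$. These are primary, because non-primary components lie in $W$ by the key step.
\item Then $\widetilde X_j\equiv X_j$ modulo $W$, so strong generation, minimality and the index sets $I_i$ are preserved automatically.
\end{itemize}
This construction by itself gives primarity and minimality. The $\ga$-stability of the span of the $\widetilde X_j$ needs a separate argument when finite-dimensional $\ga$-modules are not semisimple, in either approach.
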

\begin{proof}
Let $n\geq 1$ and let  
\[
 S_{\leq n} := \bigcup_{i=1}^n    S_i, \qquad  S_i = \{ X_j | \ j \in I_i, \ X_j \in V_i \}.  
\]
Assume inductively that the $X_j$ in $S_i$ for $1< i\leq n$ can be replaced by primary fields $\widetilde X_j$, such that their span is an $\ga$-module. Then there exists a set
\[
\widetilde S_{\leq n} := \bigcup_{i=1}^n   \widetilde S_i, \qquad \widetilde S_i = \{ \widetilde X_j |\  j \in I_i,\ \widetilde X_j \in V_i \},  
\]
such that the subspace spanned by normally ordered words in iterated derivatives of the elements in $S_{\leq n}$ and $\widetilde S_{\leq n}$ coincide. 
We denote this subspace by $W$. 
Since the $V_n$ for $n>1$ and also the $\widetilde X_j$ form $\ga$-modules, $W$ is a possibly infinite direct sum of $V^k(\ga)$-modules. Let $W_m$ be the subspace of $W$ of elements of conformal weight $m$, which is also an $\ga$-module. Note that $W_m = V_m$ for $m\leq n$. Since $W_{n+1}$ is a $\ga$-submodule of $V_{n+1}$, there is a short exact sequence of $\ga$-modules
\[
0 \rightarrow W_{n+1} \rightarrow V_{n+1} \xrightarrow{\pi_{n+1}} U_{n+1} \rightarrow 0,
\]
for some  $\ga$-module $U_{n+1}$. By construction, every element in $W_{n+1}$ is an $\ga$-descendant, and in particular a normally ordered polynomial in the iterated derivatives of the elements of $S_{\leq n}$. It follows that none of the $X_j$ for $j\in S_{n+1}$ is in $W_{n+1}$. Therefore the projection $\pi_{n+1}(X_j)$ of $X_j$ onto $U_{n+1}$ is nonzero, and in fact, $\{\pi_{n+1}(X_j)|\ j\in S_{n+1}\}$ must form a basis of $U_{n+1}$.

For each $j \in S_{n+1}$ we now construct $\widetilde X_j$ with the desired property. 
Let $M$ be the $V^k(\ga)$-module that is generated by $X_j$ for $j \in S_{n+1}$. 
Let $M = M_1 \oplus \dots \oplus M_r$ be its decomposition into indecomposable $V^k(\ga)$-modules. This means that we can write $X_j = X_j^1 + \dots + X_j^r$, such that $M_i$ is generated by $X_j^i$. If $x_{(1)}X_j^i \neq 0$ for some $x \in \ga$ and $i \in \{ 1, \dots, r\}$, then $X_j^i$ is by definition a $\ga$-descendent and in particular the top level of $M_i$ is less than $n+1$. By the almost semisimplicity of $KL_k(\ga)$, this implies that $X_j^i$ is already in a module that is generated by a vector of lower conformal weight, that is by a vector that must be in $W$, i.e. $X_j^i \in W_{n+1}$. Since $X_j$ is not in $W$ there must exist at least one index $i \in \{ 1, \dots, r\}$, such that $X_j^i \not\in W$ and hence in particular $x_{(1)}X_j^i = 0$ for all $x \in \ga$, i.e., $X_j^i$ must be $\ga$-primary. Let $i_1, \dots, i_s$
be the indices with this property and set $\widetilde X_j = X_j^{i_1} +  X_j^{i_2} + \dots +  X_j^{i_s}$.
\end{proof}

As mentioned, Lemma \ref{lemma:primary} is \cite[Lemma 3.5]{CL2} under weaker assumptions. This lemma has several consequences which now hold under our weaker assumptions, whose proofs are word by word the same as in \cite{CL2}. Let $V = \bigoplus_{n \in \frac{1}{2}\mathbb Z_+} V_n$ be a vertex superalgebra graded by conformal weight, satisfying the same conditions as Lemma \ref{lemma:primary}, except that $k$ is now regarded as a formal parameter. Assume that all structure constants appearing in the OPEs of the strong generators $\bigcup_{i=1}^d   S_i$ for $V$ are rational functions of $k$. Since there are only finitely many structure constants, the set $D$ of possible poles of the structure constants is finite, and we can regard $V$ as a vertex superalgebra over the ring of rational functions in $k$ with poles along $D$.

\begin{corollary}\label{cor:primarygeneralcase} \textup{(\cite[Corollary 3.5]{CL2})}
Let $V = \bigoplus_{n \in \frac{1}{2}\mathbb Z_+} V_n$ be a vertex superalgebra defined over the ring of rational functions in $k$ with poles along $D$, where $V_1$ generates $V^k(\ga)$, as above. If $\ga$ is in $\cS$,  then there exists a finite set $D'$ containing $D$ such that over the ring of rational functions in $k$ with poles along $D'$, we can replace the minimal strong generating set $\bigcup_{i=1}^d   S_i$ with a minimal strong generating set $\bigcup_{i=1}^d   \widetilde S_i$
such that $\widetilde S_1 = S_1$ and the fields $\widetilde X_j$  in $\widetilde S_i$ for $i\geq 2$ are $\ga$-primary, and their linear span is an $\ga$-module. 
\end{corollary}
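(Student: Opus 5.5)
The plan is to reduce Corollary \ref{cor:primarygeneralcase} to Lemma \ref{lemma:primary} applied at generic specializations, and then to spread the result back over a localized ring, following the argument of \cite[Corollary 3.5]{CL2}.

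First, fix $k_0 \notin \mathbb Q$ with $k_0 \notin D$. Specialize $V$ at $k = k_0$. The hypothesis $\ga \in \cS$ and Proposition \ref{prop:almost} make $KL^{k_0}_{\mathbb Q}(\ga)$ almost semisimple. Each weight space $V_n$ is a finite-dimensional $\ga$-module. To put it in $\ga\text{-mod}^{fin}_{\mathbb Q}$, I would argue that the $\gh$-weights of the strong generators do not depend on $k$: the zero modes of $\gh \subset V_1$ act through structure constants of the first-order poles, which are the Lie bracket. They are therefore rational, as in every case of interest, where $V_1$ comes from a rational form of $\ga$. The weights of normally ordered monomials are sums of these, so they lie in $P_{\mathbb Q}$. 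Lemma \ref{lemma:primary} then gives, at $k_0$, a minimal strong generating set of $\ga$-primary fields spanning $\ga$-modules.

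Second, I would carry out the construction uniformly in $k$ by redoing the inductive step of Lemma \ref{lemma:primary} over the function field $\mathbb C(k)$. At weight $n+1$, let $W_{n+1} \subseteq V_{n+1}$ be the span of normally ordered monomials in lower generators. The $\ga$-primary condition $x_{(1)}X = 0$ for $x \in \ga$, together with the requirement that $X$ projects to a given nonzero class in $V_{n+1}/W_{n+1}$, is a linear system whose coefficients are rational in $k$. This system is solvable over $\mathbb C(k)$, because it is solvable at every generic $k_0$, with the solution found above. Choose a solution by Cramer's rule on a minor that is nonzero generically. The determinants involved are nonzero rational functions, and their zeros, together with the denominators, form the finite set of extra poles added to $D$. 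After $d$ steps we obtain $D'$.

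Third, minimality and strong generation over the localized ring follow because the transition from $S_i$ to $\widetilde S_i$ is triangular: $\widetilde X_j \equiv X_j$ modulo $W$, with invertible leading coefficients away from $D'$. The $\ga$-module structure of the span of $\widetilde S_i$ follows because $\ga$ acts by zero modes, preserves primarity, and preserves the complement chosen in the construction.

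The main obstacle is the second step. Solvability of the linear system for generic $k$ has to yield a rational solution whose span is also stable under $\ga$. I would handle this by choosing the primary complement to $W_{n+1}$ as the full space of $\ga$-primary vectors in $V_{n+1}$, intersected with a $\ga$-stable complement of the primaries already inside $W_{n+1}$. This subspace is $\ga$-stable and has constant generic dimension, so a rational basis exists after finitely many localizations.
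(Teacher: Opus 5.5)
Your first three steps follow the paper's route. The paper proves this corollary with the argument of \cite[Corollary 3.5]{CL2}: apply Lemma~\ref{lemma:primary} at a generic level $k_0\notin\mathbb Q$, where Proposition~\ref{prop:almost} gives almost semisimplicity of $KL^{k_0}_{\mathbb Q}(\ga)$ for $\ga\in\cS$, and then observe that the corrections are rational in $k$. Your rank argument, together with the unitriangularity $\widetilde X_j\equiv X_j \bmod W_{n+1}$, is a correct way to make this precise. There are two small corrections. First, the primary condition must be annihilation by all $x_{(m)}$ with $m\geq 1$. With an abelian summand the modes $x_{(1)}$ do not generate the higher modes; in fixed weight only finitely many modes matter, so this costs nothing. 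Second, you do not need to prove $V_n\in\gg\text{-mod}^{fin}_{\mathbb Q}$. With $k$ formal, this is the only meaningful alternative among the hypotheses of Lemma~\ref{lemma:primary} that ``as above'' carries over. Your justification for it is also not valid in general, since $h_{(0)}$ acts on higher generators through OPE data of $V$, not through brackets in $\ga$.

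The step that fails as written is the last one, which handles the $\ga$-module clause. Let $P\subseteq V_{n+1}$ be the primaries. A $\ga$-stable complement to $P\cap W_{n+1}$ in $P$ exists only if the sequence $0\to P\cap W_{n+1}\to P\to V_{n+1}/W_{n+1}\to 0$ splits as $\ga$-modules. For reductive $\ga$ this is Weyl's theorem, which holds over $\mathbb C(k)$. But $\cS$ contains Lie superalgebras such as $\gs\gl_{2|1}$ or $\gg\gl_{n|n}$, whose finite-dimensional modules are not completely reducible. Knowing that $P$ is $\ga$-stable of constant generic dimension does not produce a splitting.

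The fix is to use the \emph{full} conclusion of Lemma~\ref{lemma:primary} at generic $k_0$. Primary lifts whose span is an $\ga$-module are the same thing as an $\ga$-equivariant section $s\colon V_{n+1}/W_{n+1}\to V_{n+1}$ with primary image. The conditions $\pi\circ s=\mathrm{id}$, $x_{(m)}\circ s=0$ for $m\geq 1$, and $x_{(0)}\circ s=s\circ x_{(0)}$ are linear in the matrix entries of $s$, with coefficients in $\mathbb C(k)$; note that $W_{n+1}$ is stable under zero modes. Your rank argument therefore gives a solution over $\mathbb C(k)$ at the cost of finitely many new poles. Setting $\widetilde X_j=s(\pi(X_j))$ then gives primary fields, unitriangular in the $X_j$, whose span is an $\ga$-module. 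This matches the paper, where the $\ga$-module property is inherited from the lemma at generic level rather than built by choosing complements.
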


Let $\gg$ be a Lie superalgebra with invariant nondegenerate bilinear form $(\ |\ )$, and let $F \in \gg$ be a nilpotent such that $\cW^k(\gg, F)$ has affine subalgebra $V^{\ell}(\ga)$, and $\ga$ is in $\cS$. Recall that the strong generators of $\cW^k(\gg, F)$ are indexed by $J^F$, a basis of $\gg^F$. Moreover, those strong generators that have conformal weight $r+1$ correspond to $\gg^F_{-r}$ which is ad$(\ga)$-invariant and hence an $\ga$-module that we denote by $M_r$.
 
\begin{corollary}\label{cor:primary}  \textup{(\cite[Corollary 3.6]{CL2})} Let $\cW^k(\gg, F)$ and $\ga$ be as above.
\begin{enumerate}
\item If we regard $\cW^k(\gg, F)$ as a $1$-parameter vertex algebra, the strong generators of conformal weight $r+1$ for $r>0$ can be chosen to be primary for the affine subalgebra $V^{\ell}(\ga)$. In particular, this holds for all but finitely values of $k$.
\item If the trivial representation appears as a direct summand of $M_r$, then the corresponding strong generator is a field of ${\rm Com}(V^\ell(\ga), \cW^k(\gg, F))$. 
\end{enumerate}
\end{corollary}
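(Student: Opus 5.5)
The argument follows \cite[Corollary 3.6]{CL2} essentially verbatim, with Corollary \ref{cor:primarygeneralcase} playing the role of the earlier reductive-case result.

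\textbf{Part (1).} We first check that $V=\cW^k(\gg,F)$, viewed as a $1$-parameter vertex superalgebra over the rational functions in $k$, satisfies the hypotheses of Lemma \ref{lemma:primary} and Corollary \ref{cor:primarygeneralcase}.

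By Proposition \ref{prop: generators of W-alg}, $V$ is freely generated by fields $\omega_{a}$, $a\in J^F$. A field $\omega_a$ with $a\in \gg^F_{-r}$ has conformal weight $r+1$. The weight-one generators span $\gg^F_0$, and these generate the affine subalgebra $V^\ell(\ga)$. There are no generators of weight $\frac12$ once $\Phi_{1/2}$-type weight-$\frac12$ fields are absent; if they are present, we first split them off as a free fermion/symplectic factor, as in \cite{CL2}. The structure constants are rational in $k$, with finitely many poles.

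Since the adjoint action of $\ga$ preserves each $\gg^F_{-r}$, the weight spaces $V_n$ are finite-dimensional $\ga$-modules. Their weights lie in the root lattice of $\gg$, so each $V_n$ lies in $\ga\text{-mod}^{fin}_{\mathbb Q}$ (for $\gd(2,1;\alpha)$ we need $\alpha\in\mathbb Q$, which is built into $\cS$). For $k\notin\mathbb Q$, equivalently $\ell\notin\mathbb Q$ since $\ell$ is an affine function of $k$ with rational coefficients, Proposition \ref{prop:almost} shows that $KL^\ell_{\mathbb Q}(\ga)$ is almost semisimple.

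Corollary \ref{cor:primarygeneralcase} then lets us replace, after inverting finitely many further values in $D'$, the generators of weight $r+1\geq 2$ by $\ga$-primary fields. These span an $\ga$-module that is isomorphic to $M_r$, because modulo lower-order terms the new fields have the same linear terms $\sigma^0$ as the old ones. Specializing to any $k\notin D'$ gives the statement for all but finitely many $k$.

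\textbf{Part (2).} Suppose the trivial representation $\mathbb C$ is a direct summand of $M_r$. The primary span is isomorphic to $M_r$ as an $\ga$-module, so it contains a primary field $X$ on which $\ga$ acts trivially, that is, $x_{(0)}X=0$ for all $x\in\ga$. Primarity gives $x_{(n)}X=0$ for $n\geq 1$. Hence $X$ commutes with $V^\ell(\ga)$ and lies in ${\rm Com}(V^\ell(\ga),\cW^k(\gg,F))$.

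\textbf{Main obstacle.} The delicate step is the passage from the formal parameter to specific levels. Almost semisimplicity is only available for $k\notin\mathbb Q$, whereas the corollary is claimed for all but finitely many $k$. We resolve this by working over the ring of rational functions: the correction terms produced in Lemma \ref{lemma:primary} are obtained by solving finitely many linear systems whose coefficients are rational in $k$. Their determinants are nonzero at irrational $k$, hence are nonzero rational functions with finitely many zeros. Adjoining those zeros to $D'$ gives a generating set valid away from a finite set.
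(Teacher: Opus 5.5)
Your proposal is correct and follows the paper's route. The paper says the proof is word for word that of \cite{CL2}: the almost semisimplicity of Proposition \ref{prop:almost} for $k\notin\mathbb{Q}$ feeds Lemma \ref{lemma:primary}, Corollary \ref{cor:primarygeneralcase} passes from there to all but finitely many $k$, and part (2) is then immediate from primarity plus the trivial zero-mode action. (Your hedge about weight-$\frac12$ generators is unnecessary: since $\gg^F\subseteq\bigoplus_{j\leq 0}\gg_j$, every strong generator $\omega_a$ has conformal weight at least $1$, so $V_{1/2}=0$ automatically.)
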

The family 
$\cW^1_{\sigma}(\gg,F)$ has been constructed in \cite[Section 3]{CL2}. It is a family of vertex superalgebras, that is OPE coefficients are continuous in $\sigma$ and $\cW^1_{\sigma}(\gg,F)/(\sigma^2k-1) \cong \cW^k(\gg,F)$. In the large level limit it becomes a free field algebra:

\begin{lemma} \label{lemma:norm} \textup{(\cite[Lemma 3.2]{CL2})} Let $\gg$ be a Lie superalgebra with invariant nondegenerate bilinear form $(\ |\ )$, and let $F$ be a nilpotent element in $\gg$ such that $\ga$ is in $\cS$. Let $\{X^{\alpha}|\ \alpha \in J^F\}$ denote the strong generating set for $\cW^1_{\sigma}(\gg,F)$ which satisfies the $\lambda$-brackets of \cite[Theorem 3.5]{CL2} in the $\sigma \rightarrow 0$ limit. If we replace the fields $X^{\alpha}$ with corrected fields $\tilde{X}^{\alpha}$ which are $\ga$-primary as above, we again have $\lambda$-brackets 
$$
[\tilde{X}^\alpha {}_\lambda \tilde{X}^\beta] = \delta_{j, k}\lambda^{2k+1} B_k(q^\alpha, q^\beta)
$$ in the $\sigma \rightarrow 0$ limit.
\end{lemma}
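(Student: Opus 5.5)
The plan is to follow the proof of \cite[Lemma 3.2]{CL2} essentially verbatim, replacing the reductive $\ga$ there by $\ga\in\cS$. The only input specific to $\ga$ is the almost semisimplicity used to produce $\ga$-primary corrections, and this is now supplied by Proposition \ref{prop:almost} together with Corollary \ref{cor:primarygeneralcase}. Concretely, we work in the family $\cW^1_\sigma(\gg,F)$. The affine generators $X^\alpha$ with $\alpha$ indexing $\ga\subseteq\gg^F_0$ are left unchanged. A generator $X^\alpha$ of weight $r+1>1$ is replaced by
\[
\tilde X^\alpha = X^\alpha + \sum c^\alpha_{\beta,\dots}(\sigma)\, {:}\,\cdots{:},
\]
where each correction term is a normally ordered monomial that involves at least one affine field $X^\gamma$, $\gamma\in\ga$, together with generators of strictly lower weight and their derivatives. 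This is exactly the form of the correction produced in the proof of Lemma \ref{lemma:primary}, since the correction lies in the $V^k(\ga)$-descendant space $W_{n+1}$.

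The key step is a scaling argument. In $\cW^1_\sigma(\gg,F)$ the affine fields are rescaled so that their $\lambda$-brackets become $[X^\gamma{}_\lambda X^{\gamma'}] = \sigma X^{[\gamma,\gamma']} + \lambda (\gamma|\gamma') + O(\sigma)$. Hence the affine action on any other generator, $[X^\gamma{}_\lambda X^\beta]$, is $O(\sigma)$. Requiring $\tilde X^\alpha$ to be $\ga$-primary means that the positive modes of $X^\gamma$ must kill $\tilde X^\alpha$. When we solve for the coefficients $c(\sigma)$, they come out of order $O(\sigma)$: the equation to be cancelled, $X^\gamma_{(m)}X^\alpha$ for $m\ge1$, is itself $O(\sigma)$, while the pairing of $X^\gamma$ against the correction terms is of order $1$, coming from the central term $\lambda(\gamma|\gamma')$. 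I would organize this as a triangular induction on weight, and on polynomial degree within a fixed weight, checking at each stage that the leading matrix is invertible at $\sigma=0$. I expect this invertibility to follow from the nondegeneracy of $(\ |\ )$ on $\ga$; for $\gg\gl_{n|n}$-type summands one uses the normalized bilinear form fixed above.

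Given $c(\sigma)=O(\sigma)$, we have $\tilde X^\alpha = X^\alpha + O(\sigma)$ in the free-field limit. The $\lambda$-brackets $[\tilde X^\alpha{}_\lambda \tilde X^\beta]$ therefore reduce at $\sigma=0$ to $[X^\alpha{}_\lambda X^\beta]$, which by \cite[Theorem 3.5]{CL2} equals $\delta_{j,k}\lambda^{2k+1}B_k(q^\alpha,q^\beta)$. The main obstacle is justifying the $O(\sigma)$ bound uniformly, which requires excluding the finite set $D'$ where the primary correction has poles. At such points the corrections could be of order $\sigma^{-1}$ and would spoil the limit. I would handle this by noting that the corrections are rational in $\sigma$ and that the limit is taken along generic $\sigma\to0$; this is where $k\notin\mathbb{Q}$ and almost semisimplicity enter.
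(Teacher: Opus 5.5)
Your proposal matches the paper, which simply asserts that the proof of \cite[Lemma 3.2]{CL2} goes through word for word once Lemma \ref{lemma:primary} supplies the $\ga$-primary corrections, with Proposition \ref{prop:almost} providing almost semisimplicity for $\ga\in\cS$; your scaling argument is a reasonable reconstruction of that proof. One correction to your last paragraph: the poles in $D'$ lie at finitely many finite values of $k$ and play no role as $\sigma\to 0$, so invoking generic $\sigma$ or $k\notin\mathbb{Q}$ there is misplaced. The regularity and vanishing of the correction coefficients at $\sigma=0$ come entirely from your injectivity step: in the limit the affine fields generate a free field algebra with nondegenerate form, whose positive modes have only the vacuum as joint kernel, so they act injectively on correction terms that each contain an affine factor.
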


\section{Cosets of affine vertex superalgebras inside larger structures} \label{appendix:largerstructures}
Let $\gg$ be a Lie superalgebra admitting a non-degenerate, supersymmetric, invariant and consistent bilinear form $B$, and let $V^k(\gg, B)$ denote the affine vertex superalgebra of $\gg$ associated to $B$ at level $k$. Let $\cA^k$ be a vertex superalgebra whose structure constants depend continuously on $k$, which admits a homomorphism 
\begin{equation} \label{eq:affineaction} V^k(\gg,B) \rightarrow \cA^k. \end{equation} In \cite{CL1,CL4}, the first and third authors studied the structure of the coset $\text{Com}(V^k(\gg,B), \cA^k)$ when $\gg$ is a reductive Lie algebra, using the language of {\it deformable families}. In \cite{CL3} this was adapted to the case when $\gg = \go\gs\gp_{1|2n}$. We now generalize this to the case where $\gg$ is a simple, basic Lie superalgebra or $\gg\gl_{n|n}$.

Let $K \subseteq \mathbb{C}$ be a subset containing $0$ which is at most countable, and let $F_K$ denote the $\mathbb{C}$-algebra of rational functions in a formal variable $\kappa$ of the form $\frac{p(\kappa)}{q(\kappa)}$ where $\text{deg}(p) \leq \text{deg}(q)$ and the roots of $q$ lie in $K$. A deformable family is a free $F_K$-module $\cB$ with the structure of a vertex algebra over $F_K$. We assume that $\cB$ has a $\frac{1}{2}\mathbb{N}$-grading $\cB = \bigoplus_{d \in \frac{1}{2} \mathbb{N}} \cB[d]$ by weight, where each $\cB[d]$ is free $F_K$-module of finite rank, and $\cB[0] \cong F_K$. Under these assumptions, the limit $\cB^{\infty} :=\lim_{\kappa \rightarrow \infty} \cB$ is a well-defined vertex algebra with the same graded character as $\cB$. While $\cB^{\infty}$ has a simpler structure than $\cB$, a strong generating set for $\cB^{\infty}$ gives rise to a strong generating set for $\cB$ after a suitable localization; see \cite[Lemma 3.2]{CL1}.

By \cite[Example 3.1]{CL1}, $V^k(\gg, B)$ comes from a deformable family $\cV$ with $K = \{0\}$ in the following sense: for all $k \neq 0$, $\cV/(\kappa - \sqrt{k})\cdot \cV \cong V^k(\gg, B)$, where $(\kappa - \sqrt{k})\cdot \cV$ denotes the ideal generated by $\kappa - \sqrt{k}$. Let $\gg = \gg_0 \oplus \gg_1$ be the decomposition into even and odd parts, and let $d_i = \text{dim}\ \gg_i$. Then $\cV^{\infty}$ is the free field vertex superalgebra $\cH(d_0|d_1)$ generated by fields $X^i$ for $\{x^i\ i =1, \dots, d_0 + d_1\}$ a homogeneous basis of $\gg$ and OPEs
\[
X^i(z)X^j(w) = \frac{B(x^i, x^j)}{(z-w)^2}.
\] Note that $d_1$ is even and $\cH(d_0|d_1) \cong \cO_{\text{ev}}(d_0, 2) \otimes \cS_{\text{odd}}(\frac{d_1}{2}, 2)$ in our earlier notation. Since $B$ is nondegenerate $\cH(d_0|d_1)$ has a natural Virasoro field $L^\cH$ of central charge $\text{sdim}(\gg) = d_0 -d_1$.

\subsection{Supergroup action} Suppose that $\cA^k$ admits the homomorphism \eqref{eq:affineaction}, and that under the zero mode action of $\gg$, $\cA^k$ decomposes into finite-dimensional $\gg$-modules. 
 The restriction of this action to the reductive Lie algebra $\gg_0$ then integrates to action of a connected Lie group $G_0$ by inner automorphisms of $\cA$, having $\gg_0$ as Lie algebra. Furthermore, the adjoint action of $\gg_0$ on $\gg_1$ lifts to a compatible action of $G_0$ on $\gg$, and this action is compatible with the action of $G_0$ on $\cA$ in the sense that 
\begin{equation} \label{eq:supergroupdef} g\cdot(\xi_{(0)}(a)) =  (g\cdot \xi)_{(0)} (g\cdot a),\ \ \forall g \in G_0,\ \xi \in \gg, \ a \in \cA.\end{equation}
The compatible actions of $G_0$ and $\gg$ on $\cA$ may be regarded as the action of a Lie supergroup $G$ with Lie superalgebra $\gg$, and we say that the action of $\gg$ {\it integrates} to the action of $G$. We define the $G$-orbifold
\begin{equation} \label{def:supergrouporbifold} \cA^G: = \cA^{G_0} \cap \text{Ker}_{\cA}(\gg_1) = \text{Ker}_{\cA}(\gg).\end{equation}

Suppose that $\cA^k$ comes from a deformable family $\cA$, meaning that $\cA^k \cong \cA / (\kappa - \sqrt{k})\cdot \cA$, for all $k$ with $\sqrt{k} \notin K$. Letting $\cV$ be as above, suppose that there is a homomorphism $\cV \ra \cA$ inducing \eqref{eq:affineaction}, when $\sqrt{k} \notin K$. There is an induced action of $G_0$ on the limit $\cA^{\infty}$, but this action is often by {\it outer} automorphisms. Similarly, the derivations $\xi_{(0)}$ for $\xi \in\gg$ induce derivations on $\cA^{\infty}$ which may now be outer derivations, and are still compatible with the $G_0$-action as in \eqref{eq:supergroupdef}. This defines the action of $G$ on $\cA^{\infty}$, and we define the orbifold $(\cA^{\infty})^G: = (\cA^{\infty})^{G_0} \cap \text{Ker}_{\cA^{\infty}}(\gg_1) = \text{Ker}_{\cA^{\infty}}(\gg)$ in the same way.

\begin{definition} (cf. \cite[Definition 6.1]{CL1})\label{def:good} A family of vertex algebra $\cA^k$ with structure constants depending continuously on $k$, which admits a homomorphism \eqref{eq:affineaction} will be called {\it good} if the following conditions hold.

\begin{enumerate} 
\item There exists a deformable family $\cA$ such that $\cA^k = \cA / (\kappa - \sqrt{k})\cdot \cA$, for all $k$ with $\sqrt{k} \notin K$, and there is a homomorphism $\cV \ra \cA$ inducing \eqref{eq:affineaction}, when $\sqrt{k} \notin K$.
\item For generic $k$, $\cA^k$ has a Virasoro element $L^{\cA}$ and a conformal weight grading $\cA^k = \bigoplus_{d \in \frac{1]}{2}\mathbb{N}} \cA^k[d]$, where each $\text{dim}(\cA^k[d])$ is finite and independent of $k$.
\item For generic $k$, $\cA^k$ decomposes into finite-dimensional $\gg$-modules, and the action of $\gg$ integrates to an action of a Lie supergroup $G$ as above.
\item We have a vertex algebra isomorphism 
\begin{equation}\label{splittingoflimit} \cA^{\infty} = \lim_{\kappa \ra \infty} \cA \cong \cH(d_0|d_1) \otimes \tilde{\cA},\qquad d_0 =\text{dim}(\gg_0),\qquad d_0 =\text{dim}(\gg_1).\end{equation} 
Here  $\tilde{\cA}$ is a vertex subalgebra of $\cA^{\infty}$ with Virasoro element $L^{\tilde{\cA}}$ and $\frac{1}{2}\mathbb{N}$-grading by conformal weight, with finite-dimensional graded components. 

\item The action of $G$ on $\cA^{\infty}$ preserves $\tilde{\cA}$, that is, $g \cdot \tilde{\cA} \subseteq \tilde{\cA}$ and $\xi(\tilde{\cA}) \subseteq \tilde{\cA}$ for all $g \in G_0$ and $\xi \in \gg$. Note that \eqref{splittingoflimit} implies that $G_0$ acts on $\tilde{\cA}$ by outer automorphisms and $\gg$ acts on $\tilde{\cA}$ by outer derivations.
\item Although $L^{\gg}_0$ need not act diagonalizably on $\cA^k$, it induces a decomposition of $\cA^k$ into generalized eigenspaces corresponding to the Jordan blocks of each eigenvalue. These generalized eigenspaces can be infinite-dimensional, but any highest-weight $V^k(\gg,B)$-submodule of $\cA^k$ has finite-dimensional components with respect to this grading, for generic $k$.
\end{enumerate}
\end{definition}

Let $\cA^k$ be a good $F_K$-vertex algebra, $\cA$ and $\cV$ be the deformable families with $\cA^k = \cA / (\kappa - \sqrt{k})\cdot \cA^k$ for $\sqrt{k} \notin K$, and $V^k(\gg,B) = \cV / (\kappa - \sqrt{k})
\cV$. If $G$ is as above, it is clear that $\cA^G$ is also a deformable family and $\cA^G / (\kappa - \sqrt{k}) \cdot \cA^G \cong (\cA^k)^G$ for $\sqrt{k} \notin K$.

We now consider a new $F_K$-vertex algebra 
$$\cC := \text{Com}(\cV, \cA),$$ with Virasoro element $L^{\cC} = L^{\cA} - L^{\gg}$, where $L^{\cA}$ and $L^{\gg}$ are the Virasoro elements in $\cA$ and $\cV$, respectively. For $\sqrt{k}\notin K$, define $\cC^k = \cC / (\kappa - \sqrt{k})$. We use the same symbol $L^{\cC} = L^{\cA} - L^{\gg}$ to denote the Virasoro element of $\cC^k$, where $L^{\cA}$ and $L^{\gg}$ are now the Virasoro elements in $\cA^k$ and $V^k(\gg,B)$, respectively. Under the isomorphism \eqref{splittingoflimit}, we have $$\lim_{\kappa \ra \infty}  L^{\gg} = L^{\cH},\qquad \lim_{k\ra \infty} L^{\cA} = L^{\cH} + L^{\tilde{\cA}}.$$ 
It is not clear a priori that $\cC$ is a deformable family (i.e., $\cC[n]$ is a free $F_K$-submodule of $\cA[n]$ for all $n\geq 0$), or that for $\sqrt{k} \notin K$, we have \begin{equation} \label{defcommutant} \cC^k := \text{Com}(V^k(\gg,B), \cA^k).\end{equation} Clearly $\cC^k \subseteq \text{Com}(V^k(\gg,B), \cA^k)$ and \eqref{defcommutant} holds generically.
We will determine the possible values of $k$ for which $\cC^k \neq \text{Com}(V^k(\gg,B), \cA^k)$, and in the process we will show that $\cC$ is indeed a deformable family. For the moment, we assume that $\gg$ is indecomposable and in $\cS$. We also assume that $B$ is normalized in the canonical way (see the first paragraph of section \ref{app:generators}), so $V^k(\gg,B)=V^k(\gg)$. Next, we introduce another $F_K$-vertex algebra $$\cK := \text{Ker}_\cA(L^{\gg}_0) \cap \cA^G.$$ We clearly have $\cC \subseteq \cK$, and we let $$\cK^k := \cK / (\kappa - \sqrt{k})\cdot \cK.$$

\begin{lemma} \textup{(cf. \cite[Lemma 6.3]{CL1})} If $\sqrt{k}\notin K$ and $k+h^{\vee}\notin \mathbb{Q}$, then $\cK^k = \text{Ker}_{\cA^k}(L^{\gg}_0) \cap (\cA^k)^G$. In particular, the graded character of $\cK^k$ is independent of $k$, for all $k$ such that $k+h^{\vee}\notin \mathbb{Q}$.\end{lemma}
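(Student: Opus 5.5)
The plan is to follow the argument of \cite[Lemma 6.3]{CL1}, replacing complete reducibility of the Kazhdan--Lusztig category by the almost semisimplicity of Proposition \ref{prop:almost}. There are two inclusions to check. The containment $\cK^k \subseteq \text{Ker}_{\cA^k}(L^{\gg}_0) \cap (\cA^k)^G$ is immediate: $L^{\gg}_0$ and the zero modes $\xi_{(0)}$, $\xi \in \gg$, are defined over $F_K$ and commute with specialization, and the $G_0$-action is compatible with specialization. The real content is the reverse inclusion. Equivalently, we must show that the $F_K$-module $\cK[n]$ is a saturated submodule of $\cA[n]$, i.e.\ that the rank of the specialized kernel does not jump at such $k$.

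To do this I would fix a weight $n$ and work inside the finite-dimensional space $\cA^k[n]$. Consider the $V^k(\gg)$-submodule of $\cA^k$ generated by the weight spaces $\cA^k[m]$ with $m\le n$. By condition (6) of Definition \ref{def:good}, it decomposes into generalized $L^{\gg}_0$-eigenspaces, and its highest-weight submodules have finite-dimensional graded pieces. By condition (3) the weights are finite-dimensional $\gg$-modules. Their Casimir eigenvalues $C_\lambda$ lie in $\mathbb{Q}$ once we restrict to $P_{\mathbb{Q}}$; this should follow from the weights of $\cA^k$ being constant in $k$ and coming from the deformable family. So the relevant part lies in $KL^k_{\mathbb{Q}}(\gg)$.

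Since $k+h^\vee\notin\mathbb{Q}$, Proposition \ref{prop:almost} says every indecomposable summand is almost simple. A vector $v$ with $L^{\gg}_0 v=0$ then has Sugawara weight zero. It must lie in a summand whose top level has $L^{\gg}_0$-eigenvalue $\frac{C_\lambda}{2(k+h^\vee)}\in \mathbb{Z}_{\le 0}$, which forces $C_\lambda=0$ and places $v$ in the top level. The $\gg$-invariants of that top level are exactly $\text{Ker}(\gg)$. Thus
\[
\text{Ker}_{\cA^k}(L^{\gg}_0)\cap(\cA^k)^G \;=\; \text{Ker}_{\cA^k}(\gg)\cap \text{Ker}_{\cA^k}(L^\gg_0),
\]
and its dimension in weight $n$ equals the multiplicity of the trivial $\gg$-module inside the vacuum-type top levels.

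I then compare dimensions. At generic $k$ this multiplicity equals $\text{rank}\,\cK[n]$, and the claim is that the same count holds at every $k$ with $k+h^\vee\notin\mathbb{Q}$. The count is governed by the decomposition of $\cA^k[n]$ into $\gg$-modules and top levels. Almost semisimplicity rules out the degenerations that could occur at special levels, namely extra singular vectors of lower $L^\gg_0$-weight, or Jordan blocks mixing the $L^{\gg}_0=0$ generalized eigenspace with its complement. The decomposition is therefore locally constant in $k$ on the complement of $\mathbb{Q}-h^\vee$, and $\cK[n]\to \cK^k[n]$ is onto the full kernel. The independence of the graded character then follows from the freeness of $\cK[n]$ over $F_K$.

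The main obstacle is the possible non-diagonalizability of $L^\gg_0$, together with the fact that $\text{Ker}(\gg_1)$ for a superalgebra is not simply a fixed-point functor of a reductive group. I would handle the first by using almost simplicity to show that the generalized eigenspace at eigenvalue $0$ is spanned by top-level vectors, on which $L^\gg_0$ acts by the scalar $C_\lambda/2(k+h^\vee)$ and hence diagonalizably. The second is handled by the fact that both kernels are cut out by $k$-polynomial linear conditions whose rank is constant on the good locus.
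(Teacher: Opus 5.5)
Your first step, that an $L^{\gg}_0$-null vector must lie in a top level with $C_\lambda=0$ because $C_\lambda\in\mathbb{Q}$ while $k+h^{\vee}\notin\mathbb{Q}$, is the same as the paper's. What you build on it has two problems.

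First, it is not true that $L^{\gg}_0$ acts by a scalar on top levels. There $L^{\gg}_0=\mathrm{Cas}/2(k+h^{\vee})$, and for Lie superalgebras the Casimir need not act semisimply on an indecomposable finite-dimensional module. Already for $\gg\gl_{1|1}$ it acts by a nonzero nilpotent operator on the projective cover of the trivial module. So the generalized $0$-eigenspace $E_0(k)$ of $L^{\gg}_0$ can be strictly larger than $\text{Ker}(L^{\gg}_0)$; this is exactly why $\cK$ has the $G$-invariants built in. The paper uses invariance at precisely this point: a $G$-invariant top-level vector is killed by the Casimir, so $\langle v\rangle$ is the trivial module. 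Done this way, you get $\text{Ker}_{\cA^k}(L^{\gg}_0)\cap(\cA^k)^G=E_0(k)\cap(\cA^k)^G$. That identity is the correct replacement for your diagonalizability claim.

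Second, and this is the real gap, you assert rather than prove that the count does not jump at special irrational levels. Almost semisimplicity is a statement at one fixed $k$ and gives no comparison with generic $k$. So ``the decomposition is therefore locally constant'' and the closing claim that the rank ``is constant on the good locus'' just restate the lemma. Saturation is also not the issue: $\cK[n]$, being a kernel, is automatically saturated, and the danger is that the kernel grows after specialization.

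Two inputs are needed.
\begin{itemize}
\item[(i)] The invariants must not jump, i.e.\ $\cA^G/(\kappa-\sqrt{k})\cdot\cA^G\cong(\cA^k)^G$. The paper records this in its setup just before defining $\cC$. For superalgebras it is a genuine input, because $\text{Ker}(\gg_1)$ can jump in families of modules.
\item[(ii)] On $\cA^G[n]$, every eigenvalue branch of $L^{\gg}_0$, viewed as an algebraic function of $\kappa$, must be identically of the form $\frac{C}{2(k+h^{\vee})}+m$ with $C\in\mathbb{Q}$ and $m\in\mathbb{Z}_{\geq 0}$. This follows because, at the uncountably many levels with $k+h^{\vee}\notin\mathbb{Q}$, your top-level analysis places every eigenvalue in this countable family.
\end{itemize}
Given (ii), no branch that is not identically zero can vanish at such $k$. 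Hence $\dim\big(E_0(k)\cap(\cA^k)^G[n]\big)$ is constant on that locus and equals its generic value $\mathrm{rank}\,\cK[n]$. Combined with the identity above, this gives $\cK^k=\text{Ker}_{\cA^k}(L^{\gg}_0)\cap(\cA^k)^G$.
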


\begin{proof} Observe that $\cK^k \subseteq \text{Ker}_{\cA^k}(L^{\gg}_0) \cap (\cA^k)^G$, and that $\text{Ker}_{\cA}(L^{\gg}_0)$ is the eigenspace of eigenvalue zero. The action of $G$ clearly preserves the graded subspaces of $\cA$ and the $G$-invariant space is again a free $F_K$-module. Let $v \in  \text{Ker}_{\cA^k}(L^{\gg}_0) \cap (\cA^k)^G$. In particular $v$ has conformal weight $0$ for $L^{\gg}_0$.
Let $\langle v \rangle$ be the $V^k(\gg)$-module generated by $v$. As in the proof of Proposition \ref{prop:almost} we have that $v$
must be a top level vector if $k \notin\mathbb Q$. Since $v \in (\cA^k)^G$, $\langle v \rangle$ must be the trivial representation
for $k \notin\mathbb Q$. Hence for $k+h^{\vee} \notin \mathbb{Q}$, we have $\cK^k \supseteq \text{Ker}_{\cA^k}(L^{\gg}_0) \cap (\cA^k)^G$. \end{proof}

\begin{corollary}\label{cor:kdeffamily} \textup{(cf. \cite[Corollary 6.5]{CL1})}
$\cK$ is a deformable family, that is, each weight-graded space $\cK[n]$ is a free $F_K$-submodule of $\cA[n]$.
\end{corollary}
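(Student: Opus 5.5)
The plan follows \cite[Corollary 6.5]{CL1}, with the preceding lemma as the main input. Recall that $\cA$ is a deformable family, so each $\cA[n]$ is a free $F_K$-module of finite rank, and $\cK[n]=\text{Ker}_{\cA[n]}(L^{\gg}_0)\cap \cA[n]^G$. The first step is to observe that $\cK[n]$ is the kernel of an $F_K$-linear map between finite rank free modules. Concretely, $L^{\gg}_0$ together with the zero modes $\xi_{(0)}$ for $\xi$ in a basis of $\gg$ give such a map $\cA[n]\to \cA[n]^{\oplus(1+\dim\gg)}$. Invariance under the connected group $G_0$ is equivalent to annihilation by $\gg_0$, so the $G$-invariants are exactly $\text{Ker}(\gg)$. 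The structure constants of all these operators lie in $F_K$ by the deformable-family hypothesis and the homomorphism $\cV\to\cA$.

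The second step is freeness. Since $F_K$ is a localization of $\mathbb{C}[\kappa^{-1}]$ at elements with roots in $K$, it is a PID. A submodule of a finite rank free module over a PID is free. So $\cK[n]$ is free, and its rank equals the dimension of the kernel over the fraction field.

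To make this a deformable family in the intended sense, I must also show that specialization behaves well: $\cK[n]$ should be saturated in $\cA[n]$, and its rank should match $\dim \cK^k[n]$ for generic $k$. Kernels are automatically saturated, since if $fv\in\cK$ with $0\neq f\in F_K$ then $v\in\cK$. Hence $\cK[n]$ is a direct summand and $\cK[n]/(\kappa-\sqrt k)$ injects into $\cA^k[n]$. The preceding lemma identifies $\cK^k$ with $\text{Ker}_{\cA^k}(L^{\gg}_0)\cap(\cA^k)^G$ for $k+h^\vee\notin\mathbb{Q}$. It also shows that this has $k$-independent graded character, which equals the generic rank. Therefore no jumping occurs off a countable set.

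The main obstacle I expect is the bookkeeping of the ring $F_K$. The rank computed over its fraction field must agree with the specialized dimension at all $\sqrt{k}\notin K$, not only at generic $k$. If needed, I would enlarge $K$ by the countably many exceptional points where $k+h^\vee\in\mathbb{Q}$, exactly as is done in \cite{CL1}.
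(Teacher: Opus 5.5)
Your argument is correct, and it takes a more elementary route than the paper. The paper gives no proof beyond the citation of \cite[Corollary 6.5]{CL1}. It places the statement as a consequence of the preceding lemma: for $k+h^{\vee}\notin\mathbb{Q}$, the specialization $\cK^k$ equals $\text{Ker}_{\cA^k}(L^{\gg}_0)\cap(\cA^k)^G$, whose graded character is independent of $k$, and freeness is read off from this constancy.

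Your first two steps show the literal statement needs none of that. $F_K$ is the localization of $\mathbb{C}[\kappa^{-1}]$ at the elements $1-a\kappa^{-1}$ with $a\in K\setminus\{0\}$, hence a PID. Since $\cK[n]\subseteq\cA[n]$ and $\cA[n]$ is free of finite rank, $\cK[n]$ is free of finite rank, with no input from the lemma. What the lemma adds, and what is used afterwards (to get $\cC=\cK$ and $\cC^k=\text{Com}(V^k(\gg),\cA^k)$), is the identification of this rank with the dimension of the honest invariant kernel at generic $k$. Your saturation remark is exactly the bridge between the two.

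Your final paragraph worries about a non-issue. Once $\cK[n]$ is free, $\cK[n]/(\kappa-\sqrt{k})\cK[n]$ has dimension equal to the rank at every $\sqrt{k}\notin K$, automatically. Saturation then makes it inject into $\cA^k[n]$. The only thing that can fail at special $k$ is that this image is smaller than $\text{Ker}_{\cA^k}(L^{\gg}_0)\cap(\cA^k)^G$, and that is not part of the statement. Enlarging $K$ is therefore unnecessary. It would also replace $F_K$ by a different ring, so it would not prove the corollary as stated.
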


\begin{lemma} \label{lem:characterizationofck}  \textup{(cf. \cite[Lemma 6.6]{CL1})} If $\sqrt{k}\notin K$ and $k+ h^{\vee}\notin \mathbb{Q}$, then $\cC^k = \text{Com}(V^k(\gg), \cA^k) = \cK^k$. \end{lemma}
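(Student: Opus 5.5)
The plan follows the strategy of \cite[Lemma 6.6]{CL1}, with the reductive-case input replaced by Proposition \ref{prop:almost} and the preceding lemma. We prove the chain of inclusions
\[
\cC^k \subseteq \text{Com}(V^k(\gg), \cA^k) \subseteq \cK^k \subseteq \cC^k .
\]

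The first inclusion is immediate. The map $\cC \to \cA \to \cA^k$ has image commuting with the image of $\cV$. Hence the image lies in the commutant, which is the inclusion already noted before \eqref{defcommutant}.

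For the second inclusion, let $v \in \text{Com}(V^k(\gg), \cA^k)$. Then $x_{(n)} v = 0$ for all $x \in \gg$ and $n \geq 0$. In particular $x_{(0)} v = 0$, so $v \in \text{Ker}_{\cA^k}(\gg)$. Since $G_0$ is connected and its action is obtained by integrating $\gg_0$, we get $v \in (\cA^k)^G$ by \eqref{def:supergrouporbifold}. By the Sugawara construction, $L^{\gg}_0 = L^{\gg}_{(1)}$ is a sum of normally ordered products of modes $x_{(m)}$, each containing a nonnegative mode acting first on $v$. Hence $L^{\gg}_0 v = 0$. The preceding lemma then identifies $\text{Ker}_{\cA^k}(L^{\gg}_0) \cap (\cA^k)^G$ with $\cK^k$, because $\sqrt{k} \notin K$ and $k + h^{\vee} \notin \mathbb{Q}$.

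The third inclusion, $\cK^k \subseteq \cC^k$, is the main step. Let $v \in \cK^k$, so $v$ is $G$-invariant and has $L^{\gg}_0$-eigenvalue $0$. Consider the $V^k(\gg)$-submodule $\langle v \rangle \subseteq \cA^k$. By condition (6) of Definition \ref{def:good} it has finite-dimensional generalized $L^{\gg}_0$-weight spaces. The argument of Proposition \ref{prop:almost} (rationality of Casimir eigenvalues on weights in $P_{\mathbb Q}$ versus $k + h^{\vee} \notin \mathbb{Q}$) shows that any vector of $L^{\gg}_0$-weight $0$ generates a module whose top level has weight $0$. The vector $x_{(n)} v$ with $n \geq 1$ has $L^{\gg}_0$-weight $-n$. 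If it were nonzero, $\langle v \rangle$ would contain a vector of negative weight, and this vector would have to be of the form $C_\mu / 2(k + h^{\vee})$ with $C_\mu \in \mathbb{Q}$. That is impossible since $k + h^\vee \notin \mathbb{Q}$. Hence $x_{(n)} v = 0$ for $n \geq 1$. Together with $x_{(0)} v = 0$, which follows from $G$-invariance, this gives $v \in \text{Com}(V^k(\gg), \cA^k)$.

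It remains to lift $v$ to $\cC$. Corollary \ref{cor:kdeffamily} says $\cK$ is a deformable family, so $v$ is the image of some $\tilde v \in \cK$. The element $x_{(n)} \tilde v$ lies in a free $F_K$-module. We have just shown that it specializes to zero at every $k$ with $\sqrt{k} \notin K$ and $k + h^{\vee} \notin \mathbb{Q}$. This is a set of values with accumulation points, so $x_{(n)} \tilde v = 0$ identically in $\cA$, and therefore $\tilde v \in \cC$. The main obstacle is this third step: we must control indecomposable, non-semisimple $V^k(\gg)$-submodules of $\cA^k$ in the super setting. There we rely on the weight-support assumption in $P_{\mathbb Q}$ and on condition (6), exactly as in Proposition \ref{prop:almost}.
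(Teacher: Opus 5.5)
Your proof is correct, and its core is the paper's argument. You get $\text{Com}(V^k(\gg),\cA^k)\subseteq\cK^k$ from zero modes, the Sugawara formula and the preceding lemma. You get the reverse inclusion from rationality of Casimir eigenvalues on the lowest $L^{\gg}_0$-weight part produced from $v$ by positive modes.

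The genuine difference is how $\cC^k$ enters. The paper's proof of this lemma only establishes $\text{Com}(V^k(\gg),\cA^k)=\cK^k$. It gets $\cC=\cK$ afterwards, in Corollary~\ref{cor:grchar}, from agreement at generic $k$. You instead prove $\cK\subseteq\cC$ directly: you lift to $\tilde v\in\cK$ and note that the coordinates of $x_{(n)}\tilde v$ in the free finite-rank module $\cA[d]$ are elements of $F_K$ vanishing at infinitely many specializations. This is a cleaner, self-contained version of the paper's corollary argument. It only uses the easy inclusion $\cK^{k'}\subseteq\text{Ker}_{\cA^{k'}}(L^{\gg}_0)\cap(\cA^{k'})^G$ at the other levels $k'$.

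One justification needs repair. Condition (6) of Definition~\ref{def:good} concerns highest-weight submodules, and $\langle v\rangle$ is not a priori highest-weight, so it does not give you a top level. Argue as the paper does. First, $U(\gg_+)v$ is finite-dimensional, because positive modes lower the $L^{\cA}_0$-weight and the weight spaces of $\cA^k$ are finite-dimensional and bounded below. Second, it is stable under $\gg$, because $v$ is $\gg$-invariant. Its lowest $L^{\gg}_0$-weight piece is then a finite-dimensional $\gg$-module annihilated by $\gg_+$, of integer weight at most $-n$ if $x_{(n)}v\neq 0$. On it $L^{\gg}_0$ acts by Casimir eigenvalues divided by $2(k+h^{\vee})$. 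It is this piece, not ``this vector'', whose weight would have to be $0$ or irrational. Rationality of the Casimir eigenvalues, i.e.\ weights in $P_{\mathbb Q}$, is also used tacitly in the paper.
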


\begin{proof} Let $\omega \in \text{Com}(V^k(\gg), \cA^k)$, then $\omega$ is in particular annihilated by 
 $L^{\gg}_0$ as well as by all the zero modes $\{X^{\xi}_{(0)}|\ \xi\in\gg\}$, that is $\omega \in \text{Ker}_{\cA^k}(L^{\gg}_0) \cap (\cA^k)^G = \cK^k.$
 
Conversely, suppose that $\omega \in  \text{Ker}_{\cA^k}(L^{\gg}_0) \cap (\cA^k)^G$. 
Assume that there is a $\xi \in \gg$ with 
 $X^{\xi}_{(1)} \omega \neq 0$. We will show that this cannot happen if $k + h^{\vee} \notin \mathbb{Q}$.
 
Recall that $\cA^k$ is $\frac{1}{2}\mathbb{N}$-graded by conformal weight (i.e., generalized $L_0^{\cA}$-eigenvalue). Write $\omega$ as a sum of terms of homogeneous weight, and let $m$ be the maximum value which appears. Let $\gg_+\subseteq \hat{\gg}$ be the Lie subalgebra generated by the positive modes $\{X^{\xi}_{(k)}|\ \xi\in\gg,\ k>0\}$. Note that each element of $U(\gg_+)$ lowers the weight by some $k>0$, and the conformal weight grading on $U(\gg_+)$ is the same as the grading by $L^{\gg}_0$-eigenvalue. An element $x\in U(\gg_+)$ of weight $-k$ satisfies $x (\omega) \in \cA_{m-k}$, where $\cA_{m-k}$ is the filtered component of weight at most $m-k$. Also, $x(\omega)$ lies in the generalized eigenspace of $L^{\gg}_0$ of eigenvalue $-k$, and $x(\omega) = 0$ if $k>m$.

It follows that $U(\gg_+)\omega$ is a finite-dimensional vector space graded by conformal weight. In particular, the subspace $M\subseteq U(\gg_+)\omega$ of minimal conformal weight is finite-dimensional. Hence it is a finite-dimensional $\gg$-module. Moreover, $U(\gg_+)$ acts trivially on $M$.  Since $\gg$ is indecomposable and $L^{\gg}$ is the Sugawara vector at level $k$, the eigenvalue of $L^{\gg}_0$ on $M$ is given by
\begin{equation} \label{cas} L^{\gg}_0|_{M} = \frac{\text{Cas}(M)}{k+h^{\vee}}. \end{equation}
In fact, each indecomposable summand of $M$ must have the same $L_0^{\gg}$ eigenvalue and hence the same Casimir eigenvalue. This is a rational number and hence our assumption can only be true for $k+h^\vee \in \mathbb Q$.

Thus for $k+h^\vee \notin \mathbb Q$ we have just shown that if $\omega \in  \text{Ker}_{\cA^k}(L^{\gg}_0)$, then $\omega$ is annihilated by all positive modes, that is elements in $\{X^{\xi}_{(k)}|\ \xi\in\gg,\ k>0\}$. Being in $(\cA^k)^G$ precisely means to be annihilated by all zero-modes, so that any $\omega \in  \text{Ker}_{\cA^k}(L^{\gg}_0) \cap (\cA^k)^G$ is annihilated by all non-negative modes, that is $\omega$ is a vacuum vector for $V^k(\gg)$. These are by definition the elements of $\text{Com}(V^k(\gg), \cA^k)$.
\end{proof}

\begin{corollary}  \label{cor:grchar} \textup{(cf. \cite[Corollary 6.7]{CL1})} $\cC$ is a deformable family over $F_K$, and if $\sqrt{k}\notin K$ and $k+h^{\vee}\notin \mathbb{Q}$, we have $\cC^k = \text{Com}(V^k(\gg), \cA^k)$.
\end{corollary}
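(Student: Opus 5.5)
The statement is Corollary \ref{cor:grchar}: $\cC$ is a deformable family over $F_K$, and $\cC^k = \text{Com}(V^k(\gg), \cA^k)$ whenever $\sqrt{k}\notin K$ and $k+h^{\vee}\notin\mathbb{Q}$. I would follow the proof of \cite[Corollary 6.7]{CL1}, using Lemma \ref{lem:characterizationofck} and Corollary \ref{cor:kdeffamily} as inputs.

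\textbf{Step 1.} The first goal is the equality $\cC = \cK$ as $F_K$-vertex algebras. We already have $\cC\subseteq\cK$, so we work one weight at a time. Fix $n$. By Corollary \ref{cor:kdeffamily}, $\cK[n]$ is a free $F_K$-module of finite rank, and it is a submodule of $\cA[n]$. Membership in $\cC$ is a linear condition over $F_K$. Concretely, $\omega\in\cK[n]$ lies in $\cC$ exactly when $X^{\xi}_{(j)}\omega = 0$ for all $\xi\in\gg$ and all $j\ge 0$. Only finitely many $j$ matter, because these modes lower the weight.

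Now take $\omega\in\cK[n]$. Its specialization at any $k$ with $\sqrt{k}\notin K$ and $k+h^\vee\notin\mathbb{Q}$ lies in $\cK^k$. By Lemma \ref{lem:characterizationofck}, this specialization is annihilated by all the modes above. Each $X^{\xi}_{(j)}\omega$ is an element of the free module $\cA[n-j]$, so its coordinates are rational functions in $\kappa$. These functions vanish at all but countably many values of $\kappa$, and a nonzero rational function has only finitely many zeros, so they are identically zero. Hence $\omega\in\cC[n]$, and $\cC[n]=\cK[n]$. In particular $\cC[n]$ is free, so $\cC$ is a deformable family.

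\textbf{Step 2.} Specializing gives $\cC^k = \cC/(\kappa-\sqrt{k})\cdot\cC = \cK/(\kappa-\sqrt{k})\cdot\cK = \cK^k$. For $\sqrt{k}\notin K$ and $k+h^\vee\notin\mathbb{Q}$, Lemma \ref{lem:characterizationofck} identifies $\cK^k$ with $\text{Com}(V^k(\gg),\cA^k)$, which is the claim.

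\textbf{Main obstacle.} The delicate point is the countability argument in Step 1. It requires the set of excluded parameters to be countable: $K$ is countable by assumption, and $\{k : k+h^\vee\in\mathbb{Q}\}$ is countable. It also requires that elements of $F_K$ vanishing on an uncountable set of $\kappa$ are zero. One also has to check that specializing an element of $\cK$ lands in $\cK^k$; this is immediate from the definition of $\cK^k$ as a quotient. No further input beyond Lemma \ref{lem:characterizationofck} should be needed.
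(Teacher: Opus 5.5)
Your proposal is correct and follows the paper's proof. The paper first obtains $\cC=\cK$ over $F_K$ from the generic equality $\cK^k=\text{Com}(V^k(\gg),\cA^k)$ of Lemma \ref{lem:characterizationofck}. It then transfers freeness from Corollary \ref{cor:kdeffamily} and specializes. Your weight-by-weight argument, that the coordinates of $X^{\xi}_{(j)}\omega$ are rational functions vanishing at all but countably many values of $\kappa$, just spells out the step the paper states in one line.
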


\begin{proof} Since $\cC^k = \text{Com}(V^k(\gg), \cA^k) = \text{Ker}_{\cA^k}(L^{\gg}_0) = \cK^k$ for generic values of $k$, we obtain $\cC = \cK$ as vertex algebras over $F_K$. Since $\cK$ is a deformable family over $F_K$, so is $\cC$. Therefore $\cC^k = \cK^k$ for all $k$ with $\sqrt{k}\notin K$. Finally, since $$\cK^k = \text{Ker}_{\cA^k}(L^{\gg}_0) \cap (\cA^k)^G =  \text{Com}(V^k(\gg), \cA^k)$$ for $k+h^{\vee}\notin \mathbb Q$, the claim follows. \end{proof}

So far, we have reproven statements of \cite[Section 6]{CL1} in the setting where $\gg$ is a simple basic classical Lie superalgebra or $\gg = \gg\gl_{r|s}$. The proofs of the following statements are now  the same as the corresponding statements in \cite[Section 6]{CL1}. From now on $\gg$ is an arbitrary element in $\cS$, and $B$ is nondegenerate.

\begin{corollary} \textup{(cf. \cite[Corollary 6.8]{CL1})}  Suppose that $\gg$ is in $\cS$, $B$ is nondegenerate and the restriction of $B$ to each indecomposable direct summand $\gg_i \subseteq \gg$ is  the normalized bilinear form on $\gg_i$. Then $\cC$ is a deformable family over $F_K$ and $\cC^k = \text{Com}(V^k(\gg,B), \cA^k)$ for all $k$ such that $k+h_i^{\vee} \notin \mathbb{Q}$, for all $i$. Here $h_i^{\vee}$ is the dual Coxeter number of the $\gg_i$. \end{corollary}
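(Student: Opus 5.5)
The plan is to reduce to the indecomposable case, which is Corollary \ref{cor:grchar}, and use the fact that affine cosets by a direct sum can be taken one summand at a time. First I would decompose $\gg = \bigoplus_{i=1}^t \gg_i$ into indecomposable summands. Each $\gg_i$ lies in $\cS$: it is abelian, a simple Lie algebra, a simple basic classical Lie superalgebra, or some $\gg\gl_{n|n}$. Since $B$ is nondegenerate, invariant and restricts to the normalized form on each $\gg_i$, the summands are mutually orthogonal. Hence $V^k(\gg,B) \cong \bigotimes_i V^k(\gg_i)$, and on the deformable-family level $\cV \cong \bigotimes_i \cV_i$. The abelian summands need separate handling. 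For a Heisenberg summand the argument of Lemma \ref{lem:characterizationofck} goes through with Casimir $0$: $L^{\gh}_0$ acts on a top level of Heisenberg weight $\mu$ by $B(\mu,\mu)/2k$. Alternatively one can group all abelian parts together and treat them as in \cite[Section 6]{CL1}.

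Next I would redo the argument of Lemma \ref{lem:characterizationofck} with $L^{\gg} = \sum_i L^{\gg_i}$ replaced by the family of commuting Sugawara operators $\{L^{\gg_i}_0\}$. The intermediate algebra is
\[\cK := \bigcap_i \text{Ker}_\cA(L^{\gg_i}_0) \cap \cA^G,\]
with $G$ the product supergroup, which is integrable by Definition \ref{def:good}(3). As in Corollary \ref{cor:kdeffamily}, $\cK$ is a deformable family: $G$-invariants and joint kernels of semisimple parts of the zero modes are free $F_K$-submodules of each $\cA[n]$. To show $\cK^k = \text{Com}(V^k(\gg,B),\cA^k)$, take $\omega$ in the joint kernel and suppose some positive mode of $\gg_i$ acts nontrivially. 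The minimal conformal weight piece $M$ of $U(\gg_+)\omega$ is then a finite-dimensional $\gg$-module on which $L^{\gg_i}_0$ acts by $\text{Cas}_i(M)/(k+h_i^\vee)$, while the grading forces this value to be a nonzero integer. Because $\text{Cas}_i$ is rational on $\gg_i$-mod$^{fin}_{\mathbb Q}$, this is impossible when $k+h_i^\vee\notin\mathbb Q$. The rationality comes from Definition \ref{def:good}(6) together with the $\mathbb{Q}$-weight hypothesis used in Proposition \ref{prop:almost}.

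The point where the direct-sum structure could cause trouble is the weight bookkeeping. A positive mode of $\gg_i$ lowers only the $L^{\gg_i}_0$-grading. So I would run the finiteness argument separately for each $i$, using $U((\gg_i)_+)$ and the $L^{\gg_i}_0$-generalized eigenspaces. Condition (6) applied to highest-weight $V^k(\gg_i)$-submodules, which are still highest-weight for the full algebra, gives the needed finite-dimensionality. Once every summand is handled, $\omega$ is annihilated by all nonnegative modes of every $\gg_i$, so it is a vacuum vector for $V^k(\gg,B)$.

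Finally, as in Corollary \ref{cor:grchar}, I would compare the two families. For generic $k$ we have $\cC^k = \cK^k$, so $\cC = \cK$ over $F_K$, and therefore $\cC$ is a deformable family. Specializing then gives $\cC^k = \text{Com}(V^k(\gg,B),\cA^k)$ whenever $\sqrt{k}\notin K$ and $k+h_i^\vee\notin\mathbb Q$ for all $i$. I expect the main obstacle to be the abelian and $\gg\gl_{n|n}$ summands. There the Casimir is not a single scalar and $h^\vee$ is degenerate: for $\gg\gl_{n|n}$ one has $h^\vee=0$ and a nontrivial center. I would handle them by checking directly that the Sugawara eigenvalue on top levels is still rational divided by $k$, which is the ingredient already used in Proposition \ref{prop:almost}.
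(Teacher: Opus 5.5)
Your proposal is correct and is essentially the argument the paper intends. The paper simply defers to \cite[Section 6]{CL1}, which amounts to rerunning Lemma~\ref{lem:characterizationofck} and Corollary~\ref{cor:grchar} summand by summand, as you do with $\cK$ taken to be the joint kernel of the $L^{\gg_i}_0$ inside $\cA^G$. Two small fixes are needed: the rationality of the Casimir on $M$ is not a hypothesis but holds because $\omega$ is $G$-invariant, so the weights of $M$ lie in the root lattice of $\gg_i$ (on which the normalized form is rational for $\gg_i\in\cS$); and $B$-orthogonality of the summands is automatic only for perfect summands, so for abelian and $\gg\gl_{n|n}$ summands you should first shift the complementary ideals by central elements to make the decomposition orthogonal, which leaves the restricted forms unchanged.
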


\begin{remark} \label{rem:failureofspec} \textup{(cf. \cite[Remark 6.9]{CL1})} 
If the restriction of $B$ to $\gg_i$ is a nonzero constant $\lambda_i$ times the normalized bilinear form on $\gg_i$, the above statement must be modified as follows: $\cC$ is a deformable family over $F_K$ and $\cC^k = \text{Com}(V^k(\gg,B), \cA^k)$ for all $k$ such that $k \lambda _i +h_i^{\vee} \notin \mathbb{Q}$, for all $i$. \end{remark}

Finally, we give a precise description of the limit $\cC^{\infty} = \lim_{\kappa \ra \infty} \cC$.

\begin{theorem}\label{thm:orbifoldlimit} \textup{(cf. \cite[Theorem 6.10]{CL1})}  Let $\gg$, $B$, and $\cA$ be as above. Then we have a vertex algebra isomorphism
$$\lim_{\kappa \ra \infty} \cC \cong \tilde{\cA}^G.$$
\end{theorem}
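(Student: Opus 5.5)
The strategy is to follow the proof of \cite[Theorem 6.10]{CL1}, using the superalgebra ingredients established above: Corollary \ref{cor:grchar} and the deformable family structure of $\cK$. The argument has three steps: identify $\cC$ with $\cK$, pass to the large level limit, and then compare graded characters.

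First, by Corollary \ref{cor:grchar} (and its decomposable version), $\cC = \cK = \text{Ker}_{\cA}(L^{\gg}_0)\cap \cA^G$ as deformable families over $F_K$. Hence $\lim_{\kappa\ra\infty}\cC$ makes sense and has the same graded character as $\cC^k$ for generic $k$.

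Second, we compute the limit of the defining conditions. Under \eqref{splittingoflimit}, $L^{\gg}\ra L^{\cH}$ and the $G$-action passes to $\cH(d_0|d_1)\otimes\tilde{\cA}$, preserving $\tilde{\cA}$ by condition (5) of Definition \ref{def:good}. The key observation is that $\cV\ra\cA$ forces the rescaled generators $\kappa^{-1}X^\xi$ to become the free fields of $\cH(d_0|d_1)$ in the limit. Consequently, any element of $\cC$ commutes with these generators, and its limit lies in $\text{Com}(\cH(d_0|d_1),\cH(d_0|d_1)\otimes\tilde{\cA}) = \tilde{\cA}$. Here one uses that $\cH(d_0|d_1)$ is simple with trivial commutant in itself, because $B$ is nondegenerate. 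Moreover, the zero modes $\xi_{(0)}$ on $\cA$ converge to the (outer) derivations defining the $G$-action on $\cA^\infty$, so the limit is also $G$-invariant. This gives an injective map $\lim\cC\hookrightarrow\tilde{\cA}^G$.

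Third, we show surjectivity by comparing characters, which I expect to be the main obstacle. I would use the decomposition of $\cA^k$, for generic $k$, as a $V^k(\gg)$-module. By the almost semisimplicity of $KL^k_{\mathbb Q}(\gg)$ (Proposition \ref{prop:almost}) and the argument of Lemma \ref{lem:characterizationofck}, $\cA^k$ decomposes as $\bigoplus_\lambda V^k(M_\lambda)\otimes C_\lambda$. Here the Weyl modules are generically simple, the multiplicity space $C_\lambda$ is the space of $\gg$-primaries of type $M_\lambda$, and $C_0=\cC^k$. In the limit, each $V^k(M_\lambda)$ becomes $\cH(d_0|d_1)\otimes M_\lambda$ with the same character. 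Hence $\text{ch}\,\cA^\infty = \text{ch}\,\cH(d_0|d_1)\sum_\lambda \dim$-weighted characters of $C_\lambda$. Comparing with $\cH(d_0|d_1)\otimes\tilde{\cA}$ identifies $\text{ch}\,\tilde{\cA}$ with $\sum_\lambda \text{ch}(M_\lambda)\,\text{ch}(C_\lambda)$. Taking $G$-invariants then gives $\text{ch}\,\tilde{\cA}^G = \text{ch}\,C_0$.

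The delicate point in the superalgebra case is that finite-dimensional $\gg$-modules need not be semisimple. For that reason I would argue at the level of $G$-invariants directly: only the trivial-Casimir block contributes to $\text{Ker}(L^{\gg}_0)$ for $k+h^\vee\notin\mathbb Q$, and this suffices to match $\dim \tilde{\cA}^G[n]$ with $\dim \cK^k[n]$. Equality of characters, together with the injection from the second step, then yields the isomorphism.
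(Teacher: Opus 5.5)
Your Steps 1 and 2 are correct and follow the route the paper intends; the paper's proof is only a verbatim appeal to \cite[Theorem 6.10]{CL1}. By Corollary \ref{cor:grchar}, $\cC=\cK$, and the limit lands in $\text{Com}(\cH(d_0|d_1),\cA^\infty)\cap(\cA^\infty)^G=\tilde{\cA}^G$. This gives only $\dim\cC^k[n]\le\dim\tilde{\cA}^G[n]$.

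\textbf{The gap is in Step 3.} It is meant to give the reverse inequality, and it fails at the super-specific point. For $k+h^\vee\notin\mathbb{Q}$, the Casimir argument of Lemma \ref{lem:characterizationofck} gives $\cA^k\cong U(\hat{\gg}_-)\otimes T^k$ as $\gg$-modules. Here $T^k$ is the space of vectors killed by all positive modes, and $\cC^k=(T^k)^{\gg}$. There is no decomposition $\bigoplus_\lambda V^k(M_\lambda)\otimes C_\lambda$ with simple $M_\lambda$ and $C_0=\cC^k$:
\begin{itemize}
\item Weyl modules of reducible indecomposable $\gg$-modules are not simple.
\item $(T^k)^{\gg}$ is not the multiplicity of the trivial module.
\end{itemize}
Comparing with $\cA^\infty\cong\cH(d_0|d_1)\otimes\tilde{\cA}$ yields $\text{ch}\,T^k=\text{ch}\,\tilde{\cA}$. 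But for a supergroup the dimension of invariants is not determined by the character. In a family of $\gg$-modules it is only upper semicontinuous, so it can jump at $\kappa=\infty$.

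Restricting to the Casimir-zero block does not help, because that block contains exactly the atypical non-semisimple modules causing the problem. For $\gg\gl_{1|1}$, the adjoint module has two trivial composition factors but only one invariant. The projective cover $P_1$ of the one-dimensional module of $N$-weight $1$, with top vector $x$, has no invariants. Yet it contains the weight-zero vector $\psi^-x$, which is killed by $\psi^-$ but not by $\psi^+$.

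To close the gap you need an extra input controlling the $\gg$-module structure at $\kappa=\infty$. One sufficient condition is that $\cA[n]\cong\cA^\infty[n]$ as $\gg$-modules for generic $\kappa$. This holds, for example, when the zero-mode action is $\kappa$-independent on a PBW basis of strong generators. Then $\cH(d_0|d_1)\cong U(\hat{\gg}_-)$ as graded $\gg$-modules, and Krull--Schmidt cancellation with induction on $n$ gives $T^k[n]\cong\tilde{\cA}[n]$. Hence $\dim\cC^k[n]=\dim\tilde{\cA}^G[n]$.

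Some such hypothesis seems unavoidable, because Definition \ref{def:good} does not appear to provide it. The rescaling $\alpha_t\colon\psi^+\mapsto t\psi^+$, $E\mapsto tE$ (fixing $N,\psi^-$) is an automorphism of $\gg\gl_{1|1}$ that multiplies the supertrace form by $t$. Let $\phi$ be the level-$\ell$ action on the charge-zero part $\cB$ of the $bc\beta\gamma$-system on $\mathbb{C}^{1|1}$. Take $\cA^k=V^{k-\ell/\kappa}(\gg\gl_{1|1})\otimes\cB$ with $J^\xi\mapsto J^\xi\otimes 1+1\otimes\phi(\alpha_{1/\kappa}\xi)$. This seems to meet all the listed conditions, yet $\psi^+$ acts by zero on $\tilde{\cA}=\cB$. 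So $\tilde{\cA}^G$ picks up invariants from the $P_1$ summands of $\cB[3]$ (e.g.\ inside $S^2V\otimes V^*\otimes\partial V^*$). The generic coset lacks these, since its graded dimension is $\dim\cB[n]^{\phi(\gg)}$. The paper's deferral to \cite{CL1} does not address this point; in \cite{CL1} it is automatic because $G$ is reductive.
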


\end{document}